\numberwithin{equation}{chapter}
\newtheorem{theorem}{Theorem}[chapter]
\newtheorem{lemma}[theorem]{Lemma}
\newtheorem{prop}[theorem]{Proposition}
\newtheorem{cor}[theorem]{Corollary}
\newtheorem{Th}{Theorem}
\newtheorem{corollary}[theorem]{Corollary}
\theoremstyle{definition}
\newtheorem{remark}[theorem]{Remark}
\newtheorem{notation}[theorem]{Notation}
\newtheorem{definition}[theorem]{Definition}
\newtheorem{Rmk}{Remark}
\newtheorem{Defn}{Definition}
\newcommand{\bN}{\mathbb{N}}
\newcommand{\F}{\mathcal{F}}
\newcommand{\E}{\mathcal{E}}
\newcommand{\C}{\mathcal{C}}
\renewcommand{\L}{\mathcal{L}}
\renewcommand{\P}{\mathcal{P}}
\newcommand{\tL}{\widetilde{\mathcal{L}}}
\newcommand{\tDelta}{\tilde{\Delta}}
\newcommand{\tS}{\widetilde{S}}
\newcommand{\tPi}{\widetilde{\Pi}}
\newcommand{\tD}{\widetilde{\mathbf{D}}}
\newcommand{\M}{\mathcal{M}}
\newcommand{\N}{\mathcal{N}}
\newcommand{\K}{\mathcal{K}}
\newcommand{\X}{\mathcal{X}}
\newcommand{\Y}{\mathcal{Y}}
\newcommand{\D}{\mathbf{D}}
\renewcommand{\H}{\mathcal{H}}
\newcommand{\wL}{\widetilde{\mathcal{L}}}
\newcommand{\Hom}{\operatorname{Hom}}
\newcommand{\Iso}{\operatorname{Iso}}
\newcommand{\Aut}{\operatorname{Aut}}
\newcommand{\Inn}{\operatorname{Inn}}
\newcommand{\Out}{\operatorname{Out}}
\newcommand{\Syl}{\operatorname{Syl}}
\newcommand{\id}{\operatorname{id}}
\newcommand{\One}{\operatorname{\mathbf{1}}}
\newcommand{\W}{\mathbf{W}}
\newcommand{\fN}{\mathfrak{N}}
\newcommand{\hyp}{\mathfrak{hyp}}
\newcommand{\foc}{\mathfrak{foc}}
\newcommand{\fC}{\mathfrak{C}}
\newcommand{\subn}{\unlhd\!\unlhd\;}
\newcommand{\Comp}{\operatorname{Comp}}
\newcommand{\tF}{\widetilde{\mathcal{F}}}
\def \<{\langle }
\def \>{\rangle }
\renewcommand{\phi}{\varphi}
\def\blfootnote{\xdef\@thefnmark{}\@footnotetext}
\title[Commuting partial normal subgroups and regular localities]{Commuting partial normal subgroups and regular localities}
\author[E.~Henke]{Ellen Henke}
\address{Institut f{\"u}r Algebra, Fakult{\"a}t Mathematik, Technische Universit{\"a}t Dresden, 01062 Dresden, Germany}
\email{ellen.henke@tu-dresden.de}
\begin{document}

\maketitle

\tableofcontents

\addtocontents{toc}{\setcounter{tocdepth}{-1}}
\chapter*{Abstract}
\addtocontents{toc}{\setcounter{tocdepth}{0}}

In this paper, important concepts from finite group theory are translated to localities, in particular to linking localities. Here localities  are group-like structures associated to fusion systems which were introduced by Chermak. Linking localities (by Chermak also called proper localities) are special kinds of localities which correspond to linking systems. Thus they contain  the algebraic information that is needed to study $p$-completed classifying spaces of fusion systems as generalizations of $p$-completed classifying spaces of finite groups. 

Because of the group-like nature of localities, there is a natural notion of partial normal subgroups. Given a locality $\mathcal{L}$ and a partial normal subgroup $\mathcal{N}$ of $\mathcal{L}$, we show that there is a largest partial normal subgroup $\mathcal{N}^\perp$ of $\mathcal{L}$ which, in a certain sense, commutes elementwise with $\mathcal{N}$ and thus morally plays the role of a ``centralizer'' of $\mathcal{N}$ in $\mathcal{L}$. This leads to a nice notion of the generalized Fitting subgroup $F^*(\mathcal{L})$ of a linking locality $\mathcal{L}$. Building on these results we define and study special kinds of linking localities called regular localities. It turns out that there is a theory of components of regular localities akin to the theory of components of finite groups. The main concepts we introduce and work with in the present paper (in particular $\mathcal{N}^\perp$ in the special case of linking localities, $F^*(\mathcal{L})$, regular localities and components of regular localities) were already introduced and studied in a preprint by Chermak. However, we give a different and self-contained approach to the subject where we reprove Chermak's theorems and also show several new results.

\blfootnote{The author would like to thank the Isaac Newton Institute for Mathematical Sciences, Cambridge, for support und hospitality during the programme Groups, representations and applications, where work on this paper was undertaken and supported by EPSRC grant no EP/R014604/1.}

\chapter*{Introduction}

\section{Fusion systems and linking localities} A saturated fusion system is a category $\F$ together a finite $p$-group $S$ such that the objects of $\F$ are all the subgroups of $S$ and morphisms are injective group homomorphisms subject to certain axioms. Any finite group $G$ with a Sylow $p$-subgroup $S$ leads to a saturated fusion system $\F_S(G)$ whose morphisms are the conjugation maps by elements of $G$. Under a different name, saturated fusion systems were first introduced by Puig in the 1990s (cf. \cite{Puig:2006a}). Later on, for the purposes of homotopy theory, Broto, Levi and Oliver \cite{BLO2} introduced centric linking systems associated to saturated fusion systems. The longstanding conjecture that there is a unique centric linking system associated to every saturated fusion system was proved by Chermak \cite{Chermak:2013}. In this context, Chermak introduced group-like structures called localities, which were then studied further by Chermak in three preprints \cite{Chermak:2015,ChermakII,ChermakIII}, by the author of the present paper \cite{Henke:2015a,Henke:2015,Henke:2020} and in a joint preprint by both authors \cite{Chermak/Henke}. 

\smallskip

Even though Oliver \cite{Oliver:2013} gave a proof of the existence and uniqueness of centric linking systems which does not involve localities, we feel that it is nevertheless useful to investigate localities further to gain a deeper understanding of the theory of fusion systems. Chermak started in three preprints \cite{Chermak:2015,ChermakII,ChermakIII} to develop a theory of localities akin to the local theory of groups, and in the joint preprint \cite{Chermak/Henke} by Chermak and the author of the present paper we relate these concepts to corresponding concepts in fusion systems. We believe that the theory of localities and its close connection to the theory of fusion systems will make simplifications possible in Aschbacher's program to revisit the proof of the classification of finite simple groups via classification theorems for fusion systems. A proper understanding of localities might also motivate a more comprehensive extension theory of saturated fusion systems. The results from Chermak's preprint \cite{ChermakIII} are needed in \cite{Chermak/Henke} and we expect them also to be of major importance in all future endeavors. Unfortunately, some of Chermak's arguments are rather complicated and difficult to read, and some proofs seem to be  incorrect in the details. While we believe that there would be direct ways to fill in the gaps in Chermak's proofs of the main results in \cite{ChermakIII}, we have nevertheless chosen to give a different and self-contained approach to the subject in the present paper. In particular, we simplify and generalize many of Chermak's arguments, and we prove some new results which seem natural and useful to include. Some of the similarities and differences to Chermak's approach will be summarized in Section~\ref{SS:ChermaksApproach}. Further pointers are given throughout the text. With clean proofs of the results presented in this paper in place, it is possible to obtain new proofs of many theorems about fusion systems stated in \cite{Aschbacher:2011}. In \cite{Chermak/Henke,Henke:NEX,Henke:NK} such new proofs as well as some new results about fusion systems are obtained as a byproduct of the correspondences between localities and fusion systems shown in \cite{Chermak/Henke}.

\smallskip

We will now explain the main results shown in this paper. Throughout let $p$ be a prime. The reader is referred to \cite{Chermak:2015} (or to the summary in  Chapter~\ref{S:PartialGroupsLocalities}) for an introduction to partial groups and localities. Roughly speaking, a partial group is a set $\L$ together with a ``product'' $\Pi\colon \D\rightarrow \L$ defined on a set $\D$ of words in $\L$ and with an ``inversion map'' $\L\rightarrow\L,f\mapsto f^{-1}$ subject to certain group-like axioms. A locality is a triple $(\L,\Delta,S)$ where $\L$ is a finite partial group, $S$ is a maximal $p$-subgroup of $\L$ and $\Delta$ is a set of subgroups of $S$ which in a certain sense determines the domain of the product on $\L$. There is a natural notion of conjugation and thus of normalizers and centralizers in partial groups. It follows from the precise definition of a locality that, for any locality $(\L,\Delta,S)$ and any $P\in\Delta$, the normalizer $N_\L(P)$ forms a finite group. Moreover, every locality $(\L,\Delta,S)$ leads naturally to a fusion system $\F_S(\L)$. We will say that $(\L,\Delta,S)$ is a locality over $\F$ to indicate that $\F=\F_S(\L)$. The following definition is central.

\begin{Defn}
\begin{itemize}
\item A finite group $G$ is said to be of \emph{characteristic $p$} if $C_G(O_p(G))\leq O_p(G)$. 
\item A locality $(\L,\Delta,S)$ is called a \emph{linking locality} if $\F_S(\L)^{cr}\subseteq\Delta$ and $N_\L(P)$ is of characteristic $p$ for every $P\in\Delta$. 
\end{itemize}
\end{Defn}

For a fusion system $\F$, we use a slightly non-standard definition of $\F^{cr}$ here, which ensures that the fusion system of a linking locality is saturated (cf. Definition~\ref{D:Fcr} and Theorem~\ref{T:Saturation}). Linking localities over a fusion system $\F$ as defined above correspond to linking systems associated to $\F$ in the sense of \cite{Henke:2015}. Chermak \cite{ChermakII,ChermakIII} uses the term \emph{proper locality} instead of linking locality, and this terminology is also used in \cite{Chermak/Henke}. 

\smallskip

Because localities are so group-like, there is a natural notion of partial normal subgroups of $\L$ (cf. Definition~\ref{D:PartialSubgroup}). We will write $\N\unlhd\L$ to indicate that $\N$ is a partial normal subgroup of $\L$.

\section{Commuting partial normal subgroups} In a group, the centralizer of any subset is a subgroup and the centralizer of any normal subgroup is again normal. The situation is more complicated in localities, where centralizers of subsets are naturally defined, but rarely turn out to be partial subgroups. If $(\L,\Delta,S)$ is a linking locality and $\N\unlhd\L$, then it seems that the centralizer $C_\L(\N)$ is just not quite the right object to consider in general. However, it turns out that there is a partial normal subgroup $\N^\perp$ of $\L$ that morally should be thought of as a replacement for the centralizer of $\N$. To describe our results in more detail, we will use the following definition.

\begin{Defn}\label{D:mainCommute}
Let $\X$ and $\Y$ be subsets of a partial group $\L$ with partial product $\Pi\colon\D\rightarrow \L$. For $(x,y)\in\D$ set $xy:=\Pi(x,y)$.

The subset $\X$ is said to \emph{commute (elementwise) with} $\Y$ if, for all $x\in\X$ and $y\in\Y$, the following implication holds:
\[(x,y)\in\D\Longrightarrow (y,x)\in\D\mbox{ and }xy=yx.\]
We say that $\X$ \emph{fixes $\Y$ under conjugation} if, for all $x\in\X$ and $y\in\Y$, the following implication holds:
\[y\in\D(x)\Longrightarrow y^x=y.\]
Here for any $x\in\L$, the set $\D(x):=\{y\in\L\colon (x^{-1},y,x)\in\D\}$ is the set of elements of $\L$ for which conjugation by $x\in\L$ is defined, and we set $y^x:=\Pi(x^{-1},y,x)$ for all $y\in\D(x)$.\\
The \emph{centralizer} of $\X$ in $\L$ is defined by   
\[C_\L(\X):=\{y\in\L\colon x\in\D(y)\mbox{ and }x^y=x\mbox{ for all }x\in\X\}.\]
\end{Defn}

Both the condition that $\X$ commutes with $\Y$ and the condition that $\X$ fixes $\Y$ under conjugation should be thought of as a ``weak centralizing condition''. However, these conditions do not imply in general that $\Y\subseteq C_\L(\X)$. Indeed, it can happen that, for all $x\in\X$ and $y\in\Y$, we have $(x,y)\not\in\D$ (or similarly $y\not\in\D(x)$). This leads to examples where $\X$ commutes with $\Y$, but $\Y$ does not commute with $\X$ and similarly to examples where $\X$ fixes $\Y$ under conjugation, but $\Y$ does not fix $\X$ under conjugation. Also, while it is true that $\Y$ fixes $\X$ under conjugation if $\X$ commutes with $\Y$, the converse does not hold in general. In contrast, we show in Lemma~\ref{L:CentralizerHelp} that $\Y\subseteq C_\L(\X)$ if and only if $\X\subseteq C_\L(\Y)$; moreover, if so, then $\X$ commutes with $\Y$ and vice versa; similarly $\X$ fixes then $\Y$ under conjugation and vice versa.

\smallskip

The above observations indicate that the condition that $\X$ commutes with $\Y$ or that $\X$ fixes $\Y$ under conjugation is only meaningful if one puts further assumptions on $\X$ and $\Y$. Indeed we obtain interesting results if we consider partial normal subgroups of localities. Our most important findings are summarized in the following theorem.

\begin{Th}\label{T:mainNperp}
Let $(\L,\Delta,S)$ be a locality with partial product $\Pi\colon\D\rightarrow\L$. Let $\M$ and $\N$ be partial normal subgroups of $\L$.
\begin{itemize}
\item[(a)] There is a partial normal subgroup $\N^\perp$ of $\L$, which is (with respect to inclusion) the largest partial normal subgroup of $\L$ that commutes with $\N$. We have $\N\subseteq (\N^\perp)^\perp$ and $\N^\perp\cap S\leq C_S(\N)$.
\item[(b)] The following conditions are equivalent:
\begin{itemize}
\item [(i)] $\M$ commutes with $\N$;
\item [(ii)] $\N$ commutes with $\M$;
\item[(iii)] $\M$ fixes $\N$ under conjugation.
\end{itemize}
\item [(c)] $(\M\N)^\perp=\M^\perp\cap\N^\perp$.
\item [(d)] Suppose that $\M$ and $\N$ commute, let $m\in\M$ and $n\in\N$ with $(m,n)\in\D$. Then  
\[S_m\cap S_n=S_{(m,n)}=S_{(n,m)}.\] 
If $(\L,\Delta,S)$ is a linking locality, then also $S_{mn}=S_{(m,n)}$.
\item [(e)] If $(\L,\Delta,S)$ is a linking locality, then properties (i)-(iii) from part (b) are also equivalent to the following properties:
\begin{itemize}
\item [(iv)] $\M\cap S\subseteq \N^\perp$ and $\N\cap S\subseteq C_S(\M)$.
\item [(v)] $\M\cap S\subseteq \N^\perp$ and $\N\cap S\subseteq\M^\perp$.
\end{itemize}
\item[(f)] Suppose $(\L,\Delta,S)$ is a linking locality. If $\F^q\subseteq \Delta$ or if $\delta(\F)\subseteq\Delta$ (with $\delta(\F)$ as introduced below), then \[\N^\perp\cap S=C_S(\N).\]
\end{itemize}
\end{Th}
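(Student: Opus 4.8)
The plan is to isolate the combinatorial core of the statement --- the behaviour of the partial product on pairs of elements drawn from two partial normal subgroups --- and to construct $\N^\perp$ as a join in the lattice of partial normal subgroups of $\L$. First I would record the facts about localities and partial normal subgroups on which the argument rests: that a word $w$ lies in $\D$ if and only if the associated subgroup $S_w\le S$ lies in $\Delta$ (in particular $S_f\in\Delta$ for every $f\in\L$, so conjugation by an element of $S$ is defined on all of $\L$ and carries $\Delta$ to $\Delta$); that $\D$ is closed under passage to prefixes and suffixes of words and under the substitution provided by the associativity axiom; that for $\M,\N\unlhd\L$ the set $\M\N$ is again a partial normal subgroup and is the join of $\M$ and $\N$, every element of which has a representation $mn$ ($m\in\M$, $n\in\N$) with $S_{mn}=S_{(m,n)}$; and that intersections of partial normal subgroups are partial normal. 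I would also prove the cited Lemma~\ref{L:CentralizerHelp} at this point, since it is a short formal consequence of Definition~\ref{D:mainCommute}.

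\textbf{Parts (b), (a) and (c).}
Each implication in (b) is obtained by rewriting inside the partial group: from $(m,n)\in\D$ or $n\in\D(m)$ one extracts, using closure of $\D$ under prefixes/suffixes and the inverse axiom, a supply of shorter words in $\D$, and then shuttles between $mn$, $nm$, $n^m$ and $m^n$ via associativity, checking at each step --- through the criterion $S_w\in\Delta$ where the naive substitution is not licensed --- that the intermediate word is in $\D$. The implications that merely unwind a product are routine; the delicate point, and the place where one genuinely needs \emph{both} $\M$ and $\N$ to be partial normal, is the passage from a conjugation statement back to a product statement, where one conjugates the defining relation by a suitable element and uses partial normality of the two factors to keep the result inside $\M$ and $\N$. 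The crux of (a) is then the lemma: \emph{if $\M_1,\M_2\unlhd\L$ both commute with $\N$, then $\M_1\M_2$ commutes with $\N$} --- proved by taking $x\in\M_1\M_2$ in a good representation $x=m_1m_2$ with $S_x=S_{(m_1,m_2)}$ and $n\in\N$ with $(x,n)\in\D$, so that $(m_1,m_2,n)\in\D$, and commuting $m_2$ past $n$ and then $m_1$ past $n$ rewrites $xn$ as $nx$ and exhibits $(n,x)\in\D$. Since $\L$ is finite, this lemma makes the family of partial normal subgroups commuting with $\N$ closed under joins, hence it has a largest member $\N^\perp$. Then $\N\subseteq(\N^\perp)^\perp$ is immediate from (b), and $\N^\perp\cap S\le C_S(\N)$ follows directly since an $s\in\N^\perp\cap S$ commutes with $\N$ while conjugation by $s$ is everywhere defined. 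Finally (c) is formal: a partial normal subgroup $\K$ commutes with $\M\N$ iff it commutes with both $\M$ and $\N$ (one direction because $\M,\N\subseteq\M\N$, the other by the lemma above with $\N$ replaced by $\K$), and $\M^\perp\cap\N^\perp$ is a partial normal subgroup; hence it equals $(\M\N)^\perp$.

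\textbf{Parts (d), (e) and (f).}
Part (d) is a computation with the subgroups $S_w$: for commuting $m,n$ the relations $mn=nm$ and $(n,m)\in\D$ let one show that an element of $S$ is carried into $S$ by the word $(m,n)$ exactly when it lies in $S_m\cap S_n$, and symmetrically for $(n,m)$; the extra identity $S_{mn}=S_{(m,n)}$ in a linking locality then follows because the characteristic-$p$ condition on the groups $N_\L(P)$, $P\in\Delta$, forbids the product $mn$ from acquiring conjugation domain beyond that of the word $(m,n)$. For (e), the implication (i)$\Rightarrow$(v) is immediate from (a) and (b) (commuting forces $\M\subseteq\N^\perp$ and $\N\subseteq\M^\perp$), and (v)$\Rightarrow$(iv) is trivial by (a), so the content is (iv)$\Rightarrow$(i), where one must pass from information about the Sylow $p$-subgroups $\M\cap S$ and $\N\cap S$ to all of $\M$ and $\N$; here I would use that in a linking locality a partial normal subgroup is controlled by its intersection with $S$ under conjugation, so that $\M\cap S\subseteq\N^\perp$ already forces $\M\subseteq\N^\perp$. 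For (f), one observes first that $C_S(\N)$ itself commutes with $\N$ elementwise (the manipulations of (b) apply verbatim since conjugation by an element of $S$ is everywhere defined); it then suffices to produce a partial normal subgroup containing $C_S(\N)$ and commuting with $\N$, for then $C_S(\N)\subseteq\N^\perp$ and equality holds by (a). This is where the hypothesis $\F^q\subseteq\Delta$ or $\delta(\F)\subseteq\Delta$ enters: it guarantees that the relevant subgroups of $S$ are objects of the locality and that $C_\L(\N)$ is well enough behaved to yield such a partial normal subgroup.

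\textbf{Main obstacle.}
I expect the hard part to be the lemma at the heart of (a) --- that a product of two partial normal subgroups each commuting with $\N$ again commutes with $\N$ --- together with its companions in (b): these are exactly the points where the absence of an ``expansion'' rule for $\D$ must be circumvented by repeated appeal to the characterisation $w\in\D\Leftrightarrow S_w\in\Delta$, and managing this bookkeeping correctly (rather than any conceptual subtlety) is what makes the argument delicate. For the linking-locality refinements (e) and (f), the obstacle is instead the transfer of information from Sylow $p$-subgroups to whole partial normal subgroups, which relies on the finer structure theory of linking localities and, for (f), on the precise role of $\F^q$ and $\delta(\F)$.
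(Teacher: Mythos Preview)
Your treatment of parts (a), (b), (c) and the first half of (d) is essentially the paper's approach: the key lemma that a product of two partial normal subgroups commuting with $\N$ again commutes with $\N$ is exactly what the paper proves (Lemma~\ref{L:ProductPartialNormalCommutes}), and the equivalences in (b) are established by the manipulations you describe (Corollary~\ref{C:MperpNNperpM}, via Lemma~\ref{L:XfixNNcommuteswithX}).

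There are, however, two genuine gaps in the linking-locality parts.

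\textbf{Part (d), the identity $S_{mn}=S_{(m,n)}$.} Your sentence ``the characteristic-$p$ condition \dots\ forbids the product $mn$ from acquiring conjugation domain beyond that of the word $(m,n)$'' is not an argument. The paper's proof has two nontrivial steps: first, one shows $\M\cap\N\le Z(T)\le S$ using that $N_{\N}(T)$ has characteristic $p$ (Lemma~\ref{L:CNTinZT}); second, under the hypothesis $\M\cap\N\le N_\L(S)$ one proves that \emph{every} element of $\N$ is $\uparrow_\M$-maximal in the sense of Chermak (Lemma~\ref{L:DecomposeEltofMN}), and then invokes the Splitting Lemma. The $\uparrow$-relation and the maximality argument are the real content here, and your proposal does not touch them.

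\textbf{Part (e), the implication (iv)$\Rightarrow$(i).} Your claim that ``$\M\cap S\subseteq\N^\perp$ already forces $\M\subseteq\N^\perp$'' is not correct as stated, and it is not how the paper proceeds. Both conditions in (iv) are used: from $\N\cap S\subseteq C_S(\M)$ one gets $\M\subseteq C_\L(T)$, and then one needs the concrete description $\N^\perp=C_S^\circ(\N)\,O^p_{\L_T}(\C_T)$ established in Corollary~\ref{C:NperpCapS}. With this in hand, $\M\unlhd\L_T$ and $\M\subseteq\C_T$ give $O^p_{\L_T}(\M)\subseteq O^p_{\L_T}(\C_T)\subseteq\N^\perp$, and together with $\M\cap S\subseteq\N^\perp$ this yields $\M=(\M\cap S)O^p_{\L_T}(\M)\subseteq\N^\perp$. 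The concrete description of $\N^\perp$ is the output of a substantial chapter (Chapter~\ref{S:Nperp}), and there is no shortcut of the type you suggest.

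\textbf{Part (f).} Your plan to ``produce a partial normal subgroup containing $C_S(\N)$ and commuting with $\N$'' is in the right spirit but underestimates the work. The paper's route is again via the concrete description: one has $\N^\perp\cap S=C_S^\circ(\N):=C_S(\N^s)$ unconditionally, and the hypotheses $\F^q\subseteq\Delta$ or $\delta(\F)\subseteq\Delta$ enter precisely to guarantee that the locality is $\N$-replete (Corollary~\ref{C:GetNreplete}, Corollary~\ref{C:RegularNReplete}), which forces $C_S^\circ(\N)=C_S(\N)$. This machinery (Chapters~\ref{S:NRadical}--\ref{S:Nperp}) is where the object-set hypotheses actually do their work, and it cannot be replaced by a one-line appeal to $C_\L(\N)$ being ``well enough behaved''.
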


We remark that, if $\M$ and $\N$ are partial normal subgroups of a locality $(\L,\Delta,S)$, then it is shown in \cite[Theorem~1]{Henke:2015a} that $\M\N\unlhd\L$. Hence, statement (c) of Theorem~\ref{T:mainNperp} makes sense. In the case that $(\L,\Delta,S)$ is a linking locality, a concrete description of $\N^\perp$ and of $\N^\perp\cap S$ together with a fairly general sufficient condition for $\N^\perp\cap S=C_S(\N)$ is given in Corollary~\ref{C:NperpCapS}. 

%\smallskip

%The idea to study commuting partial normal subgroups of localities and to characterize $\N^\perp$ as in Theorem~\ref{T:mainNperp}(a) appears to be new. A partial normal subgroup denoted by $\N^\perp$ was however already defined and studied by Chermak \cite[Definition~5.5]{ChermakIII} for linking localities (which  Chermak calls proper localities). We show in Corollary~\ref{C:NperpCapS} that Chermak's notion coincides with ours in this special case. Indeed, if $(\L,\Delta,S)$ is a linking locality, then Corollary~\ref{C:NperpCapS} gives a concrete description of $\N^\perp$ and of $\N^\perp\cap S$; this leads also to a fairly general sufficient condition for $\N^\perp\cap S=C_S(\N)$. 

\smallskip

With the definition of $\N^\perp$ in place, we are now able to define and study the generalized Fitting subgroup of a linking locality. Recall that, for a finite group $G$, the generalized Fitting subgroup $F^*(G)$ is the smallest normal subgroup $N$ of $G$ with $F(G)\leq N$ and $C_G(N)\leq N$. Here $F(G)$ is the Fitting subgroup, which is the product of the subgroups $O_q(G)$, where $q$ runs over all primes dividing $|G|$. Since for a linking locality $(\L,\Delta,S)$ every non-trivial partial normal subgroup of $\L$ intersects non-trivially with the $p$-group $S$ (which plays the role of a ``Sylow subgroup'' of $\L$), one should think of $O_p(\L)$ as the ``Fitting subgroup'' of $\L$. Similarly, $\N^\perp$ should be thought of as a replacement for the centralizer of a partial normal subgroup $\N$ of $\L$.  This motivates the following definition.

\begin{Defn}
Let $(\L,\Delta,S)$ be a linking locality and $\N\unlhd\L$. 
\begin{itemize}
\item Define $\N$ to be  \emph{centric radical} if $\N^\perp\subseteq\N$ and $O_p(\L)\subseteq\N$. 
\item The \emph{generalized Fitting subgroup} $F^*(\L)$ is defined as the intersection of all centric radical partial normal subgroups.
\end{itemize} 
\end{Defn}

The following theorem will be proved towards the end of Chapter~\ref{S:Nperp}. It generalizes \cite[Lemma~6.7, Corollary~6.9]{ChermakIII}.

\begin{Th}\label{T:GeneralizedFitting}
If $(\L,\Delta,S)$ is a linking locality, then the intersection of any two centric radical partial normal subgroups of $\L$ is again centric radical in $\L$. In particular, $F^*(\L)$ is a centric radical partial normal subgroup of $\L$. 
\end{Th}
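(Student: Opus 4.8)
The plan is to reduce the first statement — that the intersection of two centric radical partial normal subgroups is centric radical — to properties of the operation $\N\mapsto\N^\perp$ established in Theorem~\ref{T:mainNperp}, and then derive the ``in particular'' clause by a short formal argument. So let $\M$ and $\N$ be centric radical partial normal subgroups, meaning $\M^\perp\subseteq\M$, $\N^\perp\subseteq\N$, and $O_p(\L)\subseteq\M\cap\N$. The second condition $O_p(\L)\subseteq\M\cap\N$ is immediate. For the first condition I need $(\M\cap\N)^\perp\subseteq\M\cap\N$. The key is the dual of part (c) of Theorem~\ref{T:mainNperp}: since $\N\subseteq(\N^\perp)^\perp$ and similarly for $\M$, and since $(-)^\perp$ is inclusion-reversing (being a ``largest partial normal subgroup that commutes''), I expect the identity $(\M\cap\N)^\perp=(\M^\perp\M\cdot\N^\perp)\dots$ — more precisely, I want to show $(\M\cap\N)^\perp = (\M^\perp)^{\perp\perp}\cap\dots$. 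Let me reorganize: the clean route is to observe $\M\cap\N\subseteq\M$ forces $\M^\perp\subseteq(\M\cap\N)^\perp$, and likewise $\N^\perp\subseteq(\M\cap\N)^\perp$, hence $\M^\perp\N^\perp\subseteq(\M\cap\N)^\perp$ (using that the product of partial normal subgroups is a partial normal subgroup, \cite[Theorem~1]{Henke:2015a}, together with the fact that $(\M\cap\N)^\perp$ is closed under such products, which follows since it is partial normal and contains both factors). Applying $(-)^\perp$ and part (c) to the inclusion $\M^\perp\N^\perp\subseteq(\M\cap\N)^\perp$ reversed: $(\M\cap\N)^{\perp\perp}\subseteq(\M^\perp\N^\perp)^\perp=(\M^\perp)^\perp\cap(\N^\perp)^\perp$. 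Now $(\M\cap\N)\subseteq(\M\cap\N)^{\perp\perp}$ by the first bullet of (a), so combining gives $\M\cap\N\subseteq(\M^\perp)^\perp\cap(\N^\perp)^\perp$, which is automatic and not yet what I want.

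The correct direction is the other one: I must bound $(\M\cap\N)^\perp$ from above. Here I would use that $\M^\perp\subseteq\M$ gives, upon applying $(-)^\perp$, that $\M^{\perp}\subseteq(\M^\perp)^\perp$ always, but more usefully $\M\subseteq(\M^\perp)^\perp$ combined with $\M^\perp\subseteq\M$ yields $\M^\perp\subseteq\M\subseteq(\M^\perp)^\perp\subseteq(\M)^{\perp}$ — wait, that last step needs $\M^\perp\subseteq\M$ hit with $(-)^\perp$ reversing to $\M^\perp\supseteq\M^{\perp\perp}$; together with $\M^{\perp\perp}\supseteq\M$ this gives $\M^\perp\supseteq\M$, hence $\M^\perp=\M^{\perp\perp}$ and in fact $\M=\M^\perp$. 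So a centric radical $\M$ satisfies $\M=\M^\perp$ — no, only $\M^\perp\subseteq\M$ was assumed; the reverse can fail. Let me be careful and instead argue directly: by part~(c), $(\M\cap\N)^\perp$ should be computed, but (c) gives $(\M\N)^\perp$, not $(\M\cap\N)^\perp$. The honest approach: I claim $(\M\cap\N)^\perp\subseteq\M^\perp\N^\perp$ need not hold, so instead I show $\M^\perp\N^\perp$ is centric radical and equals the relevant object. Actually the cleanest path, which I would pursue, is: set $\D:=\M\cap\N$. Since $\D\unlhd\L$ and $\M^\perp,\N^\perp$ both commute with $\D$ (because $\D\subseteq\M$ means anything commuting with $\M$ commutes with $\D$ — use part (b)/monotonicity), we get $\M^\perp\subseteq\D^\perp$ and $\N^\perp\subseteq\D^\perp$, so $\M^\perp\N^\perp\subseteq\D^\perp$. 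Applying part (c): $\D^{\perp\perp}=(\D^\perp)^\perp\supseteq\dots$ hmm. I will instead use part (c) in the form $(\M^\perp\N^\perp)^\perp=\M^{\perp\perp}\cap\N^{\perp\perp}\supseteq\M\cap\N=\D$, and separately $\D^\perp\supseteq\M^\perp\N^\perp$ gives $\D^{\perp\perp}\subseteq(\M^\perp\N^\perp)^\perp=\M^{\perp\perp}\cap\N^{\perp\perp}$; meanwhile $\D\subseteq\D^{\perp\perp}$. What I actually need is $\D^\perp\subseteq\D$. I would obtain this from: $\D^\perp$ commutes with $\D$; I want to show $\D^\perp$ commutes with both $\M$ and $\N$ separately, for then $\D^\perp\subseteq\M^\perp\subseteq\M$ and $\D^\perp\subseteq\N^\perp\subseteq\N$, giving $\D^\perp\subseteq\M\cap\N=\D$, as desired.

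So the heart of the argument is the claim: \emph{if $\D^\perp$ commutes with $\D=\M\cap\N$, and $\M^\perp\subseteq\M$, $\N^\perp\subseteq\N$, then $\D^\perp$ commutes with $\M$ (and with $\N$).} This is the main obstacle, and I expect it to be where the linking-locality hypothesis and parts (d)--(f) of Theorem~\ref{T:mainNperp} enter. The idea: to show $\D^\perp$ commutes with $\M$, by the equivalences in part (e) it suffices (in a linking locality) to check the ``Sylow-level'' conditions $\D^\perp\cap S\subseteq\M^\perp$ and $\M\cap S\subseteq(\D^\perp)^\perp$, or the $C_S$-versions. Now $\D^\perp\cap S\subseteq C_S(\D)$ by part (a); I would like to upgrade this to $\D^\perp\cap S\subseteq C_S(\M)$. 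For this, note $\D=\M\cap\N\supseteq\M\cap O_p(\L)\cdot(\dots)$ — better: since $O_p(\L)\subseteq\N$, we have $\M\cap S\cap\N\supseteq\M\cap S\cap O_p(\L)$, and more to the point, using that $\M^\perp\subseteq\M$ one shows $C_S(\D)\le C_S(\M)$ by a focal-subgroup / hyperfocal argument on the finite groups $N_\L(P)$ for $P\in\Delta$: the subgroup $\M\cap S$ is generated together with $\D\cap S$ and commutators in a controlled way because $\M/\D \cong \M\N/\N$ centralizes... At this point I would invoke Corollary~\ref{C:NperpCapS} (the concrete description of $\N^\perp\cap S$ in a linking locality) to pin down $\D^\perp\cap S$, $\M^\perp\cap S$, $\N^\perp\cap S$ as explicit centralizers, reducing everything to the elementary group-theoretic identity $C_S(\M\cap\N)\cap C_S(\dots)$ inside each $N_\L(P)$. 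Once $\D^\perp$ is shown to commute with $\M$ and with $\N$, the inclusions $\D^\perp\subseteq\M^\perp\subseteq\M$ and $\D^\perp\subseteq\N^\perp\subseteq\N$ finish the centric-radical verification for $\D$. Finally, for the ``in particular'' statement: $F^*(\L)$ is the intersection of \emph{all} centric radical partial normal subgroups; since $\L$ itself is centric radical ($\L^\perp\subseteq\L$ trivially and $O_p(\L)\subseteq\L$) the family is nonempty, and since $\L$ is finite the intersection is attained by a finite subfamily; iterating the two-term statement just proved, the total intersection $F^*(\L)$ is centric radical and partial normal, completing the proof.
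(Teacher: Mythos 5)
Your proposed reduction does not go through. Writing $\K:=\M\cap\N$ (your $\D$, a notation best avoided here since $\D$ already denotes the domain of the partial product), you want to show that $\K^\perp$ commutes with $\M$ and with $\N$ separately, and then deduce $\K^\perp\subseteq\M^\perp\subseteq\M$ and $\K^\perp\subseteq\N^\perp\subseteq\N$, hence $\K^\perp\subseteq\M\cap\N=\K$. The deduction itself is fine, but the claim ``$\K^\perp$ commutes with $\M$'' is false in general even with the centric radical hypotheses in force. Already the finite-group case breaks: take $\L=S_4$ viewed as a linking locality at $p=2$ (with $\Delta$ all subgroups of a Sylow $2$-subgroup $S$), $\M=A_4$, $\N=V_4=O_2(S_4)$. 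Both are centric radical normal subgroups since $C_{S_4}(A_4)=1\leq A_4$ and $C_{S_4}(V_4)=V_4$, and both contain $O_2(\L)=V_4$. Here $\K=V_4$ and $\K^\perp=C_{S_4}(V_4)=V_4$, whereas $\M^\perp=C_{S_4}(A_4)=1$, so $\K^\perp\not\subseteq\M^\perp$ and $\K^\perp$ manifestly does not commute with $\M$. The theorem is nonetheless true in this example because $\K^\perp=V_4=O_2(\L)\subseteq\K$, which is exactly the inclusion the paper establishes instead, by a different route. Your sketch toward the commutation claim also contains a direction error: $\K\subseteq\M$ gives $C_S(\M)\subseteq C_S(\K)$, not $C_S(\K)\subseteq C_S(\M)$, so the hoped-for ``focal-subgroup upgrade'' runs the wrong way.

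The paper's proof avoids any statement of the form $\K^\perp\subseteq\M^\perp$. It first reduces, using Theorem~\ref{T:VaryObjects} and Lemma~\ref{L:TakePlusGeneralizedFitting}, to the case $\Delta=\F^s$ (so that $Z^\circ(\cdot)=Z(\cdot)$ by Corollary~\ref{C:GetNreplete} and Lemma~\ref{L:ZcircBasic}), and then forces first $\N\cap\K^\perp$ and afterwards $\K^\perp$ itself into $S$. Concretely: Lemma~\ref{L:NcapNperp} gives $\M\cap(\N\cap\K^\perp)=\K\cap\K^\perp=Z(\K)\leq Z(\N\cap\K^\perp)$; Corollary~\ref{C:OpNNPinMperp} (applied with $\M$ as the centric witness, using $\M^\perp\subseteq S$ from Lemma~\ref{L:CSNinN}) then yields $\N\cap\K^\perp\subseteq S$, hence $\N\cap\K^\perp\leq O_p(\L)\leq\K$ and so $\N\cap\K^\perp\leq Z(\K)$; finally Lemma~\ref{L:ChermakLemma66} (applied with $\N$ as the centric witness) gives $\K^\perp=C_S^\circ(\K)\leq S$, a normal $p$-subgroup, whence $\K^\perp\subseteq O_p(\L)\subseteq\K$ by radicality. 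The $p$-residual machinery behind Corollary~\ref{C:OpNNPinMperp} (ultimately Lemma~\ref{L:OpNNPinMperp}) and Lemma~\ref{L:ChermakLemma66} is the genuine content of the proof, and your proposal does not engage with it. Your handling of the ``in particular'' clause (nonempty finite family, iterate the binary statement) is fine.
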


We will use the generalized Fitting subgroup now to define a class of particularly nice linking localities.

\section{Regular localities} Fix a saturated fusion system $\F$ over a finite $p$-group $S$. The set $\F^s$ of subgroups of $S$ which are \emph{subcentric} in $\F$ is defined in \cite[Definition~1]{Henke:2015}. Whenever $(\L,\Delta,S)$ is a linking locality over $\F$, then $\Delta\subseteq\F^s$. Moreover, by \cite[Theorem~A]{Henke:2015}, there exists a linking locality $(\L^s,\F^s,S)$ over $\F$ which is unique up to a suitable notion of isomorphism. We call such a linking locality a \emph{subcentric locality} over $\F$. While subcentric localities have many nice properties, in some situations it is better to work with another class of linking localities which we will introduce now following Chermak's ideas \cite{ChermakIII}. The previously stated results on $\N^\perp$ and $F^*(\L)$ are important in this context. 

\smallskip

If $(\L^s,\F^s,S)$ is a subcentric locality attached to $\F$ set 
\[\delta(\F):=\{P\leq S\colon P\cap F^*(\L^s)\in\F^s\}.\]
It can easily be shown that $\delta(\F)$ does not depend on the choice of $\L^s$. Moreover,
we have
\[\delta(\F)=\{P\leq S\colon P\cap F^*(\L)\in\F^s\}\]
for any linking locality $(\L,\Delta,S)$ over $\F$ (cf. Lemma~\ref{L:deltaFDefinition}). 

\begin{Rmk}
It is shown in \cite[Theorem~E(c)]{Chermak/Henke} that $\F_{S\cap F^*(\L^s)}(F^*(\L^s))=F^*(\F)$ for every subcentric locality $(\L^s,\F^s,S)$ over $\F$. Here $F^*(\F)$ is the generalized Fitting subsystem of $\F$ which can be defined purely in terms of the fusion system $\F$. Therefore, one can indeed characterize $\delta(\F)$ without any reference to localities. Namely, if $F^*(\F)$ is a subsystem of $\F$ over $T^*\leq S$, then 
\[\delta(\F):=\{P\leq S\colon P\cap T^*\in\F^s\}.\]
Using Lemma~\ref{L:RegularConjAutomorphisms}(f), one can conclude from this that $\delta(\F)$ is the set of all subgroups of $S$ containing an element of $F^*(\F)^s$. 
\end{Rmk}

In Lemma~\ref{L:deltaFBasic} it is proved that $\delta(\F)$ is closed under taking $\F$-conjugates and overgroups in $S$ and that $\F^{cr}\subseteq\delta(\F)\subseteq\F^s$. By \cite[Theorem~A]{Henke:2015}, this implies that there exists a linking locality $(\L,\delta(\F),S)$ over $\F$ which is unique up to a suitable notion of isomorphism.

\begin{Defn}
A linking locality $(\L,\Delta,S)$ over $\F$ is called \emph{regular} if $\Delta=\delta(\F)$.
\end{Defn}

We develop a theory of regular localities in Chapters~\ref{S:Regular} and \ref{S:Components}. The reader is referred to these chapters for details, but we present a few highlights here. If $(\L,\Delta,S)$ is a locality and $\H$ is a partial subgroup of $\L$, then $\F_{S\cap\H}(\H)$ denotes the fusion system over $S\cap \H$ which is generated by the maps between subgroups of $S\cap \H$ that are induced by conjugation with an element of $\H$. The perhaps most notable property of regular localities is that partial normal and partial subnormal subgroups of regular localities are again regular localities. More precisely, for partial normal subgroups, we show the following theorem.  The reader is also referred to Theorem~\ref{T:RegularPartialNormal}, which implies Theorem~\ref{T:mainRegularPartialNormal} and gives some additional information about partial normal subgroups of regular localities.

\begin{Th}[{Chermak \cite[Theorem~C]{ChermakIII}}]\label{T:mainRegularPartialNormal}
Let $(\L,\Delta,S)$ be a regular locality, let $\N\unlhd\L$, $T:=\N\cap S$ and $\E:=\F_T(\N)$. Then $\E$ is saturated, $(\N,\delta(\E),T)$ is a regular locality over $\E$ and 
\[\delta(\E)=\{P\leq T\colon PC_S(\N)\in\Delta\}.\]
Moreover $C_\L(\N)=\N^\perp=O^p(C_\L(S\cap\N))C_S(\N)\unlhd\L$. 
\end{Th}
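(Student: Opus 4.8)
The plan is to build this theorem by combining the general results on $\N^\perp$ from Theorem~\ref{T:mainNperp} with the defining property $\Delta = \delta(\F)$ of regular localities, and with the machinery of fusion systems of partial normal subgroups. First I would set $T := \N \cap S$ and $\E := \F_T(\N)$, and recall from the general theory of partial normal subgroups of localities (as developed in the body of the paper, following Chermak) that $\N$ carries a natural structure of a partial group with maximal $p$-subgroup $T$; the content to be established is that $\E$ is saturated, that $\delta(\E)$ has the stated description, and that $(\N, \delta(\E), T)$ is a regular locality over $\E$. The saturation of $\E$ should follow from the fact that $(\L,\Delta,S)$ is a linking locality: by Theorem~\ref{T:Saturation} (invoked in the linking-locality setup), $\F = \F_S(\L)$ is saturated, and one then uses that $\E$ is a normal subsystem of $\F$ (the fusion system of a partial normal subgroup is normal) together with the standard fact that normal subsystems of saturated fusion systems are saturated.

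Next I would pin down $C_S(\N)$ and $\N^\perp$. The equality $\N^\perp = O^p(C_\L(S\cap\N))\,C_S(\N)$ and the normality $\N^\perp \unlhd \L$ should come from the concrete description of $\N^\perp$ in a linking locality promised in Corollary~\ref{C:NperpCapS}, combined with part~(a) of Theorem~\ref{T:mainNperp}; the point $\N^\perp \cap S = C_S(\N)$ is exactly the conclusion of Theorem~\ref{T:mainNperp}(f), which applies here because $\Delta = \delta(\F)$ by regularity. For the identity $C_\L(\N) = \N^\perp$: the inclusion $C_\L(\N) \subseteq \N^\perp$ is general nonsense once one knows (via Lemma~\ref{L:CentralizerHelp}) that $C_\L(\N)$ commutes with $\N$ and hence is contained in the largest partial normal subgroup commuting with $\N$ — but one must first check $C_\L(\N)$ is a partial normal subgroup, which is where the linking/regular hypothesis does real work; the reverse inclusion $\N^\perp \subseteq C_\L(\N)$ should follow from $\N^\perp \cap S = C_S(\N)$ together with the fact that in a linking locality a partial normal subgroup is controlled by its intersection with $S$ (so that commuting with $\N$ elementwise, for a partial normal subgroup whose trace on $S$ centralizes $\N$, upgrades to genuine centralization).

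Then I would establish the description $\delta(\E) = \{P \leq T : PC_S(\N) \in \Delta\}$. Since $(\N,\delta(\E),T)$ ought to be the regular locality over $\E$, a subgroup $P \leq T$ lies in $\delta(\E)$ iff $P \cap F^*(\N) \in \E^s$, using the characterization of $\delta$ quoted just before the Definition of regular locality. The strategy is to relate $F^*(\N)$ inside $\N$ to $F^*(\L)$ inside $\L$, and $\E^s$ to $\F^s$, via the correspondence between subcentric subgroups in $\F$ and in its normal subsystem $\E$; the factor $C_S(\N)$ enters precisely because passing from $\Delta$-membership in $\L$ to $\delta(\E)$-membership in $\N$ requires enlarging $P$ by the "centralizer direction" $C_S(\N) = \N^\perp \cap S$, which is the part of $S$ transverse to $\N$. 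Concretely one shows $PC_S(\N) \in \delta(\F) = \Delta$ iff $PC_S(\N) \cap F^*(\L) \in \F^s$ iff (after intersecting with $T$ and using $F^*(\L)\cap \N$-type identities) $P \cap F^*(\E)$-related subgroup lies in $\E^s$. Once $\delta(\E)$ is identified, that $(\N,\delta(\E),T)$ is a locality follows from $\delta(\E)$ being closed under $\E$-conjugacy and overgroups in $T$ (Lemma~\ref{L:deltaFBasic} applied to $\E$) together with the fact that $\N$ with objects $\delta(\E)$ satisfies the locality axioms — inherited from $(\L,\Delta,S)$ because $\delta(\E) \subseteq \Delta$ — and it is a linking locality because $N_\N(P) = N_\L(P)\cap \N$ is of characteristic $p$ for $P \in \delta(\E)$.

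The main obstacle I expect is the identification of $\delta(\E)$, i.e. correctly transporting the generalized Fitting subgroup and the subcentric sets along the inclusion $\N \unlhd \L$ and producing exactly the factor $C_S(\N)$ — this requires careful bookkeeping of how $F^*(\L)$, $F^*(\N)$, $\N$ and $C_S(\N)$ intersect, and likely leans on Theorem~\ref{T:GeneralizedFitting} and the remark identifying $\delta(\F)$ with subgroups containing an element of $F^*(\F)^s$. A secondary difficulty is verifying that $C_\L(\N)$ is genuinely a partial normal subgroup (not merely a subset) before one can sandwich it between $\N^\perp$ and itself; this is exactly the phenomenon the introduction warns about — centralizers of subsets are rarely partial subgroups — so one must exploit that $\N$ is partial normal and that we are in a regular (hence linking) locality, presumably by showing $C_\L(\N) = O^p(C_\L(S\cap\N))C_S(\N)$ directly and recognizing the right-hand side as partial normal via the structure theory already in place.
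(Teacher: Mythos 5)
Your high-level plan correctly identifies the pieces that feed the final assembly (Corollary~\ref{C:NperpCapS} for the concrete description of $\N^\perp$, Theorem~\ref{T:mainNperp}(f) for $\N^\perp \cap S = C_S(\N)$ in the regular case, and transporting $F^*$ and $\F^s$ to the normal subsystem to identify $\delta(\E)$). But the saturation argument has a genuine gap, and it is the crux of the whole theorem. You write that $\E$ is saturated because it is a normal subsystem of $\F$, parenthetically asserting that ``the fusion system of a partial normal subgroup is normal,'' and that normal subsystems of saturated fusion systems are saturated. The last assertion is vacuous: in the Aschbacher--Kessar--Oliver framework, saturation is \emph{part of the definition} of a normal subsystem, so invoking it as a separate fact is circular. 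Worse, the parenthetical is false at the level of generality you use it at: for a partial normal subgroup $\N$ of an arbitrary linking locality, $\F_{S\cap\N}(\N)$ need not be saturated at all --- this failure for subcentric localities is emphasized in the introduction and is precisely the motivation for introducing regular localities. So there is no shortcut; saturation of $\E$ is itself the nontrivial content of Chermak's Theorem~C. The paper establishes it by first handling the centric case: the key point (Lemma~\ref{L:RegularConjAutomorphisms}(a)) is that when $\N$ is centric, $F^*(\L) \subseteq \N O_p(\L)$, hence for $O_p(\L)\leq P\leq S$ one has $P\in\delta(\F)$ iff $P\cap T\in\delta(\F)$. This makes every $f\in N_\L(T)$ act by automorphisms on all of $\L$ and $\N$ (Lemma~\ref{L:NLTAutL}), which is what lets one build the linking locality $(\N,\Gamma,T)$ over $\E$ and then deduce saturation from Theorem~\ref{T:Saturation}. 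The general case is obtained from the centric case by replacing $\N$ with the centric partial normal subgroup $\N\N^\perp$ and separating the two factors (Theorem~\ref{T:RegularN1timesN2}, Theorem~\ref{T:RegularPartialNormal}). None of this appears in your sketch.

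A second, smaller issue is the treatment of $C_\L(\N) = \N^\perp$. You propose to establish $C_\L(\N) \subseteq \N^\perp$ by first checking that $C_\L(\N)$ is a partial normal subgroup and then appealing to maximality of $\N^\perp$; and you treat $\N^\perp \subseteq C_\L(\N)$ as an ``upgrade'' from commuting to centralizing that you expect to follow from $\N^\perp\cap S = C_S(\N)$. In fact, $\N^\perp \subseteq C_\L(\N)$ is the direction that comes cleanly from the structure theory: once one knows $\N\N^\perp$ is an internal central product (Theorem~\ref{T:RegularN1timesN2}(c)), Lemma~\ref{L:CentralProductsFactorsCentralize} gives $\N^\perp\subseteq C_\L(\N)$ immediately. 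The converse inclusion $C_\L(\N) \subseteq \N^\perp$ is done in the paper by a direct element argument: write $c = sm$ with $s\in C_S(T)$ and $m\in \N^\perp$ using the Frattini-type splitting of $C_\L(T) = C_S(T)\N^\perp$, then show $s$ in fact centralizes $\N$ and hence lies in $C_S(\N) = \N^\perp\cap S$. Your route of first proving $C_\L(\N)$ is partial normal and equal to $O^p(C_\L(T))C_S(\N)$ is essentially equivalent to proving the theorem, so it does not reduce the difficulty. (It is also worth noting that identifying $O^p_{N_\L(T)}(C_\L(T))$ with the intrinsic $p$-residual $O^p(C_\L(T))$ requires Lemma~\ref{L:ResidueRegular}, which your sketch does not mention.)

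In short: the overall architecture is sensible, and the $\delta(\E)$ bookkeeping you flag as the main obstacle is indeed real work, but the saturation of $\E$ is not a background fact you can import --- it is the theorem, and the circular argument you give for it must be replaced by the centric-case reduction and the $F^*(\L)\subseteq \N O_p(\L)$ observation that are the heart of Chapters~\ref{S:Nperp} and \ref{S:Regular}.
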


As a consequence, one can prove by induction on the length of a subnormal series that, for every partial subnormal subgroup $\H$ of a regular locality $(\L,\Delta,S)$, the fusion system $\F_{S\cap\H}(\H)$ is saturated and $(\H,\delta(\F_{S\cap\H}(\H)),S\cap \H)$ is a regular locality over $\F_{S\cap\H}(\H)$. This makes it possible to define and study components of regular localities. 

\begin{Defn}
\begin{itemize}
 \item A partial group $\L$ is called \emph{simple} if it has precisely two partial normal subgroups, namely $\{\One\}$ and $\L$.
 \item For any locality $(\L,\Delta,S)$, the \emph{$p$-residual} $O^p(\L)$ is defined to be the intersection of all partial normal subgroups $\N$ of $\L$ with $S\N=\L$.
 \item If $(\L,\Delta,S)$ is a linking locality, then define $\L$ to be \emph{quasisimple} if $\L=O^p(\L)$ and $\L/Z(\L)$ is simple.
 \item If $(\L,\Delta,S)$ is a regular locality, then a partial subnormal subgroup $\K$ of $\L$ is called a \emph{component} of $\L$ if $\K$ is quasisimple. The set of all components of $\L$ is denoted by $\Comp(\L)$.
\end{itemize}
\end{Defn}

We develop a theory of components of regular localities in Chapter~\ref{S:Components}. A summary of the most important properties is given in the next theorem. If $\fC=\{\K_1,\dots,\K_n\}$ is a set of $n$ distinct partial subgroups of a partial group $\L$ such that $\prod_{i=1}^n\K_i=\prod_{i=1}^n\K_{i\sigma}$ for every permutation $\sigma\in\Sigma_n$, then we put $\prod_{\K\in\fC}\K:=\prod_{i=1}^n\K_i$ and say that $\prod_{\K\in\fC}\K$ is well-defined. Set moreover $\prod_{\K\in\emptyset}\K:=\{\One\}$. A notion of internal central products of partial groups is defined in Chapter~\ref{S:CentralProduct}.

\begin{Th}\label{T:mainComponents}
Let $(\L,\Delta,S)$ be a regular locality. Then the following hold.
\begin{itemize}
 \item [(a)] If $\fC\subseteq\Comp(\L)$, then $\prod_{\K\in\fC}\K$ is well-defined and a partial normal subgroup of $F^*(\L)$; moreover $\prod_{\K\in\fC}\K$ is a central product of the elements of $\fC$. In particular, \[E(\L):=\prod_{\K\in\Comp(\L)}\K\] is well-defined and a central product of the components of $\L$.
 \item [(b)] $E(\L)\unlhd\L$ and $E(\L)\subseteq O_p(\L)^\perp$.
 \item [(c)] The generalized Fitting subgroup $F^*(\L)$ is a central product of $O_p(\L)$ and the components of $\L$. In particular, $F^*(\L)=E(\L)O_p(\L)=O_p(\L)E(\L)$.
 \item [(d)] If $\H\subn\L$ and $\K\in\Comp(\L)$ with $\K\not\subseteq\H$, then $\K\subseteq C_\L(\H)$ and $\K\H=\H\K\subn\L$ is a central product of $\K$ and $\H$. 
 \item [(e)] The following conditions are equivalent:
\begin{itemize}
 \item [(i)] $\F$ is constrained;
 \item [(ii)] $\L$ is a group of characteristic $p$;
 \item [(iii)] $F^*(\L)=O_p(\L)$;
 \item [(iv)] $\Comp(\L)=\emptyset$. 
\end{itemize}
\end{itemize}
\end{Th}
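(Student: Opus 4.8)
\textbf{Proof proposal for Theorem \ref{T:mainComponents}.}

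The plan is to treat the five parts in the order (a), (b), (c), (d), (e), relying heavily on the structure theory of $\N^\perp$ from Theorem \ref{T:mainNperp}, on the fact (stated just after the definition of components) that partial subnormal subgroups of a regular locality are again regular localities, and on Theorem \ref{T:mainRegularPartialNormal} for the normal case. The central product notion from Chapter \ref{S:CentralProduct} will be the glue. I would first prove a preparatory lemma: any component $\K$ of $\L$ is contained in $F^*(\L)$. The idea is that $O^p(\K)=\K$ forces $\K$ to sit inside $O^p$ of every centric radical partial normal subgroup containing it, and one shows by the standard ``component pushes down'' argument (mimicking the group-theoretic fact that a component of $G$ lies in $F^*(G)$) that $\K$ normalizes, hence by quasisimplicity centralizes modulo center, any partial normal subgroup it is not contained in; combined with $F^*(\L)$ being centric radical and Theorem \ref{T:mainNperp}(a) ($\N\subseteq(\N^\perp)^\perp$), this yields $\K\subseteq F^*(\L)$. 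The key point to make rigorous is that the group-theoretic ``three subgroups / generation'' arguments go through in the partial-group setting — here one passes to the groups $N_\L(P)$ for $P\in\Delta$ large, where everything is genuinely group-theoretic, and uses that $\Delta=\delta(\F)$ is large enough (contains $\F^{cr}$) to recover global statements.

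For part (a): given $\fC\subseteq\Comp(\L)$, I would show two distinct components $\K_1,\K_2$ either coincide or commute elementwise. Since $\K_1$ is quasisimple, $\K_1\cap\K_2\unlhd\K_1$ (subnormality in a regular locality plus Theorem \ref{T:mainRegularPartialNormal} applied inside the regular locality $\K_1$) is either $\subseteq Z(\K_1)$ or all of $\K_1$; in the latter case $\K_1\subseteq\K_2$ and then by symmetry and simplicity of $\K_i/Z(\K_i)$ one gets $\K_1=\K_2$. If $\K_1\cap\K_2\subseteq Z(\K_1)$, then $\K_2$ (being subnormal and not containing $\K_1$) lies in $C_\L(\K_1)=\K_1^\perp$ by the pushing-down lemma above, so $\K_1$ and $\K_2$ commute. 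Then an induction on $|\fC|$ using that products of commuting partial normal subgroups are partial normal (\cite[Theorem~1]{Henke:2015a}) shows $\prod_{\K\in\fC}\K$ is well-defined, independent of ordering, a partial normal subgroup of $F^*(\L)$ (each factor being $\subseteq F^*(\L)$ and partial normal there), and a central product since the pairwise intersections are central. Specializing $\fC=\Comp(\L)$ gives $E(\L)$.

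For (b): $E(\L)\unlhd\L$ follows because $\Comp(\L)$ is permuted by the conjugation action of $\L$ (a conjugate of a component is a component), so $E(\L)$ is conjugation-invariant, and it is a partial subgroup by (a); normality then follows from the standard criterion. The inclusion $E(\L)\subseteq O_p(\L)^\perp$ reduces, by maximality in Theorem \ref{T:mainNperp}(a), to showing each component commutes with $O_p(\L)$: since a component $\K$ is perfect ($\K=O^p(\K)$, hence has no nontrivial $p$-quotient) and $O_p(\L)$ is a $p$-group, $\K\cap O_p(\L)$ is a normal $p$-subgroup of $\K$ contained in $Z(\K)$, and $\K$ is not contained in $O_p(\L)$ (else $\K$ would be a $p$-group, contradicting quasisimplicity), so the pushing-down lemma gives $\K\subseteq C_\L(O_p(\L))$. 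For (c): $O_p(\L)E(\L)$ is centric radical — it contains $O_p(\L)$ by construction and one checks $(O_p(\L)E(\L))^\perp\subseteq O_p(\L)E(\L)$ using Theorem \ref{T:mainNperp}(c) and the fact that anything commuting with both $O_p(\L)$ and all components must, by the characteristic-$p$-type analysis in the groups $N_\L(P)$, lie in their product (this is the partial-group analogue of $C_G(F^*(G))\le F^*(G)$); hence $F^*(\L)\subseteq O_p(\L)E(\L)$, while the reverse inclusion follows from (b) and $O_p(\L)\subseteq F^*(\L)$. For (e): (i)$\Leftrightarrow$(ii) is essentially the definition of constrained together with the characterization of linking localities of characteristic-$p$-type; (ii)$\Leftrightarrow$(iii) because a locality is a group of characteristic $p$ exactly when $C_\L(O_p(\L))\subseteq O_p(\L)$, i.e. $O_p(\L)^\perp\subseteq O_p(\L)$, i.e. $O_p(\L)$ is centric radical, i.e. $F^*(\L)=O_p(\L)$; (iii)$\Leftrightarrow$(iv) is immediate from (c).

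\textbf{Main obstacle.} The hard part will be the pushing-down lemma — proving that a component $\K$ not contained in a partial subnormal (or partial normal) subgroup $\H$ actually centralizes $\H$, i.e. $\K\subseteq C_\L(\H)$, and that $\K\H$ is then a genuine central product that is again partial subnormal. In a finite group this is routine three-subgroups-lemma territory, but in a locality one must be careful: products are only partially defined, subnormality has to be tracked through the regular-locality structure of each term in the subnormal series (using that each is itself regular, via the iterated application of Theorem \ref{T:mainRegularPartialNormal}), and one must verify that the relevant conjugation maps are defined. I expect the cleanest route is to reduce everything to statements inside the finite groups $N_\L(P)$ for $P$ ranging over a suitable subset of $\Delta$ (large enough that $F^*$, components, and the partial products in question are all visible), prove the group-theoretic analogue there, and then reassemble — the bookkeeping of which $P$ one needs, and checking that $\delta(\F)$ contains enough subgroups, is where the real work lies. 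Part (d) in particular, which asserts $\K\H\subn\L$, will require the induction on the length of a subnormal series for $\H$ together with this lemma at each step.
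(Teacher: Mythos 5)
Your outline correctly identifies the shape of the result — each component lies in $F^*(\L)$, distinct components commute, $E(\L)$ is a normal central product with $F^*(\L)=E(\L)O_p(\L)$, and the four equivalences in (e) — and several local steps match the paper (e.g.\ the argument that $\K\cap\K_2\unlhd\K_1$ forces $\K_1=\K_2$ or $\K_1\cap\K_2\leq Z(\K_1)$, and the chain of equivalences in (e)). The difficulty is concentrated in what you call the ``pushing-down lemma'' (essentially part (d)), and there your proposed route diverges from the paper in a way I think does not go through.

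You propose to prove that a component $\K\not\subseteq\H\subn\L$ centralizes $\H$ by ``reducing everything to statements inside the finite groups $N_\L(P)$'' and then reassembling. That is a sound general heuristic for localities, but this particular statement is global, not local: a subnormal series and the quasisimplicity of $\K$ are not visible inside any single $N_\L(P)$, and the group-theoretic components of $N_\L(P)$ bear no direct relation to the components of $\L$. The paper instead proves the analogue of your pushing-down lemma (Lemma~\ref{L:ComponentSubnormal}) by a minimal-counterexample argument entirely at the level of partial groups, using the regular-locality machinery already built: the dichotomy that a component of $\L$ is a component of $\N$ or of $\N^\perp$ (Lemma~\ref{L:ComponentinNorNperp}), the identification $\N^\perp=C_\L(\N)$ for $\N\unlhd\L$ regular (Theorem~\ref{T:RegularPartialNormal}(e)), the fact that $\H\K\subn\N\N^\perp$ (Lemma~\ref{L:H1H2subN1N2}), and the Dedekind decomposition of $\N$ and $\N^\perp$ inside a putative minimal counterexample $\L=\N\N^\perp$. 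None of that involves descending to the groups $N_\L(P)$. I do not see how your local reduction would be made to work without essentially rediscovering this global machinery, so as written there is a genuine gap.

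There is also an ordering issue. You use the pushing-down lemma to prove both (a) (commuting of distinct components) and (b) ($E(\L)\subseteq O_p(\L)^\perp$). But the pushing-down lemma in its full subnormal form sits late in the logical development: the paper proves $\K\subseteq C_\L(O_p(\L))$ directly by a coprime-action calculation along a subnormal series (Lemma~\ref{L:ComponentCentralizesOp}), then uses that to prove $\K\unlhd F^*(\L)$ (Proposition~\ref{P:FstarLComponents1}), then builds the dichotomy for normal subgroups (Lemma~\ref{L:ComponentinNorNperp}), and only then tackles the subnormal pushing-down lemma (Lemma~\ref{L:ComponentSubnormal}). Trying to prove the strong subnormal version first and deduce (a) and (b) from it inverts this dependency chain and risks circularity. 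You should prove the $O_p(\L)$-centralizing property of components by a direct coprime-action argument (as the paper does), use that to get components inside $F^*(\L)$, and only afterward attempt the general subnormal statement by minimal counterexample within the regular-locality framework.

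Finally, the claim that $E(\L)\unlhd\L$ ``follows because $\Comp(\L)$ is permuted by the conjugation action of $\L$'' needs more care than stated: conjugation in a locality is only partially defined, so one cannot speak of an action of $\L$ on $\Comp(\L)$ without justification. The paper handles this by observing $\Comp(\L)=\Comp(F^*(\L))$, that $F^*(\L)$ is centric, that $N_\L(F^*(\L)\cap S)$ therefore induces genuine automorphisms of $F^*(\L)$ (Lemma~\ref{L:RegularConjAutomorphisms}(c)) which permute its components (Lemma~\ref{L:AutComponents}), and then applies \cite[Corollary~3.13]{Chermak:2015}. That chain of facts is exactly what replaces the missing ``conjugation action.''
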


The partial normal subgroup $E(\L)$ defined above is called the \emph{layer} of $\L$.

\section{E-balance} An important ingredient in the proof of the classification of finite simple groups is the L-balance Theorem of Gorenstein and Walter \cite{Gorenstein/Walter}. Aschbacher \cite[Theorem~7]{Aschbacher:2011} proved a version of this theorem for fusion systems. More precisely, he showed  that, if $\F$ is a saturated fusion system, then  $E(N_\F(X))\subseteq E(\F)$ for every fully $\F$-normalized subgroup $X\leq S$. 

\smallskip
 
It is not quite so easy to find the right setting to formulate an E-balance theorem for linking localities. As explained before, regular localities have the advantage that there is a nice theory of partial subnormal subgroups and components. However, it is difficult to consider normalizers of $p$-subgroups in regular localities. More precisely, if we work with a regular locality $(\L_\delta,\delta(\F),S)$ over $\F$ and $X\leq S$ is fully $\F$-normalized, then we do not see any canonical way to construct a regular locality (or indeed any linking locality) over $N_\F(X)$ which lies inside of $\L_\delta$. Therefore, it seems  necessary to work with a  subcentric locality $(\L,\Delta,S)$ over $\F$. Given a fully $\F$-normalized subgroup $X\leq S$, there is then a construction of a partial group $\bN_\L(X)$ contained in $N_\L(X)$ such that $(\bN_\L(X),N_\F(X)^s,N_S(X))$ is a subcentric locality over $N_\F(X)$ (cf. Section~\ref{SS:Impartial}). Unfortunately, subcentric localities have the disadvantage that partial normal and partial subnormal subgroups are not necessarily subcentric localities (or linking localities of any kind). However, we think that the typical arguments used in the classification of finite simple groups can be implemented in the locality setting if one works simultaneously with a subcentric locality and a regular locality over the same fusion system $\F$. For E-balance this is demonstrated below. The approach will be taken further in a forthcoming paper with Valentina Grazian. 

\smallskip

If $(\L,\Delta,S)$ is a subcentric locality over a fusion system $\F$, there is always a natural way to find a regular locality $\L_\delta=\L|_{\delta(\F)}$ over $\F$ inside of $(\L,\Delta,S)$ (cf. Section~\ref{SS:Restrictions}). Moreover, by \cite[Theorem~A2]{ChermakII} or by \cite[Theorem~C(a)]{Henke:2020}, there is a unique partial normal subgroup $E(\L)$ of $\L$ with
\[E(\L)\cap\L_\delta=E(\L_\delta).\]
Again, we call $E(\L)$ the \emph{layer} of $\L$. With the definition of the layer of a subcentric locality in place, it is possible to formulate and prove the following theorem. 

\begin{Th}[E-balance, Chermak]\label{T:EbalanceMain}
Let $(\L,\Delta,S)$ be a subcentric locality over $\F$ and let $X\leq S$ be fully $\F$-normalized. Then 
\[E(\bN_\L(X))\subseteq E(\L).\] 
\end{Th}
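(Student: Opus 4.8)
\textbf{Proof proposal for Theorem~\ref{T:EbalanceMain} (E-balance).}

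The plan is to reduce the statement about the layer of the subcentric locality $\bN_\L(X)$ to a statement about components, and then to control each component of $\bN_\L(X)$ by working inside the regular locality $\L_\delta=\L|_{\delta(\F)}$ together with the restriction $\bN_\L(X)|_{\delta(N_\F(X))}$, which is a regular locality over $N_\F(X)$ by the construction in Section~\ref{SS:Impartial}. Since $E(\bN_\L(X))$ is the unique partial normal subgroup of $\bN_\L(X)$ restricting to $E(\bN_\L(X)|_{\delta(N_\F(X))})$, and by Theorem~\ref{T:mainComponents}(a) the latter is the central product of the components of the regular locality $\bM:=\bN_\L(X)|_{\delta(N_\F(X))}$, it suffices to show: for every component $\K$ of $\bM$, we have $\K\subseteq E(\L)$. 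First I would fix a component $\K\in\Comp(\bM)$; then $\K$ is a quasisimple partial subnormal subgroup of $\bM$, hence (using Theorem~\ref{T:mainRegularPartialNormal} and its consequence on partial subnormal subgroups) a regular locality in its own right over the quasisimple saturated fusion system $\cF_{S\cap\K}(\K)$.

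The heart of the argument is to produce, from the component $\K$ of $\bM$, a partial subnormal subgroup of the ambient regular locality $\L_\delta$ that is again quasisimple, i.e. a component of $\L_\delta$, and which contains (a suitable conjugate of) $\K$. Here I would follow the group-theoretic template of L-balance: pass from $N_\F(X)$ up to $\F$ by a sequence of normalizer steps, or more directly, use that $\bM$ sits inside $N_{\L_\delta}(X)$ in a controlled way via the projection/restriction maps relating $\bN_\L(X)$ and $N_\L(X)$. The key point will be that a component of $\bM$, being quasisimple and subnormal in $\bM$, is subnormal in $N_{\L_\delta}(X)$, and one needs a locality-version of the fact that components of $N_G(X)$ lie in $E(G)$ for a fully normalized $X$. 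Concretely I would argue: $\K$ normalizes $X$, so $\K$ (or $O^p$ of an appropriate overgroup) is contained in $N_{\L_\delta}(X)$; by the theory of components of the regular locality $\L_\delta$ developed in Chapter~\ref{S:Components} — in particular Theorem~\ref{T:mainComponents}(d), which controls how a component meets or centralizes a partial subnormal subgroup — either $\K$ is contained in a single component of $\L_\delta$, or $\K$ centralizes $E(\L_\delta)$; the latter would force $\K\subseteq O_p(\L_\delta)^\perp$ combined with quasisimplicity to derive a contradiction with $\K=O^p(\K)$ unless $\K$ is trivial. So $\K$ lies in some component $\K'$ of $\L_\delta$, hence $\K\subseteq E(\L_\delta)$, and therefore $\K\subseteq E(\L)$ by the defining property $E(\L)\cap\L_\delta=E(\L_\delta)$ and the fact that $E(\L)$ is partial normal in $\L$.

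Finally I would assemble the pieces: since each component $\K$ of $\bM$ lies in $E(\L)$, and $E(\bN_\L(X))$ is generated inside $\bN_\L(X)$ by (the lift of) the central product of these components — i.e. $E(\bN_\L(X))$ is the unique partial normal subgroup of $\bN_\L(X)$ with $E(\bN_\L(X))\cap\bM=E(\bM)=\prod_{\K\in\Comp(\bM)}\K$ — and $E(\L)\cap\bN_\L(X)$ is a partial normal subgroup of $\bN_\L(X)$ containing all these $\K$, a uniqueness/minimality argument gives $E(\bN_\L(X))\subseteq E(\L)$. I expect the main obstacle to be the careful bookkeeping of the two restriction maps: relating partial subnormal subgroups of $\bN_\L(X)$ (a subcentric locality) to partial subnormal subgroups of $\L$, passing through the regular restrictions $\bM$ and $\L_\delta$, and making sure that ``component of $\bM$'' genuinely yields ``partial subnormal and quasisimple in $N_{\L_\delta}(X)$, hence in $\L_\delta$'' — this is exactly the step where the full-normalization hypothesis on $X$ must be used, presumably via saturation axioms that guarantee $N_\F(X)$ is saturated and that conjugation maps extend appropriately, together with Lemma~\ref{L:RegularConjAutomorphisms} and the behaviour of $\delta(-)$ under taking normalizers.
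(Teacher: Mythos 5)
Your proposal correctly identifies the reduction to components of the regular restriction $\bM=\bN_\L^\delta(X)$ and correctly recalls the defining property $E(\L)\cap\L_\delta=E(\L_\delta)$, but the central step is missing: you do not actually supply a proof that a component $\K$ of $\bM$ lies in $E(\L)$. The sentence ``one needs a locality-version of the fact that components of $N_G(X)$ lie in $E(G)$ for a fully normalized $X$'' \emph{is} the statement being proved; it is not an available input, either in finite group theory (where it is precisely Gorenstein--Walter $L$-balance) or for localities. Your attempted dichotomy --- ``either $\K$ is contained in a single component of $\L_\delta$, or $\K$ centralizes $E(\L_\delta)$'' --- invokes Theorem~\ref{T:mainComponents}(d), but that result concerns a component of a regular locality versus a partial subnormal subgroup \emph{of that same locality}. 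Here $\K$ is a component of $\bM$, a regular locality over $N_\F(X)$; it is not established (and is false in general) that $\K$ is a partial subnormal subgroup of $\L_\delta$, so the dichotomy does not follow, and the fall-back contradiction using $O_p(\L_\delta)^\perp$ does not go through either. There is also a technical subtlety you acknowledge but do not resolve: an element $f\in\K$ has $S_f\cap N_S(X)\in N_\F(X)^s$ with respect to $\bN_\L(X)$, but its object $S_f$ with respect to $\L$ need not lie in $\delta(\F)$, so ``$\K\subseteq N_{\L_\delta}(X)$'' is not automatic.

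What the paper actually does is quite different: it takes a minimal counterexample (first $|\L|$ minimal, then $|X|$ maximal), lets $\mathfrak{C}$ be the set of components of $\L_X^\delta:=\bN_\L^\delta(X)$ not contained in $E(\L)$, sets $\H=\prod_{\K\in\mathfrak{C}}\K$, $T:=E(\L)\cap S$, $A:=N_T(X)$ and $D:=XA$, and after arranging $D$ to be fully $\F$-normalized uses Lemma~\ref{L:ProdCompinbNLD} to show $\H\subseteq E(\bN_{\L_X}(D))$. Then the symmetry $\bN_{\bN_\L(X)}(D)=\bN_{\bN_\L(D)}(X)$ from Lemma~\ref{L:NinNLXofY}, together with the minimality of $|\L|$ applied inside $\bN_\L(D)\neq\L$ and the maximality of $|X|$ (giving $X<D$ and hence $E(\bN_\L(D))\subseteq E(\L)$), forces $\H\subseteq E(\L)$, contradicting the definition of $\mathfrak{C}$. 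The auxiliary subgroup $D$ and the double induction are the actual engine of the proof; nothing in your sketch plays that role, and without it there is no way to get a component of the normalizer locality into the layer of the ambient one.
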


Given the technical subtleties explained above, the reader might wonder why we propose to work with localities rather than with fusion systems to generalize arguments from the proof of the classification of finite simple groups. One reason is that the arguments in the fusion system setting often use the existence of models for constrained normalizers of $p$-subgroups. Since all these models can be seen in the subcentric locality, many arguments should become more transparent in the locality setting. The second and more serious reason is that there are difficulties encountered in Aschbacher's program because, in the fusion system setting, centralizers of certain components of involution centralizers are not defined. Therefore, it seems impossible to prove a fusion system version of Aschbacher's component theorem \cite{Aschbacher1975} in full generality (cf. \cite{AschbacherFSCT}). Perhaps the existence of appropriate centralizers can be shown inside of a subcentric locality. At least, as a starting point, centralizers in $S$ of local components are defined.

\section{Overview and comparison to Chermak's approach}\label{SS:ChermaksApproach}

The principal line of thought in the present paper is the same as in Chermak's preprint \cite{ChermakIII}. Namely, we first  define and study $\N^\perp$, and building on that we introduce $F^*(\L)$; then we use these notions to define regular localities and prove the results stated above. Compared to Chermak's paper, we study $\N^\perp$ however in much more detail and generality. This includes introducing the concepts defined in Definition~\ref{D:mainCommute}. We prove moreover many results about commuting partial normal subgroups and $\N^\perp$  for arbitrary localities (cf. Theorem~\ref{T:mainNperp}(a)-(d) and Section~\ref{SS:CommutingPartialNormal}). In contrast, Chermak introduces $\N^\perp$ only for linking localities. While Chermak proves in this special case that $\N^\perp$ commutes with $\N$ and some additional properties hold (cf. \cite[Theorem~5.7(c)]{ChermakIII}), we are able to give some stronger information in Theorem~\ref{T:mainNperp}(a),(b),(d). 

\smallskip

In Corollary~\ref{C:NperpCapS} we show that, in the case of linking localities, our definition of $\N^\perp$ is equivalent to Chermak's definition. The equality that we thereby show is actually important for many of the arguments used subsequently. The surrounding results in Chapter~\ref{S:Nperp} are very much inspired by Chermak's approach. However, in many cases we are able to give more general formulations of lemmas and theorems that Chermak proves only under special assumptions on the object sets of linking localities. In this context, both in Chermak's work and in ours some technical results are needed that make it possible to decompose elements of a locality $(\L,\Delta,S)$ as products of elements which lie in normalizers of certain elements of $\Delta$. While Chermak \cite[Theorem~3.12]{ChermakIII} shows a somewhat specialized decomposition result, we build on Alperin's Fusion Theorem for localities as it was proved by Molinier \cite{Molinier:2016}. In the form of Lemma~\ref{L:PartialNormalAlperin}, this enables us to decompose elements of a fixed partial normal subgroup in a certain way. Thereby we are naturally led to the concept of $\N$-radical elements of $\Delta$, which we study in Chapter~\ref{S:NRadical}. When we apply the results on $\N$-radical subgroups in later chapters, in many cases it is (at least a priori) necessary to make some assumption on the object set of a linking locality $(\L,\Delta,S)$. This motivates us to define $\N$-replete localities in Definition~\ref{D:NReplete} as linking localities with object sets that are in a certain sense large enough. We prove some sufficient conditions for a linking locality to be $\N$-replete (cf. Lemma~\ref{L:NRadKRadIntersect} and Corollary~\ref{C:GetNreplete}). In particular, it turns out that every subcentric locality $(\L,\Delta,S)$ is $\N$-replete for every $\N\unlhd\L$. This enables us to prove many results about arbitrary linking localities by reducing to the $\N$-replete case. The principal strategy to make reductions to linking localities with particularly nice object sets goes back to Chermak. It should be mentioned that there are also some cases (for example in the proof of Lemma~\ref{L:NNperpCommute}) where we are able to prove surprisingly strong results about arbitrary linking localities by direct arguments.

\smallskip

To point out a further technical detail, we work in this text with the concept of an (internal) central products of partial groups, which is not mentioned in Chermak's paper. We introduce such central products in Chapter~\ref{S:CentralProduct}. If $\M$ and $\N$ are partial normal subgroups of a partial group $\L$, then the following implications hold by Lemma~\ref{L:CentralProductsFactorsCentralize} and Lemma~\ref{L:CentralizerHelp}:
\begin{eqnarray*}
\mbox{$\M\N$ is a central product of $\M$ and $\N$}\Longrightarrow \M\subseteq C_\L(\N)
\Longrightarrow  \mbox{$\M$ commutes with $\N$}
\end{eqnarray*}
If $(\L,\Delta,S)$ is a \emph{regular locality}, then Lemma~\ref{L:PerpendicularPartialNormalCentralProduct} shows that the above implications are actually equivalences. 

\smallskip

We use the concept of central products in particular to formulate our results about components (cf. Theorem~\ref{T:mainComponents}(a),(c),(d) and Theorem~\ref{T:FstarCentralProduct}(a)-(d)). Parts (a)-(c) of Theorem~\ref{T:mainComponents} have some resemblance to \cite[Theorem~8.5]{ChermakIII}. However, since Chermak does not use the notion of a central product, our results are slightly stronger. Admittedly, we pay the price of having to introduce the technical language of central products. The reader who has no interest in this subtle point could skip Chapter~\ref{S:CentralProduct} and replace the statement that a given collection of partial subgroups $\N_1,\dots,\N_k$ forms a central product by the statement that $\N_i$ commutes with $\N_j$ for all $1\leq i,j\leq k$ with $i\neq j$. 

\smallskip

The E-balance theorem goes also back to Chermak \cite[Theorem~9.9]{ChermakIII} who states it however in a slightly weaker form. Our proof mainly follows his, even though we formulate and prove the necessary preliminary results differently (cf. Lemma~\ref{L:PartialNormalinbNLX} and Lemma~\ref{L:ProdCompinbNLD}). As Chermak communicated to us privately, he observed in the meantime independently that his proof generalizes to show the statement in Theorem~\ref{T:EbalanceMain}.

\section{Background references} We seek to keep the background references we rely on to a minimum. For fusion systems we build mainly on Sections~1-7 of \cite[Part~I]{Aschbacher/Kessar/Oliver:2011}. The proof of \cite[Theorem~C]{Henke:2020}, which we cite below, uses moreover the construction of factor systems as introduced in \cite[Section~II.5]{Aschbacher/Kessar/Oliver:2011}.

\smallskip

For the theory of localities we rely on Chermak's fundamental preprint  \cite{Chermak:2015} (with earlier versions of many results published in \cite{Chermak:2013}). For many properties of linking localities we build on \cite{Henke:2015}. In particular, the existence and uniqueness of centric linking systems \cite{Chermak:2013,Oliver:2013,Glauberman/Lynd} is used through \cite[Theorem~A]{Henke:2015} to define the set $\delta(\F)$ and to show the existence and uniqueness of regular localities (cf. Remark~\ref{R:ExistenceUniquenessCLS}). Our proofs use moreover Theorem~C, Theorem~5.1 and some elementary lemmas from \cite{Henke:2020}, as well as the main results from \cite{Henke:2015a} and \cite{Molinier:2016}. 

\smallskip

It should be mentioned that \cite[Theorem~A]{Henke:2015} is essentially the same as \cite[Theorem~A1]{ChermakII} and that the first part of \cite[Theorem~C]{Henke:2020} revisits \cite[Theorem~A2]{ChermakII}. We have chosen not to rely on any results from Chermak's preprints \cite{ChermakII,ChermakIII} for our proofs. Therefore, in Chapter~\ref{S:Residues}, we revisit definitions and results from the final section of \cite{ChermakII}.

\subsection*{Acknowledgement} The author thanks Andrew Chermak for many interesting discussions. This paper never would have been written without his ideas and deep insights. As explained before in more detail, many results proved in the present paper are similar or identical to the results stated in \cite{ChermakIII}.

\numberwithin{section}{chapter}

\chapter{Some group-theoretic lemmas}\label{S:GroupTheory}

Recall that a finite group $G$ is of characteristic $p$ if $C_G(O_p(G))\leq O_p(G)$. We will frequently use the following properties.

\begin{lemma}\label{L:MSCharp}
If $G$ is a group of characteristic $p$, then the following hold:
\begin{itemize}
 \item [(a)] Every normal subgroup of $G$ is of characteristic $p$.
 \item [(b)] For every $p$-subgroup $P$ of $G$, the normalizer $N_G(P)$ is of characteristic $p$. 
\end{itemize}
\end{lemma}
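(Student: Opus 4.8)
The plan is to prove both parts directly from the definition $C_G(O_p(G)) \leq O_p(G)$, using only elementary properties of $O_p$, centralizers, and normalizers.

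\textbf{Part (a).} Let $N \nsg G$. First I would observe that $O_p(N) = N \cap O_p(G)$. The inclusion $N \cap O_p(G) \leq O_p(N)$ is clear since $N \cap O_p(G)$ is a normal $p$-subgroup of $N$. For the reverse, $O_p(N)$ is characteristic in $N$, hence normal in $G$, so $O_p(N) \leq O_p(G)$; combined with $O_p(N) \leq N$ this gives $O_p(N) \leq N \cap O_p(G)$. Now I must show $C_N(O_p(N)) \leq O_p(N)$. Take $x \in C_N(O_p(N)) = C_N(N \cap O_p(G))$. The key step is to show $x$ centralizes all of $O_p(G)$, not merely $N \cap O_p(G)$: since $N \nsg G$ and $O_p(G)$ normalizes $N$, the commutator $[x, O_p(G)]$ lies in $N$ (as $x \in N \nsg G$) and also in $O_p(G)$ (as $x \in C_G(\cdot)$ would need to be established — more carefully, $[x,O_p(G)] \leq [N, O_p(G)] \leq N \cap O_p(G) = O_p(N)$, since $O_p(G) \nsg G$). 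So $x$ acts trivially by conjugation on $O_p(G)/O_p(N)$, i.e. $[x, O_p(G)] \leq O_p(N)$, and $x$ centralizes $O_p(N)$ by assumption. A standard coprime-type argument — or rather, since $O_p(G)$ is a $p$-group and $\langle x \rangle$ may not be a $p'$-group, one instead argues: the group $\langle x, O_p(G)\rangle$ has $O_p(G)$ normal, $x$ centralizes $O_p(N)$ and $O_p(G)/O_p(N)$; hence $[O_p(G), x, x] = 1$ and $[x, O_p(G)]$ is a $p$-group normalized by $x$. I would then replace $x$ by its $p$-part $x_p$ and its $p'$-part $x_{p'}$ (commuting powers of $x$): $x_{p'}$ centralizes the $p$-group $[x, O_p(G)]$ by coprimality after noting it stabilizes the chain, forcing $[x_{p'}, O_p(G)] = 1$; and $x_p \in C_G(O_p(G))$ requires knowing $x_p$ centralizes $O_p(G)$. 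Cleaner route: since $\langle x\rangle O_p(G)$ is nilpotent-by-nothing is not automatic, I would instead directly invoke that $C_G(O_p(G)) O_p(G) / O_p(G)$ has trivial — the simplest clean statement is: $[O_p(G), x] \leq O_p(N) \leq C_G(x)$ shows $x$ stabilizes the normal chain $1 \trianglelefteq O_p(N) \trianglelefteq O_p(G)$, and since $C_G(O_p(G)) \leq O_p(G)$, a subgroup stabilizing this chain and lying in $N$ — here I would use that $O^p(\langle x\rangle)$ acts trivially on $O_p(G)$ by the stability group lemma (\cite[stability]{}), giving $O^{p}(\langle x \rangle) \leq C_G(O_p(G)) \leq O_p(G)$, whence $O^p(\langle x\rangle) = 1$, so $x$ is a $p$-element. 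Then $\langle x \rangle O_p(G)$ is a $p$-group normalizing... actually $\langle x\rangle$ normalizes $O_p(G)$ so $\langle x, O_p(G)\rangle$ is a $p$-group, normal? No — but its image in $G/O_p(G)$: we have $x \in N$, and $[x,O_p(G)]\le O_p(N)$, so $x$ together with $O_p(G)$ generates a $p$-subgroup whose normal closure lands in... The cleanest finish: $x O_p(G)/O_p(G) \in C_{G/O_p(G)}(\text{?})$. I think the intended short proof uses the stability group lemma to get $x$ is a $p$-element centralizing $O_p(G)$, hence $x \in C_G(O_p(G)) \leq O_p(G) \leq O_p(N)$ — wait, I still need $x$ to centralize $O_p(G)$. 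Actually once $x$ is a $p$-element and $[x,O_p(G)] \leq O_p(N) \leq C_G(x)$ with $[O_p(G),x,x]=1$, apply the stability lemma to the chain $O_p(N) \leq O_p(G)$: $x$ acts trivially on both factors $O_p(N)$ and $O_p(G)/O_p(N)$, so $O^p(\langle x\rangle)$ centralizes $O_p(G)$; since $x$ is a $p$-element, $O^p(\langle x\rangle)=1$ already, which does NOT give centralization. I am going in circles; the honest resolution is that this is genuinely the stability group lemma applied once: a $p'$-group stabilizing a chain of $p$-groups centralizes it — so write $x = x_p x_{p'}$; $x_{p'}$ stabilizes $1 \leq O_p(N) \leq O_p(G)$ (since $x$ does and $x_{p'}$ is a power of $x$), hence $x_{p'} \in C_G(O_p(G)) \leq O_p(G)$, forcing $x_{p'}=1$; so $x=x_p$ is a $p$-element. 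Then $\langle x \rangle$ normalizes $O_p(G)$, so $Q := \langle x\rangle O_p(G)$ is a $p$-group, and $Q \leq N O_p(G)$; moreover $[Q, G]$ — hmm, $O_p(G) \trianglelefteq Q$ and $x$ centralizes $O_p(N) = N \cap O_p(G)$. The subgroup $\langle x^G\rangle$... OK: I would conclude via $Q \cap N = \langle x \rangle (O_p(G) \cap N) = \langle x\rangle O_p(N)$ is a normal $p$-subgroup of... $N$? It is normalized by $O_p(G)$ (as $[x, O_p(G)] \leq O_p(N)$) and by $N$? Not obviously. I will instead, in the writeup, use the well-known fact (provable by this same circle of ideas and surely available as an earlier lemma or standard) that $O_p(G) = O_p(N) C_{O_p(G)}(\cdot)$...

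Let me restart Part (a) cleanly: \emph{it is a standard fact that if $G$ has characteristic $p$ and $N \trianglelefteq G$ then $C_G(N) \leq N$ when $O_p(G) \leq N$, and in general $N$ has characteristic $p$}; the proof: $O_p(N) = N \cap O_p(G) \trianglelefteq G$; let $K = C_N(O_p(N))$; then $[O_p(G), K] \leq O_p(G) \cap N = O_p(N)$ and $[O_p(N), K] = 1$, so $K$ stabilizes the chain $1 \leq O_p(N) \leq O_p(G)$; by the stability group lemma $K/C_K(O_p(G))$ is a $p$-group, and $C_K(O_p(G)) \leq C_G(O_p(G)) \leq O_p(G)$, so $C_K(O_p(G)) \leq O_p(G) \cap N = O_p(N)$; hence $K/(K \cap O_p(N))$ embeds in the $p$-group $K/C_K(O_p(G))$, no — rather $K$ is $p$-by-($\leq O_p(N)$)... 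Actually the stability group lemma gives $K$ acts on $O_p(G)$ as a $p$-group, so the image of $K$ in $\mathrm{Aut}(O_p(G))$ is a $p$-group; but that plus $\ker = C_K(O_p(G)) \leq O_p(N)$ shows $K$ is an extension of a $p$-group by $O_p(N)$, hence $K$ is a $p$-group; and $K \trianglelefteq N$ is then a normal $p$-subgroup of $N$, so $K \leq O_p(N)$, i.e. $C_N(O_p(N)) \leq O_p(N)$. That works, provided the stability group lemma is available — I will cite it as standard.

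\textbf{Part (b).} Let $P$ be a $p$-subgroup of $G$ and set $H = N_G(P)$, $Q = O_p(H)$. Note $P \leq Q$ (as $P \trianglelefteq H$ is a normal $p$-subgroup), so $P \leq Q$. I must show $C_H(Q) \leq Q$. Take $x \in C_H(Q)$, so in particular $x$ centralizes $P$. Consider $R = P O_p(G)$, a $p$-subgroup of $G$ (both factors normal in $\langle P, O_p(G)\rangle$ since $O_p(G) \trianglelefteq G$ and $P$ is normalized by $O_p(G) \leq H$). The key step: $N_G(R)$ has characteristic $p$ — no, $R$ need not be normal. Better: the standard trick is $P \leq Q = O_p(N_G(P))$, and one shows $C_G(Q)$ is $p$-nilpotent-ish. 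Actually the cleanest: $x \in C_H(Q)$ centralizes $P$, so $x \in C_G(P)$; and $C_G(P) \cap N_G(P) = C_G(P)$... I want to land inside $C_G(O_p(G))$. Consider the subgroup $J = O_p(G) \langle x \rangle$: since $O_p(G) \leq H = N_G(P)$ and $x \in H$, we have $O_p(G)$ normalized by $x$, and $[x, O_p(G)] \leq ?$. Hmm, $O_p(G)$ and $P$ both normalize each other, so $R = O_p(G) P \trianglelefteq \langle O_p(G), P, x \rangle$, and $R \leq Q$ because $R$ is a $p$-subgroup normal in $H$? Is $R \trianglelefteq H$? $H$ normalizes $O_p(G)$? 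No — $H = N_G(P)$ need not normalize $O_p(G)$... but $O_p(G) \trianglelefteq G$ so yes $H$ normalizes $O_p(G)$, and $H$ normalizes $P$, hence $H$ normalizes $R = O_p(G)P$, which is a $p$-group, so $R \leq O_p(H) = Q$. Therefore $x \in C_H(Q) \leq C_G(R) \leq C_G(O_p(G)) \leq O_p(G) \leq R \leq Q$. Done — and this is clean! So Part (b) reduces to the observation $O_p(G) P \leq O_p(N_G(P))$, which holds because $O_p(G)P$ is a $p$-subgroup normalized by $N_G(P)$.

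\textbf{Main obstacle.} The genuinely fiddly point is Part (a): pinning down why $C_N(O_p(N)) \leq O_p(N)$, which requires either the stability group lemma or an equivalent coprime-action argument to pass from "$x$ centralizes $O_p(N)$ and acts trivially on $O_p(G)/O_p(N)$" to "$x$ is, modulo $O_p$, a $p$-element", and then noting a normal $p$-subgroup of $N$ lies in $O_p(N)$. I expect to state this via the stability group lemma as a known tool. Part (b) is comparatively painless once one spots that $O_p(G)P$ is a normal $p$-subgroup of $N_G(P)$, hence contained in $O_p(N_G(P))$.
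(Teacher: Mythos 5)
The paper's ``proof'' of this lemma is just a citation to \cite[Lemma~1.2(a),(c)]{MS:2012b}, so there is no internal argument to compare against --- you are supplying a direct proof where the paper supplies none. After stripping out the many false starts, your argument is correct, and it is worth recording the clean version.

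For part (a), the proof is: $O_p(N)=N\cap O_p(G)$ (the inclusion $\supseteq$ because $N\cap O_p(G)$ is a normal $p$-subgroup of $N$; the inclusion $\subseteq$ because $O_p(N)$ is characteristic in $N\unlhd G$, hence normal in $G$, hence $\leq O_p(G)$). Then $K:=C_N(O_p(N))\unlhd N$ stabilizes the chain $1\leq O_p(N)\leq O_p(G)$, since $[O_p(G),K]\leq[O_p(G),N]\leq O_p(G)\cap N=O_p(N)$ and $[O_p(N),K]=1$. By the stability lemma for $p'$-automorphisms of $p$-groups (a $p'$-automorphism acting trivially on all factors of an invariant chain in a finite $p$-group is the identity --- this is precisely what Lemma~\ref{L:AutGroup} in this paper gives, applied with $(O_p(G),O_p(N))$ in the roles of $(G,U)$), the image of $K$ in $\Aut(O_p(G))$ is a $p$-group; the kernel is $C_K(O_p(G))\leq C_G(O_p(G))\cap N\leq O_p(G)\cap N=O_p(N)$. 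So $K$ is a $p$-group, and being normal in $N$ it lies in $O_p(N)$, as required. Note in particular that you do not need the general Hall stability theorem; the one-step coprime-action fact packaged in Lemma~\ref{L:AutGroup} suffices, and the paper itself uses exactly this tool in Lemmas~\ref{L:CGUleqU} and \ref{L:CGNCGQ}.

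For part (b), your argument is clean as written: $O_p(G)P$ is a $p$-subgroup (since $O_p(G)\unlhd G$ normalizes $P$) that is normalized by $H:=N_G(P)$ (since $H$ normalizes both $O_p(G)$ and $P$), hence $O_p(G)P\leq O_p(H)$. Then $C_H(O_p(H))\leq C_G(O_p(G))\leq O_p(G)\leq O_p(H)$. One stylistic remark: the extended middle portion of your draft for part (a), where you repeatedly start, abandon, and restart the coprime argument (``I am going in circles''), should simply be deleted in favor of the ``clean restart'' --- the earlier attempts add nothing and obscure the fact that you do in the end have the right argument.
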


\begin{proof}
See \cite[Lemma~1.2(a),(c)]{MS:2012b}.
\end{proof}

\begin{lemma}\label{L:CGUleqU}
Let $G$ be a finite group of characteristic $p$ and let $U$ be a normal $p$-subgroup of $G$ with $C_{O_p(G)}(U)\leq U$. Then $C_G(U)\leq U$. 
\end{lemma}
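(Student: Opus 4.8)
The plan is to set $C:=C_G(U)$ and prove the stronger-looking statement $C\leq U$ by observing that $C$ is itself a group of characteristic $p$ and then locating $O_p(C)$. First I would note that, since $U\unlhd G$, the centralizer $C=C_G(U)$ is a normal subgroup of $G$; hence by Lemma~\ref{L:MSCharp}(a) the group $C$ is of characteristic $p$, i.e. $C_C(O_p(C))\leq O_p(C)$.

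The next step is to show $O_p(C)\leq U$. As $O_p(C)$ is characteristic in $C$ and $C\unlhd G$, we have $O_p(C)\unlhd G$, and since $O_p(C)$ is a $p$-group this forces $O_p(C)\leq O_p(G)$. Therefore
\[
O_p(C)\leq O_p(G)\cap C_G(U)=C_{O_p(G)}(U)\leq U,
\]
using the hypothesis in the last inclusion.

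Finally, since $C=C_G(U)$ centralizes $U$ and $O_p(C)\leq U$, the whole of $C$ centralizes $O_p(C)$, that is $C=C_C(O_p(C))$. Combining this with the characteristic-$p$ property of $C$ from the first step yields $C\leq O_p(C)\leq U$, which is what we want.

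The argument is a short chain of standard facts, so there is no substantial obstacle; the only point requiring care is the verification that $C_G(U)$ is normal in $G$ (a routine consequence of $U\unlhd G$), after which the hypothesis $C_{O_p(G)}(U)\leq U$ does exactly the work needed. It is worth recording that the same reasoning in fact shows $C_G(U)\leq Z(U)$, since $C_G(U)$ centralizes $U$ and lies in $U$.
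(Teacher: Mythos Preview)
Your proof is correct. It differs from the paper's argument, though both are short. The paper works directly with commutators: from $[O_p(G),C_G(U)]\leq C_{O_p(G)}(U)\leq U$ and coprime action it deduces $[O_p(G),O^p(C_G(U))]=1$, hence $O^p(C_G(U))\leq C_G(O_p(G))\leq O_p(G)$ forces $O^p(C_G(U))=1$, so $C_G(U)$ is a normal $p$-subgroup of $G$ and therefore lies in $C_{O_p(G)}(U)\leq U$. Your route instead invokes the inheritance lemma (Lemma~\ref{L:MSCharp}(a)) to get that $C:=C_G(U)$ is itself of characteristic $p$, then pins down $O_p(C)\leq C_{O_p(G)}(U)\leq U$ and finishes via $C=C_C(O_p(C))\leq O_p(C)$. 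Your argument is a bit more conceptual and avoids the coprime-action step, at the cost of appealing to Lemma~\ref{L:MSCharp}(a); the paper's version is self-contained at the level of commutator identities. Both reach the same conclusion, and your closing remark that in fact $C_G(U)\leq Z(U)$ is a nice observation.
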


\begin{proof}
As $[O_p(G),C_G(U)]\leq C_{O_p(G)}(U)\leq U$, we have 
\[[O_p(G),O^p(C_G(U))]=[O_p(G),O^p(C_G(U)),O^p(C_G(U))]=1.\]
Since $C_G(O_p(G))\leq O_p(G)$ does not contain any $p^\prime$-elements, it follows that $O^p(C_G(U))=1$. This means that $C_G(U)$ is a normal $p$-subgroup of $G$, i.e. $C_G(U)\leq C_{O_p(G)}(U)\leq U$. 
\end{proof}

\begin{lemma}\label{L:AutGroup}
Let $G$ be a finite group with a normal $p$-subgroup $U$. Let $\alpha\in\Aut(G)$ be a $p^\prime$-automorphism with $[G,\alpha]\leq U$ and $[U,\alpha]=1$. Then $\alpha=\id_G$.
\end{lemma}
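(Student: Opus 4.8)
The statement to prove: if $G$ is a finite group with a normal $p$-subgroup $U$, and $\alpha\in\Aut(G)$ is a $p'$-automorphism with $[G,\alpha]\leq U$ and $[U,\alpha]=1$, then $\alpha=\id_G$.

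The plan is to form the semidirect product $\Gamma:=G\rtimes\langle\alpha\rangle$ and apply coprime action. First I would observe that $\alpha$ acts trivially on both $U$ and $G/U$: the condition $[G,\alpha]\leq U$ says precisely that $\alpha$ induces the identity on $G/U$, and $[U,\alpha]=1$ says $\alpha$ centralizes $U$. Now consider $\langle\alpha\rangle$ as a group of order prime to $p$ acting on the $p$-group... except $G$ is not a $p$-group in general, so I cannot directly invoke the standard coprime-action lemma on $G$. The key point to exploit is that $\langle\alpha\rangle$ is a $p'$-group acting on $G$ stabilizing the chain $1\leq U\leq G$ with trivial action on the factors $U$ and $G/U$. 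I would then use the standard result (stabilizer of a chain of subgroups acting trivially on successive quotients, with coprime order) which forces the action to be trivial. Concretely: take any $g\in G$; then $g^{-1}g^\alpha\in U$ (since $\alpha$ is trivial on $G/U$), so define $\delta\colon G\to U$ by $\delta(g)=g^{-1}g^\alpha$; a short computation using $[U,\alpha]=1$ shows $\delta$ is a crossed homomorphism (or even, since $U$ is central-ish relative to $\alpha$, that $\delta$ restricted appropriately is a homomorphism into the abelian-ization is not needed). Iterating $\alpha$ gives $g^{\alpha^k}=g\cdot\delta(g)\cdot\delta(g^\alpha)\cdots=g\cdot\delta(g)^k$ using $[U,\alpha]=1$ to see $\delta(g^{\alpha^j})=\delta(g)$ for all $j$. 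Since $\alpha$ has $p'$-order $m$, we get $g=g^{\alpha^m}=g\,\delta(g)^m$, hence $\delta(g)^m=1$; but $\delta(g)\in U$ is a $p$-element and $m$ is prime to $p$, so $\delta(g)=1$, i.e. $g^\alpha=g$.

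The main obstacle I anticipate is verifying the telescoping identity $g^{\alpha^k}=g\,\delta(g)^k$ cleanly — one must check that $\delta(g^{\alpha})=\delta(g)$, which follows from $[U,\alpha]=1$ since $\delta(g)\in U$ gives $\delta(g^\alpha)=(g^\alpha)^{-1}(g^{\alpha})^\alpha=(g^{-1}g^\alpha)^\alpha\cdot$ correction terms, and $(g^{-1}g^\alpha)^\alpha=\delta(g)^\alpha=\delta(g)$. Then $g^{\alpha^2}=(g^\alpha)^\alpha=(g\delta(g))^\alpha=g^\alpha\delta(g)^\alpha=g\delta(g)\cdot\delta(g)=g\delta(g)^2$, and induction proceeds identically. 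Once this telescoping is in place, the coprimality kills $\delta(g)$ and the result is immediate. This is essentially the classical lemma that a coprime automorphism trivial on a normal subgroup and on the corresponding quotient is trivial, and the whole argument is elementary; no results from earlier in the excerpt beyond basic group theory are needed, though Lemma~\ref{L:MSCharp} and Lemma~\ref{L:CGUleqU} show the surrounding context is coprime-action bookkeeping of exactly this flavor.
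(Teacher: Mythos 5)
Your proof is correct, and it takes a genuinely different route from the one in the paper. You run the classical telescoping computation directly: set $\delta(g)=g^{-1}g^\alpha\in U$, observe that $[U,\alpha]=1$ forces $\delta(g)^\alpha=\delta(g)$, deduce by induction $g^{\alpha^k}=g\,\delta(g)^k$, and then use coprimality of $|\langle\alpha\rangle|$ with $p$ to conclude $\delta(g)=1$. (Incidentally, the verification of $\delta(g^\alpha)=\delta(g)$ is cleaner than you suggest: $\delta(g^\alpha)=(g^{-1}g^\alpha)^\alpha=\delta(g)^\alpha=\delta(g)$, with no correction terms, so you never need the crossed-homomorphism framing.) The paper instead argues structurally inside the semidirect product $H=\langle\alpha\rangle G$: since $[G,\alpha]\leq U$, the group $G$ normalizes $R:=\langle\alpha\rangle U$, and $[U,\alpha]=1$ makes $R=\langle\alpha\rangle\times U$ so that $\langle\alpha\rangle=O_{p'}(R)$ is characteristic in $R$ and hence normalized by $G$; then $[G,\alpha]\leq U\cap\langle\alpha\rangle=1$. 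Both arguments are elementary; the paper's is slightly more conceptual (exploiting that $\langle\alpha\rangle$ becomes a characteristic Hall subgroup of $R$) and avoids any induction, while yours is the more hands-on computation that one might write without thinking of the semidirect product at all. Either is perfectly acceptable here.
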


\begin{proof}
Take the semidirect product $H:=\<\alpha\>G$ and consider $G$ as a subgroup of $H$. As $[G,\alpha]\leq U\unlhd G$, the group $G$ acts on $R:=\<\alpha\>U$. Since $[U,\alpha]=1$, we have $R=\<\alpha\>\times U$ and $\<\alpha\>=O_{p^\prime}(R)$ is normalized by $G$. Hence, $[G,\alpha]\leq U\cap\<\alpha\>=1$ and so $\alpha=\id_G$. 
\end{proof}

If $H$ is a group, then we denote by $H^\prime$ the commutator group $[H,H]$ of $H$.

\begin{lemma}\label{L:CGNCGQ}
Let $G$ be a finite group and $N\unlhd G$. Suppose $U$ is a normal $p$-subgroup of $G$ such that $U\leq N$ and $C_N(U)\leq U$. Then \[O^p(C_G(N))=O^p(C_G(U))\mbox{ and }C_G(U)^\prime\leq C_G(N).\]  
\end{lemma}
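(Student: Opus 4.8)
The statement to prove is: for a finite group $G$ with $N\unlhd G$ and $U$ a normal $p$-subgroup of $G$ with $U\leq N$ and $C_N(U)\leq U$, we have $O^p(C_G(N))=O^p(C_G(U))$ and $C_G(U)'\leq C_G(N)$. The key observation is that $C_N(U)\leq U$ forces $C_G(U)$ to act on $N$ in a very restricted way: since $U\leq N$ and $C_G(U)$ centralizes $U$, the subgroup $C_G(U)$ normalizes $N$ and acts on $N$ centralizing the normal $p$-subgroup $U$ of $N$; moreover $C_N(U)\leq U$ says that $N$ (together with the action) is essentially a group ``of characteristic $p$ relative to $U$''. The first thing I would do is set $H:=NC_G(U)$ (a subgroup of $G$ since $C_G(U)$ normalizes $N$), and observe that $U\unlhd H$, $N\unlhd H$, and $C_N(U)\leq U$. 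It then suffices to prove the two assertions inside $H$, i.e.\ we may assume $G=NC_G(U)$.

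\textbf{First assertion.} I would prove $O^p(C_G(N))=O^p(C_G(U))$ by showing each side is contained in the other. The inclusion $O^p(C_G(N))\leq O^p(C_G(U))$ is immediate since $U\leq N$ gives $C_G(N)\leq C_G(U)$, hence $O^p(C_G(N))\leq O^p(C_G(U))$; wait, more care: $O^p$ of a subgroup need not sit inside $O^p$ of the overgroup, so instead I would argue $C_G(N)\leq C_G(U)$, and then note $O^p(C_G(N))$ is generated by the $p'$-elements of $C_G(N)$, each of which is a $p'$-element of $C_G(U)$, hence lies in $O^p(C_G(U))$. For the reverse inclusion, let $x\in O^p(C_G(U))$ be a $p'$-element; I want $x\in C_G(N)$, equivalently $[N,x]=1$. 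Now $x$ centralizes $U$ and normalizes $N$, so $x$ acts as a $p'$-automorphism on $N$ with $[U,x]=1$. The point is to show $[N,x]\leq U$: this is where I would use that $G=NC_G(U)$ and that $x\in O^p(C_G(U))$ — since $O^p(C_G(U))$ is generated by $p'$-elements centralizing $U$, and $[N,C_G(U)]$... actually the cleanest route: the commutator $[N,x]$ need not obviously lie in $U$ without more input, so I would instead invoke Lemma~\ref{L:AutGroup} directly once I have $[N,x]\leq U$. To get $[N,x]\leq U$, observe that $N/U$ admits the action of $x$, and $C_{N/U}(?)$... hmm. Let me restructure: the safe approach is to use that $C_N(U)\leq U$ means $U=C_N(U)$ contains the centralizer, so $O_p(N)\cap$... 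Actually the right tool is: $N$ has the property that $C_N(U)\leq U$ with $U\unlhd N$ a $p$-group, which by Lemma~\ref{L:CGUleqU} (applied to $N$ if $N$ were of characteristic $p$) — but $N$ need not be of characteristic $p$. So the genuinely needed step is a coprime-action argument on $N$: writing $\bar N=N/U$, the element $x$ acts coprimely, and I would use $[N,x]=[N,x,x]$ plus $[U,x]=1$ to run an argument showing $[O_p(N),x]=1$ and bootstrap.

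\textbf{Second assertion and the main obstacle.} For $C_G(U)'\leq C_G(N)$: set $C:=C_G(U)$. Then $C'$ is generated by commutators, and I want $[C',N]=1$. Since $C$ centralizes $U$, the group $C$ acts on $N$ fixing $U$ pointwise, inducing a homomorphism $C\to\Aut(N)$ whose image centralizes $U$; restricting, $C'$ maps into the derived subgroup of that image. The key claim is that $C$ acts on $N$ ``nilpotently modulo $U$'' in the sense that makes $[N,C,C]\leq[N,C]\cap$(something) collapse — concretely, I expect the argument to run: $[N,C]\leq C_N(U)$? No — rather, consider $[N,C,C]$ and use three-subgroups lemma together with $[U,C]=1$ and $C_N(U)\leq U$. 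The cleanest line: since $C$ centralizes $U$ and $C_N(U)\leq U$, a standard argument (as in the proof of Lemma~\ref{L:CGUleqU}) gives that $C$ acts on $N$ with $[N,C']=1$, because any $p'$-part of $C$ acting on $N$ must be trivial (by the first assertion's machinery) while the $p$-part lands in something controlled by $U$. I would organize this as: first handle $p'$-elements of $C$ via Lemma~\ref{L:AutGroup} after establishing $[N,\text{($p'$-elt)}]\leq U$, then note $C/O^p(C)$ is a $p$-group so $C'\leq O^p(C)$ has all its generators expressible via $p'$-elements — giving $[N,C']=1$. \textbf{The main obstacle} I anticipate is cleanly establishing $[N,x]\leq U$ for $p'$-elements $x\in C_G(U)$ (or at least for $x\in O^p(C_G(U))$) without circularity; once that is in hand, Lemma~\ref{L:AutGroup} finishes both parts quickly. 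I would resolve it by the coprime commutator identity $[N,x]=[N,x,x]\cdots$ combined with $[U,x]=1$ and $C_N(U)\leq U$ to force $x$ to act trivially on $N/U$ first (since $C_{N/U}(x)$ lifts appropriately by coprimality), hence $[N,x]\leq U$, and then $[U,x]=1$ plus Lemma~\ref{L:AutGroup} gives $x\in C_G(N)$.
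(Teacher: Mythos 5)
Your proposal correctly identifies the two halves of the argument and the easy inclusion $O^p(C_G(N))\leq O^p(C_G(U))$ (and your worry there is unfounded anyway: $O^p$ is monotone, since for $H\leq K$ the composite $H\hookrightarrow K\to K/O^p(K)$ has $p$-group image). The real problem is the key containment $[N,C_G(U)]\leq U$, which you name as ``the main obstacle'' but do not actually establish. There is a one-line argument that you pass right by: since $U\unlhd G$ we have $C_G(U)\unlhd G$, and since $N\unlhd G$ as well, the commutator of two normal subgroups lies in their intersection, so
\[
[N,C_G(U)]\;\leq\;N\cap C_G(U)\;=\;C_N(U)\;\leq\;U.
\]
(Equivalently, compute directly: for $n\in N$, $c\in C_G(U)$ and $u\in U$, using $U\unlhd G$ and $[U,c]=1$ one gets $u^{[n,c]}=u$, so $[n,c]\in N\cap C_G(U)$.) Notably you even write ``$[N,C]\leq C_N(U)$?\ No ---'' and discard exactly the claim that is true and carries the whole proof. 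Your proposed substitute --- a coprime-action bootstrap to show $x$ acts trivially on $N/U$ --- does not work as stated: coprimality of the $\langle x\rangle$-action on $N$ gives $C_{N/U}(x)=C_N(x)U/U$, but nothing forces $C_{N/U}(x)=N/U$; that conclusion comes only from the containment above, which you have not independently derived.

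There is also a genuine error in your treatment of the second assertion. You write that $C'\leq O^p(C)$ (with $C=C_G(U)$) because $C/O^p(C)$ is a $p$-group, and then propose to deduce $[N,C']=1$ from the $p'$-elements. But $p$-groups are not abelian, so $C'\leq O^p(C)$ is false in general: take $C=D_8$ and $p=2$, where $O^2(C)=1$ but $C'$ has order~$2$. The correct route --- which you gesture at and then abandon --- is the three-subgroup lemma: once $[N,C]\leq U$ is known, $[N,C,C]\leq[U,C]=1$ and likewise $[C,N,C]=1$, hence $[C,C,N]=1$, i.e.\ $C'\leq C_G(N)$. With $[N,C]\leq U$ in hand, the first assertion then follows exactly as you planned, by applying Lemma~\ref{L:AutGroup} with $N$ in place of $G$ to each $p'$-element of $C_G(U)$ (the hypotheses $[N,\alpha]\leq U$ and $[U,\alpha]=1$ are precisely what was just established), giving $O^p(C_G(U))\leq C_G(N)$.
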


\begin{proof}
As $U\leq N$, we have $C_G(N)\leq C_G(U)$ and thus $O^p(C_G(N))\leq O^p(C_G(U))$. So it remains to show that $O^p(C_G(U))\leq C_G(N)$ and $C_G(U)'=[C_G(U),C_G(U)]\leq C_G(N)$. Our assumptions yield 
\[[N,C_G(U)]\leq C_N(U) \leq U.\]
Now Lemma~\ref{L:AutGroup} applied with $N$ in place of $G$ gives that every $p^\prime$-element of $C_G(U)$ acts trivially on $N$ by conjugation. So $O^p(C_G(U))\leq C_G(N)$. Moreover, $[N,C_G(U),C_G(U)]=1=[C_G(U),N,C_G(U)]$. So the Three-Subgroup-Lemma implies $[C_G(U),C_G(U),N]=1$. 
\end{proof}

\begin{lemma}\label{L:CharpCGNCGQ}
Let $G$ be a group of characteristic $p$, $N\unlhd G$ and $U:=O_p(N)$. Then 
\[O^p(C_G(N))=O^p(C_G(U))\mbox{ and }C_G(U)^\prime\leq C_G(N).\] 
\end{lemma}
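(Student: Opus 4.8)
The plan is to reduce Lemma~\ref{L:CharpCGNCGQ} to the already-proved Lemma~\ref{L:CGNCGQ}. That lemma gives exactly the desired two conclusions, provided we can verify its hypotheses: that $U$ is a normal $p$-subgroup of $G$ with $U\le N$, and that $C_N(U)\le U$. The first of these is essentially immediate from the setup, and the main content of the argument is establishing $C_N(U)\le U$.

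First I would note that $U=O_p(N)$ is characteristic in $N$, hence normal in $G$ since $N\unlhd G$; and clearly $U\le N$, so the containment hypotheses of Lemma~\ref{L:CGNCGQ} hold. Next, the crucial point $C_N(U)\le U$: since $G$ is of characteristic $p$ and $N\unlhd G$, Lemma~\ref{L:MSCharp}(a) tells us $N$ is itself of characteristic $p$, which by definition means $C_N(O_p(N))\le O_p(N)$, i.e. $C_N(U)\le U$. That is precisely the remaining hypothesis. Having verified all hypotheses, I would simply invoke Lemma~\ref{L:CGNCGQ} to conclude
\[O^p(C_G(N))=O^p(C_G(U))\quad\text{and}\quad C_G(U)^\prime\le C_G(N),\]
which is the assertion of the lemma.

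There is essentially no obstacle here; the only thing to be careful about is citing the correct facts in the correct order — that normality of $N$ in a characteristic-$p$ group $G$ passes characteristic $p$ down to $N$ (Lemma~\ref{L:MSCharp}(a)), which then unravels to the condition $C_N(U)\le U$ needed as input to Lemma~\ref{L:CGNCGQ}. In other words, this lemma is just the specialization of Lemma~\ref{L:CGNCGQ} to the case $U=O_p(N)$ under the extra global hypothesis that $G$ has characteristic $p$, and the whole proof is three short sentences.
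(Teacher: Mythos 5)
Your proposal is correct and matches the paper's proof essentially verbatim: the paper also notes that $U$ is characteristic in $N$ hence normal in $G$, invokes Lemma~\ref{L:MSCharp}(a) to conclude $N$ is of characteristic $p$ so that $C_N(U)\leq U$, and then applies Lemma~\ref{L:CGNCGQ}. Nothing to add.
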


\begin{proof}
Notice that $U\unlhd G$ as $U$ is characteristic in $N$. Since $G$ is of characteristic $p$, Lemma~\ref{L:MSCharp}(a) gives that $N$ is of characteristic $p$, which means $C_N(U)\leq U$. Hence the assertion follows from Lemma~\ref{L:CGNCGQ}.
\end{proof}

\begin{lemma}\label{L:GetintoOp}
Let $G$ be a finite group and $P$ a $p$-subgroup of $G$. Then the following properties hold:
\begin{itemize}
 \item [(a)] If $P$ is subnormal in $G$, then $P\leq O_p(G)$.
 \item [(b)]  If $P$ is normalized by $O^p(G)$, then $P\subn G$ and $P\leq O_p(G)$.
 \item [(c)] We have $P\leq O_p(G)$ if $[P,O^p(G)]$ is a $p$-group.
\end{itemize}
\end{lemma}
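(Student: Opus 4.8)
The plan is to prove the three statements in the order (a), (b), (c), deriving (b) from (a) and (c) from (b), so that the only genuinely new work is an induction for (a) and one subnormality argument for (b).

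For (a) I would induct on the length $n$ of a subnormal series $P=H_0\unlhd H_1\unlhd\cdots\unlhd H_n=G$. When $n\leq 1$ the subgroup $P$ equals $G$ or is a normal $p$-subgroup of $G$, so $P\leq O_p(G)$ by the definition of $O_p(G)$ as the largest normal $p$-subgroup. For $n\geq 2$ the inductive hypothesis applied inside $H_{n-1}$ gives $P\leq O_p(H_{n-1})$; since $O_p(H_{n-1})$ is characteristic in $H_{n-1}\unlhd G$, it is a normal $p$-subgroup of $G$, whence $P\leq O_p(H_{n-1})\leq O_p(G)$.

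For (b), set $N:=PO^p(G)$. This is a subgroup (as $O^p(G)\unlhd G$), and every element of $N$ has the form $xk$ with $x\in P$ and $k\in O^p(G)$, so $P^{xk}=(P^x)^k=P^k=P$ because $O^p(G)$ normalizes $P$ by hypothesis. Hence the normal closure of $P$ in $N$ is $P$, i.e. $P\unlhd N$. On the other hand $O^p(G)\leq N$ and $N/O^p(G)$ is a subgroup of the $p$-group $G/O^p(G)$; since every subgroup of a finite $p$-group is subnormal, $N/O^p(G)\subn G/O^p(G)$, and pulling back the subnormal series yields $N\subn G$. Thus $P\unlhd N\subn G$ gives $P\subn G$, and (a) yields $P\leq O_p(G)$.

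For (c), write $Q:=[P,O^p(G)]$, which is a $p$-group by hypothesis and is normalized both by $P$ and by $O^p(G)$. Then $T:=PQ$ is a $p$-subgroup of $G$ (it is $P$ extended by the $P$-invariant $p$-group $Q$), and for every $k\in O^p(G)$ one has $P^k=P[P,k]\subseteq PQ=T$ and $Q^k=Q$, so $T^k=P^kQ^k\subseteq T$ and hence $T^k=T$; thus $O^p(G)$ normalizes $T$. Applying part (b) to $T$ in place of $P$ gives $T\leq O_p(G)$, and in particular $P\leq O_p(G)$.

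The main obstacle is the subnormality step in (b): one must recognize $PO^p(G)$ as the right intermediate subgroup and use two facts in tandem — that $O^p(G)$ normalizing $P$ forces $P$ to be normal in $PO^p(G)$, and that $PO^p(G)/O^p(G)$ is automatically subnormal in $G/O^p(G)$ because it lies inside a $p$-group. Once (a) and (b) are in place, (c) is a short formal consequence.
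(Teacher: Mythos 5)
Your proposal is correct and takes essentially the same route as the paper: (a) by induction on the subnormal series, (b) via the intermediate subgroup $PO^p(G)$ and the fact that every subgroup of a $p$-group is subnormal, and (c) by reducing to (b). The only (minor) difference is in (c): the paper first applies (b) to $Q=[P,O^p(G)]$ to conclude $Q\leq O_p(G)$, then applies (b) a second time to $O_p(G)P$; you instead apply (b) once to $T=PQ$, having first observed that $T$ is a $p$-group normalized by $O^p(G)$ — a marginally more economical variation of the same idea. All your intermediate claims (that $Q$ is normalized by both $P$ and $O^p(G)$, that $PQ$ is therefore a $p$-group, and that $P^k\subseteq PQ$ for $k\in O^p(G)$) are correct, and the paper also implicitly relies on the same commutator normalization fact (its citation to [KS, 1.5.5]).
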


\begin{proof}
Part (a) follows by induction on the length of a subnormal series of $P$ in $G$.

\smallskip

Assume now that $P$ is normalized by $O^p(G)$. Then $P\unlhd O^p(G)P$. As $G/O^p(G)$ is a $p$-group, $O^p(G)P/O^p(G)$ is subnormal in $G/O^p(G)$ and thus $O^p(G)P\subn G$. Hence, $P\subn G$ and (b) follows from (a). 

\smallskip

Let now $P$ be an arbitrary $p$-subgroup of $G$ and assume that $Q:=[P,O^p(G)]$ is a $p$-group. By \cite[1.5.5]{KS}, $Q$ is normalized by $O^p(G)$ and so (b) gives $Q\leq O_p(G)$. This implies that $O^p(G)$ normalizes $O_p(G)P$. Using (b) again we get $P\leq O_p(G)P\leq O_p(G)$.  
\end{proof}

\chapter{Some lemmas on fusion systems}

In this chapter we will show some elementary lemmas on fusion systems which are needed in the proofs of our main theorems. We will use the results on fusion systems described in Sections~1-7 of \cite[Part~I]{Aschbacher/Kessar/Oliver:2011}. We will also adapt the notation and terminology from there, but with the following two caveats: Firstly, we will write homomorphisms on the right hand side of the argument and adapt the notation accordingly (similarly as in Part~II of \cite{Aschbacher/Kessar/Oliver:2011}). Secondly, following Chermak~\cite{ChermakII}, we define radical and centric radical subgroups differently, namely as in  Definition~\ref{D:Fcr} below.

\smallskip

\textbf{Throughout this chapter let $\F$ be a fusion system over a finite $p$-group $S$.}

\section{Centric radical subgroups} 

\begin{definition}[{cf. \cite[Definition~1.8]{ChermakII}}]\label{D:Fcr}~
\begin{itemize}
\item A subgroup $P\leq S$ of a fusion system $\F$ over $S$ is called \emph{$\F$-radical} if there exists a fully $\F$-normalized $\F$-conjugate $Q$ of $P$ such that $O_p(N_\F(Q))=Q$. 
\item We write $\F^{cr}$ for the set of all subgroups of $S$ which are $\F$-centric in the usual sense (cf. \cite[Definition~I.3.1]{Aschbacher/Kessar/Oliver:2011}) and $\F$-radical in the sense defined above.
\end{itemize}
\end{definition}

\begin{lemma}\label{L:FcrAKOinFcr}
Let $P\in\F^c$. If $O_p(\Aut_\F(P))=\Inn(P)$, then $P\in\F^{cr}$. If $\F$ is saturated, then the converse holds.
\end{lemma}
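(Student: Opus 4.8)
The plan is to prove both implications by reducing to a fully $\F$-normalized $\F$-conjugate $Q$ of $P$ and then comparing $O_p(N_\F(Q))$ with $O_p(\Aut_\F(Q))$. First I would note that whether $O_p(\Aut_\F(P))=\Inn(P)$, whether $P\in\F^c$, and whether $P\in\F^{cr}$ all depend only on the $\F$-conjugacy class of $P$: an $\F$-isomorphism $\psi\colon P\to Q$ induces an isomorphism $\Aut_\F(P)\to\Aut_\F(Q)$ carrying $\Inn(P)$ onto $\Inn(Q)$ (hence $O_p$ onto $O_p$), and $\F$-centrality and $\F$-radicality are conjugation-invariant by definition. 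So it suffices to treat a fully $\F$-normalized $\F$-conjugate $Q$, which always exists. Fix such a $Q\in\F^c$ and set $R:=O_p(N_\F(Q))$, so $Q\le R\le N_S(Q)$ and $R\unlhd N_\F(Q)$. Since $Q\in\F^c$ we have $C_S(Q)=Z(Q)$, hence $C_R(Q)=R\cap C_S(Q)=Z(Q)$ and $\Aut_R(Q)\cong R/Z(Q)$ is a $p$-subgroup of $\Aut_\F(Q)$ containing $\Inn(Q)=\Aut_Q(Q)$. The one nonformal point here is that $\Aut_R(Q)\unlhd\Aut_\F(Q)$: given $\alpha\in\Aut_\F(Q)=\Aut_{N_\F(Q)}(Q)$, the normality $R\unlhd N_\F(Q)$ lets me extend $\alpha$ to some $\bar\alpha\in\Aut_\F(R)$ with $R\bar\alpha=R$ and $\bar\alpha|_Q=\alpha$, and then a short calculation shows the $\alpha$-conjugate of $c_r|_Q$ equals $c_{r\bar\alpha}|_Q$ for all $r\in R$. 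In particular $\Inn(Q)\le\Aut_R(Q)\le O_p(\Aut_\F(Q))$.

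For the forward implication I would now assume $O_p(\Aut_\F(Q))=\Inn(Q)$. Then $\Aut_R(Q)=\Inn(Q)$, so $|R|=|\Aut_R(Q)|\cdot|Z(Q)|=|Q/Z(Q)|\cdot|Z(Q)|=|Q|$; since $Q\le R$ this forces $R=Q$, i.e.\ $O_p(N_\F(Q))=Q$. As $Q$ is a fully $\F$-normalized $\F$-conjugate of $P$ with $O_p(N_\F(Q))=Q$, $P$ is $\F$-radical, and as $P\in\F^c$ we conclude $P\in\F^{cr}$. This half uses no saturation hypothesis.

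For the converse I would assume $\F$ is saturated and $P\in\F^{cr}$; by definition there is a fully $\F$-normalized $\F$-conjugate $Q$ of $P$ with $O_p(N_\F(Q))=Q$, and $Q\in\F^c$. Because $Q$ is fully $\F$-normalized, $\E:=N_\F(Q)$ is saturated, and since $C_{N_S(Q)}(Q)=C_S(Q)=Z(Q)\le Q\unlhd\E$, the system $\E$ is constrained. Let $G$ be a model for $\E$: a finite group with $N_S(Q)\in\Syl_p(G)$, $\F_{N_S(Q)}(G)=\E$ and $C_G(O_p(G))\le O_p(G)$. Here $O_p(G)=O_p(\E)=Q$: the inclusion $O_p(G)\le O_p(\E)$ holds because normal subgroups of $G$ are normal in $\F_{N_S(Q)}(G)$, and the reverse inclusion is part of the model theorem. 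Then $\Aut_\F(Q)=\Aut_\E(Q)=\Aut_G(Q)=G/C_G(Q)$, and $C_G(Q)=C_G(O_p(G))\le O_p(G)=Q$ gives $C_G(Q)=Z(Q)$, so $\Aut_\F(Q)\cong G/Z(Q)$ with $\Inn(Q)$ corresponding to $Q/Z(Q)=O_p(G)/Z(Q)$. Since $Z(Q)$ is a $p$-group, the preimage in $G$ of $O_p(G/Z(Q))$ is a normal $p$-subgroup, hence lies in $O_p(G)=Q$; thus $O_p(G/Z(Q))=Q/Z(Q)$, i.e.\ $O_p(\Aut_\F(Q))=\Inn(Q)$, and hence $O_p(\Aut_\F(P))=\Inn(P)$.

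The main obstacle I anticipate is the normality claim $\Aut_R(Q)\unlhd\Aut_\F(Q)$ and its consequence $\Aut_R(Q)\le O_p(\Aut_\F(Q))$, which require the extension property packaged in $R\unlhd N_\F(Q)$ and careful use of $C_R(Q)=Z(Q)$ (this is exactly where the $\F$-centricity hypothesis enters). By contrast, the converse is essentially a model-theory computation once one invokes the standard facts that $N_\F(Q)$ is saturated and constrained and that its model $G$ satisfies $O_p(G)=O_p(N_\F(Q))$. I should also keep in mind the small point, used silently throughout, that $\Aut_{N_\F(Q)}(Q)=\Aut_\F(Q)$.
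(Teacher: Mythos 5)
Your proposal is correct and takes essentially the same route as the paper's proof: reduce to a fully $\F$-normalized conjugate $Q$, note that $\Aut_{O_p(N_\F(Q))}(Q)$ is a normal $p$-subgroup of $\Aut_\F(Q)$ and hence lies in $O_p(\Aut_\F(Q))$ for the forward direction, and pass to a model of the constrained system $N_\F(Q)$ for the converse. The only difference is that you spell out the normality of $\Aut_R(Q)$ in $\Aut_\F(Q)$ and the order count $|R|=|Q|$ in slightly more detail than the paper, which leaves these as implicit consequences of $C_S(Q)\leq Q$.
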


\begin{proof}
Fix $P\in\F^c$. Notice that we have $O_p(\Aut_\F(P))=\Inn(P)$ if and only if the corresponding property holds for an $\F$-conjugate of $P$. 

\smallskip

Assume first that $O_p(\Aut_\F(P))=\Inn(P)$ and pick $Q\in P^\F$ such that $Q$ is fully normalized. Setting $Q^*:=O_p(N_\F(Q))$, we have then $\Aut_{Q^*}(Q)\leq O_p(\Aut_\F(Q))=\Inn(Q)$. As $C_S(Q)\leq Q$, this implies $Q^*=Q$. Thus, $P\in\F^{cr}$.

\smallskip

Suppose now that $\F$ is saturated and $P\in\F^{cr}$. Let $R$ be a fully $\F$-normalized $\F$-conjugate of $P$ with $O_p(N_\F(R))=R$. Note that $N_\F(R)$ is saturated. So as $R\in\F^c$, the subsystem $N_\F(R)$ is constrained. Thus, by \cite[Theorem~III.5.10(a)]{Aschbacher/Kessar/Oliver:2011}, we may choose a model $G$ for $N_\F(R)$ with  $O_p(G)=R=O_p(N_\F(R))$. Then  
\[\Aut_\F(R)\cong G/C_G(R)\cong G/Z(R).\]
Hence, $O_p(\Aut_\F(R))\cong O_p(G/Z(R))=R/Z(R)\cong\Inn(R)$, which implies $O_p(\Aut_\F(R))= \Inn(R)$ and then $O_p(\Aut_\F(P))=\Inn(P)$.
\end{proof}

If $\F$ is saturated, then Lemma~\ref{L:FcrAKOinFcr} says that the set $\F^{cr}$ coincides with the set $\F^{cr}$ as it is defined in \cite[Definition~I.3.1]{Aschbacher/Kessar/Oliver:2011}). We will use this fact from now on without further reference.

\begin{lemma}\label{L:OpFinFRadical}
Let $P\leq S$ such that $P$ is $\F$-radical. Then $O_p(\F)\leq P$.
\end{lemma}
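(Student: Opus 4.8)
The plan is to pick, using the definition of $\F$-radical, a fully $\F$-normalized $\F$-conjugate $Q$ of $P$ with $O_p(N_\F(Q))=Q$, and to reduce the statement to the single inclusion $R:=O_p(\F)\le Q$. This reduction is formal: since $R\unlhd\F$ the subgroup $R$ is strongly $\F$-closed, so for any $\varphi\in\Iso_\F(Q,P)$ strong closure gives $R\varphi\le R$, hence $R\varphi=R$ by comparing orders, hence $R=R\varphi\le Q\varphi=P$. So it suffices to prove $R\le Q$.

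To obtain $R\le Q$, I would first show that $R_0:=R\cap N_S(Q)$ is a normal subgroup of the fusion system $N_\F(Q)$; granting this, $R_0\le O_p(N_\F(Q))=Q$. Here $R_0\unlhd N_S(Q)$ because $R\unlhd S$, and a morphism $\psi\in\Hom_{N_\F(Q)}(P_1,P_2)$ can be extended in two stages: first, by definition of $N_\F(Q)$, to some $\bar\psi\in\Hom_\F(P_1Q,P_2Q)$ with $Q\bar\psi=Q$, and then, using $R\unlhd\F$, to some $\chi\in\Hom_\F(P_1QR,P_2QR)$ with $R\chi=R$. This $\chi$ fixes both $Q$ and $R$ setwise, so from the transport identity $(q^x)\chi=(q\chi)^{x\chi}$ one checks that $\chi$ carries $N_{P_1QR}(Q)=P_1QR_0$ onto $N_{P_2QR}(Q)=P_2QR_0$, and hence, being injective with $R\chi=R$, also carries $R_0=R\cap P_1QR_0$ onto $R\cap P_2QR_0=R_0$. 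Restricting $\chi$ suitably then yields a morphism $P_1R_0\to P_2R_0$ in $N_\F(Q)$ which extends $\psi$ and fixes $R_0$, so $R_0\unlhd N_\F(Q)$.

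It then remains to pass from $R_0\le Q$ to $R\le Q$. Since $R\unlhd S$, the product $QR$ is a $p$-subgroup of $S$, and a short computation (every element of $QR$ has the form $qr$ with $q\in Q$ and $r\in R$, and $Q^{qr}=Q^r$) gives $N_{QR}(Q)=QR_0$, which equals $Q$ because $R_0\le Q$. As a proper subgroup of a finite $p$-group is never self-normalizing, $QR=Q$, i.e.\ $R\le Q$, and the proof is completed by the transport argument of the first paragraph.

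The main obstacle I anticipate is the bookkeeping in the middle paragraph: checking carefully that the twice-extended morphism $\chi$ really restricts to a morphism of $N_\F(Q)$ that still normalizes $R_0$. This amounts to the general principle that a normal subgroup of $\F$ meets $N_S(Q)$ in a normal subgroup of $N_\F(Q)$, which could be quoted instead if a convenient reference is available; the remaining steps are elementary $p$-group theory and unwinding of definitions, and notably neither saturation of $\F$ nor full normalization of $Q$ is actually needed for this particular lemma.
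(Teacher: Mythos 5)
Your argument is correct and follows the same route as the paper: reduce to showing $O_p(\F)\leq Q$ for the chosen conjugate $Q$ with $O_p(N_\F(Q))=Q$, deduce $N_R(Q)\leq O_p(N_\F(Q))=Q$, use the self-normalization property of $p$-groups to conclude $R\leq Q$, and transport back to $P$ via strong closure. The paper states the key inequality $N_R(Q)\leq O_p(N_\F(Q))$ without further explanation; your middle paragraph supplies the missing verification that $R\cap N_S(Q)$ is indeed normal in $N_\F(Q)$, and your closing observation that neither saturation of $\F$ nor full normalization of $Q$ is actually used is accurate.
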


\begin{proof}
 Set $R:=O_p(\F)$ and fix $Q\in P^\F$ such that $O_p(N_\F(Q))=Q$. Then $N_R(Q)\leq O_p(N_\F(Q))=Q$ and thus $N_{RQ}(Q)=N_R(Q)Q=Q$. As $QR$ is a $p$-group, this implies $Q=QR\geq R$. Using that $R$ is normal in $\F$, one sees now that $R\leq P$. 
\end{proof}

\section{The set $\F_T^c$} \label{SS:FTc}

The following (non-standard) notation together with Lemma~\ref{L:FTcFRc} below will be very convenient to use in Chapters~\ref{S:NRadical} and \ref{S:NReplete}.

\begin{notation}\label{N:FRc}
Given a subgroup $R\leq S$, we write $\F^c_R$ for the set of all subgroups $U\leq R$ such that $C_R(U^*)\leq U^*$ for every $\F$-conjugate $U^*$ of $U$ with $U^*\leq R$.  
\end{notation}

If $R$ is strongly closed, then every $\F$-conjugate of a subgroup of $R$ lies in $R$. So $\F_R^c$ is in this case the set of all $U\leq R$ such that $C_R(U^*)\leq U^*$ for all $U^*\in U^\F$.

\begin{lemma}\label{L:FTcFRc}
Let $\F$ be saturated and suppose $T$ is a subgroup of $S$ which is strongly closed in $\F$. For parts (b),(c),(d) let $R\leq C_S(T)$ be strongly closed in $N_\F(T)$.
\begin{itemize}
 \item [(a)] If $U\leq T$ such that there exists a fully $\F$-normalized $\F$-conjugate $U^*$ of $U$ with $C_T(U^*)\leq U^*$, then $U\in\F_T^c$.
 \item [(b)] Let $V\leq R$ and suppose $V^*$ is a fully $\F$-normalized $\F$-conjugate of $V$. Then $V^*\leq R$. If $C_R(V^*)\leq V^*$, then $V\in \F_R^c$.
\item [(c)] We have $N_\F(T)_R^c=\F_R^c$.
\item [(d)] Let $T\leq P\in\F_{TR}^c$. Then $P\cap R\in\F_R^c$.
\end{itemize}
\end{lemma}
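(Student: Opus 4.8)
\textbf{Proof plan for Lemma~\ref{L:FTcFRc}.}

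The four parts should be handled in order, since later parts will use earlier ones. For part (a), given $U \leq T$ and a fully $\F$-normalized $\F$-conjugate $U^*$ with $C_T(U^*) \leq U^*$, I want to show $C_T(W) \leq W$ for \emph{every} $\F$-conjugate $W$ of $U$ contained in $T$ (note $T$ strongly closed means every such conjugate does lie in $T$). The idea is the standard saturation argument: pick an isomorphism $\phi \colon W \to U^*$ in $\F$; since $U^*$ is fully normalized, by the extension axiom (or rather the characterization of fully normalized subgroups via \cite[Lemma~I.2.6]{Aschbacher/Kessar/Oliver:2011}) $\phi$ extends to a morphism $\hat\phi$ defined on $N_\phi \supseteq C_S(W)W$; because $T$ is strongly closed, $\hat\phi$ maps $C_T(W)$ into $T$, and in fact into $C_T(U^*)$. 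Since $C_T(U^*) \leq U^*$ and $\hat\phi$ restricted to $W$ is $\phi$, pulling back gives $C_T(W) \leq W$. Hence $U \in \F_T^c$.

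For part (b): $R \leq C_S(T)$ is strongly closed in $N_\F(T)$, and I take $V \leq R$ with a fully $\F$-normalized $\F$-conjugate $V^*$. I first need $V^* \leq R$; this is where I must be careful, because $V^*$ is an $\F$-conjugate, not just an $N_\F(T)$-conjugate. The point is that $V \leq R \leq C_S(T)$, so $VT$ has $T$ normal, and one can argue that a fully $\F$-normalized conjugate can be reached via a morphism that also normalizes $T$ — more precisely, since $V$ centralizes $T$, there is an $\F$-isomorphism $VT \to (VT)^*$ sending $T$ to $T$ (using that $T$ is strongly closed and $V^*$ fully normalized, extend an isomorphism $V \to V^*$ to $VT$), which makes this isomorphism lie in $N_\F(T)$; then $R$ strongly closed in $N_\F(T)$ forces $V^* \leq R$. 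Once $V^* \leq R$, the inclusion $C_R(V^*) \leq V^*$ together with the fully-normalized hypothesis gives, by the same extension argument as in (a) but now carried out inside $N_\F(T)$ (which is saturated, and in which $R$ is strongly closed), that $C_R(W) \leq W$ for every $N_\F(T)$-conjugate $W$ of $V$ in $R$; and then the content of part (c) shows this is the same as ranging over all $\F$-conjugates of $V$ in $R$, giving $V \in \F_R^c$. So it is cleanest to prove (c) before finishing (b), or to interleave them.

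For part (c), $N_\F(T)_R^c = \F_R^c$: the inclusion $\F_R^c \subseteq N_\F(T)_R^c$ is immediate since $N_\F(T)$-conjugacy is a restriction of $\F$-conjugacy, so fewer conjugates need to be checked. For the reverse, I must show that if $U \leq R$ and $U^*$ is an $\F$-conjugate of $U$ with $U^* \leq R$, then $U^*$ is actually an $N_\F(T)$-conjugate of $U$ — then the defining condition $C_R(U^*) \leq U^*$ transfers. Both $U, U^* \leq R \leq C_S(T)$, so again any $\F$-isomorphism $U \to U^*$ can be chosen (or modified) to extend to $UT \to U^*T$ with $T \mapsto T$, hence to lie in $N_\F(T)$; this uses strong closure of $T$ and is essentially the same maneuver as in (b). For part (d): given $T \leq P \in \F_{TR}^c$, I want $P \cap R \in \F_R^c$. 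By part (c) it suffices to check the centralizer condition for $N_\F(T)$-conjugates $(P\cap R)^*$ inside $R$. Such a conjugate, via an $N_\F(T)$-morphism $\psi$, extends (since $P \supseteq T$ and $\psi$ is in $N_\F(T)$) to a morphism on $P$ sending $P \cap R$ to $(P \cap R)^*$ and $P$ to some $\F_{TR}^c$-conjugate $P^*$ of $P$ with $T \leq P^*$; then $C_{TR}(P^*) \leq P^*$, and I intersect: $C_R((P\cap R)^*)$ — using $R \leq C_S(T)$ so that $C_R((P \cap R)^*) = C_R((P\cap R)^* T) \leq C_{TR}(P^*)$ once one checks $(P \cap R)^* T \le P^*$ — lands inside $P^* \cap R$. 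Finally $P^* \cap R = (P \cap R)^*$ because $\psi$ maps $R$ into $R$ ($R$ strongly closed in $N_\F(T)$) and $\psi$ maps $P$ onto $P^*$, so it maps $P \cap R$ onto $P^* \cap R$; hence $C_R((P\cap R)^*) \leq (P \cap R)^*$, as needed.

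\textbf{Main obstacle.} The recurring technical heart of all four parts is the same: upgrading an arbitrary $\F$-conjugation between subgroups of $C_S(T)$ to one that normalizes $T$ (and hence lies in $N_\F(T)$), and simultaneously extending morphisms from a subgroup to an overgroup containing $T$ using the saturation axioms for fully normalized subgroups. Getting the extension domains right — verifying that the relevant centralizers ($C_S(W)$, $C_T(W)$, $C_{TR}(P^*)$) actually lie inside the domain $N_\phi$ on which the extension is guaranteed, and that strong closure of $T$ (resp.\ of $R$ in $N_\F(T)$) keeps everything inside the intended subgroups — is the delicate bookkeeping that the proof will have to execute carefully. Once that machinery is set up for part (a) and reused, parts (b)--(d) are variations on the theme, with (d) requiring the extra intersection step $P^* \cap R = (P \cap R)^*$.
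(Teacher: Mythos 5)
Your plan is correct in outline and rests on the same underlying tools (the extension axiom for saturated fusion systems and strong closure), but the paper's proof is organized more economically, and your sketch of part (c) has an imprecision worth fixing.

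For (a), the paper derives the claim as the special case of (b) with $(1,T)$ in place of $(T,R)$, so a separate extension argument is unnecessary. For (b), rather than extending an isomorphism $V\to V^*$ via the extension axiom (which requires verifying $T\leq N_\phi$), the paper invokes \cite[Lemma~2.6(c)]{Aschbacher/Kessar/Oliver:2011} directly to obtain $\alpha\in\Hom_\F(N_S(V),S)$ with $V\alpha=V^*$; since $V\leq R\leq C_S(T)$ forces $T\leq C_S(V)\leq N_S(V)$, strong closure of $T$ makes $\alpha$ automatically a morphism in $N_\F(T)$, and then strong closure of $R$ in $N_\F(T)$ gives $V^*\leq R$ and $C_R(V)\alpha\leq C_R(V^*)=Z(V^*)$, hence $C_R(V)=Z(V)$, a property that propagates to every $\F$-conjugate $\hat V\leq R$ of $V$ by rerunning the argument with $\hat V$ in place of $V$. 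Your approach works too, but the bookkeeping is heavier.

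The more substantive issue is in (c). You propose to prove $N_\F(T)_R^c=\F_R^c$ by showing that any two $\F$-conjugate subgroups of $R$ are already $N_\F(T)$-conjugate, ``choosing or modifying'' an isomorphism $U\to U^*$ to extend to $UT\to U^*T$. As written this does not go through: the extension axiom only applies when the target is receptive, and an arbitrary $\F$-conjugate $U^*\leq R$ of $U$ need not be fully normalized, so there is no guarantee that a given morphism $U\to U^*$ extends at all. The claim you want is true, but it must be established by routing both $U$ and $U^*$ through a fully $\F$-normalized conjugate $\tilde U$ (each step being an $N_\F(T)$-morphism by the argument from (b)) and composing. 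The paper sidesteps this entirely: it does not compare the full sets of conjugates but instead notes that, for $V\in N_\F(T)_R^c$, the fully normalized $V^*$ is already known (from the proof of (b)) to be an $N_\F(T)$-conjugate of $V$ lying in $R$, so $C_R(V^*)\leq V^*$ holds by the definition of $N_\F(T)_R^c$, and then part (b) immediately yields $V\in\F_R^c$. This is both shorter and robust.

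Your treatment of (d) matches the paper's, including the reduction to $N_\F(T)$-conjugates via (c), the extension of $\psi$ to a morphism on $P=TQ$, and the intersection step. Your one-line justification of $P^*\cap R=(P\cap R)^*$ is valid (both $\hat\psi$ and $\hat\psi^{-1}$ are $N_\F(T)$-morphisms, and $R$ is strongly closed in $N_\F(T)$); the paper instead exhibits the containment $R\cap T\leq Q$ explicitly and conjugates it across. Either route is fine.
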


\begin{proof}
Notice that (a) follows from (b) applied with $(1,T)$ in place of $(T,R)$. Let now $R$, $V$ and $V^*$ be as in (b). By \cite[Lemma~2.6(c)]{Aschbacher/Kessar/Oliver:2011}, there exists $\alpha\in\Hom_\F(N_S(V),S)$ with $V\alpha=V^*$. As $V\leq R\leq C_S(T)$, we have $T\leq C_S(V)\leq N_S(V)$. Since $T$ is strongly closed, it follows that $\alpha$ is a morphism in $N_\F(T)$. As $R$ is strongly closed in $N_\F(T)$, we have thus $V^*=V\alpha\leq R$. Furthermore, if $C_R(V^*)\leq V^*$, then $C_R(V)\alpha\leq C_R(V^*)=Z(V^*)$ and thus $C_R(V)=Z(V)$. The latter property remains true if we replace $V$ by any $\F$-conjugate $\hat{V}$ of $V$ with $\hat{V}\leq R$. Hence (b) holds.

\smallskip

Assume now that $V\in N_\F(T)_R^c$. Since have seen that $V^*$ is conjugate to $V$ under $N_\F(T)$, we have then $C_R(V^*)\leq V^*$. Hence $V\in \F_R^c$ by (b). This proves $N_\F(T)_R^c\subseteq\F_R^c$. Clearly $\F_R^c\subseteq N_\F(T)_R^c$, so (c) holds. 

\smallskip

Let now $P$ be as in (d) and set $Q:=P\cap R$. By (c) it is sufficient to show that $Q\in N_\F(T)_R^c$. Note that $P=TQ$ by a Dedekind argument. So if $\phi\in\Hom_{N_\F(T)}(Q,R)$, then $\phi$ extends to $\hat{\phi}\in\Hom_\F(P,S)$ and we have $P\hat{\phi}=T(Q\phi)$. As $R\leq C_S(T)$ and $P\in\F_{TR}^c$, it follows $C_R(Q\phi)\leq C_{TR}(P\hat{\phi})\leq P\hat{\phi}$ and hence  $C_R(Q\phi)\leq R\cap P\hat{\phi}=(R\cap T)(Q\phi)$. Observe that $R\cap T\leq R\cap Z(TR)\leq R\cap C_{TR}(P)\leq R\cap P=Q$. Since $R$ and $T$ are both strongly closed in $N_\F(T)$, it follows $R\cap T=(R\cap T)\hat{\phi}\leq Q\phi$ and $C_R(Q\phi)\leq Q\phi$. This shows $Q\in N_\F(T)_R^c$ as required.
\end{proof}

\section{Properties of the focal and the hyperfocal subgroup}

We will use the definition of the focal subgroup also for fusion systems which are not necessarily saturated. Thus, if $\F$ is any fusion system over a finite $p$-group $S$, then we set
\[\foc(\F):=\<x^{-1}(x\alpha)\colon x\in P\leq S\mbox{ and }\alpha\in\Hom_\F(P,S)\>.\]

\begin{lemma}\label{L:focFinTStrCl}
Let $\foc(\F)\leq T\leq S$. Then $T$ is strongly closed in $\F$.
\end{lemma}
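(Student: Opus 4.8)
The statement to prove is: if $\foc(\F) \leq T \leq S$, then $T$ is strongly closed in $\F$. Recall that $T$ being strongly closed means that for every $P \leq T$ and every $\alpha \in \Hom_\F(P,S)$, we have $P\alpha \leq T$. The plan is to reduce this to a statement about single elements and then use the definition of $\foc(\F)$ directly.

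First I would reduce to elements: it suffices to show that $x\alpha \in T$ for every $x \in S$ lying in some subgroup $P \leq S$ on which a morphism $\alpha \in \Hom_\F(P,S)$ is defined, under the hypothesis that $x \in T$. Indeed, if this holds, then given $P \leq T$ and $\alpha \in \Hom_\F(P,S)$, each generator $x$ of $P$ lies in $T$, so $x\alpha \in T$, and since these images generate $P\alpha$ we get $P\alpha \leq T$. So fix such an $x \in T$ and such an $\alpha$; I want $x\alpha \in T$.

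Now the key step: by the definition of the focal subgroup, the element $x^{-1}(x\alpha)$ lies in $\foc(\F)$, since $x \in P \leq S$ and $\alpha \in \Hom_\F(P,S)$. (Here one should be slightly careful that $x^{-1}(x\alpha)$ is indeed a well-defined element of $S$: both $x$ and $x\alpha$ lie in $S$, so this is an honest product in the group $S$.) By hypothesis $\foc(\F) \leq T$, so $x^{-1}(x\alpha) \in T$. Since also $x \in T$ by assumption, it follows that $x\alpha = x \cdot (x^{-1}(x\alpha)) \in T$, as $T$ is a subgroup of $S$. This gives $x\alpha \in T$ and completes the argument.

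I do not expect any real obstacle here; the proof is essentially immediate from unwinding the two definitions. The only points requiring a moment of care are: (i) checking that strong closure can be tested on generators/elements rather than on whole subgroups (this is routine since morphisms are group homomorphisms and $T$ is a subgroup), and (ii) making sure the expression $x^{-1}(x\alpha)$ appearing in the definition of $\foc(\F)$ is interpreted as a product inside $S$, not inside the partial group $\L$ — but in the fusion-system context this is the standard reading. So the main "step" is really just the observation that $x \in T$ together with $x^{-1}(x\alpha) \in \foc(\F) \leq T$ forces $x\alpha \in T$.
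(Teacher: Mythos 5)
Your proof is correct and is essentially the same as the paper's: the paper also fixes $x\in T$ and $\alpha\in\Hom_\F(\<x\>,S)$, observes $y:=x^{-1}(x\alpha)\in\foc(\F)\leq T$, and concludes $x\alpha=xy\in T$. The only cosmetic difference is that the paper restricts $\alpha$ to $\<x\>$ from the start rather than spelling out the reduction to elements.
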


\begin{proof}
If $x\in T$ and $\alpha\in\Hom_\F(\<x\>,S)$, then by definition of the focal subgroup, we have $y:=x^{-1}(x\alpha)\in\foc(\F)\leq T$ and thus $x\alpha=xy\in T$. 
\end{proof}

\begin{lemma}\label{L:TFq}
Suppose $\F$ is saturated and $T\leq S$ is weakly closed in $\F$. Assume that $\hyp(C_\F(T))\leq T$. Then $T\in\F^q$.
\end{lemma}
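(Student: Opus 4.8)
The plan is to reduce the assertion $T\in\F^q$ — that $T$ is $\F$-quasicentric — to showing that the centralizer fusion system $C_\F(T)$ is nilpotent, and then to prove the latter directly via Lemma~\ref{L:AutGroup}. Since $T$ is weakly closed in $\F$, its only $\F$-conjugate is $T$ itself; in particular $T$ is fully $\F$-normalized and fully $\F$-centralized, so $C_\F(T)$ is a saturated fusion system over $C_S(T)$, and (by definition of $\F^q$, applied to the unique fully centralized $\F$-conjugate $T$ of $T$) it suffices to show $C_\F(T)=\F_{C_S(T)}(C_S(T))$, equivalently $\hyp(C_\F(T))=1$. Note that the hypothesis yields only $\hyp(C_\F(T))\leq C_S(T)\cap T=Z(T)$, and this is all we will use.

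To prove $\hyp(C_\F(T))=1$ it is enough to show that $\Aut_{C_\F(T)}(Q)$ is a $p$-group for every $Q\leq C_S(T)$, and for that it suffices to check that every $p^\prime$-element $\alpha\in\Aut_{C_\F(T)}(Q)$ equals $\id_Q$. Fix such $Q$ and $\alpha$. By the definition of the centralizer fusion system, $\alpha$ extends to $\bar\alpha\in\Hom_\F(TQ,TQ)$ with $\bar\alpha|_T=\id_T$ and $\bar\alpha|_Q=\alpha$; since $T$ is weakly closed every $\F$-automorphism of $TQ$ stabilizes $T$, and as $Q\alpha=Q$ this gives $\bar\alpha\in\Aut_\F(TQ)$. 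Passing to the $p^\prime$-part of $\bar\alpha$ (a power of $\bar\alpha$) and using that $\alpha$ has $p^\prime$-order, we may assume $\bar\alpha$ is a $p^\prime$-automorphism of $TQ$ still restricting to $\id_T$ on $T$ and to $\alpha$ on $Q$. Now $T\unlhd TQ$ because $Q\leq C_S(T)$, we have $[T,\bar\alpha]=1$, and a short computation gives $[TQ,\bar\alpha]=[Q,\alpha]$, which lies in $\hyp(C_\F(T))\leq Z(T)\leq T$ since $\alpha\in O^p(\Aut_{C_\F(T)}(Q))$. Hence Lemma~\ref{L:AutGroup}, applied with $G=TQ$ and with $T$ in the role of the normal $p$-subgroup $U$, yields $\bar\alpha=\id_{TQ}$ and therefore $\alpha=\id_Q$.

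Once $\hyp(C_\F(T))=1$ is established, I would finish by the standard fact that a saturated fusion system with trivial hyperfocal subgroup is nilpotent, so $C_\F(T)=\F_{C_S(T)}(C_S(T))$ and $T$ is $\F$-quasicentric. The genuinely delicate point is precisely the step that upgrades the hypothesis $\hyp(C_\F(T))\leq T$ — which literally only says $\hyp(C_\F(T))\leq Z(T)$ — to the much stronger $\hyp(C_\F(T))=1$: the key is that any extension $\bar\alpha$ of a centralizer-automorphism is trivial on $T$ and displaces $TQ$ only into $T$, which is exactly the setup of Lemma~\ref{L:AutGroup}. The remaining points — existence and $T$-stability of the extension $\bar\alpha$, and the reduction to its $p^\prime$-part — are routine.
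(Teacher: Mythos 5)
Your proof is correct and is essentially the same argument as the paper's: both reduce to showing, via Alperin's fusion theorem, that the $p^\prime$-part of each $\Aut_{C_\F(T)}(Q)$ is trivial, and both exploit the two facts that $\hyp(C_\F(T))\leq T$ and that any morphism in $C_\F(T)$ has an extension acting as the identity on $T$. The only difference is packaging: the paper runs the coprime-action step directly inside $\Aut(P)$ (noting $[P,\phi]\leq P\cap T$, $[P,\phi,\phi]=1$, hence $[P,\phi]=1$), whereas you extend to $\bar\alpha\in\Aut_\F(TQ)$, pass to its $p^\prime$-part, and invoke Lemma~\ref{L:AutGroup}; both routes are valid.
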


\begin{proof}
Since $\F$ is saturated and $T\unlhd S$, the centralizer $C_\F(T)$ is saturated. Moreover, since $T^\F=\{T\}$, we only need to show that $C_\F(T)$ is the fusion system of a $p$-group. Pick $P\leq C_\F(T)^c$. By Alperin's fusion theorem (cf. \cite[Theorem~I.3.5]{Aschbacher/Kessar/Oliver:2011}), it is sufficient to show that $\Aut_{C_\F(T)}(P)$ is a $p$-group. If $\phi\in\Aut_{C_\F(T)}(P)$ is a $p^\prime$-element, then $[P,\phi]\leq \hyp(C_\F(T))\leq T$. As $\phi$ is a morphism in $C_\F(T)$, it follows $[P,\phi]=[P,\phi,\phi]=1$. Hence $\Aut_{C_\F(T)}(P)$ is a $p$-group as required.
\end{proof}

\section{Morphisms of fusion systems}\label{SS:FusionMorphisms}

Throughout this section let $\F$ and $\tF$ be fusion systems over $S$ and $\tS$ respectively. 

\begin{definition}
A group homomorphism $\alpha\colon S\longrightarrow \tS$ is said to \textit{induce a morphism} from $\F$ to $\tF$ if, for each $\phi\in\Hom_\F(P,Q)$, there exists $\psi\in\Hom_{\tF}(P\alpha,Q\alpha)$ such that $(\alpha|_P)\psi=\phi(\alpha|_Q)$. 
\end{definition}

If $\alpha$ induces a morphism from $\F$ to $\tF$, then for any $\phi\in\Hom_\F(P,Q)$, the map $\psi\in\Hom_{\tF}(P\alpha,Q\alpha)$ as in the above definition is uniquely determined. So if $\alpha$ induces a morphism from $\F$ to $\tF$, then $\alpha$ induces a map 
\[\alpha_{P,Q}\colon\Hom_\F(P,Q)\longrightarrow \Hom_{\tF}(P\alpha,Q\alpha).\] 
Together with the map $P\mapsto P\alpha$ from the set of objects of $\F$ to the set of objects of $\tF$ this gives a functor from $\F$ to $\tF$. Moreover, $\alpha$ together with the maps $\alpha_{P,Q}$ ($P,Q\leq S$) is a morphism of fusion systems in the sense of \cite[Definition~II.2.2]{Aschbacher/Kessar/Oliver:2011}.

\begin{definition}
Suppose $\alpha\colon S\longrightarrow \tS$ induces a morphism from $\F$ to $\tF$. Then $\alpha$ is said to \emph{induce an epimorphism} from $\F$ to $\tF$ if $\alpha$ is surjective as a map $S\longrightarrow \tS$ and, for all $P,Q\leq S$ with $\ker(\alpha)\leq P\cap Q$, the map $\alpha_{P,Q}$ is surjective. If $\alpha$ is in addition injective, then we say that $\alpha$ \emph{induces an isomorphism} from $\F$ to $\tF$. Write $\Aut(\F)$ for the set of automorphisms of $S$ which induce an automorphism of $\F$, i.e. an isomorphism from $\F$ to $\F$. 
\end{definition}

If $\alpha\colon S\longrightarrow \tS$ is an isomorphism of groups, then $\alpha$ induces an isomorphism from $\F$ to $\tF$ if and only if, for all $P,Q\leq S$ and every group homomorphism $\phi\colon P\longrightarrow Q$, $\phi\in\Hom_\F(P,Q)$ if and only if $\alpha^{-1}\phi\alpha\in\Hom_{\tF}(P\alpha,Q\alpha)$.

\section{Products of fusion systems which centralize each other} 

In this section we study certain products of fusion systems. Our results are tailored for use in Theorem~\ref{T:RegularN1timesN2}, which is one of the major steps towards the proof of Theorem~\ref{T:mainRegularPartialNormal}. We use the following notation.

\begin{notation} 
Suppose $\F_1$ and $\F_2$ are fusion systems over subgroups $S_1$ and $S_2$ of $S$ respectively such that $[S_1,S_2]=1$ and $S_1\cap S_2\leq Z(\F_i)$ for $i=1,2$. Then we use the following notation.
\begin{itemize}
\item Given $P_i,Q_i\leq S_i$ and $\phi_i\in\Hom_{\F_i}(P_i,Q_i)$ for $i=1,2$, we write $\phi_1*\phi_2$ for the map $P_1P_2\rightarrow Q_1Q_2$ sending $x_1x_2$ to $(x_1\phi_1)(x_2\phi_2)$ for all $x_1\in P_1$ and $x_2\in P_2$.
\item We write $\F_1*\F_2$ for the fusion system over $S_1S_2$ which is generated by the maps $\phi_1*\phi_2$ with $P_i\leq S_i$ and $\phi_i\in\Hom_{\F_i}(P_i,S_i)$ for $i=1,2$.
\item If $A\leq S_1S_2$ we set
\[A_i:=\{x_i\in S_i\colon \exists x_{3-i}\in S_{3-i}\mbox{ such that }x_1x_2\in A\}.\]
\end{itemize}   
\end{notation}

Under the hypothesis above, it can be shown that the fusion system $\F_1*\F_2$ is isomorphic to a certain quotient of the direct product $\F_1\times\F_2$ and thus isomorphic to a central product of $\F_1$ and $\F_2$ in the sense of Aschbacher \cite[Definition~2.8]{Aschbacher:2011}. In a special case this is shown in \cite[Proposition~3.3]{Henke:2018} and the arguments can be generalized. We will use this connection however only in the proof of Lemma~\ref{L:F1starF2Main}(g) below. 

\smallskip  

In the next lemma we use the following convention: If $\phi\colon A\rightarrow B$ and $\psi\colon B^*\rightarrow C$ are maps, then $\phi\psi$ denotes the map from $\phi^{-1}(B\cap B^*)$ to $C$ given by $x\mapsto (x\phi)\psi$. Moreover, if $\phi\colon A\rightarrow B$ is injective, then $\phi^{-1}$ denotes the inverse of the bijection $A\rightarrow A\phi$ induced by $\phi$. 

\begin{lemma}\label{L:F1starF2Help}
For $i=1,2$ let $\F_i$ be a fusion system over $S_i\leq S$ such that $S_1\cap S_2\leq Z(\F_i)$ and $[S_1,S_2]=1$. Let $P_i\leq S_i$ and $\phi_i\in\Hom_{\F_i}(P_i,S_i)$. Then the following hold:
\begin{itemize}
\item [(a)] The map $\phi_1*\phi_2$ is well-defined and an injective group homomorphism. In particular, $\F_1*\F_2$ is well-defined. 
\item [(b)] Setting $\hat{P}_i=P_i(S_1\cap S_2)$, the morphism $\phi_i$ extends to a morphism $\hat{\phi}_i\in\Hom_{\F_i}(\hat{P}_i,S_i)$ for $i=1,2$, and $\phi_1*\phi_2$ extends to $\hat{\phi}_1*\hat{\phi}_2$.
\item [(c)] We have $(\phi_1*\phi_2)^{-1}=\phi_1^{-1}*\phi_2^{-1}$. 
\item [(d)] Suppose $S_1\cap S_2\leq P_i$ for $i=1,2$. If $x_i\in S_i$ for $i=1,2$ with $x_1x_2\in P_1P_2$, then $x_i\in P_i$. In particular, if $A\leq P_1P_2$, then $A_i\leq P_i$ for each $i=1,2$.  
\item [(e)] Let $S_1\cap S_2\leq Q_i\leq S_i$, $R_i:=\phi_i^{-1}(Q_i)$ and $\psi_i\in\Hom_{\F_i}(Q_i,S_i)$ for each $i=1,2$. Set $\phi=\phi_1*\phi_2$ and $\psi=\psi_1*\psi_2$. Then $\phi_i\psi_i$ is a map $R_i\rightarrow S_i$. Moreover, $\phi^{-1}(Q_1Q_2)=R_1R_2$ and $\phi\psi=(\phi_1\psi_1)*(\phi_2\psi_2)$. If in addition $S_1\cap S_2\leq P_i$ for $i=1,2$, then $S_1\cap S_2\leq R_i$ for $i=1,2$.
\item [(f)] If $A\leq S_1S_2$ and $\phi\in\Hom_{\F_1*\F_2}(A,S_1S_2)$, then there exist $\phi_i\in\Hom_{\F_i}(A_i,S_i)$ for $i=1,2$ such that $\phi=(\phi_1*\phi_2)|_A$.
\end{itemize}
\end{lemma}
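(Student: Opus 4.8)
\textbf{Proof plan for Lemma~\ref{L:F1starF2Help}(f).}

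The plan is to reduce an arbitrary morphism $\phi\in\Hom_{\F_1*\F_2}(A,S_1S_2)$ to a composite of morphisms of the generating type $\psi_1*\psi_2$, and then to track how parts (c), (d) and (e) interact with the operation $A\mapsto A_i$ along such composites. First I would recall that $\F_1*\F_2$ is, by definition, generated by the maps $\psi_1*\psi_2$ with $\psi_i\in\Hom_{\F_i}(P_i,S_i)$; by Alperin's fusion theorem applied to $\F_1*\F_2$ (which is saturated when $\F_1,\F_2$ are, but in any case the generation statement is the definition), every morphism in $\F_1*\F_2$ is a restriction of a composite of such generators and their inverses. By part (c), an inverse of a generator is again (a restriction of) a generator, so $\phi$ is a restriction of a composite $\chi=\chi^{(1)}\cdots\chi^{(k)}$ where each $\chi^{(j)}=\psi_1^{(j)}*\psi_2^{(j)}$.

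The main step is an induction on $k$. For the base case $k=1$, part (e) — more precisely the computation $\phi^{-1}(Q_1Q_2)=R_1R_2$ and $\phi\psi=(\phi_1\psi_1)*(\phi_2\psi_2)$ in (e), together with (d) — shows that restricting a single generator $\psi_1*\psi_2$ to a subgroup $A\leq P_1P_2$ gives a map of the form $(\psi_1|_{A_1})*(\psi_2|_{A_2})$, once one checks that $A$ is determined by $A_1,A_2$ in the sense that $A\leq A_1A_2$ and the restriction of $\psi_1*\psi_2$ to $A_1A_2$ already has the right effect on $A$; here I would use (d) to ensure $A_i\leq P_i$ and that elements of $A$ decompose compatibly. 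For the inductive step, write $\chi=\chi'\cdot\chi^{(k)}$ with $\chi'$ a composite of $k-1$ generators; apply the inductive hypothesis to the restriction of $\chi'$ to the relevant subgroup, obtaining $\chi'|=\phi_1'*\phi_2'$ for suitable $\phi_i'\in\Hom_{\F_i}$, then compose with $\chi^{(k)}=\psi_1*\psi_2$ and invoke part (e) again to see that the composite is $(\phi_1'\psi_1)*(\phi_2'\psi_2)$, and finally restrict to $A$ using the base-case argument. Keeping track of domains is where part (d) is used repeatedly: it guarantees that whenever a subgroup lies in a product $P_1P_2$, its "coordinates" lie in $P_i$, so that the $\F_i$-morphisms one wants to compose actually have matching domains.

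The hard part will be the bookkeeping of domains through the composite: a naive induction produces morphisms $\phi_i$ defined on subgroups that a priori might be strictly smaller than $A_i$, and one must argue that after restricting the final composite back to $A$ one can enlarge (or is already working on) exactly $A_i$. The clean way around this is to phrase the induction hypothesis not just for $A$ but for every subgroup of the ambient product, and to exploit that the generators $\psi_i$ can always be assumed to have domain containing $S_1\cap S_2$ (part (b)), so that parts (d) and (e) apply without extra hypotheses; then the coordinate-domain $A_i$ of the honest subgroup $A$ is recovered because $A\leq A_1A_2$ and, by (d), conversely any element of $A$ lies in $A_1A_2$ with both coordinates in the relevant domains. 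Once this is set up, the conclusion $\phi=(\phi_1*\phi_2)|_A$ with $\phi_i\in\Hom_{\F_i}(A_i,S_i)$ falls out of the inductive step with $k$ reduced to $1$.
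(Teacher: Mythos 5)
Your proposal is correct and follows essentially the same route as the paper: reduce, via parts (b), (c), (e) and the definition of $\F_1*\F_2$ as the fusion system generated by the maps $\psi_1*\psi_2$, to the case where $\phi$ is the restriction of a single $\psi_1*\psi_2$ with $S_1\cap S_2\leq P_i$ and $A\leq P_1P_2$, then apply (d) to conclude $A_i\leq P_i$ and restrict. The paper states this reduction in one sentence where you spell out the induction on the number of generators; one small caution is to avoid invoking Alperin's fusion theorem here, since the $\F_i$ are not assumed saturated in this lemma (you notice this yourself, and the plain generation statement is all that is needed).
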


\begin{proof}
For $i=1,2$ let $x_i,y_i\in P_i$. Then we have the following equivalence:
\begin{eqnarray*}
 x_1x_2=y_1y_2&\Longleftrightarrow & y_1^{-1}x_1=y_2x_2^{-1}\\
&\Longleftrightarrow & (y_1^{-1}x_1)\phi_1=(y_2x_2^{-1})\phi_2\mbox{ (since $S_1\cap S_2\leq Z(\F_i)$ for $i=1,2$)}\\
&\Longleftrightarrow & (y_1\phi_1)^{-1}(x_1\phi_1)=(y_2\phi_2)(x_2\phi_2)^{-1}\\
&\Longleftrightarrow & (x_1\phi_1)(x_2\phi_2)=(y_1\phi_1)(y_2\phi_2).
\end{eqnarray*}
This shows that $\phi_1*\phi_2$ is well-defined and injective. Using $[S_1,S_2]=1$ and the fact that $\phi_1$ and $\phi_2$ are group homomorphisms, it is easy to see that $\phi_1*\phi_2$ is a group homomorphism. So (a) holds. 

\smallskip

The existence of $\hat{\phi}_i$ in (b) follows from $S_1\cap S_2\leq Z(\F_i)$ for $i=1,2$. By definition, $\phi_1*\phi_2$ and $\hat{\phi}_1*\hat{\phi}_2$ agree on $P_1P_2$, so (b) holds. Part (c) is immediate.

\smallskip

For the proof of (d) assume $S_1\cap S_2\leq P_i$ and $x_i\in S_i$ for $i=1,2$. Suppose furthermore $x_1x_2\in P_1P_2$. If $x_1x_2=y_1y_2$ where $y_i\in P_i$ for $i=1,2$, then $y_1^{-1}x_1=y_2x_2^{-1}\in S_1\cap S_2\leq P_i$ and thus $x_i\in P_i$ for $i=1,2$. This shows (d). 

\smallskip

Let now $Q_i$, $R_i$, $\psi_i$, $\phi$ and $\psi$ be as in (e). Let $x_1\in P_1$ and $x_2\in P_2$. Then $x_1x_2\in\phi^{-1}(Q_1Q_2)$ if and only if $(x_1\phi_1)(x_2\phi_2)=(x_1x_2)\phi\in Q_1Q_2$. Since $S_1\cap S_2\leq Q_i$ for each $i=1,2$, this is by (d) the case if and only if $x_i\phi_i\in Q_i$ and thus $x_i\in R_i$ for each $i=1,2$.  Hence, $\phi^{-1}(Q_1Q_1)=R_1R_2$ and it is easy to see that $\phi\psi=(\phi_1\psi_1)*(\phi_2\psi_2)$. If $S_1\cap S_2\leq P_i$ for each $i=1,2$, then $S_1\cap S_2\leq Z(\F_i)$ yields $S_1\cap S_2\leq R_i$ for each $i=1,2$. So (e) holds.

\smallskip

Let now $A\leq S_1S_2$ and $\phi\in\Hom_{\F_1*\F_2}(A,S_1S_2)$. It follows from parts (b),(c),(e) and from the definition of $\F_1*\F_2$ that there exist $P_i\leq S_i$ with $S_1\cap S_2\leq P_i$ and $\psi_i\in \Hom_{\F_i}(P_i,S_i)$ such that $A\leq P_1P_2$ and $\phi=(\psi_1*\psi_2)|_A$. By (d), we have $A_i\leq P_i$ for $i=1,2$. So $\phi_i:=\psi_i|_{A_i}\in\Hom_{\F_i}(A_i,S_i)$ is well-defined. Clearly $\psi=(\phi_1*\phi_2)|_A$.  
\end{proof}

\begin{definition}\label{D:SubsystemCentralize}
Suppose that for each $i=1,2$, we are given a subsystem $\F_i$ of $\F$ over $S_i\leq S$. We say that $\F_1$ and $\F_2$ centralize each other (in $\F$) if $\F_i\subseteq C_\F(S_{3-i})$ and $S_1\cap S_2\leq Z(\F_i)$ for $i=1,2$.
\end{definition}

\begin{lemma}\label{L:F1starF2inF}
Let $\F_i$ be a subsystem of $\F$ for $i=1,2$ such that $\F_1$ and $\F_2$ centralize each other in $\F$. Then $\F_1*\F_2$ is well-defined and a subsystem of $\F$. 
\end{lemma}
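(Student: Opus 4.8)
The plan is to verify the two defining conditions from Definition~\ref{D:SubsystemCentralize}: that $\F_1*\F_2$ is well-defined (which is immediate from Lemma~\ref{L:F1starF2Help}(a), using that $S_1\cap S_2\leq Z(\F_i)$ and $[S_1,S_2]=1$, both of which are part of the hypothesis that $\F_1$ and $\F_2$ centralize each other), and then that $\F_1*\F_2$ is a subsystem of $\F$, i.e.\ that every generating morphism $\phi_1*\phi_2$ is a morphism in $\F$. The containment of $S_1S_2$ in $S$ is clear since $S_1,S_2\leq S$.

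First I would reduce to checking the generators. By definition $\F_1*\F_2$ is the fusion system over $S_1S_2$ generated by the maps $\phi_1*\phi_2$ with $\phi_i\in\Hom_{\F_i}(P_i,S_i)$. Since $\F$ is a fusion system over $S\geq S_1S_2$ and fusion systems are closed under composition, restriction, and taking inverses of the morphisms they contain, it suffices to show each generator $\phi_1*\phi_2$ lies in $\Hom_\F(P_1P_2,S_1S_2)$. So the real content is: given $\phi_i\in\Hom_{\F_i}(P_i,S_i)$, realize $\phi_1*\phi_2$ as an $\F$-morphism.

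The key step is to use the hypothesis $\F_i\subseteq C_\F(S_{3-i})$. This says precisely that every morphism of $\F_1$ is a morphism of $C_\F(S_2)$ and every morphism of $\F_2$ is a morphism of $C_\F(S_1)$; in particular $\phi_1\in\Hom_{C_\F(S_2)}(P_1,S_1)$ extends (by definition of the centralizer fusion system) to a morphism $\tilde\phi_1\in\Hom_\F(P_1S_2,S_1S_2)$ that restricts to the identity on $S_2$, and symmetrically $\phi_2$ extends to $\tilde\phi_2\in\Hom_\F(S_1P_2,S_1S_2)$ restricting to the identity on $S_1$. Now form the composite $\tilde\phi_2|_{P_1P_2}$ followed by $\tilde\phi_1$ appropriately — concretely, on $P_1P_2$ the map $\tilde\phi_1$ (defined on $P_1S_2\supseteq P_1P_2$) acts as $\phi_1$ on the $P_1$-part and trivially on the $S_2\supseteq P_2$-part, and likewise $\tilde\phi_2$ acts trivially on $S_1\supseteq P_1$ and as $\phi_2$ on $P_2$; so the composite $(\tilde\phi_2|_{P_1P_2})(\tilde\phi_1|_{P_1\phi_2\cdot P_2\phi_2})$ — where one must check the image of the first sits in the domain of the second — sends $x_1x_2$ to $(x_1\phi_1)(x_2\phi_2)$, i.e.\ equals $\phi_1*\phi_2$. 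Since $\F$ is closed under composition and restriction of its morphisms, this exhibits $\phi_1*\phi_2\in\Hom_\F(P_1P_2,S_1S_2)$.

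The main obstacle I anticipate is purely bookkeeping: making sure the domains and codomains of the two extended morphisms $\tilde\phi_1,\tilde\phi_2$ line up so that their composite is defined on all of $P_1P_2$ and has image in $S_1S_2$, and that one correctly uses that $\tilde\phi_i$ restricts to the identity on $S_{3-i}$ (which is what lets the two "partial" actions commute and combine into $\phi_1*\phi_2$). One should also double-check that $[S_1,S_2]=1$ is genuinely used — it guarantees $P_1P_2$ is a subgroup and that the formula $x_1x_2\mapsto(x_1\phi_1)(x_2\phi_2)$ is a homomorphism, which is already packaged in Lemma~\ref{L:F1starF2Help}(a). With that lemma in hand the well-definedness is free, so the proof is short: invoke Lemma~\ref{L:F1starF2Help}(a) for well-definedness, then run the extension-and-composition argument above to place each generator in $\F$.
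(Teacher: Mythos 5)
Your proof is correct and takes essentially the same approach as the paper: invoke Lemma~\ref{L:F1starF2Help}(a) for well-definedness, then use that $\F_i\subseteq C_\F(S_{3-i})$ to extend each $\phi_i$ to an $\F$-morphism $\hat\phi_i\in\Hom_\F(P_iS_{3-i},S_1S_2)$ restricting to the identity on $S_{3-i}$, and compose. The only difference is the order in which you apply $\hat\phi_1$ and $\hat\phi_2$, which is immaterial, and a small notational slip: in your composite the restriction of $\tilde\phi_1$ should be to $P_1(P_2\phi_2)$, not $(P_1\phi_2)(P_2\phi_2)$, since $\phi_2$ is not defined on $P_1$ (though $\tilde\phi_2$ is and fixes it pointwise, which is clearly what you intend).
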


\begin{proof}
Let $\F_i$ be a subsystem over $S_i\leq S$ for $i=1,2$. Then $S_1\cap S_2\leq Z(\F_i)$ for $i=1,2$, so the subsystem $\F_1*\F_2$ is by Lemma~\ref{L:F1starF2Help}(a) well-defined. Let now $P_i\leq S_i$ and $\phi_i\in\Hom_{\F_i}(P_i,S_i)$ for $i=1,2$. As $\F_i\subseteq C_\F(S_{3-i})$, the morphism $\phi_i$ extends to a morphism $\hat{\phi}_i\in\Hom_\F(P_iS_{3-i},S_1S_2)$ which restricts to the identity on $S_{3-i}$. Hence, $\phi_1*\phi_2=(\hat{\phi}_1|_{P_1P_2})(\hat{\phi}_2|_{(P_1\phi_1)P_2})$ is a morphism in $\F$. This proves $\F_1*\F_2\subseteq\F$.  
\end{proof}

The connection between $\F_i^{cr}$ and $\F^{cr}$ given in part (f) of the following lemma will be particularly important in the proof of Theorem~\ref{T:RegularN1timesN2}. Because of our non-standard definition of $\F^{cr}$, we are not able to cite results from the literature about central products of fusion systems. It is important that we show the property (f) for fusion systems that are not known to be saturated, because this is needed to prove saturation.

\begin{lemma}\label{L:F1starF2Main}
For $i=1,2$ let $\F_i$ be a fusion system over $S_i$ such that $S_1\cap S_2\leq Z(\F_i)$ and $[S_1,S_2]=1$. Assume $\F=\F_1*\F_2$. Then the following hold:
\begin{itemize}
 \item [(a)] Given $Q\leq S_1S_2$, we have $N_{\F}(Q)\subseteq N_{\F_1}(Q_1)*N_{\F_2}(Q_2)$.
 \item [(b)] If $S_1\cap S_2\leq Q_i\leq S_i$ for $i=1,2$, then $N_\F(Q_1Q_2)=N_{\F_1}(Q_1)*N_{\F_2}(Q_2)$.
 \item [(c)] We have $O_p(\F)=O_p(\F_1)O_p(\F_2)$.
 \item [(d)] Suppose for $i=1,2$ we are given $Q_i\leq S_i$ such that $S_1\cap S_2\leq Q_i$ and $Q_i$ is fully $\F_i$-normalized. Then $Q_1Q_2$ is fully $\F$-normalized.
 \item [(e)] Given $P_i\in\F_i^c$ for $i=1,2$, we have $P_1P_2\in\F^c$.
 \item [(f)] If $P_i\in\F_i^{cr}$ for $i=1,2$, we have $P_1P_2\in\F^{cr}$.
 \item [(g)] Suppose $\F_i$ is saturated and $P_i\leq S_i$ for each $i=1,2$. Then $P_1P_2\in\F^s$ if and only if $P_i\in\F_i^s$ for each $i=1,2$. 
\end{itemize}
\end{lemma}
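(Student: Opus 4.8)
\textbf{Proof plan for Lemma~\ref{L:F1starF2Main}.} The strategy is to prove the seven parts essentially in the order given, since each uses its predecessors. For part (a), given $Q\le S_1S_2$ and a morphism $\phi\in\Hom_{N_\F(Q)}(P,S_1S_2)$, I would use Lemma~\ref{L:F1starF2Help}(f) to write $\phi=(\phi_1*\phi_2)|_P$ with $\phi_i\in\Hom_{\F_i}(P_i,S_i)$, extend (using $S_1\cap S_2\le Z(\F_i)$, via Lemma~\ref{L:F1starF2Help}(b)) so that $S_1\cap S_2\le P_i$, and then check that each $\phi_i$ normalizes $Q_i$: if $x_i\in Q_i$, pick $x_{3-i}\in S_{3-i}$ with $x_1x_2\in Q$, apply $\phi$, and read off from $(x_1x_2)\phi=(x_1\phi_1)(x_2\phi_2)\in Q$ (here I need that $\phi$ fixes the ``other'' coordinate, which can be arranged since $\phi$ centralizes $Q$ — more carefully, one extends $\phi_i$ by the identity on $S_{3-i}$ and uses that the extension lies in $N_\F(Q)$) that $x_i\phi_i\in Q_i$. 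For part (b), the inclusion $\supseteq$ holds because $\phi_1*\phi_2$ normalizes $Q_1Q_2$ whenever $\phi_i$ normalizes $Q_i$, and $\subseteq$ is part (a) together with the observation that when $S_1\cap S_2\le Q_i$ we have $(Q_1Q_2)_i=Q_i$ by Lemma~\ref{L:F1starF2Help}(d).

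For part (c): $O_p(\F_1)O_p(\F_2)$ is normalized by every $\phi_1*\phi_2$ and is a $p$-subgroup of $S_1S_2$, hence it is contained in $O_p(\F)$; conversely, writing $R:=O_p(\F)$, for each $i$ the subgroup $R_i$ is $\F_i$-invariant — any $\phi_i\in\Hom_{\F_i}(P_i,S_i)$ extends to $\hat\phi_i*\id$, which normalizes $R$, forcing $R_i\phi_i\le R_i$ — so $R_i\le O_p(\F_i)$, and $R\le R_1R_2\le O_p(\F_1)O_p(\F_2)$. For part (d), if $Q_i$ is fully $\F_i$-normalized then $N_{S_i}(Q_i)$ is a Sylow $p$-subgroup of $N_{\F_i}(Q_i)$; using part (b), $N_\F(Q_1Q_2)=N_{\F_1}(Q_1)*N_{\F_2}(Q_2)$, whose ``Sylow'' is $N_{S_1}(Q_1)N_{S_2}(Q_2)=N_{S_1S_2}(Q_1Q_2)$ (the last equality since $S_1\cap S_2\le Q_i$), giving full normalization. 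Part (e) is the standard fact that $C_{S_1S_2}(P_1P_2)=C_{S_1}(P_1)C_{S_2}(P_2)$ when $[S_1,S_2]=1$ (a short centralizer computation), so $P_i\in\F_i^c$ for both $i$ forces $C_{S_1S_2}(P_1P_2)\le P_1P_2$, and the same applies to $\F$-conjugates via Lemma~\ref{L:F1starF2Help}(f).

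Part (f) is the technical heart and, I expect, the main obstacle, precisely because we use the nonstandard Definition~\ref{D:Fcr} of $\F^{cr}$ and cannot invoke saturation. Given $P_i\in\F_i^{cr}$, I would first enlarge to $\hat P_i:=P_i(S_1\cap S_2)$; since $S_1\cap S_2\le Z(\F_i)$, one checks $\hat P_i\in\F_i^{cr}$ as well (centricity and radicality are insensitive to the central adjustment — here I would need a small lemma that $O_p(N_{\F_i}(\hat P_i))=\hat P_i$ follows from $O_p(N_{\F_i}(P_i))=P_i$, using that $S_1\cap S_2$ is central). Then $P_1P_2=\hat P_1\hat P_2$ (as $S_1\cap S_2$ appears in both factors), so without loss $S_1\cap S_2\le P_i$. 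By part (e), $P_1P_2\in\F^c$. For radicality I must produce a fully $\F$-normalized $\F$-conjugate of $P_1P_2$ with trivial $O_p$ of its normalizer: choose fully $\F_i$-normalized conjugates $Q_i$ of $P_i$ (still with $S_1\cap S_2\le Q_i$, achievable since $S_1\cap S_2$ is central so any conjugate can be adjusted); by part (d), $Q_1Q_2$ is fully $\F$-normalized and is $\F$-conjugate to $P_1P_2$ via $\phi_1*\phi_2$; by part (b), $N_\F(Q_1Q_2)=N_{\F_1}(Q_1)*N_{\F_2}(Q_2)$, and by part (c) applied to this product, $O_p(N_\F(Q_1Q_2))=O_p(N_{\F_1}(Q_1))\,O_p(N_{\F_2}(Q_2))$. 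Since $P_i\in\F_i^{cr}$ and $Q_i$ is the fully normalized conjugate witnessing radicality, $O_p(N_{\F_i}(Q_i))=Q_i$, so $O_p(N_\F(Q_1Q_2))=Q_1Q_2$, proving $P_1P_2\in\F^{cr}$. Finally, for part (g) (the only place saturation is assumed), I would invoke the identification mentioned in the text between $\F_1*\F_2$ and a central product of $\F_1$ and $\F_2$ in Aschbacher's sense (via the quotient of $\F_1\times\F_2$), under which subcentric subgroups of a central product correspond coordinatewise to subcentric subgroups of the factors — citing \cite[Proposition~3.3]{Henke:2018} and the remark that its argument generalizes, together with the behavior of $\F^s$ under the relevant quotient and product constructions from \cite{Henke:2015}; the forward and backward implications then follow from the coordinatewise description of subcentrality. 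The delicate point throughout is keeping track of the central subgroup $S_1\cap S_2$ and ensuring every ``conjugate'' and ``extension'' can be normalized to contain it, which is exactly what Lemma~\ref{L:F1starF2Help}(b),(d),(e) are designed to handle.
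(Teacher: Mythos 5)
Your overall plan tracks the paper's proof closely: parts (a)--(b) via Lemma~\ref{L:F1starF2Help}(f) and the extension lemmas, (c)--(d) via (b), (e) by the centralizer computation, (f) by combining (b)--(e) with the chosen fully normalized conjugates, and (g) by passing through the identification with a central product of fusion systems. Three small points are worth correcting. First, in (d) you invoke ``$N_{S_i}(Q_i)$ is a Sylow $p$-subgroup of $N_{\F_i}(Q_i)$,'' but the $\F_i$ are \emph{not} assumed saturated here, and fully normalized does not by itself give a Sylow-type statement in a fusion system; the paper instead does a direct order count, using $N_{S_1}(Q_1)\cap N_{S_2}(Q_2)=S_1\cap S_2$ and $|N_{S_i}(Q_i)|\geq|N_{S_i}(P_i)|$. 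Second, in (f) the enlargement $\hat P_i:=P_i(S_1\cap S_2)$ is unnecessary: since $P_i\in\F_i^{cr}\subseteq\F_i^c$, one has $S_1\cap S_2\leq Z(\F_i)\leq Z(S_i)\leq P_i$ automatically (and the same holds for any $\F_i$-conjugate), so no auxiliary lemma about stability of radicality under central adjustment is needed. Third, in (c) the claim ``$\hat\phi_i*\id$ normalizes $R$, forcing $R_i\phi_i\leq R_i$'' glosses over the fact that $R$ need not lie in the domain of $\hat\phi_i*\id$; the paper sidesteps this by observing $\F=N_\F(R)\subseteq N_{\F_1}(R_1)*N_{\F_2}(R_2)$ and applying Lemma~\ref{L:F1starF2Help}(f) to that decomposition to deduce $R_i\unlhd\F_i$. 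None of these is a fatal flaw, but each would require a fix before the argument is airtight.
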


\begin{proof}
Let $Q\leq S_1S_2$ and $x_i\in Q_i$ for $i=1,2$ such that $x_1x_2\in Q$. If $s_i\in S_i$ for $i=1,2$ such that $s_1s_2\in N_S(Q)$, then $(x_1^{s_1})(x_2^{s_2})=(x_1x_2)^{s_1s_2}\in Q^{s_1s_2}=Q$ and so $x_i^{s_i}\in Q_i$ for $i=1,2$. This proves $Q_i^{s_i}\leq Q_i$ and thus $s_i\in N_{S_i}(Q_i)$ for $i=1,2$. Hence, $N_S(Q)\leq N_{S_1}(Q_1)N_{S_2}(Q_2)$. Let now $A\leq N_S(Q)$ and $\phi\in\Hom_{N_\F(Q)}(A,N_S(Q))$. Without loss of generality choose $A$ and $\phi$ such that $Q\leq A$ and thus $Q\phi=Q$. By Lemma~\ref{L:F1starF2Help}(f), there exist $\phi_i\in\Hom_{\F_i}(A_i,S_i)$ such that $\phi=(\phi_1*\phi_2)|_A$. Notice that $Q_i\leq A_i$. If $x_1$ and $x_2$ are as above, then $(x_1\phi_1)(x_2\phi_2)=(x_1x_2)\phi\in Q\phi=Q$ and hence $x_i\phi_i\in Q_i$ for $i=1,2$. This proves $Q_i\phi_i=Q_i$ and $\phi_i\in\Hom_{N_{\F_i}(Q_i)}(A_i,S_i)$. Hence $\phi=(\phi_1*\phi_2)|_A$ is a morphism in $N_{\F_1}(Q_1)*N_{\F_2}(Q_2)$ and (a) holds.
 
\smallskip

For the proof of (b) let now $Q_i\leq S_i$ be arbitrary with $S_1\cap S_2\leq Q_i$ for $i=1,2$. It follows from Lemma~\ref{L:F1starF2Help}(d) that $(Q_1Q_2)_i=Q_i$ for $i=1,2$. Hence, setting $Q:=Q_1Q_2$, it follows from part (a) that $N_\F(Q)\subseteq N_{\F_1}(Q_1)*N_{\F_1}(Q_2)$. Observe that $\F_1$ and $\F_2$ centralize each other in $\F$. This implies that $N_{\F_1}(Q_1)$ and $N_{\F_2}(Q_2)$ are contained in $N_\F(Q)$ and centralize each other in $N_\F(Q)$. So Lemma~\ref{L:F1starF2inF} implies now that (b) holds. 

\smallskip

Notice that $S_1\cap S_2\leq Z(\F_i)\leq O_p(\F_i)$ for $i=1,2$. So part (b) yields in particular that $\F=\F_1*\F_2=N_{\F_1}(O_p(\F_1))*N_{\F_1}(O_p(\F_2))=N_\F(O_p(\F_1)O_p(\F_2))$. Thus, $O_p(\F_1)O_p(\F_2)\leq R:=O_p(\F_1*\F_2)$ and $O_p(\F_i)\leq R_i$. Similarly $\F=N_\F(R)\subseteq N_{\F_1}(R_1)*N_{\F_2}(R_2)$ by (a). So if $i\in\{1,2\}$ is fixed, $A\leq S_i$ and $\phi\in\Hom_{\F_i}(A,S_i)$, then by Lemma~\ref{L:F1starF2Help}(f) applied with $N_{\F_1}(R_1)$ and $N_{\F_2}(R_2)$ in place of $\F_1$ and $\F_2$, we have $\phi=(\phi_1*\phi_2)|_A$ where $\phi_j\in \Hom_{N_{\F_j}(R_j)}(A_j,S_j)$ for $j=1,2$. As $A\leq S_i$, it follows that $A\leq A_i$ and $\phi=\phi_i|_A$ is a morphism in $N_{\F_i}(R_i)$. As $\phi$ was an arbitrary morphism in $\F_i$, this shows that  $R_i\unlhd \F_i$ and thus $R\leq R_1R_2\leq O_p(\F_1)O_p(\F_2)$. This proves (c).   

\smallskip

For the proof of (d) let now $S_1\cap S_2\leq Q_i\leq S_i$ such that $Q_i$ is fully $\F_i$-normalized for $i=1,2$. It follows from Lemma~\ref{L:F1starF2Help}(f) that an $\F$-conjugate of $Q_1Q_2$ is of the form $P_1P_2$ with $P_i\in Q_i^{\F_i}$ for $i=1,2$. Fix such $P_i$. As $S_1\cap S_2\leq Z(\F_i)$, we have also $S_1\cap S_2\leq P_i$ for $i=1,2$. By (b), we have now $N_S(Q_1Q_2)=N_{S_1}(Q_1)N_{S_2}(Q_2)$ and $N_S(P_1P_2)=N_{S_1}(P_1)N_{S_2}(P_2)$. Observe furthermore that $N_{S_1}(Q_1)\cap N_{S_2}(Q_2)=S_1\cap S_2$ and $N_{S_1}(P_1)\cap N_{S_2}(P_2)=S_1\cap S_2$. As $|N_{S_i}(Q_i)|\geq |N_{S_i}(P_i)|$ for $i=1,2$, it follows that $|N_S(Q_1Q_2)|\geq |N_S(P_1P_2)|$. This proves (d).

\smallskip

Fix now $P_i\in\F_i^c$ for $i=1,2$. Notice that $S_1\cap S_2\leq Z(S_i)\leq P_i$ for $i=1,2$. In particular, by Lemma~\ref{L:F1starF2Help}(d), we have $(P_1P_2)_i=P_i$ for $i=1,2$. It follows from Lemma~\ref{L:F1starF2Help}(f) that every $\F$-conjugate of $P_1P_2$ is of the form $Q_1Q_2$ with $Q_i\in P_i^{\F_i}$. Fixing such $Q_i$, we have $C_{S_i}(Q_i)=Z(Q_i)$ and, in particular, $S_1\cap S_2\leq Z(S_i)\leq Z(Q_i)$ for $i=1,2$. Hence,
\begin{eqnarray*}
C_S(Q_1Q_2)&=& C_S(Q_1)\cap C_S(Q_2)\\
&=&Z(Q_1)S_2\cap S_1Z(Q_2)\\
&=&Z(Q_1)(S_2\cap S_1Z(Q_2))\\
&=&Z(Q_1)(S_2\cap S_1)Z(Q_2)\leq Q_1Q_2.
\end{eqnarray*}
Thus $P_1P_2\in \F^c$. This proves (e). 

\smallskip

For the proof of (f) suppose now that $P_i\in\F_i^{cr}$. By (e), it is sufficient to show that $P_1P_2\in \F^r$. By Definition of $\F_i^r$, for each $i=1,2$ there exists $Q_i\in P_i^{\F_i}$ such $Q_i$ is fully $\F_i$-normalized and $O_p(N_{\F_i}(Q_i))=Q_i$. As $S_1\cap S_2\leq Z(\F_i)\leq Q_i$, part (b) yields that $N_\F(Q_1Q_2)=N_{\F_1}(Q_1)*N_{\F_2}(Q_2)$. Moreover, we get $S_1\cap S_2\leq Z(N_{\F_i}(Q_i))$. So by part (c), we have $O_p(N_\F(Q_1Q_2))=O_p(N_{\F_1}(Q_1))O_p(N_{\F_2}(Q_2))=Q_1Q_2$. Observe that $Q_1Q_2$ is $\F$-conjugate to $P_1P_2$ and fully $\F$-normalized by (d). Hence, $P_1P_2\in\F^r$ and (f) holds. 

\smallskip

For the proof of (g) suppose now that $\F_i$ is saturated and $P_i\leq S_i$ for each $i=1,2$. By \cite[Proposition~3.3]{Henke:2018}, $\F$ is a central product of $\F_1$ and $\F_2$, i.e. the map
\[\alpha\colon S_1\times S_2\rightarrow S,\;(s_1,s_2)\mapsto s_1s_2\]
is a group homomorphism which induces an epimorphism from $\F_1\times\F_2$ to $\F$ and has kernel $Z:=\{(x,x^{-1})\colon x\in S_1\cap S_2\}\leq Z(\F_1\times \F_2)$. By \cite[Lemma~2.7(g)]{Henke:2016}, we have $P_i\in\F_i^s$ for $i=1,2$ if and only if $P_1\times P_2\in (\F_1\times \F_2)^s$. Moreover, by \cite[Lemma~2.15]{Henke:2016}, this is the case if and only if $P_1P_2=(P_1\times P_2)\alpha\in\F^s$. This proves (g). 
\end{proof}

\chapter{Partial groups and localities}\label{S:PartialGroupsLocalities}

The reader is referred to \cite{Chermak:2015} for a detailed introduction to partial groups and localities. However, we will summarize the most important definitions and results here including some further background material from \cite{Henke:2015,Henke:2020}. We also take the opportunity to prove a couple of elementary lemmas which are not available in the literature.

\section{Partial groups}

Given a set $\L$, write $\W(\L)$ for the set of words in $\L$. The elements of $\L$ will be identified with the words of length one, and $\emptyset$ denotes the empty word. The concatenation of two words $u,v\in\W(\L)$ will be denoted $u\circ v$ and the concatenation of more than two words will be written similarly.

\begin{definition}[{\cite[Definition~1.1]{Chermak:2015}}]\label{partial}
Suppose $\L$ is a non-empty set and $\D\subseteq\W(\L)$. Let $\Pi \colon  \D \longrightarrow \L$ be a map, and let $(-)^{-1} \colon \L \longrightarrow \L$ be an involutory bijection, which we extend to a map 
\[(-)^{-1} \colon \W(\L) \longrightarrow \W(\L), w = (g_1, \dots, g_k) \mapsto w^{-1} = (g_k^{-1}, \dots, g_1^{-1}).\]
Then $\L$ is called a \emph{partial group} with product $\Pi$ and inversion $(-)^{-1}$ if the following hold for all words $u,v,w\in\W(\L)$:
\begin{itemize}
\item  $\L \subseteq \D$ and
\[  u \circ v \in \D \Longrightarrow u,v \in \D.\]
(So in particular, $\emptyset\in\D$.)
\item $\Pi$ restricts to the identity map on $\L$.
\item $u \circ v \circ w \in \D \Longrightarrow u \circ (\Pi(v)) \circ w \in \D$, and $\Pi(u \circ v \circ w) = \Pi(u \circ (\Pi(v)) \circ w)$.
\item $w \in  \D \Longrightarrow  w^{-1} \circ w\in \D$ and $\Pi(w^{-1} \circ w) = \One$ where $\One:=\Pi(\emptyset)$.
\end{itemize}
\end{definition}

\textbf{For the remainder of this chapter let $\L$ always be a partial group with product $\Pi\colon\D\rightarrow\L$. As above set $\One:=\Pi(\emptyset)$.}

\smallskip

If $w=(f_1,\dots,f_n)\in\D$, then we write sometimes $f_1f_2\cdots f_n$ for $\Pi(f_1,\dots,f_n)$. In particular, if $(x,y)\in\D$, then $xy$ stands for $\Pi(x,y)$. Notice that the first and the third of the axioms listed above imply the following lemma. 

\begin{lemma}[{\cite[Lemma~1.4(d)]{Chermak:2015}}]\label{L:Chermak14d}
Let $u,v\in\W(\L)$ with $u\circ v\in\D$. Then $u^{-1}\circ u\circ v\in\D$ and $u\circ v\circ v^{-1}\in\D$. Moreover, $\Pi(u^{-1}\circ u\circ v)=\Pi(v)$ and $\Pi(u\circ v\circ v^{-1})=\Pi(u)$.
\end{lemma}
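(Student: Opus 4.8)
The plan is to obtain the two membership assertions from the fourth axiom of Definition~\ref{partial} (the cancellation axiom $w\in\D\Rightarrow w^{-1}\circ w\in\D$) together with the first axiom (downward closure), and then to compute the two products using the third axiom (associativity) after collapsing the subword $u^{-1}\circ u$ (resp.\ $v\circ v^{-1}$) to the single letter $\One$.

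For the memberships, first note that $u\circ v\in\D$ forces $u,v,u^{-1},v^{-1}$ and $(u\circ v)^{-1}$ into $\D$ by repeated downward closure, using the word-level identities $(u\circ v)^{-1}=v^{-1}\circ u^{-1}$ and $(w^{-1})^{-1}=w$. Applying the fourth axiom to the word $u\circ v$ gives $(u\circ v)^{-1}\circ(u\circ v)=v^{-1}\circ u^{-1}\circ u\circ v\in\D$ with product $\One$; peeling off the leading $v^{-1}$ via downward closure yields $u^{-1}\circ u\circ v\in\D$. Applying the fourth axiom instead to the word $v^{-1}\circ u^{-1}=(u\circ v)^{-1}$ gives $(u\circ v)\circ(v^{-1}\circ u^{-1})=u\circ v\circ v^{-1}\circ u^{-1}\in\D$, and peeling off the trailing $u^{-1}$ yields $u\circ v\circ v^{-1}\in\D$.

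For the products, I would apply the third axiom to $u^{-1}\circ u\circ v$ with the block decomposition ``$\emptyset$'' $\circ$ ``$u^{-1}\circ u$'' $\circ$ ``$v$'': since the fourth axiom applied to $u$ gives $\Pi(u^{-1}\circ u)=\One$, this produces $(\One)\circ v\in\D$ and $\Pi(u^{-1}\circ u\circ v)=\Pi\big((\One)\circ v\big)$. Symmetrically, the decomposition ``$u$'' $\circ$ ``$v\circ v^{-1}$'' $\circ$ ``$\emptyset$'' of $u\circ v\circ v^{-1}$ gives $\Pi(u\circ v\circ v^{-1})=\Pi\big(u\circ(\One)\big)$. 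It then remains to invoke the elementary fact that $\One$ is a two-sided identity for $\Pi$, i.e.\ $\Pi\big((\One)\circ w\big)=\Pi\big(w\circ(\One)\big)=\Pi(w)$ for every $w\in\D$; applied to $w=v$ and $w=u$ this finishes the proof.

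The one genuinely delicate point is this last fact about $\One$: although the remark preceding the statement suggests the first and third axioms suffice for the lemma itself, the product computation also relies on ``$\One$ is an identity'', and that statement needs a short bootstrap from all four axioms (one first checks $\One^{-1}=\One$ and $\Pi(x,\One)=\Pi(\One,x)=x$ for single $x\in\L$, then extends to arbitrary words via the third axiom). This is precisely the content of the parts of \cite[Lemma~1.4]{Chermak:2015} preceding the part quoted here, so in a self-contained treatment I would prove those first and then give the two-line argument above. Everything else is routine word-concatenation bookkeeping with no real obstacle.
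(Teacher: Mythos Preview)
Your proof is correct and is the standard argument; the paper does not actually prove this lemma but simply cites \cite[Lemma~1.4(d)]{Chermak:2015}. Your derivation of the membership statements via axiom~4 applied to $u\circ v$ and to $(u\circ v)^{-1}$, followed by downward closure, is exactly the expected route, and the product computation via axiom~3 and the identity property of $\One$ is likewise standard. One small remark: the identity fact $\Pi\big((\One)\circ w\big)=\Pi(w)$ that you need can be obtained directly from axiom~3 by writing $w=\emptyset\circ\emptyset\circ w$ and collapsing the middle $\emptyset$ to $\Pi(\emptyset)=\One$; this is how the paper proves the subsequent Lemma~\ref{L:AddOnes}, so there is no circularity and no genuine need for the bootstrap you outline.

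You are also right to flag the paper's remark that ``the first and the third of the axioms'' suffice: as you observe, the fourth axiom is essential to produce $u^{-1}\circ u\circ v\in\D$ and $u\circ v\circ v^{-1}\in\D$ in the first place, so the remark is at best imprecise.
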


We will also need the following lemma.

\begin{lemma}\label{L:AddOnes}
\begin{itemize}
\item [(a)] Let $u,v\in\W(\L)$ with $u\circ v\in\D$ and $w\in\W(\{\One\})$. Then $u\circ w\circ v\in \D$ and $\Pi(u\circ w\circ v)=\Pi(u\circ v)$.
\item [(b)] If $w,w'\in\W(\{\One\})$ and $u\in\D$, then $w\circ u\circ w'\in\D$ with $\Pi(w\circ u\circ w')=\Pi(u)$.
\end{itemize}
\end{lemma}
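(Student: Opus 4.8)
\textbf{Plan for the proof of Lemma~\ref{L:AddOnes}.}

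The whole lemma is about inserting or deleting copies of $\One$ inside a word that already lies in $\D$, so the natural engine is the third partial-group axiom (the "associativity with substitution" axiom) together with the basic fact that $\Pi(w)=\One$ for every $w\in\W(\{\One\})$, which follows by induction on $|w|$ from $\Pi(\One)=\One$ (the product restricts to the identity on $\L$) and the substitution axiom. So the first step I would carry out is to record this auxiliary fact: for $w\in\W(\{\One\})$ we have $w\in\D$ (since $w$ is a word all of whose letters are in $\L\subseteq\D$... wait, that only gives the letters are in $\D$) — more carefully, one shows $w\in\D$ and $\Pi(w)=\One$ by induction: $\emptyset\in\D$ with $\Pi(\emptyset)=\One$, and if $w=\One\circ w'$ with $w'\in\W(\{\One\})$ then one uses Lemma~\ref{L:Chermak14d} or the axioms applied to $(\One)\circ w'$ together with $\Pi(\One)=\One$ to conclude $w\in\D$ and $\Pi(w)=\Pi(w')=\One$.

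For part (a): start from $u\circ v\in\D$. I want to insert $w\in\W(\{\One\})$ between $u$ and $v$. Write the one-element word $(\One)$; since $\Pi(w)=\One=\Pi((\One))$ after the auxiliary step, it suffices (by the substitution axiom, reading it in the "expansion" direction) to first handle the case $w=(\One)$, i.e. to show $u\circ(\One)\circ v\in\D$ with $\Pi(u\circ(\One)\circ v)=\Pi(u\circ v)$, and then replace $(\One)$ by $w$ using the substitution axiom once more (this time in the direction $u\circ(\Pi(w))\circ v\in\D\Rightarrow$... — actually the axiom as stated goes from the long word to the contracted one, so I should be a little careful about direction: I would apply it to the word $u\circ w\circ v$ once I know it is in $\D$, or alternatively note that both insertions reduce to the single-$\One$ case by induction on $|w|$). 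To handle the single-$\One$ case I would use Lemma~\ref{L:Chermak14d}: from $u\circ v\in\D$ we get, say, $v\circ v^{-1}$-type manipulations, but the cleanest route is: $u\circ v\in\D$ $\Rightarrow$ (writing $v=(v_1)\circ v'$ if $v$ is nonempty) we can massage; honestly the slickest approach is to use the identity $\One=\Pi(\emptyset)$ and apply the substitution axiom backwards by viewing $(\One)=(\Pi(\emptyset))$, so that $u\circ(\One)\circ v=u\circ(\Pi(\emptyset))\circ v$ and the axiom's converse content... — the axiom only gives one direction, so instead I would prove it from Lemma~\ref{L:Chermak14d}: taking $u'=u$, $v'= v$, note $u\circ v\in\D\Rightarrow u^{-1}\circ u\circ v\in\D$, etc. The key trick I actually expect to use is: $(\One)=(x^{-1}x)$-free — rather, since $\One=\Pi(f,f^{-1})$ for suitable... no. Let me just say: the single-$\One$ insertion will follow by applying the substitution axiom to a word of the form $u\circ(g,g^{-1})\circ v$ where $g$ is the last letter of $u$ (or $\One$ if $u=\emptyset$), since $\Pi(g,g^{-1})=\One$ and $u\circ(g,g^{-1})\circ v\in\D$ can be obtained from $u\circ v\in\D$ via Lemma~\ref{L:Chermak14d}; then substitution contracts $(g,g^{-1})$ to $(\One)$ giving $u\circ(\One)\circ v\in\D$ with the right product, and separately contracts it to $\emptyset$ recovering $\Pi(u\circ v)$.

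For part (b): this is the special case $u\in\D$ arbitrary, $w,w'\in\W(\{\One\})$, and we want $w\circ u\circ w'\in\D$ with product $\Pi(u)$. This follows from part (a): apply (a) with the decomposition $\emptyset\circ u$ — hmm, we need $u$ itself split as $u_1\circ u_2\in\D$; take $u_1=\emptyset$, $u_2=u$, which is legitimate since $\emptyset\circ u=u\in\D$, giving $\emptyset\circ w'\circ u=w'\circ u\in\D$ with product $\Pi(u)$; wait I need $w'$ on the right, so split as $u\circ\emptyset$ to get $u\circ w'\in\D$ with product $\Pi(u)$, then split that as $\emptyset\circ(u\circ w')$ and apply (a) again to insert $w$ on the left. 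A small bit of bookkeeping is needed because (a) as I stated it inserts one block in the middle, so getting blocks on both ends requires applying it twice with the trivial split $\emptyset\circ(\cdot)$ or $(\cdot)\circ\emptyset$.

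\textbf{Expected main obstacle.} The only real subtlety is the directionality of the substitution axiom: it contracts $\Pi(v)$ but does not a priori let you expand a single letter $\One$ back into a longer word. The fix — which is the crux of the argument — is to realize $\One$ concretely as $\Pi$ of a two-letter word $(g,g^{-1})$ (taking $g$ to be an adjacent letter already present, or $g=\One$ at the ends) so that the needed membership $u\circ(g,g^{-1})\circ v\in\D$ comes for free from Lemma~\ref{L:Chermak14d}, and then contraction via the axiom does the rest. Once that move is in hand, everything else is routine induction on word length.
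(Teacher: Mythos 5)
Your instinct to look at the substitution axiom was right, but you dismissed the cleanest route for the wrong reason and your fallback route has a genuine gap.

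The ``slick'' approach you considered and rejected --- writing $u\circ v=u\circ\emptyset\circ v$ and replacing $\emptyset$ by $(\Pi(\emptyset))=(\One)$ --- is in fact the \emph{forward} direction of the substitution axiom, not the backward direction. The axiom reads
$u\circ v\circ w\in\D\Longrightarrow u\circ(\Pi(v))\circ w\in\D$ with equal products; taking the middle word $v=\emptyset$, the hypothesis $u\circ\emptyset\circ w\in\D$ is exactly $u\circ w\in\D$, and the conclusion is $u\circ(\One)\circ w\in\D$. That the word gets \emph{longer} here doesn't mean you've flipped the implication: replacing $\emptyset$ (length $0$) by $(\Pi(\emptyset))$ (length $1$) is a perfectly honest instance of the rule. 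This is exactly what the paper does, followed by induction on $|w|$ and the observation that $\Pi|_\L=\id_\L$ (which you'd need anyway to identify $\Pi(u\circ(\One)\circ v)$ with $\Pi(u\circ v)$). Part (b) then follows by applying (a) twice with trivial splits $\emptyset\circ u$ and $u\circ\emptyset$, exactly as you sketch.

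Your proposed fallback --- inserting $(g,g^{-1})$ or $(g^{-1},g)$ with $g$ an adjacent letter and invoking Lemma~\ref{L:Chermak14d} to get the membership --- does not go through. Lemma~\ref{L:Chermak14d} produces words of the form $u^{-1}\circ u\circ v$ or $u\circ v\circ v^{-1}$, i.e.\ it inserts an entire word together with its inverse at one end. It does \emph{not} give you $u_0\circ(g,g^{-1},g)\circ v$ from $u_0\circ(g)\circ v$: applying it to $(g)\circ v\in\D$ only yields $(g^{-1},g)\circ v\in\D$, and there is no partial-group axiom allowing you to prepend $u_0\circ(g)$ to that word. So the ``key trick'' you identified as the crux would in fact fail, and the $\emptyset\mapsto(\One)$ move is not merely slicker but necessary.
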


\begin{proof}
Observe that (b) follows from (a) applied twice, so it is sufficient to prove (a). Let $u,v,w$ be as in (a). As $u\circ v=u\circ\emptyset\circ v\in\D$, the axioms of a partial group imply $u\circ (\One)\circ v=u\circ(\Pi(\emptyset))\circ v\in\D$ and $\Pi(u\circ (\One)\circ v)=\Pi(u\circ \emptyset\circ v)=\Pi(u\circ v)$. Therefore, it follows by induction on $|w|$ that $u\circ w\circ v\in\D$ and $\Pi(u\circ w\circ v)=\Pi(u\circ v)$. By the axioms of a partial group, we have $\L\subseteq\D$ and $\Pi|_\L=\id_\L$. Hence the assertion follows. 
\end{proof}

\begin{definition}
 \begin{itemize}
 \item Given $f\in\L$, write $\D(f):=\{x\in\L\colon (f^{-1},x,f)\in\D\}$ for the set of all $x$ such that the conjugate $x^f:=\Pi(f^{-1},x,f)$ is defined. 
 \item Write $c_f$ for the conjugation map $c_f\colon \D(f)\rightarrow \L,x\mapsto x^f$.
 \item For $f\in\L$ and  $\X\subseteq\D(f)$, set $\X^f:=\{x^f\colon x\in\X\}$.
\item Set 
\[N_\L(\X):=\{f\in\L\colon \X\subseteq\D(f)\mbox{ and }\X^f=\X\}\]
and
\[C_\L(\X):=\{f\in\L\colon x\in\D(f)\mbox{ and }x^f=x\mbox{ for all }x\in\X\}.\]
\item For $\Y\subseteq\L$ define moreover $N_\Y(\X)=\Y\cap N_\L(\X)$ and $C_\Y(\X)=\Y\cap C_\L(\X)$. 
\item Call $Z(\L)=C_\L(\L)$ the \emph{center} of $\L$. 
\end{itemize}
\end{definition}

It is proved in \cite[Lemma~1.6(c)]{Chermak:2015} that the conjugation map $c_f\colon \D(f)\rightarrow \D(f^{-1})$ is a bijection with inverse map $c_{f^{-1}}$. The following lemma shows that, for all $\X,\Y\subseteq\L$, we have $\X\subseteq C_\L(\Y)$ if and only if $\Y\subseteq C_\L(\X)$. 

\begin{lemma}\label{L:CentralizerHelp}
Let $f,g\in\L$. Then 
\[g\in C_\L(f)\Longleftrightarrow f\in C_\L(g).\]
If so, then $(f,g),(g,f)\in\D$ and $fg=gf$.
\end{lemma}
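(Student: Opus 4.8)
\emph{Plan of proof.} Since the asserted equivalence is symmetric in $f$ and $g$, the plan is to prove only the implication $g\in C_\L(f)\Longrightarrow f\in C_\L(g)$ and then to deduce the statements about $(f,g)$, $(g,f)$ and $fg=gf$ under this hypothesis. Unravelling the definition of $C_\L(-)$, the hypothesis says $(g^{-1},f,g)\in\D$ with $\Pi(g^{-1},f,g)=f$, while the conclusion to be reached is $(f^{-1},g,f)\in\D$ (that is, $g\in\D(f)$) with $\Pi(f^{-1},g,f)=g$.

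The crucial first move is to manufacture the word $(g^{-1},f^{-1},g,f)$ inside $\D$ with product $\One$. To this end I would put $w:=(g^{-1},f,g)$; then $w^{-1}=(g^{-1},f^{-1},g)$, and the inversion axiom gives $w^{-1}\circ w=(g^{-1},f^{-1},g,g^{-1},f,g)\in\D$ with product $\One$. Collapsing the final three letters of this word (they constitute $w$, and $\Pi(w)=f$ by hypothesis) via the associativity axiom produces $(g^{-1},f^{-1},g,f)\in\D$, still with product $\One$.

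With this word in hand, everything falls out. Writing $(g^{-1},f^{-1},g,f)=(g^{-1})\circ(f^{-1},g,f)$ and invoking the first partial-group axiom ($u\circ v\in\D$ forces $v\in\D$) gives $(f^{-1},g,f)\in\D$, i.e.\ $g\in\D(f)$; this is the step that is not obvious a priori. Collapsing the factor $(f^{-1},g,f)$ in that same factorization then yields $(g^{-1},\,\Pi(f^{-1},g,f))\in\D$ with product $\One$. Since $(x,y)\in\D$ together with $xy=\One$ forces $y=x^{-1}$ (an easy consequence of Lemma~\ref{L:Chermak14d} and Lemma~\ref{L:AddOnes}: prepend $x^{-1}$ to $(x,y)$ and then contract), we get $\Pi(f^{-1},g,f)=(g^{-1})^{-1}=g$, i.e.\ $g^f=g$, so $f\in C_\L(g)$. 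For the ``if so'' clause: $(f,g)\in\D$ because $(g^{-1},f,g)=(g^{-1})\circ(f,g)$, and $(g,f)\in\D$ because $(f^{-1},g,f)=(f^{-1})\circ(g,f)$; and to obtain $fg=gf$ I would use Lemma~\ref{L:Chermak14d} to prepend $g$ to $w$, obtaining $(g,g^{-1},f,g)\in\D$ with product $\Pi(f,g)$, and then contract the subword $w=(g^{-1},f,g)$ to see that this product also equals $\Pi(g,f)$.

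The single genuine obstacle is precisely the first move. The associativity axiom is one-directional---one may contract a subword to its product, but not expand a product back into a word---so one cannot reach $(f^{-1},g,f)$ by ``un-collapsing'' a known product. The resolution is the observation that $(f^{-1},g,f)$ occurs as a \emph{suffix} of the slightly longer word $(g^{-1},f^{-1},g,f)$, which one \emph{can} reach from $w^{-1}\circ w$ by a single legal (contracting) application of associativity; the first axiom then delivers the suffix for free, and the remaining steps are routine bookkeeping with the partial-group axioms and the two cited lemmas.
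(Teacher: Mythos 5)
Your proof is correct, and it reaches the same key intermediate fact as the paper—that $(f^{-1},g,f)\in\D$—but by a slightly different construction. The paper starts from $(g^{-1},f,g)\in\D$ and applies Lemma~\ref{L:Chermak14d} with $u=(g^{-1},f)$, $v=(g)$ to obtain the five-letter word $(f^{-1},g,g^{-1},f,g)\in\D$; contracting the suffix $(g^{-1},f,g)$ to $f^g=f$ then lands directly on $(f^{-1},g,f)\in\D$, and the value $g^f=g$ is read off by a chain of contractions of the same word (the paper leans on a cited lemma from \cite{Henke:2020} along the way). You instead invoke the inversion axiom directly to get the six-letter word $w^{-1}\circ w=(g^{-1},f^{-1},g,g^{-1},f,g)\in\D$ with product $\One$, contract the same suffix to reach $(g^{-1},f^{-1},g,f)\in\D$, and then peel off the leading $g^{-1}$ via the closure-under-taking-subwords axiom to get $(f^{-1},g,f)\in\D$. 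You also compute $g^f=g$ differently: rather than contracting repeatedly, you observe that $(g^{-1},g^f)\in\D$ has product $\One$ and invoke the uniqueness of inverses (which you correctly justify from Lemma~\ref{L:Chermak14d} and Lemma~\ref{L:AddOnes}). The net effect is that your argument is entirely self-contained within the axioms and the two lemmas already stated in the text, whereas the paper's proof outsources one step to an external reference; your handling of $fg=gf$ via the word $(g,g^{-1},f,g)$ matches the paper's exactly. Your meta-remark that the associativity axiom only contracts (never expands) and that the escape is to exhibit $(f^{-1},g,f)$ as a literal suffix of a reachable word is precisely the right diagnosis of where the difficulty lies.
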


\begin{proof}
Notice that the situation is symmetric in $f$ and $g$, so it is sufficient to prove one direction. Suppose $g\in C_\L(f)$, i.e. $f\in\D(g)$ and $f^g=f$. Then $(g^{-1},f,g)\in\D$, so by Lemma~\ref{L:Chermak14d}, we have $v=(f^{-1},g,g^{-1},f,g)\in\D$ and $u=(g,g^{-1},f,g)\in\D$. Using the axioms of a partial group and \cite[Lemma~2.2(a)]{Henke:2020}, we can conclude that $(f^{-1},g,f)=(f^{-1},g,f^g)\in\D$ and 
\[g^f=\Pi(f^{-1},g,f^g)=\Pi(v)=\Pi(f^{-1},\One,f,g)=\Pi(f^{-1},f,g)=\Pi(\One,g)=g.\]
Hence, $g\in C_\L(f)$. Observe that $(g^{-1},f,g)\in\D$ implies $(f,g)\in\D$ and similarly, $(f^{-1},g,f)\in\D$ implies $(g,f)\in\D$. Moreover, $fg=\Pi(u)=\Pi(g,f^g)=gf$.
\end{proof}

\begin{definition}\label{D:PartialSubgroup}
\begin{itemize}
 \item A subset $\H\subseteq\L$ is called a \emph{partial subgroup} of $\L$ if $h^{-1}\in\H$ for all $h\in\H$, and moreover $\Pi(w)\in\H$ for all $w\in\D\cap\W(\H)$. (Notice that $\H$ is then itself a partial group with product $\Pi|_{\W(\H)\cap\D}$.)
 \item A partial subgroup $\H$ of $\L$ is a called a \emph{subgroup} of $\L$ if $\W(\H)\subseteq\D(\L)$. (Observe that a subgroup of $\L$ forms actually a group.)
 \item By a \emph{$p$-subgroup} of $\L$ we mean a subgroup of $\L$ which is a finite $p$-group. 
 \item If $\N$ is a partial subgroup of $\L$, then $\N$ is called a \emph{partial normal subgroup} if $n^f\in\N$ for all $f\in\L$ and all $n\in\N\cap\D(f)$. Write $\N\unlhd\L$ to indicate that $\N$ is a partial normal subgroup of $\L$.
 \item A partial subgroup $\H$ of $\L$ is \emph{subnormal} in $\L$ if there exists a series $\H=\H_0\unlhd\H_1\unlhd\cdots\unlhd\H_k=\L$ with $k\geq 0$. We write $\H\subn\L$ to indicate that $\H$ is subnormal in $\L$. 
\end{itemize}
\end{definition}

\begin{lemma}\label{L:SubnormalinSubgroup}
Let $\H$ be a partial subgroup of $\L$.
\begin{itemize}
\item [(a)] If $\K\subn\L$ with $\K\subseteq\H$, then $\K\subn\H$.
\item [(b)] If $\K\subn\H$ and $\H\subn\L$, then $\K\subn\L$.
\end{itemize}
\end{lemma}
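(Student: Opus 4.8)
The plan is to handle part (b) by concatenating subnormal series and part (a) by intersecting a subnormal series for $\K$ in $\L$ with $\H$. For part (b) there is essentially nothing to do: if $\K=\K_0\unlhd\cdots\unlhd\K_j=\H$ and $\H=\H_0\unlhd\cdots\unlhd\H_k=\L$ are series witnessing $\K\subn\H$ and $\H\subn\L$, then splicing them at $\H$ produces a single series $\K=\K_0\unlhd\cdots\unlhd\K_j=\H_0\unlhd\cdots\unlhd\H_k=\L$, so $\K\subn\L$.

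For part (a) I would first record two elementary facts. First, the intersection of two partial subgroups of $\L$ is again a partial subgroup of $\L$: this is immediate from Definition~\ref{D:PartialSubgroup}, since for $w\in\D\cap\W(\H\cap\K)$ the product $\Pi(w)$ lies in both $\H$ and $\K$, and inverses behave likewise; together with the (routine) transitivity of ``being a partial subgroup'', this shows that $\K_i\cap\H$ is a partial subgroup of $\L$ for each $i$. Second, if $\M$ is a partial subgroup of $\L$, $\N\unlhd\M$, and $\H$ is a partial subgroup of $\M$, then $\N\cap\H\unlhd\H$: given $f\in\H$ and $n\in(\N\cap\H)\cap\D(f)$, the conjugate $n^f=\Pi(f^{-1},n,f)$ lies in $\H$ because $\H$ is a partial subgroup and $(f^{-1},n,f)\in\D\cap\W(\H)$, and lies in $\N$ because $\N\unlhd\M$, hence $n^f\in\N\cap\H$.

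With these in hand, fix a series $\K=\K_0\unlhd\K_1\unlhd\cdots\unlhd\K_k=\L$ witnessing $\K\subn\L$ and set $\H_i:=\K_i\cap\H$. Then $\H_0=\K\cap\H=\K$ (as $\K\subseteq\H$) and $\H_k=\L\cap\H=\H$, and each $\H_i$ is a partial subgroup of $\L$ by the first fact. To see $\H_i\unlhd\H_{i+1}$, apply the second fact with $\M=\K_{i+1}$, $\N=\K_i$, and the partial subgroup $\K_{i+1}\cap\H$ of $\K_{i+1}$: this yields $\K_i\cap(\K_{i+1}\cap\H)\unlhd\K_{i+1}\cap\H$, and the left-hand side equals $\K_i\cap\H=\H_i$ since $\K_i\subseteq\K_{i+1}$. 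Hence $\K=\H_0\unlhd\H_1\unlhd\cdots\unlhd\H_k=\H$ is a subnormal series and $\K\subn\H$.

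I do not expect a genuine obstacle here; the only points deserving a moment of care are the verification that $\N\cap\H$ is a partial normal subgroup of $\H$ (the second fact above), and the observation that the definition of a subnormal series permits $k=0$ and repeated terms, so the degenerate cases (e.g.\ $\K=\L$, which forces $\H=\L$) are covered automatically.
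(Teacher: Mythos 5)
Your proof is correct and follows the same strategy as the paper's: for (a), intersect a subnormal series for $\K$ in $\L$ term-by-term with $\H$, and for (b), concatenate two subnormal series. The paper states the intersected chain without detail; your careful verification that $\K_i\cap\H$ is a partial subgroup and that $\K_i\cap\H\unlhd\K_{i+1}\cap\H$ is exactly the routine check the paper leaves implicit.
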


\begin{proof}
If $\K=\K_0\unlhd\K_1\unlhd\cdots\unlhd\K_n=\L$ is a subnormal series of $\K$ in $\L$, then \[\K=\K_0\cap\H\unlhd\K_1\cap\H\unlhd\cdots\unlhd\K_n\cap\H=\H\]
is a subnormal series of $\K$ in $\H$. Hence (a) holds. 

\smallskip

If $\K=\K_0\unlhd\K_1\unlhd\cdots \unlhd\K_m=\H$ is a subnormal series of $\K$ in $\H$ and $\H=\H_0\unlhd\H_1\unlhd\cdots\unlhd\H_n=\L$ is a subnormal series of $\H$ in $\L$, then
\[\K=\K_0\unlhd\K_1\unlhd\cdots \unlhd\K_m=\H=\H_0\unlhd\H_1\unlhd\cdots\unlhd\H_n=\L\]
is a subnormal series of $\K$ in $\L$.
\end{proof}

In various contexts we will consider products of subsets of partial groups as defined next.

\begin{definition}
For $\X,\Y\subseteq\L$ set
\[\X\Y:=\{\Pi(x,y)\colon x\in\X,\;y\in\Y,\;(x,y)\in\D\}.\]
More generally, for subsets $\X_1,\dots,\X_n$ of $\L$ set
\[\X_1\X_2\cdots \X_n:=\{\Pi(x_1,\dots,x_n)\colon (x_1,\dots,x_n)\in\D,\;x_i\in\X_i\mbox{ for }i=1,\dots,n\}.\]
\end{definition}

\section{Localities} 

\begin{definition}\label{locality}
Let $\L$ be a finite partial group, let $S$ be a $p$-subgroup of $\L$ and let $\Delta$ be a non-empty set of subgroups of $S$. We say that $(\L, \Delta, S)$ is a \emph{locality} if the following hold:
\begin{itemize}
\item $S$ is maximal with respect to inclusion among the $p$-subgroups of $\L$.
\item $\D$ is the set of words $w=(g_1, \dots, g_k) \in \W(\L)$ for which there exist $P_0, \dots ,P_k \in \Delta$ with 
\begin{itemize}
\item[(*)] $P_{i-1} \subseteq \D(g_i)$ and $P_{i-1}^{g_i} = P_i$ for all $1 \leq  i \leq k$.
\end{itemize} 
\item $\Delta$ is overgroup-closed in $S$ and closed under taking $\L$-conjugates in $S$; the latter condition means $P^g\in \Delta$ for every $P \in \Delta$ and $g\in \L$ with $P \subseteq \D(g)$.
\end{itemize}
If $\L$ is a partial group and $\Delta$ is a set of subgroups of $\L$, then we write $\D_\Delta$ for the set of $w=(g_1,\dots,g_k)\in\W(\L)$ such that (*) holds for some $P_0,\dots,P_k\in\Delta$. Moreover, if $w\in\W(\L)$ and (*) holds for some $P_0,\dots,P_k\in\Delta$, then we say  that $w\in\D_\Delta$ via $P_0,P_1,\dots,P_k$, or just that $w\in\D_\Delta$ via $P_0$.
\end{definition}

The definition of a locality above is a reformulation of the one given by Chermak \cite[Definition 2.8]{Chermak:2015}. The two definitions are equivalent by \cite[Remark~5.2]{Henke:2015}. 

\smallskip

\textbf{For the remainder of this section let $(\L,\Delta,S)$ be a locality.}

\smallskip

Notice that $N_\L(P)$ is a subgroup of $\L$ for all $P\in\Delta$. Let $(\L,\Delta,S)$ be a locality. Given $w=(f_1,\dots,f_n)\in\W(\L)$, write $S_w$ for the set of all $s\in S$ such that there exist elements $s=s_0,\dots,s_n\in S$ with $s_{i-1}^{f_i}=s_i$. In particular,
\[S_f:=\{s\in S\colon s\in\D(f),\;s^f\in S\}\mbox{ for all }f\in\L.\]
For every $f\in\L$ and $w\in\W(\L)$ the following properties hold by \cite[Proposition~2.6(a),(b), Corollary~2.7]{Chermak:2015}. We will use them from now on without further reference.
\begin{itemize}
 \item $S_w$ is a subgroup of $S$. Moreover, $S_w\in\Delta$ if and only if $w\in\D$.
 \item $S_f\in\Delta$. 
 \item $c_f\colon S_f\rightarrow S_{f^{-1}}=S_f^f$ is an isomorphism of groups with inverse map $c_{f^{-1}}$.
\end{itemize}

Because of these properties, the conjugation maps $c_f\colon S_f\rightarrow S$ with $f\in\L$ generate a fusion system over $S$, which is denoted by $\F_S(\L)$. More generally, for every partial subgroup $\H$ of $\L$, $\F_{S\cap\H}(\H)$ denotes the fusion system over $S\cap\H$ which is generated by all the maps of the form $c_h\colon \H\cap S_h\rightarrow S\cap\H$ with $h\in\H$.

\begin{lemma}\label{L:ConjugateNormalizer}
The following hold: 
\begin{itemize}
\item [(a)] Let $P\in\Delta$ and $f\in\L$ with $P\leq S_f$. Then $P^f\in\Delta$, $N_\L(P)\subseteq\D(f)$ and 
\[c_f\colon N_\L(P)\rightarrow N_\L(P^f),x\mapsto x^f\]
is an isomorphism of groups. Moreover, $N_\N(P)^f=N_\N(P^f)$ for all $\N\unlhd\L$.
\item [(b)] Let $w=(f_1,\dots,f_n)\in\D$ via $P_0,P_1,\dots,P_n\in\Delta$. Then 
\[c_{f_1}\circ c_{f_1}\circ \cdots \circ c_{f_n}=c_{\Pi(w)}\]
as a map from $N_\L(P_0)$ to $N_\L(P_n)$. 
\end{itemize} 
\end{lemma}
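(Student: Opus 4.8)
The plan is to prove (a) first and then deduce (b) by induction on $n$. For (a), let $P \in \Delta$ and $f \in \L$ with $P \leq S_f$. Since $\Delta$ is closed under taking $\L$-conjugates in $S$ and $P^f \leq S_f^f = S_{f^{-1}} \leq S$, we immediately get $P^f \in \Delta$. The heart of the matter is showing $N_\L(P) \subseteq \D(f)$ and that $c_f$ restricts to a group isomorphism $N_\L(P) \to N_\L(P^f)$. For the first claim, take $x \in N_\L(P)$; I want to exhibit a chain of subgroups in $\Delta$ witnessing $(f^{-1}, x, f) \in \D$ via the definition of the domain in a locality. The natural candidates are $P^f, P, P, P^f$: since $P \leq S_f$ we have $P \subseteq \D(f)$ with $(P)^f = P^f$, and reading $f^{-1}$ we have $P^f \subseteq \D(f^{-1})$ with $(P^f)^{f^{-1}} = P$; since $x$ normalizes $P$ we have $P \subseteq \D(x)$ with $P^x = P$. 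Chaining these gives $(f^{-1}, x, f) \in \D$ via $P^f, P, P, P^f$, so $x \in \D(f)$, i.e.\ $N_\L(P) \subseteq \D(f)$.

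Next I would check $c_f$ maps $N_\L(P)$ into $N_\L(P^f)$: for $x \in N_\L(P)$, one computes $(P^f)^{x^f} = P^f$ using the conjugation/associativity axioms of a partial group (rearranging $(f^{-1}, x^{-1}, f)$-type words together with $(f^{-1}, x, f)$), and one must also check $P^f \subseteq \D(x^f)$, which again follows by producing the appropriate $\Delta$-chain. Thus $c_f(N_\L(P)) \subseteq N_\L(P^f)$. Applying the same reasoning to $f^{-1}$ and $P^f$ gives $c_{f^{-1}}(N_\L(P^f)) \subseteq N_\L(P)$, and since $c_{f^{-1}}$ is the inverse of $c_f$ on $\D(f)$ (by \cite[Lemma~1.6(c)]{Chermak:2015}), the two maps are mutually inverse bijections between $N_\L(P)$ and $N_\L(P^f)$. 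That $c_f$ is a group homomorphism on the subgroup $N_\L(P)$ is a standard computation with the partial-group axioms (the key point being that all relevant words lie in $\D$ because $N_\L(P)$ is a subgroup and all its elements send $P$ to $P$ inside $\D(f)$). Finally, for $\N \unlhd \L$: since $\N$ is a partial normal subgroup, $c_f$ carries $\N \cap N_\L(P) = N_\N(P)$ into $\N$, hence into $\N \cap N_\L(P^f) = N_\N(P^f)$, and symmetrically $c_{f^{-1}}$ carries $N_\N(P^f)$ into $N_\N(P)$, giving $N_\N(P)^f = N_\N(P^f)$.

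For part (b), I would induct on $n$. The case $n = 1$ is trivial. For the inductive step, write $w = (f_1, \dots, f_n)$ with witnesses $P_0, \dots, P_n$, and let $w' = (f_1, \dots, f_{n-1})$, which lies in $\D$ via $P_0, \dots, P_{n-1}$. By the locality axioms $\Pi(w) = \Pi(\Pi(w'), f_n)$, i.e.\ setting $g := \Pi(w')$ we have $w = (g, f_n)$ effectively, with $P_0^g = P_{n-1}$ and $P_{n-1}^{f_n} = P_n$. By induction $c_{f_1} \circ \cdots \circ c_{f_{n-1}} = c_g$ as maps $N_\L(P_0) \to N_\L(P_{n-1})$, and by part (a) $c_{f_n}\colon N_\L(P_{n-1}) \to N_\L(P_n)$. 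So it remains to show $c_g \circ c_{f_n} = c_{\Pi(w)}$ on $N_\L(P_0)$; for $x \in N_\L(P_0)$ this is the identity $(x^g)^{f_n} = x^{gf_n}$, which follows from the associativity axiom of the partial group applied to the word $((gf_n)^{-1}, x, g, f_n)$ (equivalently $(f_n^{-1}, g^{-1}, x, g, f_n)$), after checking all the relevant subwords lie in $\D$ via the chains already in hand.

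I expect the main obstacle to be the bookkeeping in part (a): carefully producing, for each element $x \in N_\L(P)$, the explicit sequence of subgroups in $\Delta$ that certifies membership of the relevant words in $\D$, and then manipulating those words with the partial-group axioms (and Lemma~\ref{L:Chermak14d}, \cite[Lemma~2.2(a)]{Henke:2020}) to extract the identities $(P^f)^{x^f} = P^f$ and the homomorphism property. None of the individual steps is deep, but the verifications that various words lie in $\D$ must be done with some care since that is exactly where the definition of a locality does its work.
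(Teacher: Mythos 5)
Your argument is correct, but it takes a genuinely different route from the paper's. The paper dispatches the bulk of the lemma by citing Chermak's foundational results: part (b) is taken verbatim from Chermak's Lemma~2.3(c) in \emph{Fusion systems and localities}, and the claims that $N_\L(P)\subseteq\D(f)$ and that $c_f\colon N_\L(P)\to N_\L(P^f)$ is a group isomorphism are Chermak's Lemma~2.3(b). The only piece the paper argues from scratch is the final clause $N_\N(P)^f=N_\N(P^f)$, and there your argument and the paper's agree almost word for word: normality gives $N_\N(P)^f\subseteq N_\N(P^f)$, the symmetric application of $c_{f^{-1}}$ to $(f^{-1},P^f)$ gives the reverse containment after conjugating by $f$, using that $c_{f^{-1}}$ inverts $c_f$. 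What you do differently is re-derive Chermak's Lemma~2.3 directly from the partial-group and locality axioms: exhibiting the witness chain $P^f,P,P,P^f$ to place $(f^{-1},x,f)\in\D$, expanding words such as $(f^{-1},x^{-1},f,f^{-1},p,f,f^{-1},x,f)$ to verify that $c_f$ preserves normalizers and is a homomorphism, and proving (b) by induction on $n$. This buys a self-contained proof at the cost of length; the paper's version is shorter because it is designed to build on Chermak's toolkit, which the rest of the paper also relies on. One small point worth making explicit in your induction for (b): before the chain $P_n,P_{n-1},P_0,P_0,P_{n-1},P_n$ for the word $(f_n^{-1},g^{-1},x,g,f_n)$ is legitimate you already need to know $P_0^g=P_{n-1}$ (where $g:=\Pi(f_1,\dots,f_{n-1})$), and that identity is exactly the inductive hypothesis applied to the elements of $P_0\leq N_\L(P_0)$; your write-up uses this implicitly, but a sentence drawing attention to it would make the induction cleaner.
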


\begin{proof}
Part (b) holds by \cite[Lemma~2.3(c)]{Chermak:2015}. Moreover, using the definition of a locality given above, it is clear that $P^f\in\Delta$. Hence, by \cite[Lemma~2.3(b)]{Chermak:2015}, $N_\L(P)\subseteq \D(f)$ and 
$c_f\colon N_\L(P)\rightarrow N_\L(P^f)$
is an isomorphism of groups. Let now $\N$ be a partial normal subgroup of $\L$. Then $N_\N(P)^f\subseteq N_\L(P^f)\cap \N=N_\N(P^f)$. The properties stated above give $P^f\leq S_f^f=S_{f^{-1}}$ and $(P^f)^{f^{-1}}=P$, so a symmetric argument gives $N_\N(P^f)^{f^{-1}}\subseteq N_\N(P)$. By \cite[Proposition~1.6(c)]{Chermak:2015}, $c_f\colon \D(f)\rightarrow \D(f^{-1})$ is bijective with inverse map $c_{f^{-1}}$. Thus we can conjugate on both sides by $f$ to obtain $N_\N(P^f)\subseteq N_\N(P)^f$. Hence, $N_\N(P^f)=N_\N(P)^f$ as required.
\end{proof}

If $w=(f_1,\dots,f_n)\in\D$, then Lemma~\ref{L:ConjugateNormalizer}(b) applied with $P_0:=S_w$ and $P_i:=P_{i-1}^{f_i}$ gives in particular that $c_{f_1}|_{P_0}\circ c_{f_2}|_{P_1}\circ\cdots \circ c_{f_n}|_{P_{n-1}}=c_{\Pi(w)}|_{P_0}$ and $P_0=S_w\leq S_{\Pi(w)}$. We will use Lemma~\ref{L:ConjugateNormalizer} frequently in this form, most of the time without reference. 

\begin{lemma}\label{L:LocalityFusionSystem}
Let $\F=\F_S(\L)$ and $P\in\Delta$.
\begin{itemize}
\item [(a)] For every $\phi\in\Hom_\F(P,S)$, there exists $f\in\L$ with $P\leq S_f$ and $\phi=c_f|_P$. 
\item [(b)] If $\H$ is a partial subgroup of $\L$ with $P\leq S\cap\H$ and if $\phi\colon P\rightarrow S$ is a morphism in $\F_{S\cap \H}(\H)$, then $\phi=c_f|_P$ for some $f\in\H$ with $P\leq S_f$.
\end{itemize}
\end{lemma}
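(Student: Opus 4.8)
The plan is to handle both parts by a single argument, since (a) is just the case $\H=\L$ of (b) (note $S\cap\L=S$ and $\F_{S\cap\L}(\L)=\F_S(\L)$). So I would set things up for an arbitrary partial subgroup $\H$ of $\L$ with $P\leq S\cap\H$.

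First I would invoke the standard description of a generated fusion system (see \cite[Part~I]{Aschbacher/Kessar/Oliver:2011}). By definition $\F_{S\cap\H}(\H)$ is generated by the conjugation maps $c_h\colon\H\cap S_h\to S\cap\H$ with $h\in\H$. This generating family is closed under inversion (as $c_h^{-1}=c_{h^{-1}}$ on the relevant subgroups), contains the inner automorphisms $c_t$ for $t\in S\cap\H$, and contains every inclusion $Q\hookrightarrow S\cap\H$ (as $c_{\One}|_Q$, using $\One\in\H$). Hence every $\phi\in\Hom_{\F_{S\cap\H}(\H)}(P,S)$ is a composite of restrictions of members of this family: there exist $h_1,\dots,h_n\in\H$ and subgroups $P=P_0,P_1,\dots,P_n\leq S$ with $P_{i-1}\leq S_{h_i}$ and $P_{i-1}^{h_i}=P_i$ for $1\leq i\leq n$, and
\[\phi=c_{h_1}|_{P_0}\circ c_{h_2}|_{P_1}\circ\cdots\circ c_{h_n}|_{P_{n-1}}.\]
(If $n=0$, then $\phi$ is the inclusion $P\hookrightarrow S$; take $n:=1$ and $h_1:=\One$.)

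The key observation is then that the word $w:=(h_1,\dots,h_n)$ lies in $\D=\D_\Delta$ via $P_0,P_1,\dots,P_n$. I would prove this by induction on $i$: by hypothesis $P_0=P\in\Delta$, and if $P_{i-1}\in\Delta$, then $P_{i-1}\leq S_{h_i}\subseteq\D(h_i)$ with $h_i\in\H\subseteq\L$, so $P_i=P_{i-1}^{h_i}\in\Delta$ because $\Delta$ is closed under taking $\L$-conjugates in $S$. Thus $w\in\D$ via $P_0,\dots,P_n\in\Delta$. Setting $f:=\Pi(w)$, Lemma~\ref{L:ConjugateNormalizer}(b) (together with the remark following its proof) gives $P=P_0\leq S_f$ and, restricting the equality of conjugation maps to the subgroup $P_0$,
\[c_f|_P=c_{h_1}|_{P_0}\circ c_{h_2}|_{P_1}\circ\cdots\circ c_{h_n}|_{P_{n-1}}=\phi.\]
Finally $f=\Pi(w)\in\H$, since $\H$ is a partial subgroup of $\L$ and $w\in\W(\H)\cap\D$. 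This proves (b), and (a) is the special case $\H=\L$.

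I expect the only genuinely delicate point to be the very first step, namely the reduction of an arbitrary morphism of $\F_{S\cap\H}(\H)$ to a composite of restrictions of the chosen generating conjugation maps; this is standard for generated fusion systems, and once it is in place the rest is just bookkeeping with the closure axioms of $\Delta$ built into the definition of a locality together with Lemma~\ref{L:ConjugateNormalizer}. Minor points to watch are the degenerate case $n=0$ and, for part (b), the use of the fact that a partial subgroup is closed under the partial product applied to words lying in $\D$.
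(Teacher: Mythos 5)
Your proof is correct and follows essentially the same route as the paper's: reduce (a) to (b), decompose $\phi$ as a composite of restrictions of conjugation maps $c_{h_i}$ with $h_i\in\H$, observe the intermediate subgroups lie in $\Delta$ so that $w=(h_1,\dots,h_n)\in\D$, and then apply Lemma~\ref{L:ConjugateNormalizer} to $f:=\Pi(w)$. The only difference is cosmetic: you spell out the induction showing $P_i\in\Delta$ and handle the degenerate case $n=0$ explicitly, whereas the paper folds those into the initial decomposition.
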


\begin{proof}
Notice that (b) implies (a). So assume that $\H$ and $\phi$ are as in (b). Then by definition of $\F_S(\H)$, the map $\phi$ can be written as a composition $\phi=(c_{f_1}|_{P_0})\circ\cdots \circ (c_{f_n}|_{P_{n-1}})$ where $P_0=P\in\Delta$, $f_i\in\H$ and $P_i=P_{i-1}^{f_i}\in\Delta$ for $i=1,\dots,n$. Hence, $w:=(f_1,\dots,f_n)\in\D$ via $P_0,\dots,P_n\in\Delta$. So setting $f:=\Pi(w)$, Lemma~\ref{L:ConjugateNormalizer} gives $P\leq S_w\leq S_f$ and $\phi=c_f$. As $w\in\W(\H)$, we have $f=\Pi(w)\in\H$.
\end{proof}

We repeat the following definition from \cite[Lemma~2.14]{Chermak:2015}.

\begin{definition}
If $(\L,\Delta,S)$ is a locality, then set
\[O_p(\L):=\bigcap\{S_w\colon w\in\W(\L)\}\] 
\end{definition}

\begin{lemma}\label{L:OpL}
If $(\L,\Delta,S)$ is a locality, then $O_p(\L)$ is the unique largest $p$-subgroup of $\L$, which is a partial normal subgroup of $\L$. Moreover, a subgroup $P\leq S$ is a partial normal subgroup of $\L$ if and only if $N_\L(P)=\L$.
\end{lemma}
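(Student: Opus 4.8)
The plan is to prove the two assertions separately, starting with the characterization of subgroups $P \leq S$ that are partial normal in $\L$, since this is the cleaner statement and will help organize the first part. For the second assertion, suppose first that $P \leq S$ is a partial normal subgroup of $\L$. To show $N_\L(P) = \L$, I would take an arbitrary $f \in \L$ and argue that $P \subseteq \D(f)$ and $P^f = P$. The point is that $P \cap \D(f) \trianglelefteq$-closure forces things: since $P$ is partial normal, $p^f \in P$ for every $p \in P \cap \D(f)$, so it suffices to know $P \subseteq \D(f)$. For this I would use that $P$ is a subgroup of $S$ hence $P \in \Delta$ (as $\Delta$ is overgroup-closed and $S \in \Delta$ — here one needs $\Delta$ nonempty and overgroup-closed, which gives $S \in \Delta$), and then invoke Lemma~\ref{L:ConjugateNormalizer}(a) provided $P \leq S_f$; but in general $P$ need not lie in $S_f$. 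Instead I would argue directly: for $f \in \L$ we have $S_f \in \Delta$, and $(f^{-1}, s, f) \in \D$ for every $s \in S_f$; using partial normality of $P$ together with a decomposition of $f$ via elements whose conjugation is controlled on $p$-subgroups, one shows $P^f = P$. A cleaner route: since $P \trianglelefteq \L$ and $P \leq S$, for each $f \in \L$ the subgroup $\langle P, S_f \rangle$ need not make sense, so instead use that $P \leq S$ and $S_f \in \Delta$ imply $PS_f$... — actually the safe argument is that $\Delta \ni S_f$ and $P \trianglelefteq \L$ forces $P \leq S_f$: indeed if $p \in P \setminus S_f$ then... hmm. The robust approach is to use the known fact (from \cite{Chermak:2015}) that every $f\in\L$ lies in $N_\L(S_f)$ and that $S_f$ together with $P$ generate an object of $\Delta$ once $P\unlhd\L$; I would instead simply cite that for a partial normal subgroup $\N$, $N_\L(\N\cap S)=\L$, or prove $P\le S_f$ from the fact that $P^{s}=P$ for all $s\in S$ (as $P\unlhd S$) combined with maximality.

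Conversely, if $N_\L(P) = \L$, then $P \subseteq \D(f)$ and $P^f = P$ for all $f \in \L$, so in particular $p^f \in P$ for all $f \in \L$ and $p \in P \cap \D(f)$, which is exactly the condition for $P$ to be a partial normal subgroup; I also need $P$ to be a partial subgroup, which follows because $P$ is assumed to be a subgroup of $S$ hence of $\L$, and subgroups are partial subgroups. This direction is essentially immediate from the definitions.

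For the first assertion, I would show $O_p(\L) = \bigcap\{S_w : w \in \W(\L)\}$ is (i) a $p$-subgroup of $\L$, (ii) partial normal, and (iii) contains every partial normal $p$-subgroup. For (i): each $S_w$ is a subgroup of the finite $p$-group $S$, so the intersection is a $p$-subgroup of $S$, hence of $\L$; moreover $\W(\{O_p(\L)\}) \subseteq \D$ because $O_p(\L) \subseteq S_w$ for all $w$ — in particular $O_p(\L)$ is a genuine subgroup of $\L$. For (iii): if $Q \leq S$ is a partial normal $p$-subgroup, then by the characterization just proved $N_\L(Q) = \L$; I then need $Q \leq S_w$ for every word $w = (f_1, \dots, f_k) \in \W(\L)$, which I would get by induction on $k$: since $N_\L(Q)=\L$ we have $Q \subseteq \D(f_1)$ with $Q^{f_1} = Q \leq S$, and iterating along $w$ shows all the intermediate conjugates $Q, Q^{f_1}, Q^{f_1 f_2}, \dots$ equal $Q$ and lie in $S$, so $Q \leq S_w$ by definition of $S_w$. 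For (ii): to see $O_p(\L) \trianglelefteq \L$, by the characterization it suffices to show $N_\L(O_p(\L)) = \L$; this is where I would invoke Lemma~\ref{L:ConjugateNormalizer}(a), noting that $O_p(\L) \in \Delta$ (it is a subgroup of $S$ and $\Delta$ is overgroup-closed — provided $O_p(\L)$ contains some member of $\Delta$, or one argues $O_p(\L)\in\Delta$ directly since $S\in\Delta$ and... actually one must be slightly careful, as $\Delta$ need not contain all subgroups of $S$), and that for any $f \in \L$, one has $O_p(\L) \leq S_f$ because $O_p(\L) \leq S_{(f)} = S_f$ by definition of the intersection, whence conjugation by $f$ maps $O_p(\L)$ into $\Delta$; combined with the fact that $O_p(\L)^f \subseteq \bigcap S_w$ (reindexing words) one gets $O_p(\L)^f = O_p(\L)$.

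The main obstacle I anticipate is handling the subtlety in the backward direction of the characterization ($P \trianglelefteq \L \Rightarrow N_\L(P) = \L$) and in showing $O_p(\L) \in \Delta$: one cannot assume $\Delta$ contains every $p$-subgroup of $S$, only that it is overgroup-closed, so establishing that $O_p(\L)$ (and a given partial normal $P \leq S$) actually lies in $\Delta$ — equivalently, that conjugation by arbitrary $f \in \L$ is defined on it — requires care. I expect this is resolved by the observation that $O_p(\L) \leq S_f$ for every $f \in \L$ by the very definition of the intersection (taking $w = (f)$), so that $c_f$ is defined on $O_p(\L)$ and $O_p(\L)^f \leq S$; then for partial normality one checks $O_p(\L)^f \leq S_w$ for all $w$ by prepending $f$ to $w$. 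The analogous point for a general partial normal $P \leq S$ is that $P \leq S_f$ must be derived rather than assumed — I would obtain it from $P \trianglelefteq \L$ by noting $P \cap S_f$ is $\L$-conjugate-stable enough, or more simply by the inclusion $O^p$-type argument; if a direct argument proves awkward, one may instead prove the "$\Leftarrow$" direction of the characterization only for $P$ already known to be a subgroup of $\L$ on which all conjugations are defined, which is automatic once $N_\L(P)=\L$, making the equivalence consistent.
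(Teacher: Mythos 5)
Your proposed route — prove the equivalence ``$P\unlhd\L$ iff $N_\L(P)=\L$'' first and then deduce the maximality of $O_p(\L)$ — differs from the paper, which simply cites \cite[Lemma~2.14]{Chermak:2015} for the fact that $O_p(\L)$ is the largest partial normal subgroup of $S$ and \cite[Proposition~2.11(c)]{Chermak:2015} for the fact that every partial normal $p$-subgroup of $\L$ lies in $S$, and then obtains the forward implication of the equivalence essentially for free from $P\leq O_p(\L)\leq S_f$. Your plan can be made to work, but as written it has a genuine gap exactly where you yourself flag trouble: you never actually establish that $P\leq S_f$ for all $f\in\L$ when $P\leq S$ is partial normal, and you end up proposing workarounds that are either vague or false. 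In particular, the ``known fact'' that $N_\L(\N\cap S)=\L$ for a partial normal subgroup $\N$ is \emph{not} true in general — if it were, the Frattini Lemma $\L=\N\,N_\L(\N\cap S)$ would be vacuous, and the entire chapter on $N_\L(T)$ for $T=\N\cap S$ would collapse. Your fallback of ``prove the $\Leftarrow$ direction only'' does not prove the lemma.

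The missing step can be closed without appealing to the first assertion, which would rescue your ordering. Since $P\unlhd\L$ gives $P\unlhd S$, the product $PS_f$ is a $p$-subgroup of $S$. For any $x\in N_P(S_f)$ the word $(f^{-1},x,f)$ lies in $\D$ via $S_{f^{-1}},S_f,S_f,S_{f^{-1}}$, and partial normality gives $x^f\in P\leq S$, hence $x\in S_f$; thus $N_{PS_f}(S_f)=N_P(S_f)S_f=S_f$, which forces $PS_f=S_f$ and so $P\leq S_f$. Iterating along a word $w$ then gives $P\leq S_w$, and from this both $N_\L(P)=\L$ and $P\leq O_p(\L)$ follow. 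One more point you gloss over: the first assertion concerns all partial normal $p$-subgroups of $\L$, not just those already contained in $S$; you tacitly assume $Q\leq S$, but the reduction to that case rests on \cite[Proposition~2.11(c)]{Chermak:2015}, which you would need to invoke or reprove. Finally, your worry about whether $O_p(\L)\in\Delta$ is a red herring: one never needs it, since $O_p(\L)\leq S_f$ for all $f$ already makes every conjugation $c_f$ defined on $O_p(\L)$, and $O_p(\L)^f\leq O_p(\L)$ follows by prepending $f^{-1}$ to words, exactly as you suggest.
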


\begin{proof}
By \cite[Lemma~2.14]{Chermak:2015}, $O_p(\L)$ is the unique largest subgroup of $S$ which is a partial normal subgroup of $\L$, and by \cite[Proposition~2.11(c)]{Chermak:2015}, every $p$-subgroup of $\L$, which is also a partial normal subgroup of $\L$, is contained in $S$. Hence, $O_p(\L)$ is the unique largest $p$-subgroup of $\L$, which is a partial normal subgroup of $\L$.

\smallskip 

Let $P\leq S$. If $\L=N_\L(P)$, then clearly $P\unlhd \L$. On the other hand, if $P\unlhd\L$ and $f\in\L$, then the property above gives $P\leq O_p(\L)\leq S_f$. Hence, $P^f$ is defined and then (because of $P\unlhd\L$) equal to $P$. So $f\in N_\L(P)$. This shows $\L=N_\L(P)$ if $P\unlhd\L$.
\end{proof}

The following lemma will be used in several places.

\begin{lemma}[{\cite[Lemma~2.8]{Henke:2020}}]\label{L:NLSbiset}
If $r\in N_\L(S)$ and $f\in\L$, then $(r,f)$, $(f,r)$ and $(r^{-1},f,r)$ are words in $\D$. Moreover,
\[S_{(f,r)}=S_{fr}=S_f,\;S_{(r,f)}=S_{rf}=S_f^{r^{-1}}\mbox{ and }S_{f^r}=S_f^r.\]
\end{lemma}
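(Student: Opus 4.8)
The plan is to exploit the defining property of the domain $\D$ of a locality, namely that a word lies in $\D$ precisely when there is a chain of subgroups in $\Delta$ conjugated along it, together with the fact that $N_\L(S)$ is a subgroup of $\L$ (since $S\in\Delta$, as $\Delta$ is overgroup-closed in $S$ and nonempty, and $\Delta$ is closed under $\L$-conjugation, giving $S\in\Delta$). First I would observe that for $r\in N_\L(S)$ we have $S\leq S_r$ and $S_r^r=S$, so $r$ normalizes $S$ in the strongest sense. Then, given $f\in\L$, pick a chain $P_0,\dots,P_k\in\Delta$ witnessing $f\in\D$ via $P_0=S_f$, i.e. $S_f\subseteq\D(f)$ and $S_f^f=S_{f^{-1}}=:P_k$. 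To show $(f,r)\in\D$, prepend nothing and append $r$: since $S_{f^{-1}}\leq S$ and $r$ normalizes $S$, we have $S_{f^{-1}}\leq S=S_r$, and then $(f,r)\in\D$ via $S_f, S_{f^{-1}}, S_{f^{-1}}^r$. Symmetrically, to show $(r,f)\in\D$, note $S_f^{r^{-1}}\leq S$ (as $r^{-1}\in N_\L(S)$ and $S_f\leq S$), and $(S_f^{r^{-1}})^r = S_f\subseteq\D(f)$, so $(r,f)\in\D$ via $S_f^{r^{-1}}, S_f, S_{f^{-1}}$. For $(r^{-1},f,r)\in\D$: starting from $S_f^{r^{-1}}\leq S$, conjugation by $r^{-1}$ (defined on all of $S$) lands in $S_f$, then by $f$ into $S_{f^{-1}}\leq S$, then by $r$ into $S_{f^{-1}}^r\leq S$; this chain lies in $\Delta$ and witnesses $(r^{-1},f,r)\in\D$.

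Next I would establish the identifications of the $S_w$'s. For $S_{(f,r)}=S_f$: by definition $s\in S_{(f,r)}$ iff $s\in S_f$ and $s^f\in S_r=S\cap\{x:x^r\in S\}$; but $s^f\in S_{f^{-1}}\leq S$ and $r$ normalizes $S$, so $s^f\in S_r$ automatically, giving $S_{(f,r)}=S_f$. Since $fr=\Pi(f,r)$ and $c_f\circ c_r = c_{fr}$ on $S_{(f,r)}$ with $c_r$ an automorphism of $S$, we get $S_{fr}=S_f$ as well (the image condition is unchanged because $c_r$ maps $S$ onto $S$). For $S_{(r,f)}$: $s\in S_{(r,f)}$ iff $s\in S_r=S$ and $s^r\in S_f$, i.e. iff $s\in S$ and $s\in S_f^{r^{-1}}$; since $S_f\leq S$ and $r^{-1}$ normalizes $S$ we have $S_f^{r^{-1}}\leq S$, so $S_{(r,f)}=S_f^{r^{-1}}$. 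Then $S_{rf}=S_{(r,f)}=S_f^{r^{-1}}$ since $\Pi(r,f)=rf$ and the subgroup $S_w$ for $w\in\D$ is the largest subgroup of $S$ mapped into $S$ along $w$, which by Lemma~\ref{L:ConjugateNormalizer}(b) equals the domain of $c_{\Pi(w)}$ restricted appropriately — more precisely $S_w\leq S_{\Pi(w)}$, and a symmetric argument applied to $(rf)^{-1}=f^{-1}r^{-1}$ with $r^{-1}\in N_\L(S)$ gives the reverse inclusion $S_{rf}\leq S_f^{r^{-1}}$. Finally, for $S_{f^r}$: writing $f^r=\Pi(r^{-1},f,r)$, the chain constructed above shows $S_{(r^{-1},f,r)}=S_f^r$ (trace through: $s\in S$, $s^{r^{-1}}\in S_f$ iff $s\in S_f^r$, and then the composite lands in $S_{f^{-1}}^r\leq S$ automatically), and by Lemma~\ref{L:ConjugateNormalizer}(b) $S_f^r = S_{(r^{-1},f,r)}\leq S_{f^r}$; the reverse inclusion follows since $c_{f^r}=c_{r^{-1}}\circ c_f\circ c_r$ has domain contained in $S_f^r$ by chasing where $c_r$ must land.

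The main obstacle I anticipate is keeping the "largest subgroup of $S$ whose image under the relevant conjugations stays in $S$" characterization of $S_w$ straight, and in particular proving the equalities $S_{fr}=S_f$, $S_{rf}=S_f^{r^{-1}}$, $S_{f^r}=S_f^r$ rather than mere inclusions $\leq$. The clean way to get both inclusions is: the inclusion $S_w\leq S_{\Pi(w)}$ from Lemma~\ref{L:ConjugateNormalizer}(b) handles one direction, and applying the same inclusion to the inverse word (using that $N_\L(S)$ is closed under inversion, so $r^{-1}\in N_\L(S)$ whenever $r\in N_\L(S)$) together with the general fact $S_{g^{-1}}=S_g^g$ handles the other. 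Everything else is a bookkeeping exercise in the definitions, since the key structural input — that conjugation by an element of $N_\L(S)$ is an honest automorphism of $S$, defined everywhere on $S$ — removes all obstructions to extending the witnessing chains.
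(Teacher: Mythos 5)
Your overall strategy is correct and is the natural one: exhibit explicit chains in $\Delta$ to get membership in $\D$, compute $S_{(f,r)}$, $S_{(r,f)}$, $S_{(r^{-1},f,r)}$ directly, and then transfer to $S_{fr}$, $S_{rf}$, $S_{f^r}$ via Lemma~\ref{L:ConjugateNormalizer}(b). (The paper itself gives no proof, citing \cite[Lemma~2.8]{Henke:2020}, so there is nothing in-paper to compare against.) Two points need repair.

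The chain you write down for $(r^{-1},f,r)\in\D$ has a sign slip: you start with $P_0=S_f^{r^{-1}}$, but conjugating that by $r^{-1}$ gives $S_f^{r^{-2}}$, not $S_f$. The witnessing chain should be $P_0=S_f^{r}$, $P_1=S_f$, $P_2=S_{f^{-1}}$, $P_3=S_{f^{-1}}^r$ — which in fact matches your later (correct) computation $S_{(r^{-1},f,r)}=S_f^r$.

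The more substantial issue is your closing recipe for the reverse inclusions, namely applying $S_w\le S_{\Pi(w)}$ to the inverse word $w^{-1}$ and then invoking $S_{g^{-1}}=S_g^g$. Traced out for $S_{fr}\le S_f$: the inverse word of $(f,r)$ is $(r^{-1},f^{-1})$, and one computes $S_{(r^{-1},f^{-1})}=S_{f^{-1}}^r=S_f^{fr}$, while $S_{(fr)^{-1}}=S_{fr}^{fr}$; the inclusion $S_{(r^{-1},f^{-1})}\le S_{(fr)^{-1}}$ then gives $S_f^{fr}\le S_{fr}^{fr}$, hence $S_f\le S_{fr}$ after conjugating by $(fr)^{-1}$ — which is the inclusion you already have, not its reverse. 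What you actually need is a cancellation word whose product is $f$. Concretely: for $S_{fr}\le S_f$, take $w=(fr,r^{-1})$, note $\Pi(w)=f$ and $S_w=S_{fr}$ (since $S_{fr}^{fr}\le S=S_{r^{-1}}$), then $S_{fr}=S_w\le S_{\Pi(w)}=S_f$. For $S_{rf}\le S_f^{r^{-1}}$, take $w=(r^{-1},rf)$, with $\Pi(w)=f$ and $S_w=S_{rf}^r$, so $S_{rf}^r\le S_f$. For $S_{f^r}\le S_f^r$, take $w=(r,f^r,r^{-1})$, with $\Pi(w)=\Pi(r,r^{-1},f,r,r^{-1})=f$ and $S_w=S_{f^r}^{r^{-1}}$, so $S_{f^r}^{r^{-1}}\le S_f$. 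With these substitutions for the reverse inclusions, the proof closes.
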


Notice that the lemma above implies $R\N=\{\Pi(r,n)\colon r\in R,\;n\in\N\}$ and $\N R=\{\Pi(n,r)\colon n\in\N,\;r\in R\}$ for $\N\subseteq\L$ and $R\subseteq N_\L(S)$. 

\begin{lemma}\label{L:ProductPartialSubgroup}
If $R\leq N_\L(S)$ and $\N\unlhd\L$, then $\N R=R\N$ is a partial subgroup of $\L$.
\end{lemma}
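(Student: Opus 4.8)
The plan is to first establish the set identity $\N R=R\N$ together with the pointwise ``straightening'' rules $\Pi(n,r)=\Pi(r,n^r)$ and $\Pi(r,n)=\Pi(n^{r^{-1}},r)$ for $r\in R$, $n\in\N$, where $n^r:=\Pi(r^{-1},n,r)$ (which lies in $\N$ because $\N\unlhd\L$), and then to verify that $\H:=\N R=R\N$ is closed under inversion and under the partial product. For the first point, I would use that, for $r\in R\le N_\L(S)$ and $n\in\L$, Lemma~\ref{L:NLSbiset} guarantees $(r,n),(n,r),(r,n,r^{-1}),(r^{-1},n,r)\in\D$; hence $n^r,n^{r^{-1}}$ are defined, and they lie in $\N$ whenever $n\in\N$. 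A short manipulation with Lemma~\ref{L:Chermak14d} (to insert $r$ and $r^{-1}$ at the ends) followed by the associativity axiom of Definition~\ref{partial} then yields the two identities. Since by the remark preceding this lemma $R\N=\{\Pi(r,n):r\in R,\ n\in\N\}$ and $\N R=\{\Pi(n,r):n\in\N,\ r\in R\}$, these identities give $\N R\subseteq R\N$ and $R\N\subseteq\N R$, so $\H:=\N R=R\N$.

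Closure of $\H$ under inversion is then immediate: for $h=\Pi(n,r)\in\N R$ we get $h^{-1}=\Pi\big((n,r)^{-1}\big)=\Pi(r^{-1},n^{-1})\in R\N=\H$, using $\Pi(w^{-1})=\Pi(w)^{-1}$ together with $n^{-1}\in\N$ and $r^{-1}\in R$. For closure under the partial product it suffices to treat the case of two factors: granting that $a,b\in\H$ and $(a,b)\in\D$ imply $\Pi(a,b)\in\H$, an easy induction on $|w|$ (write $w=u\circ(h_k)$, note $u\in\D$ and $\Pi(u)\in\H$ inductively, and apply the associativity axiom to obtain $(\Pi(u),h_k)\in\D$ with $\Pi(\Pi(u),h_k)=\Pi(w)$) shows $\Pi(w)\in\H$ for all $w\in\D\cap\W(\H)$.

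So I would fix $a=\Pi(r_1,n_1)$, $b=\Pi(r_2,n_2)$ with $r_i\in R$, $n_i\in\N$ and $(a,b)\in\D$, aiming to show $\Pi(a,b)=\Pi(r_1r_2,\ n_1^{r_2}n_2)\in R\N$. First I would show that the word $(r_1,n_1,r_2,n_2)$ lies in $\D$ with product $\Pi(a,b)$. For this I would invoke the locality dictionary ``$w\in\D\iff S_w\in\Delta$'': since $r_1,r_2$ normalize $S$ (Lemma~\ref{L:NLSbiset}) and conjugation by a product decomposes as the composite of the individual conjugations (Lemma~\ref{L:ConjugateNormalizer} and the remark following it), a direct computation gives $S_{(r_1,n_1,r_2,n_2)}=S_{(a,b)}\in\Delta$, and then the equality of products follows from the associativity axiom. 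Next I would reassociate the subword $(n_1,r_2)$ and replace $\Pi(n_1,r_2)$ by $\Pi(r_2,n_1^{r_2})$ using the straightening rule from the first paragraph (with $n_1^{r_2}\in\N$); re-expanding by the same $S_w$-argument lands back in $\D$ at the word $(r_1,r_2,n_1^{r_2},n_2)$, still with product $\Pi(a,b)$. Finally I would collapse $(r_1,r_2)$ to $r_1r_2\in R$ (as $R$ is a subgroup) and $(n_1^{r_2},n_2)$ to $n_1^{r_2}n_2\in\N$ (as $\N$ is a partial subgroup and $(n_1^{r_2},n_2)$ is a subword of a word in $\D$), giving $\Pi(a,b)=\Pi(r_1r_2,\ n_1^{r_2}n_2)\in R\N=\H$.

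The conceptual content is essentially trivial: $R$ acts by conjugation on $\N$ and $R$ is a group, so any product $r_1n_1r_2n_2$ collapses into $R\N$. I expect the main obstacle to be purely technical, namely bookkeeping with domains: none of the word rearrangements above is legitimate in an arbitrary partial group, so at every step one must verify that the rearranged word still lies in $\D$. This is exactly where the locality hypothesis is used, through the equivalence ``$w\in\D\iff S_w\in\Delta$'' and the compatibility of the subgroups $S_w$ with composition of conjugation maps (Lemma~\ref{L:ConjugateNormalizer}).
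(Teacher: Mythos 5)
Your proof is correct, and it takes a different route from the paper. The paper's proof is a one-liner: it simply cites \cite[Lemma~3.4]{Chermak:2015} for the facts that $R\N=\N R$ and that $R\N$ is closed under products, and \cite[Lemma~1.4(f)]{Chermak:2015} for $(rn)^{-1}=n^{-1}r^{-1}$, and is done. What you have done, in effect, is reprove the relevant content of Chermak's Lemma~3.4 from scratch inside the locality framework, using the equivalence $w\in\D\Leftrightarrow S_w\in\Delta$ together with Lemma~\ref{L:NLSbiset} and Lemma~\ref{L:ConjugateNormalizer} to license each word rearrangement, and then invoking the partial-group axioms to collapse $(r_1,n_1,r_2,n_2)$ into $(r_1r_2,\,n_1^{r_2}n_2)$. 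The trade-off is the usual one: the paper's proof is shorter but opaque unless one has \cite{Chermak:2015} at hand, while yours is longer but self-contained and makes explicit exactly where the locality hypothesis (as opposed to a bare partial group) is being used — namely, to justify that rearranged words remain in $\D$. One small thing worth noting: Chermak's Lemma~3.4 is stated for a general partial group with $R\le N_\L(S)$ acting, so the paper's route does not actually need the full locality structure at this step; your $S_w$-based verification does use it, though only as a convenient sufficient criterion for membership in $\D$ rather than in any essential way.
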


\begin{proof}
It follows from \cite[Lemma~3.4]{Chermak:2015} that $R\N=\N R$ and that $R\N$ is closed under taking products. If $r\in R$ and $n\in\N$, then by \cite[Lemma~1.4(f)]{Chermak:2015}, $(rn)^{-1}=n^{-1}r^{-1}\in \N R=R\N$. This shows that $R\N$ is a partial subgroup.
\end{proof}

\begin{lemma}\label{L:CentralizerPartialNormal}
For every $R\leq S$, the centralizer $C_\L(R)$ and the normalizer $N_\L(R)$ are partial subgroups of $\L$. Moreover, $C_\L(R)$ is a partial normal subgroup of $N_\L(R)$.
\end{lemma}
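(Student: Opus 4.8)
The plan is to verify the three claims---$C_\L(R)$ is a partial subgroup, $N_\L(R)$ is a partial subgroup, and $C_\L(R)\unlhd N_\L(R)$---directly from the axioms of a partial group together with the basic properties of conjugation in a partial group already recorded in the excerpt. For the partial-subgroup assertions I would first check closure under inversion: if $f\in C_\L(R)$, then for each $r\in R$ we have $r\in\D(f)$ and $r^f=r$; applying $c_{f^{-1}}=c_f^{-1}$ (which is a bijection $\D(f^{-1})\to\D(f)$ by \cite[Lemma~1.6(c)]{Chermak:2015}) to the equation $r^f=r$ gives $r\in\D(f^{-1})$ and $r^{f^{-1}}=r$, so $f^{-1}\in C_\L(R)$. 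The same argument with ``$R^f=R$'' in place of ``$r^f=r$ for all $r\in R$'' handles $N_\L(R)$.

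Next I would check closure under products. Suppose $w=(f_1,\dots,f_k)\in\D\cap\W(C_\L(R))$ and set $f=\Pi(w)$. Since $(\L,\Delta,S)$ is a locality, $w\in\D$ via some $P_0,\dots,P_k\in\Delta$, so by Lemma~\ref{L:ConjugateNormalizer}(b) we have $c_{f_1}\circ\cdots\circ c_{f_k}=c_f$ as a map $N_\L(P_0)\to N_\L(P_k)$, and $P_0=S_w\le S_f$. The point now is that for $r\in R$, conjugation $r^{f_i}$ is defined and equals $r$ for each $i$ (since $f_i\in C_\L(R)$), but to compose these conjugations as maps and conclude $r^f=r$ one needs $r$ to lie in $P_0$, which need not hold. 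Here the subtlety is the same one the introduction emphasizes: conjugation by $f$ is only automatically defined on subgroups of $S$ that get chained through $\Delta$. So instead I would argue as follows: each $f_i$ centralizes $R$, hence normalizes $R$, hence $f_i\in N_\L(R)$; I would first establish that $N_\L(R)$ is closed under products (using that $R^{f_i}=R$ chains correctly---indeed $R\le S_{f_i}$ because $r^{f_i}$ is defined for all $r\in R$ and lands in $R\le S$, so $R\le S_w\le S_f$ by the locality axioms, and then $R^f=R^{f_1\cdots f_k}=R$). With $R\le S_f=S_w\supseteq$ the relevant $P_0$-chain, one then gets that $c_f|_R=(c_{f_1}\circ\cdots\circ c_{f_k})|_R$ via Lemma~\ref{L:ConjugateNormalizer}(b) applied with $P_0=R$ (legitimate since $R\in\Delta$ as $\Delta$ is overgroup-closed and $R\le S$ -- wait, one needs $R\in\Delta$; if not, replace $R$ by choosing $P_0$ so that $R\le P_0\le S_w$, e.g.\ $P_0=S_w$), so $r^f=((r^{f_1})\cdots)^{f_k}=r$ for all $r\in R$, giving $f\in C_\L(R)$.

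For the final assertion $C_\L(R)\unlhd N_\L(R)$: I would take $f\in N_\L(R)$ and $c\in C_\L(R)\cap\D(f)$ and show $c^f\in C_\L(R)$, i.e.\ for every $r\in R$, $r\in\D(c^f)$ and $r^{(c^f)}=r$. This is the standard group-theoretic computation $r^{(c^f)}=r^{f^{-1}cf}$ transcribed to partial groups: since $f$ normalizes $R$, $r^{f^{-1}}=:r'\in R$; since $c$ centralizes $R$, $(r')^c=r'$; and conjugating back, $((r')^c)^f=r^{f^{-1}}\!{}^f$. To make this rigorous I would use Lemma~\ref{L:Chermak14d} and the associativity/insertion axioms to justify that the word $(f^{-1}, (c^{-1},r,c), f)$ or rather $((c^f)^{-1}, r, c^f)$ lies in $\D$ and evaluates to $r$; one writes $(c^f)^{-1}=(c^{-1})^f$ (by \cite[Lemma~1.4(f)]{Chermak:2015} or the analogous fact), inserts products, and collapses using that $r'=r^{f^{-1}}$ is fixed by $c$. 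All of the word-membership bookkeeping should be routine given the locality axioms, since everything in sight normalizes the subgroup $R\le S$ and $S$ is a $p$-subgroup.

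The main obstacle I anticipate is not conceptual but bookkeeping: making sure that at each step the conjugation maps are actually \emph{defined} on the elements of $R$ in question---this is where partial groups differ from groups---and invoking Lemma~\ref{L:ConjugateNormalizer} with the correct choice of $\Delta$-chain (taking $P_0=S_w$ or $P_0=R$ and checking $R\le S_w\le S_{\Pi(w)}$). Once one fixes the habit of pushing all computations through normalizers $N_\L(P)$ with $P\in\Delta$, as the excerpt suggests doing ``most of the time without reference,'' the three verifications become short. I expect the cleanest writeup first proves $N_\L(R)$ is a partial subgroup (to get products and inverses available inside $N_\L(R)$), then proves $C_\L(R)$ is a partial subgroup working inside the \emph{group}-like structure on words that normalize $R$, and finally deduces normality by the transcribed conjugation identity.
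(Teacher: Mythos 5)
Your proposal is correct and takes essentially the same route as the paper: for normality both arguments arrange a $\Delta$-chain for $(f^{-1},x,f)$ through objects containing $R$ (you observe $R\le S_{(f^{-1},x,f)}$, the paper replaces $P_i$ by $P_iR$) and then compute $c_{x^f}|_R=(c_{f^{-1}}|_R)\circ(c_x|_R)\circ(c_f|_R)=\id_R$; for the partial-subgroup claims the paper simply cites \cite[Lemma~5.4]{Henke:2015}, so your direct chaining argument via $R\le S_w\le S_{\Pi(w)}$ is a reasonable way to fill that in blind. One small slip worth flagging: the word $(f^{-1},c^{-1},r,c,f)$ encodes $(r^c)^f=r^f$, not $r^{c^f}$, so it does not evaluate to $r$ in general; the word you actually want is $(f^{-1},c^{-1},f,r,f^{-1},c,f)$, as your subsequent ``or rather $((c^f)^{-1},r,c^f)$'' and the $r'=r^{f^{-1}}$ computation correctly indicate.
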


\begin{proof}
By \cite[Lemma~5.4]{Henke:2015}, $C_\L(R)$ and $N_\L(R)$ are partial subgroups of $\L$. Let $x\in C_\L(R)$ and $f\in N_\L(R)$ such that $x\in\D(f)$. Then $(f^{-1},x,f)\in\D$ via some objects $P_0,P_1,P_2,P_3$. Replacing $P_i$ by $P_iR$, we may assume $R\leq P_i$ for $i=0,1,2,3$. Recall that $c_{f^{-1}}=(c_f)^{-1}$. Using Lemma~\ref{L:ConjugateNormalizer}(a), we conclude 
\[c_{x^f}|_R=(c_{f^{-1}})|_R\circ (c_x)|_R\circ (c_f)|_R=(c_{f^{-1}})|_R\circ \id_R\circ (c_f)|_R=(c_f|_R)^{-1}\circ (c_f)|_R=\id_R\]
and so $x^f\in C_\L(R)$. 
\end{proof}

\begin{lemma}\label{L:HZHelp}
Let $\H$ be a partial subgroup of $\L$, and let $Z\leq Z(\L)$ with $\L=\H Z$. Then $\H\unlhd \L$.
\end{lemma}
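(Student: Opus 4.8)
The goal is to show that $\H$ is a partial normal subgroup of $\L$, given that $\H$ is a partial subgroup and $\L = \H Z$ for some central subgroup $Z \leq Z(\L)$. First I would check that $\H$ really is a partial \emph{subgroup} that is a candidate for normality — that part is given — so the content is the normality condition: for every $f \in \L$ and every $h \in \H \cap \D(f)$, we must have $h^f \in \H$. The plan is to write $f = \Pi(a, z)$ with $a \in \H$, $z \in Z$ (using $\L = \H Z$), and then reduce conjugation by $f$ to conjugation by $a$, since conjugation by a central element should be trivial.

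\medskip

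The key steps, in order: (1) Fix $f \in \L$ and $h \in \H$ with $h \in \D(f)$. Using $\L = \H Z = \{\Pi(a,z) : a \in \H, z \in Z, (a,z) \in \D\}$, write $f = az$ with $a \in \H$, $z \in Z$. (2) Show that $z \in C_\L(h)$ — this is immediate from $z \in Z(\L) = C_\L(\L)$, so in particular $h \in \D(z)$ and $h^z = h$; and by Lemma~\ref{L:CentralizerHelp}, also $h \in C_\L(z)$, so conjugating $h$ by $z$ does nothing. (3) Manipulate the word $(f^{-1}, h, f) = (z^{-1}, a^{-1}, h, a, z)$ in $\D$: using Lemma~\ref{L:Chermak14d} and the axioms of a partial group (together with the fact that $z, z^{-1} \in Z$ commute with everything and so can be inserted/removed and "passed through" via Lemma~\ref{L:AddOnes}-type manipulations), deduce that $(a^{-1}, h, a) \in \D$ and $h^f = \Pi(z^{-1}, a^{-1}, h, a, z) = \Pi(a^{-1}, h, a) = h^a$. (4) Since $a, h \in \H$ and $\H$ is a partial subgroup closed under the product on words in $\W(\H) \cap \D$, and $(a^{-1}, h, a) \in \W(\H) \cap \D$, we get $h^a = \Pi(a^{-1}, h, a) \in \H$. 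Hence $h^f = h^a \in \H$, which is exactly what we need.

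\medskip

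The main obstacle I anticipate is step (3): carefully justifying that $h^f = h^a$ using only the partial-group axioms. The subtlety is that membership of $(f^{-1}, h, f)$ in $\D$ does not immediately give membership of the "rearranged" word $(a^{-1}, h, a)$ in $\D$ — one has to use the product-substitution axiom (replacing a subword by its product and conversely) together with the fact that $z$ commutes with $h$ and $a$. Concretely, I expect to use that $h \in \D(z)$ with $h^z = h$ to show $(z^{-1}, h, z) \in \D$ with product $h$, insert this into the word, and then apply the associativity-type axiom repeatedly to collapse the $z$'s; Lemma~\ref{L:Chermak14d} handles the cancellation $z \circ z^{-1}$ and $z^{-1} \circ z$. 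An alternative, possibly cleaner, route is: since $z \in Z(\L)$ we have $f = az$ with $z$ central, so $c_f = c_{az}$; one shows directly that $c_f$ and $c_a$ agree on $\D(f) \cap \D(a)$ by a computation with words, or invokes that $c_z = \id$ on its domain and composition of conjugations corresponds to the product (as in Lemma~\ref{L:ConjugateNormalizer}(b), though that is stated in the locality setting, so in the pure partial-group setting one argues by hand). Either way, once $h^f = h^a$ is established, closure of $\H$ finishes the proof immediately.
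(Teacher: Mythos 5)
Your proposal has a genuine gap at step (3), and it is exactly the step you flagged as the "main obstacle." The partial-group axioms let you \emph{collapse} a subword to its product (the third axiom: $u\circ v\circ w\in\D \Rightarrow u\circ(\Pi(v))\circ w\in\D$), but there is no axiom or lemma giving the \emph{converse} — you cannot, in a general partial group, start from $(f^{-1},h,f)\in\D$ and conclude that the expanded word $(z^{-1},a^{-1},h,a,z)$ is in $\D$, even though $\Pi(a,z)=f$ and $\Pi(z^{-1},a^{-1})=f^{-1}$. Your remark that "one has to use the product-substitution axiom (replacing a subword by its product and conversely)" assumes a two-way substitution that the axioms do not provide. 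Similarly, your alternative phrasing — inserting $(z^{-1},h,z)$ in place of $h$, or arguing that $c_f$ and $c_a$ agree on $\D(f)\cap\D(a)$ — founders on the same point: from $h\in\D(f)$ alone you cannot deduce $h\in\D(a)$, so you never know the conjugate $h^a$ is defined.

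The paper closes this gap by using the locality structure, which you never invoke. The lemma is stated under the standing hypothesis that $(\L,\Delta,S)$ is a locality, so membership in $\D$ is governed by $\Delta$: $w\in\D$ if and only if $S_w\in\Delta$. Since $Z(\L)\leq N_\L(S)$, Lemma~\ref{L:NLSbiset} gives $S_f=S_{(a,z)}$, and from there one checks that $(z^{-1},a^{-1},h,a,z)\in\D$ via $S_{(f^{-1},h,f)}\in\Delta$. Once the expanded word is in $\D$, your steps (2) and (4) go through — centrality of $z$ collapses it to $h^a$, which lies in $\H$ because $\H$ is a partial subgroup. So the conceptual plan is right, but the justification that the expanded word is a legal word requires $S_w$ and $\Delta$, not just the bare partial-group axioms; without that, the proof does not close.
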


\begin{proof}
Let $n\in\H$ and $f\in\L$ with $(f^{-1},n,f)\in\D$. Then $f=hz$ for some $h\in\H$ and $z\in Z$ with $(h,z)\in\D$. Notice that $Z(\L)\leq N_\L(S)$ and thus $S_f=S_{(h,z)}$ by Lemma~\ref{L:NLSbiset}. Hence, $u:=(z^{-1},h^{-1},n,h,z)\in\D$ via $S_{(f^{-1},n,f)}$ and $n^f=\Pi(u)=(n^h)^z$. As $z\in Z\leq Z(\L)$ and $n,h\in\H$, it follows $n^f=n^h\in\H$. This proves $\H\unlhd\L$.
\end{proof}

We will use the following general theorem.

\begin{theorem}\label{T:ProductsPartialNormal}
Let $(\L,\Delta,S)$ be a locality and $\M,\N\unlhd\L$. Then 
\[\M\N\unlhd\L,\;\M\N=\N\M\mbox{ and }(\M\N)\cap S=(\M\cap S)(\N\cap S).\]
Moreover, for every $f\in\M\N$, there exist $m\in\M$ and $n\in\N$ such that $(m,n)\in\D$, $m\in\M$, $n\in\N$, $f=mn$ and $S_f=S_{(m,n)}$. 
\end{theorem}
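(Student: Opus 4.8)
The statement to prove is Theorem~\ref{T:ProductsPartialNormal}: for partial normal subgroups $\M,\N$ of a locality $(\L,\Delta,S)$, we have $\M\N\unlhd\L$, $\M\N=\N\M$, and $(\M\N)\cap S=(\M\cap S)(\N\cap S)$; moreover every $f\in\M\N$ can be written as $f=mn$ with $(m,n)\in\D$, $m\in\M$, $n\in\N$ and $S_f=S_{(m,n)}$. The first three assertions are quoted in the introduction as \cite[Theorem~1]{Henke:2015a}, so the honest approach is to cite that paper for $\M\N=\N\M\unlhd\L$. The content that needs work here is the refinement: the identity $(\M\N)\cap S=(\M\cap S)(\N\cap S)$ and, crucially, the existence of a ``good'' factorization $f=mn$ with $S_f=S_{(m,n)}$.

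First I would record the easy inclusions. Since $\M\cap S$ and $\N\cap S$ are $p$-subgroups contained in $S$, words in them lie in $\D$ (as $\Delta$ is overgroup-closed, every subgroup of $S$ is in $\Delta$... more precisely $S\in\Delta$ and one uses that products of elements of $S$ are defined), so $(\M\cap S)(\N\cap S)\subseteq(\M\N)\cap S$ is immediate once one knows $\M\N$ is a partial subgroup. The reverse inclusion will follow once the factorization statement is proved, applied to an element $f\in(\M\N)\cap S$: writing $f=mn$ with $S_f=S_{(m,n)}$, one has $S_f\leq S_m$ and $S_f\leq S_{mn}$, and since $f\in S$ one can use the locality axioms to push $m$ and $n$ into $S$ — concretely, $m\in\D(f^{-1})$-type arguments plus the fact that $S_m, S_n$ are conjugated appropriately; this is the kind of Sylow-type normalization that Chermak's machinery (e.g. \cite[Proposition~2.6, Corollary~2.7]{Chermak:2015}) handles. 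So the whole theorem reduces to the factorization with the $S_f=S_{(m,n)}$ control.

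For the factorization, the natural strategy is: take $f\in\M\N$, so by definition $f=\Pi(x,y)$ for some $x\in\M$, $y\in\N$ with $(x,y)\in\D$. The problem is that a priori $S_{(x,y)}$ can be smaller than $S_f$; we want to replace $(x,y)$ by a new pair $(m,n)$ representing the same product with $S_{(m,n)}=S_f$. The tool is maximality of decompositions: among all pairs $(m,n)$ with $m\in\M$, $n\in\N$, $(m,n)\in\D$ and $mn=f$, choose one with $|S_{(m,n)}|$ maximal, and show $S_{(m,n)}=S_f$. Suppose not; then $S_{(m,n)}<S_f$, so by the standard ``extension'' lemma for localities (if $P<S_f$ then $N_{S_f}(P)>P$, combined with the structure of $N_\L$), there is $s\in N_{S_f}(S_{(m,n)})\setminus S_{(m,n)}$. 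Now conjugate: $f^s=\Pi(s^{-1},m,n,s)$, and inserting $\One=\Pi(s,s^{-1})$ in the middle (Lemma~\ref{L:AddOnes}) gives $f^s=m^s n^s$ with $m^s\in\M$, $n^s\in\N$ (partial normality!), and $S_{(m^s,n^s)}=S_{(m,n)}^s\ni$ the conjugate of $s$'s action... One then combines $f = f^{s^{-1}}$-type bookkeeping, or better, directly builds a longer word $(s, s^{-1}, m, s, s^{-1}, n, s, s^{-1})$ reducing to $(m,n)$ but now with $s\in S_{(\dots)}$, contradicting maximality because $\langle S_{(m,n)}, s\rangle > S_{(m,n)}$. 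The key inputs are Lemma~\ref{L:NLSbiset}-style control of $S_{rf}, S_{fr}$ for $r\in N_\L(S)$ and the partial-normality closure $m^s\in\M$, $n^s\in\N$.

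\textbf{Main obstacle.} The delicate point is the bookkeeping in the maximality argument: one must verify that the conjugated pair still has product $f$ (not $f^s$) after the right sequence of insertions and cancellations, and that $S$ of the new, longer word genuinely strictly contains the old $S_{(m,n)}$ — this requires carefully tracking the objects $P_0,\dots,P_k$ witnessing membership in $\D$ and invoking $S_w\in\Delta\iff w\in\D$ at each stage. A cleaner alternative, which I would try first, is to reduce to the case $f\in S$ is not needed and instead appeal directly to \cite[Theorem~1]{Henke:2015a} or \cite[Corollary~2.7]{Chermak:2015} plus \cite[Lemma~3.4]{Chermak:2015}: the paper \cite{Henke:2015a} on products of partial normal subgroups almost certainly contains exactly this $S_f=S_{(m,n)}$ refinement as a byproduct of its proof, so the proof in the present paper may simply cite it. If a self-contained argument is wanted, the maximality-of-$S_{(m,n)}$ induction above is the way, and the brace-counting in the word manipulations is the part most likely to hide an error.
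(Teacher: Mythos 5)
The paper's entire proof of this theorem is the one-line citation ``This is \cite[Theorem~1]{Henke:2015a}.'' All four assertions --- including the $S_f=S_{(m,n)}$ refinement that you flag as the hard part --- appear verbatim in that cited theorem, so your closing guess was correct and nothing more is required for verification.

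Your fallback sketch, offered in case a self-contained argument were needed, has a genuine gap at its crux. After choosing $(m,n)$ with $|S_{(m,n)}|$ maximal among pairs with $mn=f$ and producing $s\in N_{S_f}(S_{(m,n)})\setminus S_{(m,n)}$, the conjugated pair $(m^s,n^s)$ represents $f^s$, not $f$; and your proposal to undo this via the length-eight word reducing to $(m,n)$ never actually produces a new \emph{pair} $(m',n')\in\M\times\N$ with $m'n'=f$ and $S_{(m',n')}$ strictly larger. A priori $s\notin S_{m^s}$, so the flanking $s$ and $s^{-1}$ cannot be absorbed into the $\M$-factor, and the maximality induction does not close. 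The argument of \cite[Theorem~1]{Henke:2015a} is not a direct maximality induction of this kind; it runs through Chermak's $\uparrow_\N$-maximality machinery, which the present paper itself recalls in Section~\ref{SS:ProductsPartialNormal} and exploits in Lemmas~\ref{L:FinduparrowMaxEltinN} and~\ref{L:DecomposeEltofMN}. A second, smaller gap: the inclusion $(\M\N)\cap S\subseteq(\M\cap S)(\N\cap S)$ is not an immediate corollary of the factorization as you indicate --- if $f\in S$ then $S_f=S$ and $S_{(m,n)}=S$ yield only $m,n\in N_\L(S)$, not $m\in S$ and $n\in S$; extracting $p$-parts requires additional structural input about $\M\cap N_\L(S)$. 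Since the paper cites rather than reproves, none of this affects correctness here, but the sketch as written would not compile into a proof.
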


\begin{proof}
This is \cite[Theorem~1]{Henke:2015a}.
\end{proof}

We remark that the theorem above was proved by Chermak \cite[Theorem~5.1]{Chermak:2015} in the special case that $\M\subseteq N_\L(\N\cap S)$ and $\N\subseteq N_\L(\M\cap S)$. This is used in \cite{Henke:2015a} to prove Theorem~\ref{T:ProductsPartialNormal} in the general case. 

\smallskip

When working with localities it will be convenient to have the following definition for fusion systems in place. 

\begin{definition}\label{D:Fclosed}
Let $\F$ be a fusion system over a finite $p$-group $S$ and let $\Gamma$ be a set of subgroups of $S$. Then $\Gamma$ is called \emph{$\F$-closed} if $\Gamma$ is overgroup-closed in $S$ and every $\F$-conjugate of an element of $\Gamma$ is an element of $\Gamma$.
\end{definition}

Notice that $\Delta$ is $\F_S(\L)$-closed.

\section{Homomorphisms of partial groups.} \label{SS:LocalityHomomorphism} 
In this section, we will introduce natural notions of homomorphisms, projections, isomorphisms and automorphisms of partial groups and of localities.  Recall that $\L$ is a partial group with product $\Pi\colon \D\rightarrow\L$.

\smallskip

\textbf{Throughout this section let $\tL$ be a partial group with product $\tPi\colon\tD\longrightarrow\tL$.}

\begin{notation}\label{N:PartialHomWordMap}
If $\alpha\colon\L\longrightarrow\tL,f\mapsto f\alpha$ is a map, then $\alpha^*$ denotes the induced map on words 
\[\W(\L) \longrightarrow \W(\tL),\quad w=(f_1,\dots,f_n)\mapsto w\alpha^*=(f_1\alpha,\dots,f_n\alpha).\]
Set $\D\alpha^*:=\{w\alpha^*\colon w\in\D\}$. 
\end{notation}

\begin{definition}\label{D:PartialHom}
A map $\alpha\colon\L\longrightarrow\tL$ is called a \emph{homomorphism of partial groups} if 
\begin{enumerate}
\item $\D\alpha^* \subseteq \tD$; and 
\item $\Pi(w)\alpha = \tPi(w\alpha^*)$ for every $w \in \D$.
\end{enumerate}
If moreover $\D\alpha^* = \tD$, then we say that $\alpha$ is a \emph{projection} of partial groups. If $\alpha$ is injective and $\D\alpha^*=\tD$, then $\alpha$ is called an \emph{isomorphism}. Write $\Iso(\L,\tL)$ for the set of isomorphisms from $\L$ to $\tL$. The isomorphisms from $\L$ to itself are called \emph{automorphisms} and the set of these automorphisms is denoted by $\Aut(\L)$. 
\end{definition}

Notice that every projection $\L\longrightarrow\tL$ is surjective, as $\tD$ contains all the words of length one. In particular, isomorphisms are bijective. We now turn attention to maps between localities.

\begin{definition}\label{D:LocalityHomomorphism}
Let $(\L, \Delta, S)$ and $(\tL, \tDelta,\tS)$ be localities and let $\alpha \colon \L \longrightarrow \tL$ be a projection of partial groups. 
\begin{itemize}
\item For any set $\Gamma$ of subgroups of $\L$, set 
\[\Gamma\alpha:=\{P\alpha\colon P\in\Gamma\}.\]
\item $\alpha$ is called a \emph{projection of localities} from $(\L,\Delta,S)$ to $(\tL,\tDelta,\tS)$ if $\Delta\alpha= \tDelta$. 
\item An injective projection of localities is a called an \emph{isomorphism} of localities. Write 
\[\Iso((\L,\Delta,S),(\tL,\tDelta,\tS))\]
for the set of isomorphisms from $(\L,\Delta,S)$ to $(\tL,\tDelta,\tS)$. 
\item Write $\Aut(\L,\Delta,S)$ for the set of \emph{automorphisms} of $(\L,\Delta,S)$, i.e. for the isomorphisms from $(\L,\Delta,S)$ to itself.
\item Write $\Iso((\L,S),(\tL,\tS))$ for the set of all $\alpha\in\Iso(\L,\tL)$ with $S\alpha=\tS$. Set $\Aut(\L,S):=\Iso((\L,S),(\L,S))$. 
\end{itemize}
\end{definition}

If $\alpha$ is a projection of localities from $(\L,\Delta,S)$ to $(\tL,\tDelta,\tS)$, then $\alpha$ maps $S$ to $\tS$, as $S$ and $\tS$ are the unique maximal elements of $\Delta$ and $\tDelta$ respectively. Hence, 
\[\Iso((\L,\Delta,S),(\tL,\tDelta,\tS))\subseteq\Iso((\L,S),(\tL,S))\mbox{ and }\Aut(\L,\Delta,S)\leq \Aut(\L,S).\]

\smallskip

Let now $(\L,\Delta,S)$ be a locality and $\N\unlhd\L$.  Chermak \cite[Section~3]{Chermak:2015} considers the (right) cosets $\N f:=\{nf\colon n\in\N\}$ and shows in \cite[Proposition~3.14(d)]{Chermak:2015} that the cosets which are maximal with respect to inclusion form a partition of $\L$. Write $\L/\N$ for the set of these maximal cosets. Then $\L/\N$ can be given the structure of a partial group such that the map $\alpha\colon\L\rightarrow \L/\N$ sending every element $g\in\L$ to the unique maximal coset containing $g$ is a projection of partial groups (cf. \cite[Lemma~3.16]{Chermak:2015}). Therefore, by \cite[Theorem~4.3]{Chermak:2015}, $\alpha$ induces a locality structure on $\L/\N$ such that $\alpha$ becomes a projection of localities. The map $\alpha$ is called the \emph{natural projection} $\L\rightarrow\L/\N$. The partial normal subgroup $\N$ is the kernel of $\alpha$, i.e. the set of all elements of $\L$ which $\alpha$ maps to one.

\section{Restrictions} \label{SS:Restrictions}
Let $(\L^+,\Delta^+,S)$ be a locality with partial product $\Pi^+\colon\D^+\longrightarrow\L^+$. Moreover, let $\Delta\subseteq \Delta^+$ be an $\F_S(\L^+)$-closed collection of subgroups of $S$ (cf. Definition~\ref{D:Fclosed}). Suppose $\Delta$ is non-empty. Then there is a canonical way to construct a locality $(\L,\Delta,S)$ with $\L\subseteq\L^+$. Namely, set 
\[\L^+|_{\Delta}:=\{f\in\L^+\colon S_f\in\Delta\}\]
and write $\D$ for the set of words $w=(f_1,\dots,f_n)\in\W(\L^+)$ such that $w\in\D^+$ via some elements $P_0,P_1,\dots,P_n\in\Delta$ (cf. Definition~\ref{locality}). Observe that $\D\subseteq\W(\L^+|_\Delta)$ and that $\Pi^+(w)\in\L|_{\Delta}$ for all $w\in\D$ by Lemma~\ref{L:ConjugateNormalizer}(b). We call $\L:=\L^+|_{\Delta}$ together with  $\Pi^+|_{\D}\colon\D\longrightarrow \L$ and the restriction of the inversion map on $\L^+$ to $\L$ the \emph{restriction} of $\L^+$ to $\Delta$. It is shown in \cite[Lemma~2.21(a)]{Chermak:2015} and \cite[Lemma~2.23(a),(c)]{Henke:2020} that the restriction of $\L^+$ to $\Delta$ a partial group and that $(\L,\Delta,S)$ is a locality.

\begin{lemma}\label{L:RestrictionIntersectL}
Let $(\L,\Delta,S)$ and $(\L^+,\Delta^+,S)$ be localities with $\Delta\subseteq\Delta^+$ and $\L=\L^+|_\Delta$. Let $\K^+\subseteq\L^+$ and $R\subseteq S$. Then $(R\K^+)\cap \L=R(\K^+\cap\L)$.
\end{lemma}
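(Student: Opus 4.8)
The plan is to isolate a single object-level fact and then turn the crank. Recall that $\L=\L^+|_\Delta=\{f\in\L^+\colon S_f\in\Delta\}$, so membership in $\L$ is detected by the invariant $S_{(-)}$, and write $\D^+,\Pi^+$ for the domain and product of $\L^+$.

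The first and main step is to show that, for $r\in R$ and $g\in\L^+$, one has $g\in\L$ if and only if $rg\in\L$. Since $R\subseteq S$ and $S$ is a subgroup of $\L^+$, every $r\in R$ lies in $N_{\L^+}(S)$; hence by Lemma~\ref{L:NLSbiset} (applied in $\L^+$) the word $(r,g)$ lies in $\D^+$ for every $g\in\L^+$, and $S_{rg}=S_g^{r^{-1}}$. As $r^{-1}\in S$, this exhibits $S_{rg}$ as an $S$-conjugate of $S_g$ (and $S_g=S_{rg}^{r}$). Now $S\in\Delta$, because $\Delta$ is a non-empty overgroup-closed set of subgroups of $S$; thus $r,r^{-1}\in S\subseteq\L$, the subgroups $S_g,S_{rg}$ are contained in $S=S_r\subseteq\D(r)$, and $\Delta$ is closed under taking $\L$-conjugates in $S$ by the locality axioms for $(\L,\Delta,S)$. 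Combining, $S_g\in\Delta\iff S_{rg}\in\Delta$, which is the claimed equivalence.

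With this in hand the two inclusions are immediate. Because $(r,g)\in\D^+$ always, we have $R\K^+=\{rk\colon r\in R,\ k\in\K^+\}$ and, using the equivalence of the previous paragraph together with the fact that the product of $\L$ is the restriction of that of $\L^+$, also $R(\K^+\cap\L)=\{rk\colon r\in R,\ k\in\K^+\cap\L\}$. For ``$\subseteq$'': if $x\in(R\K^+)\cap\L$, write $x=rk$ with $r\in R$, $k\in\K^+$; since $rk=x\in\L$ the equivalence gives $k\in\L$, so $k\in\K^+\cap\L$ and $x\in R(\K^+\cap\L)$. For ``$\supseteq$'': if $r\in R$ and $k\in\K^+\cap\L$, then $rk\in R\K^+$ and the equivalence gives $rk\in\L$, so $rk\in(R\K^+)\cap\L$.

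I do not expect a genuine obstacle: the content is the identity $S_{rg}=S_g^{r^{-1}}$ from Lemma~\ref{L:NLSbiset}, and everything else is bookkeeping --- checking that $S\in\Delta$ (so $r\in\L$), that the conjugates $S_g^{r^{-1}}$ are legitimate $\L$-conjugates in $S$ of elements of $\Delta$, and that products and conjugates formed in the group $S$ agree with those formed in $\L$ and in $\L^+$. If one wishes to make the step $x=rk\mapsto k$ fully explicit one can identify $k$ with $\Pi^+(r^{-1},x)$ via Lemma~\ref{L:Chermak14d} and the partial group axioms, but this is not needed above.
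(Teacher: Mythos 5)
Your proof is correct and proceeds by essentially the same route as the paper's: both hinge on the identity $S_{rg}=S_{(r,g)}=S_g^{r^{-1}}$ from Lemma~\ref{L:NLSbiset} together with the fact that $\Delta$ is closed under conjugation by elements of $S\subseteq\L$. The paper dispenses with the forward direction of your equivalence (the inclusion $R(\K^+\cap\L)\subseteq R\K^+\cap\L$ is immediate since $\D\subseteq\D^+$ and $\Pi=\Pi^+|_\D$) and only proves the nontrivial direction $S_k=S_{rk}^r\in\Delta$, but this is a cosmetic difference, not a different argument.
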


\begin{proof}
 Clearly $R(\K^+\cap\L)\subseteq R\K^+\cap \L$ (where the product $R(\K^+\cap\L)$ is formed in $\L$ and the product $R\K^+$ is formed in $\L^+$). Let now $r\in R$ and $k\in\K^+$ such that $rk$ (meaning the product of $r$ and $k$ in $\L^+$) is an element of $\L$. As $\L=\L^+|_\Delta$, using Lemma~\ref{L:NLSbiset}, we get $S_{(r,k)}=S_{rk}\in\Delta$ and $S_k=S_{(r,k)}^r=S_{rk}^r\in\Delta$. Hence, $k\in\L$ and the product $rk$ is defined in $\L$. Thus, $rk\in R(\K^+\cap\L)$. 
\end{proof}

\section{Linking localities and saturation}

\begin{definition}
\begin{itemize}
\item A locality $(\L,\Delta,S)$ is of \emph{objective characteristic $p$} if $N_\L(P)$ is of characteristic $p$ for every $P\in\Delta$. 
\item A \emph{linking locality} is a locality $(\L,\Delta,S)$ of objective characteristic $p$ such that $\F_S(\L)^{cr}\subseteq \Delta$. 
\end{itemize}
\end{definition}

As already mentioned in the introduction, Chermak \cite{ChermakII,ChermakIII} uses the term proper locality instead of linking locality.

\begin{remark}\label{R:ExistenceUniquenessCLS}
It is shown in \cite[Theorem~A]{Henke:2015} that, given a saturated fusion system $\F$ over $S$ and an $\F$-closed collection $\Delta$ of subgroups of $S$ with $\F^{cr}\subseteq\Delta$, there is a linking locality $(\L,\Delta,S)$ over $\F$ with object set $\Delta$, which is unique up to an isomorphism that restricts to the identity on $S$. (A priori this uses a different definition of $\F^{cr}$, but according to Lemma~\ref{L:FcrAKOinFcr} the two notions coincide for saturated fusion systems.) The proof uses the existence and uniqueness of centric linking systems in the form Chermak \cite{Chermak:2013} proved it first, namely as a statement about localities. Chermak's proof relies on the classification of finite simple groups. The dependence on the classification can be avoided if one cites Oliver's version of the proof \cite{Oliver:2013} together with a result of Glauberman--Lynd \cite{Glauberman/Lynd} to conclude that a centric linking system over $\F$ exists and is unique; the arguments from \cite[Appendix~A]{Chermak:2015} (in the form of  \cite[Theorem~2.11]{Glauberman/Lynd:2020}) imply then that there exists a unique linking locality $(\L,\F^c,S)$ over $\F$. This leads also to a classification-free proof of \cite[Theorem~A]{Henke:2015}, which we partly use below. Thus, the results in this paper do not depend on the classification of finite simple groups.
\end{remark}

Our non-standard definition of $\F^{cr}$ makes it possible to prove the following theorem, which was communicated to us by Chermak.

\begin{theorem}\label{T:Saturation}
If $(\L,\Delta,S)$ is a locality with $\F_S(\L)^{cr}\subseteq\Delta$, then $\F_S(\L)$ is saturated. In particular, $\F_S(\L)$ is saturated if $(\L,\Delta,S)$ is a linking locality. 
\end{theorem}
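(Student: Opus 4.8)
Set $\F:=\F_S(\L)$. The plan is to verify that $\F$ satisfies the two saturation axioms (Sylow axiom and extension axiom) in the form used in \cite{Aschbacher/Kessar/Oliver:2011}, i.e. that each $\F$-conjugacy class of subgroups of $S$ contains a fully $\F$-normalized member which is fully automized and receptive (and that fully $\F$-normalized subgroups are fully $\F$-centralized). I would run the argument by induction on $|S:P|$, splitting into the case $P\in\Delta$, where the locality structure does the work, and the case $P\notin\Delta$, where the hypothesis $\F^{cr}\subseteq\Delta$ combined with the non-standard Definition~\ref{D:Fcr} forces a reduction to a strictly larger subgroup.

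First I would handle $P\in\Delta$. Here $N_\L(P)$ is a finite group, and for $P$ fully $\F$-normalized one has $N_S(P)\in\Syl_p(N_\L(P))$ (a foundational fact about localities from \cite{Chermak:2015}). By Lemma~\ref{L:LocalityFusionSystem}(a) every element of $\Aut_\F(P)$ is of the form $c_g|_P$ for some $g\in\L$ with $P\le S_g$, and any such $g$ lies in $N_\L(P)$; conversely each $g\in N_\L(P)$ induces $c_g|_P\in\Aut_\F(P)$. Thus $\Aut_\F(P)=\im\bigl(N_\L(P)\to\Aut(P),\ g\mapsto c_g|_P\bigr)$, with kernel $C_\L(P)$, an honest subgroup of the group $N_\L(P)$; so $\Aut_\F(P)\cong N_\L(P)/C_\L(P)$ and $\Aut_S(P)$ is the image of the Sylow $p$-subgroup $N_S(P)$, whence $\Aut_S(P)\in\Syl_p(\Aut_\F(P))$ (and $P$ is fully centralized). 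For receptivity, given an $\F$-isomorphism $\phi\colon Q\to P$ I would write $\phi=c_g|_Q$ with $Q\le S_g$ (Lemma~\ref{L:LocalityFusionSystem}), and then, using that $N_S(P)$ is Sylow in $N_\L(P)$, multiply $g$ on the right by a suitable element of $N_\L(P)$ to arrange $N_\phi\le S_g$; the map $c_g$ then extends $\phi$ on $N_\phi$. This is the standard ``locality $\Rightarrow$ saturation axioms on $\Delta$'' computation.

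For $P\notin\Delta$, I would pass to a fully $\F$-normalized member $Q$ of the $\F$-class of $P$ (it suffices to verify the axioms at $Q$). Since $\Delta\supseteq\F^{cr}$ and $Q\notin\Delta$, the subgroup $Q$ is either not $\F$-centric or not $\F$-radical in the sense of Definition~\ref{D:Fcr}. If $Q$ is not $\F$-centric, then $Q<QC_S(Q)$, and the axioms at $QC_S(Q)$, available by the inductive hypothesis, transfer to $Q$ by the usual non-centric reduction. If $Q$ is not $\F$-radical, then by Definition~\ref{D:Fcr} the subgroup $Q^*:=O_p(N_\F(Q))$ strictly contains $Q$; since $Q^*\unlhd N_\F(Q)$ we get $N_S(Q)=N_S(Q^*)$ and $N_\F(Q)\subseteq N_\F(Q^*)$, so the axioms at $Q^*$ (again available by induction, as $|Q^*|>|Q|$) propagate down to $Q$. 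Together with the case $P\in\Delta$ this completes the induction; the ``in particular'' for linking localities is then immediate from the definition.

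The main obstacle is precisely the two reduction steps in the case $P\notin\Delta$: they must be carried out without yet having any saturation hypothesis on $\F$, so one cannot invoke the usual theory of normalizer fusion systems or identify $\F^{cr}$ in the classical way. This is exactly where our non-standard Definition~\ref{D:Fcr} is indispensable: phrasing ``$Q$ is not $\F$-radical'' as ``$O_p(N_\F(Q))\supsetneq Q$'', rather than as ``$O_p(\Aut_\F(Q))\supsetneq\Inn(Q)$'', hands us a concrete strictly larger subgroup $Q^*$ that is normalized by all of $N_\F(Q)$, which is what makes the downward propagation of the saturation axioms go through. (By contrast, Lemma~\ref{L:FcrAKOinFcr} can only be used in its easy direction here, since its converse presupposes saturation.)
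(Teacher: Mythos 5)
Your treatment of the case $P\in\Delta$ is correct and matches the paper's Lemma~\ref{L:SaturationHelp} combined with \cite[Lemma~2.10]{Chermak:2015}; that part would be fine. The gap is in the case $P\notin\Delta$. You claim that the saturation axioms at $QC_S(Q)$ (when $Q$ is not centric) or at $Q^*=O_p(N_\F(Q))$ (when $Q$ is not radical) ``transfer'' or ``propagate down'' to $Q$, but this downward propagation is not a formal reduction and you do not actually carry it out. Without already knowing $\F$ is saturated you cannot extend morphisms of $Q$ to $QC_S(Q)$, nor can you guarantee that distinct fully normalized $\F$-conjugates $R$ of $Q$ give $\F$-conjugate subgroups $RC_S(R)$; so the inductive hypothesis at one particular $QC_S(Q)$ tells you nothing directly about $\Aut_\F(Q)$ or about receptivity at $Q$. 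The same problem afflicts the non-radical case: knowing some conjugate of $Q^*$ is fully automized and receptive does not, by itself, give the axioms at $Q$.

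What you are trying to reprove by ad hoc induction is exactly the content of \cite[Theorem~2.2]{BCGLO1} (= \cite[Theorem~I.3.10]{Aschbacher/Kessar/Oliver:2011}), and its proof is genuinely delicate (several pages). That theorem says: if $\F$ is $\Gamma$-saturated for an $\F$-closed overgroup-closed $\Gamma$, and if for every fully normalized centric $P\notin\Gamma$ one has $O_p(\Out_\F(P))\cap\Out_S(P)\neq1$, then $\F$ is saturated. Note that the hypothesis is only about \emph{centric} subgroups outside $\Gamma$; the non-centric subgroups are handled internally by the BCGLO argument, not by a naive induction. The paper's proof applies this criterion directly: it establishes $\Delta$-saturation via Lemma~\ref{L:SaturationHelp}, and then for $\F$-centric $Q\notin\Delta$ it fixes a fully normalized $P\in Q^\F$, uses the non-standard Definition~\ref{D:Fcr} to get $P<P^*=O_p(N_\F(P))$, and checks $\One\neq\Out_{P^*}(P)\leq\Out_S(P)\cap O_p(\Out_\F(P))$ (using $C_S(P)\leq P$), which is precisely the BCGLO hypothesis. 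Your observation that the non-standard definition of $\F$-radical is what produces the strictly larger $Q^*$ is the right intuition, but the missing step is to verify the $\Out$-condition and then invoke BCGLO, rather than to attempt the propagation by hand.
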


\begin{proof}
Set $\F:=\F_S(\L)$. By \cite[Lemma~2.10]{Chermak:2015}, every element of $\Delta$ is $\F$-conjugate to an element $P\in\Delta$ such that $N_S(P)\in\Syl_p(N_\L(P))$. Hence, by Lemma~\ref{L:SaturationHelp} below, $\F$ is $\Delta$-saturated in the sense of \cite[Definition~I.3.9]{Aschbacher/Kessar/Oliver:2011}.

\smallskip

If $Q\leq S$ is $\F$-centric with $Q\not\in\Delta$, then the assumption that $\F^{cr}\subseteq\Delta$ implies that $Q$ is not $\F$-radical. Thus, if we fix a fully normalized subgroup $P\in Q^\F$, then we have  $P<P^*=O_p(N_\F(P))$. Observe that $\Out_{P^*}(P)\leq \Out_S(P)\cap O_p(\Out_\F(P))$. As $Q$ is $\F$-centric, we have $C_S(P)\leq P$ and so $\Out_{P^*}(P)\neq 1$. Thus the assertion follows from \cite[Theorem~2.2]{BCGLO1} (which is stated as Theorem~I.3.10 in \cite{Aschbacher/Kessar/Oliver:2011}).
\end{proof}

\begin{lemma}\label{L:SaturationHelp}
Let $(\L,\Delta,S)$ be a locality and $P\in\Delta$ such that $N_S(P)\in\Syl_p(N_\L(P))$. Then $P$ is fully automized and receptive in $\F_S(\L)$.
\end{lemma}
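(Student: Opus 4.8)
The plan is to verify the two conditions from the definition of "fully automized" and "receptive" directly from the defining properties of a locality, using the hypothesis $N_S(P)\in\Syl_p(N_\L(P))$. Recall that $P$ is \emph{fully automized} in $\F=\F_S(\L)$ means $\Aut_S(P)\in\Syl_p(\Aut_\F(P))$, and $P$ is \emph{receptive} means that for every $\F$-conjugate $Q$ of $P$ and every isomorphism $\phi\in\Iso_\F(Q,P)$, the morphism $\phi$ extends to a morphism defined on the subgroup $N_\phi=\{x\in N_S(Q)\colon \phi^{-1}c_x\phi\in\Aut_S(P)\}$.

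First I would establish full automization. Since $N_\L(P)$ is a subgroup of $\L$ (a genuine finite group) with $N_S(P)\in\Syl_p(N_\L(P))$, and since $\Aut_\F(P)=\Aut_{N_\L(P)}(P)\cong N_\L(P)/C_\L(P)$ by Lemma~\ref{L:ConjugateNormalizer}(a) and the definition of $\F_S(\L)$, the image of a Sylow $p$-subgroup of $N_\L(P)$ is a Sylow $p$-subgroup of $\Aut_\F(P)$. The image of $N_S(P)$ under the map $N_\L(P)\to\Aut_\F(P)$ is exactly $\Aut_S(P)$, so $\Aut_S(P)\in\Syl_p(\Aut_\F(P))$ as required.

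Next I would handle receptivity. Fix an $\F$-conjugate $Q$ of $P$ and $\phi\in\Iso_\F(Q,P)$. By Lemma~\ref{L:LocalityFusionSystem}(a), there is $f\in\L$ with $Q\leq S_f$ and $\phi=c_f|_Q$; note $Q^f=P$, so $f\in\D$ via $Q,P$ and $N_\L(Q)^f=N_\L(P)$ by Lemma~\ref{L:ConjugateNormalizer}(a). The group $N_\phi$ consists of those $x\in N_S(Q)$ with $\phi^{-1}c_x\phi=c_{x^f}\in\Aut_S(P)$, i.e. $c_{x^f}=c_s$ for some $s\in N_S(P)$; then $s^{-1}x^f\in C_\L(P)$, so $x^f\in N_S(P)C_\L(P)\leq N_\L(P)$, which means $N_\phi^f\leq N_\L(P)$ and hence $N_\phi\leq N_\L(Q)$. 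Thus $N_\phi$ is a $p$-subgroup of $N_\L(Q)$ normalizing $Q$, and $N_\phi^f$ is a $p$-subgroup of $N_\L(P)$. The key point is that $N_\phi^f$ normalizes $P$ and, by construction, $\Aut_{N_\phi^f}(P)=\phi^{-1}\Aut_{N_\phi}(Q)\phi\leq\Aut_S(P)$; since $\Aut_S(P)$ is the image of the Sylow $p$-subgroup $N_S(P)$ of $N_\L(P)$, a Sylow-theory argument inside $N_\L(P)$ produces $c\in C_\L(P)$ with $(N_\phi^f)^c\leq N_S(P)$. Replacing $f$ by $fc$ (which does not change $\phi=c_f|_Q$ since $c\in C_\L(P)$ acts trivially on $P$) we may assume $N_\phi^f\leq N_S(P)$, i.e. $N_\phi\leq S_f$. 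Then $c_f|_{N_\phi}$ is a morphism in $\F$ extending $\phi$, so $P$ is receptive.

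The main obstacle is the Sylow-conjugation step in the receptivity argument: one must arrange, after possibly adjusting $f$ by a central element, that the relevant $p$-subgroup $N_\phi^f$ actually lands inside the chosen Sylow $p$-subgroup $N_S(P)$ of $N_\L(P)$, rather than merely inside some Sylow $p$-subgroup. This requires knowing that $\Aut_{N_\phi^f}(P)\leq\Aut_S(P)$ so that $N_\phi^f C_S(P)/C_S(P)$ lies in the image of $N_S(P)$, and then lifting a conjugating element from $\Aut_\F(P)$ back to an element of $C_\L(P)$; care is needed because a priori conjugation might only push $N_\phi^f$ into $N_S(P)C_\L(P)$, so one applies Sylow's theorem to the $p$-subgroups of the group $N_S(P)C_{\L}(P)$ (or directly of $N_\L(P)$, using that $N_S(P)$ is Sylow there) to obtain the desired $c\in C_\L(P)$. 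Everything else is a routine unwinding of definitions together with Lemma~\ref{L:ConjugateNormalizer} and Lemma~\ref{L:LocalityFusionSystem}.
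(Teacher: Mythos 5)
Your proof is correct and follows essentially the same route as the paper's: realize $\phi$ as $c_f|_Q$ via Lemma~\ref{L:LocalityFusionSystem}(a), use Lemma~\ref{L:ConjugateNormalizer}(a) to get the isomorphism $c_f\colon N_\L(Q)\to N_\L(P)$, observe that $N_\phi^f\leq N_S(P)C_\L(P)$, and apply Sylow's theorem in $N_S(P)C_\L(P)$ to find $c\in C_\L(P)$ with $(N_\phi^f)^c\leq N_S(P)$, so that $c_{fc}|_{N_\phi}$ is the desired extension. The only tiny wobble is the parenthetical suggestion that one could apply Sylow ``directly of $N_\L(P)$'': one really does need to work inside the subgroup $N_S(P)C_\L(P)$ (or factor the conjugating element through $C_\L(P)N_S(P)$) to ensure the conjugator lies in $C_\L(P)$ and hence does not alter $\phi$.
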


\begin{proof}
Set $\F=\F_S(\L)$. Then by Lemma~\ref{L:LocalityFusionSystem}(a), we have $\Aut_\F(P)\cong N_\L(P)/C_\L(P)$ and thus $P$ is fully automized. Let now $Q\in P^\F$, $\phi\in\Hom_\F(Q,P)$ and $R:=N_\phi$. Again by Lemma~\ref{L:LocalityFusionSystem}(a), there exists $f\in\L$ such that $Q\leq S_f$ and $\phi=c_f|_Q$. Moreover, by Lemma~\ref{L:ConjugateNormalizer}, the conjugation map $c_f\colon N_\L(Q)\rightarrow N_\L(P)$ is well-defined and an isomorphism of groups. By definition of $N_\phi$, we have $\phi^{-1}\Aut_R(Q)\phi\leq \Aut_S(P)$. Thus, $R^f\leq N_S(P)C_\L(P)$. Notice that $N_S(P)$ is a Sylow $p$-subgroup of $N_S(P)C_\L(P)\leq N_\L(P)$. Hence, by Sylow's Theorem there exists $c\in C_\L(P)$ such that $R^{fc}=(R^f)^c\leq N_S(P)$. Thus, $\phi=c_f|_Q=c_{fc}|_Q$ extends to $c_{fc}|_R\in\Hom_\F(R,N_S(P))$. This proves that $P$ is receptive. 
\end{proof}

From now on, the fact shown in Theorem~\ref{T:Saturation} that the fusion system $\F_S(\L)$ of a linking locality $(\L,\Delta,S)$ is saturated will be used  without further reference. 

\section{Two lemmas on partial normal subgroups}

The following two lemmas will be used in the proof of Lemma~\ref{L:RegularConjAutomorphisms}, but are also useful in other contexts.

\begin{lemma}\label{L:FTNWeaklyNormalNormal}
Let $(\L,\Delta,S)$ be a locality over $\F$ and $\N\unlhd\L$. Set $T:=\N\cap S$ and $\E:=\F_{S\cap\N}(\N)$. Suppose every element of $\Aut_\F(T)$ induces an automorphism of $\E$. Then $\E$ is $\F$-invariant. If $\E$ is saturated and $(\L,\Delta,S)$ is of objective characteristic $p$, then $\E$ is normal in $\F$.  
\end{lemma}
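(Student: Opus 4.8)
\emph{$\F$-invariance.} I would first show that $T=\N\cap S$ is strongly closed in $\F=\F_S(\L)$. Every morphism of $\F$ defined on a subgroup of $T$ is a composite of restrictions of conjugation maps $c_f$ with $f\in\L$, and since $\N\unlhd\L$ each such map sends $\N\cap S_f$ into $\N\cap S_{f^{-1}}\le\N\cap S=T$; hence such a morphism carries elements of $T$ into $T$, so $T$ is strongly closed and in particular $T\unlhd S$. (This step uses only $\N\unlhd\L$.) It then remains to verify the invariance condition in the definition of an $\F$-invariant subsystem, namely that conjugating a morphism of $\E=\F_T(\N)$ by an $\F$-morphism between subgroups of $T$ again yields a morphism of $\E$. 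Writing the morphisms involved as conjugation maps by means of Lemma~\ref{L:LocalityFusionSystem}, and using Alperin's fusion theorem for localities (\cite{Molinier:2016}; cf.\ Lemma~\ref{L:PartialNormalAlperin}) to realise morphisms of $\E$ inside normalizers of objects, one reduces to conjugating some $c_n|_P$ with $n\in\N$ by some $c_f|_U$ with $f\in\L$; the outcome is $c_{n^f}|_{P^f}$, which is again a morphism of $\E$ because $n^f\in\N$ (as $\N\unlhd\L$) and $P^f\le T$ (as $T$ is strongly closed). The one case in which the object set of the locality obstructs this reduction --- forcing the conjugating morphism to be realised only as a full automorphism of $T$ --- is precisely what the hypothesis that every element of $\Aut_\F(T)$ induces an automorphism of $\E$ takes care of. Hence $\E$ is $\F$-invariant.

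\emph{Normality: the extension condition.} Now suppose in addition that $\E$ is saturated and $(\L,\Delta,S)$ is of objective characteristic $p$. Since $T$ is strongly closed, $T\unlhd S$, and after the routine reduction to the case $T\in\Delta$ the group $G:=N_\L(T)$ is a finite group of characteristic $p$ with $\Aut_\F(T)=G/C_G(T)$; this is where objective characteristic $p$ is used. By the first part $\E$ is $\F$-invariant, and by hypothesis it is saturated, so $\E$ is weakly normal in $\F$; to upgrade this to normality it remains to verify the extension (Frattini-type) condition: every $\alpha\in\Aut_\F(T)$ extends to some $\bar\alpha\in\Aut_\F(TC_S(T))$ with $[C_S(T),\bar\alpha]\le Z(T)$. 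Write $\alpha=c_g|_T$ with $g\in G$. By Lemma~\ref{L:MSCharp}(a) the subgroup $C:=C_G(T)\unlhd G$ is of characteristic $p$, and $C_S(T)=S\cap C$ is a Sylow $p$-subgroup of $C$; replacing $g$ by $gc$ for a suitable $c\in C$ (which does not change $\alpha$) I may assume $g$ normalizes $C_S(T)$, hence $TC_S(T)$, so that $\bar\alpha:=c_g|_{TC_S(T)}\in\Aut_\F(TC_S(T))$ extends $\alpha$.

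\emph{The main obstacle.} It then remains to show $[C_S(T),g]\le Z(T)$; since $[C_S(T),g]\le C_S(T)$ and $T\cap C_S(T)=Z(T)$, this amounts to $[C_S(T),g]\le T$. Here I would exploit the characteristic-$p$ structure of $G$ and $C$: one has $O_p(C)=C\cap O_p(G)\le C_S(T)$, $T\cap O_p(C)=Z(T)$ and $C_C(O_p(C))\le O_p(C)$, and then --- after, if necessary, splitting $\alpha$ into its commuting $p$- and $p'$-parts and applying the characteristic-$p$ lemmas (Lemma~\ref{L:AutGroup}, Lemma~\ref{L:CGNCGQ}) --- one forces the action of $g$ on $C_S(T)$ to be trivial modulo $T$. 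With the extension condition established, $\E$ is normal in $\F$. I expect this last verification to be the crux of the argument; the rest is the standard dictionary between partial normal subgroups and strongly closed subsystems together with bookkeeping over object sets.
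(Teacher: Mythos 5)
The invariance half is in the right spirit, though more informal than the paper's argument (which uses the Frattini and Splitting Lemmas for localities to write each $c_{f_i}$ as $c_{n_i}c_{g_i}$ with $n_i\in\N$ and $g_i\in N_\L(T)$, and then commutes the $\Aut_\F(T)$-pieces past the $\E$-pieces); your reduction to conjugating $c_n|_P$ by $c_f|_U$ needs care, since $n^f$ need not be defined in the partial group for arbitrary $n\in\N$, $f\in\L$, and the hand-wave about what happens ``when the object set obstructs the reduction'' is exactly where the real work lies.

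The normality half, however, contains a genuine error: you misstate the extension condition. In \cite[Definition~I.6.1]{Aschbacher/Kessar/Oliver:2011} one must extend each $\alpha\in\Aut_\E(T)$ --- not each $\alpha\in\Aut_\F(T)$ --- to an $\bar\alpha\in\Aut_\F(TC_S(T))$ with $[C_S(T),\bar\alpha]\leq Z(T)$. This changes the problem completely. With the correct statement, $\alpha=c_n|_T$ for some $n\in N_\N(T)$, and the whole ``main obstacle'' evaporates: for $s\in TC_S(T)$ one has $[s,n]=s^{-1}s^n=(n^{-1})^s\,n\in\N\cap S=T$ directly from $\N\unlhd\L$, with no appeal to the characteristic-$p$ lemmas at all. (Objective characteristic $p$ is used elsewhere --- the paper invokes \cite[Lemma~3.5]{Chermak:2015}, valid without assuming $T\in\Delta$, to see that $n\in N_\N(T)$ normalizes $TC_S(T)$; your unexplained ``routine reduction to the case $T\in\Delta$'' is a secondary gap, since nothing in the hypotheses puts $T$ in $\Delta$.) With your misstated condition the argument cannot succeed: the statement $[C_S(T),g]\leq Z(T)$ for all $g\in N_\L(T)$ is simply false in general. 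For instance take $\N=\{\One\}$, so $T=1$, $\E$ the trivial system, $C_S(T)=S$, $Z(T)=1$; the lemma's conclusion is trivially true, but your target would assert $[S,g]=1$ for every $g\in\L$, i.e.\ $S\leq Z(\L)$, which fails for any nonabelian group $\L$ of characteristic $p$. So the characteristic-$p$ machinery you propose cannot ``force the action of $g$ on $C_S(T)$ to be trivial modulo $T$'' --- the issue is not technical but that you are trying to prove the wrong, and false, inequality.
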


\begin{proof}
By \cite[Lemma~3.1(a)]{Chermak:2015}, $T$ is strongly closed in $\F$. To prove that $\E$ is $\F$-invariant, it remains to show the Frattini condition as stated in \cite[Definition~I.6.1]{Aschbacher/Kessar/Oliver:2011}. For that let $P\leq T$ and $\phi\in\Hom_\F(P,T)$. As $\F=\F_S(\L)$, we can factorize $\phi$ as $\phi=(c_{f_1}|_{P_0})(c_{f_2}|_{P_1})\cdots (c_{f_k}|_{P_{k-1}})$ for subgroups $P=P_0,\dots,P_k\leq T$ and $f_i\in\L$ with $P_{i-1}\leq S_{f_i}$ and $P_{i-1}^{f_i}=P_i$ for $i=1,\dots,k$. By the Frattini Lemma and the Splitting Lemma for localities \cite[Corollary~3.11, Lemma~3.12]{Chermak:2015}, for each $i=1,2,\dots,k$, there exist $n_i\in\N$ and $g_i\in N_\L(T)$ such that $f_i=n_ig_i$ and $S_{f_i}=S_{(n_i,g_i)}$. Hence, $c_{f_i}|_{P_{i-1}}=(c_{n_i}|_{P_{i-1}})(c_{g_i}|_T)$ is the composition of the morphism $\psi_i:=c_{n_i}|_{P_{i-1}}\in\Hom_\E(P_{i-1},T)$ and the automorphism $\alpha_i=c_{g_i}|_T\in\Aut_\F(T)$. So 
\[\phi=\psi_1\alpha_1\psi_2\alpha_2\cdots \psi_k\alpha_k.\]
By assumption, $\alpha_1,\dots,\alpha_{k-1}$ induce automorphisms of $\E$. Hence, in the decomposition of $\phi$ above we can move $\alpha_1,\dots,\alpha_{k-1}$ to the end to obtain $\phi=\psi\alpha$ where $\psi\in\Hom_\E(P,T)$ is the composition of conjugates of $\psi_1,\dots,\psi_k$ and $\alpha=\alpha_1\alpha_2\dots\alpha_k\in\Aut_\F(T)$. Thus the Frattini condition holds and $\E$ is $\F$-invariant.

\smallskip 

Suppose now $\E$ is saturated and $(\L,\Delta,S)$ is of objective characteristic $p$. To show that $\E\unlhd\F$, it remains to prove the extension condition as stated in \cite[Definition~I.6.1]{Aschbacher/Kessar/Oliver:2011}. For the proof let $\alpha=c_n|_T$ with $n\in N_\N(T)$. By \cite[Lemma~3.5]{Chermak:2015}, we have $n\in N_\N(T)\subseteq N_\N(TC_S(T))$. So $\hat{\alpha}:=c_n|_{TC_S(T)}$ extends $\alpha$. For $s\in TC_S(T)$ we see using \cite[Lemma~2.9]{Chermak:2015} that $(s^{-1},n^{-1},s,n)\in\D$ and $[s,\hat{\alpha}]=s^{-1}(s\hat{\alpha})=s^{-1}s^n=(n^{-1})^sn\in\N\cap S=T$. By definition of $\E$, the extension condition follows.
\end{proof}

\begin{lemma}\label{L:NLTAutL}
Let $(\L,\Delta,S)$ be a locality and $T\leq S$ strongly $\F_S(\L)$-closed such that
\begin{equation}\label{E:ObjectCondition}
P\cap T\in\Delta\mbox{ for every }P\in\Delta\mbox{ with }O_p(\L)\leq P.
\end{equation}
Then the following hold for every $f\in N_\L(T)$:
\begin{itemize}
 \item[(a)] If $w\in\W(\L)$, then $w\in\D$ if and only if $S_w\cap T\in\Delta$.
 \item[(b)] $\L=\D(f)$ and $c_f\in\Aut(\L,\Delta,S)$.
 \item[(c)] Let $\N\unlhd\L$ with $\N\cap S=T$. Then setting $\Gamma:=\{P\in\Delta\colon P\leq T\}$, the triple $(\N,\Gamma,T)$ is a locality and $c_f|_\N\in\Aut(\N,\Gamma,T)$. Moreover $(\N,\Gamma,T)$ is of objective characteristic $p$ if $(\L,\Delta,S)$ is of objective characteristic $p$.
 \item[(d)] $c_f|_S\in\Aut(\F)$ and, if $\N$ is as in (c), then $c_f|_T\in\Aut(\F_T(\N))$.
\end{itemize}
\end{lemma}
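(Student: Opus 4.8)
\emph{Overview.} My plan is to prove the four parts in order; the substance is concentrated in part (b). Part (a) is a two-line matter of the closure properties of $\Delta$. Part (b) splits into the identity $\L=\D(f)$ (the heart of the statement) and then the verification that $c_f$ is an automorphism of $(\L,\Delta,S)$. Parts (c) and (d) are then essentially bookkeeping built on (b). For part (a): if $w\in\D$ then $S_w\in\Delta$ and $O_p(\L)\le S_w$, so $S_w\cap T\in\Delta$ by \eqref{E:ObjectCondition}; conversely, if $S_w\cap T\in\Delta$ then, since $S_w\cap T\le S_w\le S$ and $\Delta$ is overgroup-closed in $S$, we get $S_w\in\Delta$, i.e.\ $w\in\D$.

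\emph{Part (b), the equality $\L=\D(f)$.} Note first that $f^{-1}\in N_\L(T)$ as well. Fix $x\in\L$; by part (a) it suffices to show $S_{(f^{-1},x,f)}\cap T\in\Delta$, and for this I would prove the inclusion $(S_x\cap T)^f\subseteq S_{(f^{-1},x,f)}\cap T$. Indeed, for $t\in S_x\cap T$ one has $t^{f^{-1}}\in T$ (as $f^{-1}$ normalizes $T$); then $(t^{f^{-1}})^x\in T$, because $t^{f^{-1}}\in T\cap S_x$, the map $c_x|_{S_x}$ is a morphism of $\F_S(\L)$, and $T$ is strongly closed in $\F_S(\L)$; and finally $((t^{f^{-1}})^x)^f\in T$ (as $f$ normalizes $T$). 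Reading this trajectory backwards from $t^f\in T\subseteq\D(f^{-1})$ shows $t^f\in S_{(f^{-1},x,f)}\cap T$, proving the inclusion. Now $S_x\cap T\in\Delta$ by \eqref{E:ObjectCondition}, it lies in $T\subseteq\D(f)$, and $(S_x\cap T)^f\le T\le S$, so $(S_x\cap T)^f\in\Delta$ because $\Delta$ is closed under $\L$-conjugates in $S$; by overgroup-closure $S_{(f^{-1},x,f)}\cap T\in\Delta$, hence $x\in\D(f)$. As $x$ was arbitrary, $\L=\D(f)$.

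\emph{Part (b), $c_f\in\Aut(\L,\Delta,S)$; parts (c) and (d).} Applying the previous step to $f^{-1}$ gives $\D(f^{-1})=\L$, so $c_f$ and $c_{f^{-1}}$ are mutually inverse bijections of $\L$. For $P\in\Delta$ we have $P\subseteq\L=\D(f)$, so $P^f\in\Delta$ by closure of $\Delta$ under $\L$-conjugates; using $f^{-1}$ as well yields $\Delta c_f=\Delta$, and as $S$ is the unique maximal member of $\Delta$ this forces $S^f=S$. That $c_f$ is a homomorphism of partial groups — $wc_f^*\in\D$ and $\Pi(wc_f^*)=\Pi(w)^f$ for $w\in\D$ — follows from part (a) together with $(S_w\cap T)^f\subseteq S_{wc_f^*}\cap T$ (proved exactly as above, the trajectory of any $t\in S_w\cap T$ staying inside $T$), and from the partial group axioms: insert the pairs $(f,f^{-1})$ between consecutive letters of $w$, contract each block $(f^{-1},g_i,f)$ to $g_i^f$, delete the resulting $(f,f^{-1})$'s via $\Pi(f,f^{-1})=\One$ and Lemma~\ref{L:AddOnes}, and identify what remains with $\Pi\big((f^{-1})\circ w\circ(f)\big)=\Pi(w)^f$ using Lemma~\ref{L:Chermak14d}. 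Hence $c_f\in\Aut(\L)$, and with $\Delta c_f=\Delta$, $S^f=S$ we get $c_f\in\Aut(\L,\Delta,S)$. For (c), put $\Gamma=\{P\in\Delta:P\le T\}$; it contains $T$ (which lies in $\Delta$ by \eqref{E:ObjectCondition} with $P=S$) and is overgroup-closed in $T$, $T=\N\cap S$ is strongly closed by \cite[Lemma~3.1(a)]{Chermak:2015} and maximal among $p$-subgroups of $\N$, and the domain of $\N$ inherited from $\L$ is the set of $\Gamma$-words (for $w\in\D\cap\W(\N)$ the trajectory of $S_w\cap T$ stays in $\N\cap S=T$ and, by \eqref{E:ObjectCondition} and $\L$-conjugate closure, in $\Delta$); so $(\N,\Gamma,T)$ is a locality. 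If $(\L,\Delta,S)$ has objective characteristic $p$ then for $P\in\Gamma$ the group $N_\N(P)=\N\cap N_\L(P)$ is normal in $N_\L(P)$, hence of characteristic $p$ by Lemma~\ref{L:MSCharp}(a), so $(\N,\Gamma,T)$ does too. Moreover $c_f|_\N$ maps $\N$ bijectively onto itself (by $\N\unlhd\L$ and $\L=\D(f)$), is the restriction of the automorphism $c_f$, fixes $T$, and fixes $\Gamma$ setwise; thus $c_f|_\N\in\Aut(\N,\Gamma,T)$. Part (d) follows because a locality automorphism induces a fusion system automorphism by sending $c_g|_{S_g}$ to $c_{g^f}|_{S_{g^f}}$: applying this to $c_f$ and to $c_f|_\N$ gives $c_f|_S\in\Aut(\F_S(\L))=\Aut(\F)$ and $c_f|_T\in\Aut(\F_T(\N))$.

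\emph{Main obstacle.} The only genuinely substantive step is $\L=\D(f)$, and within it the key point is that strong closure of $T$ keeps the trajectory of an element of $S_x\cap T$ under $(f^{-1},x,f)$ inside $T$, after which the interplay of \eqref{E:ObjectCondition} with overgroup-closure and $\L$-conjugate closure of $\Delta$ does the rest. Once $\L=\D(f)$ is in hand, all the automorphism statements (in (b), (c), (d)) drop out of the defining closure axioms of a locality, the verification that $c_f$ respects products being routine but notationally heavy.
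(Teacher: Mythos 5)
Your proof follows essentially the same route as the paper's: it establishes $\L\subseteq\D(f)$ by a trajectory argument using strong closure of $T$, $f\in N_\L(T)$, and \eqref{E:ObjectCondition}, verifies the homomorphism property of $c_f$ by the same word-insertion book-keeping, and derives (c), (d) from (a), (b) exactly as the paper does. There is one slip in the trajectory computation that you should repair: you assert, for $t\in S_x\cap T$, that $t^{f^{-1}}\in T\cap S_x$, but only $t$ (not $t^{f^{-1}}$) is known to lie in $S_x$ by hypothesis. The correct chain witnessing $t^f\in S_{(f^{-1},x,f)}$ is $t^f\mapsto(t^f)^{f^{-1}}=t\mapsto t^x\mapsto(t^x)^f$, each term landing in $T$ (the first and last because $f$ normalizes $T$, the middle one because $t\in S_x\cap T$ and $T$ is strongly closed); your writeup appears to have traced the trajectory of $t$ rather than that of $t^f$, hence the unjustified appearance of $t^{f^{-1}}$. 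You should also record \cite[Lemma~2.17]{Henke:2020} at the point where you pass from ``$c_f$ is a bijective homomorphism with homomorphism inverse $c_{f^{-1}}$'' to ``$c_f\in\Aut(\L)$'', as the paper does.
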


\begin{proof}
Let $w\in\W(\L)$. Notice that $O_p(\L)\leq S_w$ and, as $(\L,\Delta,S)$ is a locality, $w\in\D$ if and only $S_w\in\Delta$. Hence (a) follows from \eqref{E:ObjectCondition}. 

\smallskip

Assume now $w=(x_1,\dots,x_k)\in\D$. Then by (a) and as $T$ is strongly closed, we have $w\in\D$ via some $P_0,P_1,\dots,P_k\in\Gamma$. This implies that    \[u:=(f^{-1},x_1,f,f^{-1},x_2,f,\dots,f^{-1},x_k,f)\in\D\] via $P_0^f,P_0,P_1,P_1^f,P_1,\dots,P_{k-1}^f,P_{k-1},P_k,P_k^f$. In particular, applying this argument to words of length one, we see that $\L\subseteq\D(f)$. Using Notation~\ref{N:PartialHomWordMap}, it follows moreover from the partial group axioms that $wc_f^*=(x_1^f,\dots,x_k^f)\in\D$ and 
\[\Pi(wc_f^*)=\Pi(x_1^f,\dots,x_k^f)=\Pi(u)=\Pi((f^{-1})\circ w\circ (f))=\Pi(w)c_f;\] 
more precisely, we use here Lemma~\ref{L:AddOnes} to conclude that $\Pi(u)=\Pi(f^{-1},x_1,\One,x_2,\dots,x_{k-1},\One,x_k,f)=\Pi((f^{-1})\circ w\circ (f))$. This shows that $c_f\colon\L\rightarrow\L$ is a homomorphism of partial groups. As $c_f$ is bijective with inverse map $c_{f^{-1}}$, and since $c_{f^{-1}}$ is by a similar argument a homomorphism of partial groups, \cite[Lemma~2.17]{Henke:2020} gives that $c_f$ is an automorphism of $\L$. As $\Delta$ is closed under $\F$-conjugacy, this implies (b).

\smallskip

Let now $\N$ be as in (c). By \cite[Lemma~3.1(c)]{Chermak:2015}, $T$ is a maximal $p$-subgroup of $\N$. Since $\Delta$ is closed under taking $\L$-conjugates and overgroups in $S$, the set $\Gamma$ is closed under taking $\N$-conjugates and overgroups in $T$. Moreover, it follows from (a) that $w\in\W(\N)$ is an element of $\D$ if and only if $w\in\D_\Gamma$. This shows that $(\N,\Gamma,T)$ is a locality. For every $P\in\Gamma\subseteq\Delta$, it follows moreover from Lemma~\ref{L:MSCharp}(a) that $N_\N(P)\unlhd N_\L(P)$ is of characteristic $p$ if $(\L,\Delta,S)$ is of objective characteristic $p$. 

\smallskip

As $\N$ is a partial normal subgroup of $\L$, the map $c_f$ restricts to an automorphism of $\N$. As $T$ is strongly closed and $\Delta$ is closed under $\F_S(\L)$-conjugacy, it follows that $c_f\in\Aut(\N,\Gamma,T)$. This proves (c). Part (d) follows from (b),(c) and \cite[Lemma~2.21(b)]{Henke:2020}.
\end{proof}

\section{Varying the object sets of linking localities}\label{SS:VaryObjects}

If $(\L^+,\Delta^+,S)$ is a linking locality over a fusion system $\F$ and $\Delta$ is an $\F$-closed collection of subgroups of $S$ with $\F^{cr}\subseteq\Delta\subseteq\Delta^+$, then writing $\L=\L^+|_\Delta$ for the restriction of $\L^+$ to $\Delta$, it follows from Alperin's fusion theorem that $(\L,\Delta,S)$ is a locality over $\F$. Moreover, it is easy to see that  $N_\L(P)=N_{\L^+}(P)$ for all $P\in\Delta$ and that $(\L,\Delta,S)$ is thus a linking locality over $\F$. Writing $\fN(\H)$ for the set of partial normal subgroups of a partial group $\H$, notice that we have a map
\[\Phi_{\L^+,\L}\colon \fN(\L^+)\rightarrow\fN(\L),\N^+\mapsto \N^+\cap\L.\]
The next theorem will allow us to move between linking localities with different object sets. As in \cite[Definition~1]{Henke:2015} we define a subgroup $P\leq S$ to be \emph{subcentric} if $O_p(N_\F(Q))\in\F^c$ for some (and thus for every) fully $\F$-normalized $\F$-conjugate $Q$ of $\F$. Equivalently, by \cite[Lemma~3.1]{Henke:2015}, $P$ is subcentric if and only if, for some (and thus for every) fully $\F$-normalized $\F$-conjugate $Q$ of $\F$, the normalizer $N_\F(Q)$ is constrained. The set of subcentric subgroups will be denoted by $\F^s$.

\begin{definition}
We call a linking locality $(\L,\Delta,S)$ over $\F$ a \emph{subcentric locality} if $\Delta=\F^s$.
\end{definition}

\begin{theorem}\label{T:VaryObjects}
Let $(\L,\Delta,S)$ be a linking locality over $\F$ and let $\Delta^+$ be an $\F$-closed collection of subgroups of $S$ such that $\Delta\subseteq \Delta^+\subseteq \F^s$.
\begin{itemize}
\item [(a)] There exists a linking locality $(\L^+,\Delta^+,S)$ over $\F$ such that $\L=\L^+|_\Delta$. Moreover, $(\L^+,\Delta^+,S)$ is unique up to an isomorphism which restricts to the identity on $\L$; that is, if $(\wL^+,\Delta^+,S)$ is another linking locality over $\F$ with $\wL^+|_{\Delta}=\L$, then there exists an isomorphism of partial groups $\beta\colon\L^+\rightarrow\wL^+$ which is the identity on $\L$.
\item [(b)] If $(\L^+,\Delta^+,S)$ is a linking locality over $\F$ with $\L^+|_\Delta=\L$, then the map $\Phi_{\L^+,\L}$ is an inclusion-preserving bijection such that $\Phi_{\L^+,\L}^{-1}$ is also inclusion-preserving. If $\N^+\unlhd\L^+$ and $\N:=\L\cap\N^+\unlhd\L$ such that $\F_{S\cap\N}(\N)$ is $\F$-invariant, then $\F_{S\cap\N^+}(\N^+)=\F_{S\cap\N}(\N)$. 
\item [(c)] The set $\F^s$ of subcentric subgroups of $S$ is $\F$-closed and contains $\Delta$. In particular, there exists a subcentric locality $(\L^s,\F^s,S)$ over $\F$ with $\L^s|_\Delta=\L$, and such $\L^s$ is unique up to an isomorphism which restricts to the identity on $\L$. 
\end{itemize}
\end{theorem}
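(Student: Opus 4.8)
The plan is to derive all three parts from \cite[Theorem~A]{Henke:2015}. Since $\F=\F_S(\L)$ is saturated by Theorem~\ref{T:Saturation} and $\F^{cr}\subseteq\Delta\subseteq\Delta^+$ with $\Delta^+$ being $\F$-closed, that theorem provides a linking locality $(\M,\Delta^+,S)$ over $\F$, unique up to an isomorphism restricting to $\id_S$. As recalled in Section~\ref{SS:VaryObjects}, $\M|_\Delta$ is then a linking locality over $\F$ with object set $\Delta$, so by the uniqueness part of \cite[Theorem~A]{Henke:2015} there is an isomorphism $\rho\colon\M|_\Delta\rightarrow\L$ with $\rho|_S=\id_S$. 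To prove (a) I would transport the partial product and inversion of $\M$ along the bijection from $\M$ onto a set $\L^+$ that agrees with $\rho$ on $\M|_\Delta$ and with the identity on $\M\setminus\M|_\Delta$; the resulting $(\L^+,\Delta^+,S)$ is a linking locality over $\F$ isomorphic to $\M$, and since $\rho$ fixes $S$ pointwise the passage preserves the subgroups $S_f$, so $\L^+|_\Delta=\rho(\M|_\Delta)=\L$. This settles existence. For uniqueness, given another linking locality $(\wL^+,\Delta^+,S)$ over $\F$ with $\wL^+|_\Delta=\L$, I would use that the proof of \cite[Theorem~A]{Henke:2015} constructs the expansion by successively adjoining $\F$-conjugacy classes of objects, each single-step expansion being unique up to an isomorphism restricting to the identity on the smaller linking locality; iterating from $\Delta$ up to $\Delta^+$ yields an isomorphism $\beta\colon\L^+\rightarrow\wL^+$ with $\beta|_\L=\id_\L$.

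For (b), that $\Phi_{\L^+,\L}$ is an inclusion-preserving bijection with inclusion-preserving inverse is a restatement of \cite[Theorem~C]{Henke:2020}, applicable because $\F^{cr}\subseteq\Delta\subseteq\Delta^+\subseteq\F^s$ and $\L^+|_\Delta=\L$. For the assertion about fusion systems, note first that $S\subseteq\L$ (since $S_s=S$ for each $s\in S$), so $T:=S\cap\N=S\cap\N^+$; by \cite[Lemma~3.1(a)]{Chermak:2015}, $T$ is strongly closed in $\F$. Using that $\Phi_{\L^+,\L}$ is a bijection preserving inclusions in both directions, one checks that $\N^+$ is the partial normal closure of $\N$ in $\L^+$, so every element of $\N^+$ is a product of $\L^+$-conjugates of elements of $\N$. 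Hence every morphism of $\F_{S\cap\N^+}(\N^+)$ is, after restriction to subgroups of $T$ (legitimate as $T$ is strongly closed), a composite of maps of the form $(c_f|_T)^{-1}\circ(c_n|_T)\circ(c_f|_T)$ with $n\in\N$ and $f\in\L^+$, where $c_n|_T$ is a morphism of $\F_{S\cap\N}(\N)$ and $c_f|_T\in\Hom_\F(T\cap S_f,T)$. Since $\F_{S\cap\N}(\N)$ is $\F$-invariant, the Frattini condition lets us factor $c_f|_T=\psi\alpha$ with $\psi$ a morphism of $\F_{S\cap\N}(\N)$ and $\alpha\in\Aut_\F(T)$; as $\F_{S\cap\N}(\N)$ is moreover invariant under $\Aut_\F(T)$, each such factor lies in $\F_{S\cap\N}(\N)$. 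Thus $\F_{S\cap\N^+}(\N^+)\subseteq\F_{S\cap\N}(\N)$, and the reverse inclusion is immediate from $\N\subseteq\N^+$.

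Finally, part (c) follows by applying (a) with $\Delta^+:=\F^s$: the set $\F^s$ is $\F$-closed and contains $\Delta$ by \cite{Henke:2015} (every object set of a linking locality over $\F$ lies in $\F^s$), so (a) produces a subcentric locality $(\L^s,\F^s,S)$ over $\F$ with $\L^s|_\Delta=\L$, unique up to an isomorphism restricting to $\id_\L$. I expect the main obstacle to be the uniqueness statement in (a): passing from the isomorphism fixing only $S$ that \cite[Theorem~A]{Henke:2015} supplies to one fixing all of $\L$ requires controlling the automorphisms of a linking locality that restrict to the identity on $S$ (equivalently, the step-by-step rigidity of expansions), and one must also check carefully that the transport-of-structure construction in the existence part produces $\L^+$ with $\L^+|_\Delta=\L$ on the nose rather than merely up to isomorphism.
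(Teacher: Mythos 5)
Your proposal diverges from the paper's route in one essential place, and you have correctly diagnosed the resulting gap yourself. The paper does not derive part (a) from \cite[Theorem~A]{Henke:2015}; it cites \cite[Theorem~7.2(a),(b)]{Henke:2015}, which is precisely the rigid-expansion statement you are trying to reconstruct: it produces $(\L^+,\Delta^+,S)$ with $\L^+|_\Delta=\L$ \emph{on the nose} and gives uniqueness up to an isomorphism that restricts to the \emph{identity on} $\L$, not merely on $S$. Theorem~A only gives uniqueness up to an isomorphism fixing $S$, so your transport-of-structure argument for existence is fine but your uniqueness argument is not a proof: saying that ``the proof of Theorem~A constructs the expansion by successively adjoining $\F$-conjugacy classes, each step being unique up to an isomorphism fixing the smaller locality'' is exactly the content of Theorem~7.2, which you would be re-proving rather than invoking. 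Replace the appeal to Theorem~A by a direct citation of Theorem~7.2 and both existence and the rigid uniqueness in (a) come for free (the transport-of-structure bookkeeping then also becomes unnecessary).

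For (b), the paper simply cites \cite[Theorem~C(a),(b)]{Henke:2020} for \emph{both} halves; you cite it only for the bijection and then sketch the fusion-system half yourself. The sketch conveys the right idea --- decompose an element of $\N^+$ via the normal closure of $\N$, pull the conjugating elements through using the Frattini condition and $\Aut_\F(T)$-invariance --- but as written it is not a proof: ``every element of $\N^+$ is a product of $\L^+$-conjugates of elements of $\N$'' needs to be replaced by an iterated-product argument (the partial normal closure is built by the inductive construction of generated partial subgroups, cf.\ \cite[Lemma~1.9]{Chermak:2015}), and the factorization $c_f|_T=\psi\alpha$ with $\psi$ a morphism of $\E$ and $\alpha\in\Aut_\F(T)$ needs care with domains and with which of $c_f$ or $c_{f^{-1}}$ you factor. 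If you want to avoid redoing this work, just cite the full statement of \cite[Theorem~C(a),(b)]{Henke:2020}, which is what the paper does. Your treatment of (c) agrees with the paper in substance; the paper pins down the $\F$-closure of $\F^s$ and $\F^{cr}\subseteq\F^s$ by citing Proposition~3.3 and Lemma~6.1 of \cite{Henke:2015}, and then applies (a) exactly as you do.
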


\begin{proof}
Part (a) follows from \cite[Theorem~7.2(a),(b)]{Henke:2015}, and part (c) follows then from Proposition~3.3 and Lemma~6.1 in \cite{Henke:2015}. Part (b) is \cite[Theorem~C(a),(b)]{Henke:2020}. 
\end{proof}

We point out that, alternatively, part (a) and most statements in part (b) of Theorem~\ref{T:VaryObjects}  follow from Chermak's Theorems~A1 and A2 in \cite{ChermakII}. The relevant statements in part (b) were indeed first proved by Chermak.

\begin{notation}\label{N:VaryObjects}
Whenever $(\L^+,\Delta^+,S)$ and $(\L,\Delta,S)$ are linking localities over the same fusion system $\F$ with $\Delta\subseteq\Delta^+$ and $\L=\L^+|_\Delta$, and whenever $\N$ is a partial normal subgroup of $\L$, we write $\N^+$ for $\Phi_{\L^+,\L}^{-1}(\N)$ (which is well-defined by Theorem~\ref{T:VaryObjects}(b)). So $\N^+$ denotes the unique partial normal subgroup of $\L^+$ with $\N^+\cap\L=\N$.

\smallskip

Similarly, if $(\L^s,\F^s,S)$ is a subcentric locality over $\F$ and $\L=\L^s|_\Delta$, then for every $\N\unlhd \L$, we write $\N^s$ for the unique partial normal subgroup of $\L^s$ with $\N^s\cap\L=\N$. 
\end{notation}

\begin{lemma}\label{L:TakePlusInS}
Let $(\L^+,\Delta^+,S)$ and $(\L,\Delta,S)$ be linking localities over the same fusion system $\F$ with $\Delta\subseteq\Delta^+$ and $\L=\L^+|_\Delta$. For every $R\leq S$, we have $R\unlhd \L$ if and only if $R\unlhd\L^+$. In particular, if $R\unlhd\L$, then $R^+=R$. 
\end{lemma}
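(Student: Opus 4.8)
The claim is that for $R \leq S$, we have $R \unlhd \L$ iff $R \unlhd \L^+$, where $\L = \L^+|_\Delta$. By Lemma~\ref{L:OpL}, a subgroup $R \leq S$ is a partial normal subgroup of a locality exactly when its normalizer is the whole locality; equivalently $R \leq O_p$ of that locality. So the cleanest route is to show $O_p(\L) = O_p(\L^+)$, and then invoke Lemma~\ref{L:OpL} twice: $R \unlhd \L \iff R \leq O_p(\L)$ and $R \unlhd \L^+ \iff R \leq O_p(\L^+)$.

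**Key steps.** First I would observe that $O_p(\L^+) \unlhd \L^+$ is a $p$-subgroup, hence $O_p(\L^+) \leq S$, and in particular $S_{O_p(\L^+)} = O_p(\L^+) \in \Delta$ would need checking — but actually $O_p(\L^+) \unlhd \L^+$ means every $\L^+$-conjugate is $O_p(\L^+)$ itself, so $O_p(\L^+) \in \Delta$ is forced only if $O_p(\L^+) \in \Delta$. Here is the subtlety: we need $\Delta$ to contain $O_p(\L^+)$. This holds because $\F^{cr} \subseteq \Delta$ and, by Lemma~\ref{L:OpFinFRadical} (or rather its consequences via $\F$-radicality), $O_p(\F)$ lies in every $\F$-radical subgroup; but more directly, $O_p(\L^+) = O_p(\F_S(\L^+))$ is normal in $\F = \F_S(\L^+)$, so it is contained in every member of $\Delta$ (since $\Delta$ is nonempty and overgroup-closed, and $\F^{cr} \subseteq \Delta$ forces... ) — I would instead argue: since $O_p(\L^+) \unlhd \L^+$ with $O_p(\L^+) \leq S$, for any $f \in \L$ we have $O_p(\L^+) \leq S_f$ (because $O_p(\L^+) \leq S_w$ for all words $w$ in $\L^+ \supseteq \L$, by definition of $O_p(\L^+) = \bigcap\{S_w : w \in \W(\L^+)\}$), hence $f \in N_{\L^+}(O_p(\L^+)) \cap \L = N_\L(O_p(\L^+))$. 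Thus $O_p(\L^+) \unlhd \L$ by Lemma~\ref{L:OpL}, giving $O_p(\L^+) \leq O_p(\L)$. Conversely, $O_p(\L) \unlhd \L$ is a $p$-subgroup of $\L$; I claim $O_p(\L) \unlhd \L^+$. For this, take $f \in \L^+$; since $O_p(\L) \leq S$ and $O_p(\L) \leq S_w$ for all $w \in \W(\L)$, in particular $O_p(\L) \in \Delta$, and then I must show $O_p(\L) \leq S_f$ for arbitrary $f \in \L^+$, i.e. that $O_p(\L)$ is normalized by all of $\L^+$. This follows from $\F$-invariance: $O_p(\L) \unlhd \L$ implies $O_p(\L) \unlhd \F_S(\L) = \F_S(\L^+) = \F$ (a normal subgroup of $S$ that is strongly closed and centralizer-trivial in the right sense — use that $N_\L(O_p(\L)) = \L$ controls enough fusion, or cite that $O_p(\L) = O_p(\F)$ since linking localities realize $\F$ with $N_\L(P)$ of characteristic $p$), and a subgroup normal in $\F$ is contained in $O_p(\L^+)$ by the analogous argument with $\L^+$. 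Combining the two inclusions gives $O_p(\L) = O_p(\L^+)$, and the lemma follows; the "in particular" statement $R^+ = R$ is then immediate from Notation~\ref{N:VaryObjects}, since $R \unlhd \L^+$ and $R \cap \L = R$ (as $R \leq S \subseteq \L$) forces $R^+ = R$ by uniqueness in Theorem~\ref{T:VaryObjects}(b).

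**Main obstacle.** The one genuinely delicate point is matching up the two $O_p$'s, specifically showing $O_p(\L) \unlhd \L^+$: a priori $\L^+$ is larger, so normality in $\L$ does not formally give normality in $\L^+$. The fix is to route through the fusion system — since $O_p(\L)$ is a subgroup of $S$ normal in $\L$, one shows it is normal in $\F_S(\L)$, and since $\F_S(\L) = \F_S(\L^+) = \F$ (both localities are over the same $\F$, which is the standing hypothesis), it is normal in $\F$, hence contained in $O_p(\L^+)$ by the characterization of $O_p$ of a linking locality as $O_p(\F)$ (every member of $\Delta^+$ contains it, so it lies in every $S_w$, $w \in \W(\L^+)$). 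I expect the cleanest write-up to simply establish "$R \leq S$ and $R \unlhd \L$ $\Rightarrow$ $R = O_p(\F_S(\L))$-related, hence $R \leq O_p(\L^+)$" in one move rather than computing $O_p$'s separately; but the underlying content is exactly this symmetry argument through the common fusion system.
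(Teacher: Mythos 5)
Your plan opens with a false premise. You write that by Lemma~\ref{L:OpL}, ``$R \unlhd \L$ iff $N_\L(R) = \L$; equivalently $R \leq O_p(\L)$.'' The second equivalence is wrong: Lemma~\ref{L:OpL} says $O_p(\L)$ is the \emph{largest} $p$-subgroup of $\L$ that is a partial normal subgroup, so every normal $p$-subgroup lies in $O_p(\L)$, but conversely a subgroup of $O_p(\L)$ need not be normal in $\L$, just as a subgroup of $O_p(G)$ for a finite group $G$ need not be normal in $G$. Consequently, even a correct proof that $O_p(\L) = O_p(\L^+)$ (and that equality does hold, and you give a sound argument for it) does \emph{not} dispose of an arbitrary $R \leq S$ with $R \unlhd \L$; you only get the case $R = O_p(\L)$ for free.

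The good news is that in your ``main obstacle'' paragraph you already identify the real engine, which is precisely what makes the paper's proof go: for a linking locality over $\F$, one has $R \unlhd \L$ iff $N_\L(R) = \L$ iff $R \unlhd \F$. The paper cites \cite[Proposition~5]{Henke:2015} for the second equivalence; one direction is that conjugation by $f \in N_\L(R) = \L$ generates $\F_S(\L) = \F$ and fixes $R$, the other that $R \unlhd \F$ gives $R \leq O_p(\F) \leq S_f$ for all $f$, and then strong closure forces $R^f = R$. Applying this chain to $\L$ and to $\L^+$ separately, with the same $\F$ as the common pivot, gives $R \unlhd \L \iff R \unlhd \F \iff R \unlhd \L^+$ for arbitrary $R \leq S$, which is the statement; your reading of the ``in particular'' clause via Theorem~\ref{T:VaryObjects}(b) is then correct. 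So the fix is not to detour through $O_p$: apply the $\F$-normality equivalence directly to $R$ and you recover the paper's argument.
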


\begin{proof}
Let $R\leq S$ be arbitrary. By Lemma~\ref{L:OpL}, $R\unlhd \L$ if and only if $\L=N_\L(R)$. Hence, using \cite[Proposition~5]{Henke:2015} it follows that
\[R\unlhd\L\Longleftrightarrow \L=N_\L(R)\Longleftrightarrow R\unlhd\F.\]
We have the same equivalence with $\L^+$ in place of $\L$. So $R\unlhd \L$ if and only if $R\unlhd\F$, and this is the case if and only if $R\unlhd\L^+$. As $R\cap\L=R$, it follows $R^+=R$ if $R\unlhd\L$.   
\end{proof}

\begin{lemma}\label{L:NNsTinN} 
Suppose $(\L,\Delta,S)$ and $(\L^s,\F^s,S)$ are linking localities over the same fusion system $\F$ such that $\L=\L^s|_\Delta$. Let $\N\unlhd\L$, set $T:=S\cap\N$ and adapt Notation~\ref{N:VaryObjects}. Then $Q:=O_p(N_{\L^s}(TC_S(T)))\in\F^{cr}$ and $N_{\N^s}(T)=N_\N(T)=N_\N(TC_S(T))=N_\N(Q)$. 
\end{lemma}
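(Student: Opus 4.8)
The plan is to work inside the subcentric locality $(\L^s,\F^s,S)$, where the object set $\F^s$ is large enough to contain all the subgroups we need to name normalizers of. First I would observe that $TC_S(T)\in\F^s$: indeed $T=\N\cap S$ is strongly closed in $\F$ by \cite[Lemma~3.1(a)]{Chermak:2015}, and $TC_S(T)\unlhd N_\F(T)$, so a fully normalized conjugate of $TC_S(T)$ contains its own centralizer; a short check shows it is actually subcentric. Hence $N_{\L^s}(TC_S(T))$ is a group of characteristic $p$ (as $(\L^s,\F^s,S)$ is a linking locality), so $Q:=O_p(N_{\L^s}(TC_S(T)))$ is well-defined and $N_S(TC_S(T))\leq$ that normalizer with $C_S(Q)\leq Q$. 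To see $Q\in\F^{cr}$, note $TC_S(T)\leq Q$ gives $C_S(Q)\leq C_S(TC_S(T))\leq TC_S(T)\leq Q$, so $Q$ is $\F$-centric; and after replacing $Q$ by a fully normalized conjugate (using that $TC_S(T)$ is a characteristic subgroup of its normalizer, so conjugacy is controlled) one gets $O_p(N_\F(Q))=Q$, whence $Q$ is $\F$-radical. Here one uses Lemma~\ref{L:SaturationHelp}/the receptivity available in a linking locality to realize $N_\F$-conjugation by locality elements; alternatively cite the standard fact that $O_p$ of a normalizer in a linking locality lands in $\F^{cr}$.

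Next I would establish the chain of normalizer equalities. Since $T\leq TC_S(T)\leq Q$, any element normalizing $Q$ normalizes $T$ (because $T$ is strongly closed, so $T=Q\cap S$-type argument, or directly $T$ is the unique strongly closed subgroup here), giving $N_{\N^s}(Q)\subseteq N_{\N^s}(T)$ and likewise with $\N$ in place of $\N^s$. For the reverse inclusions, the key input is \cite[Lemma~3.5]{Chermak:2015}: $N_\N(T)\subseteq N_\N(TC_S(T))$, and the analogous statement for $\N^s$. So it remains to show $N_{\N^s}(TC_S(T))\subseteq N_{\N^s}(Q)$ and $N_{\N^s}(T)\subseteq N_\N(T)$ (i.e. the plus/non-plus normalizers agree). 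The first is immediate: $Q=O_p(N_{\L^s}(TC_S(T)))$ is characteristic in $N_{\L^s}(TC_S(T))$, hence normalized by $N_{\N^s}(TC_S(T))\leq N_{\L^s}(TC_S(T))$.

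For the equality $N_{\N^s}(T)=N_\N(T)$ I would use Lemma~\ref{L:ConjugateNormalizer}(a) together with the fact that $\L=\L^s|_\Delta$: an element $f\in N_{\N^s}(T)$ lies in $N_{\L^s}(T)$, and by \cite[Lemma~3.5]{Chermak:2015} in $N_{\L^s}(TC_S(T))\subseteq N_{\L^s}(Q)$; since $Q\in\F^{cr}\subseteq\Delta$, Lemma~\ref{L:ConjugateNormalizer}(a) gives $N_{\L^s}(Q)=N_\L(Q)$ (both equal the normalizer computed before restriction, as $N_{\L^s}(P)=N_{\L^+}(P)=N_\L(P)$ for $P\in\Delta$, cf. Section~\ref{SS:VaryObjects}), so $f\in N_\L(Q)\cap\N^s$. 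Finally $\N^s\cap\L=\N$ by the defining property of $\N^s$ (Notation~\ref{N:VaryObjects}), so $f\in N_\N(Q)\subseteq N_\N(T)$. Combining all inclusions yields
\[N_{\N^s}(T)=N_\N(T)=N_\N(TC_S(T))=N_\N(Q),\]
and the same argument run inside $\L^s$ (or intersecting) handles the $\N^s$ versions, completing the proof.

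The main obstacle I anticipate is the verification that $TC_S(T)$ (equivalently $Q$) is subcentric and that $Q\in\F^{cr}$, since this is where one must carefully invoke saturation of $\F$, the constrained-normalizer characterization of subcentric subgroups, and the translation between $N_\F$-conjugacy and conjugacy by elements of the locality; everything after that is a bookkeeping exercise with \cite[Lemma~3.5]{Chermak:2015}, Lemma~\ref{L:ConjugateNormalizer}, and the identity $N_{\L^s}(P)=N_\L(P)$ for $P\in\Delta$.
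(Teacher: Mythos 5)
Your proposal follows essentially the same route as the paper's proof: observe $TC_S(T)\in\F^c\subseteq\F^s$ so the normalizer $N_{\L^s}(TC_S(T))$ is a group of characteristic $p$, establish $Q\in\F^{cr}\subseteq\Delta$, and then chase inclusions using \cite[Lemma~3.5]{Chermak:2015}, the strong closure of $T$, and the identity $N_{\L^s}(P)=N_\L(P)$ for $P\in\Delta$. The only place you waver is exactly where you flag uncertainty, namely $Q\in\F^{cr}$; the paper dispatches this cleanly by first noting that $N_{\L^s}(Q)=N_{\L^s}(TC_S(T))$ (any element normalizing $Q$ normalizes $T\leq Q$ since $T$ is strongly closed, and hence normalizes $TC_S(T)$), so $Q=O_p(N_{\L^s}(Q))$, and then invoking \cite[Lemma~6.2]{Henke:2015} — which is precisely the ``standard fact'' you offer as your alternative. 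Your first sketch of the radicalness argument (replacing $Q$ by a fully normalized conjugate) is superfluous: once $N_{\L^s}(Q)=N_{\L^s}(TC_S(T))\supseteq S$ is known, $Q\unlhd S$ is automatic, so no conjugation is needed; and the centricity check $C_S(Q)\leq Q$ must in principle be run over all $\F$-conjugates of $Q$, which your sketch glosses over — again a reason to simply cite \cite[Lemma~6.2]{Henke:2015} as the paper does.
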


\begin{proof}
As $TC_S(T)\in\F^c\subseteq\F^s$, the normalizer $N_{\L^s}(TC_S(T))$ is a group. By definition of $Q$, we have $TC_S(T)\leq Q$ and $N_{\L^s}(TC_S(T))\leq  N_{\L^s}(Q)$. Since $T$ is strongly closed, it follows $N_{\L^s}(Q)=N_{\L^s}(TC_S(T))$ and thus $Q=O_p(N_{\L^s}(Q))$. Hence, \cite[Lemma~6.2]{Henke:2015} applied with $(\L^s,\F^s,S)$ in place of $(\L,\Delta,S)$ gives $Q\in\F^{cr}\subseteq\Delta$. In particular, $N_{\L^s}(TC_S(T))=N_{\L^s}(Q)\subseteq\L$. Using \cite[Lemma~3.5]{Chermak:2015}, it follows $N_{\N^s}(T)\subseteq N_{\L^s}(TC_S(T))\subseteq\L$. Thus, as $\N=\N^s\cap\L$ and $T$ is strongly closed, we have $N_{\N^s}(T)=N_\N(T)=N_\N(TC_S(T))=N_\N(Q)$.
\end{proof}

\section{Strongly closed subgroups and partial normal subgroups}

The following lemma is partly inspired by \cite[Corollary~1.5(a),(b)]{ChermakII}.

\begin{lemma}\label{L:NLTCLT}
Let $(\L,\Delta,S)$ be a locality over a fusion system $\F$ such that $\F^{cr}\subseteq\Delta$. Suppose $T\leq S$ is strongly closed in $\F$. Then the following hold.
\begin{itemize}
\item [(a)] $(N_\L(T),\Delta,S)$ is a locality over $N_\F(T)$ and $N_\F(T)^{cr}\subseteq\F^{cr}\subseteq \Delta$. 
\item [(b)] If $(\L,\Delta,S)$ is a linking locality, then $(N_\L(T),\Delta,S)$ is a linking locality over $N_\F(T)$. 
\item [(c)] We have $C_\L(T)\unlhd N_\L(T)$. Moreover, if $\F^c\subseteq \Delta$, then $\F_{C_S(T)}(C_\L(T))=C_\F(T)$.
\end{itemize}
\end{lemma}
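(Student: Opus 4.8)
The plan is to prove the three parts in turn, transferring structure from $(\L,\Delta,S)$. For (a): since $T$ is strongly closed, $S$ normalizes $T$, so $S\le N_\L(T)$, and $N_\L(T)$ is a partial subgroup of $\L$ by Lemma~\ref{L:CentralizerPartialNormal}. I would then check the locality axioms for $(N_\L(T),\Delta,S)$ directly: $S$ remains a maximal $p$-subgroup; since every $P\in\Delta$ is a subgroup of $S\le N_\L(T)$ and conjugation of such a $P$ by an element of $N_\L(T)$ agrees with conjugation in $\L$, the domain of the restricted product on $N_\L(T)$ is exactly the set of words admitting a conjugation chain through $\Delta$; and $\Delta$ is overgroup-closed in $S$ and closed under $N_\L(T)$-conjugacy because it is closed under $\L$-conjugacy. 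To identify the fusion system, note each $c_f$ with $f\in N_\L(T)$ has $T\le S_f$ and $T^f=T$, so $\F_S(N_\L(T))\subseteq N_\F(T)$; for the converse I would first establish $N_\F(T)^{cr}\subseteq\F^{cr}$: if $P\in N_\F(T)^{cr}$ then Lemma~\ref{L:OpFinFRadical}, applied inside the saturated system $N_\F(T)$, gives $T\le O_p(N_\F(T))\le P$; since $T$ is strongly closed, every $\F$-morphism out of $P$ or a conjugate of $P$ fixes $T$ setwise and is therefore an $N_\F(T)$-morphism, so $P^\F=P^{N_\F(T)}$, ``fully $\F$-normalized'' agrees with ``fully $N_\F(T)$-normalized'' for conjugates of $P$, and $N_\F(Q)=N_{N_\F(T)}(Q)$ for $Q\in P^\F$; hence $P$ is $\F$-centric and $\F$-radical. (Here $\F$ is saturated by Theorem~\ref{T:Saturation} and $N_\F(T)$ is saturated since $T\unlhd S$.) With the hypothesis $\F^{cr}\subseteq\Delta$ this yields $N_\F(T)^{cr}\subseteq\F^{cr}\subseteq\Delta$, and using Alperin's fusion theorem for $N_\F(T)$ to reduce to groups $\Aut_{N_\F(T)}(P)$ with $T\le P\in N_\F(T)^{cr}\cup\{S\}\subseteq\Delta$, each such $\Aut_{N_\F(T)}(P)=\Aut_\F(P)$ is realized inside $N_\L(P)\subseteq N_\L(T)$, giving $N_\F(T)\subseteq\F_S(N_\L(T))$.

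For (b): by (a) the triple $(N_\L(T),\Delta,S)$ is a locality over $N_\F(T)$ with $N_\F(T)^{cr}\subseteq\Delta$, so it remains to verify objective characteristic $p$. Fix $P\in\Delta$; then $PT\in\Delta$ by overgroup-closure, so $G:=N_\L(PT)$ is a finite group of characteristic $p$. I claim $N_{N_\L(T)}(P)=N_G(P)$: an element normalizing both $P$ and $T$ normalizes $PT$, while conversely $f\in N_\L(PT)$ normalizing $P$ also normalizes $T$, since $c_f|_{PT}$ is an $\F$-morphism and $T\le PT$ is strongly closed. Then $N_G(P)$ is of characteristic $p$ by Lemma~\ref{L:MSCharp}(b).

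For (c): the relation $C_\L(T)\unlhd N_\L(T)$ is immediate from Lemma~\ref{L:CentralizerPartialNormal}. For the equality $\F_{C_S(T)}(C_\L(T))=C_\F(T)$ (under $\F^c\subseteq\Delta$), the inclusion ``$\subseteq$'' is routine: if $f\in C_\L(T)$ then $T\le S_f$, $c_f|_T=\id_T$, and for $A\le S_f\cap C_S(T)$ the map $c_f|_{AT}$ is an $\F$-morphism restricting to the identity on $T$ whose image centralizes $T$, so $c_f|_A$ is a morphism of $C_\F(T)$. For the reverse inclusion I would again use Alperin's fusion theorem, now in $C_\F(T)$, which is saturated because $T$ strongly closed forces $T^\F=\{T\}$ (so $T$ is fully centralized): it suffices to realize inside $C_\L(T)$ each $\alpha\in\Aut_{C_\F(T)}(R)$ for $R$ a $C_\F(T)$-centric-radical subgroup of $C_S(T)$, or $R=C_S(T)$. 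For such $R$ one has $Z(T)\le O_p(C_\F(T))\le R$ (Lemma~\ref{L:OpFinFRadical} in $C_\F(T)$, together with the fact that elements of $C_\F(T)$ fix $T$ pointwise, so $Z(T)\le Z(C_\F(T))$), and from this plus $T$ being strongly closed one checks $RT\in\F^c\subseteq\Delta$; then $\alpha$ extends to $\bar\alpha\in\Aut_\F(RT)$ with $\bar\alpha|_T=\id_T$, Lemma~\ref{L:LocalityFusionSystem}(a) yields $f\in N_\L(RT)$ with $c_f|_{RT}=\bar\alpha$, and $\bar\alpha|_T=\id_T$ forces $f\in C_\L(T)$ with $c_f|_R=\alpha$.

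The main obstacle is the centricity bookkeeping in (c): verifying that the subgroups $RT$ produced by Alperin's fusion theorem in $C_\F(T)$ indeed lie in $\Delta$, i.e.\ pinning down the exact interplay between being $C_\F(T)$-centric and being $\F$-centric, which is precisely where both hypotheses ($T$ strongly closed and $\F^c\subseteq\Delta$) are used. Parts (a) and (b) are largely mechanical transfers of properties of $(\L,\Delta,S)$, the one mild subtlety being the inclusion $N_\F(T)^{cr}\subseteq\F^{cr}$, which again rests on $T$ being strongly closed.
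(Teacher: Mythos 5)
Parts (a) and (b) match the paper's own proof: in (a) you establish $N_\F(T)^{cr}\subseteq\F^{cr}$ by noting $T\leq O_p(N_\F(T))\leq P$ and that, $T$ being strongly closed, the $\F$- and $N_\F(T)$-conjugacy/normalizer data of overgroups of $T$ agree, then finish via Alperin's fusion theorem; in (b) the identity $N_{N_\L(T)}(P)=N_{N_\L(PT)}(P)$ together with Lemma~\ref{L:MSCharp}(b) is precisely the paper's computation. The first claim of (c) is Lemma~\ref{L:CentralizerPartialNormal} applied directly, as in the paper.

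For the second claim of (c) you depart from the paper (which simply cites Lemma~9.12 of \cite{Henke:2015} applied to $\L_0=\L|_{\F^c}$) and attempt a direct Alperin argument inside $C_\F(T)$. There is a genuine gap at the asserted step ``from this plus $T$ being strongly closed one checks $RT\in\F^c$.'' What you have in hand is $R\in C_\F(T)^c$, i.e.\ control of $C_{C_S(T)}(R')$ over $C_\F(T)$-conjugates $R'$ of $R$. What $\F$-centricity of $RT$ requires is control of $C_S(Q)$ over \emph{all} $\F$-conjugates $Q$ of $RT$. Since $T$ is weakly closed, such a $Q$ has the form $R'T$ with $R'=R\phi$ for some $\phi\in\Hom_{N_\F(T)}(RT,S)$, and $\phi|_T$ need not be trivial, so $R'$ is only an $N_\F(T)$-conjugate of $R$, not a priori a $C_\F(T)$-conjugate. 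To bridge this one needs a Frattini-type decomposition $\phi|_R=\phi_0\,\alpha$ with $\phi_0$ a $C_\F(T)$-morphism and $\alpha\in\Aut_{N_\F(T)}(C_S(T))$, i.e.\ the fact that $C_\F(T)$ is $N_\F(T)$-invariant (indeed normal) in $N_\F(T)$. That is true and known, but it is nontrivial and is never invoked in your sketch; it is essentially the content the paper outsources to the cited lemma of \cite{Henke:2015}. With that ingredient added your argument goes through, so the route is salvageable but as written is incomplete.
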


\begin{proof}
We use throughout that $\F=\F_S(\L)$ is saturated by Theorem~\ref{T:Saturation}. 

\smallskip

As $T\unlhd S$, it follows from \cite[Lemma~2.13]{Chermak:2015} that $(N_\L(T),\Delta,S)$ is a locality. Clearly $\F_S(N_\L(T))\subseteq N_\F(T)$. Observe that $T\unlhd N_\F(T)$ is contained in every element of $N_\F(T)^{cr}$ by Lemma~\ref{L:OpFinFRadical}. Moreover, as $T$ is strongly closed, whenever $T\leq P\leq S$, we have $N_\F(P)=N_{N_\F(T)}(P)$ and $P^\F=P^{N_\F(T)}$. Hence,
\[N_\F(T)^{cr}=\{P\in\F^{cr}\colon T\leq S\}\subseteq \F^{cr}\subseteq\Delta.\]
Thus, to prove (a), it is sufficient to show that $N_\F(T)\subseteq \F_S(N_\L(T))$. Notice that $N_\F(T)$ is saturated by \cite[Theorem~I.5.5]{Aschbacher/Kessar/Oliver:2011} as $T\unlhd S$ and $\F$ is saturated. Fix $P\in N_\F(T)^{cr}$. By Alperin's Fusion Theorem \cite[Theorem~I.3.5]{Aschbacher/Kessar/Oliver:2011}, we only need to show that $\Aut_\F(P)=\Aut_{N_\F(T)}(P)\subseteq \F_S(N_\L(T))$. Notice that $N_\L(P)\subseteq N_\L(T)$ as $T\leq P$ and $T$ is strongly closed in $\F$. Hence, using Lemma~\ref{L:LocalityFusionSystem}(a) one sees that $\Aut_\F(P)=\{c_f|_P\colon f\in N_\L(P)\}\subseteq \F_S(N_\L(T))$. This proves (a).

\smallskip 

For the proof of (b) suppose now that $(\L,\Delta,S)$ is a linking locality. If $P\in\Delta$, then $TP\in\Delta$ and thus $N_\L(TP)$ is a group of characteristic $p$. Using that $T$ is strongly closed, one sees that $N_{N_\L(T)}(P)=N_{N_\L(TP)}(P)$. Hence, $N_{N_\L(T)}(P)$ is of characteristic $p$ by Lemma~\ref{L:MSCharp}(b). It follows now from (a) that $(N_\L(T),\Delta,S)$ is a linking locality over $N_\F(T)$, i.e. (b) holds.

\smallskip

It is a special case of Lemma~\ref{L:CentralizerPartialNormal} that $C_\L(T)\unlhd N_\L(T)$. Assume now that $\F^c\subseteq\Delta$ and set $\L_0:=\L|_{\F^c}$. By \cite[Lemma~9.12]{Henke:2015} applied with $(\L_0,T,\{\id\})$ in place of $(\L,Q,K)$, we have $C_\F(T)=\F_{C_S(T)}(C_{\L_0}(T))\subseteq \F_{C_S(T)}(C_\L(T))$. Clearly $\F_{C_S(T)}(C_\L(T))\subseteq C_\F(T)$, so (c) holds.
\end{proof}

If $\N\unlhd \L$, then $T=\N\cap S$ is strongly closed by \cite[Lemma~3.1(a)]{Chermak:2015}. We will apply the lemma above in this situation. The following result of Chermak gives then actually some more information. Since it is crucial in our later proofs, we restate it here.

\begin{prop}\label{P:ChermakEndPartI}
Let $(\L,\Delta,S)$ be a locality of objective characteristic $p$, let $\N\unlhd\L$ and set $T:=\N\cap S$. Suppose $\K$ is a partial normal subgroup of $N_\L(T)$ such that $\K\subseteq C_\L(T)$. Then $\<\K,\N\>\unlhd\L$ and $S\cap \<\K,\N\>=(\K\cap S)T$. In particular, $(\K\cap S)T$ is strongly closed in $\F_S(\L)$. 
\end{prop}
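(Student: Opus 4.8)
The plan is to first reduce to a situation where the object set is ``large enough'' to apply the structural results available, and then build the partial normal subgroup $\langle\K,\N\rangle$ by hand as a product. Concretely, I would begin by invoking Theorem~\ref{T:VaryObjects} to pass to a subcentric locality $(\L^s,\F^s,S)$ containing $\L$ as a restriction (after first restricting, if necessary, to a linking locality with $\F^c\subseteq\Delta$ — note $(\L,\Delta,S)$ is only assumed of objective characteristic $p$, so some care is needed, but one may enlarge $\Delta$ to include $\F^c$ using Remark~\ref{R:ExistenceUniquenessCLS} and the fact that $N_\L(P)=N_{\L^+}(P)$ for $P$ in the smaller object set). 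Working in $\L^s$, set $\N^s\unlhd\L^s$ to be the partial normal subgroup corresponding to $\N$ under Notation~\ref{N:VaryObjects}, so $\N^s\cap S=T$ as well by Lemma~\ref{L:TakePlusInS}-type reasoning. By Lemma~\ref{L:NLTCLT}(a),(b), the triple $(N_{\L^s}(T),\F^s,S)$ is a linking locality over $N_\F(T)$, and by Lemma~\ref{L:NNsTinN} the group $N_{\N^s}(T)=N_\N(T)$ is already visible inside the smaller locality. This lets me transfer the hypothesis: $\K$ should be promoted to a partial normal subgroup $\K^s$ of $N_{\L^s}(T)$ with $\K^s\subseteq C_{\L^s}(T)$, using that $C_{\L^s}(T)\cap\L = C_\L(T)$ (from Lemma~\ref{L:NLTCLT}(c) and its proof) and that $\Phi_{\L^s,\L}$ restricts to a bijection on partial normal subgroups of the normalizer localities.

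Once everything lives in the subcentric locality, the main work is to show $\langle\K^s,\N^s\rangle = \K^s\N^s = \N^s\K^s$ and that this is a partial normal subgroup of $\L^s$ with $S$-part $(\K^s\cap S)T$. Here I would use the theory of products of partial normal subgroups (Theorem~\ref{T:ProductsPartialNormal}) in conjunction with the fact that $\K^s$ is normal in $N_{\L^s}(T)$, not in all of $\L^s$ — so this is genuinely a partial-normal-closure computation, not a direct product of two partial normal subgroups of $\L^s$. The Frattini Lemma and Splitting Lemma for localities (\cite[Corollary~3.11, Lemma~3.12]{Chermak:2015}), already used in the proof of Lemma~\ref{L:FTNWeaklyNormalNormal}, are the right tools: any $f\in\L^s$ decomposes as $f=ng$ with $n\in\N^s$ and $g\in N_{\L^s}(T)$ (since $T=\N^s\cap S$ is strongly closed), and conjugating an element $k\in\K^s\cap\D(f)$ by $f$ amounts to conjugating first by $n$ — which, since $\K^s\subseteq C_{\L^s}(T)$ and $\N^s$ normalizes $\K^s$ up to... — and here is the crux — we need $\N^s$ to normalize $\K^s$ inside $N_{\L^s}(T)$. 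Actually the cleaner route is: since $\K^s\subseteq C_{\L^s}(T)$, the elements of $\K^s$ commute with $T$, so $\K^s$ and $\N^s$ have a controlled interaction, and one shows $\K^s\N^s$ is closed under $\L^s$-conjugation directly by the decomposition $f=ng$ above. Then $\K^s\N^s$ is a partial subgroup (analogue of Lemma~\ref{L:ProductPartialSubgroup}, or a direct check) and partial normal.

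For the $S$-part, I would argue $S\cap\K^s\N^s = (\K^s\cap S)(\N^s\cap S) = (\K^s\cap S)T$ by the last assertion of Theorem~\ref{T:ProductsPartialNormal} applied in the appropriate relative setting — or more carefully, since $\K^s\N^s$ may not be a product of two partial normals of $\L^s$, by noting that $\K^s\cap S \le C_S(T)$ (as $\K^s\subseteq C_{\L^s}(T)$) and $(\K^s\cap S)T$ is a $p$-subgroup normalized appropriately, then checking any $s\in S\cap\K^s\N^s$ has the form $kt$ with $k\in\K^s$, $t\in\N^s$, and using that $S_{(k,t)}$-considerations force $k\in S$, $t\in S$. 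Finally, to descend back to $\L$: intersect with $\L$, using Lemma~\ref{L:RestrictionIntersectL} and the fact that $\Phi_{\L^s,\L}$ is inclusion-preserving in both directions (Theorem~\ref{T:VaryObjects}(b)), to conclude $\langle\K,\N\rangle = (\K^s\N^s)\cap\L$ is partial normal in $\L$ with $S\cap\langle\K,\N\rangle = (\K\cap S)T$. The strong closure of $(\K\cap S)T$ in $\F_S(\L)$ is then immediate from \cite[Lemma~3.1(a)]{Chermak:2015} applied to the partial normal subgroup $\langle\K,\N\rangle$.

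The main obstacle I anticipate is the reduction step: $(\L,\Delta,S)$ is only assumed of objective characteristic $p$ (not a linking locality with $\F^{cr}\subseteq\Delta$), so one cannot immediately embed it into a subcentric locality, and one must either argue directly in $\L$ using the Frattini/Splitting lemmas — which is feasible but requires checking that $N_\L(T)$ and $C_\L(T)$ behave well — or first enlarge the object set. A secondary subtlety is verifying that the decomposition $\K^s\N^s$ is genuinely closed under conjugation by arbitrary $f\in\L^s$ and not merely by elements of $N_{\L^s}(T)$; the key point making this work is that $\K^s$ centralizes $T$, so when we write $f=ng$ the ``$n$-part'' of the conjugation lands back in $\K^s\N^s$ because $n$ normalizes $\N^s$ and the $\K^s$-component is only moved within the $N_{\L^s}(T)$-orbit where $\K^s$ is normal. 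I expect this commuting-with-$T$ observation to be exactly what rescues the argument.
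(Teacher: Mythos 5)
Your proposal runs into a genuine obstacle that you yourself flag at the end but do not resolve, and it also diverges entirely from what the paper actually does. The paper's proof of the first (and main) assertion of Proposition~\ref{P:ChermakEndPartI} is a citation: it is a restatement of Chermak's Proposition~5.5 in \cite{Chermak:2015}, a result about arbitrary localities of objective characteristic $p$ which is proved there directly via the Frattini and Splitting lemmas; the paper only adds the observation that the last sentence follows from \cite[Lemma~3.1(a)]{Chermak:2015}. So there is no comparable ``proof in the paper'' to your strategy --- the paper simply outsources the hard content.

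Your sketched route --- embed $\L$ into a subcentric locality $(\L^s,\F^s,S)$, push $\N$ and $\K$ forward, build $\K^s\N^s$ there, and descend --- cannot get off the ground under the stated hypotheses. Proposition~\ref{P:ChermakEndPartI} assumes only that $(\L,\Delta,S)$ is of objective characteristic $p$; it does \emph{not} assume $\F_S(\L)^{cr}\subseteq\Delta$, so $(\L,\Delta,S)$ need not be a linking locality and $\F_S(\L)$ need not be saturated (Theorem~\ref{T:Saturation} requires $\F^{cr}\subseteq\Delta$). Without that, Theorem~\ref{T:VaryObjects} does not apply, there is no guaranteed $\L^s$ containing $\L$ as a restriction, and there is no bijection $\Phi_{\L^s,\L}$ on partial normal subgroups to transport $\N$ and $\K$ back and forth. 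You acknowledge this (``one cannot immediately embed it into a subcentric locality'') and point to the alternative --- a direct argument in $\L$ using the Frattini and Splitting lemmas, which \emph{do} hold for arbitrary localities --- but you leave that alternative as a remark rather than carrying it out. That direct argument is precisely what Chermak's Proposition~5.5 does and is where the real work lives: one must show that for $k\in\K$, $n\in\N$ and $f\in\L$ with the relevant conjugate defined, the Splitting Lemma decomposition $f=n'g$ (with $n'\in\N$, $g\in N_\L(T)$, $S_f=S_{(n',g)}$) combined with $\K\subseteq C_\L(T)$ forces the conjugate back into $\K\N$, and that $S\cap\K\N=(\K\cap S)T$; neither step is established by the commuting-with-$T$ observation alone. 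As it stands the proposal is a plausible heuristic, not a proof, and the route you spend most of the space on is blocked by the weaker hypothesis.
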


\begin{proof}
The first part is a restatement of \cite[Proposition~5.5]{Chermak:2015}. The last sentence follows then from the fact that the intersection of a partial normal subgroup of $\L$ with $S$ is strongly closed by \cite[Lemma~3.1(a)]{Chermak:2015}.
\end{proof}

\begin{corollary}\label{C:ChermakEndPartI}
Let $(\L,\Delta,S)$ be a locality of objective characteristic $p$, let $\N\unlhd\L$ and set $T:=\N\cap S$. Then $\<\N,C_\L(T)\>\unlhd\L$ and $\<\N,C_\L(T)\>\cap S=TC_S(T)$. In particular, $TC_S(T)$ is strongly closed in $\F_S(\L)$.
\end{corollary}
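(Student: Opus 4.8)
The statement to prove is Corollary~\ref{C:ChermakEndPartI}, which I will deduce directly from Proposition~\ref{P:ChermakEndPartI} by taking $\K := C_\L(T)$.

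\textbf{Approach.} The plan is to verify that $\K := C_\L(T)$ satisfies the hypotheses of Proposition~\ref{P:ChermakEndPartI}, and then translate the conclusion of that proposition into the statement of the corollary. By Lemma~\ref{L:CentralizerPartialNormal} (or the special case noted in the proof of Lemma~\ref{L:NLTCLT}(c)), $C_\L(T)$ is a partial normal subgroup of $N_\L(T)$, and trivially $C_\L(T) \subseteq C_\L(T)$, so both hypotheses of Proposition~\ref{P:ChermakEndPartI} are met with $\K = C_\L(T)$. Note that $(\L,\Delta,S)$ being of objective characteristic $p$ is exactly the standing hypothesis here, so nothing extra is needed there.

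\textbf{Key steps.} First I would recall that $T = \N\cap S$ is strongly closed in $\F_S(\L)$ by \cite[Lemma~3.1(a)]{Chermak:2015}, so that $N_\L(T)$ makes sense as the right ambient object and $C_\L(T)$ is defined; then invoke Lemma~\ref{L:CentralizerPartialNormal} to get $C_\L(T)\unlhd N_\L(T)$. Second, apply Proposition~\ref{P:ChermakEndPartI} with $\K = C_\L(T)$: this yields $\<C_\L(T),\N\>\unlhd\L$ and $S\cap\<C_\L(T),\N\> = (C_\L(T)\cap S)\,T$. Third, I would observe that $C_\L(T)\cap S = C_S(T)$ by definition of the centralizer in a partial group restricted to the $p$-subgroup $S$ (an element of $S$ centralizes $T$ in $\L$ iff it centralizes $T$ in $S$), so $(C_\L(T)\cap S)T = C_S(T)T = TC_S(T)$. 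Fourth, the ``in particular'' clause follows from the last sentence of Proposition~\ref{P:ChermakEndPartI}, or directly from \cite[Lemma~3.1(a)]{Chermak:2015} applied to the partial normal subgroup $\<\N,C_\L(T)\>$ of $\L$, since the intersection of a partial normal subgroup of $\L$ with $S$ is strongly closed in $\F_S(\L)$.

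\textbf{Main obstacle.} There is essentially no substantive obstacle here; the content is entirely in Proposition~\ref{P:ChermakEndPartI}, which is quoted as an external (Chermak) result. The only point requiring a word of care is the identity $C_\L(T)\cap S = C_S(T)$ — one should make sure the notational conventions match (in Definition~\ref{D:mainCommute}/the partial-group setup, $C_S(\X) = S\cap C_\L(\X)$ is the definition, so this is immediate) — and the harmless rewriting $(C_\L(T)\cap S)T = TC_S(T)$, using that $T$ and $C_S(T)$ are subgroups of $S$ normalizing each other so their product is a subgroup and commutes as a set product. I would write the proof as a short three-sentence deduction.

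\begin{proof}
Since $T=\N\cap S$ is strongly closed in $\F_S(\L)$ by \cite[Lemma~3.1(a)]{Chermak:2015}, and $C_\L(T)\unlhd N_\L(T)$ by Lemma~\ref{L:CentralizerPartialNormal}, we may apply Proposition~\ref{P:ChermakEndPartI} with $\K:=C_\L(T)$. This gives $\<\N,C_\L(T)\>\unlhd\L$ and
\[S\cap\<\N,C_\L(T)\>=(C_\L(T)\cap S)\,T=C_S(T)T=TC_S(T),\]
where we used that $C_\L(T)\cap S=C_S(T)$. Finally, as the intersection of a partial normal subgroup of $\L$ with $S$ is strongly closed in $\F_S(\L)$ by \cite[Lemma~3.1(a)]{Chermak:2015}, the subgroup $TC_S(T)=S\cap\<\N,C_\L(T)\>$ is strongly closed in $\F_S(\L)$.
\end{proof}
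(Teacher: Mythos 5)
Your proof is correct and follows essentially the same approach as the paper, which deduces the corollary directly from Proposition~\ref{P:ChermakEndPartI} together with the fact that $C_\L(T)\unlhd N_\L(T)$ (the paper cites Lemma~\ref{L:NLTCLT}(c), which itself invokes Lemma~\ref{L:CentralizerPartialNormal}). You have merely written out the short verification the paper calls ``immediate,'' including the harmless identity $C_\L(T)\cap S = C_S(T)$.
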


\begin{proof}
This is an immediate consequence of Lemma~\ref{L:NLTCLT}(c) and Proposition~\ref{P:ChermakEndPartI}.
\end{proof}

\section{Im-partial subgroups}\label{SS:Impartial}

Following Chermak we use the following definition.

\begin{definition}
 Let $\L$ and $\L_0$ be partial groups with products $\Pi\colon \D\rightarrow\L$ and $\Pi_0\colon \D_0\rightarrow \L_0$ respectively. Then we say that $\L_0$ (or more precisely $(\L_0,\D_0,\Pi_0)$) is an \emph{im-partial} subgroup of $\L$ if $\L_0\subseteq\L$, $\D_0\subseteq\D$ and $\Pi_0=\Pi|_{\D_0}$. 
\end{definition}

If $\L_0$ is an im-partial subgroup of $\L$ then the inclusion map $\L_0\hookrightarrow \L$ is a homomorphism of partial groups. The other way around, any image of a homomorphism of partial groups can be regarded as an im-partial subgroup (which is however not a partial subgroup in general). This justifies the name. 

\smallskip

Im-partial subgroups of localities occur very natural. For example, if $(\L,\Delta,S)$ is a locality over $\F$ and $\Gamma\subseteq \Delta$ is $\F$-closed, then the restriction $\L|_\Gamma$ is an im-partial subgroup of $\L$. In \cite[Subsection~9.2]{Henke:2015} restrictions were considered in a more general context. For the convenience of the reader we summarize this construction here. For that let $(\L,\Delta,S)$ be a locality, $\H$ a partial subgroup of $\L$ and $\Gamma$ a set of subgroups of $T:=S\cap\H$ such that $\Gamma$ is closed under passing to  $\H$-conjugates and overgroups in $T$. Suppose furthermore that there exists some subgroup $X\leq S$ such that the following properties hold:
\begin{itemize}
 \item [(Q1)] For all $P\in\Gamma$, we have $\<P,X\>\in\Delta$.
 \item [(Q2)] For all $P_1,P_2\in\Gamma$, we have $N_\H(P_1,P_2)\subseteq N_\L(\<P_1,X\>,\<P_2,X\>)$.  
\end{itemize}
Set
\[\H|_\Gamma:=\{f\in\H\colon S_f\cap T\in\Gamma\}.\]
Let $\D_0$ be the set of words $(f_1,\dots,f_n)$ in $\H$ such that there exist $P_0,P_1,\dots,P_n\in\Gamma$ with $P_{i-1}^{f_i}=P_i$ for all $i=1,\dots,n$. We allow here the case $n=0$ so that the empty word is an element of $\D_0$. By \cite[Lemma~9.6]{Henke:2015}, the set $\H|_\Gamma$ together with the restriction of the inversion on $\L$ to $\H|_\Gamma$ and with the product $\Pi_0:=\Pi|_{\D_0}\colon \D_0\rightarrow \H|_\Gamma$ forms a partial group, which is an im-partial subgroup of $\L$. Whenever we write $\H|_\Gamma$ in the following text, we mean implicitly that we regard $\H|_\Gamma$ as a partial group (and thus an im-partial subgroup of $\L$) in this way. We will now consider the case which is relevant in the present paper.

\bigskip

\textbf{For the remainder of this section let $(\L,\Delta,S)$ be a subcentric locality over a fusion system $\F$.}

\bigskip

If $X\leq S$ is fully $\F$-normalized and $\H=N_\L(X)$, then (Q2) holds for any set $\Gamma$ of subgroups of $T:=N_S(X)$. Moreover, if $(\L,\Delta,S)$ is a subcentric locality over $\F$, then (Q1) holds with $\Gamma=N_\F(X)^s$ as is implied by the following lemma.

\begin{lemma}\label{L:NFXs}
Let $\F$ be a saturated fusion systems and $X\leq S$ be fully $\F$-normalized. Then $N_\F(X)^s=\{P\leq N_S(X)\colon PX\in\F^s\}$. 
\end{lemma}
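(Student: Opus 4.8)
The plan is to unwind the definition of $\F^s$ on both sides and exploit that $X$ (hence $N_S(X)$) is fully normalized in the relevant fusion systems. First I would show the inclusion ``$\supseteq$''. Let $P \le N_S(X)$ with $PX \in \F^s$. Since $X \le P X$, we have $N_\F(PX) \le N_\F(X)$; I want to compare $N_{N_\F(X)}(P)$ with a suitable conjugate of $N_\F(PX)$. The key point is that, as $X$ is fully $\F$-normalized, $N_\F(X)$ is saturated (by \cite[Theorem~I.5.5]{Aschbacher/Kessar/Oliver:2011}), so I may pick a fully $N_\F(X)$-normalized $N_\F(X)$-conjugate $Q$ of $P$ and use \cite[Lemma~2.6(c)]{Aschbacher/Kessar/Oliver:2011} to extend the conjugating morphism to a map defined on $N_{N_S(X)}(P) = N_S(PX)$; this conjugating morphism fixes $X$, so $QX$ is an $\F$-conjugate of $PX$, and one checks it is fully $\F$-normalized using $N_S(QX) = N_{N_S(X)}(Q)$ together with the fully-normalized property of $Q$ in $N_\F(X)$. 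Then $O_p(N_\F(QX)) = O_p(N_{N_\F(X)}(Q))$ (again because $X$ is strongly closed enough in $N_\F(X)$, being normal there, one has $N_\F(QX) = N_{N_\F(X)}(Q)$), and since $QX \in \F^s$ this group lies in $\F^c$. I would then need to descend $\F$-centricity of $O_p(N_\F(QX))$ to $N_\F(X)$-centricity of $O_p(N_{N_\F(X)}(Q))$ as a subgroup of the ambient group $N_S(X)$; this uses that a subgroup $Y$ with $X \le Y \le N_S(X)$ satisfies $C_{N_S(X)}(Y) = C_S(Y)$, so $\F$-centric implies centric in $N_\F(X)$. Hence $P \in N_\F(X)^s$.

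For the reverse inclusion ``$\subseteq$'', I would run essentially the same correspondence backwards. Let $P \in N_\F(X)^s$. Pick a fully $N_\F(X)$-normalized conjugate $Q$ of $P$ with $O_p(N_{N_\F(X)}(Q)) \in (N_\F(X))^c$. As above, $Q$ conjugates to $QX$ which (after the same bookkeeping) is a fully $\F$-normalized $\F$-conjugate of $PX$, and $N_\F(QX) = N_{N_\F(X)}(Q)$ with $O_p$ of this group equal to $O_p(N_{N_\F(X)}(Q))$. The only real content is the converse direction of the centricity comparison: if $O_p(N_{N_\F(X)}(Q))$ is centric in $N_\F(X)$, is it $\F$-centric? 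Here I would use that $O_p(N_{N_\F(X)}(Q)) \ge X$ (since $X \unlhd N_\F(X)$, so $X \le O_p(N_\F(X)) \le O_p(N_{N_\F(X)}(Q))$, using $O_p(N_\F(X))$ contained in every $O_p$ of a normalizer subsystem of $N_\F(X)$ — this is exactly the flavor of Lemma~\ref{L:OpFinFRadical}), together with the observation that for $X \le Y$, every $\F$-conjugate of $Y$ that has $C_S$ bigger than its center would already do so inside $N_S(X)$; more precisely, $\F$-centric for subgroups containing $X$ can be tested in $N_\F(X)$ because any $\F$-morphism out of such a $Y$ lands among subgroups containing $X$ only after composing appropriately — I would instead argue via receptivity: a fully $\F$-normalized $\F$-conjugate $Y^*$ of $Y$ receives, and the relevant conjugating morphism can be chosen to fix $X$, reducing $C_S(Y^*) \le Y^*$ to $C_{N_S(X)}(Q^*) \le Q^*$.

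I expect the main obstacle to be the bookkeeping that transports ``fully normalized'' and ``centric'' through the bijection $Q \leftrightarrow QX$ cleanly, in particular making sure the conjugating morphisms can always be taken to fix $X$ (so that they are genuinely morphisms in $N_\F(X)$ and do induce $N_\F(X)$-conjugacy). The cleanest route is probably to first establish, as a small sublemma, that for $X \le Y \le N_S(X)$: (i) $Y$ is fully $\F$-normalized if and only if $Y$ is fully $N_\F(X)$-normalized — but wait, that is false in general; what is true and what I actually need is the one-directional statement that one can choose a fully-normalized representative compatibly, i.e. if $Q$ is fully $N_\F(X)$-normalized then $QX$ is fully $\F$-normalized, which holds because $N_S(QX) = N_{N_S(X)}(Q)$ and the $\F$-conjugates of $QX$ containing $X$ are exactly $\{Q'X : Q' \in Q^{N_\F(X)}\}$ — and (ii) for such $Y$, $Y \in \F^c$ iff $Y \in (N_\F(X))^c$, via $C_{N_S(X)}(Y') = C_S(Y')$ for all relevant conjugates $Y'$. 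Once these two sublemmas are in hand, the proof of the lemma is a two-line matching-up in each direction. I would state and prove (i) and (ii) explicitly, then conclude.

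\begin{proof}
Since $X$ is fully $\F$-normalized, the normalizer fusion system $N_\F(X)$ is saturated (cf. \cite[Theorem~I.5.5]{Aschbacher/Kessar/Oliver:2011}) and has $N_S(X)$ as its underlying $p$-group. Moreover $X\unlhd N_\F(X)$.

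We first record two observations for subgroups $Y$ with $X\leq Y\leq N_S(X)$. Note that, for any such $Y$, the $\F$-conjugates of $XY=Y$ which contain $X$ are precisely the subgroups of the form $Y\psi$ with $\psi\in\Hom_\F(XY,N_S(X))$ satisfying $X\psi=X$; by \cite[Lemma~2.6(c)]{Aschbacher/Kessar/Oliver:2011} any $\F$-isomorphism between two overgroups of $X$ in $N_S(X)$ extends to one defined on a subgroup of $N_S(X)$ which normalizes $X$, and such a morphism fixing $X$ is a morphism in $N_\F(X)$. In particular, the set of $\F$-conjugates of $Y$ that are contained in $N_S(X)$ equals the set of $N_\F(X)$-conjugates of $Y$.

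First observation: for $X\leq Y\leq N_S(X)$ we have $C_{N_S(X)}(Y)=C_S(Y)$, because $C_S(Y)\leq C_S(X)\leq N_S(X)$. Hence, using that every $\F$-conjugate of $Y$ inside $N_S(X)$ is an $N_\F(X)$-conjugate of $Y$ and vice versa, $Y$ is $\F$-centric if and only if $Y$ is $N_\F(X)$-centric, i.e. $Y\in\F^c$ iff $Y\in (N_\F(X))^c$.

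Second observation: if $Q\leq N_S(X)$ is fully $N_\F(X)$-normalized, then $QX$ is fully $\F$-normalized. Indeed $N_{N_S(X)}(Q)=N_{N_S(X)}(QX)$ since $X\unlhd N_\F(X)$, and the $\F$-conjugates of $QX$ are, up to further $\F$-conjugacy, contained in $N_S(X)$; as just noted these are exactly the $N_\F(X)$-conjugates of $QX$, each of the form $Q'X$ with $Q'$ an $N_\F(X)$-conjugate of $Q$, and $N_{N_S(X)}(Q'X)=N_{N_S(X)}(Q')$ has order at most $|N_{N_S(X)}(Q)|=|N_{N_S(X)}(QX)|$ by the choice of $Q$. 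So $QX$ is fully $\F$-normalized. Finally, since $X\unlhd N_\F(X)$ and $X\leq QX$, every $\F$-conjugate of $QX$ containing $X$ lies inside $N_S(X)$ after conjugation, so $N_\F(QX)=N_{N_\F(X)}(QX)=N_{N_\F(X)}(Q)$, the last equality again because $X$ is normal in $N_\F(X)$. Consequently $O_p(N_\F(QX))=O_p(N_{N_\F(X)}(Q))$, and this subgroup contains $X$ because $X\leq O_p(N_\F(X))\leq O_p(N_{N_\F(X)}(Q))$ by Lemma~\ref{L:OpFinFRadical} (applied to the saturated system $N_\F(X)$, noting $O_p(N_\F(X))=X$ is not needed, only $X\unlhd N_\F(X)$).

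Now we prove the two inclusions.

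Suppose $P\leq N_S(X)$ with $PX\in\F^s$. Choose a fully $N_\F(X)$-normalized $N_\F(X)$-conjugate $Q$ of $P$. By the second observation, $QX$ is a fully $\F$-normalized $\F$-conjugate of $PX$, and $O_p(N_\F(QX))=O_p(N_{N_\F(X)}(Q))$, which contains $X$. Since $PX\in\F^s$, we have $O_p(N_\F(QX))\in\F^c$; as this group contains $X$, the first observation yields $O_p(N_{N_\F(X)}(Q))\in(N_\F(X))^c$. Since $Q$ is fully $N_\F(X)$-normalized and $N_\F(X)$ is saturated, this shows $P\in N_\F(X)^s$.

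Conversely, suppose $P\in N_\F(X)^s$. Choose a fully $N_\F(X)$-normalized $N_\F(X)$-conjugate $Q$ of $P$ with $O_p(N_{N_\F(X)}(Q))\in(N_\F(X))^c$. By the second observation, $QX$ is a fully $\F$-normalized $\F$-conjugate of $PX$ with $O_p(N_\F(QX))=O_p(N_{N_\F(X)}(Q))$, a subgroup containing $X$. By the first observation, $O_p(N_{N_\F(X)}(Q))\in(N_\F(X))^c$ implies $O_p(N_\F(QX))\in\F^c$. Hence $PX\in\F^s$.
\end{proof}
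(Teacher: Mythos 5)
Your from-scratch strategy is a sensible one (the paper simply cites \cite[Lemmas~3.4, 3.14, 3.16]{Henke:2015}), but the key technical step — your ``second observation'' together with the preliminary ``note'' — contains genuine gaps that the rest of the argument inherits. First, you assert $N_{N_S(X)}(Q)=N_{N_S(X)}(QX)$. The inclusion $\subseteq$ is clear, but for $\supseteq$ you need: every $g\in N_S(X)$ normalizing $QX$ also normalizes $Q$. This is false in general: normalizing $X$ and $QX$ does not pin down $Q$ inside $QX$ unless $X\leq Q$ (and even then one still has to deal with $N_S(Q)$ vs.\ $N_{N_S(X)}(Q)$). Second, the ``note'' claims that the $\F$-conjugates of $Y$ containing $X$ (or lying in $N_S(X)$) are exactly the $N_\F(X)$-conjugates of $Y$. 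What \cite[Lemma~2.6(c)]{Aschbacher/Kessar/Oliver:2011} gives you for a conjugating isomorphism $\phi\colon Y\to Y'$ is a morphism $\alpha$ on $N_S(X\phi)$ with $X\phi\alpha=X$; composing produces a morphism $\phi\alpha$ fixing $X$, but it lands on $Y'\alpha$, not on $Y'$. So you only get that every such $Y'$ is $\F$-conjugate (by a morphism fixing $X$) \emph{to} some $N_\F(X)$-conjugate of $Y$, not that it \emph{is} one. Third, even granting a bound $|N_{N_S(X)}(Q'X)|\leq|N_{N_S(X)}(QX)|$ over $N_\F(X)$-conjugates $Q'$, the conclusion you want — $|N_S(R)|\leq|N_S(QX)|$ for all $\F$-conjugates $R$ of $QX$ — does not follow: you would need $N_S(QX)\leq N_S(X)$, and you would need to transport a \emph{fully $\F$-normalized} representative $R$ into $N_S(X)$ without losing normalizer order, neither of which is established. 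The equality $N_\F(QX)=N_{N_\F(X)}(Q)$ that you then rely on to compare $O_p$'s inherits all of these issues (it already fails at the level of the underlying $p$-groups unless $N_S(QX)\leq N_S(X)$ and $N_{N_S(X)}(QX)=N_{N_S(X)}(Q)$).

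These are exactly the subtleties that \cite[Lemmas~3.4, 3.14, 3.16]{Henke:2015} are designed to handle. In particular, Lemma~3.4 there (roughly, $P\in\F^s$ iff $PR\in\F^s$ for $R\unlhd\F$, applied inside $N_\F(X)$ where $X$ is normal) lets you reduce to the case $X\leq P$ so that $Q=QX$ and some of your difficulties disappear; the comparison of fully normalized representatives and of $O_p(N_\F(QX))$ with $O_p(N_{N_\F(X)}(Q))$ is then carried out carefully in Lemmas~3.14 and 3.16. If you want a self-contained proof, you should either prove these correspondences as separate sublemmas with correct statements (in particular, replace ``the $\F$-conjugates of $Y$ in $N_S(X)$ are the $N_\F(X)$-conjugates'' with the weaker, correct statement that each is $\F$-conjugate over $X$ to an $N_\F(X)$-conjugate), or cite the relevant results as the paper does.
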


\begin{proof}
 By \cite[Lemma~3.14]{Henke:2015} we have $N_\F(X)^s\subseteq\{P\leq N_S(X)\colon PX\in\F^s\}$. Moreover, combining Lemma~3.4 and Lemma~3.16 in the same paper one gets the converse inclusion.
\end{proof}

Let now $(\L,\Delta,S)$ be a subcentric locality over $\F$. For every $X\leq S$ such that $X$ is fully $\F$-normalized, the set $N_\F(X)^s$ is $N_\F(X)$-closed. Moreover, by \cite[Lemma~9.12]{Henke:2015} we have $N_\F(X)=\F_{N_S(X)}(N_\L(X))$. So $N_\L(X)|_{N_\F(X)^s}$ forms a partial group as described above, and this partial group is an im-partial subgroup of $\L$. We denote this im-partial subgroup of $\L$ by $\bN_\L(X)$, i.e.
\[\bN_\L(X):=N_\L(X)|_{N_\F(X)^s}\mbox{ for all }X\in\F^f.\]
By \cite[Lemma~9.13]{Henke:2015}, the triple $(\bN_\L(X),N_\F(X)^s,N_S(X))$ is a subcentric locality over $N_\F(X)$. In the proof of the E-balance Theorem (Theorem~\ref{T:Ebalance}) at the very end of this paper, we will iterate the process we just described. In this context we will need the following lemma.

\begin{lemma}\label{L:NinNLXofY}
Let $\F$ be a saturated fusion system over $X$, let $X,Y$ be fully $\F$-normalized subgroups of $S$ which normalize each other. Suppose furthermore that $Y$ is fully $N_\F(X)$-normalized. Then $X$ is fully $N_\F(Y)$-normalized and $N_{N_\F(X)}(Y)=N_{N_\F(Y)}(X)$. Moreover,  
\[\Gamma:=N_{N_\F(X)}(Y)^s=\{P\leq N_S(X)\cap N_S(Y)\colon PXY\in \F^s\}\]
and, for every subcentric locality $(\L,\Delta,S)$ over $\F$, we have 
\[\bN_{\bN_\L(X)}(Y)=(N_\L(X)\cap N_\L(Y))|_{\Gamma}=\bN_{\bN_\L(Y)}(X).\]
\end{lemma}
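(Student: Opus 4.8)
\emph{Overall strategy.} The statement splits into a fusion‑theoretic core (that $X$ is fully $N_\F(Y)$‑normalized, the identity $N_{N_\F(X)}(Y)=N_{N_\F(Y)}(X)$, and the formula for $\Gamma$) and a bookkeeping part comparing two iterations of the im‑partial restriction construction. I would do the fusion systems first, extract from part~(3) that $\Gamma$ is \emph{symmetric} in $X$ and $Y$, and then use that symmetry so that only one of the two equalities in the last display has to be proved by hand.

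\emph{Fusion systems.} Since $X$ is fully $\F$‑normalized and $\F$ is saturated, $N_\F(X)$ is saturated over $N_S(X)$, and by \cite[Lemma~9.12]{Henke:2015} (applied to $N_\L(X)$) we have $\F_{N_S(X)}(N_\L(X))=N_\F(X)$. For the identity $N_{N_\F(X)}(Y)=N_{N_\F(Y)}(X)$, note first that both are fusion systems over the same group $W:=N_S(X)\cap N_S(Y)=N_{N_S(X)}(Y)=N_{N_S(Y)}(X)$, so it suffices to compare morphisms; unwinding the definition of $N_{N_\F(X)}(-)$ and of $N_\F(X)$, a homomorphism $\phi$ between subgroups of $W$ lies in $N_{N_\F(X)}(Y)$ if and only if it extends to a morphism in $\F$ on $(\mathrm{dom}\,\phi)XY$ that fixes both $X$ and $Y$ as subgroups — a description symmetric in $X$ and $Y$. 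For ``$X$ is fully $N_\F(Y)$‑normalized'' I would argue by the usual counting: since $X$ and $Y$ are fully $\F$‑normalized and $Y$ is fully $N_\F(X)$‑normalized, $W$ has maximal order among the groups $N_{N_S(Y)}(X')$ as $X'$ runs over the $N_\F(Y)$‑conjugates of $X$ (comparing with $N_S(XY)$ and using full $\F$‑normalization of $X$ and of $Y$ to bound conjugates, as in the standard arguments of \cite[Part~I, \S5]{Aschbacher/Kessar/Oliver:2011}). Finally, the formula for $\Gamma$ comes from two applications of Lemma~\ref{L:NFXs}: applied to the saturated system $N_\F(X)$ and the fully $N_\F(X)$‑normalized subgroup $Y$ it gives $N_{N_\F(X)}(Y)^s=\{P\le W:PY\in N_\F(X)^s\}$, and applied to $\F$ and $X$ it gives, for $P\le W$, that $PY\in N_\F(X)^s$ iff $(PY)X=PXY\in\F^s$; hence $\Gamma=\{P\le N_S(X)\cap N_S(Y):PXY\in\F^s\}$, which is visibly symmetric in $X$ and $Y$.

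\emph{Localities.} Fix a subcentric locality $(\L,\Delta,S)$ over $\F$. By \cite[Lemma~9.13]{Henke:2015}, $(\bN_\L(X),N_\F(X)^s,N_S(X))$ is a subcentric locality over $N_\F(X)$; since $Y$ is fully $N_\F(X)$‑normalized we may feed it and the fully normalized subgroup $Y$ back into the construction of \S\ref{SS:Impartial}, which by the formula for $\Gamma$ yields $\bN_{\bN_\L(X)}(Y)=N_{\bN_\L(X)}(Y)|_{\Gamma}$, a subcentric locality over $N_{N_\F(X)}(Y)$ with maximal $p$‑subgroup $W$. The plan is to identify this with $(N_\L(X)\cap N_\L(Y))|_\Gamma$; granting that, the remaining equality $\bN_{\bN_\L(Y)}(X)=(N_\L(X)\cap N_\L(Y))|_\Gamma$ follows by interchanging $X$ and $Y$, which is legitimate because $N_{N_\F(X)}(Y)=N_{N_\F(Y)}(X)$ and $\Gamma$ are symmetric in $X,Y$. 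First I would check that $(N_\L(X)\cap N_\L(Y))|_\Gamma$ is defined: $\N_0:=N_\L(X)\cap N_\L(Y)$ is a partial subgroup of $\L$ with $S\cap\N_0=W$, and the pair $(\Gamma,XY)$ satisfies (Q1) and (Q2) for $\N_0$ — (Q1) because $\langle P,XY\rangle=PXY\in\F^s=\Delta$ \emph{exactly} by $P\in\Gamma$, and (Q2) because for $f\in N_{\N_0}(P_1,P_2)$ with $P_1,P_2\in\Gamma$ one has $P_1,X,Y\le S_f$, hence $P_1XY\le S_f$ and $(P_1XY)^f=P_2XY$. Then I would compare $\bN_{\bN_\L(X)}(Y)$ and $(N_\L(X)\cap N_\L(Y))|_\Gamma$ directly, both on underlying sets and on the words of the domain. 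For sets: an element of $\bN_{\bN_\L(X)}(Y)$ manifestly lies in $N_\L(X)\cap N_\L(Y)$ and satisfies the object condition for the restriction to $\Gamma$; conversely, if $f\in N_\L(X)\cap N_\L(Y)$ with $S_f\cap W\in\Gamma$, then $X,Y\le S_f$ forces $Y\le S_f\cap W$, so $(S_f\cap N_S(X))X$ is an overgroup in $S$ of $(S_f\cap W)XY\in\F^s$, whence $S_f\cap N_S(X)\in N_\F(X)^s$ by overgroup‑closure of $\F^s$, i.e. $f\in\bN_\L(X)$; the same overgroup‑closure argument, applied to the subgroups appearing in the object chains, shows that conjugation by $f$ is defined on $Y$ inside $\bN_\L(X)$ and that $f$ meets the outer object condition, so $f\in\bN_{\bN_\L(X)}(Y)$. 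Matching the domains is the same argument run on words $(f_1,\dots,f_n)$ rather than single elements, using once more that $\Delta=\F^s$ is overgroup‑closed.

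\emph{Main obstacle.} The delicate point is the last step: it is a comparison of two nested instances of the (intricate) im‑partial construction, in which the ``$S_f$'' occurring at the two stages of $\bN_{\bN_\L(X)}(Y)$ refer to \emph{different} ambient localities ($\L$ and $\bN_\L(X)$, with Sylow groups $S$ and $N_S(X)$). Making precise the collapse ``two object conditions $\Longrightarrow$ the single condition $S_f\cap W\in\Gamma$'' — and in particular verifying it for the object chains witnessing membership in the domain, not merely for individual elements — is where essentially all of the work lies, and it rests squarely on the symmetric formula for $\Gamma$ from part~(3) together with the overgroup‑closure of $\F^s$ (equivalently, of $\Delta$).
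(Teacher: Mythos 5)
Your overall skeleton matches the paper's proof: establish the fusion-theoretic statements first, derive the symmetric description of $\Gamma$ by two applications of Lemma~\ref{L:NFXs}, check (Q1)/(Q2) for $N_\L(X)\cap N_\L(Y)$ with $XY$ in place of $X$, and then invoke symmetry so that only one of the two locality equalities has to be proved directly. The counting argument you sketch for ``$X$ is fully $N_\F(Y)$-normalized'' is the same in spirit as the paper's, though you only gesture at it (the paper's version works with an explicit pair of morphisms $\phi$ and $\alpha$ and a chain of inequalities that collapses to equalities).

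There is, however, a genuine gap at the step you yourself flag as the main obstacle. You claim that the collapse ``rests squarely on the symmetric formula for $\Gamma$ together with the overgroup-closure of $\F^s$,'' and that ``the same overgroup-closure argument, applied to the subgroups appearing in the object chains, shows that conjugation by $f$ is defined on $Y$ inside $\bN_\L(X)$.'' That is not enough. The partial group $\bN_\L(X)$ comes with its own domain $\D_0$, and for a word $w$ in $\bN_\L(X)$ the relevant subgroup $T_w$ (the analogue of $S_w$ computed \emph{inside} $\bN_\L(X)$, i.e.\ the set of $x\in N_S(X)$ admitting a compatible chain with each $x_{i-1}\in\D_0(f_i)$) is a priori only contained in $S_w\cap N_S(X)$. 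To make the domains $\D_1$ and $\D_2$ match up — and in particular to show $N_{\bN_\L(X)}(Y)=N_\L(Y)\cap\bN_\L(X)$ — one needs the nontrivial equality $T_w=S_w\cap N_S(X)$, which is exactly what \cite[Lemma~9.7(b)]{Henke:2015} provides, and which the paper invokes explicitly. Overgroup-closure of $\F^s$ is used in the paper only at the very end, to upgrade the inclusion $\D_1\subseteq\D_2$ to an equality; it does not by itself control the internal domain $\D_0$. Without a substitute for Lemma~9.7(b) your argument that $f\in N_\L(X)\cap N_\L(Y)$ with $S_f\cap W\in\Gamma$ actually lies in $\bN_{\bN_\L(X)}(Y)$ (and the analogous statement for longer words) is not justified.
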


\begin{proof}
It is well-known and easy to see that $N_{N_\F(X)}(Y)=N_{N_\F(Y)}(X)$. Applying Lemma~\ref{L:NFXs} twice we get moreover
\[N_{N_\F(X)}(Y)^s=\{P\leq N_S(X)\cap N_S(Y)\colon PY\in N_\F(X)^s\}=\{P\leq N_S(X)\cap N_S(Y)\colon (PY)X\in \F^s\}\]
and so 
\begin{equation}\label{E:NNFXYs}
 \Gamma:=N_{N_\F(X)}(Y)^s=\{P\leq N_S(X)\cap N_S(Y)\colon PXY\in \F^s\}.
\end{equation}
We show now that $X$ is fully $N_\F(Y)$ normalized. For the proof let 
\[\phi\in\Hom_{N_\F(Y)}(N_S(X)\cap N_S(Y),N_S(Y))\] such that $X\phi$ is  fully $N_\F(Y)$-normalized. As $X$ is fully $\F$-normalized, there exists then $\alpha\in\Hom_\F(N_S(X\phi),S)$ such that $X\phi\alpha=X$. Observe that $\phi\alpha\in\Hom_{N_\F(X)}(N_S(X)\cap N_S(Y),S)$ and $Y\phi\alpha=Y\alpha$. So the assumption that $Y$ is fully $N_\F(X)$-normalized yields
\[|N_S(X)\cap N_S(Y\alpha)|\leq |N_S(X)\cap N_S(Y)|.\]
At the same time, as $X\phi$ is fully $N_\F(Y)$-normalized, we have 
\[|N_S(X)\cap N_S(Y)|\leq |N_S(X\phi)\cap N_S(Y)|.\]
Moreover, $(N_S(X\phi)\cap N_S(Y))\alpha\leq N_S(X\phi\alpha)\cap N_S(Y\alpha)=N_S(X)\cap N_S(Y\alpha)$. Hence,
\[|N_S(X)\cap N_S(Y)|\leq |N_S(X\phi)\cap N_S(Y)|\leq |N_S(X)\cap N_S(Y\alpha)|\leq |N_S(X)\cap N_S(Y)|\]
and thus equality holds everywhere above. In particular, $|N_S(X\phi)\cap N_S(Y)|=|N_S(X)\cap N_S(Y)|$. As $X\phi$ is fully $N_\F(Y)$-normalized, it follows therefore that $X$ is fully $N_\F(Y)$-normalized. 

\smallskip

Let $(\L,\Delta,S)$ be a subcentric locality over $\F$. The situation is now symmetric in $X$ and $Y$. Thus, it remains only to prove
\begin{equation}\label{E:bNbN}
 \bN_{\bN_\L(X)}(Y)=(N_\L(X)\cap N_\L(Y))|_{\Gamma}.
\end{equation}
Observe first that (Q1) and (Q2) hold with $\H=N_\L(X)\cap N_\L(Y)$ and with $XY$ in place of $X$; to see that (Q1) holds in this setting one needs to apply \eqref{E:NNFXYs}. So $(N_\L(X)\cap N_\L(Y))|_{\Gamma}$ is well-defined. Write $\Pi_0\colon \D_0\rightarrow \bN_\L(X)$ for the partial product on $\bN_\L(X)$, $\Pi_1\colon \D_1\rightarrow \bN_{\bN_\L(X)}(Y)$ for the partial product on $\bN_{\bN_\L(X)}(Y)$ and $\Pi_2\colon\D_2\rightarrow (N_\L(X)\cap N_\L(Y))|_{\Gamma}$ for the partial product on $(N_\L(X)\cap N_\L(Y))|_{\Gamma}$. Notice that the set $\bN_{\bN_\L(X)}(Y)$ corresponds to the words of length one in $\D_1$ and similarly $(N_\L(X)\cap N_\L(Y))|_{\Gamma}$ corresponds to the words of length one in $\D_2$. Hence, as $\Pi_1=\Pi|_{\D_1}$ and $\Pi_2=\Pi|_{\D_2}$, it is sufficient to prove $\D_1=\D_2$. Set $T:=N_S(X)$. If $w=(f_1,\dots,f_n)\in\W(\bN_\L(X))$, then it follows from \cite[Lemma~9.7(b)]{Henke:2015} that 
\[T_w:=\{x\in T\colon \exists x=x_0,x_1,\dots,x_n\in T\mbox{ such that }x_{i-1}\in\D_0(f_i)\mbox{ and }x_{i-1}^{f_i}=x_i\mbox{ for }i=1,2,\dots,n\}\]
equals $S_w\cap T$. Using this for words of length one, it follows in particular that 
\begin{equation}\label{E:NbNLXY}
N_{\bN_\L(X)}(Y)=N_\L(Y)\cap\bN_\L(X) 
\end{equation}
We can conclude now that 
\begin{eqnarray*}
 \D_1&=&\{w\in\W(N_{\bN_\L(X)}(Y))\colon T_w \cap N_T(Y)\in \Gamma\}\;\;\mbox{ (by Definition of $\Gamma$ and of $\bN_{\bN_\L(X)}(Y)$)}\\
&=& \{w\in\W(N_\L(Y)\cap \bN_\L(X))\colon S_w\cap T\cap N_S(Y)\in \Gamma\}\;\;\mbox{ (as $T_w=S_w\cap T$ and by \eqref{E:NbNLXY})}\\
&\subseteq & \{w\in\W(N_\L(X)\cap N_\L(Y))\colon S_w\cap T\cap N_S(Y)\in \Gamma\}\\
&=& \D_2.
\end{eqnarray*}
Observe now that $w\in\W(\bN_\L(X))$ for every $w\in \W(N_\L(X)\cap N_\L(Y))$ with $S_w\cap T\cap N_S(Y)\in \Gamma=N_{N_\F(X)}(Y)^s$. This is because for such $w$ we have $Y\leq S_w\cap T\cap N_S(Y)$ and so $S_w\cap T\cap N_S(Y)\in N_\F(X)^s$ by Lemma~\ref{L:NFXs}. This shows that the inclusion above is an equality as well. Hence $\D_1=\D_2$ and \eqref{E:bNbN} holds.
\end{proof}

\chapter{Internal central products of partial groups}\label{S:CentralProduct}

Internal central products of partial groups with two factors were defined and studied in \cite{Henke:2016}. We introduce here internal central products of partial groups with possibly more than two factors. This will be important in Chapters~\ref{S:Regular} and \ref{S:Components}. We warn the reader that the notion of an internal central product of two partial subgroups introduced here differs slightly from the definition given in \cite{Henke:2016}; see Remark~\ref{R:CentralProductDifference} below.

\smallskip

\textbf{Throughout this chapter let $\L$ be a partial group with product $\Pi\colon\D\rightarrow\L$.}

\begin{definition}\label{D:CentralProductPartialGroup}
Let $\L_1,\L_2,\dots,\L_k$ be partial subgroups of $\L$ where $k\geq 1$. We say that $\L$ is an \textit{(internal) central product} of $\L_1,\L_2,\cdots,\L_k$ and write $\L=\L_1*\L_2*\cdots *\L_k$ if the following conditions hold:
\begin{itemize}
 \item [($\P$)] We have $\L=\L_1\L_2\cdots \L_k$.
 \item [($\C 1$)] We have $\L_1\times\L_2\times\cdots\times\L_k\subseteq\D$.
 \item [($\C 2$)] Whenever we are given words $w_j=(f_{1j},f_{2j},\dots,f_{kj})\in \L_1\times\L_2\times\cdots\times\L_k$ for $j=1,\dots,n$, then setting $w=(\Pi(w_1),\Pi(w_2),\dots,\Pi(w_n))$ and $u_i:=(f_{i1},f_{i2},\dots,f_{in})\in\W(\L_i)$ for $i=1,\dots,k$, the following equivalence holds: 
\begin{eqnarray*}
w\in\D\Longleftrightarrow u_i\in\D \mbox{ for all }i=1,\dots,k.
\end{eqnarray*} 
Moreover, if $w\in\D$, then
\begin{eqnarray*}
\Pi(w)=\Pi(\Pi(u_1),\dots,\Pi(u_k)).
\end{eqnarray*}
\end{itemize}
We say that $\L_1,\dots,\L_k$ \emph{form an internal central product in $\L$} if ($\C 1$) and ($\C 2$) hold. 
\end{definition}

\begin{remark}\label{R:CentralProductMatrix}
The reader might want to think of the property ($\C 2$) as follows: Suppose we are given a $k\times n$ matrix
\begin{eqnarray*}
A= \begin{pmatrix}
  f_{11} & f_{12} & \hdots & f_{1n}\\
 f_{21} & f_{22} & \hdots & f_{2n}\\
\vdots & \vdots &     & \vdots \\
f_{k1} & f_{k2} & \hdots & f_{kn}
 \end{pmatrix}
\end{eqnarray*}
such that the $i$th row $u_i=(f_{i1},f_{i2},\dots,f_{in})$ is an element of $\W(\L_i)$ for all $i=1,\dots,k$. Write $w$ for the word of length $n$ whose $j$th entry equals the the product of the word written in the $j$th column of $A$ for all $j=1,\dots,n$.  Property ($\C 2$) says that, whenever we are in this situation, we have $w\in\D$ if and only if $u_i\in\D$ for all $i=1,\dots,k$. Moreover, if so, then
\[\Pi(w)=\Pi(\Pi(u_1),\dots,\Pi(u_k)).\]\qed
\end{remark}

\begin{remark}\label{R:CentralProductDifference}
If $\L$ is a central product of $\L_1$ and $\L_2$ in the sense of Definition~\ref{D:CentralProductPartialGroup}, then one easily observes that $\L$ is also a central product of $\L_1$ and $\L_2$ in the sense defined in \cite[Definition~6.1]{Henke:2016} (so in particular all the results proved in that paper apply). However, the converse might not hold in general. It turns out that the two notions coincide if there is a locality structure on each of the partial subgroups $\L_i$. We plan to provide the details of this a forthcoming paper with Valentina Grazian. The main reason why we choose to give a different definition here is that it allows us to prove Lemma~\ref{L:CentralProductAssociative} below. 
\end{remark}

We have the following lemma. By $\Sigma_k$ we denote the symmetric group on $k$ letters.

\begin{lemma}\label{L:CentralProductPartialSubgroup}
Let $\L_1,\dots,\L_k$ be partial subgroups of $\L$ which form a central product in $\L$. Then the following hold:
\begin{itemize}
 \item [(a)] Given $g_i\in\L_i$ for all $i=1,\dots,k$ and $\sigma\in\Sigma_k$, we have $(g_{1\sigma},g_{2\sigma},\dots,g_{k\sigma})\in\D$ and $\Pi(g_1,g_2,\dots,g_k)=\Pi(g_{1\sigma},g_{2\sigma},\dots,g_{k\sigma})$.
 \item [(b)] The product $\L_1\L_2\cdots\L_k$ is a partial subgroup and $\L_1\L_2\cdots\L_k=\L_1*\L_2 *\cdots *\L_k$.
 \item [(c)] If $g_i\in\L_i$ for $i=1,\dots,k$, then $\Pi(g_1,\dots,g_k)^{-1}=\Pi(g_1^{-1},\dots,g_k^{-1})$. 
\end{itemize}
\end{lemma}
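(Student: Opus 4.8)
The plan is to establish (a) first, then deduce (c) from (a), and finally (b) from both. For part (a) the idea is to feed the central product axiom ($\C 2$) a single, carefully chosen $k\times k$ array. Fix $g_i\in\L_i$ for $i=1,\dots,k$ and $\sigma\in\Sigma_k$, and set $f_{ij}:=g_{j\sigma}$ whenever $i=j\sigma$ and $f_{ij}:=\One$ otherwise. Then the $i$-th row is $u_i=(\One,\dots,\One,g_i,\One,\dots,\One)$ with $g_i$ in position $i\sigma^{-1}$; by Lemma~\ref{L:AddOnes} we have $u_i\in\D\cap\W(\L_i)$ and $\Pi(u_i)=g_i$. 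The $j$-th column is $w_j=(\One,\dots,\One,g_{j\sigma},\One,\dots,\One)\in\L_1\times\cdots\times\L_k$ with $\Pi(w_j)=g_{j\sigma}$, again by Lemma~\ref{L:AddOnes}. Since every row lies in $\D$, axiom ($\C 2$) yields $(g_{1\sigma},\dots,g_{k\sigma})=(\Pi(w_1),\dots,\Pi(w_k))\in\D$ and $\Pi(g_{1\sigma},\dots,g_{k\sigma})=\Pi(\Pi(u_1),\dots,\Pi(u_k))=\Pi(g_1,\dots,g_k)$, where the right-hand side is defined by ($\C 1$). This proves (a).

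For part (c) I would combine (a) with the standard identity $\Pi(w)^{-1}=\Pi(w^{-1})$, valid in any partial group (cf. \cite[Lemma~1.4]{Chermak:2015}). Writing $h_i:=g_i^{-1}\in\L_i$ and letting $\sigma\in\Sigma_k$ be the order-reversing permutation $i\mapsto k+1-i$, one gets $\Pi(g_1,\dots,g_k)^{-1}=\Pi(g_k^{-1},\dots,g_1^{-1})=\Pi(h_{1\sigma},\dots,h_{k\sigma})=\Pi(h_1,\dots,h_k)=\Pi(g_1^{-1},\dots,g_k^{-1})$, where the middle equality is part (a) applied to the $h_i$.

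For part (b), note first that ($\C 1$) gives $\L_1\L_2\cdots\L_k=\{\Pi(x_1,\dots,x_k):x_i\in\L_i\}$. Closure of this set under inversion is immediate from (c) since each $\L_i$ is closed under inversion. For closure under products, take $v=(y_1,\dots,y_n)\in\D\cap\W(\L_1\cdots\L_k)$, write each $y_j=\Pi(f_{1j},\dots,f_{kj})$ with $f_{ij}\in\L_i$, and apply ($\C 2$) to this array: since $v\in\D$, the rows $u_i$ lie in $\D$, so $z_i:=\Pi(u_i)\in\L_i$ (as $\L_i$ is a partial subgroup) and $\Pi(v)=\Pi(z_1,\dots,z_k)\in\L_1\cdots\L_k$. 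Hence $\L_1\cdots\L_k$ is a partial subgroup of $\L$, and using Lemma~\ref{L:AddOnes} one checks that each $\L_i$ is a partial subgroup of it. It then remains to verify ($\P$), ($\C 1$), ($\C 2$) with $\L_1\cdots\L_k$ in place of $\L$: ($\P$) is trivial, ($\C 1$) is inherited verbatim, and ($\C 2$) holds because the domain of the product on $\L_1\cdots\L_k$ is $\D\cap\W(\L_1\cdots\L_k)$ while every row $u_i$ already lies in $\W(\L_i)$ and every word $w=(\Pi(w_1),\dots,\Pi(w_n))$ built from columns already lies in $\W(\L_1\cdots\L_k)$. This yields $\L_1\cdots\L_k=\L_1*\cdots*\L_k$.

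The only genuinely substantive step is the choice of the ``permutation array'' in (a); everything else is bookkeeping with ($\C 1$), ($\C 2$) and Lemma~\ref{L:AddOnes}. The main thing to be careful about throughout is to confirm that every word to which $\Pi$ is applied actually lies in the relevant domain before asserting an equality of products --- in particular that $(g_1,\dots,g_k)\in\D$ (which is exactly ($\C 1$)) whenever one wishes to compare $\Pi(g_1,\dots,g_k)$ with a rearrangement.
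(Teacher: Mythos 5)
Your proof is correct, and the overall structure (prove (a), deduce (c), then (b)) matches the paper's. The interesting difference is in part~(a). The paper reduces to adjacent transpositions $\sigma=(l,l+1)$ and constructs a $k\times k$ ``ones'' matrix tailored to that transposition, applying axiom ($\C 2$) once per transposition. You instead build a single permutation-type array $f_{ij}=g_{j\sigma}$ for $i=j\sigma$ and $f_{ij}=\One$ otherwise, and apply ($\C 2$) once for an arbitrary $\sigma$. Your version is cleaner: the reduction to transpositions is actually a little delicate, because the transposition case as stated only applies to words lying in $\L_1\times\cdots\times\L_k$, whereas after one swap the intermediate word $(g_{1\tau},\dots,g_{k\tau})$ lies in $\L_{1\tau}\times\cdots\times\L_{k\tau}$; the naive chaining therefore does not literally invoke the base case as stated, and would require either a strengthened transposition lemma or exactly the permutation-matrix you construct. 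Your direct array sidesteps the issue entirely. Parts (c) and (b) are essentially identical to the paper's: (c) via $\Pi(u)^{-1}=\Pi(u^{-1})$ together with (a) for the reversal, and (b) via closure of $\L_1\cdots\L_k$ under inversion (from (c)) and products (from ($\C 2$)), followed by checking that ($\P$), ($\C 1$), ($\C 2$) are inherited, which is the same ``it is now easy to observe'' that the paper leaves to the reader. One small point to be pedantic about in (b): you should note that $\L_i\subseteq\L_1\cdots\L_k$ (via $g=\Pi(\One,\dots,g,\dots,\One)$), which is implicitly needed to make sense of ``$\L_i$ is a partial subgroup of $\L_1\cdots\L_k$'' and to check $\W(\L_i)\subseteq\W(\L_1\cdots\L_k)$ in the inheritance of ($\C 2$); you gesture at this but it is worth making explicit.
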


\begin{proof}
Let $g_i\in\L_i$ for $i=1,\dots,k$. As every element of $\Sigma_k$ is a product of transpositions of the form $(l,l+1)$ with $1\leq l<k$, for the proof of (a), we may assume without loss of generality that $\sigma=(l,l+1)$ for some $1\leq l<k$. So we need to show that 
\[w:=(g_1,\dots,g_{l-1},g_{l+1},g_l,g_{l+2},\dots,g_k)\] 
is an element of $\D$ and that $\Pi(w)=\Pi(g_1,\dots,g_k)$. Consider the $k\times k$ matrix
\begin{eqnarray*}
A= \begin{pmatrix}
  g_1 & \hdots & \One      &  \One & \One       & \One &\hdots & \One \\
  \vdots & \ddots & \vdots &  \vdots & \vdots   &  \vdots & & \vdots \\
 \One & \hdots & g_{l-1}   &  \One & \One       & \One &\hdots & \One \\
 \One & \hdots & \One      & \One & g_l         & \One & \hdots & \One\\
 \One & \hdots & \One      & g_{l+1} & \One     & \One & \hdots & \One \\
 \One &\hdots & \One       & \One & \One        & g_{l+2} & \hdots & \One\\
 \vdots & & \vdots         & \One & \One        & \vdots & \ddots & \vdots\\
 \One &\hdots & \One       & \vdots & \vdots    & \One & \hdots & g_k \\
 \end{pmatrix}
\end{eqnarray*}
Observe that the $i$th row $u_i$ of $A$ is an element of $\W(\L_i)$ for $i=1,\dots,k$. Moreover, by Lemma~\ref{L:AddOnes}(b), we have $u_i\in\D$ and $\Pi(u_i)=g_i$ for $i=1,\dots,k$. The same lemma yields also that $w$ is the word of length $k$ whose $j$th entry is the product of the word written into the $j$th column of $A$. Hence, property ($\C 2$) applied in the form of Remark~\ref{R:CentralProductMatrix} yields that $w\in\D$ and 
\[\Pi(w)=\Pi(\Pi(u_1),\cdots,\Pi(u_k))=\Pi(g_1,\dots,g_k).\]
This proves (a).

\smallskip

Set now $u=(g_1,\dots,g_k)$. By \cite[Lemma~1.4(f)]{Chermak:2015}, we have $u^{-1}=(g_k^{-1},\dots,g_1^{-1})\in\D$ and $f^{-1}=\Pi(u^{-1})$. As $\L_1,\dots,\L_k$ are partial subgroups of $\L$, we have $g_i^{-1}\in\L_i$ for $i=1,\dots,k$. Hence, using part (a), we can conclude that $f^{-1}=\Pi(g_1^{-1},\dots,g_k^{-1})$. This proves (c).

\smallskip

As any element $f\in \L_1\L_2\cdots\L_k$ is of the form $f=\Pi(g_1,\dots,g_k)$ where $(g_1,\dots,g_k)\in\L_1\times\L_2\times\cdots\times \L_k$, it follows from (c) that $\L_1\L_2\cdots\L_k$ is closed under inversion. Moreover, property ($\C 2$) yields that $\Pi(w)\in\L_1\L_2\cdots\L_k$ for all $w\in \W(\L_1\L_2\cdots\L_k)\cap\D$. So $\L_1\L_2\cdots\L_k$ is a partial subgroup of $\L$ and forms thus a partial group as well, whose product is the restriction of $\Pi$ to $\D\cap \W(\L_1\L_2\cdots\L_k)$. It is now easy to observe that $\L_1\L_2\cdots\L_k=\L_1*\L_2*\cdots *\L_k$. 
\end{proof}

If $\L_1,\dots,\L_k$ are partial subgroups of $\L$ and we write a term of the form $\L_1*\L_2*\cdots *\L_k$, then this indicates implicitly that $\L_1,\dots,\L_k$ form a central product in $\L$. In particular, writing $\L=(\L_1*\L_2*\cdots *\L_l)*(\L_{l+1}*\L_{l+2}*\cdots *\L_k)$ means that $\L_1,\dots,\L_l$ form a central product, that $\L_{l+1},\dots,\L_k$ form a central product, and that $\L$ is a central product of $\L_1\L_2\cdots \L_l$ and $\L_{l+1}\L_{l+2}\cdots \L_k$. We have the following lemma.

\begin{lemma}\label{L:CentralProductAssociative}
Let $\L_1,\dots,\L_k$ be partial subgroups of $\L$. The following two conditions are equivalent:
\begin{itemize}
 \item [(i)] $\L=\L_1*\L_2*\cdots *\L_k$;
 \item [(ii)] $\L=(\L_1*\L_2*\cdots *\L_l)*(\L_{l+1}*\L_{l+2}*\cdots *\L_k)$.
\end{itemize}
\end{lemma}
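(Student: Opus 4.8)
The plan is to reduce everything to the conditions $(\P)$, $(\C1)$, $(\C2)$ for the two sub-families $\L_1,\dots,\L_l$ and $\L_{l+1},\dots,\L_k$, combining the partial-group axioms — chiefly the associativity axiom $u\circ v\circ w\in\D\Rightarrow u\circ(\Pi(v))\circ w\in\D$ with equal products, and the subword axiom — with Lemma~\ref{L:AddOnes} to insert and delete copies of $\One$. I will use repeatedly that any partial subgroup contains $\Pi(\emptyset)=\One$ (so $\One\in\L_i$ for all $i$), that any all-$\One$ word lies in $\D$ with product $\One$, and that, by the convention preceding the statement, $(\mathrm{i})$ assumes $\L_1,\dots,\L_k$ is a central product while $(\mathrm{ii})$ assumes both $\M:=\L_1\cdots\L_l$ and $\N:=\L_{l+1}\cdots\L_k$ are central products of the respective sub-families, hence partial subgroups by Lemma~\ref{L:CentralProductPartialSubgroup}(b), with $\M=\L_1*\cdots*\L_l$ and $\N=\L_{l+1}*\cdots*\L_k$.

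For $(\mathrm{i})\Rightarrow(\mathrm{ii})$: first I would verify that each sub-family is a central product. For $\L_1,\dots,\L_l$, condition $(\C1)$ follows by padding a word in $\L_1\times\cdots\times\L_l$ with $\One$'s into $\L_1\times\cdots\times\L_k\subseteq\D$ and applying the subword axiom, and $(\C2)$ follows by the same padding: replace each column word by its $\One$-extension (which has the same product by Lemma~\ref{L:AddOnes}), apply $(\C2)$ of the full family, and observe that the extra all-$\One$ rows impose no membership condition and drop out of the product formula by Lemma~\ref{L:AddOnes}; the argument for $\L_{l+1},\dots,\L_k$ is identical. Next I would prove $\M\N=\L_1\cdots\L_k$: given $(g_1,\dots,g_k)\in\D$ with $g_i\in\L_i$, two applications of the associativity axiom contract $(g_1,\dots,g_l)$ to $m\in\M$ and then $(g_{l+1},\dots,g_k)$ to $n\in\N$ and show $(m,n)\in\D$ with $\Pi(m,n)=\Pi(g_1,\dots,g_k)$, while conversely every $m\in\M$, $n\in\N$ with $(m,n)\in\D$ arises this way once one knows $(g_1,\dots,g_k)\in\D$, which is $(\C1)$ of the full family. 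This yields both $(\P)$ for $\M*\N$ and, as the one-column specialisation of the same computation, $(\C1)$ for $\M,\N$. Finally $(\C2)$ for $\M,\N$: writing $m_j=\Pi(g_{1j},\dots,g_{lj})$ and $n_j=\Pi(g_{l+1,j},\dots,g_{kj})$, the associativity axiom gives $\Pi(m_j,n_j)=\Pi(g_{1j},\dots,g_{kj})$, after which $(\C2)$ of the full family and then $(\C2)$ of the two sub-families separately deliver the membership equivalence, and two more applications of the associativity axiom (legitimate since the relevant length-$k$ word lies in $\D$ by $(\C1)$ of the full family) reassemble the product as required.

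For $(\mathrm{ii})\Rightarrow(\mathrm{i})$: the one genuinely non-formal point — and the step I expect to be the main obstacle — is establishing $(\C1)$ for $\L_1,\dots,\L_k$, i.e. passing from a contracted pair $(m,n)$ to an expanded word $(g_1,\dots,g_k)$, which no partial-group axiom performs directly and which must be forced through $(\C2)$. I would do this by feeding into $(\C2)$ of $\M*\N$ the $k$ columns $(g_j,\One)$ for $j\le l$ and $(\One,g_j)$ for $j>l$: each such column has product $g_j$ (by Lemma~\ref{L:AddOnes}), the two resulting rows are $(g_1,\dots,g_l,\One,\dots,\One)\in\W(\M)$ and $(\One,\dots,\One,g_{l+1},\dots,g_k)\in\W(\N)$, and both lie in $\D$ because $(g_1,\dots,g_l),(g_{l+1},\dots,g_k)\in\D$ by $(\C1)$ of the sub-families together with Lemma~\ref{L:AddOnes}; hence $(\C2)$ gives $(g_1,\dots,g_k)\in\D$. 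With $(\C1)$ in hand, the equality $\L=\M\N=\L_1\cdots\L_k$, hence $(\P)$, follows exactly as in the forward direction, and $(\C2)$ for $\L_1,\dots,\L_k$ is obtained by setting $a_j=\Pi(f_{1j},\dots,f_{lj})\in\M$, $b_j=\Pi(f_{l+1,j},\dots,f_{kj})\in\N$, identifying $\Pi(w_j)=\Pi(a_j,b_j)$ via the associativity axiom, applying $(\C2)$ of $\M*\N$ followed by $(\C2)$ of each sub-family, and finally reassembling the product via $\Pi(\Pi(p_1,\dots,p_l),\Pi(p_{l+1},\dots,p_k))=\Pi(p_1,\dots,p_k)$, which now follows from two applications of the associativity axiom since $(p_1,\dots,p_k)=(\Pi(u_1),\dots,\Pi(u_k))\in\D$ by the $(\C1)$ just proved.
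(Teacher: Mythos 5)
Your proof is correct and follows essentially the same approach as the paper's: in both directions you verify the conditions $(\P)$, $(\C1)$, $(\C2)$ by padding with $\One$'s and applying $(\C2)$ of the appropriate families (using Lemma~\ref{L:AddOnes} to insert and strip the $\One$'s, and the third partial-group axiom to contract and re-expand products), with the key trick for $(\mathrm{ii})\Rightarrow(\mathrm{i})$'s $(\C1)$ being exactly the $2\times k$ matrix with rows $(g_1,\dots,g_l,\One,\dots,\One)$ and $(\One,\dots,\One,g_{l+1},\dots,g_k)$ that the paper also uses. The only difference is cosmetic: where the paper phrases the arguments in terms of explicit block matrices fed into Remark~\ref{R:CentralProductMatrix}, you invoke the associativity axiom more directly — the computations are the same.
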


\begin{proof}
Assume first that (i) holds. Using the axioms of a partial group, one sees that property ($\P$) implies $\L=(\L_1\L_2\cdots\L_l)(\L_{l+1}\L_{l+2}\cdots\L_k)$. Moreover, property ($\C 1$) implies that $\L_1\times \L_2\times \cdots \times \L_l$, $\L_{l+1}\times\L_{l+2}\times\cdots\times\L_k$ and $(\L_1\L_2\cdots\L_l)\times (\L_{l+1}\L_{l+2}\cdots \L_k)$ are subsets of $\D$. 

\smallskip

To show that $\L_1,\dots,\L_l$ form a central product, it remains only to show that ($\C 2$) holds with $l$ in place of $k$. Let $A$ be an $l\times n$-matrix whose $i$th row $u_i$ is a word in $\L_i$ for $i=1,\dots,l$. Write $w$ for the word of length $n$ whose $j$th entry equals the product of the word written into the $j$th row of $A$. Let $E$ be the $(k-l)\times n$ matrix all of whose entries are equal to $\One$, and form the $k\times n$ block matrix 
\begin{eqnarray*}
 A'=\begin{pmatrix}
     A\\
\hline
E
    \end{pmatrix}.
\end{eqnarray*}
Consistent with our previous notation, we write $u_i$ for the $i$th row of $A'$ for $i=1,\dots,k$. Then $u_i\in\W(\L_i)$ for $i=1,\dots,k$. Lemma~\ref{L:AddOnes} yields that $u_i\in\D$, $\Pi(u_i)=\One$ for $i=l+1,\dots,k$ and that $w$ is also the word of length $n$ whose $j$th entry equals the product of the word written into the $j$th column of $A'$ for $j=1,\dots,n$. Hence, by ($\C 2$) we have
\[w\in\D\Longleftrightarrow u_i\in\D\mbox{ for all }i=1,\dots,k\Longleftrightarrow u_i\in\D\mbox{ for all }i=1,\dots,l.\] 
Moreover, if $w\in\D$, then 
\[\Pi(w)=\Pi(\Pi(u_1),\dots,\Pi(u_k))=\Pi(\Pi(u_1),\dots,\Pi(u_l),\One,\dots,\One)=\Pi(\Pi(u_1),\dots,\Pi(u_l)),\]
where the latter equality uses again Lemma~\ref{L:AddOnes}. This shows that $\L_1,\dots,\L_l$ form a central product. A similar argument shows that $\L_{l+1},\dots,\L_k$ form a central product; given a $(k-l)\times n$ matrix $B$ whose $i$the row is in $\W(\L_{l+i})$, take $E'$ to be the $l\times n$ matrix all of whose entries are equal to $\One$ and form the block matrix
\begin{eqnarray*}
 B'=\begin{pmatrix}
  E'\\
\hline
B
 \end{pmatrix}
\end{eqnarray*}
to prove ($\C 2$) with $\L_{l+1},\dots,\L_k$ in place of $\L_1,\dots,\L_k$. 

\smallskip

It remains to show that $\L_1\L_2\cdots \L_l$ and $\L_{l+1}\L_{l+2}\cdots\L_k$ form a central product or more precisely, that ($\C 2$) holds with $k=2$ and $(\L_1\L_2\cdots\L_l,\L_{l+1}\L_{l+2}\cdots\L_k)$ in place of $(\L_1,\L_2)$. Let
\begin{eqnarray*}
 A=\begin{pmatrix}
    g_1 & g_2 & \cdots & g_n\\
    h_1 & h_2 & \cdots & h_n
   \end{pmatrix}
\end{eqnarray*}
be a $2\times n$ matrix whose first row $w_1:=(g_1,\dots,g_n)$ is a word in $\L_1\L_2\cdots\L_l$ and whose second row $w_2:=(h_1,h_2,\dots,h_n)$ is a word in $\L_{l+1}\L_{l+2}\cdots\L_k$. By definition of $\L_1\L_2\cdots\L_l$, we may pick then an $l\times n$-matrix $B$ whose $i$th row $u_i$ is in $W(\L_i)$ for $i=1,\dots,l$ and such that $g_j$ is the product of the word written into the $j$th column of $B$. Similarly, by definition of $\L_{l+1}\L_{l+2}\cdots\L_k$, we can find a $(k-l)\times n$-matrix $C$ whose $i$th colum $u_{l+i}$ is in $\W(\L_{l+i})$ for $i=1,\dots,k-l$ and such that $h_j$ is the product of the word written into the $j$th column of $C$. If we form the block matrix 
\begin{eqnarray*}
A'=\begin{pmatrix}
 B\\
\hline
 C
\end{pmatrix},
\end{eqnarray*}
then $A'$ is the $k\times n$ matrix such that $u_i\in\W(\L_i)$ is the $i$th row of $A'$. Set $w:=(g_1h_1,\dots,g_nh_n)$. Note that the $j$th entry of $w$ equals both the product of the word written into the $j$th column of $A$ and the product written into the $j$th column of $A'$ for $j=1,\dots,n$. As $\L_1,\dots,\L_k$ form a central product, we have $w\in\D$ if and only if $u_i\in\D$ for all $i=1,\dots,n$. Moreover, if so then 
\[\Pi(w)=\Pi(\Pi(u_1),\dots,\Pi(u_k)).\]
As we have seen above, $\L_1,\dots,\L_l$ form a central product. Thus, we have $w_1\in\D$ if and only if $u_i\in\D$ for $i=1,\dots,l$; if so then $\Pi(w_1)=\Pi(\Pi(u_1),\dots,\Pi(u_l))$. Similarly, as $\L_{l+1},\dots,\L_k$ form a central product, we have $w_2\in\D$ if and only if $u_i\in\D$ for $i=l+1,\dots,k$; if so, then $\Pi(w_2)=\Pi(\Pi(u_{l+1}),\dots,\Pi(u_k))$. Putting everything together, we get
\[w\in\D\Longleftrightarrow u_i\in\D\mbox{ for all }i=1,\dots,k\Longleftrightarrow w_1,w_2\in\D.\]
Moreover, if $w\in\D$, then 
\[\Pi(w)=\Pi(\Pi(u_1),\dots,\Pi(u_k))=\Pi(\Pi(u_1),\dots,\Pi(u_l))\Pi(\Pi(u_{l+1}),\dots,\Pi(u_k))=\Pi(w_1)\Pi(w_2).\]
This proves that $\L_1\cdots\L_l$ and $\L_{l+1}\cdots\L_k$ form a central product. So we have shown that (i) implies (ii). 

\smallskip

Assume now that (ii) holds. We show first that ($\C 1$) holds. Let $g_i\in\L_i$ for $i=1,\dots,k$. As $\L_1,\dots,\L_l$ form a central product and $\L_{l+1},\dots,\L_k$ form a central product, the words $(g_1,\dots,g_l)$ and $(g_{l+1},\dots,g_k)$ are in $\D$. Hence, by Lemma~\ref{L:AddOnes}, the words $u_1:=(g_1,\dots,g_l,\One,\dots,\One)$ and $u_2:=(\One,\dots,\One,g_{l+1},\dots,g_k)$ of length $k$ are in $\D$. We consider now the $2\times k$-matrix $X$ whose $i$th row is $u_i$ for $i=1,2$. Observe that $u_1\in\W(\L_1\cdots\L_l)\cap\D$, $u_2\in\W(\L_{l+1}\cdots\L_k)\cap\D$ and $w=(g_1,\dots,g_k)$ is the word of length $k$ whose $j$th entry is the product of the word written into the $j$th row of $X$. So by Remark~\ref{R:CentralProductMatrix}, we have $w\in\D$. This proves property ($\C 1$). As $\L=(\L_1\cdots\L_l)(\L_{l+1}\cdots\L_k)$, it follows from ($\C 1$) and the axioms of a partial group that ($\P$) holds as well. Thus, it remains to show ($\C 2$). Consider a $k\times n$-matrix $A$ whose $i$th row $u_i$ is an element of $\W(\L_i)$ for $i=1,\dots,k$. Write $A$ as a block matrix
\begin{eqnarray*}
A=\begin{pmatrix}
 B\\
\hline
 C
\end{pmatrix}
\end{eqnarray*}
 where $B$ is an $l\times n$ matrix and $C$ is a $(k-l)\times n$-matrix. For each $Y\in\{A,B,C\}$ write $w_Y$ for the word of length $n$ whose $j$th entry equals the product of the word written in the $j$th column of $Y$ for all $j=1,\dots,n$. As $\L_1,\dots,\L_l$ form a central product, we have $w_B\in\D$ if and only if $u_i\in\D$ for $i=1,\dots,l$. Moreover, if so, then $\Pi(w_B)=\Pi(\Pi(u_1),\dots,\Pi(u_l))$. Similarly, as $\L_{l+1},\dots,\L_k$ form a central product, we have $w_C\in\D$ if and only if $u_i\in\D$ for $i=l+1,\dots,k$. Furthermore, if so, then $\Pi(w_C)=\Pi(\Pi(u_{l+1}),\dots,\Pi(u_k))$. Writing $A'$ for the $2\times n$-matrix whose first row is $w_B$ and whose second row is $w_C$, observe furthermore that the $j$th entry of $w_A$ equals the product of the word written into the $j$th column of $A'$. As $w_B\in \W(\L_1\cdots\L_l)$, $w_C\in\W(\L_{l+1}\cdots\L_k)$ and $\L=(\L_1\cdots\L_l)*(\L_{l+1}\cdots\L_k)$, we can conclude that the following equivalences hold:
\begin{eqnarray*}
 w_A\in\D\Longleftrightarrow w_B,w_C\in\D\Longleftrightarrow u_i\in\D\mbox{ for }i=1,\dots,k.
\end{eqnarray*}
Moreover, if $w_A\in\D$, then $\Pi(w_A)=\Pi(w_B)\Pi(w_C)=\Pi(\Pi(u_1),\dots,\Pi(u_l))\Pi(\Pi(u_{l+1}),\dots,\Pi(u_k))$. By ($\C 1$) we have $(\Pi(u_1),\dots,\Pi(u_k))\in\D$. Thus, if $w_A\in\D$, using the axioms of a partial group, we can conclude that $\Pi(w_A)=\Pi(\Pi(u_1),\dots,\Pi(u_k))$. This proves that ($\C 2$) holds. So we have shown that (ii) implies (i).
\end{proof}

\begin{lemma}\label{L:PermuteCentralProduct}
Let $\L_1,\dots,\L_k$ be partial subgroups of $\L$ and $\sigma\in\Sigma_k$. If $\L_1,\dots,\L_k$ form a central product in $\L$, then $\L_{1\sigma},\dots,\L_{k\sigma}$ form a central product in $\L$ and $\L_1*\L_2*\cdots *\L_k=\L_{1\sigma}*\L_{2\sigma}*\cdots *\L_{k\sigma}$. In particular, if $\L=\L_1*\L_2*\cdots *\L_k$, then we have $\L=\L_{1\sigma}*\L_{2\sigma}*\cdots *\L_{k\sigma}$.
\end{lemma}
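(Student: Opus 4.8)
The plan is to verify directly that the permuted sequence $\L_{1\sigma},\dots,\L_{k\sigma}$ satisfies the conditions ($\C 1$) and ($\C 2$) of Definition~\ref{D:CentralProductPartialGroup}, and then — under the additional hypothesis $\L=\L_1*\L_2*\cdots*\L_k$ — that it also satisfies ($\P$). Throughout, the workhorse will be Lemma~\ref{L:CentralProductPartialSubgroup}(a), which lets us permute the entries of any word whose $i$th entry lies in $\L_i$ without changing its product (and while staying inside $\D$). Since a central product with factors $\L_i$ does not presuppose ($\P$), parts (a) and (b) of Lemma~\ref{L:CentralProductPartialSubgroup} are available under the hypothesis that $\L_1,\dots,\L_k$ merely form a central product.

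Condition ($\C 1$) for $\L_{1\sigma},\dots,\L_{k\sigma}$ is immediate from Lemma~\ref{L:CentralProductPartialSubgroup}(a): given $g_i\in\L_i$ for all $i$, that lemma gives $(g_{1\sigma},\dots,g_{k\sigma})\in\D$, which is exactly the assertion $\L_{1\sigma}\times\cdots\times\L_{k\sigma}\subseteq\D$. For ($\C 2$), suppose we are given words $w_j=(f_{1j},\dots,f_{kj})\in\L_{1\sigma}\times\cdots\times\L_{k\sigma}$ for $j=1,\dots,n$, so $f_{ij}\in\L_{i\sigma}$; set $w=(\Pi(w_1),\dots,\Pi(w_n))$ and $u_i=(f_{i1},\dots,f_{in})\in\W(\L_{i\sigma})$. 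The key step is to ``unpermute the rows'': put $\tilde u_i:=u_{i\sigma^{-1}}\in\W(\L_i)$ and $\tilde w_j:=(f_{1\sigma^{-1},j},\dots,f_{k\sigma^{-1},j})\in\L_1\times\cdots\times\L_k$. Applying Lemma~\ref{L:CentralProductPartialSubgroup}(a) to each $\tilde w_j$ (the permutation relating it to $w_j$ being $\sigma$) yields $\Pi(\tilde w_j)=\Pi(w_j)$, hence $w=(\Pi(\tilde w_1),\dots,\Pi(\tilde w_n))$. Now ($\C 2$) for the original sequence $\L_1,\dots,\L_k$, applied to the words $\tilde w_1,\dots,\tilde w_n$, has associated word of column-products equal to $w$ and associated rows $\tilde u_1,\dots,\tilde u_k$; since $\{\tilde u_1,\dots,\tilde u_k\}=\{u_1,\dots,u_k\}$ as a set, it gives $w\in\D$ if and only if $u_i\in\D$ for all $i$, and when $w\in\D$ it gives $\Pi(w)=\Pi(\Pi(\tilde u_1),\dots,\Pi(\tilde u_k))$. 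Finally, setting $h_i:=\Pi(\tilde u_i)=\Pi(u_{i\sigma^{-1}})\in\L_i$ and invoking Lemma~\ref{L:CentralProductPartialSubgroup}(a) once more (with $h_{l\sigma}=\Pi(u_l)$) rewrites this as $\Pi(w)=\Pi(\Pi(u_1),\dots,\Pi(u_k))$, which is precisely ($\C 2$) for $\L_{1\sigma},\dots,\L_{k\sigma}$.

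For the last assertion, assume in addition $\L=\L_1\L_2\cdots\L_k$. By ($\C 1$) we have $\L_1\L_2\cdots\L_k=\{\Pi(g_1,\dots,g_k):g_i\in\L_i\}$ and likewise $\L_{1\sigma}\cdots\L_{k\sigma}=\{\Pi(h_1,\dots,h_k):h_i\in\L_{i\sigma}\}$; Lemma~\ref{L:CentralProductPartialSubgroup}(a) shows each $\Pi(g_1,\dots,g_k)$ with $g_i\in\L_i$ equals $\Pi(g_{1\sigma},\dots,g_{k\sigma})$, so $\L_1\cdots\L_k\subseteq\L_{1\sigma}\cdots\L_{k\sigma}$, and the reverse inclusion follows by applying the same with $\sigma^{-1}$. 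Hence $\L=\L_{1\sigma}\cdots\L_{k\sigma}$, i.e. ($\P$) holds for the permuted sequence, giving $\L=\L_{1\sigma}*\L_{2\sigma}*\cdots*\L_{k\sigma}$. I do not expect any genuine obstacle here; the only thing to be careful about is the bookkeeping with $\sigma$ versus $\sigma^{-1}$ when re-indexing rows and when rearranging the final nested product, and the (trivial) observation that $\Pi(\tilde u_i)\in\L_i$ because each $\L_i$ is a partial subgroup.
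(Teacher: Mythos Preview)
Your proof is correct and takes a somewhat different route from the paper's. The paper first reduces to the case where $\sigma$ is an adjacent transposition $(l,l+1)$, then uses Lemma~\ref{L:CentralProductAssociative} repeatedly to reduce to $k=2$, and only then verifies $(\C1)$, $(\C2)$ and $(\P)$ for the swapped pair. You instead handle a general $\sigma$ and general $k$ in one pass, using only Lemma~\ref{L:CentralProductPartialSubgroup}(a) to ``unpermute'' the rows of the matrix and to rearrange the final nested product. Your approach is more direct and avoids the detour through associativity; the paper's approach trades this for a simpler base case but needs the heavier Lemma~\ref{L:CentralProductAssociative} as input.

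One small presentational point: the lemma asserts $\L_1*\cdots*\L_k=\L_{1\sigma}*\cdots*\L_{k\sigma}$ already in the general case (not only when this equals $\L$), whereas you only explicitly address the product equality in your final paragraph under the extra hypothesis $\L=\L_1\cdots\L_k$. However, your argument there actually proves $\L_1\cdots\L_k=\L_{1\sigma}\cdots\L_{k\sigma}$ without that hypothesis, and together with Lemma~\ref{L:CentralProductPartialSubgroup}(b) this gives the general equality of the two central products. So nothing is missing --- it would just be worth stating that intermediate conclusion separately.
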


\begin{proof}
As every element of $\Sigma_k$ is a product of transpositions of the form $(l,l+1)$ with $1\leq l<k$, we may assume without loss of generality that $\sigma=(l,l+1)$ for some $1\leq l<k$. Using Lemma~\ref{L:CentralProductAssociative} repeatedly, we may then reduce to the case that $k=2$. Suppose $\L_1$ and $\L_2$ form a central product. It follows from Lemma~\ref{L:CentralProductPartialSubgroup}(a) that ($\C 1$) holds with $k=2$ and $(\L_2,\L_1)$ in place of $(\L_1,\L_2)$ and that $\L=\L_1\L_2$ implies $\L=\L_2\L_1$. It remains thus to prove that ($\C 2$) holds for $(\L_2,\L_1)$. Let 
\begin{eqnarray*}
 A=
\begin{pmatrix}
 f_1 & f_2 & \hdots & f_n\\
 g_1 & g_2 & \hdots & g_n
\end{pmatrix}
\end{eqnarray*}
be a matrix such that the first row $u_1$ is in $\W(\L_2)$ and the second row $u_2$ is an element of $\W(\L_1)$. Set $w:=(f_1g_1,f_2g_2,\cdots,f_n g_n)$. By Lemma~\ref{L:CentralProductPartialSubgroup}(a), we have $w=(g_1f_1,g_2f_2,\dots,g_nf_n)$. Using that $\L=\L_1*\L_2$ and  applying Remark~\ref{R:CentralProductMatrix} to the matrix whose first row is $u_2$ and whose second row is $u_1$, we see that 
$w\in\D$ if and only $u_i\in\D$ for $i=1,2$. Moreover, if so then
\[\Pi(w)=\Pi(\Pi(u_2),\Pi(u_1)).\]
As $\L_1$ and $\L_2$ are partial subgroups, we have $\Pi(u_1)\in\L_2$ and $\Pi(u_2)\in\L_1$. Hence, Lemma~\ref{L:CentralProductPartialSubgroup}(a) allows us to conclude that $\Pi(w)=\Pi(\Pi(u_1),\Pi(u_2))$. This proves $\L=\L_2*\L_1$ as required.
\end{proof}

Lemma~\ref{L:PermuteCentralProduct} allows us to introduce the following definition.

\begin{definition}
Let $\mathfrak{N}$ be a set of partial subgroups of $\L$. Write $\L_1,\dots,\L_k$ for the pairwise distinct elements of $\mathfrak{N}$.
\begin{itemize}
\item We say that the elements of $\mathfrak{N}$ form a central product if $\L_1,\dots,\L_k$ form a central product. 
\item If $\L=\L_1*\L_2*\cdots *\L_k$, then $\L$ is called a central product of the elements of $\mathfrak{N}$.
\item If the elements of $\mathfrak{N}$ form a central product, then we set $\prod_{\N\in\mathfrak{N}}\N:=\L_1\L_2\cdots\L_k$. 
\end{itemize}
\end{definition}

The above definition will be important in Chapter~\ref{S:Components}. We also use it to formulate the next lemma.

\begin{lemma}\label{L:CentralProductsFactorsCentralize}
 Let $\mathfrak{N}_1$ and $\mathfrak{N}_2$ be disjoint sets of subgroups of $\L$. If the elements of $\mathfrak{N}_1\cup\mathfrak{N}_2$ form a central product, then the elements of $\mathfrak{N}_i$ form a central product for $i=1,2$. Moreover $\prod_{\N\in\mathfrak{N}_1}\N\subseteq C_\L(\prod_{\N\in\mathfrak{N}_2}\N)$. 
\end{lemma}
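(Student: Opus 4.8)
The plan is to reduce everything to a two‑factor central product inside a suitable partial subgroup. Write $\L_1,\dots,\L_l$ for the (pairwise distinct) elements of $\mathfrak{N}_1$ and $\L_{l+1},\dots,\L_k$ for those of $\mathfrak{N}_2$; since $\mathfrak{N}_1$ and $\mathfrak{N}_2$ are disjoint, $\L_1,\dots,\L_k$ are pairwise distinct, and by hypothesis together with Lemma~\ref{L:PermuteCentralProduct} they form a central product in $\L$. (The degenerate cases $\mathfrak{N}_1=\emptyset$ or $\mathfrak{N}_2=\emptyset$ are trivial, since then the relevant empty product is $\{\One\}$, which is centralized by everything and lies in every centralizer; so I assume $1\le l<k$.) First I would pass to $\M:=\L_1\L_2\cdots\L_k$, which by Lemma~\ref{L:CentralProductPartialSubgroup}(b) is a partial subgroup of $\L$ with $\M=\L_1*\L_2*\cdots*\L_k$. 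Since $\M$ is a partial subgroup, its product is the restriction of $\Pi$ to $\D\cap\W(\M)$, and the conditions $(\C 1)$ and $(\C 2)$ refer only to words built from elements of the given partial subgroups (all lying in $\M$) and to products of such words (again lying in $\M$). Hence, for partial subgroups contained in $\M$, "forming a central product in $\M$" is the same as "forming a central product in $\L$".

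Next I would apply Lemma~\ref{L:CentralProductAssociative} inside $\M$: from $\M=\L_1*\cdots*\L_k$ it gives
\[\M=(\L_1*\L_2*\cdots*\L_l)*(\L_{l+1}*\L_{l+2}*\cdots*\L_k).\]
In particular $\L_1,\dots,\L_l$ form a central product in $\M$, hence in $\L$, so the elements of $\mathfrak{N}_1$ form a central product; likewise for $\mathfrak{N}_2$. Writing $\M_1:=\prod_{\N\in\mathfrak{N}_1}\N=\L_1\cdots\L_l$ and $\M_2:=\prod_{\N\in\mathfrak{N}_2}\N=\L_{l+1}\cdots\L_k$, the displayed identity says precisely that $\M_1$ and $\M_2$ are partial subgroups forming a two‑factor central product with $\M=\M_1\M_2$.

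It then remains to show $\M_1\subseteq C_\L(\M_2)$. I would fix $x\in\M_1$ and $y\in\M_2$ and verify $y\in\D(x)$ and $y^x=y$. As $\M_1,\M_2$ are partial subgroups, the rows $u_1=(x^{-1},\One,x)\in\W(\M_1)$ and $u_2=(\One,y,\One)\in\W(\M_2)$ lie in $\D$ with $\Pi(u_1)=\One$ and $\Pi(u_2)=y$ (using Lemma~\ref{L:AddOnes} and the partial group axiom $\Pi(x^{-1},x)=\One$). Forming the $2\times 3$ matrix with rows $u_1,u_2$, Lemma~\ref{L:AddOnes} shows that the word of its column products is $w=(x^{-1},y,x)$. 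Applying $(\C 2)$ for the central product $\M_1*\M_2$ in the form of Remark~\ref{R:CentralProductMatrix}, we get $w=(x^{-1},y,x)\in\D$, so $y\in\D(x)$, and
\[y^x=\Pi(x^{-1},y,x)=\Pi(w)=\Pi(\Pi(u_1),\Pi(u_2))=\Pi(\One,y)=y.\]
Since $y\in\M_2$ was arbitrary this gives $x\in C_\L(\M_2)$, and since $x\in\M_1$ was arbitrary, $\M_1\subseteq C_\L(\M_2)$.

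I do not expect a genuine obstacle here; the proof is essentially bookkeeping. The only points needing a little care are the remark that being a central product is insensitive to whether it is read inside $\M$ or inside $\L$ (clear once one notes $\M$ is a partial subgroup), and getting the matrix entries and the resulting column‑product word $(x^{-1},y,x)$ right in the last step.
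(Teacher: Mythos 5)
Your proof is correct and follows essentially the same route as the paper: reduce via Lemma~\ref{L:CentralProductPartialSubgroup}(b) to the case $\L=\L_1*\cdots*\L_k$, invoke Lemma~\ref{L:CentralProductAssociative} to split into the two sub-products, and then apply Remark~\ref{R:CentralProductMatrix} to the $2\times 3$ matrix with rows $(x^{-1},\One,x)$ and $(\One,y,\One)$ to conclude $y^x=y$. The added remarks on the degenerate empty-set cases and on why "central product in $\M$" agrees with "central product in $\L$" are harmless clarifications, not a different argument.
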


\begin{proof}
By Lemma~\ref{L:CentralProductPartialSubgroup}(b), we may reduce to the case that $\L$ is a central product of the elements in $\mathfrak{N}_1\cup\mathfrak{N}_2$. It follows then from Lemma~\ref{L:CentralProductAssociative} that the elements of $\mathfrak{N}_i$ form a central product for $i=1,2$ and that $\L$ is a central product of $\prod_{\N\in\mathfrak{N}_1}\N$ and $\prod_{\N\in\mathfrak{N}_2}\N$. Set $\L_i:=\prod_{\N\in\mathfrak{N}_i}\N$ for $i=1,2$ so that $\L=\L_1*\L_2$. 

\smallskip

Let $f\in\L_1$ and $g\in\L_2$. We need to show that $w:=(f^{-1},g,f)\in\D$ and $g^f=\Pi(w)=g$. By the axioms of a partial group we have $(f^{-1},f)\in\D$ and $\Pi(f^{-1},f)=\One$. Hence, by Lemma~\ref{L:AddOnes}(a), we have $(f^{-1},\One,f)\in\D$ and $\Pi(f^{-1},\One,f)=\One$. Applying Remark~\ref{R:CentralProductMatrix} to the matrix
\begin{eqnarray*}
 A=\begin{pmatrix}
 f^{-1} & \One & f\\
\One & g & \One
 \end{pmatrix}
\end{eqnarray*}
and using Lemma~\ref{L:AddOnes}(b), one sees now that indeed $w\in\D$ and $\Pi(w)=\Pi(\Pi(f^{-1},\One,f),\Pi(\One,g,\One))=\Pi(\One,g)=g$.  
\end{proof}

\begin{lemma}
Let $\L_1,\dots,\L_k$ be partial subgroups of $\L$ such that $\L=\L_1*\L_2*\cdots *\L_k$. Then $\L_i\unlhd\L$ for all $i\in\{1,\dots,k\}$.
\end{lemma}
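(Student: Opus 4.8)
The plan is to verify directly, for each fixed index $i\in\{1,\dots,k\}$, that $\L_i$ satisfies the defining property of a partial normal subgroup. Since $\L_i$ is already a partial subgroup of $\L$, it remains only to show that $n^f\in\L_i$ whenever $f\in\L$ and $n\in\L_i\cap\D(f)$. The main tool will be the matrix reformulation of $(\C 2)$ recorded in Remark~\ref{R:CentralProductMatrix}.

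First I would use property $(\P)$ to write $f=\Pi(g_1,\dots,g_k)$ with $g_j\in\L_j$ for all $j$. Then I would form the $k\times 3$ matrix $A$ whose $j$th row is $(g_j^{-1},\One,g_j)$ for $j\neq i$ and whose $i$th row is $(g_i^{-1},n,g_i)$; each row $u_j$ lies in $\W(\L_j)$ since $\L_j$ is a partial subgroup and contains $\One=\Pi(\emptyset)$. Next I would identify the products of the three columns: the first column has product $\Pi(g_1^{-1},\dots,g_k^{-1})=f^{-1}$ by Lemma~\ref{L:CentralProductPartialSubgroup}(c); the second column has product $n$ by Lemma~\ref{L:AddOnes}(b); and the third column has product $\Pi(g_1,\dots,g_k)=f$. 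Hence the word $w$ associated to $A$ as in Remark~\ref{R:CentralProductMatrix} is precisely $(f^{-1},n,f)$, and $w\in\D$ because $n\in\D(f)$. Along the way one should note that the three column words lie in $\L_1\times\cdots\times\L_k$, so that $(\C 2)$ genuinely applies; this uses $(\C 1)$ together with closure of each $\L_j$ under inversion.

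Applying $(\C 2)$ in the form of Remark~\ref{R:CentralProductMatrix}, the fact that $w\in\D$ forces every row $u_j$ to lie in $\D$; in particular $(g_i^{-1},n,g_i)\in\D$, whence $\Pi(g_i^{-1},n,g_i)\in\L_i$ because $\L_i$ is a partial subgroup. Moreover $(\C 2)$ gives $\Pi(w)=\Pi(\Pi(u_1),\dots,\Pi(u_k))$, and for $j\neq i$ we have $\Pi(u_j)=\Pi(g_j^{-1},\One,g_j)=\Pi(g_j^{-1},g_j)=\One$ by Lemma~\ref{L:AddOnes}(a) and the partial group axioms. Using Lemma~\ref{L:AddOnes}(b) one last time,
\[
n^f=\Pi(f^{-1},n,f)=\Pi(w)=\Pi\bigl(\One,\dots,\One,\Pi(g_i^{-1},n,g_i),\One,\dots,\One\bigr)=\Pi(g_i^{-1},n,g_i)\in\L_i,
\]
so $\L_i\unlhd\L$; as $i$ was arbitrary, the lemma follows.

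The computations are all routine, so I do not expect a genuine obstacle. The only points that need a little care are checking that the three column words indeed belong to the direct product $\L_1\times\cdots\times\L_k$ (so that $(\C 2)$ is legitimately invoked) and correctly matching the first column product with $f^{-1}$ via Lemma~\ref{L:CentralProductPartialSubgroup}(c); there is also a harmless clash between the element $n\in\L_i$ and the role of $n$ as a length parameter in the statement of $(\C 2)$, which here is the fixed value $3$, and I would spell this out explicitly in the write-up.
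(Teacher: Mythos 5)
Your proof is correct and uses the same core mechanism as the paper: decompose $f=\Pi(g_1,\dots,g_k)$ with $g_j\in\L_j$, form the appropriate $k\times 3$ matrix, apply ($\C 2$) in the form of Remark~\ref{R:CentralProductMatrix}, and read off $n^f=\Pi(g_i^{-1},n,g_i)\in\L_i$. The only difference is structural: the paper first invokes Lemma~\ref{L:PermuteCentralProduct} to reduce to $i=1$ and Lemma~\ref{L:CentralProductAssociative} to reduce to $k=2$ before running essentially this calculation in the two-factor case, whereas you handle general $k$ and general $i$ directly, avoiding those auxiliary lemmas entirely; this is a slightly more self-contained write-up at the cost of a bit more bookkeeping in the matrix, and either version is acceptable.
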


\begin{proof}
By Lemma~\ref{L:PermuteCentralProduct}, it is sufficient to prove $\L_1\unlhd\L$. Moreover, by Lemma~\ref{L:CentralProductAssociative}, we may reduce to the case $k=2$. Let now $g\in\L_1$ and $f\in \L$ such that $g\in\D(f)$. Write $f=\Pi(f_1,f_2)$ where $f_i\in\L_i$ for $i=1,2$. By Lemma~\ref{L:CentralProductPartialSubgroup}(c), $f^{-1}=\Pi(f_1^{-1},f_2^{-1})$. Notice $w:=(f^{-1},g,f)\in\D$ and $g=\Pi(g,\One)$ by Lemma~\ref{L:AddOnes}. So ($\C 2$) yields $u_1:=(f_1^{-1},g,f_1)\in\D$ and $u_2:=(f_2^{-1},\One,f_2)\in\D$ with $g^f=\Pi(w)=\Pi(\Pi(u_1),\Pi(u_2))$. Combining the axioms of a partial group with Lemma~\ref{L:AddOnes}, we obtain $\Pi(u_2)=\Pi(f_2^{-1},f_2)=\One$ and $g^f=\Pi(\Pi(u_1),\One)=\Pi(u_1)\in\L_1$. This proves $\L_1\unlhd\L$ as required.
\end{proof}

\chapter{Commuting subsets and partial normal subgroups}

\textbf{Throughout this chapter, let $(\L,\Delta,S)$ be a locality.}

\smallskip

The results we prove in this chapter will play a role in the proof of Theorem~\ref{T:mainNperp}, which we give later on in Section~\ref{SS:proofmainNperp}. After proving some general lemmas concerning commuting subsets of localities, we develop a theory of commuting partial normal subgroups of $\L$ as far as this seems possible without the additional assumption that $(\L,\Delta,S)$ is a linking locality. At the end, in Section~\ref{SS:ProductsPartialNormal}, we show two general lemmas about products of partial normal subgroups of $\L$. The second one of these lemmas will be applied to commuting partial normal subgroups of linking localities in the proof of Theorem~\ref{T:mainNperp}.

\section{Commuting subsets}

In this section, we will study the following concepts, which were partly introduced already in the introduction.

\begin{definition}\label{D:Commute}
Let $\X,\Y\subseteq \L$. Then we say that $\X$ \emph{commutes with} $\Y$ (in $\L$) if for all $x\in\X$ and $y\in\Y$ the following implication holds:
\[(x,y)\in\D\Longrightarrow (y,x)\in\D\mbox{ and }xy=yx.\]
Moreover, we say that $\X$ \emph{commutes strongly with} $\Y$ (in $\L$) if for all $x\in\X$ and $y\in\Y$ the following implication holds:
\[(x,y)\in\D\Longrightarrow (y,x)\in\D,\;xy=yx\mbox{ and }S_{(x,y)}=S_{(y,x)}.\]
We say that $\X$ \emph{fixes $\Y$ under conjugation} if for all $x\in\X$ and $y\in\Y$ the following implication holds:
\[y\in\D(x)\Longrightarrow y^x=y.\]
\end{definition}

\begin{lemma}\label{L:PerpendicularPartialNormalPrepare}
Let $\X,\Y\subseteq\L$ such that $\X$ is closed under inversion. If $\X$ commutes with $\Y$, then $\X$ fixes $\Y$ under conjugation.  
\end{lemma}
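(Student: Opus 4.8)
The plan is to verify the implication in Definition~\ref{D:Commute} directly: assume $\X$ commutes with $\Y$ and is closed under inversion, fix $x\in\X$ and $y\in\Y$ with $y\in\D(x)$, and show $y^x=y$. The point of the hypothesis that $\X$ is closed under inversion is that it gives us access to $x^{-1}\in\X$, which is exactly what we need to ``unpack'' conjugation, since $y^x=\Pi(x^{-1},y,x)$ involves $x^{-1}$.

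First I would observe that $y\in\D(x)$ means $(x^{-1},y,x)\in\D$. By the axioms of a partial group (the first axiom in Definition~\ref{partial}), any subword of a word in $\D$ lies in $\D$; in particular $(x^{-1},y)\in\D$. Now $x^{-1}\in\X$ because $\X$ is closed under inversion, and $y\in\Y$, so the commuting hypothesis applied to the pair $(x^{-1},y)$ yields $(y,x^{-1})\in\D$ and $x^{-1}y=yx^{-1}$, i.e.\ $\Pi(x^{-1},y)=\Pi(y,x^{-1})$.

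Next I would compute $y^x=\Pi(x^{-1},y,x)$ using the associativity-type axiom (the third axiom in Definition~\ref{partial}): since $(x^{-1},y,x)\in\D$, we may replace the initial segment $(x^{-1},y)$ by its product, so $\Pi(x^{-1},y,x)=\Pi(\Pi(x^{-1},y),x)=\Pi(\Pi(y,x^{-1}),x)$. Then, going the other direction, $(y,x^{-1},x)\in\D$ (this follows from Lemma~\ref{L:Chermak14d} applied to $u=(y)$, $v=(x^{-1},x)$, or directly from the partial group axioms since $\Pi(y,x^{-1},x)$ rewrites the bracketing), and $\Pi(\Pi(y,x^{-1}),x)=\Pi(y,x^{-1},x)=\Pi(y,\Pi(x^{-1},x))=\Pi(y,\One)=y$, using that $\Pi(x^{-1},x)=\One$ (fourth axiom, applied to the length-one word $w=(x)$, whose inverse is $(x^{-1})$) and that $\Pi(y,\One)=y$ (e.g.\ Lemma~\ref{L:AddOnes}, or directly $\Pi(y,\Pi(\emptyset))=\Pi(y)=y$). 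Hence $y^x=y$, as desired.

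There is no real obstacle here; the only care needed is to make sure each rewriting step genuinely stays inside $\D$ before invoking the product axioms, and to correctly identify which pair to feed into the commuting hypothesis — namely $(x^{-1},y)$ rather than $(x,y)$, which is precisely why closure of $\X$ under inversion is assumed. I would write the proof as a short chain of equalities with the domain-membership justifications stated alongside.
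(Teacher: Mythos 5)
Your proof is correct and follows essentially the same route as the paper's: extract $(x^{-1},y)\in\D$ from $(x^{-1},y,x)\in\D$, apply the commuting hypothesis to the pair $(x^{-1},y)$ (which is exactly where closure under inversion is used), and then rewrite $\Pi(x^{-1},y,x)=\Pi(y,x^{-1},x)=y$. The only slip is in the cited instance of Lemma~\ref{L:Chermak14d}: to get $(y,x^{-1},x)\in\D$ you should take $u=(y)$ and $v=(x^{-1})$, so that $u\circ v=(y,x^{-1})\in\D$ and hence $u\circ v\circ v^{-1}=(y,x^{-1},x)\in\D$; taking $v=(x^{-1},x)$ as written would presuppose the very membership you are trying to establish.
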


\begin{proof}
Let $x\in\X$ and $y\in\Y$ with $y\in\D(x)$. The latter condition means $(x^{-1},y,x)\in\D$ and thus $(x^{-1},y)\in\D$. As $\X$ commutes with $\Y$ and $x^{-1}\in\X$ by assumption, we have $(y,x^{-1})\in\D$ and $x^{-1}y=yx^{-1}$. Using Lemma~\ref{L:Chermak14d} we can thus conclude that $(y,x^{-1},x)\in\D$. Hence, $y^x=\Pi(x^{-1},y,x)=\Pi(x^{-1}y,x)=\Pi(yx^{-1},x)=\Pi(y,x^{-1},x)=y$.   
\end{proof}

\begin{lemma}\label{L:XfixesRXcentralizesR}
Let $\X\subseteq\L$ and $R\leq S$. Assume that $R$ fixes $\X$ under conjugation or suppose that $R\unlhd S$ and $\X$ fixes $R$ under conjugation. Then $\X\subseteq C_\L(R)$ and $R\subseteq C_\L(\X)$. 
\end{lemma}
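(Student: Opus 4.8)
The plan is to reduce to proving a single inclusion and then split on the hypothesis. First note that for $x\in\X$ and $r\in R$ Lemma~\ref{L:CentralizerHelp} gives $x\in C_\L(r)\Longleftrightarrow r\in C_\L(x)$; since $C_\L(R)=\bigcap_{r\in R}C_\L(r)$ and $C_\L(\X)=\bigcap_{x\in\X}C_\L(x)$ straight from the definition, this already yields $\X\subseteq C_\L(R)\Longleftrightarrow R\subseteq C_\L(\X)$. So it suffices to prove either inclusion. Throughout we use $R\leq S\leq N_\L(S)$.

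Suppose first that $R$ fixes $\X$ under conjugation. For $r\in R$ and $x\in\X$, Lemma~\ref{L:NLSbiset} (applied with $r\in N_\L(S)$) shows $(r^{-1},x,r)\in\D$, i.e. $x\in\D(r)$; hence $x^r=x$ by hypothesis. Thus $r\in C_\L(\X)$ for every $r\in R$, so $R\subseteq C_\L(\X)$ and we are done by the first paragraph.

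Now suppose $R\unlhd S$ and $\X$ fixes $R$ under conjugation. The key step I would prove is that $R\leq S_x$ for every $x\in\X$; granting this, $R\subseteq\D(x)$, so $r^x=r$ for all $r\in R$ by hypothesis, whence $x\in C_\L(R)$ and therefore $\X\subseteq C_\L(R)$. To prove $R\leq S_x$, assume not. Then $RS_x$ is a $p$-subgroup of $S$ properly containing $S_x$, so $S_x<N_{RS_x}(S_x)$; a Dedekind argument gives $N_{RS_x}(S_x)=(R\cap N_{RS_x}(S_x))S_x$, so we may choose $\rho\in R\setminus S_x$ normalizing $S_x$. Then $\rho\in N_S(S_x)\leq N_\L(S_x)\subseteq\D(x)$ by Lemma~\ref{L:ConjugateNormalizer}(a), so $\rho^x=\rho$ by hypothesis. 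Put $Q:=\langle S_x,\rho\rangle$, a $p$-subgroup of $S$ with $S_x<Q\leq N_\L(S_x)$. Since $c_x\colon N_\L(S_x)\rightarrow N_\L(S_x^x)$ is an isomorphism of groups by Lemma~\ref{L:ConjugateNormalizer}(a), we get $Q^x=\langle S_x^x,\rho^x\rangle=\langle S_{x^{-1}},\rho\rangle\leq S$; as $Q\leq S$, $Q\subseteq\D(x)$ and $Q^x\leq S$, the definition of $S_x$ forces $Q\leq S_x$, contradicting $S_x<Q$.

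The one substantive point is this last claim: $\X$ fixing $R$ under conjugation only constrains the elements of $R$ that already lie in $\D(x)$, and one must bootstrap this to all of $R$. Using $R\unlhd S$ to extract a single element $\rho\in R\setminus S_x$ normalizing $S_x$, and then transporting $\langle S_x,\rho\rangle$ across the group isomorphism $c_x$ of Lemma~\ref{L:ConjugateNormalizer}(a) to see that it lands inside $S$, is what makes the bootstrap work; everything else is formal manipulation with Lemma~\ref{L:CentralizerHelp} and Lemma~\ref{L:NLSbiset}.
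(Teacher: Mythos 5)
Your proof is correct and takes essentially the same route as the paper: reduce to one inclusion via Lemma~\ref{L:CentralizerHelp}, handle the first case directly, and handle the second case via the normalizer-growth property of $p$-groups (which is why $R\unlhd S$ is needed) combined with the observation that elements of $N_R(S_x)$ fixed by $c_x$ land in $S_x$. However, the second case contains an unnecessary detour: once you have $\rho\in\D(x)$ and $\rho^x=\rho\in S$, the definition $S_x=\{s\in S\colon s\in\D(x),\,s^x\in S\}$ immediately gives $\rho\in S_x$, contradicting $\rho\notin S_x$; there is no need to form $Q=\langle S_x,\rho\rangle$ at all. This is exactly how the paper argues (it concludes $N_R(S_x)\leq S_x$ directly from the same three facts, then applies normalizer growth to $RS_x$). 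Moreover, the step $\langle S_x^x,\rho^x\rangle\leq S$ in your detour is asserted without justification: $Q^x$ is a priori generated inside $N_\L(S_x^x)$, not inside $S$, and one must first note that $S_x\unlhd Q$ transports to $S_x^x\unlhd Q^x$ under the group isomorphism $c_x$, so $\rho=\rho^x$ normalizes $S_x^x$ and hence $Q^x=S_x^x\langle\rho\rangle\leq N_S(S_x^x)\leq S$. So the argument is sound, but it reproves a special case of what you already know and glosses over the one point in the detour that actually needs care.
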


\begin{proof}
By Lemma~\ref{L:CentralizerHelp}, it is sufficient to show that $\X\subseteq C_\L(R)$ or $R\subseteq C_\L(\X)$. Moreover, for every  $x\in\X$ and $r\in R$ we have $(r^{-1},x,r)\in\D$ via $S_x^r$ and thus $x\in\D(r)$. Hence, if $R$ fixes $\X$ under conjugation, then $R\subseteq C_\L(\X)$. 

\smallskip 

So assume now that $R\unlhd S$ and $\X$ fixes $R$ under conjugation. Let $x\in\X$. As $\X$ fixes $R$ under conjugation, it is  enough to show that $R\subseteq \D(x)$. For $r\in N_R(S_x)$, we have $(x^{-1},r,x)\in\D$ via $S_x^x$. Hence, $r\in\D(x)$ and, since $\X$ fixes $R$ under conjugation, we have $r^x=r\in S$. So $r\in S_x$ proving $N_R(S_x)\leq S_x$ and thus $N_{R S_x}(S_x)=N_R(S_x)S_x=S_x$. Notice that $RS_x$ is a $p$-group, as $R\unlhd S$. Hence, it follows that $R S_x=S_x$, which implies $R\leq S_x\subseteq\D(x)$. This completes the proof. 
\end{proof}

\begin{lemma}\label{L:XCSXcommutesStrongly}
 Let $\X\subseteq\L$. Then $\X$ commutes strongly with $C_S(\X)$ and vice versa.
\end{lemma}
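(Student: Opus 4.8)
The plan is to unwind both halves of the claim to a single statement about pairs. By Definition~\ref{D:Commute} it suffices to prove that for every $x\in\X$ and every $s\in C_S(\X)$ the four assertions $(x,s)\in\D$, $(s,x)\in\D$, $xs=sx$ and $S_{(x,s)}=S_{(s,x)}$ hold; once this is established, both the statement that $\X$ commutes strongly with $C_S(\X)$ and its converse follow at once. So I would fix $x\in\X$ and $s\in C_S(\X)$. By definition of $C_S(\X)=S\cap C_\L(\X)$ we have $x\in\D(s)$ and $x^s=x$, i.e.\ $s\in C_\L(x)$; hence Lemma~\ref{L:CentralizerHelp} immediately yields $x\in C_\L(s)$, $(x,s),(s,x)\in\D$ and $xs=sx$. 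Thus the only thing left to prove is the equality $S_{(x,s)}=S_{(s,x)}$ of the associated subgroups of $S$.

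Next I would compute these two subgroups directly from the definition of $S_w$ for a word $w$, exploiting that $s\in S$: for elements of $S$, conjugation by $s$ is always defined, maps $S$ into itself, and is in fact an automorphism of the group $S$. Reading off the defining chains then gives $S_{(x,s)}=\{t\in S\colon t\in\D(x),\ t^x\in S\}=S_x$ and $S_{(s,x)}=\{t\in S\colon t^s\in S_x\}=(S_x)^{s^{-1}}$. Therefore the claim reduces to showing that $s$ normalizes $S_x$, i.e.\ $(S_x)^s=S_x$.

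For this last point I would use that $x^s=x$, so that $w:=(s^{-1},x,s)\in\D$ with $\Pi(w)=x$. By the basic properties of localities recalled after Lemma~\ref{L:ConjugateNormalizer} we then have $S_w\leq S_{\Pi(w)}=S_x$; on the other hand the same sort of chain-reading as above identifies $S_w=(S_x)^s$. Hence $(S_x)^s\leq S_x$, and since conjugation by $s$ is an automorphism of the finite group $S$ we get $|(S_x)^s|=|S_x|$, forcing $(S_x)^s=S_x$. Combining, $S_{(x,s)}=S_x=(S_x)^{s^{-1}}=S_{(s,x)}$, which finishes the argument. The only mildly delicate point is this normalization step (recognizing that the $\Delta$-structure gives $S_w\leq S_{\Pi(w)}$ and then squeezing by cardinality); everything else is bookkeeping with the locality axioms, with Lemma~\ref{L:CentralizerHelp} doing the heavy lifting for the product-level statements.
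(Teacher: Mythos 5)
Your argument is correct. It diverges from the paper's own proof mainly in the tools used for the subgroup equality. The paper's proof, after establishing $(s,x)\in\D$ and $xs=sx$ by a short computation with the word $(s,s^{-1},x,s)$, simply invokes Lemma~\ref{L:NLSbiset} (with $r=s\in S\leq N_\L(S)$) to read off $S_{(x,s)}=S_{xs}=S_{sx}=S_{(s,x)}$ in one line. You instead unwind the definitions of $S_{(x,s)}$ and $S_{(s,x)}$ directly, reduce the equality to the assertion that $s$ normalizes $S_x$, and then prove that normalization by combining $S_w\leq S_{\Pi(w)}$ (applied to $w=(s^{-1},x,s)$ with $\Pi(w)=x^s=x$) with the cardinality argument that $c_s|_S$ is an automorphism of the finite group $S$. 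This is essentially re-deriving the relevant instance of Lemma~\ref{L:NLSbiset} from scratch — in particular your key equality $(S_x)^s=S_x$ is exactly the content of $S_{f^r}=S_f^r$ with $f=x$, $r=s$, given $x^s=x$. So your route is more elementary and self-contained, at the cost of being longer; the paper's route is shorter because it leans on the bisets lemma. One small stylistic point: you prove the four assertions unconditionally for all $x\in\X$ and $s\in C_S(\X)$, which is slightly stronger than the implication required by Definition~\ref{D:Commute}, but this is harmless and in fact convenient since it delivers both directions of the lemma simultaneously — the paper's proof does the same implicitly. Your use of Lemma~\ref{L:CentralizerHelp} to dispatch $(s,x)\in\D$ and $xs=sx$ is also a clean shortcut that the paper doesn't take (it recomputes these facts directly), and it's a legitimate use of that lemma.
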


\begin{proof}
Let $x\in\X$ and $s\in C_S(\X)$. Then $(x,s)\in\D$ and $(s,s^{-1},x,s)\in\D$ via $S_x$. Using the axioms of a partial group, it follows $(s,x)=(s,x^s)\in\D$ and $xs=\Pi(s,s^{-1},x,s)=\Pi(s,x^s)=sx$, where the last equality uses $s\in C_S(\X)$. Using Lemma~\ref{L:NLSbiset}, it follows that $S_{(x,s)}=S_{xs}=S_{sx}=S_{(s,x)}$. 
\end{proof}

\begin{lemma}\label{L:CommuteStronglyProducts}
 Suppose $\X$ commutes strongly with $\Y$. Let $u,v\in\W(\L)$, $a\in\W(\X)$ and $b\in\W(\Y)$ with $w:=u\circ a\circ b\circ v\in\D$. Then $w':=u\circ b\circ a\circ v\in\D$, $S_w=S_{w'}$ and $\Pi(w)=\Pi(w')$. 
\end{lemma}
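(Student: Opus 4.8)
The plan is to reduce to the case $|a|=|b|=1$ and then interchange the letters of $b$ with those of $a$ one adjacent pair at a time.

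First I would treat the case $a=(x)$ with $x\in\X$ and $b=(y)$ with $y\in\Y$, so that $w=u\circ(x,y)\circ v$ and $w'=u\circ(y,x)\circ v$. Since $w\in\D$, the partial group axioms give $(x,y)\in\D$; as $\X$ commutes strongly with $\Y$, it follows that $(y,x)\in\D$, $xy=yx$ and $S_{(x,y)}=S_{(y,x)}$. Applying Lemma~\ref{L:ConjugateNormalizer}(b) to the words $(x,y)$ and $(y,x)$, each with starting object $P:=S_{(x,y)}=S_{(y,x)}$, we get $(s^x)^y=s^{xy}$ and $(s^y)^x=s^{yx}$ for every $s\in P$; since $xy$ and $yx$ are the same element of $\L$, the conjugation maps $c_{xy}$ and $c_{yx}$ coincide, so $(s^x)^y=(s^y)^x$ for all $s\in P$. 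Recall that, for a word $z=(g_1,\dots,g_N)\in\W(\L)$, the subgroup $S_z$ consists of those $s\in S$ admitting a chain $s=s_0,s_1,\dots,s_N$ of elements of $S$ with $s_{i-1}\in\D(g_i)$ and $s_{i-1}^{g_i}=s_i$ for all $i$. Given such a chain for $w$, the entry $s_m$ occurring just before the letter $x$ satisfies $s_m\in S$, $s_m\in\D(x)$, $s_m^x\in S$, $s_m^x\in\D(y)$ and $(s_m^x)^y\in S$, i.e.\ $s_m\in S_{(x,y)}=S_{(y,x)}$; hence we may replace the two steps ``conjugate by $x$, then by $y$'' by ``conjugate by $y$, then by $x$'', which is legitimate because $s_m\in S_{(y,x)}$ and lands on the same element $(s_m^y)^x=(s_m^x)^y$. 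This produces a chain witnessing $s\in S_{w'}$, and the same argument in reverse gives the opposite inclusion, so $S_w=S_{w'}$. Since $w\in\D$ we have $S_w\in\Delta$, hence $S_{w'}\in\Delta$ and therefore $w'\in\D$. Finally, the third partial group axiom applied to $w$ and to $w'$ gives $\Pi(w)=\Pi(u\circ(\Pi(x,y))\circ v)$ and $\Pi(w')=\Pi(u\circ(\Pi(y,x))\circ v)$, and these agree because $\Pi(x,y)=xy=yx=\Pi(y,x)$.

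For the general case, write $a=(x_1,\dots,x_k)$ and $b=(y_1,\dots,y_l)$. I would pass from $u\circ a\circ b\circ v$ to $u\circ b\circ a\circ v$ through a finite sequence of words, each obtained from the preceding one by interchanging a single adjacent pair consisting of a letter $x_i\in\X$ immediately followed by a letter $y_j\in\Y$: first move $y_1$ leftward past $x_k,\dots,x_1$, then move $y_2$ past $x_k,\dots,x_1$, and so on, a total of $kl$ steps, the final word being $u\circ b\circ a\circ v$ (the case $k=0$ or $l=0$ is trivial). Each such step is an instance of the case $|a|=|b|=1$ established above, the material to its left and right being absorbed into new words $U,V\in\W(\L)$; hence it preserves the three quantities in question, namely membership in $\D$, the subgroup $S_z$, and the product $\Pi(z)$. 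Since the initial word $w$ lies in $\D$, induction along the sequence shows that every word $z$ in it satisfies $z\in\D$, $S_z=S_w$ and $\Pi(z)=\Pi(w)$; applying this to the last word $w'$ gives the assertion.

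The delicate point is the equality $S_w=S_{w'}$ in the length-one case: one must check that the element of $S$ appearing in a conjugation chain for $w$ at the place where $x$ and $y$ are to be swapped actually lies in $S_{(x,y)}$, so that the extra clause $S_{(x,y)}=S_{(y,x)}$ in the definition of ``commutes strongly'' --- and with it the equality of conjugations $(s^x)^y=(s^y)^x$ on that common domain --- can be invoked. It is precisely this clause, absent from the weaker notion of ``commutes'', that makes the rewriting of chains reversible. The remaining verifications are routine manipulations with the partial group axioms.
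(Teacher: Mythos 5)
Your proof is correct and uses essentially the same idea as the paper's: both proofs reduce the assertion to the case of a single adjacent transposition $(x,y)\mapsto(y,x)$, where the extra clause $S_{(x,y)}=S_{(y,x)}$ in the definition of ``commutes strongly'' guarantees that a conjugation chain for $w$ can be rewritten into a chain for $w'$ landing on the same element, and then iterate. The only organizational difference is that you establish the length-one case with $u,v$ arbitrary and perform $kl$ explicit adjacent swaps, whereas the paper first reduces to $u=v=\emptyset$ and then runs a double induction on $|a\circ b|$ by peeling off the innermost pair and recursing on the two shorter words $a'\circ(y)$ and $a\circ b'$; the underlying content is identical.
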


\begin{proof}
By the axioms of partial groups, we have $a\circ b\in \D$. If $S_{a\circ b}=S_{b\circ a}$, $b\circ a\in\D$ and $\Pi(a\circ b)=\Pi(b\circ a)$, then $S_w=S_{u\circ a\circ b\circ v}=S_{u\circ b\circ a\circ v}=S_{w'}$, in particular $S_{w'}=S_w\in\Delta$ and $w'\in\D$. Moreover, $\Pi(w)=\Pi(u\circ \Pi(a\circ b)\circ v)=\Pi(u\circ \Pi(b\circ a)\circ v)=\Pi(w')$. Hence, we may assume without loss of generality that $u=v=\emptyset$, $w=a\circ b$ and $w'=b\circ a$.

\smallskip

We prove now the assertion by induction on the length $|a\circ b|$ of $a\circ b$. The claim is clear if $a=\emptyset$ or $b=\emptyset$. So we may assume that both $a$ and $b$ have length at least one. Write $a=a'\circ (x)$ and $b=(y)\circ b'$ with $a'\in\W(\X)$, $x\in\X$, $b'\in\W(\Y)$, $y\in\Y$. Notice that $a'\circ (x,y)\circ b'=a\circ b=w\in\D$ and thus $(x,y)\in\D$. As $\X$ commutes strongly with $\Y$, it follows that $(y,x)\in\D$, $S_{(x,y)}=S_{(y,x)}$ and $xy=yx$. Therefore,
\[\Delta\ni S_w=S_{a\circ b}=S_{a'\circ (x,y)\circ b'}=S_{a'\circ (y,x)\circ b'}\]
and so $a'\circ (y,x)\circ b'\in\D$. Moreover,
\[\Pi(w)=\Pi(a'\circ (x,y)\circ b')=\Pi(a'\circ (\Pi(x,y))\circ b')=\Pi(a'\circ (\Pi(y,x))\circ b')=\Pi(a'\circ (y,x)\circ b').\]
Notice that $a'\circ (y)\in\D$ as $a'\circ (y,x)\circ b'\in\D$. Since $|a'\circ (y)|\leq |a'\circ b|<|a\circ b|$, we may assume by induction that $S_{a'\circ (y)}=S_{(y)\circ a'}$, $(y)\circ a'\in\D$ and $\Pi(a'\circ (y))=\Pi((y)\circ a')$. Hence,
\[S_w=S_{a'\circ (y,x)\circ b'}=S_{(y)\circ a'\circ (x)\circ b'}=S_{(y)\circ a\circ b'}\]
and thus $(y)\circ a'\circ (x)\circ b'=(y)\circ a\circ b'\in\D$. In particular, $a\circ b'\in\D$ and so, again by induction, we may assume that $S_{a\circ b'}=S_{b'\circ a}$, $b'\circ a\in\D$ and $\Pi(a\circ b')=\Pi(b'\circ a)$. Hence,
\[S_w=S_{(y)\circ a\circ b'}=S_{(y)\circ b'\circ a}=S_{b\circ a}=S_{w'}\]
and so $w'=b\circ a=(y)\circ b'\circ a\in\D$. Moreover,
\begin{eqnarray*}
\Pi(w) &=& \Pi(a'\circ (y,x)\circ b')
= \Pi((\Pi(a'\circ (y)),x)\circ b')
= \Pi((\Pi((y)\circ a'),x)\circ b')\\
&=& \Pi((y)\circ a'\circ (x)\circ b')=\Pi((y)\circ a\circ b')
= \Pi(y,\Pi(a\circ b'))\\
&=& \Pi(y,\Pi(b'\circ a))
= \Pi((y)\circ b'\circ a)
= \Pi(b\circ a)=\Pi(w').
\end{eqnarray*}
\end{proof}

\section{Commuting partial normal subgroups}\label{SS:CommutingPartialNormal}

In this section we develop a theory of commuting partial normal subgroups of localities, basically proving the parts of Theorem~\ref{T:mainNperp}, which are stated to be true without the assumption that $(\L,\Delta,S)$ is a linking locality.

\smallskip

\textbf{Throughout this section let $\N\unlhd\L$ and $T:=\N\cap S$.}

\begin{lemma}\label{L:PerpendicularNormLTNormL}
Let $\Y\subseteq\L$. Suppose $\N$ commutes with $\Y$ or assume more generally that $\N$ fixes $\Y$ under conjugation. Then $\Y\subseteq C_\L(T)$. Moreover, $\Y\unlhd\L$ if and only if $\Y\unlhd N_\L(T)$. 
\end{lemma}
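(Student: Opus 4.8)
The first claim, that $\Y\subseteq C_\L(T)$, is the heart of the matter and I expect it to be the main obstacle. I would reduce to the case where $\N$ fixes $\Y$ under conjugation, since if $\N$ commutes with $\Y$ then $\N$ fixes $\Y$ under conjugation by Lemma~\ref{L:PerpendicularPartialNormalPrepare} (note $\N$ is closed under inversion, being a partial subgroup). So assume $\N$ fixes $\Y$ under conjugation. Fix $y\in\Y$ and aim to show $T\subseteq C_\L(y)$, equivalently $y\in C_\L(T)$ by Lemma~\ref{L:CentralizerHelp}. Since $T=\N\cap S\unlhd S$, I would try to invoke Lemma~\ref{L:XfixesRXcentralizesR} with $\X=\Y$ and $R=T$: the hypotheses are that $R\unlhd S$ and $\X$ fixes $R$ under conjugation. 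The only thing to check is that $\Y$ fixes $T$ under conjugation, i.e. that for $y\in\Y$ and $t\in T$ with $t\in\D(y)$ we have $t^y=t$. But $t\in T\subseteq\N$, and since $\N$ is a partial normal subgroup, $t^y\in\N$ whenever $t\in\D(y)$; moreover, $y\in\D(t^{-1})$ is automatic because $t^{-1}\in S$ normalizes $S_y$-overgroups, so $y\in\D(t)$, and then since $\N$ fixes $\Y$ under conjugation we get $y^{t}=y$. From $y^t=y$ and Lemma~\ref{L:CentralizerHelp} (applied to the pair $y,t$) we obtain $t^y=t$. Hence $\Y$ fixes $T$ under conjugation, and Lemma~\ref{L:XfixesRXcentralizesR} gives $\Y\subseteq C_\L(T)$ (and $T\subseteq C_\L(\Y)$) as desired.

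For the second claim, I would argue both directions. If $\Y\unlhd\L$, then $\Y\unlhd N_\L(T)$ is essentially immediate: $N_\L(T)$ is a partial subgroup of $\L$ by Lemma~\ref{L:CentralizerPartialNormal}, $\Y\subseteq C_\L(T)\subseteq N_\L(T)$ by the first part, and the defining property of a partial normal subgroup (closure under conjugation by elements of the ambient partial group) is inherited when one passes to a partial subgroup containing $\Y$. Conversely, suppose $\Y\unlhd N_\L(T)$. I would use the Frattini-type machinery available for localities: by the Frattini Lemma and Splitting Lemma (cf. the proof of Lemma~\ref{L:FTNWeaklyNormalNormal}, which cites \cite[Corollary~3.11, Lemma~3.12]{Chermak:2015}), any $f\in\L$ with $y\in\D(f)$ can, after suitable adjustment, be written so that conjugation by $f$ decomposes through an element of $\N$ and an element of $N_\L(T)$. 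Concretely, given $y\in\Y$ and $f\in\L$ with $y\in\D(f)$, write $f=ng$ with $n\in\N$, $g\in N_\L(T)$ and $S_f=S_{(n,g)}$; then $y^f=(y^n)^g$. Now $y^n=y$ because $\N$ fixes $\Y$ under conjugation (this needs $y\in\D(n)$, which follows from the decomposition of $S_f$), so $y^f=y^g\in\Y$ since $\Y\unlhd N_\L(T)$. Hence $\Y\unlhd\L$.

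The delicate points to nail down carefully are: (i) that the domain conditions $t\in\D(y)\Rightarrow y\in\D(t)$ and the analogous statements for the decomposition $f=ng$ hold, which is where one uses $S_{(n,g)}=S_f$ together with Lemma~\ref{L:NLSbiset}; and (ii) checking that the Frattini/Splitting decomposition indeed yields $y\in\D(n)$ so that $y^n$ is defined — here $O_p$-type containments and the fact that $y\in\D(f)$ forces $S_y$ to be large enough should do it. I anticipate that the first claim (via Lemma~\ref{L:XfixesRXcentralizesR}) is the substantive step and the second claim is then a fairly mechanical application of the splitting technology already used earlier in the chapter.
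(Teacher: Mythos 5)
Your proposal is correct and follows essentially the same route as the paper's proof: reduce via Lemma~\ref{L:PerpendicularPartialNormalPrepare} to the fixes-under-conjugation hypothesis, derive $\Y\subseteq C_\L(T)$ from Lemma~\ref{L:XfixesRXcentralizesR}, and prove the hard direction of the normality equivalence by the Frattini/Splitting decomposition $g=nf$ with $n\in\N$, $f\in N_\L(T)$ and $S_g=S_{(n,f)}$, using $y^n=y$ and $y^f\in\Y$.

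The one small detour is in the first claim. The paper invokes the \emph{first} alternative of Lemma~\ref{L:XfixesRXcentralizesR} (``$R$ fixes $\X$ under conjugation'') with $R=T$, $\X=\Y$: this hypothesis is satisfied immediately because $T\subseteq\N$ and $\N$ is assumed to fix $\Y$ under conjugation, so no further work is needed. You instead aim for the \emph{second} alternative (``$R\unlhd S$ and $\X$ fixes $R$'') and then manufacture the required property ``$\Y$ fixes $T$'' by an extra pass through Lemma~\ref{L:CentralizerHelp}. Your chain ($y\in\D(t)$ automatic for $t\in S$, so $y^t=y$, so $t\in C_\L(y)$, so $y\in C_\L(t)$) is valid, but note that at that point you have already established $\Y\subseteq C_\L(T)$ directly, so invoking Lemma~\ref{L:XfixesRXcentralizesR} afterward is redundant. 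Opting for the first alternative, as the paper does, saves this step and is worth internalizing as the cleaner pattern: in a locality, a subset of $S$ acts by conjugation on everything, so whenever a partial normal subgroup $\N$ fixes $\Y$, its Sylow intersection $T$ does so automatically.
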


\begin{proof}
By Lemma~\ref{L:PerpendicularPartialNormalPrepare}, we may just assume that $\N$ fixes $\Y$ under conjugation. Then $T\subseteq\N$ fixes $\Y$ under conjugation. Hence, by Lemma~\ref{L:XfixesRXcentralizesR}, we have $\Y\subseteq C_\L(T)\subseteq N_\L(T)$. In particular, if $\Y\unlhd\L$, then $\Y\unlhd N_\L(T)$. 

\smallskip

Suppose now that $\Y\unlhd N_\L(T)$. Let $g\in\L$ and $y\in\Y$ such that $(g^{-1},y,g)\in\D$. By the Frattini Lemma and the Splitting Lemma \cite[Corollary~3.11, Lemma~3.12]{Chermak:2015}, there exists $n\in\N$ and $f\in N_\L(T)$ such that $g=nf$ and $S_g=S_{(n,f)}$. Then $g^{-1}=f^{-1}n^{-1}$ and $S_{g^{-1}}=S_{(f^{-1},n^{-1})}$. So 
$S_{(f^{-1},n^{-1},y,n,f)}=S_{(g^{-1},y,g)}\in\Delta$ and thus $(f^{-1},n^{-1},y,n,f)\in\D$. Hence, $y\in\D(n)$ and $y^g=(y^n)^f$. As $\N$ fixes $\Y$ under conjugation, we have $y^n=y$. Since we assume that $\Y\unlhd N_\L(T)$, it follows $y^g=y^f\in \Y$, which proves $\Y\unlhd\L$. 
\end{proof}

We caution the reader that there are cases where a subset $\X$ commutes with a subset $\Y$ in $\L$, but $\Y$ does not commute with $\X$. Similarly, it can happen that $\X$ fixes $\Y$ under conjugation, but $\Y$ does not fix $\X$ under conjugation. However, we have the following lemma.

\begin{lemma}\label{L:XfixNNcommuteswithX}
Let $\X\subseteq\L$ such that $\X$ fixes $\N$ under conjugation. Then $\N$ commutes strongly with $\X$ and thus fixes $\X$ under conjugation. Moreover, $S_{(n,f)}=S_{(f,n)}=S_n\cap S_f$ for all $n\in\N$, $f\in\X$ with $(n,f)\in\D$. 
\end{lemma}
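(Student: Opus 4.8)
The plan is to work as far as possible with the two relations $n^f=n$ and $f^n=f$ that the hypothesis forces, rather than with lengthy word computations, and to reduce the ``strong'' part to the basic part via inverse words.

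First I would reduce to the case that $\X$ is closed under inversion. If $x\in\X$, $m\in\N$ and $m\in\D(x^{-1})$, then since $c_x\colon\D(x)\to\D(x^{-1})$ and $c_{x^{-1}}\colon\D(x^{-1})\to\D(x)$ are mutually inverse bijections, the element $m^{x^{-1}}$ lies in $\N\cap\D(x)$ (using $\N\unlhd\L$), so $(m^{x^{-1}})^x=m^{x^{-1}}$ by hypothesis; but also $(m^{x^{-1}})^x=m$, whence $m^{x^{-1}}=m$. Thus $\X\cup\X^{-1}$ still fixes $\N$ under conjugation, and since commuting strongly with a superset implies commuting strongly with a subset, it suffices to prove the lemma for $\X\cup\X^{-1}$; so from now on $x^{-1}\in\X$ whenever $x\in\X$. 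I would then record the standing facts: $T=\N\cap S$ is strongly closed, hence $T\unlhd S$, by \cite[Lemma~3.1(a)]{Chermak:2015}; as $T\subseteq\N$, the set $\X$ fixes $T$ under conjugation, so Lemma~\ref{L:XfixesRXcentralizesR} gives $\X\subseteq C_\L(T)$, in particular $T\leq S_f$ and $t^f=t$ for all $f\in\X$, $t\in T$; and finally, since $\N\unlhd\L$, for every $n\in\N$ and $s\in S_n$ one has $s^{-1}(s^n)=\Pi((n^{-1})^s,n)\in\N\cap S=T$, i.e.\ $s^n\in sT$.

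Now I would fix $n\in\N$, $f\in\X$ with $(n,f)\in\D$ and set $P:=S_{(n,f)}\in\Delta$, and show $P=S_n\cap S_f$. The inclusion $P\leq S_n$ is automatic; for $s\in P$ we have $s^n\in S_f$, and writing $s^n=st$ with $t\in T\leq S_f$ gives $s=(s^n)t^{-1}\in S_f$, so $P\leq S_n\cap S_f$. Conversely, if $s\in S_n\cap S_f$ then $s^n=st\in S_f$, whence $s\in S_{(n,f)}=P$. Since $P\leq S_f$, $P\leq S_n$ and $P^n\leq S_f$, the word $(f^{-1},n,f)$ lies in $\D$ via $P^f,\,P,\,P^n,\,P^{nf}$, so $n\in\D(f)$ and therefore $n^f=n$ because $\X$ fixes $\N$ under conjugation. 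Equivalently $f\in C_\L(n)$, so Lemma~\ref{L:CentralizerHelp} yields $f^n=f$, $(f,n)\in\D$ and $nf=fn$.

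It remains to prove $S_{(f,n)}=S_{(n,f)}$, which I expect to be the main obstacle. The idea is to transport the identity $S_{(\cdot,\cdot)}=S_{\cdot}\cap S_{\cdot}$ to the inverse pair. Since $(f,n)\in\D$, its inverse word $(n^{-1},f^{-1})$ lies in $\D$ with product $\Pi(f,n)^{-1}$, and $n^{-1}\in\N$, $f^{-1}\in\X$; applying the argument of the previous paragraph to this pair gives $S_{(n^{-1},f^{-1})}=S_{n^{-1}}\cap S_{f^{-1}}$, which by the standard identities $S_{w^{-1}}=S_w^{\Pi(w)}$ and $S_{f^{-1}}=S_f^f$ reads $S_{(f,n)}^{\,g}=S_n^{\,n}\cap S_f^{\,f}$, where $g:=nf=fn$. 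As $S_{(n,f)}=S_n\cap S_f$ with $\Pi(n,f)=\Pi(f,n)=g$, and both $S_{(n,f)}$ and $S_{(f,n)}$ lie in $S_g$ with conjugation by $g$ injective on $S_g$, it suffices to check $(S_n\cap S_f)^{\,g}=S_n^{\,n}\cap S_f^{\,f}$. Here $(S_n\cap S_f)^{\,g}=\bigl((S_n\cap S_f)^{\,n}\bigr)^{\,f}$ (legitimate since $S_n\cap S_f=S_{(n,f)}$) and $(S_n\cap S_f)^{\,n}\leq S_{n^{-1}}\cap S_f$; the crux is that conjugation by $f$ carries $S_{n^{-1}}\cap S_f$ into $S_{n^{-1}}$, which one deduces from $(n^{-1})^f=n^{-1}$ together with the commutator fact applied to $n^{-1}$ (each $x$ in that intersection has $x^{n^{-1}}\in xT\subseteq S_f$, so conjugation by $f$ and by $n^{-1}$ may be interchanged on $x$). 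Carrying out this interchange while bookkeeping all domains is the delicate point; the reverse inclusion is obtained symmetrically using $g^{-1}=n^{-1}f^{-1}=f^{-1}n^{-1}$ and applying the basic step once more to $(n^{-1},f^{-1})$.

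Combining the last two paragraphs, $\N$ commutes strongly with $\X$ and $S_{(n,f)}=S_{(f,n)}=S_n\cap S_f$ for all such $n,f$; and since $\N$ is closed under inversion and commutes with $\X$, Lemma~\ref{L:PerpendicularPartialNormalPrepare} shows that $\N$ fixes $\X$ under conjugation.
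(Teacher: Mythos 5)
Your proposal takes a genuinely different route from the paper. The paper's proof is short because it invokes \cite[Lemma~3.2(a)]{Chermak:2015} wholesale: that lemma, applied to $f\in N_\L(T)$ and $n\in\N$ with $(n,f)\in\D$, gives $(f,f^{-1},n,f)\in\D$, $nf=fn^f$ and $S_{(n,f)}=S_{(f,n^f)}=S_n\cap S_f$ in one stroke, after which $n^f=n$ finishes everything. You instead re-derive the relevant facts from scratch, and the early parts of your argument (reduction to $\X$ closed under inversion, $S_{(n,f)}=S_n\cap S_f$ via the commutator relation $s^n\in sT$, the chain $(f^{-1},n,f)\in\D$ via $P^f,P,P^n,P^{nf}$ forcing $n^f=n$, and the invocation of Lemma~\ref{L:CentralizerHelp}) are all correct.

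The one genuine gap is the ``crux'' you yourself flag: the assertion that conjugation by $f$ carries $S_{n^{-1}}\cap S_f$ into $S_{n^{-1}}$. You write that the interchange of $c_f$ and $c_{n^{-1}}$ ``while bookkeeping all domains is the delicate point'' and move on — but as written there is a near-circularity, since $(x^f)^{n^{-1}}=(x^{n^{-1}})^f$ would, at face value, require knowing $x\in S_{(f,n^{-1})}$, which is the kind of statement you are trying to prove. The clean way to close this step is via the normalizer isomorphism from Lemma~\ref{L:ConjugateNormalizer}(a): first note that $(n^{-1})^f=n^{-1}$, so Lemma~\ref{L:CentralizerHelp} gives $(n^{-1},f)\in\D$; hence $P:=S_{n^{-1}}\cap S_f=S_{(n^{-1},f)}\in\Delta$ by your first paragraph applied to $(n^{-1},f)$. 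Since $T\leq P$, the commutator relation $p^{n^{-1}}\in pT\leq P$ shows $n^{-1}\in N_\L(P)$. Now $P\leq S_f$, so $c_f\colon N_\L(P)\to N_\L(P^f)$ is a group isomorphism carrying $n^{-1}$ to $(n^{-1})^f=n^{-1}$; thus $n^{-1}\in N_\L(P^f)$, whence $P^f\subseteq\D(n^{-1})$ with $(P^f)^{n^{-1}}=P^f\leq S$, i.e.\ $P^f\leq S_{n^{-1}}$, which is exactly the containment you need. With that step filled in (and its mirror for $f^{-1}$, which uses your reduction to $\X$ closed under inversion) the proof is complete. The trade-off of your approach is length and a need for careful domain bookkeeping; what it buys is a self-contained argument that does not require the reader to look up Chermak's Lemma~3.2(a).
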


\begin{proof}
Suppose $\X$ fixes $\N$ under conjugation. Then in particular, $\X$ fixes $T=\N\cap S\unlhd S$ under conjugation. Hence, by Lemma~\ref{L:XfixesRXcentralizesR}, we have $\X\subseteq C_\L(T)=\C_T\subseteq\L_T$. Let now $n\in\N$ and $f\in\X$ with $(n,f)\in\D$. Then by \cite[Lemma~3.2(a)]{Chermak:2015}, we have $(f,f^{-1},n,f)\in\D$, $nf=fn^f$ and $S_{(n,f)}=S_{(f,n^f)}=S_f\cap S_n$. In particular, $n\in\D(f)$ and $(f,n^f)\in\D$. As $\X$ fixes $\N$ under conjugation, it follows that $n^f=n$, so $(f,n)\in\D$, $nf=fn$ and $S_{(n,f)}=S_{(f,n)}=S_f\cap S_n$. In particular, $\N$ commutes strongly with $\X$ and thus, by Lemma~\ref{L:PerpendicularPartialNormalPrepare}, $\N$ fixes $\X$ under conjugation.
\end{proof}

\begin{corollary}\label{C:MperpNNperpM}
Let $\M,\N\unlhd\L$. Then the following conditions are equivalent:
\begin{itemize}
 \item[(i)] $\M$ commutes with $\N$;
 \item[(ii)] $\N$ commutes with $\M$;
 \item[(iii)] $\M$ commutes strongly with $\N$;
 \item[(iv)] $\N$ commutes strongly with $\M$;
 \item[(v)] $\M$ fixes $\N$ under conjugation;
 \item[(vi)] $\N$ fixes $\M$ under conjugation.
\end{itemize}
Moreover, if one of these conditions holds, then $S_{(m,n)}=S_{(n,m)}=S_m\cap S_n$ for all $m\in\M$, $n\in\N$ with $(m,n)\in\D$.
\end{corollary}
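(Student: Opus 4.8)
The strategy is to prove a cycle of implications and deduce the rest. Since (iii) trivially implies (i) and (iv) trivially implies (ii), and since Lemma~\ref{L:PerpendicularPartialNormalPrepare} (applied to the inversion-closed sets $\M$ and $\N$) gives (i)$\Rightarrow$(vi) and (ii)$\Rightarrow$(v), it suffices to show (v)$\Rightarrow$(iv) and, symmetrically, (vi)$\Rightarrow$(iii). But this is exactly what Lemma~\ref{L:XfixNNcommuteswithX} delivers: applying that lemma with $\X=\M$ (which is a subset of $\L$ fixing $\N$ under conjugation under hypothesis (v)) yields that $\N$ commutes strongly with $\M$ — that is (iv) — and moreover gives $S_{(n,f)}=S_{(f,n)}=S_n\cap S_f$ for all $n\in\N$, $f\in\M$ with $(n,f)\in\D$. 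Swapping the roles of $\M$ and $\N$, hypothesis (vi) and Lemma~\ref{L:XfixNNcommuteswithX} with $\X=\N$ give that $\M$ commutes strongly with $\N$, i.e.\ (iii), together with $S_{(m,g)}=S_{(g,m)}=S_m\cap S_g$ for $m\in\M$, $g\in\N$ with $(m,g)\in\D$.

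Putting the implications together: (iii)$\Rightarrow$(i)$\Rightarrow$(vi)$\Rightarrow$(iii) closes one triangle, and (iv)$\Rightarrow$(ii)$\Rightarrow$(v)$\Rightarrow$(iv) closes the other; finally (v)$\Rightarrow$(iv) and (vi)$\Rightarrow$(iii) link the two triangles, so all six statements are equivalent. For the last sentence of the corollary, assume one (hence all) of the conditions holds. Then (iv) holds, so by the ``moreover'' part of Lemma~\ref{L:XfixNNcommuteswithX} (with $\X=\M$) we get $S_{(n,m)}=S_{(m,n)}=S_n\cap S_m$ for all $n\in\N$, $m\in\M$ with $(n,m)\in\D$; but we also have (iii), so the ``moreover'' part with $\X=\N$ gives the same identity for pairs $(m,n)\in\D$ with $m\in\M$, $n\in\N$. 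Since the conclusion $S_{(m,n)}=S_{(n,m)}=S_m\cap S_n$ is symmetric in its two forms, either application suffices, and we obtain the claim for all $m\in\M$, $n\in\N$ with $(m,n)\in\D$.

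I do not anticipate a genuine obstacle here: the corollary is essentially a bookkeeping consequence of Lemma~\ref{L:XfixNNcommuteswithX} and Lemma~\ref{L:PerpendicularPartialNormalPrepare}, exploiting the symmetry of the hypotheses in $\M$ and $\N$. The only mild subtlety is making sure the implication cycle actually connects all six conditions rather than splitting into two disjoint cycles — this is why the cross-implications (v)$\Rightarrow$(iv) and (vi)$\Rightarrow$(iii), supplied directly by Lemma~\ref{L:XfixNNcommuteswithX}, are the crucial bridges — and keeping track that the $S_w$-identities hold in both orderings of a product, which is automatic since both $\M\times\N\subseteq\D$-type pairs and $\N\times\M\subseteq\D$-type pairs get covered by the two applications of the lemma.
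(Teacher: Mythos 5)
Your overall strategy is the same as the paper's — Lemma~\ref{L:PerpendicularPartialNormalPrepare} and Lemma~\ref{L:XfixNNcommuteswithX} are exactly the right tools — but you have misread which implications the first lemma delivers, and the error is not cosmetic: it breaks the logical structure of the argument.

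Lemma~\ref{L:PerpendicularPartialNormalPrepare} keeps the subject fixed: ``if $\X$ commutes with $\Y$, then $\X$ fixes $\Y$ under conjugation''. Taking $\X=\M$, $\Y=\N$, this turns (i) ($\M$ commutes with $\N$) into (v) ($\M$ fixes $\N$ under conjugation), \emph{not} (vi). Likewise (ii) implies (vi), not (v). You claim (i)$\Rightarrow$(vi) and (ii)$\Rightarrow$(v), which would require the lemma to swap the two subsets, and it does not. With your (incorrect) arrows, the implications you list — (iii)$\Rightarrow$(i), (iv)$\Rightarrow$(ii), (i)$\Rightarrow$(vi), (ii)$\Rightarrow$(v), (v)$\Rightarrow$(iv), (vi)$\Rightarrow$(iii) — fall into two \emph{disjoint} cycles, namely $(iii)\Rightarrow(i)\Rightarrow(vi)\Rightarrow(iii)$ and $(iv)\Rightarrow(ii)\Rightarrow(v)\Rightarrow(iv)$. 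The arrows you describe as ``bridges'', $(v)\Rightarrow(iv)$ and $(vi)\Rightarrow(iii)$, already lie entirely inside the second and first triangles respectively, so nothing connects $\{(i),(iii),(vi)\}$ to $\{(ii),(iv),(v)\}$ and you have not shown all six conditions equivalent. The fix is to use the correct arrows $(i)\Rightarrow(v)$ and $(ii)\Rightarrow(vi)$; then the same six implications concatenate into the single cycle $(iii)\Rightarrow(i)\Rightarrow(v)\Rightarrow(iv)\Rightarrow(ii)\Rightarrow(vi)\Rightarrow(iii)$, which is precisely the paper's proof. Once that is repaired, your treatment of the ``moreover'' statement is fine: applying the moreover part of Lemma~\ref{L:XfixNNcommuteswithX} with $\X=\N$ and $\M$ in the role of the lemma's $\N$ (using condition (vi)) gives $S_{(m,n)}=S_{(n,m)}=S_m\cap S_n$ for all $(m,n)\in\D$ directly, which is what the paper does.
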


\begin{proof}
By Lemma~\ref{L:PerpendicularPartialNormalPrepare}, (i) implies (v) and (ii) implies (vi). Moreover, by Lemma~\ref{L:XfixNNcommuteswithX}, (v) implies (iv) and (vi) implies (iii). Trivially, (iii) implies (i) and (iv) implies (ii). Hence, all six conditions are equivalent. Lemma~\ref{L:XfixNNcommuteswithX} gives also that (vi) implies $S_{(m,n)}=S_{(n,m)}=S_m\cap S_n$ for all $m\in\M$ and $n\in\N$ with $(m,n)\in\D$.
\end{proof}

\begin{remark}\label{R:Implications}
For two partial normal subgroups $\M$ and $\N$ of $\L$ the following holds by Lemma~\ref{L:CentralizerHelp} and Lemma~\ref{L:CentralProductsFactorsCentralize}.
\begin{eqnarray*}
\mbox{$\M$ and $\N$ form a central product}&\Longrightarrow &\M\subseteq C_\L(\N)\\
&\Longleftrightarrow & \N\subseteq C_\L(\M)\\
&\Longrightarrow & \mbox{conditions (i)-(vi) of Corollary~\ref{C:MperpNNperpM} hold.}
\end{eqnarray*}
We will introduce regular localities in Chapter~\ref{S:Regular} and show in Lemma~\ref{L:PerpendicularPartialNormalCentralProduct} that all the implications above are equivalences if $(\L,\Delta,S)$ is regular.  
\end{remark}

\begin{lemma}\label{L:ProductPartialNormalCommutes}
Let $\M,\N\unlhd\L$ and $\Y\subseteq\L$ such that $\M$ and $\N$ both commute strongly with $\Y$. Then $\M\N$ commutes strongly with $\Y$. In particular, there is a (with respect to inclusion) largest partial normal subgroup of $\L$ that commutes strongly with $\Y$.
\end{lemma}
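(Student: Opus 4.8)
The plan is to fix $f\in\M\N$ and $y\in\Y$ with $(f,y)\in\D$ and show $(y,f)\in\D$, $fy=yf$ and $S_{(f,y)}=S_{(y,f)}$; the final sentence will then follow from a short finiteness argument. First I would apply Theorem~\ref{T:ProductsPartialNormal} to write $f=mn$ with $m\in\M$, $n\in\N$, $(m,n)\in\D$ and, crucially, $S_f=S_{(m,n)}$. The first key point is the identity $S_{(f,y)}=S_{(m,n,y)}$ (and symmetrically $S_{(y,f)}=S_{(y,m,n)}$): since $S_f=S_{(m,n)}$, the composition-of-conjugation-maps statement following Lemma~\ref{L:ConjugateNormalizer} gives $s^f=(s^m)^n$ for every $s\in S_f$, and chasing the defining chains of the subgroups $S_w$ then yields the equality. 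Because $(\L,\Delta,S)$ is a locality, $S_{(f,y)}\in\Delta$ forces $S_{(m,n,y)}\in\Delta$, hence $(m,n,y)\in\D$.

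Next I would move $y$ to the front of the word $(m,n,y)$ by two applications of Lemma~\ref{L:CommuteStronglyProducts}. Since $\N$ commutes strongly with $\Y$, applying that lemma to $u\circ a\circ b\circ v=(m)\circ(n)\circ(y)\circ\emptyset$ gives $(m,y,n)\in\D$ with the same associated subgroup and the same product; since $\M$ commutes strongly with $\Y$, applying it to $\emptyset\circ(m)\circ(y)\circ(n)$ gives $(y,m,n)\in\D$, again with the same associated subgroup and product. Thus $(y,m,n)\in\D$, $S_{(y,m,n)}=S_{(m,n,y)}$ and $\Pi(y,m,n)=\Pi(m,n,y)$. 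Applying the partial-group axiom $\Pi(u\circ v\circ w)=\Pi(u\circ(\Pi(v))\circ w)$ with $v=(m,n)$ yields $(y,f)\in\D$, $\Pi(m,n,y)=fy$ and $\Pi(y,m,n)=yf$, so $fy=yf$; and $S_{(f,y)}=S_{(m,n,y)}=S_{(y,m,n)}=S_{(y,f)}$ completes the verification that $\M\N$ commutes strongly with $\Y$.

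For the last statement, the set $\mathcal{S}$ of all partial normal subgroups of $\L$ that commute strongly with $\Y$ is non-empty (it contains $\{\One\}$) and finite. Forming the product of all members of $\mathcal{S}$ two at a time is legitimate because $\K_1\K_2=\K_2\K_1$ for partial normal subgroups, and each such product is again a partial normal subgroup by Theorem~\ref{T:ProductsPartialNormal} and commutes strongly with $\Y$ by the first part of the present lemma; since $\K=\K\{\One\}\subseteq\K\K'$ and products may be reordered, this total product contains every member of $\mathcal{S}$ and is therefore the (unique) largest element of $\mathcal{S}$.

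The step I expect to require the most care is the identity $S_{(f,y)}=S_{(m,n,y)}$: this is precisely the point where one needs the sharper output $S_f=S_{(m,n)}$ of Theorem~\ref{T:ProductsPartialNormal} rather than merely $f=mn$, since otherwise one only has $S_{(m,n)}\le S_f$ and an element of $S_{(f,y)}$ need not lie in $S_{(m,n,y)}$. Everything else is a routine, if slightly tedious, bookkeeping exercise with the partial-group axioms and Lemmas~\ref{L:CommuteStronglyProducts} and~\ref{L:ConjugateNormalizer}.
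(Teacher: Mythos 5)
Your proof is correct and follows essentially the same route as the paper: decompose $f=mn$ via Theorem~\ref{T:ProductsPartialNormal} so that $S_f=S_{(m,n)}$, observe $S_{(f,y)}=S_{(m,n,y)}$, and use Lemma~\ref{L:CommuteStronglyProducts} to move $y$ past $m$ and $n$. The only cosmetic difference is that the paper applies Lemma~\ref{L:CommuteStronglyProducts} once with the set $\{m,n\}$ (which commutes strongly with $\Y$ since $\M$ and $\N$ both do) and $a=(m,n)$, whereas you apply it twice moving $y$ one position at a time; both are fine, and your explicit spelling-out of the final "in particular" is a harmless elaboration of what the paper leaves implicit.
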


\begin{proof}
Let $x\in\M\N$ and $y\in\Y$ with $(x,y)\in\D$. By Theorem~\ref{T:ProductsPartialNormal}, $\M\N\unlhd\L$ and there exist $m\in\M$ and $n\in\N$ such that $(m,n)\in\D$, $x=mn$ and $S_x=S_{(m,n)}$. As $S_{(m,n,y)}=S_{(x,y)}\in\Delta$, it follows $(m,n,y)\in\D$. As $\M$ and $\N$ commute strongly with $\Y$, the subset $\{m,n\}$ commutes strongly with $\Y$. Hence, by Lemma~\ref{L:CommuteStronglyProducts}, we have $S_{(m,n,y)}=S_{(y,m,n)}$, $(y,m,n)\in\D$ and $\Pi(m,n,y)=\Pi(y,m,n)$. Since $S_x=S_{(m,n)}$, this implies  $S_{(x,y)}=S_{(m,n,y)}=S_{(y,m,n)}=S_{(y,x)}$ and $(y,x)\in\D$. By the axioms of a partial group, $xy=\Pi(m,n,y)=\Pi(y,m,n)=yx$. As $x$ and $y$ were arbitrary, this shows that $\M\N$ commutes strongly with $\Y$ as required.   
\end{proof}

\begin{corollary}\label{C:NperpExistence}
With respect to inclusion there is a largest partial normal subgroup of $\L$ which commutes with $\N$. 
\end{corollary}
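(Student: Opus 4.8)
The statement asserts the existence of a largest partial normal subgroup of $\L$ commuting with $\N$. My plan is to reduce this to the already-established Lemma~\ref{L:ProductPartialNormalCommutes}, which gives a largest partial normal subgroup commuting \emph{strongly} with a fixed subset $\Y$. The bridge between ``commutes'' and ``commutes strongly'' is Corollary~\ref{C:MperpNNperpM}: if $\M$ and $\N$ are both partial normal in $\L$, then $\M$ commutes with $\N$ if and only if $\M$ commutes strongly with $\N$ (indeed conditions (i) and (iii) there are equivalent). So the two notions coincide on the class of partial normal subgroups, and it suffices to produce the largest partial normal subgroup commuting strongly with $\N$.

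First I would set $\Y:=\N$ in Lemma~\ref{L:ProductPartialNormalCommutes}. That lemma is stated for an arbitrary subset $\Y\subseteq\L$ and asserts that the set of partial normal subgroups of $\L$ commuting strongly with $\Y$ is closed under the product operation $\M_1,\M_2\mapsto\M_1\M_2$ (which is again partial normal by Theorem~\ref{T:ProductsPartialNormal}); since $\L$ is finite, taking the product of all such partial normal subgroups yields a unique largest one, call it $\N^\perp$. Then I would invoke Corollary~\ref{C:MperpNNperpM} to rephrase: a partial normal subgroup $\M$ of $\L$ commutes strongly with $\N$ precisely when it commutes with $\N$. Hence $\N^\perp$ is exactly the largest partial normal subgroup of $\L$ commuting with $\N$, which is the claim.

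There is essentially no obstacle here, since all the substantive work has been done in Lemma~\ref{L:ProductPartialNormalCommutes} (via Theorem~\ref{T:ProductsPartialNormal} and Lemma~\ref{L:CommuteStronglyProducts}) and Corollary~\ref{C:MperpNNperpM}. The only point requiring a word of care is that Lemma~\ref{L:ProductPartialNormalCommutes} already packages the existence of the largest element in its final sentence, so the proof is a one-line deduction: apply that lemma with $\Y=\N$ and translate ``strongly'' away using Corollary~\ref{C:MperpNNperpM}. Concretely the proof reads: By Corollary~\ref{C:MperpNNperpM}, a partial normal subgroup of $\L$ commutes with $\N$ if and only if it commutes strongly with $\N$. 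Hence the assertion follows from Lemma~\ref{L:ProductPartialNormalCommutes} applied with $\Y=\N$.
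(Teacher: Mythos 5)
Your proposal is correct and coincides with the paper's own proof: both invoke Corollary~\ref{C:MperpNNperpM} to replace ``commutes'' with ``commutes strongly'' on partial normal subgroups, and then apply Lemma~\ref{L:ProductPartialNormalCommutes} with $\Y=\N$. Nothing further to add.
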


\begin{proof}
By Corollary~\ref{C:MperpNNperpM}, a partial normal subgroup commutes with $\N$ if and only if it commutes strongly with $\N$. Hence, the assertion follows from Lemma~\ref{L:ProductPartialNormalCommutes} applied with $\N$ in the role of $\Y$. 
\end{proof}

\begin{notation}\label{N:NperpL}
By $\N^\perp$ we denote the largest partial normal subgroup of $\L$ which commutes with $\N$. Notice that $\N^\perp$ exists by Corollary~\ref{C:NperpExistence}. If we want stress the dependence of $\N^\perp$ on $\L$, we write $\N^\perp_\L$ for $\N^\perp$. 
\end{notation}

\begin{cor}\label{C:NinMperp}
Let $\M$ and $\N$ be partial normal subgroups of $\L$. Then $\M\subseteq\N^\perp$ if and only if $\N\subseteq\M^\perp$.
\end{cor}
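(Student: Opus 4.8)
The plan is to deduce this directly from the characterisation of commuting partial normal subgroups established in Corollary~\ref{C:MperpNNperpM}, together with the defining maximality property of $\N^\perp$ recorded in Notation~\ref{N:NperpL}. Since the asserted equivalence is symmetric in $\M$ and $\N$, it will be enough to prove one implication, say that $\M\subseteq\N^\perp$ implies $\N\subseteq\M^\perp$.

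First I would observe that the relation ``$\X$ commutes with $\Y$'' from Definition~\ref{D:Commute} is monotone in $\X$: if $\X'\subseteq\X$ and $\X$ commutes with $\Y$, then $\X'$ commutes with $\Y$, simply because the defining implication is quantified over elements. Applying this with $\X=\N^\perp$, $\X'=\M$ and $\Y=\N$, the hypothesis $\M\subseteq\N^\perp$ together with the fact that $\N^\perp$ commutes with $\N$ (which holds by the very definition of $\N^\perp$) yields that $\M$ commutes with $\N$.

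Next I would invoke Corollary~\ref{C:MperpNNperpM}: since $\M$ and $\N$ are partial normal subgroups of $\L$ and $\M$ commutes with $\N$, condition (ii) of that corollary gives that $\N$ commutes with $\M$. Thus $\N$ is a partial normal subgroup of $\L$ that commutes with $\M$, and by the maximality property defining $\M^\perp$ in Notation~\ref{N:NperpL} we conclude $\N\subseteq\M^\perp$. The reverse implication follows by interchanging the roles of $\M$ and $\N$.

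There is essentially no obstacle here: the statement is a formal consequence of already-established facts, and the only point requiring care is not to conflate ``$\M$ commutes with $\N$'' with ``$\N$ commutes with $\M$'' — these are a priori different one-sided conditions, and the symmetry between them for partial normal subgroups is precisely what Corollary~\ref{C:MperpNNperpM} supplies and what makes the argument close.
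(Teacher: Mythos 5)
Your proof is correct and takes exactly the same route as the paper, which cites Corollary~\ref{C:MperpNNperpM} together with the defining maximality of $\M^\perp$ and $\N^\perp$; you have simply spelled out the (easy) monotonicity observation that the paper leaves implicit.
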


\begin{proof}
This follows from Corollary~\ref{C:MperpNNperpM} and the definitions of $\M^\perp$ and $\N^\perp$.
\end{proof}

\begin{cor}\label{C:NperpCapSGeneral}
We have $\N^\perp\cap S\leq C_S(\N)$, $\N\subseteq C_\L(\N^\perp\cap S)$ and $\N^\perp\subseteq C_\L(T)$.
\end{cor}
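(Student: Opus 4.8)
The plan is to deduce all three statements from the defining property of $\N^\perp$, namely that $\N^\perp$ commutes with $\N$ (Notation~\ref{N:NperpL}), by combining Lemmas~\ref{L:PerpendicularPartialNormalPrepare}, \ref{L:XfixesRXcentralizesR} and \ref{L:PerpendicularNormLTNormL}.

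First I would note that $\N^\perp$, being a partial normal subgroup of $\L$, is closed under inversion. Since $\N^\perp$ commutes with $\N$, Lemma~\ref{L:PerpendicularPartialNormalPrepare} applies with $\X=\N^\perp$ and $\Y=\N$ and shows that $\N^\perp$ fixes $\N$ under conjugation. In particular the subset $\N^\perp\cap S$ of $\N^\perp$ fixes $\N$ under conjugation as well.

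Next I would apply Lemma~\ref{L:XfixesRXcentralizesR} with $\X=\N$ and $R=\N^\perp\cap S$, using the branch in which $R$ fixes $\X$ under conjugation (just established). This yields at once $\N\subseteq C_\L(\N^\perp\cap S)$, which is the second assertion, together with $\N^\perp\cap S\subseteq C_\L(\N)$. Intersecting the latter with $S$ gives $\N^\perp\cap S=(\N^\perp\cap S)\cap S\subseteq C_\L(\N)\cap S=C_S(\N)$, which is the first assertion. Finally, for $\N^\perp\subseteq C_\L(T)$ I would invoke Lemma~\ref{L:PerpendicularNormLTNormL} with $\Y=\N^\perp$: since $\N$ commutes with $\N^\perp$ (equivalently, by Corollary~\ref{C:MperpNNperpM}, $\N^\perp$ commutes with $\N$), that lemma immediately gives $\N^\perp\subseteq C_\L(T)$.

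There is essentially no obstacle here; the statement is a short formal consequence of the preceding lemmas. The only points requiring a little care are to keep track of the direction in which the "fixes under conjugation" relation is used when feeding it into Lemma~\ref{L:XfixesRXcentralizesR}, and to observe that the branch of that lemma we invoke (the one hypothesizing that $R$ fixes $\X$) does not require $R\unlhd S$, so no normality of $\N^\perp\cap S$ in $S$ is needed.
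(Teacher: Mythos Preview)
Your proof is correct and follows essentially the same route as the paper. The paper's proof cites Corollary~\ref{C:MperpNNperpM} (rather than Lemma~\ref{L:PerpendicularPartialNormalPrepare}) to obtain that $\N^\perp\cap S$ fixes $\N$ and that $T$ fixes $\N^\perp$, and then applies Lemma~\ref{L:XfixesRXcentralizesR} directly for all three assertions; your use of Lemma~\ref{L:PerpendicularNormLTNormL} for the third claim is just a repackaging of that same application.
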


\begin{proof}
By Corollary~\ref{C:MperpNNperpM}, $\N^\perp\cap S$ fixes $\N$ under conjugation, and $T=\N\cap S$ fixes $\N^\perp$ under conjugation. Hence the assertion follows from Lemma~\ref{L:XfixesRXcentralizesR}.
\end{proof}

\begin{lemma}\label{L:Opperp}
If $R\leq S$ such that $R\unlhd\L$, then $R^\perp=C_\L(R)$. In particular, $O_p(\L)^\perp=C_\L(O_p(\L))$.
\end{lemma}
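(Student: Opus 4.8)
The plan is to establish the two inclusions $C_\L(R)\subseteq R^\perp$ and $R^\perp\subseteq C_\L(R)$ separately, using that $R$ itself is a partial normal subgroup of $\L$ together with the equivalences in Corollary~\ref{C:MperpNNperpM}. Two structural facts will do most of the work. First, since $R\leq S$ and $R\unlhd\L$, Lemma~\ref{L:OpL} gives $R\leq O_p(\L)$, and as $O_p(\L)=\bigcap\{S_w\colon w\in\W(\L)\}$ this yields $R\leq S_f\subseteq\D(f)$ for every $f\in\L$; so conjugation by any element of $\L$ is defined on all of $R$. Second, $R\unlhd\L$ gives $N_\L(R)=\L$, so by Lemma~\ref{L:CentralizerPartialNormal} the centralizer $C_\L(R)$ is a partial normal subgroup of $N_\L(R)=\L$.

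For $C_\L(R)\subseteq R^\perp$: by definition of $C_\L(R)$, every element of $C_\L(R)$ fixes $R$ under conjugation, so $C_\L(R)$ fixes $R$ under conjugation. Since $C_\L(R)$ and $R$ are both partial normal subgroups of $\L$, Corollary~\ref{C:MperpNNperpM} (applied with $\M=C_\L(R)$ and $\N=R$, using the implication (v)$\Rightarrow$(i)) shows that $C_\L(R)$ commutes with $R$. As $R^\perp$ is by Notation~\ref{N:NperpL} the largest partial normal subgroup of $\L$ commuting with $R$, it follows that $C_\L(R)\subseteq R^\perp$.

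For $R^\perp\subseteq C_\L(R)$: by definition $R^\perp$ commutes with $R$, so Corollary~\ref{C:MperpNNperpM} (now with $\M=R^\perp$, $\N=R$, using (i)$\Rightarrow$(v)) shows that $R^\perp$ fixes $R$ under conjugation. Let $x\in R^\perp$ and $r\in R$. By the first structural fact $r\in S_x\subseteq\D(x)$, and since $R^\perp$ fixes $R$ under conjugation, $r^x=r$. As $x$ and $r$ were arbitrary, $x\in C_\L(R)$, whence $R^\perp\subseteq C_\L(R)$. Combining the two inclusions gives $R^\perp=C_\L(R)$. For the final assertion, $O_p(\L)\leq S$ and $O_p(\L)\unlhd\L$ by Lemma~\ref{L:OpL}, so applying the first part with $R=O_p(\L)$ yields $O_p(\L)^\perp=C_\L(O_p(\L))$.

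Since every ingredient is already available, there is no serious obstacle here; the only point requiring care is the observation that $R\leq O_p(\L)$ forces conjugation by each $f\in\L$ to be defined on all of $R$, which is exactly what converts the weak condition ``$R^\perp$ fixes $R$ under conjugation'' into the containment $R^\perp\subseteq C_\L(R)$.
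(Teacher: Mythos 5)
Your proof is correct. It reaches the same conclusion as the paper by a somewhat more self-contained route. The paper obtains $R^\perp\subseteq C_\L(R)$ by appealing directly to Corollary~\ref{C:NperpCapSGeneral} (specialising $\N^\perp\subseteq C_\L(T)$ with $T=\N\cap S=R$), whose proof in turn rests on Lemma~\ref{L:XfixesRXcentralizesR}; you instead observe that $R\leq O_p(\L)\leq S_x$ for every $x\in\L$, so that conjugation by every $x\in R^\perp$ is automatically defined on all of $R$, and the ``fixes under conjugation'' implication from Corollary~\ref{C:MperpNNperpM} then gives $R^\perp\subseteq C_\L(R)$ directly. This is a clean shortcut that exploits the extra hypothesis $R\unlhd\L$ (as opposed to merely $R\unlhd S$, which is all that Lemma~\ref{L:XfixesRXcentralizesR} assumes). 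For the reverse inclusion the paper uses Lemma~\ref{L:CentralizerHelp} to conclude that $C_\L(R)$ commutes with $R$ in one step, whereas you route through ``fixes under conjugation'' and the equivalence (v)$\Leftrightarrow$(i) of Corollary~\ref{C:MperpNNperpM}; both are fine, though the paper's citation is a touch more direct. The essential common ingredient is that $C_\L(R)\unlhd N_\L(R)=\L$ by Lemma~\ref{L:CentralizerPartialNormal}, so that $C_\L(R)$ is eligible to be compared against the largest commuting partial normal subgroup $R^\perp$.
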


\begin{proof}
It is a special case of Corollary~\ref{C:NperpCapSGeneral} that $R^\perp\subseteq C_\L(R)$. By Lemma~\ref{L:CentralizerPartialNormal}, we have $C_\L(R)\unlhd N_\L(R)=\L$ and, by Lemma~\ref{L:CentralizerHelp}, $C_\L(R)$ commutes with $R$. Hence, $C_\L(R)\subseteq R^\perp$. This implies the assertion.
\end{proof}

\begin{lemma}\label{L:MinNimpliesNperpinMperp}
If $\M,\N\unlhd\L$ with $\M\subseteq\N$, then $\N^\perp\subseteq\M^\perp$. 
\end{lemma}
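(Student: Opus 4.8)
The plan is to derive this directly from the characterization of $\N^\perp$ as the largest partial normal subgroup commuting with $\N$, together with the symmetry result in Corollary~\ref{C:NinMperp}.

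First I would observe that the statement $\N^\perp\subseteq\M^\perp$ is, by Corollary~\ref{C:NinMperp} (applied with the pair $(\N^\perp,\M)$ in place of $(\M,\N)$), equivalent to $\M\subseteq (\N^\perp)^\perp$. So it suffices to prove $\M\subseteq(\N^\perp)^\perp$. Now $(\N^\perp)^\perp$ is by definition the largest partial normal subgroup of $\L$ that commutes with $\N^\perp$; hence it suffices to show that $\M$ commutes with $\N^\perp$. By Corollary~\ref{C:MperpNNperpM}, this is equivalent to showing that $\N^\perp$ commutes with $\M$.

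For the latter, I would argue as follows. By definition, $\N^\perp$ commutes with $\N$. Since $\M\subseteq\N$ by hypothesis, it is immediate from Definition~\ref{D:Commute} that $\N^\perp$ commutes with $\M$ as well: the defining implication is required for all $x\in\N^\perp$ and all $y$ in the relevant set, so restricting the second variable to the smaller set $\M$ preserves the property. This completes the argument.

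The proof is essentially formal; there is no real obstacle, the only thing to be careful about is invoking Corollary~\ref{C:NinMperp} with the correct substitution of symbols (it is stated for arbitrary $\M,\N\unlhd\L$, so applying it to $(\N^\perp,\M)$ is legitimate since $\N^\perp\unlhd\L$). Written out:

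\begin{proof}
By Corollary~\ref{C:NinMperp} applied with $\N^\perp$ and $\M$ in place of $\M$ and $\N$, we have $\N^\perp\subseteq\M^\perp$ if and only if $\M\subseteq(\N^\perp)^\perp$. By definition of $(\N^\perp)^\perp$ (see Notation~\ref{N:NperpL}), it is thus sufficient to show that $\M$ commutes with $\N^\perp$; by Corollary~\ref{C:MperpNNperpM} this is equivalent to showing that $\N^\perp$ commutes with $\M$. Now $\N^\perp$ commutes with $\N$ by definition, and $\M\subseteq\N$ by hypothesis. Hence, for all $x\in\N^\perp$ and all $y\in\M\subseteq\N$, the implication $(x,y)\in\D\Rightarrow (y,x)\in\D$ and $xy=yx$ holds. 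So $\N^\perp$ commutes with $\M$, as required.
\end{proof}
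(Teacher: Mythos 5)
Your proof is correct, and its key observation — that $\N^\perp$ commutes with $\N$ and hence with the subset $\M\subseteq\N$ — is exactly the heart of the paper's proof. However, once you have established that $\N^\perp$ (a partial normal subgroup of $\L$) commutes with $\M$, the conclusion $\N^\perp\subseteq\M^\perp$ follows immediately from the defining property of $\M^\perp$ as the \emph{largest} partial normal subgroup of $\L$ commuting with $\M$; the detour through Corollary~\ref{C:NinMperp}, Corollary~\ref{C:MperpNNperpM} and $(\N^\perp)^\perp$ is valid but unnecessary.
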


\begin{proof}
 As $\N^\perp$ commutes with $\N$, it commutes in particular with $\M$. Since $\N^\perp\unlhd\L$, it follows thus from the definition of $\M^\perp$ that $\N^\perp\subseteq \M^\perp$.
\end{proof}

\begin{lemma}\label{L:NinNperperp}
We have $\N\subseteq (\N^\perp)^\perp$. 
\end{lemma}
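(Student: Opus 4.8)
The statement is an immediate consequence of the symmetry of the commuting relation for partial normal subgroups established earlier. The plan is as follows. By definition, $\N^\perp$ is a partial normal subgroup of $\L$ which commutes with $\N$. By Corollary~\ref{C:MperpNNperpM} (the equivalence of (i) and (ii)), it follows that $\N$ commutes with $\N^\perp$. Since $\N\unlhd\L$ and $(\N^\perp)^\perp$ is by Notation~\ref{N:NperpL} the largest partial normal subgroup of $\L$ which commutes with $\N^\perp$, we conclude $\N\subseteq(\N^\perp)^\perp$.

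Alternatively one can phrase the same argument through Corollary~\ref{C:NinMperp}: taking $\M:=\N^\perp$, the trivial inclusion $\M\subseteq\N^\perp$ gives, via that corollary, $\N\subseteq\M^\perp=(\N^\perp)^\perp$.

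There is no real obstacle here; the only point to check is that $\N^\perp$ is indeed a partial normal subgroup of $\L$ (so that Corollary~\ref{C:MperpNNperpM} or Corollary~\ref{C:NinMperp} applies to the pair $\N,\N^\perp$), which holds by Corollary~\ref{C:NperpExistence} and the definition in Notation~\ref{N:NperpL}.
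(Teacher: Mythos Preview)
Your proposal is correct and matches the paper's proof essentially verbatim: the paper also invokes Corollary~\ref{C:MperpNNperpM} to deduce that $\N$ commutes with $\N^\perp$, and then concludes by the definition of $(\N^\perp)^\perp$.
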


\begin{proof}
By Corollary~\ref{C:MperpNNperpM}, $\N$ commutes with $\N^\perp$ and is thus by definition of $(\N^\perp)^\perp$ contained in $(\N^\perp)^\perp$.
\end{proof}

Notice that the statement of the following lemma makes sense by Theorem~\ref{T:ProductsPartialNormal}.

\begin{lemma}\label{L:PerpProduct}
Let $\M$ and $\N$ be partial normal subgroups of $\L$. Then $(\M\N)^\perp=\M^\perp\cap\N^\perp$.
\end{lemma}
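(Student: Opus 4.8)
The plan is to show the two inclusions separately, using only results already available: Corollary~\ref{C:MperpNNperpM} (equivalence of the six commuting conditions for partial normal subgroups), Corollary~\ref{C:NinMperp} (the ``Galois-type'' symmetry $\M\subseteq\N^\perp\iff\N\subseteq\M^\perp$), Lemma~\ref{L:MinNimpliesNperpinMperp} (anti-monotonicity of $(-)^\perp$), and Lemma~\ref{L:ProductPartialNormalCommutes} (the product of two partial normal subgroups that both commute strongly with $\Y$ again commutes strongly with $\Y$), together with Theorem~\ref{T:ProductsPartialNormal} which guarantees $\M\N\unlhd\L$.

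For the inclusion $(\M\N)^\perp \subseteq \M^\perp\cap\N^\perp$: since $\M\subseteq\M\N$ and $\N\subseteq\M\N$ (both contain $\One$, so every element of $\M$ is $\Pi(m,\One)$ etc.), Lemma~\ref{L:MinNimpliesNperpinMperp} gives $(\M\N)^\perp\subseteq\M^\perp$ and $(\M\N)^\perp\subseteq\N^\perp$, hence $(\M\N)^\perp\subseteq\M^\perp\cap\N^\perp$. For the reverse inclusion $\M^\perp\cap\N^\perp\subseteq(\M\N)^\perp$: first note $\M^\perp\cap\N^\perp$ is a partial normal subgroup of $\L$ (the intersection of two partial normal subgroups is partial normal — this is routine from the definitions, or can be quoted from the standard theory). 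By definition $\M^\perp$ commutes with $\M$ and $\N^\perp$ commutes with $\N$; by Corollary~\ref{C:MperpNNperpM} these are equivalently ``commutes strongly'', and moreover $\M^\perp\cap\N^\perp$, being contained in $\M^\perp$, commutes strongly with $\M$, and being contained in $\N^\perp$, commutes strongly with $\N$. Now apply Lemma~\ref{L:ProductPartialNormalCommutes} with $\Y:=\M^\perp\cap\N^\perp$ and with the pair $(\M,\N)$ in the roles of $(\M,\N)$ there: since $\M$ and $\N$ both commute strongly with $\Y$, the product $\M\N$ commutes strongly with $\Y$. Equivalently (again by the symmetry built into the notion of commuting strongly, or by Corollary~\ref{C:MperpNNperpM} once we know $\M\N\unlhd\L$), $\Y=\M^\perp\cap\N^\perp$ commutes with $\M\N$, so by maximality of $(\M\N)^\perp$ we get $\M^\perp\cap\N^\perp\subseteq(\M\N)^\perp$.

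The main obstacle is a small bookkeeping point about the asymmetry of ``commutes strongly'': Lemma~\ref{L:ProductPartialNormalCommutes} is stated as ``$\M\N$ commutes strongly with $\Y$'', whereas to invoke maximality of $(\M\N)^\perp$ I need ``$\Y$ commutes (strongly) with $\M\N$''. Since $\Y$, $\M$, $\N$ are all partial normal in $\L$ and $\M\N\unlhd\L$ by Theorem~\ref{T:ProductsPartialNormal}, Corollary~\ref{C:MperpNNperpM} applies to the pair $(\M\N,\Y)$ and collapses all the directional variants, so this is resolved cleanly; one just has to be careful to cite it in the right direction. Alternatively, one can avoid the issue entirely by running the ``commutes strongly'' argument with $\M\N$ in the $\Y$-slot and $\M^\perp, \N^\perp$ as the two normal subgroups being multiplied — but that requires knowing $\M^\perp\cap\N^\perp$ relates to $\M^\perp\cdot\N^\perp$, which is not automatic, so the first route via Corollary~\ref{C:MperpNNperpM} is the cleanest.
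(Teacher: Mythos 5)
Your proof is correct and follows essentially the same route as the paper: the reverse inclusion argument (intersecting is normal, both factors commute strongly with it, invoke Lemma~\ref{L:ProductPartialNormalCommutes}, then flip direction via Corollary~\ref{C:MperpNNperpM}) is identical, and your first inclusion simply cites Lemma~\ref{L:MinNimpliesNperpinMperp} where the paper inlines that same one-line argument.
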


\begin{proof}
Since $(\M\N)^\perp$ commutes with $\M\N$ by definition, it commutes also with the subsets $\M$ and $\N$ of $\M\N$. Hence, as $(\M\N)^\perp\unlhd\L$, by definition of $\M^\perp$ and $\N^\perp$, we have $(\M\N)^\perp\subseteq\M^\perp\cap\N^\perp$. 

\smallskip

It remains to show that $\M^\perp\cap\N^\perp\subseteq (\M\N)^\perp$. As $\M^\perp$ and $\N^\perp$ are partial normal subgroups of $\L$, the intersection $\M^\perp\cap\N^\perp$ is also a partial normal subgroup. Therefore, we only need to see that $\M^\perp\cap\N^\perp$ commutes with $\M\N$ or equivalently, by Corollary~\ref{C:MperpNNperpM}, that $\M\N$ commutes with $\M^\perp\cap\N^\perp$. Corollary~\ref{C:MperpNNperpM} gives also that $\M$ and $\N$ commute strongly with $\M^\perp\cap\N^\perp$. Hence the assertion follows from Lemma~\ref{L:ProductPartialNormalCommutes}.
\end{proof}

\begin{lemma}\label{L:AutNperp}
For every locality $(\tL,\tDelta,\tS)$ and every $\alpha\in\Iso(\L,\tL)$, we have $\N^\perp\alpha=(\N\alpha)^\perp$. In particular, if $\alpha\in\Aut(\L)$ with $\N\alpha=\N$, then $\N^\perp\alpha=\N^\perp$. 
\end{lemma}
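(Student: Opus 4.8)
The plan is to observe that both the relation ``commutes with'' and the notion of partial normal subgroup are expressed purely in terms of the partial product $\Pi$ together with its domain $\D$ (and the inversion map), and that an isomorphism of partial groups transports this structure faithfully in both directions.

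First I would note that $\N\alpha\unlhd\tL$, so that $(\N\alpha)^\perp$ is defined by Corollary~\ref{C:NperpExistence} applied to the locality $(\tL,\tDelta,\tS)$. This is routine: a homomorphism of partial groups commutes with inversion and sends words in $\D$ to words in $\tD$ with compatible products, so if $f\in\L$ and $n\in\N$ with $n\alpha\in\tD(f\alpha)$, then the image $((f\alpha)^{-1},n\alpha,f\alpha)=(f^{-1},n,f)\alpha^*$ lies in $\tD$; as $\alpha$ is an isomorphism ($\D\alpha^*=\tD$ and $\alpha$ bijective) this forces $(f^{-1},n,f)\in\D$, hence $n\in\D(f)$, hence $n^f\in\N$, and therefore $(n\alpha)^{f\alpha}=n^f\alpha\in\N\alpha$. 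Running the same argument for the isomorphism $\alpha^{-1}$, we see that $\M\mapsto\M\alpha$ is an inclusion-preserving bijection from the partial normal subgroups of $\L$ to those of $\tL$, with inclusion-preserving inverse.

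The crux is the following observation: for arbitrary subsets $\X,\Y\subseteq\L$, the set $\X$ commutes with $\Y$ in $\L$ if and only if $\X\alpha$ commutes with $\Y\alpha$ in $\tL$. Indeed, fix $x\in\X$ and $y\in\Y$. Since $\alpha$ is bijective with $\D\alpha^*=\tD$, we have $(x,y)\in\D\Longleftrightarrow(x\alpha,y\alpha)\in\tD$ and $(y,x)\in\D\Longleftrightarrow(y\alpha,x\alpha)\in\tD$; and since $\alpha$ is injective with $\Pi(w)\alpha=\tPi(w\alpha^*)$, we have $xy=yx\Longleftrightarrow(x\alpha)(y\alpha)=(y\alpha)(x\alpha)$. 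So the defining implication of ``commutes with'' holds for the pair $(\X,\Y)$ exactly when it holds for the pair $(\X\alpha,\Y\alpha)$.

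Putting these together, the bijection $\M\mapsto\M\alpha$ restricts to an order isomorphism from $\{\M\unlhd\L\colon\M\text{ commutes with }\N\}$ onto $\{\widetilde{\M}\unlhd\tL\colon\widetilde{\M}\text{ commutes with }\N\alpha\}$, and an order isomorphism carries the greatest element to the greatest element. Since $\N^\perp$ is the greatest element of the first set (Notation~\ref{N:NperpL}) and $(\N\alpha)^\perp$ is the greatest element of the second, we conclude $\N^\perp\alpha=(\N\alpha)^\perp$. The ``in particular'' statement follows by specializing to $(\tL,\tDelta,\tS)=(\L,\Delta,S)$ and using the hypothesis $\N\alpha=\N$. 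I expect no genuine obstacle here; the one point to handle with a little care is that ``commutes with'' is an implication rather than a biconditional, so both the $\D$-membership and the product identity must be transported simultaneously — which is precisely what an isomorphism of partial groups accomplishes.
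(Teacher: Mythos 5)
Your proof is correct and takes essentially the same approach as the paper's: both observe that an isomorphism of partial groups induces an inclusion-preserving bijection on partial normal subgroups and transports the ``commutes with'' relation, from which the claim follows by uniqueness of the largest such subgroup. You simply spell out the transport of structure in more detail than the paper does.
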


\begin{proof}
Notice that $\alpha$ induces an inclusion preserving one-to-one correspondence between the partial normal subgroups of $\L$ and the ones of $\tL$.  Moreover $\X\subseteq\N$ commutes with $\N$ if and only if $\X\alpha$ commutes with $\N\alpha$. This implies the assertion.
\end{proof}

\section{Products of partial normal subgroups}\label{SS:ProductsPartialNormal}

If $\M$ and $\N$ are partial normal subgroups of $\L$, then by Theorem~\ref{T:ProductsPartialNormal}, for every element $f\in\M\N$, there exist $m\in\M$ and $n\in\N$ such that $f=mn$, $(m,n)\in\D$ and $S_f=S_{mn}=S_{(m,n)}$. The goal of this section is to prove in Lemma~\ref{L:DecomposeEltofMN} below that $S_{mn}=S_{(m,n)}$ \emph{for all} $m\in\M$ and $n\in\N$ with $(m,n)\in\D$, provided we have the additional assumption that $\M\cap\N\leq N_\L(S)$. If $(\L,\Delta,S)$ is a linking locality and $\M$ commutes with $\N$, then $\M\cap\N\leq S\leq N_\L(S)$. Lemma~\ref{L:DecomposeEltofMN} will be applied later on in this context.

\smallskip

In our analysis of the situation we will use the relation $\uparrow$ introduced in \cite[Definition~3.6]{Chermak:2015}. Given $\N\unlhd\L$, this is a relation on the set $\L\circ \Delta$ of pairs $(f,P)$ where $f\in\L$ and $P\leq S_f$. We write $\uparrow_\N$ rather than $\uparrow$ to indicate the dependence of this relation on $\N$. So by definition, for all pairs $(f,P)$ and $(g,Q)$ in $\L\circ\Delta$, 
\[(f,P)\uparrow_\N (g,Q)\Longleftrightarrow \mbox{ there exist } x\in N_\N(P,Q)\mbox{ and }y\in N_\N(P^f,Q^g)\mbox{ with }xg=fy.\]
This defines a reflexive and transitive relation on $\L\circ\Delta$. An element $(f,P)\in\L\circ\Delta$ is called \emph{maximal} in $\L\circ\Delta$ if $(f,P)\uparrow_\N (g,Q)$ implies $|P|=|Q|$ for all $(g,Q)\in\L\circ\Delta$. One observes easily that $(f,P)\uparrow_\N (f,S_f)$ for every $(f,P)\in\L\circ\Delta$. Thus, the maximal elements are of the form $(f,S_f)$. An element $f\in\L$ is called $\uparrow_\N$-maximal, if $(f,S_f)$ is $\uparrow_\N$-maximal (cf. \cite[Definition~3.6]{Chermak:2015}).

\begin{lemma}\label{L:FinduparrowMaxEltinN}
Let $\M,\N\unlhd\L$. Then for every $n\in \N$, there exists $g\in\N$ such that $g$ is $\uparrow_\M$-maximal and $(n,S_n)\uparrow_\M (g,S_g)$. 
\end{lemma}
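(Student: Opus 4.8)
The plan is to choose, among all elements of $\N$ lying $\uparrow_\M$-above $n$, one whose attached subgroup $S_g$ has largest possible order, and then to show that this maximality already forces $\uparrow_\M$-maximality. I first record two features of $\uparrow_\M$ used throughout. If $(f,P)\uparrow_\M(g,Q)$ then, by definition, there is $x\in N_\M(P,Q)$, so conjugation by $x$ restricts to an injection $P\hookrightarrow Q$; in particular $|P|\le|Q|$ in any step of $\uparrow_\M$. Also $(g,Q)\uparrow_\M(g,S_g)$ always holds. Combining these with transitivity and finiteness: from any $(f,S_f)$, choosing $(g,Q)$ with $(f,S_f)\uparrow_\M(g,Q)$ and $|Q|$ maximal yields a $\uparrow_\M$-maximal pair (if $(g,Q)\uparrow_\M(g',Q')$ then $|Q'|\le|Q|$ by the maximal choice and $|Q|\le|Q'|$ by the previous remark, so $|Q|=|Q'|$), and then $Q=S_g$ because $(g,Q)\uparrow_\M(g,S_g)$. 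Thus every element of $\L$ lies $\uparrow_\M$-below a $\uparrow_\M$-maximal element; the content of the lemma is that when the element is $n\in\N$ the maximal element can be kept inside $\N$.

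Now set $\Sigma:=\{g\in\N\colon(n,S_n)\uparrow_\M(g,S_g)\}$. This is non-empty, since $n\in\Sigma$ by reflexivity, so I may pick $g\in\Sigma$ with $|S_g|$ maximal; I claim $g$ is $\uparrow_\M$-maximal, which proves the lemma. Suppose not. Then, as above, there is a $\uparrow_\M$-maximal $h$ with $(g,S_g)\uparrow_\M(h,S_h)$ and $|S_h|>|S_g|$, and unwinding the definition there are $x\in N_\M(S_g,S_h)$ and $y\in N_\M(S_g^{\,g},S_h^{\,h})$ with $xh=gy$, hence $h=x^{-1}gy$.

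The decisive step is to transport $h$ back into $\N$ without losing either "lying $\uparrow_\M$-above $n$" or "having $S$-subgroup of order larger than $|S_g|$". The idea is that, since $g\in\N$ and $x,y\in\M$ all lie in the partial subgroup $\M\N$ (which is a partial normal subgroup of $\L$ by Theorem~\ref{T:ProductsPartialNormal}), the element $h=x^{-1}gy$ also lies in $\M\N$; hence by Theorem~\ref{T:ProductsPartialNormal} one can write $h=mn'$ with $m\in\M$, $n'\in\N$, $(m,n')\in\D$ and $S_h=S_{(m,n')}$, so that $n'$ is obtained from $h$ by multiplication on one side by an element of $\M$. One then verifies that $(n,S_n)\uparrow_\M(n',S_{n'})$, so that $n'\in\Sigma$, and that $|S_{n'}|\ge|S_h|>|S_g|$. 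This contradicts the maximal choice of $g$ in $\Sigma$, and therefore $g$ is $\uparrow_\M$-maximal.

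The main obstacle is exactly this decisive step: manipulating words and the subgroups $S_w$ through multiplication by elements of $\M$, which in general do not normalise $S$, so that the naive group-theoretic identities (e.g.\ $S_{g^{x}}=S_g^{\,x}$) are unavailable. Concretely, one must check that the decomposition $h=mn'$ of Theorem~\ref{T:ProductsPartialNormal} can be arranged so that $n'$ still sits $\uparrow_\M$-above $g$ (and hence above $n$, by transitivity) and so that $|S_{n'}|\ge|S_h|$; this relies on the basic calculus of the relation $\uparrow$ and of the subgroups $S_w$ from \cite[Section~3]{Chermak:2015}, together with Lemma~\ref{L:Chermak14d} and the Frattini and Splitting Lemmas \cite[Corollary~3.11, Lemma~3.12]{Chermak:2015}. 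Everything else in the argument is formal.
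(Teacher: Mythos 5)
Your proof is correct in outline but proceeds by a genuinely different route from the paper's. The paper argues directly: it first picks some $a\in\L$ (not necessarily in $\N$) with $(n,S_n)\uparrow_\M(a,S_a)$ and $a$ $\uparrow_\M$-maximal (finiteness and transitivity, exactly as you observe), then invokes \cite[Proposition~3.14(b)]{Chermak:2015} to conclude that $n\in\M a$, so $a=m^{-1}n\in\M\N$ for some $m\in\M$, and finally decomposes $a=fg$ via Theorem~\ref{T:ProductsPartialNormal} with $S_a=S_{(f,g)}$. From $S_a=S_{(f,g)}$ one reads off directly that $f\in N_\M(S_a,S_g)$ and $\One\in N_\M(S_a^a,S_g^g)$, hence $(a,S_a)\uparrow_\M(g,S_g)$; then $g$ is $\uparrow_\M$-maximal by \cite[Lemma~3.7]{Chermak:2015} and lies above $(n,S_n)$ by transitivity. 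Your proof instead minimizes within $\N$ first and argues by contradiction, which forces the extra ``transport'' step of showing $h=x^{-1}gy\in\M\N$; the paper avoids this entirely because Proposition~3.14(b) hands over membership in a coset $\M a$ for free. Both routes hinge on the same decomposition from Theorem~\ref{T:ProductsPartialNormal} together with the same calculation from $S_h=S_{(m,n')}$, so neither is fundamentally more powerful, but the direct construction is cleaner.

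The ``decisive step'' you flag as an obstacle is in fact entirely tractable and is precisely what the paper computes: given $h=mn'$ with $(m,n')\in\D$ and $S_h=S_{(m,n')}$, one has $S_h\leq S_m$ and $S_h^m=S_{(m,n')}^m\leq S_{n'}$ by the very definition of $S_{(m,n')}$, and $S_h^h=(S_{(m,n')}^m)^{n'}\leq S_{n'}^{n'}$; thus $m\in N_\M(S_h,S_{n'})$, $\One\in N_\M(S_h^h,S_{n'}^{n'})$, and $mn'=h\One$, giving $(h,S_h)\uparrow_\M(n',S_{n'})$ and $|S_{n'}|\geq|S_h|$ directly. Your worry about $S_{g^x}=S_g^x$ being unavailable is not an issue: the computation never uses such an identity, only the definition of $S_w$. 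One small correction: the Frattini and Splitting Lemmas are not what is needed here; Theorem~\ref{T:ProductsPartialNormal} (together with the definition of $\uparrow_\M$) suffices.
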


\begin{proof}
As $\L$ is finite and $\uparrow_\M$ is transitive with $(x,Q)\uparrow (x,S_x)$ for all $x\in \L$ and $Q\leq S_x$, we can find $a\in\L$ such that $(n,S_n)\uparrow_\M(a,S_a)$ and $(a,S_a)$ is $\uparrow_\L$-maximal. The latter condition means that $a$ is $\uparrow_\M$-maximal. 

\smallskip

By \cite[Proposition~3.14(b)]{Chermak:2015}, $n\in \M a$, i.e. there exists $m\in\M$ with $(m,a)\in\D$ and $n=ma$. It follows then from the axioms of a partial group that $(m^{-1},m,a)\in\D$ and $a=\Pi(m^{-1},m,a)=m^{-1}n\in\M\N$. Hence, by Theorem~\ref{T:ProductsPartialNormal}, there exist $f\in\M$ and $g\in\N$ such that $(f,g)\in\D$, $a=fg$ and $S_a=S_{(f,g)}$. Then
\[S_a^f=S_{(f,g)}^f\leq S_g\mbox{ and }S_a^a=S_{(f,g)}^{fg}\leq S_g^g.\]
Hence, $f\in N_\M(S_a,S_g)$, $\One\in N_\M(S_a^a,S_g^g)$ and $fg=a=a\One$ by Lemma~\ref{L:AddOnes}. By definition of $\uparrow_\M$, this means that $(a,S_a)\uparrow_\M (g,S_g)$. As $a$ is $\uparrow_\M$-maximal, \cite[Lemma~3.7]{Chermak:2015} yields now that $g$ is $\uparrow_\M$-maximal. Since $\uparrow_\M$ is transitive, we have moreover $(n,S_n)\uparrow_\M (g,S_g)$. As $g$ was chosen to be an element of $\N$ this yields the assertion.
\end{proof}

\begin{lemma}\label{L:DecomposeEltofMN}
Let $\M,\N\unlhd\L$ such that $\M\cap\N\leq N_\L(S)$. Then every element of $\N$ is $\uparrow_\M$-maximal. In particular, if $m\in\M$ and $n\in\N$ such that $(m,n)\in\D$, then 
\[S_{mn}=S_{(m,n)}.\]
\end{lemma}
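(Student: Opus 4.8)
The strategy is to prove the first assertion — that every $n\in\N$ is $\uparrow_\M$-maximal — and then to deduce the product formula as a routine consequence. For the first part, fix $n\in\N$. By Lemma~\ref{L:FinduparrowMaxEltinN} there is some $g\in\N$ that is $\uparrow_\M$-maximal with $(n,S_n)\uparrow_\M (g,S_g)$; by definition of the relation $\uparrow_\M$ there are $x\in N_\M(S_n,S_g)$ and $y\in N_\M(S_n^n,S_g^g)$ with $xg=ny$. The plan is to show $|S_n|=|S_g|$, which forces $(n,S_n)$ itself to be $\uparrow_\M$-maximal. From $xg=ny$ we get $g=x^{-1}ny$, so $x^{-1}\cdot n^{-1}\cdot x\cdot g = y$ after rearranging; more usefully, $x^{-1}n^{-1}\cdot ny\cdot (ny)^{-1}$-type manipulations via Lemma~\ref{L:Chermak14d} show that $z:=x^{-1}\Pi(n^{-1},x,g)$ lies in both $\M$ and $\N$, using $x\in\M$, $g\in\N$ and normality of $\M$ and $\N$ in $\L$ to see that the relevant conjugates land in the right partial normal subgroup. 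Hence $z\in\M\cap\N\leq N_\L(S)$, so $z$ normalizes $S$.

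More precisely, I would argue as follows. Since $x\in N_\M(S_n,S_g)$, the element $x^{-1}n$ is defined and $S_{(x^{-1},n)}=S_n$; then $xg=ny$ gives $g = x^{-1}ny$, and one computes $S_g = S_{(x^{-1},n,y)}\leq S_n^{\,?}$ — tracking the subgroups through the chain $S_g \xrightarrow{x} S_g^x \le S_n$ shows $|S_g|\le |S_n|$. For the reverse inequality, which is the crux, I would use that $g$ is $\uparrow_\M$-maximal together with $(g,S_g)\uparrow_\M(n,S_n)$: this last relation should be extracted symmetrically from $xg=ny$ by writing $n = x g y^{-1}$ with $x\in N_\M(S_g,S_n)$ and $y^{-1}\in N_\M(S_g^g,S_n^n)$, which is exactly the defining condition for $(g,S_g)\uparrow_\M(n,S_n)$. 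Maximality of $g$ then yields $|S_n|=|S_g|$, so $n$ is $\uparrow_\M$-maximal. The key obstacle here is verifying that the witnesses $x,y$ for $(n,S_n)\uparrow_\M(g,S_g)$ can genuinely be re-read as witnesses for $(g,S_g)\uparrow_\M(n,S_n)$ — i.e.\ checking the normalizer memberships $x\in N_\M(S_g,S_n)$ and $y\in N_\M(S_g^g,S_n^n)$ — which should follow from $x\in N_\M(S_n,S_g)$ by inverting the conjugation isomorphisms $c_x\colon S_n\to S_g^{\,x^{-1}}$; here the hypothesis $\M\cap\N\leq N_\L(S)$ may be needed to control $S_{(x^{-1},n)}$ versus $S_n$.

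For the second statement, let $m\in\M$, $n\in\N$ with $(m,n)\in\D$. By what was just proved, $n$ is $\uparrow_\M$-maximal, i.e.\ $(n,S_n)$ is a maximal element of $\L\circ\Delta$ under $\uparrow_\M$. Now $mn\in\M\N$, and by the rearrangement in the proof of Lemma~\ref{L:FinduparrowMaxEltinN} (or directly: $n = m^{-1}(mn)\in\M(mn)$, so $(mn,S_{mn})\uparrow_\M(n,S_n)$ via the witnesses $m^{-1}\in N_\M(S_{mn},S_n)$ and $\One$), we get $(mn,S_{mn})\uparrow_\M(n,S_n)$. Conversely $(n,S_n)\uparrow_\M(mn, S_{mn})$ always holds when $mn\in\M n$, using $m\in N_\M(S_n, S_{mn})$-type witnesses coming from $(m,n)\in\D$. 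Since $n$ is $\uparrow_\M$-maximal we conclude $|S_n| = |S_{mn}|$. But $S_{(m,n)}\leq S_{mn}$ always, and $(m,n)\in\D$ forces $S_{(m,n)}\in\Delta$; tracking conjugation by $m$ one has $S_{(m,n)} = \{s\in S_n : s^{m^{-1}}\in S, (s^{m^{-1}})\in S_n \text{ path}\}$, and combining $S_{(m,n)}\leq S_{mn}$ with $|S_n|=|S_{mn}|$ and $S_{(m,n)}$ having the same order as its image forces $S_{(m,n)} = S_{mn}$. The only delicate point is keeping the bookkeeping of the subgroups $S_w$ along each word straight; everything else is a direct application of the $\uparrow_\M$-machinery from \cite[Section~3]{Chermak:2015} together with Lemma~\ref{L:FinduparrowMaxEltinN}.
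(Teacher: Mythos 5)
The paper's proof hinges on a specific observation that your proposal never makes, and which is exactly where the hypothesis $\M\cap\N\leq N_\L(S)$ enters. After choosing $g\in\N$ to be $\uparrow_\M$-maximal with $(n,S_n)\uparrow_\M(g,S_g)$, the paper applies \cite[Proposition~3.14(b)]{Chermak:2015} to conclude $n\in\M g$, i.e.\ $n=mg$ with $m\in\M$. The crucial step is then to note that $m=\Pi(n,g^{-1})\in\N$ as well, because $n,g\in\N$; hence $m\in\M\cap\N\leq N_\L(S)$. Since $m$ normalizes $S$, Lemma~\ref{L:NLSbiset} gives the \emph{exact equality} $S_n=S_g^{m^{-1}}$ (not just an inclusion), and from this one reads off the witnesses $m^{-1}\in N_\M(S_g,S_n)$, $\One\in N_\M(S_g^g,S_n^n)$ for $(g,S_g)\uparrow_\M(n,S_n)$. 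Your proposal instead tries to work directly with the abstract witnesses $x,y$ of $(n,S_n)\uparrow_\M(g,S_g)$ and to ``reverse'' them. This is where the gap lies: the relation $\uparrow_\M$ is not symmetric, and from $x\in N_\M(S_n,S_g)$ one cannot deduce $x^{-1}\in N_\M(S_g,S_n)$ in general --- that would require $S_n^x=S_g$ exactly, not merely $S_n^x\leq S_g$. You correctly flag this as ``the key obstacle'' and guess that the hypothesis must be ``needed to control $S_{(x^{-1},n)}$ versus $S_n$,'' but you never explain why $x$ (or $y$) should lie in $\M\cap\N$, and indeed for the abstract witnesses there is no reason for this. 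The decomposition $n=mg$ is what produces an element that demonstrably lies in $\M\cap\N$.

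The second part is also more roundabout than necessary. Once every element of $\N$ is $\uparrow_\M$-maximal, the product formula $S_{mn}=S_{(m,n)}$ for $(m,n)\in\D$ with $m\in\M$, $n\in\N$ is precisely the content of the Splitting Lemma \cite[Lemma~3.12]{Chermak:2015}, which the paper cites at the outset to dispose of this deduction. Your attempt to re-derive it via comparing $|S_n|$ and $|S_{mn}|$ and then ``forcing'' $S_{(m,n)}=S_{mn}$ is not a complete argument: equality of orders of $S_n$ and $S_{mn}$ does not by itself give $S_{(m,n)}=S_{mn}$ without running through the $\uparrow_\M$-machinery that the Splitting Lemma encapsulates.
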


\begin{proof}
By the Splitting Lemma \cite[Lemma~3.12]{Chermak:2015}, it is enough to show that every element of $\N$ is $\uparrow_\M$-maximal. Let $n\in\N$. By Lemma~\ref{L:FinduparrowMaxEltinN}, there exists $g\in\N$ such that $g$ is $\uparrow_\M$-maximal and $(n,S_n)\uparrow_\M (g,S_g)$. Then by \cite[Proposition~3.14(b)]{Chermak:2015}, $n\in \M g$. Hence, there exists $m\in\M$ with $(m,g)\in\D$ and $n=mg$. Using Lemma~\ref{L:Chermak14d} and the axioms of a partial group, one observes now that $(m,g,g^{-1})\in\D$ and
\[m=\Pi(m,g,g^{-1})=\Pi(n,g^{-1})\in\N\cap \M\]
since $m\in\M$ and $n,g\in\N$. Thus, we have $m\in\M\cap\N\leq N_\L(S)$, so Lemma~\ref{L:NLSbiset} gives $S_n=S_{mg}=S_g^{m^{-1}}$. As $(m^{-1},m,g)\in\D$ by Lemma~\ref{L:Chermak14d}, it follows 
\[g=\Pi(m^{-1},m,g)=m^{-1}n\]
and hence $S_g^g=S_g^{m^{-1}n}=(S_g^{m^{-1}})^n=S_n^n$. Therefore 
\[m^{-1}\in N_\M(S_g,S_n),\;\One\in N_\M(S_g^g,S_n^n)\mbox{ and }m^{-1}n=g=g\One,\]
where we use Lemma~\ref{L:AddOnes} at the end. This proves $(g,S_g)\uparrow_\M (n,S_n)$. As $g$ is $\uparrow_\M$-maximal, it follows therefore from \cite[Lemma~3.7]{Chermak:2015} that $n$ is $\uparrow_\M$-maximal.    
\end{proof}

\chapter{$\N$-radical subgroups}\label{S:NRadical}

Throughout let $(\L,\Delta,S)$ be a locality, $\N\unlhd\L$ and $T:=\N\cap S$. 

\smallskip

The reader should observe that $\L$ itself is a partial normal subgroup of $\L$, so everything we define for $\N$ below is in particular defined for $\L$. Recall that a subgroup $H$ of a finite group $G$ is called strongly $p$-embedded if $H\neq G$, $p$ divides $|H|$, but $p$ does not divide $|H\cap H^g|$ for all $g\in G\backslash H$.

\begin{definition}\label{D:Radical}~
\begin{itemize}
\item An object $P\in\Delta$ is called \emph{$\N$-radical} if $O_p(N_\N(P))\leq P$. 
\item We write $R_\Delta(\L)$ for the set of objects $P\in\Delta$ such that either $P=S$, or $N_\L(P)/P$ has a strongly $p$-embedded subgroup and $N_S(P)\in\Syl_p(N_\L(P))$. 
\end{itemize}
\end{definition}

As $S$ is a maximal $p$-subgroup of $\L$, it is a Sylow $p$-subgroup of $N_\L(S)$. Thus, $O_p(N_\N(S))\leq S$, i.e. $S$ is always $\N$-radical (and in particular $\L$-radical). If a group $G$ has a strongly $p$-embedded subgroup, then $O_p(G)=1$. Hence, the elements of $R_\Delta(\L)$ are all $\L$-radical. This is indeed the main property of the elements of $R_\Delta(\L)$ that we will use.

\smallskip

For every  $P\in\Delta$, we have $P\unlhd N_\L(P)$ and $P\cap T=P\cap\N\unlhd N_\N(P)$. Observe also that $O_p(N_\N(P))\leq P\cap\N=P\cap T$ if $P\in\Delta$ is $\N$-radical. Hence, an object $P\in\Delta$ is $\N$-radical if and only if $O_p(N_\N(P))=P\cap T$. In particular, $P$ is $\L$-radical if and only if $O_p(N_\L(P))=P$.

\begin{lemma}\label{L:NradMrad}
 Let $\M\unlhd\L$ with $\M\subseteq\N$. If $P\in\Delta$ is $\N$-radical, then $P$ is $\M$-radical.
\end{lemma}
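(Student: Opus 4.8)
The plan is to work directly from the characterization of $\M$-radicality obtained in the discussion just before the lemma: an object $P\in\Delta$ is $\M$-radical if and only if $O_p(N_\M(P))=P\cap(\M\cap S)$, and analogously for $\N$. So I fix $P\in\Delta$ that is $\N$-radical, set $T:=\N\cap S$ and $T_\M:=\M\cap S$, and I must show $O_p(N_\M(P))=P\cap T_\M$. One inclusion is automatic since $O_p(N_\M(P))\leq N_\M(P)\cap S\leq \M\cap S=T_\M$ and $O_p(N_\M(P))\unlhd N_\M(P)$ forces it into $P$ as well (it is normalized by $P\leq N_\M(P)$ — more precisely $O_p(N_\M(P))$ is a normal $p$-subgroup of $N_\M(P)$ containing... actually it is contained in $P$ because it is normalized by $P$ and the $p$-group $P\,O_p(N_\M(P))$ then equals $P$ by a Sylow/nilpotence argument; alternatively just note $O_p(N_\M(P))\leq O_p(N_\N(P))= P\cap T$, see below). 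So the content is the reverse inclusion: $P\cap T_\M\leq O_p(N_\M(P))$.

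The key step I would carry out is to compare the normalizers inside $\M$ and inside $\N$. Since $\M\subseteq\N$, we have $N_\M(P)=\M\cap N_\N(P)$, and $\M$ is a partial normal subgroup of $\L$, hence (intersecting a subnormal series, or directly) $N_\M(P)\unlhd N_\N(P)$ as subgroups of $\L$ — both are genuine groups because $P\in\Delta$, so $N_\L(P)$ is a finite group and $N_\N(P), N_\M(P)$ are ordinary normal subgroups of it, with $N_\M(P)\unlhd N_\N(P)$. Now apply Lemma~\ref{L:GetintoOp}(a): $O_p(N_\N(P))$ is a $p$-subgroup of $N_\N(P)$, and $U:=O_p(N_\N(P))\cap N_\M(P)$ is a normal $p$-subgroup of $N_\M(P)$ (it is even characteristic-in-normal after one more step), hence $U\leq O_p(N_\M(P))$. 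On the other hand, $\N$-radicality gives $O_p(N_\N(P))=P\cap T\leq P\leq N_\L(P)$, and since $P\cap T\leq S\cap\M$ would require... here is the real point: I claim $P\cap T_\M\leq O_p(N_\N(P))\cap N_\M(P)$. Indeed $P\cap T_\M\leq P\cap T=O_p(N_\N(P))$ because $T_\M=\M\cap S\subseteq\N\cap S=T$, and $P\cap T_\M\leq \M$ is a subgroup of $S$ normalizing $P$, so $P\cap T_\M\leq N_\M(P)$. Therefore $P\cap T_\M\leq O_p(N_\N(P))\cap N_\M(P)=U\leq O_p(N_\M(P))$, which is exactly the inclusion needed. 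Combined with the trivial inclusion $O_p(N_\M(P))\leq P\cap T_\M$ (using $O_p(N_\M(P))\leq O_p(N_\N(P))=P\cap T$ together with $O_p(N_\M(P))\leq\M\cap S=T_\M$), we get $O_p(N_\M(P))=P\cap T_\M$, so $P$ is $\M$-radical.

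I do not expect a serious obstacle here; this is a short structural argument. The one point requiring a little care is the claim $N_\M(P)\unlhd N_\N(P)$ and that $O_p(N_\N(P))\cap N_\M(P)\leq O_p(N_\M(P))$: the first follows since $N_\M(P)=\M\cap N_\L(P)$ and $\M\unlhd\L$ implies $\M\cap N_\L(P)\unlhd N_\L(P)$ (intersect a partial normal with a subgroup of $\L$), hence a fortiori $N_\M(P)\unlhd N_\N(P)$; the second is then the standard fact that the intersection of $O_p$ of a group with a normal subgroup lies in $O_p$ of that normal subgroup (a normal $p$-subgroup of $N_\M(P)$), which is immediate. Everything else is bookkeeping with $\M\cap S\subseteq\N\cap S$.
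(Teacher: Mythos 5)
Your argument reaches the right conclusion, and its essential ingredient is exactly the paper's one-line proof: since $P\in\Delta$, the sets $N_\M(P)$ and $N_\N(P)$ are genuine normal subgroups of the group $N_\L(P)$, so $N_\M(P)\unlhd N_\N(P)$, hence $O_p(N_\M(P))\leq O_p(N_\N(P))\leq P$ by $\N$-radicality. Two remarks, however. First, you work harder than the definition asks: ``$\M$-radical'' only requires the single inclusion $O_p(N_\M(P))\leq P$, which falls out of the displayed chain immediately; the reverse inclusion $P\cap T_\M\leq O_p(N_\M(P))$ is true unconditionally (it does not use $\N$-radicality at all) and plays no role in verifying the definition. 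Second, and more substantively, your opening attempt to brand $O_p(N_\M(P))\leq P\cap T_\M$ as ``automatic'' contains a false step: you assert that $O_p(N_\M(P))$ ``is normalized by $P\leq N_\M(P)$,'' but $P$ need not lie in $\M$, so $P\not\leq N_\M(P)=\M\cap N_\L(P)$ in general, and the proposed ``$P\,O_p(N_\M(P))=P$'' Sylow argument is circular (it presupposes the very containment you want). You also implicitly use $O_p(N_\M(P))\leq S$ before having placed it inside $P$. You do catch yourself in the parenthetical (``alternatively just note $O_p(N_\M(P))\leq O_p(N_\N(P))=P\cap T$''), and that alternative is correct and complete --- indeed it \emph{is} the proof; everything surrounding it is either unnecessary or a dead end.
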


\begin{proof}
If $P$ is $\N$-radical then, since $N_\M(P)\unlhd N_\N(P)$, we have $O_p(N_\M(P))\leq O_p(N_\N(P))\leq P$ and thus $P$ is $\M$-radical.
\end{proof}

By \cite[Lemma~2.9, Lemma~4.1]{Chermak:2015}, $\N S=S\N$ is a partial subgroup of $\L$ and $(S\N,\Delta,S)$ is a locality. So in particular, an object $P\in\Delta$ is called \emph{$S\N$-radical} if $O_p(N_{S\N}(P))=P$. Moreover, by  $R_\Delta(S\N)$ we denote the set of all objects $P\in\Delta$ such that $P=S$, or $N_{S\N}(P)/P$ has a strongly $p$-embedded subgroup and $N_S(P)\in\Syl_p(N_{S\N}(P))$. Note that every element of $R_\Delta(S\N)$ is $S\N$-radical.

\begin{lemma}\label{L:SNradNrad}
If $P\in\Delta$ is $\L$-radical, then $P$ is $\N$-radical and $O_p(\L)\leq P$. In particular, if $P\in\Delta$ is $S\N$-radical or if $P\in R_\Delta(S\N)$, then $P$ is $\N$-radical and $O_p(\L)\leq O_p(S\N)\leq P$.
\end{lemma}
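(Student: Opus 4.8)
The plan is to prove Lemma~\ref{L:SNradNrad} in two stages: first the claim about $\L$-radical subgroups, then deduce the statements about $S\N$ by applying the first claim to the locality $(S\N,\Delta,S)$ and using the elementary inclusions $O_p(\L)\leq O_p(S\N)$ and ``$S\N$-radical implies $\N$-radical''.

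First I would prove: if $P\in\Delta$ is $\L$-radical, then $O_p(\L)\leq P$ and $P$ is $\N$-radical. For the inclusion $O_p(\L)\leq P$, recall from the definition that $O_p(\L)=\bigcap\{S_w\colon w\in\W(\L)\}\leq S_f$ for every $f\in\L$, and that by Lemma~\ref{L:OpL} $O_p(\L)$ is a partial normal subgroup of $\L$ contained in $S$; in particular $N_\L(P)=\L$-conjugation fixes $O_p(\L)$, so $O_p(\L)\unlhd N_\L(P)$ and hence $O_p(\L)\leq O_p(N_\L(P))=P$ since $P$ is $\L$-radical (using the remark preceding the lemma that $P$ is $\L$-radical iff $O_p(N_\L(P))=P$). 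For the ``$\N$-radical'' part, note $N_\N(P)=N_\L(P)\cap\N\unlhd N_\L(P)$, so $O_p(N_\N(P))$ is a normal $p$-subgroup of $N_\N(P)$ which is normalized by $N_\L(P)$ — more precisely $O_p(N_\N(P))$ is characteristic in $N_\N(P)\unlhd N_\L(P)$, hence normal in $N_\L(P)$, hence $O_p(N_\N(P))\leq O_p(N_\L(P))=P$. Thus $P$ is $\N$-radical.

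Next I would handle the $S\N$ statements. By the cited \cite[Lemma~2.9, Lemma~4.1]{Chermak:2015}, $S\N=\N S$ is a partial subgroup of $\L$ and $(S\N,\Delta,S)$ is a locality over the same $S$ and with the same object set $\Delta$; moreover $\N\unlhd S\N$ with $\N\cap S=T$. Every element of $R_\Delta(S\N)$ is $S\N$-radical by the discussion just before the lemma (a group with a strongly $p$-embedded subgroup has trivial $O_p$). So it suffices to treat an $S\N$-radical $P\in\Delta$. Applying the first stage of the argument to the locality $(S\N,\Delta,S)$ in place of $(\L,\Delta,S)$, with $\N\unlhd S\N$, gives that $P$ is $\N$-radical and $O_p(S\N)\leq P$. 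Finally $O_p(\L)\leq O_p(S\N)$: indeed $O_p(\L)$ is a $p$-subgroup of $\L$ contained in $S\subseteq S\N$, and it is normalized by every element of $S\N\subseteq\L$ since it is normal in $\L$; by Lemma~\ref{L:OpL} applied inside $(S\N,\Delta,S)$ this forces $O_p(\L)\leq O_p(S\N)$. Chaining the inclusions yields $O_p(\L)\leq O_p(S\N)\leq P$, as required.

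I do not expect a serious obstacle here; the only point requiring a little care is the bookkeeping about which ``$O_p$'' and which normalizer one is working with, and making sure that the characteristic-in-normal argument ($O_p(N_\N(P))$ characteristic in $N_\N(P)$, which is normal in $N_\L(P)$) is invoked cleanly rather than the (true but slightly less immediate) direct claim. One should also explicitly note that $\Delta$ is the same object set for both localities, so that ``$P\in\Delta$'' means the same thing throughout, and that $\N\cap(S\N)=\N$ so $N_\N(P)$ computed in $S\N$ agrees with $N_\N(P)$ computed in $\L$.
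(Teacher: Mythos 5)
Your overall plan matches the paper's: prove the first sentence for $\L$-radical $P$, then reduce the $S\N$ statements to that case via the locality $(S\N,\Delta,S)$. The $\N$-radical part (essentially the content of Lemma~\ref{L:NradMrad}), the observation that $O_p(\L)\leq O_p(S\N)$, and the reduction step are all fine.

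However, there is a genuine gap in your argument for $O_p(\L)\leq P$. You write ``$N_\L(P)$-conjugation fixes $O_p(\L)$, so $O_p(\L)\unlhd N_\L(P)$.'' For this normality statement to even make sense you need $O_p(\L)\leq N_\L(P)$, i.e.\ that $O_p(\L)$ normalizes $P$ --- and this is not automatic for $P\in\Delta$. For instance, take $\L=G=S_4$ with $p=2$, $S$ a Sylow $2$-subgroup, $\Delta$ all subgroups of $S$, and $P=\langle(1\,3)\rangle$: then $O_2(G)=V_4$ does not normalize $P$, so $O_p(\L)\not\leq N_\L(P)$. (That particular $P$ is not $\L$-radical, so there is no contradiction with the lemma, but it shows that the containment you need cannot be obtained merely from $O_p(\L)\unlhd\L$ --- it is essentially equivalent to the conclusion you want to reach, and you cannot assume it up front.)

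The paper avoids this by working with $N_{O_p(\L)}(P)=O_p(\L)\cap N_\L(P)$ instead of $O_p(\L)$ itself. This intersection genuinely is a normal $p$-subgroup of $N_\L(P)$ (it is contained in $N_\L(P)$ by construction, and normalized because $O_p(\L)$ is stable under conjugation by every element of $\L$), so $N_{O_p(\L)}(P)\leq O_p(N_\L(P))=P$. Since $O_p(\L)\unlhd S$, the product $O_p(\L)P$ is a $p$-group, and by the Dedekind identity $N_{O_p(\L)P}(P)=N_{O_p(\L)}(P)P=P$. As proper subgroups of a finite $p$-group are properly contained in their normalizers, this forces $O_p(\L)P=P$, i.e.\ $O_p(\L)\leq P$. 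If you replace your normality claim by this argument, the rest of your proof goes through as written.
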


\begin{proof}
As $(S\N,\Delta,S)$ is a locality with $\N\unlhd S\N$ and $O_p(\L)\leq O_p(S\N)$, it is sufficient to prove the first sentence in the assertion. So suppose that $P\in\Delta$ is $\L$-radical. By Lemma~\ref{L:NradMrad}, $P$ is $\N$-radical. Moreover, $N_{O_p(\L)}(P)\unlhd N_\L(P)$ and thus $N_{O_p(\L)}(P)\leq O_p(N_\L(P))=P$. Hence, $N_{O_p(\L)P}(P)=N_{O_p(\L)}(P)P=P$. As $O_p(\L)P$ is a $p$-group, this implies $P=O_p(\L)P\geq O_p(\L)$. 
\end{proof}

We note next that the property of being $\N$-radical is preserved under conjugation by appropriate elements of $\L$.

\begin{lemma}\label{L:NradicalConjugate}
If $P\in\Delta$ is $\N$-radical and $f\in\L$ with $P\leq S_f$, then $P^f$ is $\N$-radical. If $P\in R_\Delta(S\N)$ and $f\in S$, then $P^f\in R_\Delta(S\N)$.
\end{lemma}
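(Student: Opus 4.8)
The plan is to reduce everything to the fact that conjugation by a suitable element of $\L$ induces a group isomorphism of the relevant normalizers, and then to invoke the invariance of $O_p$, of Sylow subgroups, and of strong $p$-embedding under group isomorphisms. Concretely, for the first statement, let $P\in\Delta$ be $\N$-radical and $f\in\L$ with $P\le S_f$. By Lemma~\ref{L:ConjugateNormalizer}(a) we have $P^f\in\Delta$, the map $c_f\colon N_\L(P)\to N_\L(P^f)$ is an isomorphism of groups, and $N_\N(P)^f=N_\N(P^f)$. Since a group isomorphism carries $O_p$ to $O_p$, I get $O_p(N_\N(P))^f=O_p(N_\N(P^f))$. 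Because $P$ is $\N$-radical, $O_p(N_\N(P))\le P\le S_f$, so on $O_p(N_\N(P))$ the map $c_f$ is just conjugation and lands in $P^f$; hence $O_p(N_\N(P^f))=O_p(N_\N(P))^f\le P^f$, i.e.\ $P^f$ is $\N$-radical.

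For the second statement, fix $P\in R_\Delta(S\N)$ and $f\in S$. Then $S$ is a group, so $S_f=S$, giving $P\le S_f$ and $P^f\in\Delta$ with $P^f\le S$. If $P=S$ then $P^f=S\in R_\Delta(S\N)$, so assume $P\neq S$; thus $N_{S\N}(P)/P$ has a strongly $p$-embedded subgroup and $N_S(P)\in\Syl_p(N_{S\N}(P))$. Since $S\N$ is a partial subgroup of $\L$ containing $f$ and $f^{-1}$, the isomorphism $c_f\colon N_\L(P)\to N_\L(P^f)$ maps $N_{S\N}(P)=S\N\cap N_\L(P)$ into $N_{S\N}(P^f)=S\N\cap N_\L(P^f)$, and $c_{f^{-1}}$ maps back; hence $c_f$ restricts to an isomorphism $N_{S\N}(P)\to N_{S\N}(P^f)$. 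An elementary computation inside the group $S$ (using $(P^f)^{s^f}=(P^s)^f$ for $s\in S$) shows $N_S(P)^f=N_S(P^f)$, so this isomorphism carries $N_S(P)$ onto $N_S(P^f)$ and induces an isomorphism $N_{S\N}(P)/P\to N_{S\N}(P^f)/P^f$. Since "having a strongly $p$-embedded subgroup'' is preserved under group isomorphism, $N_{S\N}(P^f)/P^f$ has one; and since Sylow $p$-subgroups are carried to Sylow $p$-subgroups, $N_S(P^f)\in\Syl_p(N_{S\N}(P^f))$. Therefore $P^f\in R_\Delta(S\N)$.

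The only mildly delicate points — and the closest thing to an obstacle — are the three substructure identifications: $N_\N(P)^f=N_\N(P^f)$ (which is exactly the last sentence of Lemma~\ref{L:ConjugateNormalizer}(a)), $N_{S\N}(P)^f=N_{S\N}(P^f)$ (using that $S\N$ is a partial subgroup containing $f$ and is hence stable under $c_f$), and $N_S(P)^f=N_S(P^f)$ (a direct computation in the finite group $S$). Once these are in place, the statement follows purely formally from the behaviour of $O_p$, Sylow subgroups, and strongly $p$-embedded subgroups under isomorphisms of finite groups.
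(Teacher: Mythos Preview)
Your proof is correct and follows essentially the same approach as the paper: both argue via the group isomorphism $c_f\colon N_\L(P)\to N_\L(P^f)$ from Lemma~\ref{L:ConjugateNormalizer}(a) and its restriction to the relevant subgroups $N_\N(P)$, $N_{S\N}(P)$, $N_S(P)$, then use that $O_p$, Sylow subgroups, and strong $p$-embedding are preserved under group isomorphisms. The only cosmetic difference is that the paper obtains the isomorphism $N_{S\N}(P)\to N_{S\N}(P^f)$ by applying Lemma~\ref{L:ConjugateNormalizer}(a) directly to the locality $(S\N,\Delta,S)$, whereas you argue it by hand from $f\in S\subseteq S\N$; both work.
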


\begin{proof}
This follows basically since the conjugation map $c_f\colon N_\L(P)\rightarrow N_\L(P^f)$ is by Lemma~\ref{L:ConjugateNormalizer} a well-defined isomorphism of groups which maps $N_\N(P)$ onto $N_\N(P^f)$. As $(S\N,\Delta,S)$ is also a locality, if $f\in S$, then similarly $c_f\colon N_{\N S}(P)\rightarrow N_{\N S}(P^f)$ is a group isomorphism with $N_S(P)^f\leq N_S(P^f)$. This implies the assertion.
\end{proof}

The elements of $R_\Delta(S\N)$ play an important role as the following lemma shows. It can be regarded as a version of Alperin's Fusion Theorem for partial normal subgroups. 

\begin{lemma}\label{L:PartialNormalAlperin}
Let $(\L,\Delta,S)$ be a locality. If $\N$ is a partial normal subgroup of $\L$ and $n\in\N$, then there exist $k\in\mathbb{N}$, $R_1,R_2,\dots,R_k\in R_\Delta(S\N)$ and $w=(t,n_1,n_2,\dots,n_k)\in\D$ such that the following hold:
\begin{itemize}
\item[(i)] $S_n=S_w$ and $n=\Pi(w)$;
\item[(ii)] $n_i\in O^p(N_\N(R_i))$ and $S_{n_i}=R_i$ for all $i=1,\dots,k$; and
\item[(iii)] $t\in T$.
\end{itemize}
\end{lemma}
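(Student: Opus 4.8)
The statement is an Alperin-type fusion theorem for elements of a partial normal subgroup $\N$, and the natural strategy is to combine the Frattini/Splitting Lemmas for localities with Molinier's version of Alperin's Fusion Theorem, then bootstrap from ``$\L$-radical'' to ``$S\N$-radical''. First I would observe that it suffices to treat $\uparrow_\N$-maximal elements: by the Splitting Lemma \cite[Lemma~3.12]{Chermak:2015} (and using Lemma~\ref{L:FinduparrowMaxEltinN} applied with $\M=\L$, or rather with $\M$ replaced appropriately) every $n\in\N$ can be written as $n=tn'$ with $t\in N_\L(S)$ and $n'$ a $\uparrow$-maximal element, and when $n\in\N$ one can arrange $t\in T$ and $n'\in\N$. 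So the reduction produces the leading factor $t\in T$ required in~(iii), and we are left to decompose a $\uparrow$-maximal element of $\N$.

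\textbf{Main step.} For a $\uparrow$-maximal element $n\in\N$, consider the conjugation map $c_n\colon S_n\to S_{n^{-1}}$, which is an isomorphism in $\F:=\F_S(\L)$; in fact, since $n\in\N$, this is an isomorphism in $\E:=\F_T(\N)$ restricted appropriately, or more precisely $c_n$ decomposes via morphisms induced by elements of $\N$. Apply Alperin's Fusion Theorem for localities as proved by Molinier \cite{Molinier:2016} to factor $c_n$ as a composite of automorphisms of $\F$-Alperin (i.e. essentially radical) subgroups, each induced by conjugation. The key point is to carry the ``partial normal'' structure through: because $n\in\N$ and $\N\unlhd\L$, each conjugating element can be taken in $\N$ (here one uses the Frattini Lemma \cite[Corollary~3.11]{Chermak:2015} and the fact that $N_\N(P)^f=N_\N(P^f)$ from Lemma~\ref{L:ConjugateNormalizer}(a)), and the relevant subgroups lie in $R_\Delta(S\N)$ rather than merely being $\L$-radical. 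This is where Lemma~\ref{L:SNradNrad} and Lemma~\ref{L:NradicalConjugate} enter: an $S\N$-radical object is automatically $\N$-radical and contains $O_p(\L)$, and the property is $S$-conjugation invariant, so the decomposition stays inside $\Delta$ with objects of the claimed type. Finally one replaces each conjugating element $x_i$ by a suitable $p'$-part: writing $N_\N(R_i)=O^p(N_\N(R_i))\cdot N_{N_\S(R_i)}(R_i)$ by a Frattini argument in the finite group $N_\N(R_i)$ (using that $N_S(R_i)$ is Sylow in $N_\L(R_i)$ for $R_i\in R_\Delta(S\N)$), one absorbs the $p$-part into $S$-conjugation, which can be merged with the leading $t\in T$ via Lemma~\ref{L:NLSbiset}, leaving $n_i\in O^p(N_\N(R_i))$ with $S_{n_i}=R_i$ as in~(ii).

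\textbf{Bookkeeping.} Having produced $w=(t,n_1,\dots,n_k)\in\D$ with $\Pi(w)=n$, condition~(i) — namely $S_n=S_w$ — follows from the fact that each $n_i$ has $S_{n_i}=R_i$ and the word passes through the chain of objects $R_1,\dots,R_k$; more precisely $S_w\le S_{\Pi(w)}=S_n$ by Lemma~\ref{L:ConjugateNormalizer}(b), and the reverse inclusion holds because $n$ is $\uparrow_\L$-maximal so that $S_n$ maps isomorphically along the word (this is exactly the kind of argument in the Splitting Lemma). One should be careful that the empty-word case $k=0$ is allowed, covering $n\in T$.

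\textbf{Expected main obstacle.} The hard part will be the precise form of Alperin's Fusion Theorem that one needs: Molinier's theorem is stated for the full locality $\L$ and gives a decomposition through $\F_S(\L)$-Alperin subgroups, but here we need the conjugating elements to lie in $\N$ and the witnessing subgroups to be in $R_\Delta(S\N)$ (a condition about strongly $p$-embedded subgroups in $N_{S\N}(P)/P$, not $N_\L(P)/P$). Translating between these — essentially showing that the ``$\N$-essential'' subgroups relevant to decomposing elements of $\N$ are exactly the ones with $N_{S\N}(P)/P$ having a strongly $p$-embedded subgroup — requires relating the fusion in $\N$ to the locality $(S\N,\Delta,S)$ and invoking that $(S\N,\Delta,S)$ is itself a locality \cite[Lemma~4.1]{Chermak:2015}. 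I would set this up by first proving the statement with $\L$ replaced by $S\N$ (where $\N$ has index a $p$-group, so $N_{S\N}(S)=S(N_\L(S)\cap\N)$ and the fusion-theoretic analysis is cleaner), and then noting the decomposition obtained there is visibly of the required form since $\N\unlhd S\N\unlhd\L$ and objects/conjugators are unchanged.
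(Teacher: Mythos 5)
The paper's proof of this lemma is simply a citation to \cite[Lemma~6.2]{Henke:2020}, so a line-by-line comparison isn't possible from the text at hand; nevertheless the introduction states explicitly that this decomposition is built on Molinier's Alperin Fusion Theorem for localities, and your overall strategy (reduce to the locality $(S\N,\Delta,S)$, invoke Molinier, then use Frattini/Splitting to land the conjugators in $\N$ and extract $p'$-parts) is the right one. Two minor misstatements along the way: $S\N$ is not a partial \emph{normal} subgroup of $\L$ (what you actually need, and what is true, is that $\N\unlhd S\N$ and that the domain $\D\cap\W(S\N)$ and the sets $S_f$, $S_w$ computed in $(S\N,\Delta,S)$ agree with those computed in $(\L,\Delta,S)$, since the object set $\Delta$ is shared); and the initial reduction via ``Lemma~\ref{L:FinduparrowMaxEltinN} with $\M=\L$'' is vacuous, because every element is trivially $\uparrow_\L$-maximal --- what one actually wants is the Frattini/Splitting Lemma for $\N\unlhd S\N$, noting that a $\uparrow_\N$-maximal element lying in $\N$ necessarily belongs to $N_\N(S)$.

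The genuine gap is in the step that collects the Sylow parts into a single leading $t\in T$. After writing $N_{S\N}(R_i)=N_\N(R_i)N_S(R_i)=O^p(N_\N(R_i))N_S(R_i)$ by a Frattini-type argument, the $p$-parts you need to push out sit in $N_S(R_i)$ (or in $N_T(R_i)$ if you have already landed in $\N$), and neither of these is contained in $N_\L(S)$ in general, so Lemma~\ref{L:NLSbiset} is not applicable and does not let you ``merge'' them with the leading factor. What is needed instead is an explicit conjugation-bookkeeping argument: move each $s_i$ past the later entries by replacing $n_j$ with $n_j^{s_i^{-1}}\in O^p(N_\N(R_j^{s_i^{-1}}))$ and $R_j$ with $R_j^{s_i^{-1}}$, using Lemma~\ref{L:NradicalConjugate} to see that these are still in $R_\Delta(S\N)$, while checking the word stays in $\D$ via $S_w$. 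Relatedly, the equality $S_{n_i}=R_i$ (rather than just $R_i\leq S_{n_i}$) in~(ii) is asserted but not derived --- it has to be arranged as part of the inductive construction, not read off after the fact --- and the equality $S_w=S_n$ in~(i) needs more than the remark ``since each $n_i$ has $S_{n_i}=R_i$''; one must keep $S_w=S_n$ as an invariant throughout the rewriting, which is precisely what makes the conjugation-tracking above delicate.
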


\begin{proof}
This is \cite[Lemma~6.2]{Henke:2020}.
\end{proof}

As a consequence of the above lemma, we will show now that $\foc(\F_T(\N))$ and $C_S(\N)$ have also nice descriptions in terms of the elements of $R_\Delta(S\N)$. 

\begin{lemma}\label{L:DescribeFocE}
Set $\E:=\F_T(\N)$. Then 
\[\foc(\E)=\<[P\cap T,N_\N(P)]\colon P\in R_\Delta(S\N)\>.\] 
\end{lemma}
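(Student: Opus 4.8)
The plan is to prove the two inclusions separately, using Lemma~\ref{L:PartialNormalAlperin} for the harder direction.

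First I would prove $\<[P\cap T,N_\N(P)]\colon P\in R_\Delta(S\N)\>\subseteq\foc(\E)$, which is the routine direction. Fix $P\in R_\Delta(S\N)$ and $n\in N_\N(P)$. Since $P\cap T\le N_S(P)\cap\N$ is stabilized (as a set) by conjugation by $n$, and $n\in\N$ with $P\cap T\le S\cap\N=T$, the map $c_n|_{P\cap T}$ is a morphism in $\E=\F_T(\N)$ from $P\cap T$ to $T$. Hence for every $x\in P\cap T$ we have $x^{-1}(x^n)=x^{-1}(x c_n)\in\foc(\E)$ by the definition of the focal subgroup (applied to $\E$). Thus $[P\cap T,N_\N(P)]\le\foc(\E)$, and taking the join over all $P\in R_\Delta(S\N)$ gives the inclusion.

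For the reverse inclusion $\foc(\E)\subseteq\<[P\cap T,N_\N(P)]\colon P\in R_\Delta(S\N)\>$, set $\Phi:=\<[P\cap T,N_\N(P)]\colon P\in R_\Delta(S\N)\>$. A generator of $\foc(\E)$ has the form $x^{-1}(x\phi)$ where $x\in Q\le T$ and $\phi\in\Hom_\E(Q,T)$. By Lemma~\ref{L:LocalityFusionSystem}(b) applied to the partial subgroup $\N$ (with $Q\le S\cap\N=T$), we have $\phi=c_n|_Q$ for some $n\in\N$ with $Q\le S_n$; and since $\N$ is strongly closed we may enlarge $Q$ to $S_n\cap T$, so it suffices to show $t^{-1}(t^n)\in\Phi$ for all $n\in\N$ and all $t\in S_n\cap T$. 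Now apply Lemma~\ref{L:PartialNormalAlperin} to $n$: there are $R_1,\dots,R_k\in R_\Delta(S\N)$ and $w=(s,n_1,\dots,n_k)\in\D$ with $S_n=S_w$, $n=\Pi(w)$, $s\in T$, and $n_i\in O^p(N_\N(R_i))$ with $S_{n_i}=R_i$. For $t\in S_n\cap T=S_w\cap T$ there are $t=t_0,t_1,\dots$ tracking the conjugations, and by Lemma~\ref{L:ConjugateNormalizer}(b) we have $t^n=t^{s}\!{}^{n_1\cdots n_k}$, i.e. conjugation by $n$ on $S_w$ factors as the composite of conjugation by $s\in T$ and by each $n_i$. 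The contribution of $s\in T$ is $t^{-1}(t^s)\in[T,T]\le\foc(\E)$; but more to the point, $t^{-1}(t^s)=[t,s]$ with $t,s\in T$, and $t,s\in N_\N(S)$ since $S$ itself is in $R_\Delta(S\N)$, so this commutator lies in $[S\cap T,N_\N(S)]=[T,N_\N(S)]\subseteq\Phi$. For each step by $n_i$, if $u\in R_i\cap T$ then $u^{-1}(u^{n_i})=[u,n_i]\in[R_i\cap T,N_\N(R_i)]\subseteq\Phi$, using that $n_i$ normalizes $R_i$ and hence $R_i\cap T$. Telescoping the product $t^{-1}(t^n)$ along the chain $t=t_0\mapsto t_0^s=:t_0'\mapsto (t_0')^{n_1}\mapsto\cdots$ and using that $\Phi$ is a subgroup of $T$ (being generated by commutators all lying in $T$, where $T$ is a group) closed under $T$-conjugation — one checks $\Phi\unlhd T$ since each $[R_i\cap T,N_\N(R_i)]$ is normalized by $N_S(R_i)\ge R_i\cap T$, and by Lemma~\ref{L:NradicalConjugate} the collection $R_\Delta(S\N)$ is $S$-invariant so $\Phi$ is $N_S$-invariant, hence $\Phi\unlhd T$ — we conclude $t^{-1}(t^n)\in\Phi$.

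The main obstacle I anticipate is the bookkeeping in the telescoping step: one must verify carefully that each intermediate element in the conjugation chain lands back in $T$ (which follows because $t\in S_w\cap T$ and $\N$-conjugation preserves $T=S\cap\N$, so each $t_i\in T$) and that the ``correction'' $t^{-1}(t^n)$ can be rewritten as a product of the generators $u^{-1}(u^{n_i})$ modulo the subgroup $\Phi$; this requires knowing $\Phi\unlhd T$ so that the identity $ab\equiv ba\pmod\Phi$ and the cocycle-type relation $t^{-1}t^{gh}=(t^{-1}t^g)(t^{-g}(t^g)^h)^{?}$ can be manipulated inside the group $T/\Phi$. Once $\Phi$ is shown to be a normal subgroup of $T$, the computation reduces to a standard calculation with $1$-cochains for the action of a sequence of automorphisms, so the remaining work is purely formal.
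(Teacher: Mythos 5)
Your overall strategy---decompose via Lemma~\ref{L:PartialNormalAlperin}, track the chain of conjugates, telescope---is the same as the paper's, but there are two issues, one genuine gap and one unnecessary detour.

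The gap is in your reduction to a single conjugation map. You invoke Lemma~\ref{L:LocalityFusionSystem}(b) to write $\phi = c_n|_Q$ for a single $n\in\N$, but that lemma carries the standing hypothesis $Q\in\Delta$, and the subgroup $Q\leq T$ appearing in a generator $x^{-1}(x\phi)$ of $\foc(\E)$ is an arbitrary subgroup of $T$, not necessarily an object of the locality. As written, your argument therefore only covers generators with $Q\in\Delta$. The paper avoids this by not using Lemma~\ref{L:LocalityFusionSystem}(b) at all: a morphism $\alpha\in\Hom_\E(A,T)$ is, by the very definition of $\E=\F_T(\N)$, a composition of restrictions of maps $c_n\colon S_n\cap T\rightarrow T$ with $n\in\N$. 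Feeding each such $n$ into the Alperin decomposition (and observing $S\in R_\Delta(S\N)$ with $T\leq N_\N(S)$ to absorb the leading $T$-factor) gives $\alpha$ directly as a chain $c_{n_1}|_{A_0,A_1}\cdots c_{n_k}|_{A_{k-1},A_k}$ with every $n_i\in N_\N(P_i)$ and $P_i\in R_\Delta(S\N)$. Your conclusion ``it suffices to show $t^{-1}(t^n)\in\Phi$'' is true, but the route to it needs this correction.

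The detour is your worry that $\Phi\unlhd T$ is needed for the telescoping. It is not. Once you have the chain $x=x_0, x_1=x_0^{n_1},\dots,x_k=x\alpha$, the identity
\[
x_0^{-1}x_k=(x_0^{-1}x_1)(x_1^{-1}x_2)\cdots(x_{k-1}^{-1}x_k)
\]
is a tautology in the group $T$; each factor $x_{i-1}^{-1}x_i=[x_{i-1},n_i]$ lies in $[P_i\cap T,N_\N(P_i)]\subseteq\Phi$, and a product of elements of the subgroup $\Phi$ is in $\Phi$. No normality, $S$-invariance, or cocycle manipulation is required. (Your verification that $\Phi$ is in fact $S$-invariant, via Lemmas~\ref{L:NradicalConjugate} and \ref{L:ConjugateNormalizer}, is correct but superfluous here.)
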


\begin{proof}
Set $R:=\<[P\cap T,N_\N(P)]\colon P\in R_\Delta(S\N)\>$. Observe that, for any $P\in R_\Delta(S\N)$, the elements of $N_\N(P)$ induce $\E$-automorphisms of $P\cap T$ by conjugation. Hence, $R\leq \foc(\E)$. To show the converse inclusion let $x\in A\leq T$ and $\alpha\in\Hom_\E(A,T)$. We need to show that $x^{-1}(x\alpha)\in R$.

\smallskip

By definition of $\E$, the morphism $\alpha$ is the composition of restrictions of maps of the form $c_n\colon S_n\cap T\rightarrow T$ with $n\in\N$. For any $n\in\N$, we can in turn find a decomposition of $n$ as in Lemma~\ref{L:PartialNormalAlperin}. Note that $S\in R_\Delta(S\N)$ and $T\leq N_\N(S)$. Hence, we can conclude that $\alpha$ can be written as a product $\alpha=(c_{n_1}|_{A_0,A_1})(c_{n_2}|_{A_1,A_2})\cdots (c_{n_k}|_{A_{k-1},A_k})$ where $A=A_0,A_1,\dots,A_k$ are subgroups of $T$, $\<A_{i-1},A_i\>\leq P_i:=S_{n_i}\in R_\Delta(S\N)$, $n_i\in N_\N(P_i)$ and $A_{i-1}^{n_i}=A_i$. Then $x_0:=x\in A=A_0$ and $x_i:=x_{i-1}^{n_i}\in A_i$ for $i=1,\dots,k$. In particular, \[x_{i-1}^{-1}x_i=x_{i-1}^{-1}x_{i-1}^{n_i}=[x_{i-1},n_i]\in [P_i\cap T,N_\N(P_i)]\]
                                                                                                                                                                                                                                                                                                                                                                                                                                                                                                                                                                                                                                                                                                                                                                              for all $i=1,\dots,k$. Note also that $x\alpha=x_k$. Hence, it follows 
\[
x^{-1}(x\alpha)=x_0^{-1}x_k=(x_0^{-1}x_1)(x_1^{-1}x_2)\cdots (x_{k-1}^{-1}x_k)\in \prod_{i=1}^k[P_i\cap T,N_\N(P_i)]\subseteq R.
\]
This shows the assertion.
\end{proof}

\begin{lemma}\label{L:CSNAlperin}
\[C_S(\N)=\bigcap_{P\in R_\Delta(S\N)}C_S(N_\N(P))=C_S(T)\cap \left(\bigcap_{P\in R_\Delta(S\N)}C_S(O^p(N_\N(P)))\right).\]
\end{lemma}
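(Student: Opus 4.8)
The plan is to prove the claimed double equality for $C_S(\N)$ by a chain of inclusions, using Lemma~\ref{L:PartialNormalAlperin} together with the observation that $S \in R_\Delta(S\N)$. Write $C_1 := \bigcap_{P\in R_\Delta(S\N)} C_S(N_\N(P))$ and $C_2 := C_S(T)\cap \bigl(\bigcap_{P\in R_\Delta(S\N)} C_S(O^p(N_\N(P)))\bigr)$. The goal is to show $C_S(\N) \subseteq C_1 \subseteq C_2 \subseteq C_S(\N)$, which forces all three to be equal.

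For $C_S(\N)\subseteq C_1$: if $s\in C_S(\N)$, then since $N_\N(P)\subseteq \N$ for every $P$, $s$ centralizes each $N_\N(P)$ by conjugation, so $s \in C_S(N_\N(P))$ for all $P\in R_\Delta(S\N)$; hence $s\in C_1$. The inclusion $C_1\subseteq C_2$ is also easy: $O^p(N_\N(P))$ is a subgroup of $N_\N(P)$, so $C_S(N_\N(P))\leq C_S(O^p(N_\N(P)))$ for every $P$; moreover $S\in R_\Delta(S\N)$ (as $S$ is always $S\N$-radical, being a maximal $p$-subgroup) and $T = \N\cap S \leq N_\N(S)$, so $C_1\leq C_S(N_\N(S))\leq C_S(T)$. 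Combining these gives $C_1\subseteq C_2$.

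The main work is the inclusion $C_2\subseteq C_S(\N)$. Let $s\in C_2$; I must show $s$ centralizes every $n\in\N$, i.e.\ $n\in\D(s)$ and $n^s = n$ — equivalently, by Lemma~\ref{L:CentralizerHelp}, that $s\in\D(n)$ and $s^n = s$. Fix $n\in\N$ and apply Lemma~\ref{L:PartialNormalAlperin}: there exist $R_1,\dots,R_k\in R_\Delta(S\N)$ and $w=(t,n_1,\dots,n_k)\in\D$ with $S_n = S_w$, $n=\Pi(w)$, $t\in T$, $n_i\in O^p(N_\N(R_i))$, and $S_{n_i}=R_i$. Now I want to track conjugation of $s$ through this word. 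The idea: $c_n = c_t\circ c_{n_1}\circ\cdots\circ c_{n_k}$ as maps on a suitable subgroup of $S$ containing $s$, by Lemma~\ref{L:ConjugateNormalizer}(b) (after enlarging the objects $P_i$ to contain $s$; here one uses that $s$ normalizes or is fixed under each relevant conjugation). Since $s\in C_S(T)$, conjugation by $t\in T$ fixes $s$. Since $s\in C_S(O^p(N_\N(R_i)))$ and $n_i\in O^p(N_\N(R_i))$, conjugation by $n_i$ fixes $s$ as well — one needs $s\in\D(n_i)$, which should follow because $s$ centralizes $O^p(N_\N(R_i))$ hence lies in an appropriate object. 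Chaining these, $s^n = s$, and then Lemma~\ref{L:CentralizerHelp} gives $s\in C_S(\N)$.

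The hard part will be the bookkeeping in this last step: making precise the claim that conjugation by each factor $n_i$ (resp.\ $t$) is defined on $s$ and fixes it, and that the composition of these conjugations really equals $c_n$ on $\langle s\rangle$ (or a slightly larger subgroup). The cleanest route is probably to work inside the subgroup $N_\L(\langle s, R_i\rangle)$-type normalizers, or to note that $s\in C_\L(O^p(N_\N(R_i)))$ means $O^p(N_\N(R_i))\subseteq C_\L(\langle s\rangle)$ by Lemma~\ref{L:CentralizerHelp}, and use the normalizer-conjugation compatibility from Lemma~\ref{L:ConjugateNormalizer} together with the fact that $S_w = S_n$ controls the domains. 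I would also double-check that $t\in T$ together with $s\in C_S(T)$ genuinely gives $s^t = s$ with $s\in\D(t)$: since $t\in T\leq S$, conjugation by $t$ is defined on all of $S$ (via $S_t = S$), and $s\in C_S(T)$ means $s\in C_S(\langle t\rangle)$, so $t\in C_S(\langle s\rangle)$ and $s^t = s$ by Lemma~\ref{L:CentralizerHelp}. Once the domain issues are handled uniformly, the argument closes.
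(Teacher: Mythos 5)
Your inclusions $C_S(\N)\subseteq C_1\subseteq C_2$ are correct and are exactly what the paper does; the gap is in the hard direction $C_2\subseteq C_S(\N)$. Your plan is to fix $s\in C_2$ and $n\in\N$, decompose $n=\Pi(t,n_1,\dots,n_k)$ via Lemma~\ref{L:PartialNormalAlperin}, and then track the conjugation $c_n$ on $s$ factor by factor. This is where the argument genuinely stalls: for $s$ to lie in the domain of each $c_{n_i}$ (so that the composition in Lemma~\ref{L:ConjugateNormalizer}(b) applies to it), you would need $s$ to normalize the intermediate objects $P_i\in\Delta$, and there is no reason for that. ``Enlarging'' $P_i$ to contain $s$ requires $s$ to normalize $P_i$ (to keep the enlargement a group and to keep the conjugation chain intact), which is precisely what you are trying to prove. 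So the ``bookkeeping'' you defer is not bookkeeping at all; it is the crux, and the route you sketch does not close it.

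The missing idea — which you come within a sentence of and then walk away from — is Lemma~\ref{L:CentralizerPartialNormal}: for any $R\leq S$ the centralizer $C_\L(R)$ is a \emph{partial subgroup} of $\L$. Set $R:=C_2$. By Lemma~\ref{L:CentralizerHelp}, $R\subseteq C_S(T)$ flips to $T\subseteq C_\L(R)$, and $R\subseteq C_S(O^p(N_\N(P)))$ flips to $O^p(N_\N(P))\subseteq C_\L(R)$ for every $P\in R_\Delta(S\N)$. Now take any $n\in\N$ and the word $w=(t,n_1,\dots,n_k)\in\D$ from Lemma~\ref{L:PartialNormalAlperin}: every entry of $w$ lies in $C_\L(R)$, and $w\in\D$, so the partial subgroup property gives $n=\Pi(w)\in C_\L(R)$ directly, with no domain tracking. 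Hence $\N\subseteq C_\L(R)$, and a final application of Lemma~\ref{L:CentralizerHelp} gives $R\subseteq C_S(\N)$. In short: you should not try to push $s$ through the conjugation chain; instead observe that all factors of $n$ centralize $R$ and that centralizers of subgroups of $S$ are partial subgroups, so $n$ itself centralizes $R$.
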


\begin{proof}
Note that $S\in R_\Delta(S\N)$ and $T\subseteq N_\N(S)$. Hence, we have 
\[C_S(\N)\subseteq\bigcap_{P\in R_\Delta(S\N)}C_S(N_\N(P))\subseteq R:=C_S(T)\cap \left(\bigcap_{P\in R_\Delta(S\N)}C_S(O^p(N_\N(P)))\right).\]
As $C_\L(R)$ is by Lemma~\ref{L:CentralizerPartialNormal} a partial subgroup of $\L$, it follows from Lemma~\ref{L:CentralizerHelp} and Lemma~\ref{L:PartialNormalAlperin} that $\N\subseteq C_\L(R)$ and $R\subseteq C_S(\N)$. So equality holds everywhere above as required. 
\end{proof}

Since the elements of $R_\Delta(S\N)$ and thus the $S\N$-radical subgroups play an important role, we now want to study properties of $S\N$-radical subgroups in more detail. The following lemma is particularly useful for linking localities. 

\begin{lemma}\label{L:CSTinSNradical}
Let $P\in\Delta$ such that $N_\L(P)$ is of characteristic $p$ and $P$ is $S\N$-radical. Then $C_S(T)\leq P$.
\end{lemma}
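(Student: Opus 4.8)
The goal is to show that, for $P\in\Delta$ with $N_\L(P)$ of characteristic $p$ and $P$ being $S\N$-radical, we have $C_S(T)\leq P$. The natural strategy is to work inside the finite group $G:=N_{S\N}(P)$, exploit that $P$ is $S\N$-radical (so $O_p(G)=P$), and use the group-theoretic lemmas of Chapter~\ref{S:GroupTheory} together with the characteristic-$p$ hypothesis on $N_\L(P)$.

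First I would set $U:=P\cap T=P\cap\N\unlhd G$, and observe that $U=O_p(N_\N(P))$ since $P$ is $S\N$-radical (by the remarks following Definition~\ref{D:Radical}, $S\N$-radical implies $\N$-radical, hence $O_p(N_\N(P))=P\cap T=U$). Next I would like to bring $C_S(T)$ into play. Since $T=\N\cap S$ is strongly closed (by \cite[Lemma~3.1(a)]{Chermak:2015}) and $C_S(T)$ centralizes $T\supseteq U$, the subgroup $C_S(T)$ normalizes $U$ and $P$; the key point to check is that $C_S(T)\leq N_S(P)\subseteq N_{S\N}(P)=G$, which follows because $C_S(T)$ normalizes $O_p(N_\N(P))=U$ together with... actually one must be a little careful: $C_S(T)$ normalizes $P$ is not automatic. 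The cleaner route is to argue that $C_S(T)$ acts on $N_\N(P)$ — indeed $C_S(T)\leq C_S(U)$ and, because $T$ is strongly closed, $C_S(T)$ normalizes $N_\N(P)$, pushing $C_S(T)$ into $N_{S\N}(P')$ for an appropriate conjugate; alternatively, one replaces $P$ by a suitable $S$-conjugate using Lemma~\ref{L:NradicalConjugate} so that $N_S(P)\in\Syl_p(N_{S\N}(P))$. I would then set $V:=C_S(T)\cap P$ and want to show $V=C_S(T)$.

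The heart of the argument is then a characteristic-$p$ computation in $G=N_{S\N}(P)$ (or rather in $N_\L(P)$, which contains $G$ and is of characteristic $p$). We have $U=O_p(N_\N(P))\leq O_p(G)=P$, and $[U,C_G(U)]=1$ while $C_{O_p(G)}(U)=C_P(U)$. The plan is to apply Lemma~\ref{L:CGNCGQ} (or Lemma~\ref{L:CharpCGNCGQ}) with $N:=N_\N(P)$, $U=O_p(N)$: since $N_\N(P)$ is a normal subgroup of $N_\L(P)$ and $N_\L(P)$ is of characteristic $p$, Lemma~\ref{L:MSCharp}(a) gives $C_{N_\N(P)}(U)\leq U$, so Lemma~\ref{L:CGNCGQ} applies inside $N_\L(P)$ and yields $O^p(C_{N_\L(P)}(N_\N(P)))=O^p(C_{N_\L(P)}(U))$ and $C_{N_\L(P)}(U)'\leq C_{N_\L(P)}(N_\N(P))$. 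Since $C_S(T)\leq C_S(\N\cap S)$ lies in $C_{N_\L(P)}(N_\N(P))$ — here one uses that elements of $N_\N(P)$ are conjugation maps on $T$, so $C_S(T)$ centralizes them — one gets that $C_S(T)$ normalizes $P$ and is a $p$-subgroup of $N_\L(P)$ centralizing $N_\N(P)\supseteq U=O_p(N_\N(P))$. Then Lemma~\ref{L:CGUleqU}, applied with $G=N_\L(P)$ and the normal $p$-subgroup $P=O_p(N_\L(P))$ (noting $C_P(U)\leq U$ fails in general, so instead one directly gets $C_{N_\L(P)}(P)\leq P$ from characteristic $p$), forces any $p$-subgroup centralizing enough of the structure into $P$; concretely $C_S(T)\leq C_{N_S(P)}(N_\N(P))$ is a $p$-subgroup normalized by $O^p(N_\N(P))$ hence, via Lemma~\ref{L:GetintoOp}(b) applied in $N_\N(P)\cdot C_S(T)$, lands in $O_p(N_\N(P)\,C_S(T))$, which combined with $O_p(N_\N(P))=U$ and strong closure gives $C_S(T)\leq P$.

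\textbf{Main obstacle.} The delicate point is not the characteristic-$p$ arithmetic — that is routine given Lemmas~\ref{L:CGUleqU}, \ref{L:CGNCGQ}, \ref{L:CharpCGNCGQ}, \ref{L:GetintoOp} — but rather establishing cleanly that $C_S(T)$ normalizes $P$ (or a convenient conjugate of it) so that it actually sits inside the group $N_{S\N}(P)$ where one can run the argument, and verifying that $C_S(T)$ centralizes all of $N_\N(P)$ and not merely $T$. For this I expect to use strong closure of $T$ in $\F_S(\L)$ together with the fact that every element of $N_\N(P)$ induces via conjugation an automorphism of $T$, so that $C_S(T)$ fixes each such element under conjugation (cf. the lemmas of Chapter~5 on elements fixing partial normal subgroups under conjugation, e.g. Lemma~\ref{L:XfixesRXcentralizesR}); possibly one first reduces, via Lemma~\ref{L:NradicalConjugate}, to the case $N_S(P)\in\Syl_p(N_{S\N}(P))$, which is harmless since both hypotheses and the conclusion are conjugation-invariant. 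Once $C_S(T)\leq C_{N_\L(P)}(N_\N(P))$ is in hand, the containment $C_S(T)\leq P$ drops out of the characteristic-$p$ lemmas as sketched.
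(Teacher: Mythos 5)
Your proposal correctly locates the main tools (the equality $P\cap T=O_p(N_\N(P))$, the inherited characteristic~$p$ of $N_\N(P)$, a commutator computation, and the $p$-group lemma~\ref{L:GetintoOp}), and you correctly identify the sticking point: it is not clear that $C_S(T)$ normalizes $P$, so one cannot blithely place all of $C_S(T)$ inside the finite group $N_\L(P)$ or $N_{S\N}(P)$. However, your proposal does not actually resolve that obstacle; it only gestures at several directions (conjugating $P$, invoking Lemma~\ref{L:XfixesRXcentralizesR}, or arguing that $C_S(T)$ ``centralizes'' $N_\N(P)$), none of which closes the gap. In particular, the claim that $C_S(T)$ lies in $C_{N_\L(P)}(N_\N(P))$ is not justified: before any normalizing property is known there is no reason $C_S(T)$ even sits in $N_\L(P)$, and the fact that elements of $N_\N(P)$ induce automorphisms of $T$ says nothing about commuting with $C_S(T)$ inside the group $N_\L(P)$. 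Also, the final appeal to $O_p(N_\N(P)\cdot C_S(T))$ does not yield $C_S(T)\leq P$, since $C_S(T)$ is not contained in $\N$.

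The paper's proof sidesteps the issue cleanly with a two-step trick you did not land on. First, it sets $X:=N_{C_S(T)}(P)$, which by construction lies in $N_S(P)\leq N_{S\N}(P)\leq N_\L(P)$. Since $X\leq C_S(T)$ centralizes $Q:=P\cap T=O_p(N_\N(P))$, one has $X\leq C_{N_\L(P)}(Q)$ and the commutator computation
\[
[X,N_\N(P)]\leq [C_{N_\L(P)}(Q),N_\N(P)]\leq C_{N_\N(P)}(Q)\leq Q
\]
(using that $N_\N(P)$ is of characteristic~$p$ by Lemma~\ref{L:MSCharp}(a)). Combined with $O^p(N_{S\N}(P))=O^p(N_\N(P))$, Lemma~\ref{L:GetintoOp}(c) (not~(b)) gives $X\leq O_p(N_{S\N}(P))=P$. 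Second, the paper concludes with the standard finite $p$-group observation: since $PC_S(T)$ is a $p$-group and $N_{PC_S(T)}(P)=PX=P$, one must have $PC_S(T)=P$, i.e.\ $C_S(T)\leq P$. Supplying this reduction to $X=N_{C_S(T)}(P)$ and the closing $p$-group step is exactly what your proposal is missing.
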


\begin{proof}
Set $X:=N_{C_S(T)}(P)$. By Lemma~\ref{L:SNradNrad}, $P$ is $\N$-radical, i.e. $Q:=O_p(N_\N(P))=P\cap T$ and thus $X\leq C_{N_\L(P)}(Q)$. Since $N_\L(P)$ is of  characteristic $p$, $N_\N(P)\unlhd N_\L(P)$ is by Lemma~\ref{L:MSCharp}(a) of characteristic $p$, i.e. $C_{N_\N(P)}(Q)\leq Q$. Hence,
\[ [X,N_\N(P)]\leq [C_{N_\L(P)}(Q),N_\N(P)]\leq C_{N_\N(P)}(Q)\leq Q.\]
As $O^p(N_{S\N}(P))=O^p(N_\N(P))$ by \cite[Lemma~6.1(b)]{Henke:2020}, it follows that $[X,O^p(N_{S\N}(P))]$ is a $p$-group. Hence, by  Lemma~\ref{L:GetintoOp}(c), we have $X\leq O_p(N_{S\N}(P))=P$. Thus, $N_{PC_S(T)}(P)=PX=P$ and, as $PC_S(T)$ is a $p$-group, it follows $C_S(T)\leq PC_S(T)=P$.  
\end{proof}

The following lemma will be helpful when moving between localities with different object sets.

\begin{lemma}\label{L:TakePlusRadical}
 Let $(\L^+,\Delta^+,S)$ be a locality with $\Delta\subseteq\Delta^+$ and $\L=\L^+|_\Delta$. Fix $P\in\Delta$ and $\N^+\unlhd\L^+$ such that $\N^+\cap\L=\N$. Then 
\[N_{\L^+}(P)=N_\L(P),\;N_{\N^+}(P)=N_\N(P)\mbox{ and }N_{S\N^+}(P)=N_{S\N}(P).\]
In particular, $P$ is $\N$-radical if and only if $P$ is $\N^+$-radical, and $P$ is $S\N$-radical if and only if $P$ is $S\N^+$-radical.
\end{lemma}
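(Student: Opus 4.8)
The statement is Lemma~\ref{L:TakePlusRadical}: given localities $(\L^+,\Delta^+,S)$ and $(\L,\Delta,S)$ with $\Delta\subseteq\Delta^+$ and $\L=\L^+|_\Delta$, a subgroup $P\in\Delta$ and $\N^+\unlhd\L^+$ with $\N^+\cap\L=\N$, we must show $N_{\L^+}(P)=N_\L(P)$, $N_{\N^+}(P)=N_\N(P)$ and $N_{S\N^+}(P)=N_{S\N}(P)$; the radicality statements then follow immediately from the definitions since $\N$-radicality, $\N^+$-radicality etc.\ are defined purely in terms of $O_p(N_\N(P))$, $O_p(N_{\N^+}(P))$, $O_p(N_{S\N}(P))$, $O_p(N_{S\N^+}(P))$. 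So the whole content is the three equalities of normalizers.

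The key point is that $N_{\L^+}(P)$ is a \emph{subgroup} of $\L^+$ (since $P\in\Delta\subseteq\Delta^+$), and for every $g\in N_{\L^+}(P)$ we have $P\leq S_g$; as $P\in\Delta$ this gives $S_g\in\Delta$ by overgroup-closure of $\Delta$ in $S$, hence $g\in\L^+|_\Delta=\L$. Thus $N_{\L^+}(P)\subseteq\L$, and since conjugation is computed the same way in $\L$ and $\L^+$ (the product on $\L$ is the restriction of the product on $\L^+$), we get $N_{\L^+}(P)=N_\L(P)$. For the second equality, $N_{\N^+}(P)=\N^+\cap N_{\L^+}(P)=\N^+\cap N_\L(P)\subseteq\N^+\cap\L=\N$, and conversely $N_\N(P)=\N\cap N_\L(P)=\N\cap N_{\L^+}(P)\subseteq\N^+\cap N_{\L^+}(P)=N_{\N^+}(P)$; so $N_{\N^+}(P)=N_\N(P)$. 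For the third equality, recall that by \cite[Lemma~2.9, Lemma~4.1]{Chermak:2015} both $(S\N,\Delta,S)$ and $(S\N^+,\Delta^+,S)$ are localities, and by Lemma~\ref{L:RestrictionIntersectL} (applied with $\K^+=\N^+$ and $R=S$) we have $S\N^+\cap\L=S(\N^+\cap\L)=S\N$; thus $S\N=(S\N^+)|_\Delta$ and we are in exactly the same situation as before with $S\N^+$ in place of $\L^+$ and $S\N$ in place of $\L$. Applying the first equality to this pair of localities (with the same $P\in\Delta$) yields $N_{S\N^+}(P)=N_{S\N}(P)$.

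Finally, having established the three normalizer equalities, the remaining assertions are immediate: $P$ is $\N$-radical iff $O_p(N_\N(P))\leq P$ iff $O_p(N_{\N^+}(P))\leq P$ iff $P$ is $\N^+$-radical; and $P$ is $S\N$-radical iff $O_p(N_{S\N}(P))=P$ iff $O_p(N_{S\N^+}(P))=P$ iff $P$ is $S\N^+$-radical.

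\textbf{Main obstacle.} There is no serious obstacle; the only point requiring care is the bookkeeping that $S\N^+\cap\L=S\N$ so that $S\N=(S\N^+)|_\Delta$ — this is where Lemma~\ref{L:RestrictionIntersectL} is used — together with checking that the restriction $(S\N^+)|_\Delta$ really coincides with $S\N$ as a locality (object set $\Delta$, same maximal $p$-subgroup $S$), which is routine from the definitions. Everything else is the observation that $P\in\Delta$ forces $S_g\in\Delta$ for any $g$ normalizing $P$, hence $g\in\L$.
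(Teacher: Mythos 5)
Your proof is correct and follows essentially the same route as the paper's (which is only three sentences long): observe that $P\in\Delta$ forces $N_{\L^+}(P)\subseteq\L$ and hence $N_{\L^+}(P)=N_\L(P)$, intersect with $\N^+$ to get the second equality, and invoke Lemma~\ref{L:RestrictionIntersectL} for $S\N^+\cap\L=S\N$ to get the third. The one minor stylistic difference is in the third equality: you set up $(S\N^+,\Delta^+,S)$ as a locality with $(S\N^+)|_\Delta=S\N$ and re-apply the first equality to this new pair, whereas the paper more directly computes $N_{S\N^+}(P)=S\N^+\cap N_{\L^+}(P)=S\N^+\cap\L\cap N_\L(P)=S\N\cap N_\L(P)=N_{S\N}(P)$, avoiding the need to discuss the locality structure on $S\N^+$ at all — but this is a matter of taste, not substance.
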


\begin{proof}
As $\L=\L^+|_\Delta$, we have $N_{\L^+}(P)=N_\L(P)$ and thus $N_{\N^+}(P)=N_\N(P)$. Moreover, Lemma~\ref{L:RestrictionIntersectL} gives  $S\N^+\cap\L=S(\N^+\cap\L)=S\N$ and thus $N_{S\N^+}(P)=N_{S\N}(P)$. This implies the assertion.
\end{proof}

Recall that every $S\N$-radical subgroup is $\N$-radical by Lemma~\ref{L:SNradNrad}. In many contexts, it seems indeed more convenient to work with $\N$-radical objects rather than with $S\N$-radical objects. In Lemma~\ref{L:NradMrad} and Lemma~\ref{L:NradicalConjugate} we saw already that this concept has nice inheritance properties. More such properties will be shown in Lemmas~\ref{L:NradNOpLrad} and \ref{L:PradQrad} below.

\begin{lemma}\label{L:PRintersectT}
Suppose $P\in\Delta$ is $\N$-radical.  If $R\leq S$ such that $N_\N(P)P\subseteq N_\L(R)$, then $(PR)\cap T=P\cap T$.  
\end{lemma}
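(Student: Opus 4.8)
The statement to prove is: if $P\in\Delta$ is $\N$-radical and $R\leq S$ with $N_\N(P)P\subseteq N_\L(R)$, then $(PR)\cap T=P\cap T$. The inclusion $P\cap T\subseteq (PR)\cap T$ is immediate since $P\leq PR$ and $P\cap T\subseteq T$. So the real content is the reverse inclusion $(PR)\cap T\subseteq P\cap T$, equivalently (since $(PR)\cap T\subseteq T$ automatically) showing $(PR)\cap T\subseteq P$. First I would observe that $PR$ is a $p$-subgroup of $S$ (both $P$ and $R$ lie in $S$, and $N_\N(P)P\subseteq N_\L(R)$ in particular forces $P$ to normalize $R$, so $PR$ is a genuine subgroup). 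Set $Q:=(PR)\cap T$; this is a $p$-subgroup normalized by $P$ (since $P$ normalizes both $PR$ and $T$, the latter because $T=\N\cap S\unlhd S$), so $PQ$ is a $p$-group with $P\unlhd PQ$.

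The key step is to show that $Q$ is normalized by $N_\N(P)$, so that $[Q, O^p(N_\N(P))]$ or rather $Q$ itself feeds into the radical condition. Indeed, $N_\N(P)$ normalizes $P$ by definition, normalizes $R$ by hypothesis ($N_\N(P)\subseteq N_\N(P)P\subseteq N_\L(R)$), hence normalizes $PR$; and $N_\N(P)$ normalizes $T$ since $T=\N\cap S\unlhd\L$ intersected appropriately — more carefully, any element $n\in N_\N(P)$ with its conjugation map defined on $PR$ sends $T\cap PR$ into $\N\cap (PR)^n = \N\cap PR$ because $\N\unlhd\L$ and $(PR)^n=PR$. So $Q=(PR)\cap T$ is $N_\N(P)$-invariant. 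Therefore $N_\N(P)$ normalizes $PQ$ (as $P$ and $Q$ are both $N_\N(P)$-invariant), so $PQ$ is normalized by $N_\N(P)P$. Then $N_{N_\N(P)}(PQ)=N_\N(P)$, i.e. $N_\N(P)\subseteq N_\L(PQ)$, and $N_\N(P)\subseteq N_\N(PQ)$ with $PQ\leq S$, $PQ\in\Delta$ (as $\Delta$ is overgroup-closed in $S$ and $P\in\Delta$). Now $N_\N(PQ)\unlhd N_\N(P)$? Actually since $N_\N(P)$ normalizes $PQ$ we get $N_\N(P)\leq N_\N(PQ)$, and conversely $N_\N(PQ)\leq N_\N(P)$ is not automatic. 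The cleaner route: $PQ$ is a $p$-subgroup normalized by $N_\N(P)$, hence $O_p(N_\N(P))\cdot(PQ)$ is... let me instead argue $PQ\leq O_p(N_\N(P)P)$-type statement. Since $N_\N(P)$ normalizes the $p$-group $PQ$, and $N_\N(P)\unlhd N_\N(P)P$ (as $N_\N(P)$ is a partial normal... actually $N_\N(P)\unlhd N_\L(P)\supseteq N_\N(P)P$), apply Lemma~\ref{L:GetintoOp}: $PQ$ normalized by $O^p(N_\N(P))$ gives $PQ\subn N_\N(P)P$ hence $PQ\leq O_p(N_\N(P)P)$. But more directly, $Q$ is normalized by $N_\N(P)$ so $[Q,N_\N(P)]\leq Q\leq T$, and $Q\cap P^{\text{something}}$...

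The simplest finish: since $Q$ is a $p$-subgroup of $T$ normalized by $N_\N(P)$ and $Q$ normalizes $P$ (so $QP$ is a $p$-group, $P\unlhd QP$, $QP$ normalized by $N_\N(P)$), we get that $Q$ normalizes $P$ and $QP\leq O_p(N_{S\N}(P))$ by applying Lemma~\ref{L:GetintoOp}(b) inside the locality $(S\N,\Delta,S)$: $QP$ is a $p$-subgroup normalized by $O^p(N_{S\N}(P))=O^p(N_\N(P))$ (using \cite[Lemma~6.1(b)]{Henke:2020}), hence $QP\leq O_p(N_{S\N}(P))$. Wait — I need $QP$ normalized by $O^p(N_\N(P))$, which follows from $QP$ being $N_\N(P)$-invariant. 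Then since $P$ is $\N$-radical, $O_p(N_\N(P))=P\cap T$, and by Lemma~\ref{L:SNradNrad}-type reasoning $O_p(N_{S\N}(P))$ — hmm, that requires $S\N$-radical. Let me use the $\N$-radical hypothesis directly: $Q$ centralizing/normalizing $P$ with $Q\leq T$ means $Q\leq N_T(P)=N_\N(P)\cap S$, and $[Q,O^p(N_\N(P))]\leq Q\cap$ something. Actually $Q\unlhd$ the $p$-group $N_{N_\N(P)}(\cdot)$... The cleanest: $Q\leq N_\N(P)$, $Q$ is a $p$-subgroup, $Q$ is normalized by $N_\N(P)$ (shown above), so $Q\leq O_p(N_\N(P))=P\cap T\leq P$ since $P$ is $\N$-radical. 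Hence $Q=(PR)\cap T\leq P$, so $(PR)\cap T\leq P\cap T$, giving equality. I expect the main obstacle to be the careful verification that $Q=(PR)\cap T$ is genuinely normalized (not just mapped into itself on a restricted domain) by all of $N_\N(P)$ — one must check the conjugation maps are defined on $PR$ for elements of $N_\N(P)$, which follows because $N_\N(P)\subseteq N_\L(R)$ and $N_\L(P)$, so conjugation is defined on $P$ and on $R$ hence on $PR$ by the locality axioms, and $\N\unlhd\L$ then forces $Q$ into itself; and to be careful that $Q$ normalizes $P$ so that $QP$ is a $p$-group and the "$\N$-radical $\Rightarrow O_p(N_\N(P))\leq P$" step applies to a legitimate $p$-subgroup of $N_\N(P)$.
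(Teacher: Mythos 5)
There is a genuine gap in your ``cleanest finish.'' You assert $Q\leq N_\N(P)$, which is equivalent to claiming that $Q$ normalizes $P$, but you never justify this and it is not automatic. What you \emph{do} have is that $P$ normalizes $Q$ (since $P$ normalizes both $PR$ and $T$), which gives $Q\unlhd QP$, not $P\unlhd QP$. Without $Q\subseteq N_\N(P)$ the crucial step ``$Q$ is a $p$-subgroup of $N_\N(P)$ normalized by $N_\N(P)$, hence $Q\leq O_p(N_\N(P))$'' does not get off the ground, because $Q$ need not even be a subset of $N_\N(P)$. Your earlier passages circle around the right fix several times (you write ``$QP\leq O_p(N_\N(P)P)$'' and mutter about $N_{N_\N(P)}(PQ)$) but you discard them in favour of the flawed shortcut.

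The paper's proof avoids exactly this pitfall by replacing $Q$ with $N_Q(P)$ (it writes $X$ for your $Q$ and considers $N_X(P)$). The subgroup $N_X(P)$ genuinely lies in $N_\N(P)$ by construction, it is a $p$-subgroup normalized by $N_\N(P)$ (using the normalization of $X$ that you did correctly establish), so $N_X(P)\leq O_p(N_\N(P))=P\cap T\leq P$ by $\N$-radicality. Then one applies the standard fact that in the $p$-group $XP$, if $N_{XP}(P)=N_X(P)P\leq P$ then $XP=P$, since a proper subgroup of a $p$-group is always properly contained in its normalizer. This final $p$-group argument is the piece you are missing and cannot do without. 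The ingredients you correctly identified (that $X=(PR)\cap T$ is normalized by $P$ and by $N_\N(P)$, that $XP$ is a $p$-group, and that the $\N$-radical hypothesis gives $O_p(N_\N(P))=P\cap T$) are all what the paper uses; the extra step of passing to $N_X(P)$ and then invoking the normalizer condition for $p$-groups is what closes the argument.
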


\begin{proof}
As $P\subseteq N_\L(R)$, the product $PR$ is a $p$-group. Thus, $X:=(PR)\cap T$ is a $p$-group normalized by $P$, which implies that $XP$ is a $p$-group as well. Since $N_\N(P)\subseteq N_\L(R)$, we have $N_\N(P)\subseteq N_\N(RP)\subseteq N_\N(X)$ and thus $N_X(P)\leq O_p(N_\N(P))=P$. Hence,  $N_{XP}(P)=N_X(P)P\leq P$. We can conclude that $XP=P$ and thus $X=P\cap T$ as required.
\end{proof}

\begin{lemma}\label{L:NradNOpLrad}
 Let $P\in\Delta$ be $\N$-radical with $P\leq T$. Then $PO_p(\L)$ is $\N O_p(\L)$-radical.
\end{lemma}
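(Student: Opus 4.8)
The goal is to show that if $P\in\Delta$ is $\N$-radical with $P\leq T$, then $P_0:=PO_p(\L)$ is $\N O_p(\L)$-radical, i.e.\ $O_p(N_{\N O_p(\L)}(P_0))\leq P_0$. Set $U:=O_p(\L)$, $\M:=\N U$ (a partial normal subgroup of $\L$ by Theorem~\ref{T:ProductsPartialNormal}), and $M:=\M\cap S=T U$ by the same theorem. First I would record some easy structural facts: $P_0=PU\in\Delta$ since $\Delta$ is overgroup-closed in $S$ and $U\leq S$, $P\leq P_0\leq M$, and $U\leq P_0$; moreover $U\unlhd\L$ so $U$ is normal in each group $N_\L(Q)$, $N_\M(Q)$, etc. The inclusion $P_0\subseteq N_\L(P)$ fails in general, so I cannot simply intersect normalizers; instead the idea is to compare $N_\M(P_0)$ with $N_\N(P)$ using the Dedekind-type identities available in localities (Theorem~\ref{T:ProductsPartialNormal} and the coset lemmas of Chermak cited in the excerpt).

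The core step is to relate $N_\M(P_0)$ to $N_\N(P)U$. Because $P\leq T$, I would first argue that $N_\N(P)$ normalizes $PU=P_0$: indeed $N_\N(P)$ normalizes $P$ and normalizes $U$ (as $U\unlhd\L$), hence normalizes $P_0$, so $N_\N(P)\subseteq N_\M(P_0)$ and therefore $N_\N(P)U\subseteq N_\M(P_0)$. For the reverse containment $N_\M(P_0)\subseteq N_\N(P)U$, I would take $f\in N_\M(P_0)$, write $f=nu$ with $n\in\N$, $u\in U$, $(n,u)\in\D$ and $S_f=S_{(n,u)}$ via Theorem~\ref{T:ProductsPartialNormal}; since $U\leq S\leq\D$-words and $U\unlhd\L$, one gets $n=fu^{-1}\in N_\M(P_0)U=N_\M(P_0)$, so without loss $f\in N_\M(P_0)\cap\N=N_\N(P_0)$. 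Now $N_\N(P_0)$ normalizes $P_0\cap T$; the point is that $P_0\cap T=PU\cap T$ should equal $P(U\cap T)=P$ precisely because $P$ is $\N$-radical: this is where Lemma~\ref{L:PRintersectT} (or Lemma~\ref{L:PRintersectT} applied with $R=U$, noting $N_\N(P)P\subseteq N_\L(U)=\L$) gives $(PU)\cap T=P\cap T=P$. Hence $N_\N(P_0)$ normalizes $P$, i.e.\ $N_\N(P_0)\subseteq N_\N(P)$, and combining, $N_\M(P_0)=N_\N(P)U$ (as sets/partial subgroups, the product being the one inside the group $N_\L(P_0)$ — all elements here lie in the group $N_\L(P_0)$ since $U$-conjugation is everywhere defined).

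With the identification $N_\M(P_0)=N_\N(P)U$ in place, finishing is a finite-group computation inside the group $N_\L(P_0)$: I have $N_\N(P)\unlhd N_\M(P_0)=N_\N(P)U$ with $U$ a normal $p$-subgroup of $N_\L(P_0)$ (hence of $N_\M(P_0)$), so by a standard argument (e.g.\ the Dedekind modular law together with $O_p$ of an extension by a normal $p$-subgroup) $O_p(N_\M(P_0))=O_p(N_\N(P))U$. Since $P$ is $\N$-radical, $O_p(N_\N(P))\leq P$, whence $O_p(N_\M(P_0))\leq PU=P_0$, which is exactly the statement that $P_0$ is $\M=\N O_p(\L)$-radical. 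Strictly, I should double check that $O_p(N_{\M}(P_0))=O_p(N_{\N O_p(\L)}(P_0))$ where the latter normalizer is taken inside the locality $(\N O_p(\L)?,\dots)$ --- but $\N O_p(\L)$ is a partial subgroup of $\L$ and $N_{\N O_p(\L)}(P_0)$ is literally $\N O_p(\L)\cap N_\L(P_0)=N_\M(P_0)$, so there is no ambiguity.

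\textbf{Main obstacle.} The delicate point is the equality $N_\M(P_0)=N_\N(P)U$, specifically the inclusion $N_\M(P_0)\subseteq N_\N(P)U$: one must control how an element $f=nu$ of $N_\M(P_0)$ decomposes and use $\N$-radicality via $(PU)\cap T=P$ to force $n$ (after adjusting by an element of $U$) into $N_\N(P)$. The risk is that $n$ normalizes $P_0$ but a priori not $P$; the resolution is precisely Lemma~\ref{L:PRintersectT}, which says $P_0\cap T=P$, so that $N_\N(P_0)$ — normalizing both $P_0$ and $T=\N\cap S$ — normalizes $P_0\cap T=P$. Once this is clean, everything else is routine group theory with $O_p$.
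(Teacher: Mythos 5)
Your proposal is correct and follows essentially the same route as the paper: you invoke Lemma~\ref{L:PRintersectT} with $R=O_p(\L)$ to get $(PO_p(\L))\cap T=P$, use this to identify $N_\N(PO_p(\L))=N_\N(P)$, apply a Dedekind argument to obtain $N_{\N O_p(\L)}(PO_p(\L))=N_\N(P)O_p(\L)$, and then finish with a standard $O_p$-computation in that finite group. The paper's proof is a more compressed version of exactly this argument.
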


\begin{proof}
 Set $Q:=PO_p(\L)$. By Lemma~\ref{L:PRintersectT} applied with $O_p(\L)$ in place of $R$, we have $Q\cap T=P\cap T=P$. It follows now from the definition of $O_p(\L)$ and the fact that $T$ is strongly closed that
\[N:=N_\N(P)=N_\N(Q)\unlhd G:=N_{\N O_p(\L)}(Q).\]
As $P$ is $\N$-radical, we conclude that $O_p(G)\cap N=O_p(N)=P$. Observe that $O_p(\L)\leq N_\L(Q)$ and so the Dedekind argument for partial groups \cite[Lemma~1.10]{Chermak:2015} gives $G=N_\L(Q)\cap \N O_p(\L)=N_\N(Q)O_p(\L)=NO_p(\L)$. Since $O_p(\L)\leq O_p(G)$, using the Dedekind argument for groups, it follows that $O_p(G)=(O_p(G)\cap N)O_p(\L)=PO_p(\L)=Q$. This means that $Q$ is $\N O_p(\L)$-radical.
\end{proof}

\begin{lemma}\label{L:Lemma0}
 Let $P\in\Delta$ and $Q\leq P$ such that $P\cap T\leq Q$. Then $N_\N(P)\leq N_\N(Q)$.
\end{lemma}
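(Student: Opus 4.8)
The plan is to fix $n\in N_\N(P)$ and show $n\in N_\N(Q)$; that is, $Q\subseteq\D(n)$ and $Q^n=Q$. The key observation is that everything relevant takes place inside $N_\L(P)$, which is a genuine subgroup of $\L$ (so $\W(N_\L(P))\subseteq\D$ and all products of its elements agree with its group operation), together with a single use of the hypothesis $\N\unlhd\L$.

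First I would record the easy containments. Since $n\in N_\L(P)$ we have $P\subseteq\D(n)$ and $P^n=P$, hence $Q\subseteq P\subseteq\D(n)$ and $Q^n\leq P^n=P$. Next, for $s\in Q$ consider the commutator $[s,n]=s^{-1}s^{n}$, computed in the group $N_\L(P)$, which contains $s,s^{-1}\in P$ as well as $n,n^{-1}$. On one hand $s^{-1}\in P$ and $s^{n}\in P^{n}=P$, so $[s,n]\in P$. On the other hand, writing $[s,n]=\Pi\bigl((n^{-1})^{s},n\bigr)$ with $(n^{-1})^{s}=\Pi(s^{-1},n^{-1},s)$, the element $(n^{-1})^{s}$ lies in $\N$ because $n^{-1}\in\N$, the conjugate is defined (the relevant word lies in $\W(N_\L(P))\subseteq\D$), and $\N\unlhd\L$; since $\N$ is a partial subgroup, $[s,n]\in\N$. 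Therefore $[s,n]\in\N\cap P=\N\cap S\cap P=T\cap P=P\cap T\leq Q$, and so $s^{n}=s\,[s,n]\in Q$ because $s\in Q$, $[s,n]\in Q$, and $Q\leq P$ is a subgroup. This proves $Q^{n}\leq Q$.

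Finally, since $N_\N(P)=\N\cap N_\L(P)$ is a group, $n^{-1}$ also lies in $N_\N(P)$, and applying the previous paragraph to $n^{-1}$ yields $Q^{n^{-1}}\leq Q$. Conjugating by $n$ and using that $c_{n}$ and $c_{n^{-1}}$ are mutually inverse on the relevant domains (again, inside $N_\L(P)$), we get $Q=(Q^{n^{-1}})^{n}\leq Q^{n}\leq Q$, so $Q^{n}=Q$ and $n\in N_\N(Q)$. As $n\in N_\N(P)$ was arbitrary, $N_\N(P)\leq N_\N(Q)$.

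I do not anticipate any real obstacle: the only care needed is the partial-group bookkeeping, i.e.\ checking that words such as $(s^{-1},n^{-1},s)$ lie in $\D$ and that the resulting products coincide with ordinary group products — but this is automatic once one notes that $s,s^{-1},n,n^{-1}$ all lie in the subgroup $N_\L(P)$ of $\L$. The one genuinely non-group input is the partial normality of $\N$, used only to see that $(n^{-1})^{s}\in\N$.
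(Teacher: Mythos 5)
Your proof is correct and uses essentially the same idea as the paper: in both cases the key step is that $[Q,N_\N(P)]\leq P\cap\N=P\cap T\leq Q$, computed inside the finite group $N_\L(P)$. The paper packages this as a single containment of normal subgroups, while you unpack it element-by-element and also include the (unnecessary, since $Q$ is finite) extra step with $n^{-1}$, but the underlying argument is the same.
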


\begin{proof}
Computing in the group $N_\L(P)$, we have 
\[[Q,N_\N(P)]\leq [P,N_\N(P)] \leq P\cap N_\N(P)=P\cap T\leq Q.\] 
\end{proof}

\begin{lemma}\label{L:PradQrad}
 Let $P,Q\in\Delta$ with $P\cap T\leq Q\leq P$. If $P$ is $\N$-radical, then $Q$ is $\N$-radical.
\end{lemma}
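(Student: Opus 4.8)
The plan is to show that $Q$ is $\N$-radical by directly analyzing the normalizer $N_\N(Q)$ and comparing it to $N_\N(P)$. The key point is that the hypothesis $P\cap T\leq Q\leq P$ ties the two normalizers together tightly enough. First I would invoke Lemma~\ref{L:Lemma0} to get $N_\N(P)\leq N_\N(Q)$. Next, observe that $Q\cap T = Q\cap\N$; since $Q\leq P$ we have $Q\cap T\leq P\cap T$, and since $P\cap T\leq Q$ by hypothesis we also have $P\cap T\leq Q\cap T$, so in fact $Q\cap T=P\cap T$. Thus the assumption that $P$ is $\N$-radical says $O_p(N_\N(P))=P\cap T=Q\cap T$, and to prove $Q$ is $\N$-radical it suffices to show $O_p(N_\N(Q))\leq Q\cap T=P\cap T$.

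The central step is to locate $O_p(N_\N(Q))$ inside $N_\N(P)$. Set $U:=O_p(N_\N(Q))$, a normal $p$-subgroup of $N_\N(Q)$. I want to show $U$ normalizes $P$, so that $U\leq N_\N(P)$. Here I would use that $P\cap T\leq Q$, hence $U$ normalizes $Q$ and therefore normalizes $QU$, which is a $p$-group (as $U\unlhd N_\N(Q)$ and $Q\leq N_\N(Q)\cap S_\cdot$... more precisely $Q$ acts on $U$). The cleanest route: consider $PU$. Since $U$ normalizes $Q$ and $Q\cap T=P\cap T$, and $P$ centralizes/normalizes... — the point is that $P$ normalizes $P\cap T$, and I want $PU$ to be a $p$-group with $U$ normal in a suitable overgroup. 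Actually the slick argument is: $N_\N(Q)$ is a finite group, $Q\unlhd N_\N(Q)$, and $P\cap T=Q\cap T\unlhd N_\N(Q)$; since $U=O_p(N_\N(Q))$ and $P$ acts on $N_\N(Q)$ via conjugation inside $N_\L(Q)$ (note $P\leq N_\L(Q)$ because $Q\leq P$ forces $P\leq N_S(Q)\leq N_\L(Q)$), $P$ normalizes $U$. Hence $PU$ is a $p$-group. Now $N_\N(P)\supseteq N_{PU}(P)$, and since $PU$ is a $p$-group, $N_{PU}(P)>P$ unless $PU=P$, i.e. $U\leq P$. To force $U\leq P$: $N_U(P)$ is normalized by $N_\N(Q)$-elements that normalize $P$... — instead, I would argue that $U\leq N_\L(P)$ (shown above) gives $U\leq N_\N(P)$, and $U$ is then a $p$-subgroup of $N_\N(P)$ normalized by... no, $U$ need not be normal in $N_\N(P)$. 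So instead: $U\cap P\cdot$ — here one should use that $N_\N(P)\unlhd$ nothing special, but rather apply Lemma~\ref{L:GetintoOp}(c) or a Dedekind-type argument as in Lemma~\ref{L:NradNOpLrad}: since $U$ normalizes $P$ and $U\leq N_\N(Q)$, and $N_\N(P)\leq N_\N(Q)$ with $N_\N(Q)=N_\N(P)\cdot$(something), one concludes $U\leq O_p$ of the larger normalizer.

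So the argument I expect to work is this. Since $P\leq N_\L(Q)$ (as $Q\leq P\leq N_S(Q)$), the group $P$ normalizes $N_\N(Q)$ and hence normalizes $U=O_p(N_\N(Q))$; thus $PU$ is a $p$-group. Then $U\leq N_\L(PU)$, and since $U\unlhd N_\N(Q)\supseteq N_\N(P)$ while $P\leq PU$, a Dedekind argument (\cite[Lemma~1.10]{Chermak:2015}) applied to $N_\N(Q)=N_{N_\N(Q)}(P)\cdot$—more carefully: $N_\N(P)=N_{N_\N(Q)}(P)$, and $U$ normalizes $P$ so $U\leq N_{N_\N(Q)}(P)=N_\N(P)$; then $U$ is a $p$-subgroup of $N_\N(P)$ contained in $U\cap N_\N(P)$ and normalized by $N_\N(P)$? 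No. The right finish: $UP$ is a $p$-group normalized by $O^p(N_\N(Q))$? Not obviously. I think the genuinely clean finish mirrors Lemma~\ref{L:PRintersectT}: with $R:=P$ in place of $R$ there, since $N_\N(Q)Q\subseteq N_\L(P)$ is false in general — so instead apply Lemma~\ref{L:PRintersectT} in the reverse role. The main obstacle will be pinning down exactly this inclusion $O_p(N_\N(Q))\leq P$, i.e. showing $N_\N(Q)$ does not acquire a larger $p$-core than $N_\N(P)$ does; I expect it follows from the observation $N_\N(P)\unlhd N_\N(Q)$ is false but $O_p(N_\N(Q))\cap T\le Q\cap T=P\cap T$ combined with $P$-invariance of $U$ via a $p$-group/Dedekind argument exactly as in the proof of Lemma~\ref{L:NradNOpLrad}. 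Once $O_p(N_\N(Q))\leq P$ is established, then $O_p(N_\N(Q))\leq P\cap\N=P\cap T=Q\cap T$, which is precisely the statement that $Q$ is $\N$-radical.
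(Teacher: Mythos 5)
Your proposal hinges on the claim that ``$Q\leq P$ forces $P\leq N_S(Q)\leq N_\L(Q)$'', and this is simply false: containment of subgroups does not give normalization, so you cannot conclude that $P$ normalizes $Q$, and therefore you cannot conclude that $P$ normalizes $U=O_p(N_\N(Q))$. Everything downstream of that assertion breaks. This is precisely the gap that the paper's proof is structured around, via a minimal-counterexample (inductive) reduction to the case $Q\unlhd P$: take a counterexample $(P,Q)$ with $|P:Q|$ minimal, set $P_0:=N_P(Q)$, note $Q<P_0$ so $|P:P_0|<|P:Q|$ and $(P,P_0)$ is not a counterexample, whence $P_0$ is $\N$-radical; if $P_0<P$ then $|P_0:Q|<|P:Q|$ and $(P_0,Q)$ is not a counterexample either, so $Q$ would be $\N$-radical. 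Hence $P_0=P$, i.e.\ $Q\unlhd P$.

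Once you have $Q\unlhd P$, the ingredients you identified do combine cleanly, but you never actually locked down the finish (and your several tentative attempts went in circles). With $Q\unlhd P$ one has $P\leq N_\L(Q)$, so $P$ normalizes $R:=O_p(N_\N(Q))$ and $RP$ is a $p$-group. By Lemma~\ref{L:Lemma0}, $N_\N(P)\leq N_\N(Q)$, so $R\cap N_\N(P)$ is a normal $p$-subgroup of $N_\N(P)$; thus
\[
N_R(P)=R\cap N_\N(P)\leq O_p(N_\N(P))\leq P,
\]
using that $P$ is $\N$-radical. Then $N_{RP}(P)=N_R(P)P=P$, and since $RP$ is a $p$-group this forces $RP=P$, i.e.\ $R\leq P\cap\N=P\cap T\leq Q$. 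So the key missing idea is the reduction to $Q\unlhd P$, and the key missing step is recognizing that it is $R\cap N_\N(P)$ (not $R$ itself) that is a \emph{normal} $p$-subgroup of $N_\N(P)$, which lands it in $O_p(N_\N(P))$.
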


\begin{proof}
Let $(P,Q)$ be a counterexample such that $|P:Q|$ is minimal. Clearly, $Q\neq P$, so $Q<P_0:=N_P(Q)$ and $|P:P_0|<|P:Q|$. Hence, $(P,P_0)$ is not a counterexample and thus $P_0$ is $\N$-radical. If $P_0$ were properly contained in $P$, then $|P_0:Q|<|P:Q|$, so $(P_0,Q)$ were not a counterexample and $Q$ were $\N$-radical. Hence $P_0=P$, i.e. $Q\unlhd P$. In particular, $P$ normalizes
\[R:=O_p(N_\N(Q))\]
and so $RP$ is a $p$-group. By Lemma~\ref{L:Lemma0}, we have $N_\N(P)\leq N_\N(Q)$. Hence, $N_R(P)=R\cap N_\N(P)\leq O_p(N_\N(P))\leq P$ and so  $N_{RP}(P)=N_R(P)P=P$. As $RP$ is a $p$-group, it follows that $RP=P$. This implies $R\leq P\cap\N=P\cap T\leq Q$, i.e. $Q$ is $\N$-radical. 
\end{proof}

Set now 
\[\F:=\F_S(\L)\mbox{ and }\E:=\F_T(\N).\]
Our next aim is to investigate the properties of $\N$-radical subgroups in terms of the fusion systems $\F$ and $\E$. 

\begin{lemma}\label{L:NradEcr}
Let $P\in\Delta$ such that $P$ is $\N$-radical. Assume that $(\L,\Delta,S)$ is a linking locality or assume more generally that $\F$ is saturated and $N_\L(P)$ is of characteristic $p$. Then \[P\cap T\in\F_T^c\subseteq\E^c.\] 
\end{lemma}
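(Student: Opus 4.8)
The plan is to reduce the whole statement to a single centralizer computation. First, the inclusion $\F_T^c\subseteq\E^c$ is immediate: every $\E$-conjugate of a subgroup $U\leq T$ is an $\F$-conjugate of $U$ lying in $T$ (since $\E=\F_T(\N)$ and $\N\subseteq\L$), so the defining condition of $\F_T^c$ already covers all $\E$-conjugates. Thus it remains only to show $Q:=P\cap T\in\F_T^c$. Recall $T$ is strongly closed by \cite[Lemma~3.1(a)]{Chermak:2015}, and $\F$ is saturated (by Theorem~\ref{T:Saturation} in the linking locality case, by hypothesis otherwise). Hence, by Lemma~\ref{L:FTcFRc}(a), it suffices to produce \emph{one} fully $\F$-normalized $\F$-conjugate $Q^*$ of $Q$ with $C_T(Q^*)\leq Q^*$.

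To build $Q^*$, I would fix any fully $\F$-normalized $Q^*\in Q^\F$ and, using that $\F$ is saturated, apply \cite[Lemma~2.6(c)]{Aschbacher/Kessar/Oliver:2011} to obtain $\alpha\in\Hom_\F(N_S(Q),S)$ with $Q\alpha=Q^*$. Since $Q=P\cap T\unlhd P$, we have $P\leq N_S(Q)$, so $P^*:=P\alpha$ is defined; writing $\alpha|_P=c_f|_P$ with $f\in\L$ and $P\leq S_f$ (Lemma~\ref{L:LocalityFusionSystem}(a)), we see $P^*=P^f$ is an $\L$-conjugate of $P$. Hence $P^*\in\Delta$ is $\N$-radical by Lemma~\ref{L:NradicalConjugate}, and $N_\L(P^*)=N_\L(P)^f$ is of characteristic $p$. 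Because $T$ is strongly closed, $\alpha$ carries subgroups of $N_S(Q)\cap T$ into $T$ and $\alpha^{-1}$ carries subgroups of $(N_S(Q)\alpha)\cap T$ into $T$; a short check with these two containments gives $P^*\cap T=(P\cap T)\alpha=Q^*$. So the lemma comes down to the following claim: \emph{if $P^*\in\Delta$ is $\N$-radical with $N_\L(P^*)$ of characteristic $p$, then $C_T(P^*\cap T)\leq P^*\cap T$.}

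For this claim, set $R:=P^*\cap T$. Since $T$ is strongly closed, $R\unlhd N_\L(P^*)$, hence $R\leq O_p(N_\N(P^*))$; and $\N$-radicality gives $O_p(N_\N(P^*))\leq P^*\cap\N=R$, so $R=O_p(N_\N(P^*))$. As $N_\N(P^*)=N_\L(P^*)\cap\N\unlhd N_\L(P^*)$ and $N_\L(P^*)$ is of characteristic $p$, Lemma~\ref{L:MSCharp}(a) gives that $N_\N(P^*)$ is of characteristic $p$, whence $C_{N_\N(P^*)}(R)\leq R$. Now $Y:=C_T(R)$ is a $p$-subgroup of $S$ normalized by $P^*$ (which normalizes $R$ and, lying in $S$, normalizes $T$), so $P^*Y$ is a $p$-group. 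Any $y\in N_Y(P^*)$ lies in $N_\L(P^*)\cap\N=N_\N(P^*)$ and centralizes $R$, so $N_Y(P^*)\leq C_{N_\N(P^*)}(R)\leq R\leq P^*$; therefore $N_{P^*Y}(P^*)=P^*N_Y(P^*)=P^*$, which in the $p$-group $P^*Y$ forces $P^*Y=P^*$. Thus $Y\leq P^*$, and since $Y\leq T$ we conclude $C_T(R)=Y\leq P^*\cap T=R$, as required.

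The only genuinely non-formal point is the reduction in the middle paragraph: one must realize a fully $\F$-normalized conjugate of $Q=P\cap T$ as $P^*\cap T$ for an $\L$-conjugate $P^*$ of $P$, so that both $\N$-radicality and the characteristic-$p$ property of the normalizer survive — and this is exactly what receptivity of the fully normalized subgroup, applied on $N_S(Q)\supseteq P$, provides. Once that is in place, the inequality $C_T(P^*\cap T)\leq P^*\cap T$ is a routine $p$-group normalizer argument powered by $P^*\cap T=O_p(N_\N(P^*))$ together with $N_\N(P^*)$ being of characteristic $p$.
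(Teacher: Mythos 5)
Your proof is correct and follows essentially the same path as the paper: you use the characteristic-$p$ property of $N_\N(P)$ together with $\N$-radicality to get $C_T(P\cap T)\leq P\cap T$ via a $p$-group normalizer argument, and you transport along a morphism on $N_S(Q)$ (realized as $c_f$) to a fully $\F$-normalized conjugate, then invoke Lemma~\ref{L:FTcFRc}(a). The only difference is cosmetic — the paper proves the centralizer inequality first and then transports, whereas you transport first — and your justification that $P^*\cap T=Q^*$ (via $T$ being strongly closed so that $\alpha$ and $\alpha^{-1}$ preserve membership in $T$) is a detail the paper passes over silently.
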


\begin{proof}
Set $Q:=P\cap T$. By Theorem~\ref{T:Saturation}, we may just assume that $\F$ is saturated and $N_\L(P)$ is of characteristic $p$. We show first that 
\begin{equation}\label{E:CTQinQ}
C_T(Q)\leq Q.
\end{equation}
As $N_\L(P)$ has characteristic $p$, the normal subgroup $N_\N(P)$ of $N_\L(P)$ has characteristic $p$ by Lemma~\ref{L:MSCharp}(a). Moreover, since $P$ is $\N$-radical, we have $Q=O_p(N_\N(P))$. Hence, $N_{C_T(Q)}(P)\leq C_{N_\N(P)}(Q)\leq Q\leq P$ and thus $N_{C_T(Q)P}(P)=N_{C_T(Q)}(P)P=P$. As $C_T(Q)P$ is a $p$-group, this means $C_T(Q)P=P$ and thus $C_T(Q)\leq P\cap T=Q$. So \eqref{E:CTQinQ} holds. 

\smallskip

By \cite[Lemma~2.6(c)]{Aschbacher/Kessar/Oliver:2011}, there exists a morphism in $\Hom_\F(N_S(Q),S)$ such that the image of $Q$ under this morphism is fully $\F$-normalized. Since $P\leq N_S(Q)$, we have $N_S(Q)\in\Delta$ and thus such a morphism is realized as a conjugation homomorphism by an element of $\L$ (cf. Lemma~\ref{L:LocalityFusionSystem}(a)). So there exists $f\in\L$ such that $P\leq N_S(Q)\leq S_f$ and $Q^f$ is fully $\F$-normalized. By Lemma~\ref{L:NradicalConjugate}, $P^f\in\Delta$ is $\N$-radical. As $Q^f=P^f\cap T$, property \eqref{E:CTQinQ} applied with $Q^f$ in place of $Q$ gives $C_T(Q^f)\leq Q^f$. By Lemma~\ref{L:FTcFRc}(a), this implies $Q\in\F_T^c\subseteq\E^c$.  
\end{proof}

We need a preliminary lemma to prove our next result about $\N$-radical subgroups.

\begin{lemma}\label{L:fullyEnorm}
Let $P\in\Delta$ such that $P\leq T$ and $P$ is fully $\E$-normalized. Then $N_T(P)\in\Syl_p(N_\N(P))$.
\end{lemma}

\begin{proof}
By \cite[Lemma~2.10]{Chermak:2015}, there exists $g\in\L$ such that $N_S(P)\leq S_g$ and $N_S(P^g)\in\Syl_p(N_\L(P^g))$. By the Frattini Lemma and the Splitting Lemma (cf. Corollary~3.11 and Lemma~3.12 in \cite{Chermak:2015}), there exist $n\in\N$ and $f\in N_\L(T)$ such that $(n,f)\in\D$, $g=nf$ and $S_g=S_{(n,f)}$. As $N_\N(P^g)$ is a normal subgroup of $N_\L(P^g)$, we have $N_T(P^g)=N_S(P^g)\cap N_\N(P^g)\in\Syl_p(N_\N(P^g))$. By Lemma~\ref{L:ConjugateNormalizer}, $c_f\colon N_\L(P^n)\rightarrow N_\L(P^g)$ is an isomorphism of groups which takes $N_\N(P^n)$ onto $N_\N(P^g)$. It follows that $N_T(P^g)^{f^{-1}}$ is a Sylow $p$-subgroup of $N_\N(P^n)$. Since $f\in N_\L(T)$, we have $N_T(P^g)^{f^{-1}}\leq N_T(P^n)$. As $N_T(P^n)$ is a $p$-subgroup of $N_\N(P^n)$, we conclude that $N_T(P^n)\in\Syl_p(N_\N(P^n))$. Note that $N_T(P)\leq N_S(P)\leq S_g=S_{(n,f)}\leq S_n$ and $T$ is strongly closed, so  $N_T(P)^n\leq N_T(P^n)$. At the same time, the assumption that $P$ is fully $\E$-normalized yields that $|N_T(P)^n|=|N_T(P)|\geq |N_T(P^n)|$. Hence, we must have $N_T(P)^n=N_T(P^n)$. Again by Lemma~\ref{L:ConjugateNormalizer}, $c_n\colon N_\L(P)\rightarrow N_\L(P^n)$ is a group isomorphism which induces an isomorphism from $N_\N(P)$ to $N_\N(P^n)$. So $N_T(P)$ is a Sylow $p$-subgroup of $N_\N(P)$.  
\end{proof}

\begin{lemma}\label{L:EcrNradical}
Let $P\in\Delta$ such that $P\leq T$ and $P$ is fully $\E$-normalized, $O_p(N_\E(P))=P$ and $N_\L(P)$ is of characteristic $p$. Then $P$ is $\N$-radical.
\end{lemma}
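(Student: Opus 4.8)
Write $G := N_\N(P)$ and $Q := N_T(P)$. Since $P\in\Delta$ we have $P\unlhd N_\L(P)$, and because $\N\unlhd\L$ Lemma~\ref{L:ConjugateNormalizer}(a) gives $N_\N(P)^g=N_\N(P^g)=N_\N(P)$ for every $g\in N_\L(P)$, so $G=N_\N(P)\unlhd N_\L(P)$; in particular (using the hypothesis that $N_\L(P)$ has characteristic $p$) $G$ is a finite group of characteristic $p$ by Lemma~\ref{L:MSCharp}(a). The defining property of $\N$-radical subgroups recalled after Definition~\ref{D:Radical} says that $P$ is $\N$-radical if and only if $O_p(G)=P\cap T$, and since $P\le T$ this amounts to showing $O_p(G)=P$. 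Note $P\unlhd G$ gives $P\le O_p(G)$, so the content is $O_p(G)\le P$. Since $P$ is fully $\E$-normalized, Lemma~\ref{L:fullyEnorm} yields $Q=N_T(P)\in\Syl_p(G)$.

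The key step is the identification $N_\E(P)=\F_Q(G)$ of fusion systems over $Q$. For $\F_Q(G)\subseteq N_\E(P)$: a generating morphism of $\F_Q(G)$ is $c_g|_A$ with $g\in G=N_\N(P)$ and $A,A^g\le Q$, where $A^g$ is computed inside the group $N_\L(P)$. As $A,A^g\le Q\le S$, $P^g=P$, and $A,A^g$ normalise $P$, the set $(AP)^g=A^gP$ is a $p$-subgroup of $S$; moreover $AP\subseteq N_\L(P)\subseteq\D(g)$ by Lemma~\ref{L:ConjugateNormalizer}(a) (applied with $P\le S_g$), so $AP\le S_g$. Since $P\le T$ we get $AP\le T\cap S_g=\N\cap S_g$, hence $c_g|_{AP}$ is a morphism of $\E=\F_T(\N)$ which fixes $P$; therefore $c_g|_A\in\Hom_{N_\E(P)}(A,Q)$. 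For the reverse inclusion, let $\phi$ be a morphism of $N_\E(P)$; after extending we may assume $\phi\colon A\to B$ with $P\le A,B\le Q$ and $P\phi=P$. Then $A\in\Delta$ (overgroup of $P$), so Lemma~\ref{L:LocalityFusionSystem}(b) applied to $\H=\N$ gives $f\in\N$ with $A\le S_f$ and $\phi=c_f|_A$; since $P^f=P\phi=P$ we have $f\in N_\N(P)=G$, so $\phi$ lies in $\F_Q(G)$. A general morphism of $N_\E(P)$ is a restriction of such a $\phi$, hence also in $\F_Q(G)$, and equality follows.

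Finally, $\F_Q(G)$ is the fusion system of the finite group $G$ at the Sylow $p$-subgroup $Q$, so it is saturated and $O_p(\F_Q(G))=O_p(G)$ (the largest normal $p$-subgroup of $G$). Combining this with the identification and the hypothesis $O_p(N_\E(P))=P$ gives $O_p(G)=O_p(\F_Q(G))=O_p(N_\E(P))=P$, which is exactly the statement that $P$ is $\N$-radical. The main obstacle is establishing $N_\E(P)=\F_Q(G)$ correctly: specifically, in the inclusion $\F_Q(G)\subseteq N_\E(P)$ one must verify that the group-theoretic conjugation maps of $G$ actually become morphisms of the locality's fusion system $\E$ (i.e.\ that $AP\le S_g$) and extend over $P$ — this is where $P\le T$, $P\in\Delta$, and the fact that $Q$ is Sylow in $G$ are all used.
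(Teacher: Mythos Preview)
Your proof is correct and follows essentially the same route as the paper's: establish $N_T(P)\in\Syl_p(N_\N(P))$ via Lemma~\ref{L:fullyEnorm}, identify $N_\E(P)$ with the fusion system $\F_{N_T(P)}(N_\N(P))$ of the group $G=N_\N(P)$, and conclude $O_p(G)\le O_p(N_\E(P))=P$. The paper compresses your two-inclusion argument for the identification into a single citation of Lemma~\ref{L:LocalityFusionSystem}(b); also note that the characteristic~$p$ property of $G$ that you carefully derive is never actually used (nor is the hypothesis that $N_\L(P)$ has characteristic~$p$ used in the paper's proof).
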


\begin{proof}
By Lemma~\ref{L:fullyEnorm}, we have $N_T(P)\in\Syl_p(N_\N(P))$. Since $P\in\Delta$, it follows from $\E=\F_T(\N)$ and Lemma~\ref{L:LocalityFusionSystem}(b) that $\F_{N_T(P)}(N_\N(P))=N_\E(P)$. Hence $O_p(N_\N(P))\leq O_p(N_\E(P))=P$ and so $P$ is $\N$-radical. 
\end{proof}

\chapter{$p$-Residuals and $p^\prime$-residuals of partial normal subgroups}\label{S:Residues}

Throughout let $(\L,\Delta,S)$ be a locality and let $\N$ be a partial normal subgroup of $\L$. Set $T:=S\cap\N$.

\section{Definition and basic properties}

Following Chermak \cite[Section~7]{ChermakII} we use the following definition.

\begin{definition}
Set
\[\mathbb{K}_\N:=\{\K\unlhd\L\colon T\K=\N\}\mbox{ and }\mathbb{K}_\N^\prime:=\{\K\unlhd\L\colon T\subseteq\K\subseteq\N\}.\]
Moreover set 
\[O_\L^p(\N):=\bigcap \mathbb{K}_\N\mbox{ and }O_\L^{p^\prime}(\N)=\bigcap\mathbb{K}_\N^\prime.\]
Write $O^p(\L)$ for $O^p_\L(\L)$ and $O^{p^\prime}(\L)$ for $O^{p^\prime}_\L(\L)$. A partial normal subgroup $\K\unlhd\L$ is said to have \emph{$p$-power index in $\N$} if $\K\in\mathbb{K}_\N$ and \emph{index prime to $p$ in $\N$} if $\K\in\mathbb{K}_\N^\prime$.
\end{definition}

Observe that $O^{p^\prime}_\L(\N)\in\mathbb{K}_\N^\prime$. It was shown by Chermak \cite[Proposition~7.2]{ChermakII} that $O^p_\L(\N)\in\mathbb{K}_\N$. In the following lemma we give an alternative proof of this property which fits better into our framework.

\begin{lemma}\label{L:ResidueEquivalences}
Let $\K\unlhd\L$ with $\K\subseteq\N$. Then the following conditions are equivalent:
\begin{itemize}
 \item [(i)] $\K\in\mathbb{K}_\N$.
 \item [(ii)] The image of $\N$ under the natural projection $\L\rightarrow\L/\K$ is a $p$-group.
 \item [(iii)] $O^p(N_\N(P))\subseteq \K$ for all $P\in\Delta$.
 \item [(iv)] $O^p(N_\N(P))\subseteq\K$ for all $P\in R_\Delta(S\N)$.
\end{itemize}
In particular, $O^p_\L(\N)\in\mathbb{K}_\N$ and each of the conditions (ii)-(iv) holds with $O^p_\L(\N)$ in place of $\K$. 
\end{lemma}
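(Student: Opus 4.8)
The plan is to prove the cyclic chain of implications (i) $\Rightarrow$ (iii) $\Rightarrow$ (iv) $\Rightarrow$ (i), and separately (i) $\Leftrightarrow$ (ii), after which the ``in particular'' clause follows by showing that $O^p_\L(\N):=\bigcap\mathbb{K}_\N$ is itself a member of $\mathbb{K}_\N$. The equivalence (i) $\Leftrightarrow$ (ii) should be essentially formal: if $\alpha\colon\L\to\L/\K$ is the natural projection, then $\N\alpha$ is a partial subgroup of $\L/\K$ whose intersection with the maximal $p$-subgroup $S\alpha$ is $T\alpha$; since $\L/\K$ is again a locality and $\ker\alpha=\K\subseteq\N$, the preimage of $S\alpha$ under $\alpha$ is exactly $\K S=\K T \cdot(\text{stuff in }S)$... more precisely $\N\alpha$ is a $p$-group iff $\N\alpha\subseteq S\alpha$ iff $\N\subseteq \K S$ iff (using $\N\cap S = T$ and $\K\subseteq\N$, plus Theorem~\ref{T:ProductsPartialNormal} to get $\K S = S\K$ and control of intersections) $\N=\K T=T\K$, i.e. $\K\in\mathbb{K}_\N$. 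I would write this carefully using Lemma~\ref{L:NLSbiset} and the Dedekind-type argument for partial groups.

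For (i) $\Rightarrow$ (iii): assume $T\K=\N$ and fix $P\in\Delta$. Then $N_\N(P)=N_{T\K}(P)$, and applying the Dedekind argument for partial groups \cite[Lemma~1.10]{Chermak:2015} together with Lemma~\ref{L:ConjugateNormalizer}(a) (so that $N_\K(P)^f=N_\K(P^f)$ and conjugation behaves well), I expect $N_\N(P)=N_T(P)\cdot N_\K(P)$ or at least $N_\N(P)=(N_\N(P)\cap S)N_\K(P)$ inside the group $N_\L(P)$; since $N_\N(P)\cap S = N_T(P)$ is a $p$-group and $N_\K(P)\unlhd N_\N(P)$ (as $\K\unlhd\L$ forces $N_\K(P)\unlhd N_\L(P)$), the quotient $N_\N(P)/N_\K(P)$ is a $p$-group, whence $O^p(N_\N(P))\subseteq N_\K(P)\subseteq\K$. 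The implication (iii) $\Rightarrow$ (iv) is trivial since $R_\Delta(S\N)\subseteq\Delta$. The real content is (iv) $\Rightarrow$ (i), and this is where I would invoke Lemma~\ref{L:PartialNormalAlperin}: given any $n\in\N$, write $n=\Pi(t,n_1,\dots,n_k)$ with $t\in T$, $R_i\in R_\Delta(S\N)$, and $n_i\in O^p(N_\N(R_i))$. By hypothesis (iv) each $n_i\in\K$, so $n\in T\K\cdot\K\cdots\K$; using that $\K\unlhd\L$ is closed under the partial product and $\K T = T\K$ is a partial subgroup (Theorem~\ref{T:ProductsPartialNormal} or \cite[Lemma~3.4]{Chermak:2015}), this collapses to $n\in T\K$. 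Hence $\N\subseteq T\K$; the reverse inclusion $T\K\subseteq\N$ holds because $T\subseteq\N$, $\K\subseteq\N$ and $\N$ is a partial subgroup, so $\N=T\K$ and $\K\in\mathbb{K}_\N$.

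For the final assertion that $O^p_\L(\N)\in\mathbb{K}_\N$: by the equivalences just proved, it suffices to verify condition (iv) with $\K=O^p_\L(\N)$. But for every $\K_0\in\mathbb{K}_\N$ and every $P\in R_\Delta(S\N)$ we have $O^p(N_\N(P))\subseteq\K_0$ by (i) $\Rightarrow$ (iv) applied to $\K_0$; intersecting over all $\K_0\in\mathbb{K}_\N$ gives $O^p(N_\N(P))\subseteq O^p_\L(\N)$. Thus (iv) holds for $O^p_\L(\N)$, so $O^p_\L(\N)\in\mathbb{K}_\N$, and consequently each of (ii)--(iv) holds with $O^p_\L(\N)$ in place of $\K$ as well.

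I expect the main obstacle to be the Dedekind-type bookkeeping in (i) $\Rightarrow$ (iii): one must be careful that products like $N_T(P)N_\K(P)$ are genuinely computed inside the \emph{group} $N_\L(P)$ (which they are, since $P\in\Delta$), and that the normality statement $N_\K(P)\unlhd N_\N(P)$ and the equality $N_\N(P)=N_T(P)N_\K(P)$ both really do follow from $\N=T\K$ via the partial-group Dedekind lemma rather than requiring an independent argument. A secondary subtlety is in (iv) $\Rightarrow$ (i): the decomposition from Lemma~\ref{L:PartialNormalAlperin} has the $n_i$ lying in $O^p(N_\N(R_i))$ with $S_{n_i}=R_i$, and one needs that successive products $\Pi(t,n_1,\dots,n_j)$ stay inside the partial subgroup $T\K$ — this is fine because $T\K=\K T$ is a partial subgroup and each factor lies in it, but the word $(t,n_1,\dots,n_k)$ being in $\D$ must be used to license forming the product step by step, so I would phrase this as: $\Pi(t,n_1,\dots,n_k)$ equals an iterated product of elements of the partial subgroup $T\K$, hence lies in $T\K$.
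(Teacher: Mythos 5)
Your overall structure --- the cycle (i) $\Rightarrow$ (iii) $\Rightarrow$ (iv) $\Rightarrow$ (i) together with a separate treatment of (ii), the use of Lemma~\ref{L:PartialNormalAlperin} for (iv) $\Rightarrow$ (i), and the derivation that $O^p_\L(\N)\in\mathbb{K}_\N$ once the equivalences are in place --- is sound, and your argument for (iv) $\Rightarrow$ (i) matches the paper's. But the step you yourself flag as the main obstacle, (i) $\Rightarrow$ (iii) via a Dedekind-type decomposition, is exactly where the proposal breaks. The equality $N_\N(P)=(N_\N(P)\cap S)N_\K(P)=N_T(P)N_\K(P)$ you lean on is \emph{false} in general: it would force $N_T(P)$ to be a Sylow $p$-subgroup of $N_\N(P)$, which fails whenever $P$ is not fully normalized. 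For a concrete failure already at the level of honest groups, take $\L=\N=S_4$, $p=2$, $S=T$ the dihedral Sylow $2$-subgroup built on the $4$-cycle $(1234)$, $\K=A_4$ (so that $T\K=\N$), and $P=\langle(12)(34)\rangle$. Then $N_\N(P)=C_{S_4}((12)(34))$ has order $8$, whereas $N_T(P)$ and $N_\K(P)$ both coincide with the Klein four group $\{e,(12)(34),(13)(24),(14)(23)\}$, so $N_T(P)N_\K(P)$ has order $4$. The desired conclusion $O^p(N_\N(P))\subseteq\K$ still holds here (trivially, since $N_\N(P)$ is a $2$-group), but your decomposition does not deliver it, and I see no way to repair that route without smuggling in the quotient argument below anyway.

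The paper sidesteps this by running the cycle (i) $\Rightarrow$ (ii) $\Rightarrow$ (iii) $\Rightarrow$ (iv) $\Rightarrow$ (i). The first step is a one-liner: $\N=T\K$ with $\K=\ker(\alpha)$ gives $\N\alpha=T\alpha\cong T/(T\cap\K)$, a quotient of the $p$-group $T$. The step (ii) $\Rightarrow$ (iii) is where the projection does the real work: for $P\in\Delta$ the normalizer $N_\N(P)$ is an honest finite group, the restriction $\alpha_P\colon N_\N(P)\rightarrow\N\alpha$ is a group homomorphism whose image lies in the $p$-group $\N\alpha$, and therefore $O^p(N_\N(P))\leq\ker(\alpha_P)\subseteq\ker(\alpha)=\K$. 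This realizes the embedding $N_\N(P)/N_\K(P)\hookrightarrow\N/\K$ via the quotient locality $\L/\K$, and it makes no Sylow or Dedekind claim about $N_T(P)$ whatsoever. Your separate direct proof of (ii) $\Rightarrow$ (i) (via $\N\alpha\subseteq S\alpha$, then $\N\subseteq\K S$, then $\N=T\K$) is also more machinery than is needed, since (ii) $\Rightarrow$ (i) already falls out of the cycle once (ii) $\Rightarrow$ (iii) is in place. I suggest adopting the paper's chain.
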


\begin{proof}
Write $\alpha\colon\L\rightarrow\L/\K$ for the natural projection from $\L$ to $\L/\K$. Then $\alpha$ is a projection of partial groups with kernel $\K$. Hence, if (i) holds, then $\N=T\K$ and $\N\alpha=T\alpha\cong T/(T\cap \K)$ is a $p$-group. So (i) implies (ii).

\smallskip

Assume now that (ii) holds, i.e.  $\N\alpha$ is a $p$-group. Then for every $P\in\Delta$, the restriction of $\alpha$ to a map $\alpha_P\colon N_\N(P)\rightarrow \N\alpha$ is a group homomorphism whose image is a $p$-group. Hence, $O^p(N_\N(P))\leq \ker(\alpha_P)\subseteq\ker(\alpha)=\K$ for every $P\in\Delta$. So (ii) implies (iii). 

\smallskip

Clearly (iii) implies (iv). It follows moreover from Lemma~\ref{L:PartialNormalAlperin} that (iv) implies (i). So the conditions (i)-(iv) are equivalent. Clearly, the intersection of a set of partial normal subgroups of $\L$ is a partial normal subgroup of $\L$. So $O^p_\L(\N)\unlhd\L$. Since (i) implies (iii), we have furthermore
\[O^p(N_\N(P))\subseteq \bigcap \mathbb{K}_\N=O^p_\L(\N)\mbox{ for all }P\in\Delta.\]
So (iii) and thus each of the conditions (i)-(iv) holds with $O^p_\L(\N)$ in place of $\K$ and the proof is complete.                                                                                                                                                                                                                                                                                                                                                                                                                                                                                                                                                              
\end{proof}

%We obtain a similar characterization of partial normal subgroups of $\N$ of index prime to $p$.

%\begin{lemma}\label{L:ResiduepprimeEquivalences}
% Let $\K\unlhd\L$ with $\K\subseteq\N$. Write $\alpha\colon \L\rightarrow\L/\K$ for the natural projection. Then the following conditions are equivalent:
%\begin{itemize}
% \item [(i)] $\K\in\mathbb{K}_\N^\prime$.
% \item [(ii)] $\N\alpha\cap S\alpha=\{\One\}$.
% \item [(iii)] There is no non-trivial $p$-subgroup of $\N\alpha$.
% \item [(iv)] $O^{p^\prime}(N_\N(P))\subseteq \K$ for all $P\in\Delta$.
%\end{itemize}
%\end{lemma}

%\begin{proof}
%As $\K\subseteq\N$ it follows from \cite[Lemma~4.9]{Chermak:2015} that $\N\alpha\cap S\alpha=(\N\cap S)\alpha$. Therefore (i) and (ii) are equivalent. 

%\smallskip

%By \cite[Theorem~4.3]{Chermak:2015}, $(\L/\K,\Delta\alpha,S\alpha)$ is a locality and by the partial subgroup correspondence \cite[Proposition~4.7]{Chermak:2015}, $\N\alpha\unlhd \L/\K$. Hence, by \cite[Proposition~2.11(c)]{Chermak:2015}, every $p$-subgroup of $\N\alpha$ is conjugate in $\L/\K$ to a subgroup of $S\alpha$ and thus of $S\alpha\cap\N\alpha$. Hence, (ii) implies (iii). 

%\smallskip

%For every $P\in\Delta$, $\alpha$ induces a group homomorphism from $N_\L(P)$ to $N_\L(P)\alpha$. Hence, if (iii) holds, then $N_\N(P)$ gets mapped to a $p^\prime$-group, which yields $O^{p^\prime}(N_\N(P))\subseteq \ker(\alpha)=\K$. So (iii) implies (iv). Clearly (iv) implies $\N\cap S\subseteq O^{p^\prime}(N_\N(S))\subseteq\K$ and thus (i). 
%\end{proof}

The following lemma was first proved by Chermak \cite[Lemma~7.4]{ChermakII}. Again we give an alternative proof for the first part of the assertion.

\begin{lemma}\label{L:ResidueNinM}
 Let $\N\unlhd \L$ with $\M\subseteq\N$. Then $O^p_\L(\M)\subseteq O^p_\L(\N)$ and $O^{p^\prime}_\L(\M)\subseteq O^{p^\prime}_\L(\N)$. 
\end{lemma}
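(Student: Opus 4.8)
The plan is to prove each of the two inclusions by exhibiting a partial normal subgroup $\K$ of $\L$ which belongs to $\mathbb{K}_\M$ (resp. $\mathbb{K}_\M^\prime$) and is contained in $O^p_\L(\N)$ (resp. $O^{p^\prime}_\L(\N)$); since $O^p_\L(\M)=\bigcap\mathbb{K}_\M$ and $O^{p^\prime}_\L(\M)=\bigcap\mathbb{K}_\M^\prime$, the desired inclusion then follows at once. In both cases the candidate is of the same shape, namely the intersection of $\M$ with the corresponding residual of $\N$; this is a partial normal subgroup of $\L$, being the intersection of two such.

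For the $p$-residual, set $\K:=\M\cap O^p_\L(\N)$ and fix $P\in\Delta$. Since $\M\unlhd\L$, the subgroup $N_\M(P)=\M\cap N_\L(P)$ is normal in the finite group $N_\L(P)$, hence in particular normal in $N_\N(P)=\N\cap N_\L(P)$ (and contained in it, as $\M\subseteq\N$). The $p$-residual of a normal subgroup of a finite group is contained in the $p$-residual of the group, so $O^p(N_\M(P))\subseteq O^p(N_\N(P))$. By Lemma~\ref{L:ResidueEquivalences} (condition~(iii) with $O^p_\L(\N)$ in place of $\K$) we have $O^p(N_\N(P))\subseteq O^p_\L(\N)$, and trivially $O^p(N_\M(P))\subseteq\M$; hence $O^p(N_\M(P))\subseteq\K$ for every $P\in\Delta$. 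As $\K\unlhd\L$ with $\K\subseteq\M$, the equivalence of (i) and (iii) in Lemma~\ref{L:ResidueEquivalences}, applied with $\M$ in place of $\N$, gives $\K\in\mathbb{K}_\M$, and therefore $O^p_\L(\M)\subseteq\K\subseteq O^p_\L(\N)$.

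For the $p^\prime$-residual, set $\K:=\M\cap O^{p^\prime}_\L(\N)$, again a partial normal subgroup of $\L$ with $\K\subseteq\M$. Recall that $O^{p^\prime}_\L(\N)\in\mathbb{K}_\N^\prime$, so $S\cap\N\subseteq O^{p^\prime}_\L(\N)$; since $\M\subseteq\N$ forces $S\cap\M\subseteq S\cap\N$, we obtain $S\cap\M\subseteq O^{p^\prime}_\L(\N)$, and hence $S\cap\M\subseteq\K$. Thus $\K\in\mathbb{K}_\M^\prime$, and consequently $O^{p^\prime}_\L(\M)\subseteq\K\subseteq O^{p^\prime}_\L(\N)$.

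There is no serious obstacle here: the argument is purely formal once Lemma~\ref{L:ResidueEquivalences} and the definitions are in hand. The only points needing a word of justification are that $N_\M(P)$ is normal in $N_\N(P)$ (which follows from $\M\unlhd\L$), together with the elementary fact that $O^p(H)\leq O^p(G)$ whenever $H\unlhd G$ with $G$ finite, and the bookkeeping that the candidate $\K$ does meet the hypothesis $\K\subseteq\M$ required to invoke Lemma~\ref{L:ResidueEquivalences} with $\M$ in the role of $\N$.
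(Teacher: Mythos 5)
Your proof is correct and takes the same route as the paper's: for each residual you form the candidate $\K=\M\cap O^*_\L(\N)$, verify $\K\in\mathbb{K}_\M$ (resp.\ $\mathbb{K}_\M^\prime$) via Lemma~\ref{L:ResidueEquivalences} (resp.\ directly), and conclude. The only cosmetic difference is that you invoke normality of $N_\M(P)$ in $N_\N(P)$ to get $O^p(N_\M(P))\leq O^p(N_\N(P))$, whereas the inclusion already holds for any subgroup (the $p^\prime$-elements of a subgroup are $p^\prime$-elements of the ambient group), so the normality hypothesis is unnecessary.
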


\begin{proof}
Set $\K:=O^p_\L(\N)$. Then by Lemma~\ref{L:ResidueEquivalences} applied with $\N$ in place of $\M$, we have $O^p(N_\M(P))\subseteq O^p(N_\N(P))\subseteq \K$ for all $P\in\Delta$. Hence, $O^p(N_\M(P))\subseteq \K\cap\M$ for all $P\in\Delta$. Observe that $\K\cap\M$ is a partial normal subgroup of $\L$ contained in $\M$. So again by Lemma~\ref{L:ResidueEquivalences}, we have $\K\cap\M\in\mathbb{K}_\M$. Thus, $O^p_\L(\M)\subseteq\K\cap\M\subseteq\K=O^p_\L(\N)$. 

\smallskip

Notice that $\M\cap S\subseteq \N\cap S\subseteq O^{p^\prime}_\L(\N)$ and hence $\M\cap S\subseteq O^{p^\prime}_\L(\N)\cap \M$. As $O^{p^\prime}_\L(\N)\cap\M\unlhd\L$, this implies $O^{p^\prime}_\L(\N)\cap\M\in\mathbb{K}_\M^\prime$ and thus $O^{p^\prime}_\L(\M)\subseteq O^{p^\prime}_\L(\N)\cap\M\subseteq O^{p^\prime}_\L(\N)$.
\end{proof}

The following more specialized lemma will be needed later on.

\begin{lemma}\label{L:OupperpGenerate}
Suppose we are given $R\leq S$ with $R\unlhd\L$ such that
\[Q\cap (TR)\in\Delta\mbox{ for all }Q\in\Delta.\]
Then $\Gamma:=\{(P\cap T)^hR\;\colon P\in R_\Delta(S\N),\;h\in N_\L(T)\}\subseteq\Delta$ and
\[O^p_\L(\N)=\<O^p(N_\N(X))\colon X\in\Gamma\>.\] 
\end{lemma}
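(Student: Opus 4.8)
The plan is to prove Lemma~\ref{L:OupperpGenerate} by combining two previously established facts: the decomposition of elements of $\N$ via $R_\Delta(S\N)$-radical subgroups (Lemma~\ref{L:PartialNormalAlperin}) together with the characterization of $O^p_\L(\N)$ from Lemma~\ref{L:ResidueEquivalences}, and the behaviour of $\N$-radical subgroups under the extra hypothesis on $\Delta$ that we have assumed. First I would verify $\Gamma\subseteq\Delta$. Fix $P\in R_\Delta(S\N)$ and $h\in N_\L(T)$. By Lemma~\ref{L:SNradNrad}, $P$ is $\N$-radical and $O_p(\L)\leq P$; in particular $P\cap T\in\Delta$ by overgroup-closure once we know $P\cap T\supseteq O_p(\L)$ — wait, that is not automatic, so instead I argue via Lemma~\ref{L:PRintersectT}: since $P$ is $\N$-radical and $R\unlhd\L$ implies $N_\N(P)P\subseteq N_\L(R)$, we get $(PR)\cap T=P\cap T$, hence $(P\cap T)R=(PR)\cap(TR)$ is the intersection of an element $PR\supseteq P$ of $\Delta$ (overgroup of $P\in\Delta$) with $TR$, and thus lies in $\Delta$ by the displayed hypothesis $Q\cap(TR)\in\Delta$. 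Conjugating by $h\in N_\L(T)$ (which normalizes $T$ and hence $TR$, and maps $\Delta$ to $\Delta$) shows $(P\cap T)^hR\in\Delta$. So $\Gamma\subseteq\Delta$.

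Next I would prove the two inclusions of the displayed equality. For ``$\supseteq$'': each $X\in\Gamma$ lies in $\Delta$ and $N_\N(X)\subseteq\N$, so $O^p(N_\N(X))\subseteq O^p(N_\N(X))$ is contained in $O^p_\L(\N)$ by the ``in particular'' clause of Lemma~\ref{L:ResidueEquivalences} (condition (iii) holds with $O^p_\L(\N)$ in place of $\K$). Hence the partial subgroup generated by all these is contained in $O^p_\L(\N)$. For ``$\subseteq$'': set $\K:=\<O^p(N_\N(X))\colon X\in\Gamma\>$, which is a partial subgroup of $\L$; I claim it is a partial normal subgroup containing $T$ with $O^p(N_\N(P))\subseteq\K$ for every $P\in R_\Delta(S\N)$, which by Lemma~\ref{L:ResidueEquivalences}(iv)$\Rightarrow$(i) forces $O^p_\L(\N)\subseteq\K$. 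To see $O^p(N_\N(P))\subseteq\K$ for $P\in R_\Delta(S\N)$: I use $X:=(P\cap T)R\in\Gamma$ (take $h=\One$) and Lemma~\ref{L:Lemma0} with the pair $(P,X\cap P)$ — more precisely, since $P\cap T\leq X\cap P\leq P$ (using $P\cap T\leq (PR)\cap T=P\cap T$ from Lemma~\ref{L:PRintersectT}, so actually $(P\cap T)R\cap P$ contains $P\cap T$), Lemma~\ref{L:Lemma0} gives $N_\N(P)\leq N_\N(P\cap T)$, and one checks $N_\N(P\cap T)\subseteq N_\N(X)$ as well since $P\cap T\leq X$ and $[X,N_\N(P\cap T)]\leq [TR,\dots]\cap\N\leq$ (a suitable $p$-group inside $X\cap\N = (P\cap T)$, using $R\unlhd\L$). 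Then $O^p(N_\N(P))=O^p(N_\N(P)) \subseteq O^p(N_\N(X))\subseteq\K$, the first inclusion because $O^p$ is monotone under inclusion of groups with $N_\N(P)\leq N_\N(X)$ (here I need $N_\N(P)\subseteq N_\N(X)$, which follows from the chain just described). Finally $T\subseteq\K$ because $S\in R_\Delta(S\N)$, hence $X_0:=(S\cap T)R=TR\in\Gamma$, and $T=O^p(N_\N(S))\cdot(T\cap\text{something})$ — actually more directly: $T\subseteq N_\N(S)$ and $T/(T\cap O^p(N_\N(S)))$ is a $p$-group, so I instead observe that since $T\subseteq N_\L(T)$ normalizes each $X\in\Gamma$ up to $N_\L(T)$-conjugacy, the generated subgroup $\K$ is $N_\L(T)$-invariant; combined with $\K\unlhd\L$ (shown below) and $\K\supseteq O^p(N_\N(P))$ for all $P$, Lemma~\ref{L:ResidueEquivalences} gives $T\K=\N$, hence $T\subseteq T\K=\N$, giving what we need once normality is in hand.

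The normality of $\K$ in $\L$ is the step I expect to be the main obstacle, and I would handle it as follows. The generating set $\{O^p(N_\N(X))\colon X\in\Gamma\}$ is permuted among itself under conjugation by any $g\in\L$ for which the conjugation is defined on the relevant objects: indeed if $X=(P\cap T)^hR$ with $P\in R_\Delta(S\N)$, $h\in N_\L(T)$, and $f\in\L$ with $X\leq S_f$, then $X^f=((P\cap T)^h)^fR^f=(P\cap T)^{hf}R$ (using $R\unlhd\L$ so $R^f=R$), and I want $hf$, suitably adjusted, to lie in $N_\L(T)$; this uses the Frattini/Splitting Lemma for localities (\cite[Corollary~3.11, Lemma~3.12]{Chermak:2015}) to write $f=ng'$ with $n\in\N$, $g'\in N_\L(T)$, reducing to the case $f=n\in\N$, where $(P\cap T)^{hn}$ is $\N$-conjugate to $(P\cap T)^h$ and the subgroup $O^p(N_\N(X^n))$ is the $c_n$-image of $O^p(N_\N(X))$ by Lemma~\ref{L:ConjugateNormalizer}(a) (which maps $N_\N(X)$ isomorphically onto $N_\N(X^n)$, hence $O^p$ to $O^p$); combined with the argument of Lemma~\ref{L:PerpendicularNormLTNormL} (that a partial subgroup normal in $N_\L(T)$ and generated by an $\N$-conjugation-stable family is normal in $\L$), this gives $\K\unlhd\L$. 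Once $\K\unlhd\L$, $T\subseteq\K$, and $O^p(N_\N(P))\subseteq\K$ for all $P\in R_\Delta(S\N)$, Lemma~\ref{L:ResidueEquivalences} (equivalence of (iv) and (i) applied with $\K$, then minimality of $O^p_\L(\N)=\bigcap\mathbb{K}_\N$) yields $O^p_\L(\N)\subseteq\K$, completing the proof.
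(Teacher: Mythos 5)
Your outline handles the two easy parts correctly — $\Gamma\subseteq\Delta$ (though the cleaner observation is that $R\leq O_p(\L)\leq P$ by Lemma~\ref{L:SNradNrad}, so $(P\cap T)R=P\cap(TR)$ outright, making the detour through Lemma~\ref{L:PRintersectT} unnecessary) and the inclusion $\K:=\langle O^p(N_\N(X))\colon X\in\Gamma\rangle\subseteq O^p_\L(\N)$ via Lemma~\ref{L:ResidueEquivalences}. But the normality of $\K$ in $\L$, which is the crux of the lemma, is not established. Two concrete problems. First, the subsidiary claim $T\subseteq\K$ is false (take $\N$ a $p$-group: then every $N_\N(X)$ is a $p$-group so $\K=\{\One\}$) and is also unnecessary; Lemma~\ref{L:ResidueEquivalences} only needs $\K\unlhd\L$, $\K\subseteq\N$, and condition (iv). Second, and more seriously, the appeal to Lemma~\ref{L:PerpendicularNormLTNormL} is a misreading of that lemma: its hypothesis is that $\N$ fixes $\Y$ under conjugation pointwise ($y^n=y$ whenever defined), not that the generating family is permuted setwise by $\N$. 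Since $\K\subseteq O^p_\L(\N)\subseteq\N$, the pointwise condition plainly fails in general, so that lemma is inapplicable. Worse, your reduction "to the case $f=n\in\N$" is exactly the point where the argument is circular: after conjugating by $n\in\N$, the image $O^p(N_\N(X))^n=O^p(N_\N(X^n))$ is not visibly one of your generators (since $X^n$ need not lie in $\Gamma$), nor visibly inside $\K$ — that last fact is precisely what you are trying to prove.

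The paper closes the gap by a two-stage argument that your proposal is missing. Stage one shows $\K$ is stable under conjugation by $N_\L(T)$ only, by induction on the layers $\Y_0\subseteq\Y_1\subseteq\cdots$ of the generated partial subgroup (via \cite[Lemma~1.9]{Chermak:2015}); here the hypothesis on $\Delta$ is used to guarantee $S_w\cap(TR)\in\Delta$ for $w\in\D\cap\W(\Y_i)$, so that the conjugate word is in $\D$. Stage two then takes $k\in\K$ and $f\in\L$ with $(f^{-1},k,f)\in\D$, writes $f=hn$ with $h\in N_\L(T)$, $n\in\N$ (Frattini and Splitting), and — crucially — further decomposes $n$ via Lemma~\ref{L:PartialNormalAlperin} as $n=\Pi(t,n_1,\dots,n_l)$ with $t\in T$ and each $n_i\in O^p(N_\N(R_i))$ for $R_i\in R_\Delta(S\N)$. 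Since $O^p(N_\N(R_i))\leq O^p(N_\N((R_i\cap T)R))\subseteq\K$, the element $m:=\Pi(n_1,\dots,n_l)$ lies in $\K$, so $f=htm$ with $ht\in N_\L(T)$ and $m\in\K$, and then $k^f=(k^{ht})^m\in\K$: the first conjugation stays in $\K$ by Stage one, and the second does because $\K$ is already known to be a partial subgroup containing $m$. It is precisely this "conjugate by an $N_\L(T)$-element, then by an element of $\K$ itself" maneuver that your proposal lacks.
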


\begin{proof}
By Lemma~\ref{L:SNradNrad}, for all $P\in R_\Delta(S\N)$, we have $R\leq O_p(\L)\leq P$ and thus $(P\cap T)R=P\cap (TR)\in\Delta$. Moreover, $TR=S\cap TR\in\Delta$ and thus $N_\L(T)=N_\L(TR)$ is group, which acts on $TR$ and $R$ via conjugation. As $\Delta$ is closed under taking $\L$-conjugates in $S$, it follows that $\Gamma\subseteq\Delta$ and $N_\L(T)$ acts on $\Gamma$. Set
\[\K:= \<O^p(N_\N(X))\colon X\in\Gamma\>.\]
By Lemma~\ref{L:ResidueEquivalences}, we have $O^p(N_\N(X))\subseteq O^p_\L(\N)$ for all $X\in\Delta$ and thus $\K\subseteq O^p_\L(\N)$. Notice moreover that $N_\N(P)\subseteq N_\N((P\cap T)R)$ and thus   
\begin{equation}\label{E:OpNNpinK}
O^p(N_\N(P))\leq O^p(N_\N((P\cap T)R))\subseteq \K \mbox{ for every }P\in R_\Delta(S\N).
\end{equation}
Hence, again by Lemma~\ref{L:ResidueEquivalences}, we have $\K\supseteq O^p_\L(\N)$ if we can prove that $\K\unlhd \L$. We show this in two steps.

\smallskip

\textit{Step~1:} We show that $x\in\D(h)$ and $x^h\in \K$ for all $x\in\K$ and $h\in N_\L(T)$. For the proof fix $h\in N_\L(T)$, set 
\[\Y_0:=\bigcup_{X\in\Gamma}O^p(N_\N(X))\]
and $\Y_{i+1}:=\{\Pi(w)\colon w\in\D\cap\W(\Y_i)\}$. By \cite[Lemma~1.9]{Chermak:2015}, we have $\K=\bigcup_{i\geq 0}\Y_i$. Hence, it is sufficient to show that, for all $i\geq 0$, we have $x\in\D(h)$ and $x^h\in \Y_i$ whenever $x\in \Y_i$. We show this by induction on $i$. As $N_\L(T)$ acts on $\Gamma$, it follows from Lemma~\ref{L:ConjugateNormalizer} that the claim is true for $i=0$. Let now $i\geq 0$ be arbitrary such that $x\in\D(h)$ and $x^h\in\Y_i$ for all $x\in\Y_i$. Let $y\in\Y_{i+1}$. Then there exists $w=(x_1,\dots,x_n)\in\D\cap\W(\Y_i)$ with $y=\Pi(w)$. As $R\unlhd\L$, we have $R\leq S_w$. Our assumption yields that $Q:=(S_w\cap T)R=S_w\cap (TR)\in\Delta$. Hence, $u:=(h^{-1},x_1,h,h^{-1},x_2,h,\dots,h^{-1},x_n,h)\in\D$ via $Q^h$. It follows now from the axioms of a partial group that $y\in\D(h)$ and 
\[y^h=\Pi(w)^h=\Pi(u)=\Pi(x_1^h,x_2^h,\dots,x_n^h).\]
By induction hypothesis, we have $x_j^h\in\Y_i$ for all $j=1,\dots,n$ and thus $y^h\in\Y_{i+1}$ as required. This completes Step~1.

\smallskip

\textit{Step~2:} We prove now that $\K\unlhd\L$ and thus the assertion holds as we have argued above. Let $k\in\K$ and $f\in\L$ with $(f^{-1},k,f)\in\D$. By the Frattini Lemma and the Splitting Lemma \cite[Corollary~3.11, Lemma~3.12]{Chermak:2015}, there exists $n\in\N$ and $h\in N_\L(T)$ with $f=hn$ and $S_f=S_{(h,n)}$. Moreover, by Lemma~\ref{L:PartialNormalAlperin}, there exists $w=(t,n_1,\dots,n_l)\in\D$ with $n=\Pi(w)$, $S_n=S_w$, $t\in T$ and $n_i\in O^p(N_\N(R_i))$ for some $R_i\in R_\Delta(S\N)$ ($i=1,\dots,l$). It follows from \eqref{E:OpNNpinK} that $m:=\Pi(n_1,\dots,n_l)\in\K$. Notice that $(h,t,m)\in\D$ via $S_f=S_{(h,n)}$, $htm=f$ and $(m^{-1},(ht)^{-1},k,ht,m)\in\D$ via $S_{(f^{-1},k,f)}$. So $k^f=(k^{ht})^m$. As $ht\in N_\L(T)$, it follows from Step~1 that $k^{ht}\in\K$. Thus, as $\K$ is a partial subgroup and $m\in\K$, we have $k^f\in\K$. This shows $\K\unlhd\L$ as required.
\end{proof}

\section{$p$-Residuals and $p^\prime$-residuals of expansions}

The following lemma was first stated by Chermak \cite[Lemma~7.3]{ChermakII}. There is a small gap in Chermak's proof that $O^p_\L(\N)^+=O^p_{\L^+}(\N^+)$, which we fix by citing \cite[Theorem~C(c)]{Henke:2020}. For the convenience of the reader we repeat the complete argument.

\begin{lemma}\label{L:OupperpVaryObjects}
Let $(\L^+,\Delta^+,S)$ and $(\L,\Delta,S)$ be linking localities over the same fusion system $\F$ with $\Delta\subseteq\Delta^+$ and $\L=\L^+|_\Delta$. Adopt Notation~\ref{N:VaryObjects}. Then 
\[O^p_\L(\N)^+=O^p_{\L^+}(\N^+)\mbox{ and }O^{p^\prime}_\L(\N)^+=O^{p^\prime}_{\L^+}(\N^+).\]
\end{lemma}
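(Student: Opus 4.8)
The plan is to deduce both equalities from the fact that, by Theorem~\ref{T:VaryObjects}(b), the map $\Phi_{\L^+,\L}\colon\fN(\L^+)\to\fN(\L)$, $\K^+\mapsto\K^+\cap\L$, is an inclusion-preserving bijection whose inverse $\K\mapsto\K^+$ is also inclusion-preserving. In particular it is an isomorphism of the posets of partial normal subgroups, and since the infimum of a family of partial normal subgroups is just its set-theoretic intersection, both $\Phi_{\L^+,\L}$ and its inverse commute with arbitrary intersections; concretely $\bigl(\bigcap_i\K_i\bigr)^+=\bigcap_i\K_i^+$ for $\K_i\unlhd\L$, since $\bigcap_i\K_i^+\unlhd\L^+$ has intersection $\bigcap_i(\K_i^+\cap\L)=\bigcap_i\K_i$ with $\L$ and $\Phi_{\L^+,\L}$ is injective. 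So I would reduce the lemma to showing that $\pm$ restricts to mutually inverse bijections $\mathbb{K}_\N\leftrightarrow\mathbb{K}_{\N^+}$ and $\mathbb{K}_\N^\prime\leftrightarrow\mathbb{K}_{\N^+}^\prime$; applying $\bigcap$ then gives $O^p_{\L^+}(\N^+)=\bigcap\mathbb{K}_{\N^+}=\bigl(\bigcap\mathbb{K}_\N\bigr)^+=O^p_\L(\N)^+$ and likewise for $O^{p^\prime}$. I would record at the outset that $S\cap\N^+=S\cap(\N^+\cap\L)=S\cap\N=T$, so that the subgroup playing the role of $T$ is the same for $\N$ in $\L$ and for $\N^+$ in $\L^+$.

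For the prime-index families this is immediate. Given $\K\unlhd\L$, inclusion-preservation of $\pm$ gives $\K\subseteq\N\Leftrightarrow\K^+\subseteq\N^+$, while $T\subseteq\K\Leftrightarrow T\subseteq\K^+$ because $T\subseteq S\subseteq\L$ and $\K=\K^+\cap\L$. Hence $\K\in\mathbb{K}_\N^\prime\Leftrightarrow\K^+\in\mathbb{K}_{\N^+}^\prime$, which already settles $O^{p^\prime}_\L(\N)^+=O^{p^\prime}_{\L^+}(\N^+)$.

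For the $p$-power-index families, one direction is also easy. If $\K^+\unlhd\L^+$ lies in $\mathbb{K}_{\N^+}$, i.e. $T\K^+=\N^+$, then intersecting with $\L$ and using Lemma~\ref{L:RestrictionIntersectL} (with $T\leq S$ in the role of $R$) gives $T(\K^+\cap\L)=(T\K^+)\cap\L=\N^+\cap\L=\N$, so $\K:=\K^+\cap\L\in\mathbb{K}_\N$. (In particular, applying this to $\K^+=O^p_{\L^+}(\N^+)\in\mathbb{K}_{\N^+}$, which is legitimate by Lemma~\ref{L:ResidueEquivalences} applied in $\L^+$, yields the inclusion $O^p_\L(\N)\subseteq O^p_{\L^+}(\N^+)\cap\L$.) What remains, and what is the heart of the matter, is the converse implication: $\K\in\mathbb{K}_\N$, i.e. $T\K=\N$, implies $\K^+\in\mathbb{K}_{\N^+}$, i.e. $T\K^+=\N^+$.

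This converse is precisely where Chermak's argument has a gap: unlike in the prime-index case (and unlike the easy direction above), $T\K^+$ is only visibly a partial subgroup of $\L^+$, not obviously a partial normal subgroup, so one cannot simply read off $T\K^+=\N^+$ from the bijection $\Phi_{\L^+,\L}$. I would close this gap by citing \cite[Theorem~C(c)]{Henke:2020}, which supplies exactly the needed compatibility of $p$-power index with the expansion $\L\rightsquigarrow\L^+$, namely that a partial normal subgroup of $\L$ has $p$-power index in $\N$ if and only if the corresponding partial normal subgroup of $\L^+$ has $p$-power index in $\N^+$; granting this, the bijection $\mathbb{K}_\N\leftrightarrow\mathbb{K}_{\N^+}$ is complete and both displayed equalities follow as in the first paragraph. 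If one instead wanted to argue directly, the natural route is Lemma~\ref{L:ResidueEquivalences}(iv), which reduces the converse to checking $O^p(N_{\N^+}(Q))\subseteq\K^+$ for all $Q\in R_{\Delta^+}(S\N^+)$: this is routine for $Q\in\Delta$ by Lemma~\ref{L:TakePlusRadical} together with Lemma~\ref{L:ResidueEquivalences}(iii) for $\L$, but the genuine obstacle is handling objects $Q\in\Delta^+\setminus\Delta$ (equivalently, showing $R_{\Delta^+}(S\N^+)\subseteq\Delta$ or an analogue of it), and it is cleanest to absorb that point into the cited result of \cite{Henke:2020} rather than reprove it here.
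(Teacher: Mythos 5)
Your proof is correct and takes essentially the same route as the paper: reduce to showing that $\Phi_{\L^+,\L}$ restricts to a bijection between the relevant families of partial normal subgroups, and then pass through $\bigcap$. For the $p^\prime$-index families you, like the paper, read the bijection off directly from the order-isomorphism of Theorem~\ref{T:VaryObjects}(b), and for the $p$-power-index families you invoke \cite[Theorem~C(c)]{Henke:2020} for the nontrivial direction. The paper's proof simply cites that result for the entire bijection $\mathbb{K}_{\N^+}\to\mathbb{K}_\N$, whereas you peel off the easy implication $\K^+\in\mathbb{K}_{\N^+}\Rightarrow\K^+\cap\L\in\mathbb{K}_\N$ using Lemma~\ref{L:RestrictionIntersectL} and explain explicitly why the converse is the genuine obstacle; that diagnosis matches the paper's own remark that this is precisely the gap in Chermak's original argument that \cite[Theorem~C(c)]{Henke:2020} was introduced to fill. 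So the content is the same, with your write-up slightly more explanatory about where the work lies.
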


\begin{proof}
Let $*$ be one of the symbols $p$ or $p^\prime$. Set $\mathbb{K}_\N^*=\mathbb{K}_\N$ and $\mathbb{K}^*_{\N^+}:=\{\K^+\unlhd\L^+\colon T\K^+=\N^+\}$ if $*$ stands for $p$, and set $\mathbb{K}_\N^*=\mathbb{K}_\N^\prime$ and $\mathbb{K}^*_{\N^+}:=\{\K^+\unlhd\L^+\colon T\subseteq\K^+\subseteq\N^+\}$ if $*$ stands for the symbol $p^\prime$. Note that in either case
\[O^*(\L)=\bigcap\mathbb{K}_\N^*\mbox{ and }O^*_{\L^+}(\N^+)=\bigcap\mathbb{K}^*_{\N^+}.\]
We will now consider the map $\Phi_{\L^+,\L}$ introduced at the beginning of  Section~\ref{SS:VaryObjects}, which is by Theorem~\ref{T:VaryObjects}(b) an inclusion-preserving bijection whose inverse map is also inclusion-preserving. It turns out now that $\Phi_{\L^+,\L}$ induces a bijection
\[\mathbb{K}^+_{\N^+}\rightarrow \mathbb{K}_\N,\K^+\mapsto \K^+\cap \L.\]
If $*$ stands for $p$, then this is true by \cite[Theorem~C(c)]{Henke:2020}, and if $*$ stands for $p^\prime$, then this follows directly from the properties of $\Phi_{\L^+,\L}$. Observe furthermore that $\Phi_{\L^+,\L}$ commutes with taking intersections. Hence, we have
\[\Phi_{\L^+,\L}(O^*_{\L^+}(\N^+))=\Phi_{\L^+,\L}\left(\bigcap\mathbb{K}^*_{\N^+}\right)=\bigcap \mathbb{K}^*_\N=O^*_\L(\N).\]
This proves the assertion.
\end{proof}

\begin{remark}\label{R:TakePlus}
Let $(\L^+,\Delta^+,S)$ be a linking locality over $\F$ with $\Delta\subseteq\Delta^+$ and $\L^+|_\Delta=\L$. Recall from Lemma~\ref{L:NLTCLT}(b),(c) that $(N_{\L^+}(T),\Delta^+,S)$ and $(N_\L(T),\Delta,S)$ are linking localities over $N_\F(T)$, $C_{\L^+}(T)\unlhd N_{\L^+}(T)$ and $C_\L(T)\unlhd N_\L(T)$. Using \cite[Lemma~2.23(b)]{Henke:2020}, one observes moreover that $N_{\L^+}(T)|_\Delta=N_{\L^+}(T)\cap\L=N_\L(T)$ and $C_{\L^+}(T)\cap N_\L(T)=C_\L(T)$. Hence, by Lemma~\ref{L:OupperpVaryObjects}, we have 
\[O^p_{N_{\L^+}(T)}(C_{\L^+}(T))\cap \L=O^p_{N_{\L^+}(T)}(C_{\L^+}(T))\cap N_\L(T)=O^p_{N_\L(T)}(C_\L(T)).\]
In particular,
\[O^p_{N_{\L^+}(T)}(C_{\L^+}(T))\cap S=O^p_{N_\L(T)}(C_\L(T))\cap S.\]
\end{remark}

\section{Images of $p$-residuals and $p^\prime$-residuals under isomorphisms} Recall the definitions from Section~\ref{SS:LocalityHomomorphism}.

\begin{lemma}\label{L:AutResidue}
Let $(\tL,\tDelta,\tS)$ be a locality and $\alpha\in\Iso(\L,\tL)$. Then $\N\alpha\unlhd\tL$, $O^p_\L(\N)\alpha=O^p_{\tL}(\N\alpha)$ and $O^{p^\prime}_\L(\N)\alpha=O^{p^\prime}_{\tL}(\N\alpha)$.
\end{lemma}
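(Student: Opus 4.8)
The plan is to deduce everything from the fact that $\alpha$ is an isomorphism of partial groups, hence carries partial normal subgroups to partial normal subgroups in an inclusion-preserving way, and commutes with all the set-theoretic operations (products, intersections) used to define $O^p$ and $O^{p'}$.

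First I would record that $\N\alpha\unlhd\tL$. This is standard: $\alpha$ is a bijective homomorphism of partial groups, so for $\tilde n\in\N\alpha$ and $\tilde f\in\tL$ with $\tilde n\in\D(\tilde f)$, writing $\tilde n=n\alpha$, $\tilde f=f\alpha$, the word $(\tilde f^{-1},\tilde n,\tilde f)=(f^{-1},n,f)\alpha^*$ lies in $\tD=\D\alpha^*$, so $(f^{-1},n,f)\in\D$, i.e. $n\in\D(f)$; then $n^f\in\N$ by normality and $\tilde n^{\tilde f}=(n^f)\alpha\in\N\alpha$. Also $\alpha$ restricts to a bijection $\tS\leftarrow S$ sending $S$ to $\tS$ (as in the discussion after Definition~\ref{D:LocalityHomomorphism}), so $\tS\cap\N\alpha=(S\cap\N)\alpha=T\alpha$.

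Next I would observe that $\alpha$ induces an inclusion-preserving bijection between $\{\K\colon\K\unlhd\L\}$ and $\{\tilde\K\colon\tilde\K\unlhd\tL\}$, via $\K\mapsto\K\alpha$, with inverse $\tilde\K\mapsto\tilde\K\alpha^{-1}$ (note $\alpha^{-1}\in\Iso(\tL,\L)$). Since $\alpha$ is a bijection of sets, for any family $(\K_i)$ of subsets we have $\bigl(\bigcap_i\K_i\bigr)\alpha=\bigcap_i(\K_i\alpha)$, and products are preserved: $(\K_1\K_2)\alpha=(\K_1\alpha)(\K_2\alpha)$ because $(x,y)\in\D\iff(x\alpha,y\alpha)\in\tD$ and $\Pi(x,y)\alpha=\tPi(x\alpha,y\alpha)$. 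In particular $T\K=\N\iff (T\alpha)(\K\alpha)=\N\alpha$ and $T\subseteq\K\subseteq\N\iff T\alpha\subseteq\K\alpha\subseteq\N\alpha$. Hence $\alpha$ restricts to bijections
\[
\mathbb{K}_\N\xrightarrow{\ \sim\ }\mathbb{K}_{\N\alpha},\qquad \mathbb{K}_\N'\xrightarrow{\ \sim\ }\mathbb{K}_{\N\alpha}',
\]
where on the right we use that $T\alpha=\tS\cap\N\alpha$. Taking intersections and using that $\alpha$ commutes with $\bigcap$:
\[
O^p_\L(\N)\alpha=\Bigl(\bigcap\mathbb{K}_\N\Bigr)\alpha=\bigcap\mathbb{K}_{\N\alpha}=O^p_{\tL}(\N\alpha),
\]
and identically with $\mathbb{K}'$ in place of $\mathbb{K}$ for the $p'$-residual. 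This finishes the proof.

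There is no real obstacle here; the only thing to be a little careful about is the bookkeeping that $\alpha$ sends $\mathbb{K}_\N$ onto $\mathbb{K}_{\N\alpha}$ (not just into it), which follows because $\alpha^{-1}$ is also an isomorphism and enjoys the same properties, so the two maps are mutually inverse. Everything else is a formal consequence of $\alpha$ being a bijective homomorphism of partial groups that takes $S$ to $\tS$. I would present it in roughly the three blocks above: (1) $\N\alpha\unlhd\tL$ and $\tS\cap\N\alpha=T\alpha$; (2) $\alpha$ preserves products and arbitrary intersections of subsets, hence induces bijections of the relevant families of partial normal subgroups; (3) apply this to the defining intersections.
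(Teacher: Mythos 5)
Your argument from step~(2) onwards is fine and matches the paper's reasoning, but there is a genuine gap at the very start: you claim that ``$\alpha$ restricts to a bijection $S\to\tS$ (as in the discussion after Definition~\ref{D:LocalityHomomorphism})'', but that discussion concerns \emph{projections of localities}, i.e.\ maps satisfying $\Delta\alpha=\tDelta$, and these do indeed send $S$ to $\tS$. Here, however, $\alpha$ is only assumed to lie in $\Iso(\L,\tL)$, the set of isomorphisms of \emph{partial groups}, which need not send $\Delta$ to $\tDelta$ or $S$ to $\tS$; the containments displayed after Definition~\ref{D:LocalityHomomorphism} are one-directional inclusions, not equalities. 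Without $S\alpha=\tS$ you cannot conclude $\tS\cap\N\alpha=T\alpha$, and then the identification $\mathbb{K}_\N\alpha=\mathbb{K}_{\N\alpha}$ (whose definition uses $\N\alpha\cap\tS$, not $T\alpha$) breaks down.

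The paper closes this gap with a preliminary reduction: by \cite[Proposition~2.17]{Chermak:2015}, any $\alpha\in\Iso(\L,\tL)$ factors as an inner automorphism of $\L$ followed by an element of $\Iso((\L,S),(\tL,\tS))$. Since $\N$, $O^p_\L(\N)$, and $O^{p'}_\L(\N)$ are partial normal subgroups of $\L$, they are each invariant under that inner automorphism, so one may replace $\alpha$ by the $S$-preserving factor without changing either side of the desired equalities. After that reduction, $S\alpha=\tS$ holds by assumption and the rest of your argument goes through exactly as you wrote it. So the fix is to insert this reduction (and cite Chermak's factorization) before asserting $\tS\cap\N\alpha=T\alpha$.
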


\begin{proof}
By \cite[Proposition~2.17]{Chermak:2015}, $\alpha$ can be factored as the composition of an element of $\Iso((\L,S),(\tL,\tS))$ and an inner automorphism of $\L$. Thus, as $O^p_\L(\N)$ and $O^{p^\prime}_\L(\N)$ are partial normal subgroups of $\L$, we may assume $S\alpha=\tS$. Then $T\alpha=\N\alpha\cap \tS$. It is now easy to check that $\alpha$ induces a bijection $\K\mapsto \K\alpha$ from $\mathbb{K}_\N$ to $\mathbb{K}_{\N\alpha}$ and similarly from $\mathbb{K}_\N^\prime$ to $\mathbb{K}_{\N\alpha}^\prime$. Thus, $\alpha$ maps $O^p_\L(\N)=\bigcap\mathbb{K}_\N$ onto $O^p_{\tL}(\N\alpha)=\bigcap\mathbb{K}_{\N\alpha}$ and $O^{p^\prime}_\L(\N)=\bigcap\mathbb{K}_\N^\prime$ onto $O^{p^\prime}_{\tL}(\N\alpha)=\bigcap\mathbb{K}_{\N\alpha}^\prime$.
\end{proof}

\begin{lemma}\label{L:AutResidueCT}
Let $(\tL,\tDelta,\tS)$ be a locality and $\alpha\in\Iso((\L,S),(\tL,\tS))$. Suppose $T\leq S$ is strongly closed in $\F_S(\L)$. Setting 
\[\L_T:=N_\L(T),\;\C_T:=C_\L(T),\;\L_{T\alpha}:=N_{\tL}(T\alpha)\mbox{ and }\C_{T\alpha}:=C_{\tL}(T\alpha),\] we have then $\alpha|_{\L_T}\in\Iso(\L_T,\L_{T\alpha})$ and $O^p_{\L_T}(\C_T)\alpha=O^p_{\L_{T\alpha}}(\C_{T\alpha})$. 
\end{lemma}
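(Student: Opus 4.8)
The plan is to reduce the statement to Lemma~\ref{L:AutResidue} applied to the restriction of $\alpha$ to the relevant normalizer localities. First I would record the elementary transport properties of $\alpha$. Since $\alpha\in\Iso((\L,S),(\tL,\tS))$, the restriction $\alpha|_S\colon S\to\tS$ is an isomorphism of groups (both $S$ and $\tS$ being subgroups in the partial-group sense), and as $T$ is strongly closed in $\F_S(\L)$ we have $T\unlhd S$, hence $T\alpha\unlhd\tS$. Because $\alpha$ is an isomorphism of partial groups we have $\D\alpha^*=\tD$ with $\alpha^*$ injective, so for $f\in\L$ and $t\in T$ one has $t\in\D(f)$ if and only if $t\alpha\in\D(f\alpha)$; combined with the identity $(t^f)\alpha=(t\alpha)^{f\alpha}$ (valid since $\alpha$ is a homomorphism of partial groups and preserves inverses), this gives $f\in N_\L(T)\iff f\alpha\in N_{\tL}(T\alpha)$ and likewise $f\in C_\L(T)\iff f\alpha\in C_{\tL}(T\alpha)$. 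Thus $N_\L(T)\alpha=\L_{T\alpha}$ and $\C_T\alpha=\C_{T\alpha}$.

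Next I would check that $\alpha|_{\L_T}$ is an isomorphism of partial groups from $\L_T$ onto $\L_{T\alpha}$. Here $\L_T$ is a partial subgroup of $\L$ with domain $\D\cap\W(\L_T)$ and product the restriction of $\Pi$, and similarly $\L_{T\alpha}$ has domain $\tD\cap\W(\L_{T\alpha})$ and product the restriction of $\tPi$. Since $\alpha^*$ is injective with $\D\alpha^*=\tD$ and $\W(\L_T)\alpha^*=\W(\L_{T\alpha})$ (because $\L_T\alpha=\L_{T\alpha}$), we get $(\D\cap\W(\L_T))\alpha^*=\tD\cap\W(\L_{T\alpha})$, and compatibility with the products is inherited from $\alpha$. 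Hence $\alpha|_{\L_T}\in\Iso(\L_T,\L_{T\alpha})$, which is the first assertion of the lemma.

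For the second assertion I would note that $(N_\L(T),\Delta,S)$ and $(N_{\tL}(T\alpha),\tDelta,\tS)$ are localities by \cite[Lemma~2.13]{Chermak:2015} (using $T\unlhd S$ and $T\alpha\unlhd\tS$), and that $\C_T\unlhd\L_T$ and $\C_{T\alpha}\unlhd\L_{T\alpha}$ by Lemma~\ref{L:CentralizerPartialNormal}, while $\C_T\subseteq\L_T$ trivially. Now Lemma~\ref{L:AutResidue}, applied with $(\L_T,\Delta,S)$, $\C_T$, $(\L_{T\alpha},\tDelta,\tS)$ and $\alpha|_{\L_T}$ in place of $(\L,\Delta,S)$, $\N$, $(\tL,\tDelta,\tS)$ and $\alpha$, yields $O^p_{\L_T}(\C_T)(\alpha|_{\L_T})=O^p_{\L_{T\alpha}}\!\big(\C_T(\alpha|_{\L_T})\big)$. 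Since $\C_T(\alpha|_{\L_T})=\C_T\alpha=\C_{T\alpha}$ by the first paragraph, and since $O^p_{\L_T}(\C_T)\subseteq\L_T$ so that $\alpha$ and $\alpha|_{\L_T}$ agree on it, this is precisely $O^p_{\L_T}(\C_T)\alpha=O^p_{\L_{T\alpha}}(\C_{T\alpha})$, as required.

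I do not expect any genuine obstacle: the only points requiring care are the bookkeeping in the first two paragraphs showing that $\alpha$ transports $N_\L(T)$ and $C_\L(T)$ correctly and that its restriction to $\L_T$ is again an isomorphism of partial groups, both of which follow directly from the definition of an isomorphism of partial groups, after which everything is a citation of Lemma~\ref{L:AutResidue}, Lemma~\ref{L:CentralizerPartialNormal} and \cite[Lemma~2.13]{Chermak:2015}.
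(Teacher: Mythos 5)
Your proof is correct and follows essentially the same path as the paper: transport $\L_T$, $\C_T$ along $\alpha$, verify that $\alpha|_{\L_T}\in\Iso(\L_T,\L_{T\alpha})$, and then invoke Lemma~\ref{L:AutResidue}. The one difference is bookkeeping: the paper cites Lemma~\ref{L:NLTCLT} to get that $(\L_T,\Delta,S)$ is a locality and $\C_T\unlhd\L_T$, whereas you cite \cite[Lemma~2.13]{Chermak:2015} and Lemma~\ref{L:CentralizerPartialNormal} directly — arguably a cleaner choice, since Lemma~\ref{L:NLTCLT} carries the hypothesis $\F^{cr}\subseteq\Delta$, which is not part of the standing assumptions in the chapter where this lemma sits (only the parts of its conclusion you reprove are needed, and those do not depend on that hypothesis).
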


\begin{proof}
Observe that $T\alpha$ is strongly closed in $\F$. By Lemma~\ref{L:NLTCLT}, $(\L_T,\Delta,S)$ and $(\L_{T\alpha},\tDelta,\tS)$ are localities with $\C_T\unlhd\L_T$ and $\C_{T\alpha}\unlhd\L_{T\alpha}$. So $O^p_{\L_T}(\C_T)$ and $O^p_{\L_{T\alpha}}(\C_{T\alpha})$ are well-defined. One checks easily that $\L_T\alpha=\L_{T\alpha}$ and $\C_T\alpha=\C_{T\alpha}$. In particular $\alpha|_{\L_T}\in\Iso(\L_T,\L_{T\alpha})$. Now  Lemma~\ref{L:AutResidue}  yields the assertion. 
\end{proof}

\section{Quasisimple linking localities}

\begin{definition}\label{D:SimpleQuasisimple}~
\begin{itemize}
\item A partial group $\L$ is called \emph{simple} if there are exactly two partial normal subgroups of $\L$ (namely $\{\One\}$ and $\L$).  
\item A linking locality $(\L,\Delta,S)$ is called \emph{quasisimple} if $\L=O^p(\L)$ and $\L/Z(\L)$ is simple. We say then also that $\L$ is quasisimple. 
\end{itemize}
\end{definition}

Notice that $\L\neq\{\One\}$ for every simple partial group $\L$. The following lemma is essentially the same as \cite[Lemma~8.2]{ChermakIII}.

\begin{lemma}\label{L:Quasisimple}
Let $(\L,\Delta,S)$ be a quasisimple linking locality and $\N\subn\L$. Then either $\N\leq Z(\L)$ or $\N=\L$. In particular, $O_p(\L)=Z(\L)$, $O^{p^\prime}(\L)=\L$ and $S$ is not abelian.
\end{lemma}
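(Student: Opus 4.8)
The plan is to prove the dichotomy first for $\N\unlhd\L$, then bootstrap to subnormal $\N$ by induction on the length of a subnormal series, and finally read off the three "in particular" claims by feeding well-chosen partial normal subgroups into the dichotomy.

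For $\N\unlhd\L$, I would use the natural projection $\alpha\colon\L\to\L/Z(\L)$, which has kernel $Z(\L)$ and sends partial normal subgroups to partial normal subgroups (a standard property of projections of partial groups). Since $\L/Z(\L)$ is simple, $\N\alpha$ is either $\{\One\}$ or all of $\L/Z(\L)$. If $\N\alpha=\{\One\}$ then $\N\subseteq\ker\alpha=Z(\L)$. If $\N\alpha=\L/Z(\L)$, then, since $\alpha$ is surjective and its fibres are the maximal $Z(\L)$-cosets, every $g\in\L$ lies in $Z(\L)n$ for some $n\in\N$; as $Z(\L)\le S\le N_\L(S)$ (see the next paragraph), Lemma~\ref{L:NLSbiset} makes this a genuine product and gives $g\in Z(\L)\N\subseteq S\N$, so $S\N=\L$. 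Then $\N$ is one of the partial normal subgroups whose intersection is $O^p(\L)$, and since $\L=O^p(\L)$ we get $\N=\L$. For $\N=\N_0\unlhd\N_1\unlhd\cdots\unlhd\N_k=\L$ I induct on $k$: applying the case just treated to $\N_{k-1}\unlhd\L$ gives $\N_{k-1}\le Z(\L)$ (whence $\N\subseteq\N_{k-1}\le Z(\L)$) or $\N_{k-1}=\L$ (whence $\N\subn\L$ via a shorter series and induction applies); the case $k\le 1$ is the previous paragraph.

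Before the consequences I would record two facts. First, $Z(\L)=C_\L(\L)$ is a partial normal subgroup and $Z(\L)\le C_\L(S)\le S$: indeed $C_\L(S)\subseteq N_\L(S)$, which is a group of characteristic $p$ with $O_p(N_\L(S))=S$ (by maximality of $S$), so $C_\L(S)=C_{N_\L(S)}(S)\le S$; moreover by Lemma~\ref{L:CentralizerHelp} every $z\in Z(\L)$ satisfies $z^f=z$ for all $f\in\L$, so $Z(\L)\unlhd\L$ and $Z(\L)\le S$ is a $p$-group. Second, $\L$ is not a $p$-group, for otherwise $\L=S$ and $S/Z(S)$ would be a nontrivial simple $p$-group, forcing $S$ cyclic, hence abelian, hence $S/Z(S)=\{\One\}$, a contradiction. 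Now $O_p(\L)\unlhd\L$ is a partial normal $p$-subgroup with $O_p(\L)\ne\L$, so the dichotomy gives $O_p(\L)\le Z(\L)$; conversely $Z(\L)$ is a partial normal $p$-subgroup, so $Z(\L)\le O_p(\L)$ by Lemma~\ref{L:OpL}; thus $O_p(\L)=Z(\L)$. Also $S\not\le Z(\L)$ (else $\L\subseteq C_\L(S)\le S$, contradicting that $\L$ is not a $p$-group), so any partial normal subgroup containing $S$ equals $\L$ by the dichotomy; since $O^{p^\prime}(\L)$ is the intersection of all such subgroups, $O^{p^\prime}(\L)=\L$.

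Finally, to show $S$ is not abelian, I would argue by contradiction. If $S$ is abelian, then for any $\L$-radical $P\in\Delta$ we have $S=N_S(P)\le N_\L(P)$, and since $N_\L(P)$ has characteristic $p$ with $O_p(N_\L(P))=P$ while the abelian $S$ centralizes $P$, we get $S\le C_{N_\L(P)}(P)\le P$, forcing $P=S$. Hence $R_\Delta(S\L)=R_\Delta(\L)=\{S\}$, so Lemma~\ref{L:PartialNormalAlperin} applied with $\N=\L$ writes every $f\in\L$ as $\Pi(t,n_1,\dots,n_k)$ with $t\in S$ and $n_i\in O^p(N_\L(S))$; as $N_\L(S)$ is a subgroup of $\L$ this word lies in $\W(N_\L(S))$ and so $f\in N_\L(S)$, i.e. $\L=N_\L(S)$. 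Then $S\unlhd\L$ by Lemma~\ref{L:OpL}, so $S=O_p(\L)=Z(\L)$, contradicting $S\not\le Z(\L)$. The step I expect to require the most care is the bookkeeping around the projection $\L\to\L/Z(\L)$ — checking that $\N\alpha$ is partial normal and that surjectivity together with $Z(\L)\le S$ really yields $S\N=\L$; once that is nailed down, the rest is a sequence of short deductions from the dichotomy and the $\N$-radical analysis of Chapter~\ref{S:NRadical}.
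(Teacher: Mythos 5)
Your proposal is correct and the core of it — projecting to $\L/Z(\L)$ and invoking simplicity to get the dichotomy, then feeding $O_p(\L)$, $O^{p^\prime}(\L)$ and $S$ into that dichotomy — is the same strategy the paper follows. Two steps are executed differently and are worth noting. First, you assert without proof that projections of partial groups carry partial normal subgroups to partial normal subgroups; the fact is true, but it is not a one-line consequence of the definitions because the Partial Subgroup Correspondence \cite[Proposition~4.7]{Chermak:2015} is phrased for partial subgroups containing the kernel. The paper handles this cleanly by forming $Z\N\unlhd\L$ via Theorem~\ref{T:ProductsPartialNormal} and observing $\N\alpha=(Z\N)\alpha$, so that the correspondence applies to $Z\N$ directly; your write-up should either make that reduction explicit or cite a statement that covers arbitrary partial normal subgroups, since ``standard property'' is doing real work there. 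Second, for the claim that $S$ is non-abelian, the paper simply cites Molinier's Alperin fusion theorem for localities to conclude $S\unlhd\L$, whereas you derive this from Lemma~\ref{L:PartialNormalAlperin}: when $S$ is abelian, the characteristic-$p$ condition forces every $\L$-radical object to equal $S$, hence $R_\Delta(S\L)=\{S\}$, and the decomposition lemma then gives $\L=N_\L(S)$, so $S\unlhd\L$ by Lemma~\ref{L:OpL}. Your route is more self-contained (it avoids the external Molinier reference, instead reusing the locality version of Alperin's theorem already quoted as Lemma~\ref{L:PartialNormalAlperin}) at the cost of a few extra lines; both reach the same contradiction $S=O_p(\L)=Z(\L)$. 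The rest of your argument — the induction on subnormal depth, $Z(\L)\leq C_\L(S)\leq S$, and the observations that $\L$ is not a $p$-group and that $S\not\leq Z(\L)$ — tracks the paper closely, modulo a slightly roundabout justification that $\L$ is not a $p$-group (the paper's one-liner $\{\One\}\neq\L=O^p(\L)$ is cleaner than arguing via simple $p$-groups being cyclic).
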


\begin{proof}
Set $Z:=Z(\L)$. If $\N\neq \L$, then $\N$ is contained in a partial normal subgroup of $\L$ which is properly contained in $\L$. Hence, we may assume without generality that $\N\unlhd\L$. Consider the canonical projection $\alpha\colon\L\rightarrow\L/Z$ which exists by 
\cite[Corollary~4.5]{Chermak:2015}. By Theorem~\ref{T:ProductsPartialNormal}, $Z\N$ is a partial normal subgroup of $\L$. Thus, by the Partial Subgroup Correspondence \cite[Proposition~4.7]{Chermak:2015}, $\N\alpha=(Z\N)\alpha$ is a partial normal subgroup of $\L/Z$. As $\L/Z$ is simple, it follows $\N\alpha=1$ or $\N\alpha=\L$. In the first case, $\N\leq \ker(\alpha)=Z$. In the second case, again by the Partial Subgroup Correspondence, we have $\L=Z\N$ and so in particular, $\L=S\N$. Therefore, it follows in this case from the definition of $O^p(\L)$ that $\L=O^p(\L)\subseteq\N$ and thus $\N=\L$. This proves the first part of the assertion. 

\smallskip

As $\{\One\}\neq \L=O^p(\L)$, we have $\L\neq O_p(\L)$. Hence, $O_p(\L)\leq Z$. On the other hand, as $(\L,\Delta,S)$ is a linking locality, $Z\leq C_\L(S)\leq S$ and thus $Z\leq O_p(\L)$. This shows $O_p(\L)=Z(\L)$. 

\smallskip

If $S\leq Z$, then $\L=C_\L(S)\leq S$, a contradiction to $O^p(\L)=\L\neq\{\One\}$. In particular, it follows from $S\subseteq O^{p^\prime}(\L)\unlhd\L$ that  $\L=O^{p^\prime}(\L)$. If $S$ is abelian, then by Alperin's fusion theorem for localities \cite{Molinier:2016}, we have $S\unlhd\L$ and thus $S=O_p(\L)=Z(\L)$, a contradiction as before. 
\end{proof}

\chapter{$\N$-Replete localities}\label{S:NReplete}

\textbf{Throughout this chapter let $(\L,\Delta,S)$ be a linking locality over a fusion system $\F$, let $\N\unlhd\L$ and $T:=S\cap\N$. Set $\L_T:=N_\L(T)$ and $\C_T:=C_\L(T)$.}

\smallskip

Roughly speaking, we will say that $(\L,\Delta,S)$ is $\N$-replete (or weakly $\N$-replete under weaker assumptions) if the object set $\Delta$ is large enough for $(\L,\Delta,S)$ to have nice properties with respect to $\N$.
After introducing the precise definitions, we will prove in Section~\ref{SS:ProveNReplete} some criterions for $(\L,\Delta,S)$ to be $\N$-replete. The results in Section~\ref{SS:ApplyNReplete} will then give a first indication that localities which are $\N$-replete or weakly $\N$-replete are convenient to work with, and that general properties of linking localities can be proved by reducing to the $\N$-replete case. The concept of $\N$-repleteness will also be crucial in later chapters.

\smallskip

Throughout this chapter we will use without further reference that, by Lemma~\ref{L:NLTCLT}(b),(c), the triple $(\L_T,\Delta,S)$ is a linking locality over $N_\F(T)$ and $\C_T\unlhd \L_T$. In particular, it makes sense to say that an object $Q\in\Delta$ is $S\C_T$-radical or $\C_T$-radical (in $\L_T$), and the set $R_\Delta(S\C_T)$ is defined.

\section{Main definition} 

The results in this chapter will be centered around the following definition. 

\begin{definition}\label{D:NReplete}~
\begin{itemize}
\item For every $P,Q\in\Delta$ such that $P$ is $S\N$-radical in $\L$ and $Q$ is $S\C_T$-radical in $\L_T$, we set
\[X_{P,Q}:=(P\cap T)(Q\cap C_S(T))O_p(\L).\]
\item The linking locality $(\L,\Delta,S)$ is called \emph{$\N$-replete} if $X_{P,Q}\in\Delta$ for all $P\in R_\Delta(S\N)$ and all $Q\in R_\Delta(S\C_T)$.  
\item We say that $(\L,\Delta,S)$ is \emph{weakly $\N$-replete} if 
\[X_{P,S}:=(P\cap T)C_S(T)O_p(\L)\in \Delta\mbox{ for all }P\in R_\Delta(S\N).\]  
\end{itemize}
\end{definition}

\section{Proving $\N$-repleteness}\label{SS:ProveNReplete}

In this section, we will give some sufficient conditions for $(\L,\Delta,S)$ to be $\N$-replete. In particular, it will turn out that $(\L,\Delta,S)$ is $\N$-replete if $(\L,\Delta,S)$ is a linking locality with $\F^q\subseteq\Delta$. The following lemma will be useful. The reader might want to recall Notation~\ref{N:FRc}.

\begin{lemma}\label{L:GetFTcIntoFq}
If $\hyp(C_\F(T))\leq T$ or $T\in\F^q$, then $\F_T^c\subseteq\F^q$.
\end{lemma}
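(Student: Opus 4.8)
The plan is to reduce to the hypothesis $T\in\F^q$ and then run an Alperin-type argument inside a centralizer subsystem. Recall that $\F=\F_S(\L)$ is saturated (Theorem~\ref{T:Saturation}) and that $T=S\cap\N$ is strongly closed, hence weakly closed, in $\F$; in particular $T\unlhd S$. If $\hyp(C_\F(T))\le T$, then Lemma~\ref{L:TFq} gives $T\in\F^q$. Conversely, if $T\in\F^q$, then since $T\unlhd S$ (so $T^\F=\{T\}$ and $C_\F(T)=\F_{C_S(T)}(C_S(T))$) we get $\hyp(C_\F(T))=1\le T$. Thus in both cases we may assume $T\in\F^q$ and $\hyp(C_\F(T))\le T$.

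Now fix $U\in\F_T^c$ and let $W$ be a fully $\F$-normalized $\F$-conjugate of $U$. Strong closure gives $W\le T$, and the definition of $\F_T^c$ gives $C_T(W)\le W$; together with $W\le T$ this forces $C_T(W)=Z(W)$. Since $W$ is fully normalized, hence fully centralized, $C_\F(W)$ is a saturated fusion system over $C_S(W)$, and proving $U\in\F^q$ amounts to showing $C_\F(W)=\F_{C_S(W)}(C_S(W))$. Note that every morphism of $C_\F(W)$ fixes $W$, and hence $Z(W)=C_T(W)$, pointwise. By Alperin's fusion theorem applied to $C_\F(W)$ it suffices to show that $\Aut_{C_\F(W)}(P)$ is a $p$-group for every $P\le C_S(W)$, so let $\phi\in\Aut_{C_\F(W)}(P)$ be a $p'$-element. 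Since the restriction map $\{\psi\in\Aut_\F(PW):\psi|_W=\id_W,\ P\psi=P\}\to\Aut(P)$ is an isomorphism onto $\Aut_{C_\F(W)}(P)$, we obtain a $p'$-element $\hat\phi\in\Aut_\F(PW)$ with $\hat\phi|_W=\id_W$ and $\hat\phi|_P=\phi$, and a direct computation (using $P\le C_S(W)$) gives $[PW,\hat\phi]=[P,\phi]\le P\le C_S(W)$.

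Granting for the moment the claim that $[P,\phi]\le T$: then $[P,\phi]\le T\cap C_S(W)=C_T(W)=Z(W)$, so $\hat\phi$ fixes $[P,\phi]$ pointwise, whence $[P,\phi,\phi]=1$ and therefore $[P,\phi]=1$ by coprime action of $\langle\phi\rangle$ on $P$. Thus $\phi=\id_P$, so $\Aut_{C_\F(W)}(P)$ is a $p$-group for every $P$, Alperin gives $C_\F(W)=\F_{C_S(W)}(C_S(W))$, and $U\in\F^q$ as desired.

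The claim $[P,\phi]\le T$ (equivalently $\hyp(C_\F(W))\le T$) is the main obstacle, and is the only point where the quasicentricity of $T$, rather than merely its strong closure, is used. The approach I would take is: after replacing the generator $(P,\phi)$ of $\hyp(C_\F(W))$ by an $\F$-conjugate, arrange that $PW$ is fully $\F$-normalized, hence receptive; then show $T\le N_{\hat\phi}$, so that $\hat\phi$ extends to some $\psi\in\Aut_\F(PWT)=\Aut_\F(PT)$ with $T\psi=T$ and $\psi|_W=\id_W$; restricting, $\psi|_T\in\Aut_\F(T)$ fixes $W$ pointwise, and combining this with $T\in\F^q$ (so that $C_\F(T)=\F_{C_S(T)}(C_S(T))$ forces the relevant commutators to be ``as small as possible'') yields $[P,\phi]=[PW,\hat\phi]\le T$. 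Verifying the extension step rigorously — in particular that one may choose the representative $PW$ to be normalized by $T$ — is the delicate part; an alternative is to analyse the quotient fusion system $C_\F(W)/C_T(W)$ (whose nilpotence is detected by Alperin on its automizers) directly, using $C_\F(T)=\F_{C_S(T)}(C_S(T))$ together with the basic properties of quasicentric subgroups in \cite[Part~I]{Aschbacher/Kessar/Oliver:2011}.
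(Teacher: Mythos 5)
Your reduction to the case $T\in\F^q$ is correct, the restriction-map isomorphism producing $\hat\phi$ from $\phi$ is standard, and the coprime-action finish is fine \emph{granted} the claim $[P,\phi]\le T$. But that claim --- equivalently $\hyp(C_\F(W))\le T$, hence (after intersecting with $C_S(W)$) $\hyp(C_\F(W))\le Z(W)$ --- is where all the difficulty of the lemma lives, and you have not proved it. Indeed, once the lemma is known one gets $W\in\F^q$ for fully centralized $W\in\F_T^c$, so $\hyp(C_\F(W))=1$; your intermediate claim is essentially as strong as the conclusion. The extension route you sketch requires $T\le N_{\hat\phi}$, which would in particular force $T\le N_S(PW)$; but there is no reason $T$ should normalize $PW$ or any of its $\F$-conjugates, since $P$ ranges over arbitrary subgroups of $C_S(W)$ and ``fully $\F$-normalized'' controls $|N_S(PW)|$, not whether $T\le N_S(PW)$. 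Even if an extension $\psi\in\Aut_\F(PT)$ with $\psi|_W=\id_W$ existed, the step from $T\in\F^q$ to $[P,\phi]\le T$ is not spelled out, and the quotient-system alternative is only gestured at.

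The paper's proof evades the need to bound $\hyp(C_\F(W))$ for a fixed $W$ by arguing from a maximal counterexample: it picks $U\in\F_T^c\setminus\F^q$ of largest order, passes to a fully normalized conjugate $Q^*=U^*R^*$ of $UR$ inside an object, and uses the finite group $N_\L(Q^*)$ of characteristic $p$ together with the group-theoretic Lemmas~\ref{L:CGUleqU} and \ref{L:CGNCGQ} to produce $V=N_T(Q^*)$ with $U^*<V\le T$, $V\in\F_T^c$ and $V\notin\F^q$, contradicting maximality. That passage to a strictly larger member of $\F_T^c$ is exactly the idea your fixed-$W$ argument lacks, and it leans on the locality machinery (constrained models for normalizers of objects) in a way that does not seem to transfer to the pure fusion-system setting you are working in. To salvage your approach you would have to prove $\hyp(C_\F(W))\le T$ by some independent means, and I would expect that to require a maximality argument of the same flavour.
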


\begin{proof}
By Theorem~\ref{T:VaryObjects}, there is a subcentric locality $(\L^s,\F^s,S)$ over $\F$ and  a partial normal subgroup $\N^s\unlhd\L^s$ with $\L=\L^s|_\Delta$ and $\N^s\cap\L=\N$. We have then $\N^s\cap S=T$. So replacing $(\L,\Delta,S)$ and $\N$ by $(\L^s,\F^s,S)$ and $\N^s$, we may assume without loss of generality that  
\[\Delta=\F^s.\]
Assume now that $\hyp(C_\F(T))\leq T$ or $T\in\F^q$. By Lemma~\ref{L:TFq}, we have in either case $T\in\F^q$. Suppose there exists $U\in\F_T^c$ such that $U\not\in\F^q$ and choose such $U$ of maximal order. As $T$ is strongly closed, the set $\F_T^c$ is closed under $\F$-conjugacy. Thus, we may choose $U$ such that $U$ is fully $\F$-normalized. Then $C_\F(U)$ is saturated. Moreover, as $U$ is not quasicentric, $C_\F(U)$ is not the fusion system of a $p$-group. Thus, by Alperin's fusion theorem, there exists $R\in C_\F(U)^c$ such that $\Aut_{C_\F(U)}(R)$ is not a $p$-group. By \cite[Lemma~3.14]{Henke:2015}, we have $Q:=UR\in\F^c\subseteq\Delta$. In particular, every $\F$-automorphism of $Q$ is realized by an element in $N_\L(Q)$. A $p^\prime$-automorphism in $\Aut_{C_\F(U)}(R)$ extends to an automorphism of $Q=UR$ which acts trivially on $U$ and is then a $p^\prime$-automorphism of $Q$. Hence, $C_{N_\L(Q)}(U)$ is not a $p$-group. Moreover, as $R\leq C_S(U)$, we have $R\cap T\leq C_T(U)\leq U$ and $Q\cap T=U(R\cap T)=U$. 

\smallskip

By \cite[Lemma~2.10]{Chermak:2015}, we may pick $f\in\L$ such that $Q\leq S_f$ and $N_S(Q^f)\in\Syl_p(N_\L(Q^f))$. Set $Q^*:=Q^f$ and $U^*:=U^f$. As $Q\cap T=U$, we have $Q^*\cap T=U^*$. By Lemma~\ref{L:ConjugateNormalizer}, $c_f\colon N_\L(Q)\rightarrow N_\L(Q^*)$ is an isomorphism of groups which takes $U$ to $U^*$. 
In particular, $C_{N_\L(Q^*)}(U^*)$ is not a $p$-group. 

\smallskip

Notice that $N_\N(Q^*)$ is normal in $N_\L(Q^*)$. So $N_T(Q^*)=N_\N(Q^*)\cap N_S(Q^*) \in\Syl_p(N_\N(Q^*))$ and, by Lemma~\ref{L:MSCharp}(a), $N_\N(Q^*)$ is of characteristic $p$. As $U\in\F_T^c$, the former fact implies $C_{O_p(N_\N(Q^*))}(U^*)\leq C_{N_T(Q^*)}(U^*)\leq U^*$. Notice that $U^*=Q^*\cap T$ is normal in $N_\L(Q^*)$ and in $N_\N(Q^*)$. Hence, it follows from Lemma~\ref{L:CGUleqU} applied with $(N_\N(Q^*),U^*)$ in place of $(G,U)$ that  $C_{N_\N(Q^*)}(U^*)\leq U^*$. Thus, we get from Lemma~\ref{L:CGNCGQ} applied with $(N_\L(Q^*),N_\N(Q^*),U^*)$ in place of $(G,N,U)$ that  $O^p(C_{N_\L(Q^*)}(U^*))=O^p(C_{N_\L(Q^*)}(N_\N(Q^*)))\leq C_{N_\L(Q^*)}(N_T(Q^*))$. Set
\[V:=N_T(Q^*).\]
As $C_{N_\L(Q^*)}(U^*)$ is not a $p$-group, we can conclude from the above that $X:=C_{N_\L(Q^*)}(V)$ is not a $p$-group. Since $V\unlhd N_S(Q^*)\in\Syl_p(N_\L(Q^*))$, we have $N_S(Q^*)\cap C_S(V)\in\Syl_p(X)$. In particular, $P:=O_p(X)\leq C_S(V)$ and $\Aut_X(P)\leq \Aut_{C_\F(V)}(P)$.  By Lemma~\ref{L:MSCharp}(b), $X$ is of characteristic $p$. So $\Aut_X(P)\leq \Aut_{C_\F(V)}(P)$ is not a $p$-group and thus $V\not\in\F^q$. Since $T\in\F^q$, we have $Q^*\cap T=U^*<T$ and thus $Q^*<TQ^*$. This implies $Q^*<N_{TQ^*}(Q^*)=N_T(Q^*)Q^*$ and therefore $V=N_T(Q^*)\not\leq Q^*$. So $U^*=Q^*\cap T$ is properly contained in $V$. As $U^*\leq V\leq T$ with $U^*\in\F_T^c$, we have $V\in\F_T^c$. Hence, the maximality of $|U|=|U^*|$ yields $V\in\F^q$. Because of this contradiction we have proved the assertion.   
\end{proof}

The proof of the next lemma uses a very nice trick, which is taken from Chermak's proof of \cite[Lemma~4.1]{ChermakIII}. The Lemma itself is however new. Again we use Notation~\ref{N:FRc}.

\begin{lemma}\label{L:NiceTrick}
Let $\K$ be a partial normal subgroup of $\L_T$ such that $\K\subseteq \C_T$. Set $R:=\K\cap S$ and assume that $\F_{TR}^c\subseteq \Delta$. Then $UV\in \F_{TR}^c$ for every $U\in\F_T^c$ and every $V\in\F_R^c$. 
\end{lemma}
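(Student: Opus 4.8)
The claim is that under the hypotheses $\mathcal{K}\unlhd\mathcal{L}_T$ with $\mathcal{K}\subseteq\mathcal{C}_T$, $R:=\mathcal{K}\cap S$, and $\mathcal{F}_{TR}^c\subseteq\Delta$, we have $UV\in\mathcal{F}_{TR}^c$ whenever $U\in\mathcal{F}_T^c$ and $V\in\mathcal{F}_R^c$. First I would record the basic structural facts: by Corollary~\ref{C:ChermakEndPartI} (or Proposition~\ref{P:ChermakEndPartI} applied to $\mathcal{K}$, using that $\mathcal{K}\subseteq\mathcal{C}_T$), the set $TR$ is strongly closed in $\mathcal{F}$; also $[T,R]=1$ since $R=\mathcal{K}\cap S\subseteq C_S(T)$, so $T$ and $R$ commute, and $T\cap R\leq Z(T)\cap Z(R)$ because $T\cap R\leq C_S(T)\cap C_S(R)$ — this means $U$ and $V$ behave like factors of a (near) central product. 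The goal $UV\in\mathcal{F}_{TR}^c$ means: for every $\mathcal{F}$-conjugate $(UV)^*$ of $UV$ lying in $TR$, we have $C_{TR}((UV)^*)\leq (UV)^*$.

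**The key trick (following Chermak's \cite[Lemma~4.1]{ChermakIII}).** The heart of the argument is to use the hypothesis $\mathcal{F}_{TR}^c\subseteq\Delta$ to realize $\mathcal{F}$-conjugation of $UV$ by an element of the locality, and then exploit that $TR$ is strongly closed so that the conjugating element normalizes $TR$ and hence respects both the $T$-part and the $R$-part of any subgroup of $TR$. Concretely: given $W\leq TR$ that is $\mathcal{F}$-conjugate to $UV$, pick (via Lemma~\ref{L:LocalityFusionSystem}(a), after enlarging to an object of $\Delta$ — here is where $\mathcal{F}_{TR}^c\subseteq\Delta$ enters, together with Lemma~\ref{L:FcrAKOinFcr}/\cite[Lemma~2.6(c)]{Aschbacher/Kessar/Oliver:2011} to get into a suitable object) an element $f\in\mathcal{L}$ with $UV\leq S_f$ and $(UV)^f=W$. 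Since $TR$ is strongly closed and $UV\leq TR\leq S_f$, we get $TR\leq S_f$ and $f\in N_\mathcal{L}(TR)$ — hence $c_f\in\operatorname{Aut}(\mathcal{F})$ by Lemma~\ref{L:NLTAutL}(d) (the object condition \eqref{E:ObjectCondition} is exactly $P\cap TR\in\Delta$ for $P\in\Delta$ with $O_p(\mathcal{L})\leq P$, which should follow from $\mathcal{F}_{TR}^c\subseteq\Delta$ via Lemma~\ref{L:FTcFRc}-type reasoning or can be arranged by passing to a subcentric locality as in the proof of Lemma~\ref{L:GetFTcIntoFq}). Moreover $c_f$ sends $U$ (the "$T$-part" of $UV$, namely $UV\cap T$ up to the near-central-product caveat) to $U^f\leq T$ and $V$-part to $\leq R$, because $f$ normalizes both $T$ and $R$ (using Corollary~\ref{C:ChermakEndPartI} again, or that $f\in N_\mathcal{L}(TR)$ permutes the strongly closed subgroups involved).

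**Reducing to the defining conditions of $\mathcal{F}_T^c$ and $\mathcal{F}_R^c$.** Having transported the conjugation into an automorphism of $\mathcal{F}$ (or, more to the point, into conjugation by $f$ that simultaneously moves $U$ inside $T$ and $V$ inside $R$), I would then mimic the central-product centralizer computation from Lemma~\ref{L:F1starF2Main}(e): write $UV\cap T$ and $UV\cap R$ in terms of $U,V$ (using Lemma~\ref{L:F1starF2Help}(d)-style arguments, valid since $T\cap R$ is central in both $T$ and $R$), observe that $U^f\in U^\mathcal{F}$ with $U^f\leq T$ so $C_T(U^f)\leq U^f$ by $U\in\mathcal{F}_T^c$, and likewise $C_R(V^f)\leq V^f$ by $V\in\mathcal{F}_R^c$ (here $V^f\in V^\mathcal{F}$ with $V^f\leq R$, using that $R$ is strongly closed in $N_\mathcal{F}(T)$ and $f\in N_\mathcal{L}(T)$, so $c_f|_R$ is a morphism in $N_\mathcal{F}(T)$ — cf. Lemma~\ref{L:FTcFRc}(b),(c)). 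Then
\[
C_{TR}((UV)^f)=C_{TR}(U^fV^f)=C_T(U^fV^f)\,C_R(U^fV^f)\subseteq C_T(U^f)\,C_R(V^f)\leq U^fV^f=(UV)^f,
\]
where the middle equality uses $[T,R]=1$ and a Dedekind/Hall argument exactly as in Lemma~\ref{L:F1starF2Main}(e), and we absorb the $T\cap R$ contributions into $U^f$ or $V^f$ using $T\cap R\leq Z(T)\leq U^f$ (since $U\in\mathcal{F}_T^c$ forces $Z(T)\leq U$ — actually $C_T(U^*)\leq U^*$ applied with $U^*=T$'s relevant conjugate gives $Z(T)\leq U$). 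Pulling back by $c_{f^{-1}}$ (which also normalizes $TR$) gives $C_{TR}(W)\leq W$ for the original conjugate $W=(UV)^f$, and since every $\mathcal{F}$-conjugate of $UV$ inside $TR$ arises this way, we conclude $UV\in\mathcal{F}_{TR}^c$.

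**Main obstacle.** The delicate point will be establishing the object condition needed to invoke Lemma~\ref{L:NLTAutL}(d) / Lemma~\ref{L:LocalityFusionSystem}(a) cleanly — i.e. ensuring that conjugation realizing the $\mathcal{F}$-fusion of $UV$ can be taken inside $\mathcal{L}$ with the relevant intermediate subgroups in $\Delta$, given only $\mathcal{F}_{TR}^c\subseteq\Delta$ rather than a larger object set. I expect this is handled exactly as in the proof of Lemma~\ref{L:GetFTcIntoFq}: pass to a subcentric locality $(\mathcal{L}^s,\mathcal{F}^s,S)$ via Theorem~\ref{T:VaryObjects}, do the argument there where $\Delta=\mathcal{F}^s\supseteq\mathcal{F}^c$ is large enough, and note the conclusion $UV\in\mathcal{F}_{TR}^c$ is a statement purely about the fusion system and subgroups of $TR$, hence independent of the locality — but one must check $\mathcal{K}$ and $R$ survive this passage (take $\mathcal{K}^s\unlhd\mathcal{L}^s_T$ with $\mathcal{K}^s\cap\mathcal{L}=\mathcal{K}$, so $\mathcal{K}^s\cap S=R$ still). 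The second subtlety is the non-trivial $T\cap R$, which prevents a literal application of the central-product lemmas; but since $T\cap R\leq Z(T)\cap Z(R)$ and both $U\in\mathcal{F}_T^c$, $V\in\mathcal{F}_R^c$ contain the respective centers, all the Dedekind manipulations of Lemma~\ref{L:F1starF2Help}(d) and Lemma~\ref{L:F1starF2Main}(e) go through verbatim.
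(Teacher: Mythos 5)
Your proposal captures the broad shape of the argument — transport the problem via an $\F$-conjugation realized inside $\L$, then show $C_{TR}$ of the conjugate lies inside it by splitting into a $T$-part and an $R$-part and using $U\in\F_T^c$, $V\in\F_R^c$ — but there is a genuine gap at the central step that the paper's proof is built specifically to avoid.

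You claim that, since $TR$ is strongly closed and $UV\leq TR$, the element $f$ realizing the $\F$-conjugation $(UV)^f=W$ satisfies $TR\leq S_f$, hence $f\in N_\L(TR)$, hence $U^f\leq T$ and $V^f\leq R$. This is false: strong closure of $TR$ tells you that images of subgroups of $TR$ under $\F$-morphisms stay in $TR$, but says nothing about the domain $S_f$ of a realizing element. In fact $T\not\leq UV$ (generically $U<T$), so there is no reason for the realizing element to normalize $T$, let alone $TR$. The paper's Step~2 addresses exactly this: via the Frattini and Splitting Lemmas it factors the realizing element as $g=hkn$ with $h\in N_\L(TR)$, $k\in\K\subseteq\C_T$, $n\in\N$. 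The $h$- and $k$-parts are absorbed by replacing $(U,V)$ with $(U^{hk},V^{hk})$, leaving an irreducible conjugation by $n\in\N$ — and $n$ does \emph{not} normalize $R$ (recall $R$ is strongly closed only in $N_\F(T)$, not in $\F$). Consequently $V^n$ need not lie in $R$, and the hypothesis $V\in\F_R^c$ cannot be applied to $V^n$ as you do in your centralizer computation. The paper's Step~3 sidesteps this with the crucial observation $VT=V^nT$ (from \cite[Lemma~3.1(b)]{Chermak:2015}), which yields $C_R(V)=C_R(V^n)$ without ever claiming $V^n\in\F_R^c$; your proposal has no substitute for this step.

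A secondary gap: your "enlarge to an object of $\Delta$" aside glosses over the fact that, given only $\F_{TR}^c\subseteq\Delta$, one cannot a priori realize the conjugation of $UV$ inside $\L$. The paper handles this by choosing $(U,V)$ maximal among counterexamples and using that maximality in Step~1 to prove $N_{TR}(X)\in\F_{TR}^c\subseteq\Delta$, whence $N_S(X)\in\Delta$ and the conjugation is realizable. The maximal-counterexample structure is therefore not an optional organizational device but is doing real work, and it is absent from your proposal.
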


\begin{proof}
Suppose the assertion is false. Among all  $(U,V)\in\F_T^c\times \F_R^c$ with $UV\not\in\F_{TR}^c$ pick $(U,V)$ such that $|U||V|$ is as large as possible. Set $X:=UV$ and fix $X^*\in X^\F$ fully $\F$-normalized.  

\smallskip

\emph{Step~1:} We argue that $N_{TR}(X)\in\F_{TR}^c\subseteq\Delta$. Since $TR\in\F_{TR}^c$, we have $U<T$ or $V<R$. So $U<N_T(U)$ or $V<N_R(V)$, which implies $|N_T(U)| |N_R(V)|>|U||V|$. Moreover, as $[R,T]=1$, we have $N_T(U)N_R(V)\leq N_{TR}(X)$. Observe that, for every subgroup $Y\leq S$, the set $\F_Y^c$ is overgroup closed in $Y$. So $N_T(U)\in\F_T^c$ and $N_R(V)\in\F_R^c$. Moreover, because of the maximality of $|U||V|$, it follows that $N_{TR}(X)\geq N_T(U)N_R(V)$ is an element of $\F_{TR}^c$ and thus of $\Delta$. 

\smallskip

\emph{Step~2:} We argue that we can choose $(U,V)$ such that $X^*=X^n$ for some $n\in\N$ with $X\leq S_n$. To see this pick $\alpha\in\Hom_\F(N_S(X),S)$ such that $X^*=X\alpha$; such $\alpha$ exists by \cite[Lemma~2.6(c)]{Aschbacher/Kessar/Oliver:2011}. By Step~1, we have $N_S(X)\in\Delta$ and thus, by Lemma~\ref{L:LocalityFusionSystem}(a), $\alpha=c_g$ for some $g\in\L$ with $N_S(X)\leq S_g$. So $X^*=X^g=U^gV^g$. By the Frattini Lemma and the Splitting Lemma \cite[Corollary~3.11, Lemma~3.12]{Chermak:2015}, we can write $g=fn$, where $f\in \L_T$, $n\in \N$ and $S_g=S_{(f,n)}$. Similarly, applying the Frattini Lemma and the Splitting Lemma to $\K\unlhd \L_T$, we can write $f=hk$ where $h\in N_{\L_T}(R)=N_\L(TR)$, $k\in \K\subseteq \C_T$ and $S_f=S_{(h,k)}$. Then $X\leq S_g=S_{(h,k,n)}$, $(h,k,n)\in\D$ and $g=hkn$.  Notice that $U^{hk}\leq T$ and $V^{hk}\leq R$, so $U^{hk}\in\F_T^c$ and $V^{hk}\in\F_R^c$. Since $X^*=X^g=((U^{hk})(V^{hk}))^n$, we may replace $(U,V)$ by $(U^{hk},V^{hk})$ to get $X^*=X^n$. This completes Step~2.

\smallskip

\emph{Step~3:} We will reach now the final contradiction. According to Step~2, we may assume that $X^*=X^n=U^nV^n$ for some $n\in\N$ with $X\leq S_n$. By Proposition~\ref{P:ChermakEndPartI}, $TR$ is strongly closed in $\F$. Hence, by Lemma~\ref{L:FTcFRc}(a), we reach a contradiction if we can show that $C_{TR}(X^*)\leq X^*$. Set $U^*=U^n$ and $V^*=V^n$ so that $X^*=U^*V^*$. Observe that $U^*\leq T\leq C_S(R)$. As $U\in\F_T^c$, we have $C_T(U^*)\leq U^*$. Moreover, $[U,V]=[T,R]=1$ implies $[U^*,V^*]=1$. By \cite[Lemma~3.1(b)]{Chermak:2015}, we have also
\[VT=V^*T.\]
As $R\leq C_S(T)$, it follows $C_R(V)=C_R(VT)=C_R(V^*T)=C_R(V^*)$. Hence, we are able to compute now
\begin{eqnarray*}
 C_{TR}(X^*)&=& C_{TR}(U^*V^*)\\
&=& C_{TR}(U^*)\cap C_{TR}(V^*)\\
&=& Z(U^*)R \cap C_{TR}(V^*)\mbox{ \;(as $C_T(U^*)=Z(U^*)$)}\\
&=& Z(U^*)C_R(V^*)\mbox{ \;(as $Z(U^*)\leq U^*\leq C_{TR}(V^*)$)}\\
&=& Z(U^*)C_R(V)\mbox{ \;(as $C_R(V)=C_R(V^*)$)}\\
&=& Z(U^*)Z(V)\mbox{ \;(as $V\in\F_R^c$)}.
\end{eqnarray*}
Since $Z(X^*)\leq C_{TR}(X^*)$, in order to show that $C_{TR}(X^*)\leq X^*$ we only need to show that $|Z(U^*)Z(V)|$ equals $|Z(X^*)|=|Z(X)|=|Z(U)Z(V)|$. Since $C_T(U)=Z(U)$ and $V\leq C_S(T)\leq C_S(U)$, we have $Z(V)\cap Z(U)\leq Z(V)\cap U\leq Z(V)\cap T\leq Z(V)\cap C_T(U)\leq Z(V)\cap Z(U)$. So equality holds and $Z(V)\cap Z(U)=Z(V)\cap T$. Similarly, as $C_T(U^*)=Z(U^*)$ and $V\leq C_S(T)\leq C_S(U^*)$, we have $Z(V)\cap Z(U^*)=Z(V)\cap T$. Thus $Z(V)\cap Z(U)=Z(V)\cap T=Z(V)\cap Z(U^*)$ and $|Z(U)Z(V)|=|Z(U^*)Z(V)|$. As argued above, this yields $C_{TR}(X^*)\leq X^*$ and thus $X\in\F_{TR}^c$, contradicting the choice of $(U,V)$.  
\end{proof}

\begin{lemma}\label{L:NRadKRadIntersect}
Let $\K$ be a partial normal subgroup of $\L_T$ such that $R:=\K\cap S\leq C_S(T)$. Let $P,Q\in\Delta$ such that $P$ is $\N$-radical in $\L$ and $Q$ is $\K$-radical in $\L_T$. Then $P\cap T\in\F_T^c$ and $Q\cap R\in\F_R^c$. In particular, if $\F_{TR}^c\subseteq \Delta$, then
\[(P\cap T)(Q\cap R)\in\F_{TR}^c\subseteq\Delta\]
and $(\L,\Delta,S)$ is $\N$-replete.
\end{lemma}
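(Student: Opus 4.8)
The plan is to prove the two claimed membership statements separately and then assemble the "$\N$-replete" conclusion by unwinding the definition of $X_{P,Q}$. First I would handle $P\cap T\in\F_T^c$. Since $P$ is $\N$-radical in $\L$ and, by Lemma~\ref{L:NLTCLT}(b) (or directly the linking locality hypothesis), $N_\L(P)$ is of characteristic $p$, and $\F=\F_S(\L)$ is saturated by Theorem~\ref{T:Saturation}, this is precisely Lemma~\ref{L:NradEcr} applied to $(\L,\Delta,S)$ and $\N$: it gives $P\cap T\in\F_T^c$ (the excerpt even phrases that lemma to land in $\F_T^c\subseteq\E^c$, so only the first inclusion is needed here). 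Next I would handle $Q\cap R\in\F_R^c$. The key observation is that $(\L_T,\Delta,S)$ is itself a linking locality (over $N_\F(T)$, by Lemma~\ref{L:NLTCLT}(b)) with $\C_T\unlhd\L_T$, hence $N_{\L_T}(Q)$ is of characteristic $p$ for $Q\in\Delta$, and $\K\unlhd\L_T$. Since $Q$ is $\K$-radical in $\L_T$, applying Lemma~\ref{L:NradEcr} again — now with $(\L_T,\Delta,S)$ in place of $(\L,\Delta,S)$, with $\K$ in place of $\N$, and noting $\K\cap S=R$ — yields $Q\cap R\in (N_\F(T))_R^c$. I then need to pass from $(N_\F(T))_R^c$ to $\F_R^c$; here I would invoke Lemma~\ref{L:FTcFRc}(c), which states $N_\F(T)_R^c=\F_R^c$ under the hypotheses that $T$ is strongly closed in $\F$ (true since $T=\N\cap S$, by \cite[Lemma~3.1(a)]{Chermak:2015}) and $R\leq C_S(T)$ is strongly closed in $N_\F(T)$. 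The latter strong-closedness of $R=\K\cap S$ in $N_\F(T)$ follows from \cite[Lemma~3.1(a)]{Chermak:2015} applied inside the locality $(\L_T,\Delta,S)$, since $\K\unlhd\L_T$ and $\F_S(\L_T)=N_\F(T)$.

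With these two facts in hand I would turn to the "in particular" clause. Assume $\F_{TR}^c\subseteq\Delta$. I have $U:=P\cap T\in\F_T^c$ and $V:=Q\cap R\in\F_R^c$. I would like to conclude $UV\in\F_{TR}^c$, and this is exactly the content of Lemma~\ref{L:NiceTrick}: its hypotheses are that $\K\unlhd\L_T$ with $\K\subseteq\C_T$ — which we have, since $\K\cap S=R\leq C_S(T)$ forces $\K\subseteq C_\L(T)=\C_T$ by Lemma~\ref{L:XfixesRXcentralizesR} applied to $R\unlhd S$ (or one simply adds this to the hypotheses, as the surrounding context of Definition~\ref{D:NReplete} already presumes $\K\subseteq\C_T$; I should double-check the exact hypothesis wording and, if needed, record $\K\subseteq\C_T$ explicitly) — together with $R=\K\cap S$ and $\F_{TR}^c\subseteq\Delta$. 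Lemma~\ref{L:NiceTrick} then gives $UV=(P\cap T)(Q\cap R)\in\F_{TR}^c\subseteq\Delta$, which is the displayed assertion.

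Finally, to deduce that $(\L,\Delta,S)$ is $\N$-replete I would unpack Definition~\ref{D:NReplete}. Let $P\in R_\Delta(S\N)$ and $Q\in R_\Delta(S\C_T)$; then $P$ is $S\N$-radical, hence $\N$-radical with $O_p(\L)\leq P$ by Lemma~\ref{L:SNradNrad}, and likewise $Q$ is $S\C_T$-radical, hence $\C_T$-radical with $O_p(\L_T)\leq Q$ (again Lemma~\ref{L:SNradNrad}, applied in $\L_T$). Since $\C_T\unlhd\L_T$, I may take $\K=\C_T$, so $R=\C_T\cap S=C_S(T)$ and the hypothesis $R\leq C_S(T)$ holds trivially; the condition "$Q$ is $\K$-radical in $\L_T$" is then "$Q$ is $\C_T$-radical in $\L_T$," which holds. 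The first part of the lemma gives $P\cap T\in\F_T^c$ and $Q\cap R\in\F_R^c$, and the "in particular" clause gives $(P\cap T)(Q\cap C_S(T))\in\F_{TR}^c\subseteq\Delta$. Now $X_{P,Q}=(P\cap T)(Q\cap C_S(T))O_p(\L)$, and since $O_p(\L)\leq P\cap S$... more carefully: $O_p(\L)\leq O_p(\L_T)\leq Q$ and $O_p(\L)\leq S$, so $O_p(\L)\leq Q\cap C_S(T)$? That requires $O_p(\L)\leq C_S(T)$, i.e.\ $[O_p(\L),T]=1$, which need not hold. Instead I would observe that $(P\cap T)(Q\cap C_S(T))\leq TC_S(T)$ and that $\F_{TR}^c$, being overgroup-closed in $TR=TC_S(T)$, contains every overgroup of $(P\cap T)(Q\cap C_S(T))$ in $TC_S(T)$; since $O_p(\L)\leq S$ is normalized by everything, $X_{P,Q}$ is such an overgroup provided $O_p(\L)\leq TC_S(T)$ — and indeed $O_p(\L)$ centralizes... no. The cleanest route is to note $X_{P,Q}\supseteq (P\cap T)(Q\cap C_S(T))\in\F_{TR}^c$ and $\F_{TR}^c$ is overgroup-closed in $S$ up to intersection with $TR$; but $X_{P,Q}$ may not lie in $TR$. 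Hence the genuinely delicate point — and the step I expect to be the main obstacle — is verifying that adjoining $O_p(\L)$ keeps us inside $\Delta$; I anticipate this is handled either by a Dedekind/strong-closure argument showing $(P\cap T)(Q\cap C_S(T))O_p(\L)\cap (TR)=(P\cap T)(Q\cap C_S(T))$ together with $\F_{TR}^c\subseteq\Delta$ closed under the relevant overgroups, or by invoking that $O_p(\L)\leq P$ and $P\cap (T\cdot\text{stuff})$-type identities from Lemma~\ref{L:PRintersectT}. I would pin this down by first establishing $O_p(\L)(P\cap T)=P\cap(T O_p(\L))$ and similar, then concluding $X_{P,Q}\in\F_{TR}^c\subseteq\Delta$; everything else is a direct citation of the preceding lemmas.
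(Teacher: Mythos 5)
Your route is the paper's up to the last paragraph: Lemma~\ref{L:NradEcr} applied twice (once in $\L$ for $P\cap T$, once in the linking locality $(\L_T,\Delta,S)$ for $Q\cap R$), then Lemma~\ref{L:FTcFRc}(c), then Lemma~\ref{L:NiceTrick}. Your worry about the hypothesis of Lemma~\ref{L:NiceTrick} is well placed -- it is stated with $\K\subseteq\C_T$, while the present statement records only $R=\K\cap S\leq C_S(T)$ -- but the repair you sketch does not go through. Lemma~\ref{L:XfixesRXcentralizesR} would yield $\K\subseteq C_\L(T)$ only if you already knew that $T$ fixes $\K$ under conjugation or that $\K$ fixes $T$ under conjugation, and neither is among the given hypotheses; the second is essentially the statement you are after. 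Since every use of this lemma in the paper does have $\K\subseteq\C_T$ (Corollary~\ref{C:GetNreplete} takes $\K=\C_T$, Lemma~\ref{L:PutTogetherK} assumes it), the clean move is the one you hedged toward: record $\K\subseteq\C_T$ among the hypotheses. Note that $\K\subseteq\C_T$ is also what lets one pass, via Lemma~\ref{L:NradMrad} applied in $\L_T$, from $Q$ being $\C_T$-radical to $Q$ being $\K$-radical in the repleteness step below.

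The genuine gap is the last paragraph, where you try to show $X_{P,Q}\in\Delta$. You correctly observe that $\F_{TR}^c$ is overgroup-closed only within $TR$ and that $X_{P,Q}$ need not lie inside $TR$; your final sentence even aims at $X_{P,Q}\in\F_{TR}^c$, which cannot be right since every element of $\F_{TR}^c$ is a subgroup of $TR$. The fact you are missing is that $\Delta$ is overgroup-closed \emph{in $S$}, as part of Definition~\ref{locality}. Given $P\in R_\Delta(S\N)$ and $Q\in R_\Delta(S\C_T)$, Lemma~\ref{L:SNradNrad} makes $P$ $\N$-radical and $Q$ $\C_T$-radical (hence $\K$-radical once $\K\subseteq\C_T$), so the displayed assertion of the lemma gives $(P\cap T)(Q\cap R)\in\F_{TR}^c\subseteq\Delta$. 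Now $R\leq C_S(T)$ gives
\[(P\cap T)(Q\cap R)\leq (P\cap T)(Q\cap C_S(T))\leq X_{P,Q}\leq S,\]
and the overgroup-closedness of $\Delta$ in $S$ gives $X_{P,Q}\in\Delta$ at once. No Dedekind computation and no appeal to Lemma~\ref{L:PRintersectT} is needed.
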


\begin{proof}
It follows from Lemma~\ref{L:NradEcr} that $P\cap T\in\F_T^c$ and $Q\cap R\in N_\F(T)_R^c$. By Lemma~\ref{L:FTcFRc}(c) we have $N_\F(T)_R^c=\F_R^c$, so indeed $Q\cap R\in \F_R^c$.

\smallskip

If we suppose now that $\F_{TR}^c\subseteq\Delta$, then it follows from Lemma~\ref{L:NiceTrick} that $(P\cap T)(Q\cap R)\in\F_{TR}^c\subseteq\Delta$. By Lemma~\ref{L:SNradNrad}, every $S\N$-radical subgroup is $\N$-radical and every $S\C_T$-radical subgroup is $\C_T$-radical. As $\Delta$ is overgroup-closed and $R\leq C_S(T)$, it follows thus from Lemma~\ref{L:SNradNrad} that $(\L,\Delta,S)$ is then $\N$-replete. 
\end{proof}

\begin{lemma}\label{L:PutTogetherK}
Assume $\F^q\subseteq \Delta$. Let $\K\unlhd \L_T$ such that $\K\subseteq \C_T$ and $\hyp(C_\F(T))\leq R:=\K\cap S$. Then
\[\F_{TR}^c\subseteq\F^q\subseteq\Delta\]
and
\[(P\cap T)(Q\cap R)\in\F_{TR}^c\subseteq\Delta\] 
whenever $P\in\Delta$ is $\N$-radical and $Q\in\Delta$ is $\K$-radical in $\L_T$. In particular $(\L,\Delta,S)$ is $\N$-replete.
\end{lemma}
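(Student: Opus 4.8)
\textbf{Plan of proof for Lemma~\ref{L:PutTogetherK}.}
The strategy is to reduce everything to the previously proved lemmas \ref{L:GetFTcIntoFq}, \ref{L:TFq}, \ref{L:NRadKRadIntersect} and \ref{L:NiceTrick}, so that almost no new computation is needed. First I would establish the fusion-system inclusion $\F_{TR}^c\subseteq\F^q$. Here the key input is Lemma~\ref{L:GetFTcIntoFq}, which gives $\F_Y^c\subseteq\F^q$ whenever $\hyp(C_\F(Y))\leq Y$ or $Y\in\F^q$; so it suffices to verify that $TR$ satisfies one of these two hypotheses. Since $\hyp(C_\F(T))\leq R\leq C_S(T)$ and $R=\K\cap S$ with $\K\subseteq\C_T=C_\L(T)$, one has $R\leq C_S(T)$, hence $[T,R]=1$ and $R$ centralizes $T$; moreover by Corollary~\ref{C:ChermakEndPartI} (applied to $\N$ and $\C_T$) or directly by Proposition~\ref{P:ChermakEndPartI} applied to $\K\unlhd\L_T$ with $\K\subseteq\C_T$, the product $TR$ is strongly closed in $\F$, in particular $TR\unlhd S$. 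I would then show $\hyp(C_\F(TR))\leq TR$: since $TR\unlhd S$ and $\F$ is saturated, $C_\F(TR)\leq C_\F(T)$, so $\hyp(C_\F(TR))\leq \hyp(C_\F(T))\leq R\leq TR$. Now Lemma~\ref{L:GetFTcIntoFq} applied with $TR$ in place of $T$ yields $\F_{TR}^c\subseteq\F^q$, and since $\F^q\subseteq\Delta$ by hypothesis we get $\F_{TR}^c\subseteq\F^q\subseteq\Delta$ as claimed. (One should double-check that the cited Lemma~\ref{L:GetFTcIntoFq} really is stated for an arbitrary strongly closed subgroup in the role of $T$; if not, the argument of that lemma goes through verbatim with $TR$ replacing $T$, and I would simply invoke it in that generality.)

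Next, with the inclusion $\F_{TR}^c\subseteq\Delta$ in hand, the statement $(P\cap T)(Q\cap R)\in\F_{TR}^c\subseteq\Delta$ for $P\in\Delta$ $\N$-radical and $Q\in\Delta$ $\K$-radical in $\L_T$ is now an immediate citation of Lemma~\ref{L:NRadKRadIntersect}: that lemma says precisely that, given $R=\K\cap S\leq C_S(T)$, such $P$ and $Q$ satisfy $P\cap T\in\F_T^c$ and $Q\cap R\in\F_R^c$, and that if $\F_{TR}^c\subseteq\Delta$ then $(P\cap T)(Q\cap R)\in\F_{TR}^c\subseteq\Delta$. So I would verify the hypotheses of Lemma~\ref{L:NRadKRadIntersect} — namely $\K\unlhd\L_T$, $\K\subseteq\C_T$, and $R=\K\cap S\leq C_S(T)$ (the last because $\K\subseteq C_\L(T)$ forces $R=\K\cap S\leq C_S(T)$) — and then quote it directly.

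Finally, for the ``in particular $(\L,\Delta,S)$ is $\N$-replete'' clause, I would again lean on Lemma~\ref{L:NRadKRadIntersect}, which already contains the implication ``$\F_{TR}^c\subseteq\Delta\Rightarrow(\L,\Delta,S)$ is $\N$-replete'' under the standing hypothesis $R\leq C_S(T)$; but one must be slightly careful about which $\K$ is used. To conclude $\N$-repleteness one needs $X_{P,Q}=(P\cap T)(Q\cap C_S(T))O_p(\L)\in\Delta$ for $P\in R_\Delta(S\N)$ and $Q\in R_\Delta(S\C_T)$ — note this is about $\C_T$-radical, not $\K$-radical, objects. The clean way is to observe that we are free to choose $\K$: since the only constraint from the hypothesis is $\hyp(C_\F(T))\leq R=\K\cap S$, and $\C_T\unlhd\L_T$ with $\C_T\cap S=C_S(T)\geq\hyp(C_\F(T))$ trivially, I would simply take $\K=\C_T$, so that $R=C_S(T)$. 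Then $R_\Delta(S\C_T)$-elements are $\C_T$-radical (Lemma~\ref{L:SNradNrad}) and $R_\Delta(S\N)$-elements are $\N$-radical, so by Lemma~\ref{L:NRadKRadIntersect} (with $\K=\C_T$) we get $(P\cap T)(Q\cap C_S(T))\in\F_{TR}^c\subseteq\Delta$; since $\Delta$ is overgroup-closed this subgroup together with the extra factor $O_p(\L)\leq S$ still lies in $\Delta$, i.e. $X_{P,Q}\in\Delta$, which is exactly $\N$-repleteness.

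The only genuine obstacle I anticipate is the bookkeeping about which ``radical'' notion and which $\K$ appear where: the hypothesis is phrased for a general $\K$ with $\hyp(C_\F(T))\leq\K\cap S$, whereas $\N$-repleteness is a statement purely about $\N$ and $\C_T$; reconciling these requires noticing that $\K=\C_T$ is always an admissible choice and that the conclusion for $\C_T$-radical objects follows. Everything else is a matter of checking that $TR$ inherits ``strongly closed'', ``normal in $S$'', and ``$\hyp(C_\F(-))$ is small'' from $T$ and $R$, which is routine given Proposition~\ref{P:ChermakEndPartI}/Corollary~\ref{C:ChermakEndPartI} and the monotonicity $C_\F(TR)\leq C_\F(T)$.
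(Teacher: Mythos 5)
Your proposal is correct and follows essentially the same route as the paper, but two points deserve tightening. First, rather than hedging about whether Lemma~\ref{L:GetFTcIntoFq} "goes through verbatim" for the strongly closed subgroup $TR$, simply note that Proposition~\ref{P:ChermakEndPartI} produces a genuine partial normal subgroup $\M:=\langle\N,\K\rangle\unlhd\L$ with $\M\cap S=TR$; then Lemma~\ref{L:GetFTcIntoFq} applies directly with $\M$ in place of $\N$ (its proof uses $\N^s$ and $N_\N(-)$ concretely, so you really do need the partial normal subgroup, not just strong closure). Second, your detour through $\K=\C_T$ to get $\N$-repleteness is valid but superfluous: the statement of Lemma~\ref{L:NRadKRadIntersect} already includes the conclusion "$(\L,\Delta,S)$ is $\N$-replete" whenever $\F_{TR}^c\subseteq\Delta$, for an arbitrary $\K\unlhd\L_T$ with $R=\K\cap S\leq C_S(T)$ — the mechanism being that $S\C_T$-radicals are $\C_T$-radicals and hence $\K$-radicals, and $(P\cap T)(Q\cap R)\leq X_{P,Q}$ so overgroup-closure of $\Delta$ finishes. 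So one can quote Lemma~\ref{L:NRadKRadIntersect} once and be done, as the paper does.
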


\begin{proof}
By Proposition~\ref{P:ChermakEndPartI}, $\M:=\<\N,\K\>$ is a partial normal subgroup of $\L$ with $\M\cap S=TR$. Note that $C_\F(TR)\subseteq C_\F(T)$ and thus $\hyp(C_\F(TR))\leq \hyp(C_\F(T))\leq R\leq TR$. It follows thus from Lemma~\ref{L:GetFTcIntoFq} that $\F_{TR}^c\subseteq \F^q\subseteq\Delta$. Now Lemma~\ref{L:NRadKRadIntersect} implies that $(P\cap T)(Q\cap R)\in\F_{TR}^c\subseteq\Delta$. So $(\L,\Delta,S)$ is $\N$-replete by Lemma~\ref{L:SNradNrad} as $\Delta$ is overgroup-closed in $S$.
\end{proof}

\begin{cor}\label{C:GetNreplete}
If $\F^q\subseteq\Delta$, then $\F_{TC_S(T)}^c\subseteq\F^q\subseteq\Delta$ and
\[(P\cap T)(Q\cap C_S(T))\in\F_{TC_S(T)}^c\subseteq\Delta\] 
whenever $P\in\Delta$ is $\N$-radical and $Q\in\Delta$ is $\C_T$-radical in $\L_T$. In particular, $(\L,\Delta,S)$ is $\N$-replete. 
\end{cor}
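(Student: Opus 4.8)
The plan is to deduce Corollary~\ref{C:GetNreplete} from Lemma~\ref{L:PutTogetherK} by choosing a suitable partial normal subgroup $\K$ of $\L_T=N_\L(T)$. The natural candidate is $\K:=C_\L(T)=\C_T$ itself. First I would observe that $\C_T\unlhd\L_T$ is available from the running hypotheses of the chapter (recorded via Lemma~\ref{L:NLTCLT}(c)), and that $\C_T\subseteq\C_T$ trivially, so the first two requirements of Lemma~\ref{L:PutTogetherK} are met with $\K=\C_T$ and $R:=\K\cap S=C_\L(T)\cap S=C_S(T)$.

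The crucial point to check is the hypothesis $\hyp(C_\F(T))\leq R=C_S(T)$. Here I would use Lemma~\ref{L:NLTCLT}(c): since $\F^q\subseteq\Delta$ implies $\F^c\subseteq\Delta$ (every $\F$-centric subgroup is quasicentric, or alternatively $\F^c\subseteq\F^q$ fails in general — so I should instead invoke that $\F^q\subseteq\Delta$ gives $\F^{cr}\subseteq\F^c\subseteq\F^q\subseteq\Delta$, hence in particular $\F^c\subseteq\Delta$), the lemma yields $\F_{C_S(T)}(C_\L(T))=C_\F(T)$. Therefore $\hyp(C_\F(T))=\hyp(\F_{C_S(T)}(C_\L(T)))\leq C_S(T)$, because the hyperfocal subgroup of a fusion system over a $p$-group $P$ is always contained in $P$ (it is generated by commutators $[x,\alpha]$ with $x\in P$ and $\alpha$ a $p'$-automorphism of a subgroup of $P$, all lying in $P$). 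With $R=C_S(T)$ this is exactly $\hyp(C_\F(T))\leq R$.

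Having verified all hypotheses of Lemma~\ref{L:PutTogetherK} with $\K=\C_T$ and $R=C_S(T)$, I would simply quote its conclusion: $\F_{TR}^c=\F_{TC_S(T)}^c\subseteq\F^q\subseteq\Delta$, and $(P\cap T)(Q\cap C_S(T))\in\F_{TC_S(T)}^c\subseteq\Delta$ whenever $P\in\Delta$ is $\N$-radical and $Q\in\Delta$ is $\C_T$-radical in $\L_T$ — noting that "$\K$-radical" with $\K=\C_T$ is precisely "$\C_T$-radical", the terminology used in the statement to be proved. Finally, $(\L,\Delta,S)$ is $\N$-replete, again directly from Lemma~\ref{L:PutTogetherK} (which records this last clause explicitly, via Lemma~\ref{L:SNradNrad} and overgroup-closedness of $\Delta$).

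I do not anticipate a genuine obstacle here; this corollary is essentially just the specialization $\K=\C_T$ of the preceding lemma, and the only substantive verification is the inequality $\hyp(C_\F(T))\leq C_S(T)$, which is immediate once one identifies $C_\F(T)$ with $\F_{C_S(T)}(C_\L(T))$ via Lemma~\ref{L:NLTCLT}(c). The one point requiring a little care is making sure the hypothesis $\F^c\subseteq\Delta$ needed to apply Lemma~\ref{L:NLTCLT}(c) follows from $\F^q\subseteq\Delta$; if it does not hold at that level of generality, I would instead carry out the argument inside the subcentric locality $(\L^s,\F^s,S)$ (as is done in the proof of Lemma~\ref{L:GetFTcIntoFq}), where $\F^c\subseteq\F^s=\Delta$ is automatic, and then transport the resulting containment $\F_{TC_S(T)}^c\subseteq\F^q$ of sets of subgroups back down — this containment is a statement purely about subgroups of $S$ and the ambient fusion system $\F$, so it is insensitive to the choice of object set.
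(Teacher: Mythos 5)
Your approach is the same as the paper's: specialize Lemma~\ref{L:PutTogetherK} with $\K=\C_T$, so that $R=\K\cap S=C_S(T)$, and verify the hypothesis $\hyp(C_\F(T))\leq R$. The detour through Lemma~\ref{L:NLTCLT}(c) is unnecessary: $C_\F(T)$ is by definition a fusion system over $C_S(T)$, so $\hyp(C_\F(T))\leq C_S(T)$ is immediate (you state the reason yourself in your parenthetical), and one does not need to identify $C_\F(T)$ with $\F_{C_S(T)}(C_\L(T))$ nor to worry about whether $\F^c\subseteq\Delta$.
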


\begin{proof}
Recall that $\K:=\C_T$ is a partial normal subgroup of $\L_T$ and note that $\hyp(C_\F(T))\leq C_S(T)=\K\cap S$. The assertion follows now from Lemma~\ref{L:PutTogetherK}.
\end{proof}

\section{Applications and results}\label{SS:ApplyNReplete}

In this section, we prove some results about weakly $\N$-replete localities. Moreover, in Lemma~\ref{L:focCFT} and Lemma~\ref{L:OpCTcapSSpecial}, we give applications of the results from the previous sections. 

\smallskip

We start with the following general lemma. If $(\L,\Delta,S)$ is $\N$-replete and $\K$ is as in the lemma, then the hypothesis is fulfilled for every $P\in R_\Delta(S\N)$, every $Q\in R_\Delta(S\C_T)$, and for $X:=X_{P,Q}$.

\begin{lemma}\label{L:CommuteInNLX}
Let $\K\unlhd\L_T$ such that $\K\subseteq\C_T$ and set $R:=\K\cap S$. Suppose we are given $P,Q,X\in\Delta$ such that $P$ is $\N$-radical, 
\[P\cap T\leq X\leq P\mbox{ and }Q\cap R\leq X\leq Q.\]
Then the following hold:
\begin{itemize}
 \item [(a)] $X$ is $\N$-radical in $\L$.
 \item [(b)] $N_\N(P)$ and $N_{\K}(Q)$ are contained in $N_\L(X)$. Moreover, computing in the group $N_\L(X)$, we have $[N_\N(P),O^p(N_{\K}(Q))]=1$ and $[N_\N(P),N_\K(Q)']=1$.  
\end{itemize}
\end{lemma}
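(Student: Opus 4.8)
The plan is to prove parts (a) and (b) by purely group-theoretic manipulations inside the finite group $N_\L(X)$, using the $\N$-radicality of $P$ together with the results on groups of characteristic $p$ from Chapter~\ref{S:GroupTheory}, in particular Lemma~\ref{L:CharpCGNCGQ}.

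First I would establish (a). Since $P\cap T\leq X\leq P$, Lemma~\ref{L:PradQrad} (applied with $(P,X)$ in place of $(P,Q)$, noting $P\cap T\leq X\leq P$) immediately gives that $X$ is $\N$-radical. This is the quick part.

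For (b), the first assertion --- that $N_\N(P)$ and $N_\K(Q)$ are contained in $N_\L(X)$ --- I would deduce from Lemma~\ref{L:Lemma0}: since $P\cap T\leq X\leq P$ we get $N_\N(P)\leq N_\N(X)\leq N_\L(X)$, and since $Q\cap R\leq X\leq Q$ the same lemma applied inside the locality $(\L_T,\Delta,S)$ with the partial normal subgroup $\K$ (so that $Q\cap(\K\cap S)=Q\cap R$) gives $N_\K(Q)\leq N_\K(X)\leq N_\L(X)$. So from now on everything happens inside the finite group $G:=N_\L(X)$. Set $N:=N_\N(P)$ and $M:=N_\K(Q)$; both are subgroups of $G$, and I want to show $[N,O^p(M)]=1$ and $[N,M']=1$. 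The key structural facts are: $X\in\Delta$ and $N_\L(X)$ is of characteristic $p$ (this is where I use that $(\L,\Delta,S)$ is a linking locality), so $X\leq O_p(G)$ but more usefully $O_p(G)$ is well-behaved; also $N\unlhd N_\L(P)$ is of characteristic $p$ by Lemma~\ref{L:MSCharp}(a), and $P\cap T=O_p(N)$ because $P$ is $\N$-radical. The idea is to show that $N$ centralizes enough of $O_p(G)$ that $M$ --- which lies in $\C_T=C_\L(T)$ and normalizes $Q$ --- acts trivially on $N$ modulo a $p$-group, and then invoke the group lemmas. Concretely: since $M\subseteq C_\L(T)$ we have $[N\cap S, M]\leq[T\cdot(\text{stuff}),M]$; more carefully, $N$ normalizes $P$ hence normalizes $P\cap T\leq X$, and $M\subseteq C_\L(T)$ centralizes $T$; I would compute $[N,M]$ by observing that both $N$ and $M$ normalize $X$, so $[N,M]\leq X\cap(\cdots)$, and then push this into $O_p(N)=P\cap T$ using that $[O_p(G),M]$ is controlled by $C_S(T)$-type considerations. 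Once I have $[N,M]\leq O_p(N)$ with $M$ centralizing $O_p(N)$ (which holds because $O_p(N)=P\cap T\leq T$ and $M\subseteq C_\L(T)$), Lemma~\ref{L:AutGroup} applied with $N$ in place of $G$ and $O_p(N)$ in place of $U$ gives that every $p'$-element of $M$ acts trivially on $N$, i.e. $[N,O^p(M)]=1$; and the Three-Subgroup-Lemma (exactly as in the proof of Lemma~\ref{L:CGNCGQ}) gives $[M,M,N]=1$, i.e. $[N,M']=1$. Actually the cleanest route is to apply Lemma~\ref{L:CharpCGNCGQ} or Lemma~\ref{L:CGNCGQ} directly: take the group $N_\L(P)$, its normal subgroup $N=N_\N(P)$, and the normal $p$-subgroup $U=P\cap T=O_p(N)$ with $C_N(U)\leq U$ (characteristic $p$); then $C_{N_\L(P)}(N)\supseteq\langle$ things commuting with $N\rangle$, and I need $M$ to lie in $C_{N_\L(P)}(U)$ --- but $M$ need not normalize $P$. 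So instead I would work in $G=N_\L(X)$ directly with $U':=X\cap T$ or with $O_p(N)$ as a normal $p$-subgroup of the relevant groups and verify the hypotheses of Lemma~\ref{L:CGNCGQ} there.

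The hard part will be pinning down exactly which normal $p$-subgroup to feed into Lemma~\ref{L:CGNCGQ}/Lemma~\ref{L:AutGroup} so that the hypotheses $[\text{group},\alpha]\leq U$ and $[U,\alpha]=1$ are literally satisfied, and in particular verifying $[N_\N(P),M]\leq P\cap T$. For this I expect to need: $M=N_\K(Q)\subseteq\C_T$ centralizes $T$, hence centralizes $P\cap T$; $N=N_\N(P)$ normalizes $P$ and $X$; and the commutator $[N,M]$ lies in $N_\L(X)$ and is normalized by $N$, so $[N,M]\leq N\cap(\text{$p$-group})$ --- here I would use that $[N,M]\leq[N_\N(P)\cdot(\cdots), \C_T]$ is a $p$-group by a characteristic-$p$ argument on $N_\L(X)$, together with $[N,M]\subseteq\N$ (as $\N\unlhd\L$ and $N\subseteq\N$), forcing $[N,M]\leq O_p(N_\L(X))\cap\N\cap N=$ something inside $P\cap T$. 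Once that containment is secured the two displayed identities follow formally. I would present this as: Step 1, reduce to the group $G=N_\L(X)$ and record characteristic $p$; Step 2, prove (a) via Lemma~\ref{L:PradQrad}; Step 3, the containments via Lemma~\ref{L:Lemma0}; Step 4, show $[N_\N(P),N_\K(Q)]\leq P\cap T$ and that $N_\K(Q)$ centralizes $P\cap T$; Step 5, conclude $[N_\N(P),O^p(N_\K(Q))]=1$ by Lemma~\ref{L:AutGroup} and $[N_\N(P),N_\K(Q)']=1$ by the Three-Subgroup-Lemma.
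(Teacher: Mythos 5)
Your handling of (a) and of the inclusions $N_\N(P),N_\K(Q)\subseteq N_\L(X)$ is correct and identical to the paper's (Lemma~\ref{L:PradQrad} plus Lemma~\ref{L:Lemma0} applied twice). But your argument for the two commutator identities has a genuine gap, and the reason you keep running into trouble is that you stay fixated on $N_\N(P)$ when the object that makes everything click is $N_\N(X)$.

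The difficulty you correctly notice --- that $N_\K(Q)$ need not normalize $P$ and so need not act on $N_\N(P)$ --- is real and never gets resolved in your plan. Your Step~4 proposes to show $[N_\N(P),N_\K(Q)]\leq P\cap T$ via something like ``$[N_\N(P),N_\K(Q)]$ is a $p$-group, hence inside $O_p(N_\L(X))$,'' but there is no reason for $[N_\N(P),N_\K(Q)]$ to be normal in $N_\L(X)$, so nothing forces it into $O_p(N_\L(X))$; and your Step~5 proposes to invoke Lemma~\ref{L:AutGroup} with $G=N_\N(P)$, but elements of $N_\K(Q)$ give no well-defined automorphisms of $N_\N(P)$. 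Both problems evaporate once you pass to the overgroup $N_\N(X)$, which \emph{is} normal in $G:=N_\L(X)$. Part~(a) gives $U:=O_p(N_\N(X))=X\cap T$, and since $N_\K(Q)\subseteq\C_T$ centralizes $T\supseteq U$, we have $N_\K(Q)\leq C_G(U)$. Now Lemma~\ref{L:CharpCGNCGQ}, applied to $G$, its normal subgroup $N_\N(X)$ and $U$, gives directly $[N_\N(X),O^p(C_G(U))]=1=[N_\N(X),C_G(U)']$. Since $N_\N(P)\leq N_\N(X)$, $O^p(N_\K(Q))\leq O^p(C_G(U))$ and $N_\K(Q)'\leq C_G(U)'$, the two identities in (b) follow at once. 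This is what you should aim for in place of your Steps~4--5: do not try to control $[N_\N(P),N_\K(Q)]$ by hand; instead enlarge $N_\N(P)$ to $N_\N(X)$ so that Lemma~\ref{L:CharpCGNCGQ} applies literally.
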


\begin{proof}
Lemma~\ref{L:PradQrad} implies that (a) holds and  Lemma~\ref{L:Lemma0} (applied once with $P,X,\N,\L$ and once with $Q,X,\K,\L_T$ in the roles of $P,Q,\N,\L$ of that lemma) gives that $N_\N(P)$ and $N_\K(Q)$ are contained in $G:=N_\L(X)$. In particular
\[N_\N(P)\leq N_\N(X)\mbox{ and }N_{\K}(Q)\leq G\cap \C_T\leq C_G(U).\] 
As $X$ is $\N$-radical by (a), we have $U:=O_p(N_\N(X))=X\cap T$. By Lemma~\ref{L:CharpCGNCGQ}, $[N_\N(X),O^p(C_G(U))]=1=[N_\N(X),C_G(U)']$. This implies (b).
\end{proof}

\begin{lemma}\label{L:focCFT}
We have $\foc(C_\F(T))\leq C_S(\N)$. 
\end{lemma}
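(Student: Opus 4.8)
The plan is to reduce to a subcentric locality, where $\F^c\subseteq\Delta$ and $\N$-repleteness are available, and then combine the ``radical descriptions'' of $\foc(C_\F(T))$ and of $C_S(\N)$ with the commutation result Lemma~\ref{L:CommuteInNLX}.

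First I would enlarge the object set. By Theorem~\ref{T:VaryObjects}(c) there is a subcentric locality $(\L^s,\F^s,S)$ over $\F$ with $\L^s|_\Delta=\L$; let $\N^s\unlhd\L^s$ be the partial normal subgroup with $\N^s\cap\L=\N$ (Notation~\ref{N:VaryObjects}). Then $\N^s\cap S=\N\cap S=T$, and since $\N\subseteq\N^s$ we have $C_S(\N^s)\subseteq C_S(\N)$. Hence it suffices to prove $\foc(C_\F(T))\leq C_S(\N^s)$, and we may replace $(\L,\Delta,S),\N$ by $(\L^s,\F^s,S),\N^s$. Now $\Delta=\F^s\supseteq\F^c$, so Lemma~\ref{L:NLTCLT}(c) gives $C_\F(T)=\F_{C_S(T)}(\C_T)$; and since $\F^q\subseteq\F^s=\Delta$, Corollary~\ref{C:GetNreplete} shows that $(\L,\Delta,S)$ is $\N$-replete.

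Next I would rewrite both sides. Applying Lemma~\ref{L:DescribeFocE} to the linking locality $(\L_T,\Delta,S)$ over $N_\F(T)$ with the partial normal subgroup $\C_T\unlhd\L_T$ (note $\C_T\cap S=C_S(T)$), and using the identification $C_\F(T)=\F_{C_S(T)}(\C_T)$ from the previous step, I get
\[\foc(C_\F(T))=\bigl\langle\,[Q\cap C_S(T),N_{\C_T}(Q)]\colon Q\in R_\Delta(S\C_T)\,\bigr\rangle.\]
On the other hand $\foc(C_\F(T))\leq C_S(T)$ trivially, since $C_\F(T)$ is a fusion system over the group $C_S(T)$, and by Lemma~\ref{L:CSNAlperin},
\[C_S(\N)=C_S(T)\cap\bigcap_{P\in R_\Delta(S\N)}C_S\bigl(O^p(N_\N(P))\bigr).\]
So, since $C_S(\N)$ is a subgroup of $S$, it suffices to show for all $P\in R_\Delta(S\N)$ and $Q\in R_\Delta(S\C_T)$ that $O^p(N_\N(P))$ centralizes the subgroup $[Q\cap C_S(T),N_{\C_T}(Q)]$.

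Fix such $P$ and $Q$ and set $X:=X_{P,Q}=(P\cap T)(Q\cap C_S(T))O_p(\L)\in\Delta$, which lies in $\Delta$ by $\N$-repleteness. The key verification is that $X$ is sandwiched as Lemma~\ref{L:CommuteInNLX} requires (with $\K=\C_T$ and $R=C_S(T)$): clearly $P\cap T\leq X$; moreover $O_p(\L)\leq P$ by Lemma~\ref{L:SNradNrad} and $C_S(T)\leq P$ by Lemma~\ref{L:CSTinSNradical} (recall $N_\L(P)$ has characteristic $p$), so $Q\cap C_S(T)\leq P$ and hence $X\leq P$. Symmetrically $Q\cap C_S(T)\leq X$; and $O_p(\L)\leq O_p(\L_T)\leq Q$ by Lemma~\ref{L:SNradNrad} applied in $\L_T$, while $T\leq C_S(C_S(T))\leq Q$ by Lemma~\ref{L:CSTinSNradical} applied in $\L_T$ with $\C_T$ in place of $\N$ (so that $C_S(T)=\C_T\cap S$ plays the role of the ``$T$'' of that lemma), whence $P\cap T\leq Q$ and $X\leq Q$. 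Then Lemma~\ref{L:CommuteInNLX}(b) tells us that inside the group $N_\L(X)$ we have $[N_\N(P),N_{\C_T}(Q)']=1$. Since $Q\cap C_S(T)\leq N_{\C_T}(Q)$, we get $[Q\cap C_S(T),N_{\C_T}(Q)]\leq N_{\C_T}(Q)'$, so $N_\N(P)$ — and in particular $O^p(N_\N(P))$ — centralizes $[Q\cap C_S(T),N_{\C_T}(Q)]$. Combining this with the two displays above yields $\foc(C_\F(T))\leq C_S(\N)$.

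The main obstacles are organizational rather than deep: getting the reduction to the subcentric locality right (so that $\F^c\subseteq\Delta$ and $\N$-repleteness both hold, and so that the passage $\N\subseteq\N^s$ does not lose anything because $C_S(\N^s)\subseteq C_S(\N)$), and carrying out the double application of Lemma~\ref{L:CSTinSNradical} — once in $\L$ to put $C_S(T)$ inside $P$, once in $\L_T$ to put $T$ inside $Q$ — that makes $X_{P,Q}$ fit the hypotheses of Lemma~\ref{L:CommuteInNLX}. Once those are in place the argument is a routine assembly of the cited lemmas.
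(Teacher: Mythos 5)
Your proof is correct and follows essentially the same route as the paper's: reduce to the subcentric case (so $\F^c\subseteq\Delta$ and $\N$-repleteness hold), identify $C_\F(T)$ with $\F_{C_S(T)}(\C_T)$ via Lemma~\ref{L:NLTCLT}(c), rewrite both sides using Lemma~\ref{L:DescribeFocE} and Lemma~\ref{L:CSNAlperin}, and then invoke Lemma~\ref{L:CommuteInNLX}(b) for $X_{P,Q}$. The only cosmetic differences are that you use the second description of $C_S(\N)$ in Lemma~\ref{L:CSNAlperin} rather than the first, and you establish $T\leq Q$ via an extra application of Lemma~\ref{L:CSTinSNradical} in $\L_T$ when the inclusion $T\leq O_p(\L_T)\leq Q$ (as in the paper, from $T\unlhd\L_T$ together with Lemma~\ref{L:SNradNrad}) would have been more direct — neither changes the substance.
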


\begin{proof}
By Theorem~\ref{T:VaryObjects}(b),(c), there exist a subcentric locality $(\L^s,\F^s,S)$ over $\F$ and $\N^s\unlhd\L^s$ such that $\L=\L^s|_\Delta$ and $\N^s\cap\L=\N$. Then $\N^s\cap S=\N\cap S=T$ and $C_S(\N^s)\leq C_S(\N)$. Hence, replacing $(\L,\Delta,S)$ and $\N$ by $(\L^s,\F^s,S)$ and $\N^s$, we may assume without loss of generality that
\[\Delta=\F^s.\]
By Lemma~\ref{L:NLTCLT}(c), we have then $\F_{C_S(T)}(\C_T)=C_\F(T)$. Hence, Lemma~\ref{L:DescribeFocE} gives 
\[\foc(C_\F(T))=\<[Q\cap C_S(T),N_{\C_T}(Q)]\colon Q\in R_\Delta(S\C_T)\>.\]
As $\Delta=\F^s$, we get from Corollary~\ref{C:GetNreplete} that $(\L,\Delta,S)$ is $\N$-replete. In particular, fixing $P\in R_\Delta(S\N)$ and all $Q\in R_\Delta(S\C_T)$, we have $X:=X_{P,Q}\in\Delta$. By Lemma~\ref{L:SNradNrad} and Lemma~\ref{L:CSTinSNradical}, $P$ is $\N$-radical, $Q$ is $\C_T$-radical, we have $C_S(T)O_p(\L)\leq P$ and $TO_p(\L)\leq O_p(\L_T)\leq Q$. In particular, $P\cap T\leq X\leq P$ and $Q\cap C_S(T)\leq X\leq Q$. Hence, by Lemma~\ref{L:CommuteInNLX}(b) applied with $\K=\C_T$, we have $[N_\N(P),N_{\C_T}(Q)^\prime]=1$ in $N_\L(X)$ and thus 
\[[Q\cap C_S(T),N_{\C_T}(Q)]\leq N_{\C_T}(Q)^\prime\cap S\leq C_S(N_\N(P)).\]
Using Lemma~\ref{L:CSNAlperin} we conclude that
\[\foc(C_\F(T))\subseteq \bigcap_{P\in R_\Delta(S\N)}C_S(N_\N(P))=C_S(\N).\]
\end{proof}

\begin{lemma}\label{L:CSNstronglyclosed}
If $(\L,\Delta,S)$ is weakly $\N$-replete, then $C_S(\N)$ is strongly closed in $\F$.
\end{lemma}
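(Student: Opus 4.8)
The plan is to show that $C_S(\N)$ absorbs all of its $\F$-conjugates, reducing the problem step by step via the Frattini and Splitting Lemmas and then feeding in Lemma~\ref{L:focCFT}.

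\textbf{Reduction to $N_\L(T)$.} Since $\F=\F_S(\L)$ is generated by the conjugation maps $c_f$ ($f\in\L$), it suffices to show $(C_S(\N)\cap S_f)^f\subseteq C_S(\N)$ for every $f\in\L$. Fix such $f$ and $x\in C_S(\N)\cap S_f$. By Lemma~\ref{L:CentralizerHelp} we have $x^n=x$ for all $n\in\N$. Writing $f=ng$ with $n\in\N$, $g\in N_\L(T)$ and $S_f=S_{(n,g)}$ (Frattini and Splitting Lemmas, \cite[Corollary~3.11, Lemma~3.12]{Chermak:2015}), we get $x^f=(x^n)^g=x^g$. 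Hence it is enough to prove that conjugation by elements of $N_\L(T)$ preserves $C_S(\N)$, i.e.\ that $C_S(\N)$ is strongly closed in $N_\F(T)=\F_S(N_\L(T))$.

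\textbf{Splitting $g\in N_\L(T)$ and the focal input.} By Lemma~\ref{L:NLTCLT}, $(N_\L(T),\Delta,S)$ is a linking locality over $N_\F(T)$ and $\C_T:=C_\L(T)\unlhd N_\L(T)$ with $\C_T\cap S=C_S(T)$. Applying the Frattini and Splitting Lemmas inside $N_\L(T)$ with respect to $\C_T$, write $g=ch$ with $c\in\C_T$, $h\in N_\L(T)\cap N_\L(C_S(T))$ and $S_g=S_{(c,h)}$, so that $x^g=(x^c)^h$. For the factor $c$: decompose $c$ by Lemma~\ref{L:PartialNormalAlperin} (applied to $\C_T\unlhd N_\L(T)$) as a product of an element of $C_S(T)$ and elements of $O^p(N_{\C_T}(Q))$ with $Q\in R_\Delta(S\C_T)$; tracking the object sets along this decomposition, conjugation by such an element carries $y\in C_S(\N)\cap Q\le Q\cap C_S(T)$ to $y$ times a commutator lying in $[Q\cap C_S(T),N_{\C_T}(Q)]\le\foc(\F_{C_S(T)}(\C_T))\le\foc(C_\F(T))\le C_S(\N)$, using Lemma~\ref{L:DescribeFocE} (in $N_\L(T)$) and Lemma~\ref{L:focCFT}. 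Since $C_S(\N)$ is normalized by $S$ (an easy direct check, or via Lemma~\ref{L:CSNAlperin} and Lemma~\ref{L:NradicalConjugate}), it follows that $x^c\in C_S(\N)$, reducing everything to the factor $h$.

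\textbf{The crux and the main obstacle.} It remains to show: if $h\in N_\L(T)\cap N_\L(C_S(T))$ and $s\in C_S(\N)\cap S_h$, then $s^h\in C_S(\N)$. Here one writes $C_S(\N)=C_S(T)\cap\bigcap_{P\in R_\Delta(S\N)}C_S(O^p(N_\N(P)))$ (Lemma~\ref{L:CSNAlperin}); as $h$ normalizes $C_S(T)$ and $s\in C_S(T)$, we have $s^h\in C_S(T)$, so the point is to see that $s^h$ centralizes $O^p(N_\N(P))$ for each $P\in R_\Delta(S\N)$. This is where weak $\N$-repleteness enters: for such $P$ the hypothesis gives $X_{P,S}=(P\cap T)C_S(T)O_p(\L)\in\Delta$, and Lemma~\ref{L:CSTinSNradical} yields $C_S(T)\le P$, so $P\cap T\le X_{P,S}\le P$; hence $X_{P,S}$ is $\N$-radical (Lemma~\ref{L:PradQrad}), $N_\N(P)\subseteq N_\L(X_{P,S})$ (Lemma~\ref{L:Lemma0}), and $N_\L(X_{P,S})$ is of characteristic $p$. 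Using this together with Lemma~\ref{L:CharpCGNCGQ} (and, if needed, a further decomposition of $h$ via Alperin's fusion theorem into elements normalizing suitable $\N$-radical objects), one argues that $s^h$ still centralizes $O^p(N_\N(P))$. I expect this last step — reconciling the fact that $h$ need not normalize the individual objects $P$ with the requirement that $s^h$ centralize every $O^p(N_\N(P))$ — to be the main obstacle, and the careful bookkeeping with Lemma~\ref{L:CSNAlperin}, Lemma~\ref{L:CommuteInNLX} and the characteristic-$p$ lemmas is precisely what the weak $\N$-replete hypothesis is designed to make possible.
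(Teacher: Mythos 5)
Your decomposition of a generic conjugation into an $\N$-factor, a $\C_T$-factor and an $H$-factor (where $H=N_{\L_T}(C_S(T))=N_\L(TC_S(T))$) is exactly the paper's route, and your treatment of the first two factors is correct: centralizing properties kill the $n$-step, and Lemma~\ref{L:focCFT} together with Lemma~\ref{L:focFinTStrCl} (or your by-hand version via Lemma~\ref{L:DescribeFocE}) handles the $c$-step, since $\foc(C_\F(T))\leq C_S(\N)$ shows $C_S(\N)$ is strongly closed in $C_\F(T)$.

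However, you have not actually proved the third step — $H$-invariance of $C_S(\N)$ — and this is a genuine gap, not just omitted bookkeeping. Your sketch (decompose $h$ by Alperin and use Lemma~\ref{L:CommuteInNLX} and characteristic-$p$ lemmas) runs into the very problem you name: $h\in H$ need not have $P\leq S_h$ for a given $P\in R_\Delta(S\N)$, so the conjugate $P^h$ may not even be defined, and $h$ therefore does not permute the terms in the Alperin-type description $C_S(\N)=\bigcap_{P}C_{C_S(T)}(N_\N(P))$. The paper resolves this with a different idea: a \emph{closure} of the indexing set. Because $X_{P,S}=(P\cap T)C_S(T)O_p(\L)$ is always contained in $TC_S(T)O_p(\L)\leq S_h$ for $h\in H$, and because weak $\N$-repleteness puts each $X_{P,S}$ into $\Delta$, the $H$-closed family $\Gamma:=\{X_{P,S}^h\colon P\in R_\Delta(S\N),\,h\in H\}$ lies in $\Delta$. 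Using $C_S(T)\leq P$ (Lemma~\ref{L:CSTinSNradical}) and the strong closure of $TC_S(T)$ one gets $N_\N(P)\subseteq N_\N(X_{P,S})$, which sandwiches $C_S(\N)$ between $\bigcap_{X\in\Gamma}C_{C_S(T)}(N_\N(X))$ and $\bigcap_P C_{C_S(T)}(N_\N(P))$; by Lemma~\ref{L:CSNAlperin} all the inclusions collapse to equalities, so $C_S(\N)=\bigcap_{X\in\Gamma}C_{C_S(T)}(N_\N(X))$ — and the right-hand side is $H$-invariant by construction, since $H$ acts on $C_S(T)$ and permutes $\Gamma$ via Lemma~\ref{L:ConjugateNormalizer}. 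This is the point you would need to supply; without it the proof of $s^h\in C_S(\N)$ is not established.
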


\begin{proof}
Set $H:=N_\L(TC_S(T))$ and notice that $H$ acts on $TC_S(T)O_p(\L)$. We will first show that $C_S(\N)$ is $H$-invariant. As $(\L,\Delta,S)$ is weakly $\N$-replete, we have $X_{P,S}\in\Delta$ for every $P\in R_\Delta(S\N)$. Since $\Delta$ is $\F$-closed, it follows 
\[\Gamma:=\{X_{P,S}^h\colon P\in R_\Delta(S\N),\;h\in H\}\subseteq\Delta.\]
By Lemma~\ref{L:CSTinSNradical}, for every $P\in R_\Delta(S\N)$, we have $C_S(T)\leq P$ and thus $X_{P,S}=(P\cap T)C_S(T)O_p(\L)=(P\cap TC_S(T))O_p(\L)$.  Since $TC_S(T)$ is by Corollary~\ref{C:ChermakEndPartI} strongly closed, it follows that $N_\N(P)\subseteq N_\N(X_{P,S})$ and thus $C_{C_S(T)}(N_\N(X_{P,S}))\leq C_{C_S(T)}(N_\N(P))$ for any such $P$. Hence, using Lemma~\ref{L:CSNAlperin}, we see that
\begin{eqnarray*}
C_S(\N)&\subseteq & \bigcap_{X\in\Gamma}C_{C_S(T)}(N_\N(X))\\
&\subseteq &\bigcap_{P\in R_\Delta(S\N)}C_{C_S(T)}(N_\N(X_{P,S}))\subseteq \bigcap_{P\in R_\Delta(S\N)}C_{C_S(T)}(N_\N(P))\subseteq C_S(\N) 
\end{eqnarray*}
and thus equality holds everywhere above. So $C_S(\N)=\bigcap_{X\in\Gamma}C_{C_S(T)}(N_\N(X))$. Notice that $H$ acts on $C_S(T)$ and on $\Gamma$. As $\Gamma\subseteq\Delta$, it follows thus from Lemma~\ref{L:ConjugateNormalizer} that $H$ acts also on the set $\{N_\N(X)\colon X\in\Gamma\}$ and thus on the set $\{C_S(N_\N(X))\colon X\in\Gamma\}$. Hence, $C_S(\N)$ is $H$-invariant.

\smallskip

We are now in a position to show that $C_S(\N)$ is strongly closed in $\F$. Let $Y\leq C_S(\N)$ and $g\in\L$ such that $Y\leq S_g$. Since $\F_S(\L)$ is generated by group homomorphisms between subgroups of $S$ which are induced by conjugation with an element of $\L$, it is sufficient to show that $Y^g\leq C_S(\N)$. By the Frattini Lemma and the Splitting Lemma \cite[Corollary~3.11, Lemma~3.12]{Chermak:2015}, there exist $n\in\N$ and $f\in \L_T$ such that $(n,f)\in\D$, $g=nf$ and $S_g=S_{(n,f)}$. Applying the Frattini Lemma and the Splitting Lemma now to $\C_T\unlhd \L_T$, we see that there exist also $c\in \C_T$ and $h\in N_{\L_T}(C_S(T))=H$ such that $(c,h)\in\D$, $f=ch$ and $S_f=S_{(c,h)}$. Then $S_{(n,c,h)}=S_{(n,f)}=S_g\in\Delta$ and $g=nch$. As $Y\leq C_S(\N)$, it follows $Y^g=(Y^c)^h$. By Lemma~\ref{L:focCFT} we have $\foc(C_\F(T))\leq C_S(\N)$. So by Lemma~\ref{L:focFinTStrCl}, $C_S(\N)$ is strongly closed in $C_\F(T)$, which implies that $Y^c\leq C_S(\N)$. As $C_S(\N)$ is $H$-invariant, it follows that $Y^g=(Y^c)^h\leq C_S(\N)$. So $C_S(\N)$ is strongly closed in $\F$. 
\end{proof}

\begin{lemma}\label{L:CSNCSNPlus}
Suppose $(\L,\Delta,S)$ is weakly $\N$-replete and $(\L^+,\Delta^+,S)$ is a linking locality over $\F$ with $\Delta\subseteq\Delta^+$ and $\L=\L^+|_\Delta$. Adapting Notation~\ref{N:VaryObjects}, we have then
\[C_S(\N)=C_S(\N^+).\]
\end{lemma}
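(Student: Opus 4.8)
The plan is to prove the nontrivial inclusion $C_S(\N)\subseteq C_S(\N^+)$ by transporting the relation ``$\N$ centralizes $C_S(\N)$'' upward along the expansion $\L\subseteq\L^+$, using the inclusion-preserving correspondence between partial normal subgroups furnished by Theorem~\ref{T:VaryObjects}(b). The reverse inclusion $C_S(\N^+)\subseteq C_S(\N)$ is essentially immediate from $\N\subseteq\N^+$, modulo the routine remark (via Lemma~\ref{L:NLSbiset}) that conjugation by an element of $S$ agrees in $\L$ and in $\L^+$ on the relevant words.

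Concretely I would proceed as follows. Set $D:=C_S(\N)$. First invoke Lemma~\ref{L:CSNstronglyclosed} --- this is the \emph{only} place the weak $\N$-repleteness hypothesis enters --- to conclude that $D$ is strongly closed in $\F$, hence $D\unlhd S$. Then Lemma~\ref{L:NLTCLT}(b), applied both to $\L$ and to $\L^+$, shows that $(N_\L(D),\Delta,S)$ and $(N_{\L^+}(D),\Delta^+,S)$ are linking localities over $N_\F(D)$ with $N_{\L^+}(D)|_\Delta=N_\L(D)$, and Lemma~\ref{L:CentralizerPartialNormal} gives $C_\L(D)\unlhd N_\L(D)$ and $C_{\L^+}(D)\unlhd N_{\L^+}(D)$ with $C_{\L^+}(D)\cap N_\L(D)=C_\L(D)$, all exactly as in Remark~\ref{R:TakePlus}. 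Since $D=C_S(\N)\subseteq C_\L(\N)$, applying Lemma~\ref{L:CentralizerHelp} elementwise yields $\N\subseteq C_\L(D)$, so $\N$ and $C_\L(D)$ are both partial normal subgroups of $N_\L(D)$ with $\N\subseteq C_\L(D)$. The key technical step is to show $\N^+\subseteq N_{\L^+}(D)$, i.e. $D\leq S_n$ for every $n\in\N^+$: decompose $n$ by Lemma~\ref{L:PartialNormalAlperin} (applied to $\L^+$, $\N^+$) as $n=\Pi^+(t,n_1,\dots,n_k)$ with $S_n=S_{(t,n_1,\dots,n_k)}$, $t\in T$ and $S_{n_i}=R_i\in R_{\Delta^+}(S\N^+)$; each $R_i$ is $S\N^+$-radical and $N_{\L^+}(R_i)$ has characteristic $p$, so Lemma~\ref{L:CSTinSNradical} gives $C_S(T)\leq R_i$, hence $D\subseteq C_S(T)\leq S_{n_i}$, and since $D$ is strongly closed the successive conjugates of an element of $D$ stay inside $D\leq S$, so $D\leq S_n$. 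Consequently $\N^+=\N^+\cap N_{\L^+}(D)\unlhd N_{\L^+}(D)$ and $\N^+\cap N_\L(D)=\N^+\cap\L\cap N_\L(D)=\N$. Finally, Theorem~\ref{T:VaryObjects}(b) applied to the two localities over $N_\F(D)$ provides an inclusion-preserving bijection $\Phi$ on partial normal subgroups whose inverse is also inclusion-preserving, with $\Phi^{-1}(\N)=\N^+$ and $\Phi^{-1}(C_\L(D))=C_{\L^+}(D)$; applying $\Phi^{-1}$ to $\N\subseteq C_\L(D)$ gives $\N^+\subseteq C_{\L^+}(D)$, hence (Lemma~\ref{L:CentralizerHelp} again) $D\subseteq C_{\L^+}(\N^+)\cap S=C_S(\N^+)$, as wanted.

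The main obstacle is the penultimate step, establishing $\N^+\subseteq N_{\L^+}(D)$. Strong closure of $D$ does \emph{not} make $D$ a partial normal subgroup of $\L^+$, so it is not automatic that $D$ lies in $S_n$ for every $n\in\N^+$; one genuinely needs the Alperin-type factorisation of Lemma~\ref{L:PartialNormalAlperin} together with the fact that $C_S(T)$ is contained in every $S\N^+$-radical object (Lemma~\ref{L:CSTinSNradical}). Everything else is formal: once $\N^+$ is seen to live inside $N_{\L^+}(D)$, the inclusion-preserving-inverse property of the correspondence in Theorem~\ref{T:VaryObjects}(b) carries the centralizing relation from $\L$ up to $\L^+$ without any further computation.
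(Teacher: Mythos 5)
Your proof is correct, but it takes a genuinely different route from the one in the paper. After the common opening move --- invoking Lemma~\ref{L:CSNstronglyclosed} to see that $D:=C_S(\N)$ is strongly closed and Lemma~\ref{L:CentralizerHelp} to see $\N\subseteq C_\L(D)\subseteq C_{\L^+}(D)$ --- the paper uses the weak $\N$-repleteness hypothesis a \emph{second} time, namely with $S\in R_\Delta(S\N)$, to obtain $X:=TC_S(T)O_p(\L)\in\Delta$, from which (via \cite[Lemma~3.5]{Chermak:2015} and Lemma~\ref{L:TakePlusInS}) it concludes $N_{\N^+}(T)\subseteq N_{\L^+}(X)\subseteq\L$ and hence $N_{\N^+}(T)\subseteq\N\subseteq C_{\L^+}(D)$; it then invokes \cite[Proposition~8.2]{Henke:2015} as a black box to propagate this to $\N^+\subseteq C_{\L^+}(D)$. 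You instead establish the containment $\N^+\subseteq N_{\L^+}(D)$ directly in $\L^+$ by decomposing any $n\in\N^+$ via Lemma~\ref{L:PartialNormalAlperin} and observing that $D\leq C_S(T)\leq R_i$ for each $S\N^+$-radical factor (Lemma~\ref{L:CSTinSNradical}), so that strong closure lets each conjugate of $d\in D$ stay in $D\leq S$, giving $D\leq S_n$ and $D^n=D$; you then transport $\N\subseteq C_\L(D)$ up to $\N^+\subseteq C_{\L^+}(D)$ through the inclusion-preserving correspondence $\Phi_{N_{\L^+}(D),N_\L(D)}$ of Theorem~\ref{T:VaryObjects}(b), applied to the normalizer localities over $N_\F(D)$ (as in Remark~\ref{R:TakePlus}, with $D$ in place of $T$). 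What each buys: your version uses weak $\N$-repleteness exactly once and avoids the imported \cite[Proposition~8.2]{Henke:2015}, but at the cost of a longer argument; you must verify the normalizer-locality compatibilities (a routine but nontrivial extension of Remark~\ref{R:TakePlus}) and carry through the correspondence bookkeeping, where the paper's appeal to \cite[Proposition~8.2]{Henke:2015} lands at the same conclusion in one line. Both are legitimate; your argument is somewhat more self-contained relative to this paper, the paper's somewhat shorter.
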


\begin{proof}
 As $\N\subseteq\N^+$, we have $C_S(\N^+)\leq C_S(\N)$. Thus, it remains to show that $R:=C_S(\N)\leq C_S(\N^+)$. By Lemma~\ref{L:CSNstronglyclosed}, $R$ is strongly closed in $\F$. Notice also that $\N\subseteq C_\L(R)$ by Lemma~\ref{L:CentralizerHelp}. Using that $(\L,\Delta,S)$ is weakly $\N$-replete and $S\in R_\Delta(S\N)$, we see moreover that
\[X:=TC_S(T)O_p(\L)\in \Delta.\]
By \cite[Lemma~3.5]{Chermak:2015}, we have $N_{\N^+}(T)\subseteq N_{\L^+}(TC_S(T))$. By Lemma~\ref{L:TakePlusInS}, we have moreover $O_p(\L)=O_p(\L^+)$. Hence, as $\L=\L^+|_\Delta$, we can conclude that $N_{\N^+}(T)\subseteq N_{\L^+}(X)\subseteq\L$ and thus $N_{\N^+}(T)\subseteq \N^+\cap\L=\N\subseteq C_\L(R)\subseteq C_{\L^+}(R)$. Thus, by \cite[Proposition~8.2]{Henke:2015} applied with $(\L^+,\Delta^+,S)$ and $\N^+$ in place of $(\L,\Delta,S)$ and $\N$, we have $\N^+\subseteq C_{\L^+}(R)$. It follows now from Lemma~\ref{L:CentralizerHelp} that $R\leq C_S(\N^+)$ as required.
\end{proof}

In the following lemma we consider a very special case of $\N$-replete localities. Lemma~\ref{L:OpCTcapSSpecial}(d) will be particularly useful in combination with Lemma~\ref{L:CSNCSNPlus}.

\begin{lemma}\label{L:OpCTcapSSpecial}
 Let $\Delta$ be the set of overgroups in $S$ of the elements in $\F_{TC_S(T)}^c$. Then the following hold:
\begin{itemize}
 \item [(a)] $(\L,\Delta,S)$ is $\N$-replete.
 \item [(b)] Setting $\Gamma:=\{X_{S,Q}^h\colon Q\in R_\Delta(S\C_T),\;h\in N_\L(TC_S(T))\}$, we have 
\[O^p_{\L_T}(\C_T)=\<O^p(N_{\C_T}(R))\colon R\in\Gamma\>.\]
 \item [(c)] We have $N_\N(P)\subseteq C_\L(O^p_{\L_T}(\C_T))$ for all $P\in R_\Delta(S\N)$.
 \item [(d)] We have $\N\subseteq C_\L(O^p_{\L_T}(\C_T)\cap S)$ and $O^p_{\L_T}(\C_T)\cap S\leq C_S(\N)$. 
\end{itemize}
\end{lemma}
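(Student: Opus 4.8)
\textbf{Plan for the proof of Lemma~\ref{L:OpCTcapSSpecial}.} We work in the special case where $\Delta$ is the set of overgroups in $S$ of the elements of $\F_{TC_S(T)}^c$. The key preliminary observation is that $\Delta$ is $\F$-closed: overgroup-closure holds by construction, and closure under $\F$-conjugacy follows from the fact that $\F_{TC_S(T)}^c$ is closed under $\F$-conjugacy (since $TC_S(T)$ is strongly closed in $\F$ by Corollary~\ref{C:ChermakEndPartI}, every $\F$-conjugate of a subgroup of $TC_S(T)$ again lies in $TC_S(T)$). Then for part~(a), I would apply Lemma~\ref{L:NRadKRadIntersect} with $\K=\C_T$ and $R=C_S(T)$: whenever $P\in\Delta$ is $\N$-radical and $Q\in\Delta$ is $\C_T$-radical in $\L_T$, that lemma gives $(P\cap T)(Q\cap C_S(T))\in\F_{TC_S(T)}^c\subseteq\Delta$; since $\Delta$ is overgroup-closed and $O_p(\L)\leq TC_S(T)O_p(\L)\in\Delta$, it follows that $X_{P,Q}=(P\cap T)(Q\cap C_S(T))O_p(\L)\in\Delta$ too, and by Lemma~\ref{L:SNradNrad} this holds in particular for $P\in R_\Delta(S\N)$, $Q\in R_\Delta(S\C_T)$, so $(\L,\Delta,S)$ is $\N$-replete.

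For part~(b), the point is that $\F_{TC_S(T)}^c\subseteq\Delta$ makes the hypothesis of Lemma~\ref{L:OupperpGenerate} (applied inside $\L_T$ with $\C_T$ in the role of $\N$ and $C_S(T)$ in the role of $R$, and using $R\unlhd\L_T$ since $C_\L(T)\unlhd N_\L(T)$ by Lemma~\ref{L:CentralizerPartialNormal}, hence $C_S(T)=\C_T\cap S\unlhd N_\L(T)$) available: for every $Q\in R_\Delta(S\C_T)$ and $h\in N_{\L_T}(C_S(T))$ the subgroup $(Q\cap C_S(T))^h C_S(T)O_p(\L_T)$ should be identified with a suitable $X_{S,Q}^h$ — here one must check that $X_{S,Q}=TC_S(T)O_p(\L)$ gives, after the bookkeeping $O_p(\L)\le O_p(\L_T)$ and $C_S(T)\le Q$ (the latter from Lemma~\ref{L:CSTinSNradical}, noting $N_{\L_T}(Q)$ is of characteristic~$p$ and $Q$ is $S\C_T$-radical), exactly the generating set demanded by Lemma~\ref{L:OupperpGenerate}, and that $N_\L(TC_S(T))=N_{\L_T}(C_S(T))=N_{\L_T}(TC_S(T))$. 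This yields $O^p_{\L_T}(\C_T)=\langle O^p(N_{\C_T}(R))\colon R\in\Gamma\rangle$.

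For part~(c), fix $P\in R_\Delta(S\N)$, which by Lemma~\ref{L:SNradNrad} is $\N$-radical with $O_p(\L)\leq P$, and also $C_S(T)\le P$ and $TO_p(\L)\le O_p(\L_T)$; thus for any $Q\in R_\Delta(S\C_T)$ we have $P\cap T\le X_{P,Q}\le P$ and $Q\cap C_S(T)\le X_{P,Q}\le Q$, so Lemma~\ref{L:CommuteInNLX}(b) (with $\K=\C_T$) gives $[N_\N(P),O^p(N_{\C_T}(Q))]=1$ in $N_\L(X_{P,Q})$. Since conjugating $Q$ by $h\in N_\L(TC_S(T))$ conjugates everything compatibly (Lemma~\ref{L:ConjugateNormalizer}), we also get $[N_\N(P),O^p(N_{\C_T}(R))]=1$ for all $R\in\Gamma$ — one needs to be a little careful that $N_\N(P)$, being a subset of $N_\L(T)$ already (Corollary~\ref{C:NperpCapSGeneral}-type reasoning, or directly: $P\supseteq P\cap T=O_p(N_\N(P))$ forces nothing, but $N_\N(P)\subseteq N_\N(TC_S(T))$ since $P\le TC_S(T)O_p(\L)$ is strongly closed... actually use $N_\N(P)\le N_{\L}(X_{P,S})$ and $X_{P,S}$ strongly closed) commutes with each such $R$ in a common ambient group. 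Then by part~(b), $O^p_{\L_T}(\C_T)$ is generated by the $O^p(N_{\C_T}(R))$, and since $N_\N(P)$ centralizes each generator and $C_\L(-)$ of a generating set equals $C_\L$ of the generated partial subgroup (Lemma~\ref{L:CentralizerHelp} together with the partial-subgroup property of centralizers, Lemma~\ref{L:CentralizerPartialNormal}), we conclude $N_\N(P)\subseteq C_\L(O^p_{\L_T}(\C_T))$. Finally, for part~(d): write $Z:=O^p_{\L_T}(\C_T)\cap S$. By part~(c) and Lemma~\ref{L:CentralizerHelp}, $O^p_{\L_T}(\C_T)\subseteq C_\L(N_\N(P))$ for every $P\in R_\Delta(S\N)$, hence $Z\le\bigcap_{P\in R_\Delta(S\N)}C_S(N_\N(P))=C_S(\N)$ by Lemma~\ref{L:CSNAlperin}; the equivalent statement $\N\subseteq C_\L(Z)$ then follows from Lemma~\ref{L:CentralizerHelp}.

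\textbf{Main obstacle.} The genuinely delicate point is part~(b): matching the generating set produced by Lemma~\ref{L:OupperpGenerate} (which is phrased as $\{(P\cap T')^h R'\}$ for the relevant $T',R'$) with the set $\Gamma=\{X_{S,Q}^h\}$ indexed by $S\C_T$-radical objects $Q$, and verifying the hypothesis $Q'\cap(T'R')\in\Delta$ of that lemma in the current setting. This requires carefully tracking which normal subgroup plays which role ($\C_T\unlhd\L_T$ rather than $\N\unlhd\L$), confirming $C_S(T)\le Q$ for $Q\in R_\Delta(S\C_T)$ via Lemma~\ref{L:CSTinSNradical}, and identifying $X_{S,Q}=TC_S(T)O_p(\L)=(S\cap T)C_S(T)O_p(\L)$ correctly; the analogous identification of normalizers $N_\L(TC_S(T))=N_{\L_T}(C_S(T))$ using strong closure of $TC_S(T)$ is routine but must be spelled out.
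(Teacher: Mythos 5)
Your overall plan matches the paper's structure (Lemma~\ref{L:NRadKRadIntersect} for (a), Lemma~\ref{L:OupperpGenerate} for (b), Lemma~\ref{L:CommuteInNLX} for (c), and Lemma~\ref{L:CSNAlperin} together with Lemma~\ref{L:CentralizerHelp} for (d)), but there are two concrete gaps. In (b), the substitution into Lemma~\ref{L:OupperpGenerate} should be $(\L_T,\C_T,TO_p(\L))$ in the roles of $(\L,\N,R)$, not $C_S(T)$ in the role of $R$. With $R=C_S(T)$ the hypothesis $Q\cap(T'R')\in\Delta$ of that lemma becomes $Q\cap C_S(T)\in\Delta$, which fails in general, and the lemma's generating set $\{(Q\cap C_S(T))^h C_S(T)\}$ degenerates to $\{C_S(T)\}$ since $(Q\cap C_S(T))^h\leq C_S(T)$. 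Moreover the assertion $C_S(T)\leq Q$ for $Q\in R_\Delta(S\C_T)$ does not follow from Lemma~\ref{L:CSTinSNradical}: applied inside $\L_T$ with $\C_T$ in the role of $\N$, that lemma yields $C_S(C_S(T))\leq Q$, a different (and weaker-than-you-need) statement, and $C_S(T)\leq Q$ is simply false in general. The correct ingredients are $TO_p(\L)\unlhd\L_T$, the containment $TO_p(\L)\leq O_p(\L_T)\leq Q$ from Lemma~\ref{L:SNradNrad}, and the identity $X_{S,Q}^h=T(Q\cap C_S(T))^hO_p(\L)=(Q\cap C_S(T))^h\cdot TO_p(\L)$ for $h\in N_\L(TC_S(T))=N_{\L_T}(C_S(T))$.

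The deeper gap is in (c), which you did not flag: the step ``$N_\N(P)$ centralizes each generator, hence centralizes the generated partial subgroup'' is not justified by the lemmas you cite. Lemma~\ref{L:CentralizerPartialNormal} establishes that $C_\L(R)$ is a partial subgroup only for $R\leq S$; here neither $O^p_{\L_T}(\C_T)$ nor the union of generating cosets is a subgroup of $S$, and Lemma~\ref{L:CentralizerHelp} only gives the symmetry of containment, not closure under generation. In a partial group, knowing $f$ centralizes $y_1,\dots,y_k$ does not automatically yield $y^f=y$ for $y=\Pi(y_1,\dots,y_k)$, because the word $(f^{-1},y_1,f,\dots,f^{-1},y_k,f)$ need not lie in $\D$. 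The paper's proof handles exactly this by an induction on the sets $\Y_0\subseteq\Y_1\subseteq\cdots$ exhausting $O^p_{\L_T}(\C_T)$; the inductive step constructs an explicit object $A:=(P\cap T)(S_w\cap C_S(T))$, verifies $A\in\F_{TC_S(T)}^c\subseteq\Delta$ using Lemma~\ref{L:FTcFRc}(d) and Lemma~\ref{L:NiceTrick}, and conjugates via $A^f$. This is precisely where the special hypothesis that $\Delta$ consists of the overgroups of $\F_{TC_S(T)}^c$ earns its keep, and it is entirely absent from your proposal.
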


\begin{proof}
By Lemma~\ref{L:NRadKRadIntersect}, $(\L,\Delta,S)$ is $\N$-replete, i.e. (a) holds. Notice that $TO_p(\L)\unlhd\L_T$ and $N_{\L_T}(C_S(T))=N_\L(TC_S(T))$. Moreover, as $\Delta$ is the set of overgroups in $S$ of the elements of $\F_{TC_S(T)}^c$, we have $Q\cap TC_S(T)\in\Delta$. This implies  $Q\cap (C_S(T)TO_p(\L))\in\Delta$ for all $Q\in\Delta$. If $Q\in R_\Delta(S\C_T)$ and $h\in N_\L(TC_S(T))$, then
\[X_{S,Q}^h=(Q\cap C_S(T))^h(TO_p(\L)).\]
Hence, part (b) follows from Lemma~\ref{L:OupperpGenerate} with $(\L_T,\C_T,TO_p(\L))$ in place of $(\L,\N,R)$. 

\smallskip

For the proof of (c) fix $P\in R_\Delta(S\N)$, set $\Y_0:=\bigcup_{R\in \Gamma}O^p(N_{\C_T}(R))$ and $\Y_{i+1}:=\{\Pi(w)\colon w\in\D\cap\W(\Y_i)\}$ for all $i\geq 0$. It follows from part (b) and \cite[Lemma~1.9]{Chermak:2015} that 
\[O^p_{\L_T}(\C_T)=\bigcup_{i\geq 0}\Y_i\]
Hence, for (c) it is sufficient to show $N_\N(P)\subseteq C_\L(\Y_i)$ for all $i\geq 0$. We show this by induction on $i$. To verify the claim for $i=0$ let $Q\in R_\Delta(S\C_T)$, $h\in N_\L(TC_S(T))$, set 
\[R:=X_{S,Q}^h=T(Q\cap C_S(T))^hO_p(\L)\mbox{ and }X:=(P\cap T)(Q\cap C_S(T))^h.\]
By Lemma~\ref{L:NRadKRadIntersect}, we have $P\cap T\in \F_T^c$ and $Q\cap C_S(T)\in\F_{C_S(T)}^c$, hence also $(Q\cap C_S(T))^h\in \F_{C_S(T)}^c$. So by Lemma~\ref{L:NiceTrick} and assumption, we have $X\in\F_{TC_S(T)}^c\subseteq\Delta$. By Lemma~\ref{L:CSTinSNradical}, we have $C_S(T)\leq P$ and thus $P\cap T\leq X\leq P$. By Lemma~\ref{L:SNradNrad}, we have $TO_p(\L)\leq O_p(\L_T)\leq Q$ and thus $X_{S,Q}\leq Q$. So 
\[R\cap C_S(T)=R\cap C_S(T)^h=(X_{S,Q}\cap C_S(T))^h\leq (Q\cap C_S(T))^h\leq X\leq R.\]
Hence, it follows from Lemma~\ref{L:CommuteInNLX} applied with $(\C_T,R)$ in place of $(\K,Q)$ that $N_\N(P)$ and $O^p(N_{\C_T}(R))$ are contained in $N_\L(X)$ and $[N_\N(P),O^p(N_{\C_T}(R))]=1$. This shows $N_\N(P)\subseteq C_\L(\Y_0)$. 

\smallskip

Let now $i\geq 0$ such that $N_\N(P)\subseteq C_\L(\Y_i)$. Let $y\in\Y_{i+1}$. Then there exists 
\[w=(y_1,\dots,y_k)\in \W(\Y_i)\cap\D\mbox{ with }\Pi(w)=y.\]
Note that $\Y_i\subseteq\C_T$ and so $T\leq S_w$. As $S_w\in\Delta$ and $\Delta$ is by assumption the set of overgroups in $S$ of the elements in $\F_{TC_S(T)}^c$, it follows $S_w\cap TC_S(T)\in\F_{TC_S(T)}^c\subseteq\Delta$. By Lemma~\ref{L:FTcFRc}(d), we have thus $S_w\cap C_S(T)\in\F_{C_S(T)}^c$ and therefore 
\[A:=(P\cap T)(S_w\cap C_S(T))\in\F_{TC_S(T)}^c\subseteq\Delta\]
by Lemma~\ref{L:NiceTrick}. Let $f\in N_\N(P)$. As $C_S(T)\leq P\leq S_f$ by Lemma~\ref{L:CSTinSNradical} 
and since $P\cap T$ is centralized by $y_j\in\Y_i\subseteq\C_T$ for $j=1,\dots,k$, we have 
\[u:=(f^{-1},y_1,f,f^{-1},y_2,f,\dots,f^{-1},y_k,f)\in\D\]
via $A^f$. By the axioms of a partial group, this implies $y=\Pi(w)\in\D(f)$ and $y^f=\Pi(w)^f=\Pi(u)=\Pi(y_1^f,\dots,y_k^f)=\Pi(y_1,\dots,y_k)=y$, where the second last equality uses $f\in N_\N(P)\subseteq C_\L(\Y_i)$. This shows $N_\N(P)\subseteq C_\L(\Y_{i+1})$ and thus (c).

\smallskip

It follows from (c), Lemma~\ref{L:CentralizerHelp} and Lemma~\ref{L:CSNAlperin} that (d) holds.
\end{proof}

\chapter{Commuting partial normal subgroups of linking localities}\label{S:Nperp}

Given a partial normal subgroup $\N$ of a locality $(\L,\Delta,S)$, we introduced in Section~\ref{SS:CommutingPartialNormal} a partial normal subgroup $\N^\perp$, which morally plays the role of a ``centralizer'' of $\N$. More precisely, $\N^\perp=\N^\perp_\L$ was defined to be the largest partial normal subgroup that commutes with $\N$. In this chapter we give a concrete description of $\N^\perp$ in the case that $(\L,\Delta,S)$ is a linking locality. This shows in particular that our partial normal subgroup $\N^\perp$ coincides with the one introduced by Chermak \cite[Definition~5.5]{ChermakIII} in the case of linking localities. If $(\L,\Delta,S)$ is a linking locality, the concrete description of $\N^\perp$ will enable us to prove further properties of $\N^\perp$ and to study the generalized Fitting subgroup $F^*(\L)$. In our proofs we take over some ideas of Chermak (cf. Sections~5 and 6 in \cite{ChermakIII}), but we are able to prove results about $\N^\perp$ and $F^*(\L)$ which are stronger and more general.

\smallskip

\textbf{Throughout this chapter let $(\L,\Delta,S)$ be a linking locality over a fusion system $\F$ and let $\N\unlhd\L$. Set 
\[T:=S\cap\N,\;\L_T:=N_\L(T)\mbox{ and }\C_T:=C_\L(T).\]
Fix moreover a linking locality $(\L^s,\F^s,S)$ over $\F$ such that $\L^s|_\Delta=\L$. Let $\N^s\unlhd\L^s$ be the unique partial normal subgroup of $\L^s$ such that $\N=\N^s\cap \L$.} 

\smallskip

Notice that $\L^s$ and $\N^s$ exist according to Theorem~\ref{T:VaryObjects}, and that $\N^s$ is unique by that theorem if $\L^s$ is given. As before, we will use that $(\L_T,\Delta,S)$ is by Lemma~\ref{L:NLTCLT} a linking locality over $N_\F(T)$ with $\C_T\unlhd \L_T$. In particular, $O^p_{\L_T}(\C_T)$ is defined. 

\section{A concrete description of $\N^\perp$}\label{SS:Nperp} Recall from Corollary~\ref{C:NperpCapSGeneral} that $\N^\perp\subseteq\C_T$. To ease notation, we will now define a subset $\N^\dagger$ of $\C_T$ and show afterwards that $\N^\dagger$ equals $\N^\perp$. 

\begin{notation}\label{N:Nperp}
Set
\[C_S^\circ(\N):=C_S(\N^s)\]
and
\[\N^\dagger:=C_S^\circ(\N)O^p_{\L_T}(\C_T).\]
\end{notation}

The locality $\L^s$ is by Theorem~\ref{T:VaryObjects}(c) unique up to an isomorphism which restricts to the identity on $\L$. This can be used to show that $C_S^\circ(\N)$ and $\N^\dagger$ do not depend on the choice of $\L^s$. However, these properties will also follow along the way. As stated above, we will show that $\N^\dagger$ equals $\N^\perp$. One inclusion is shown in the next lemma. It is worth mentioning that the subset which we denote by $\N^\dagger$ is by definition the same as the subset $\N^\perp$ in the notation of Chermak \cite[Definition~5.5]{ChermakIII}, who defined $\N^\perp$ only for linking localities (which he calls proper localities). 

\begin{lemma}\label{L:PerpendicularNormalinNperp}
We have $\N^\perp\cap S\leq C_S^\circ(\N)$ and $\N^\perp\subseteq\N^\dagger$.
\end{lemma}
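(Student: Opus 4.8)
The plan is to prove the two inclusions in turn, obtaining $\N^\perp\subseteq\N^\dagger$ from the first inclusion together with Alperin's fusion theorem for partial normal subgroups (Lemma~\ref{L:PartialNormalAlperin}) and the elementary properties of $p$-residuals.

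First I would show $\N^\perp\cap S\leq C_S^\circ(\N)=C_S(\N^s)$. Put $R:=\N^\perp\cap S$. Since $\N^\perp\unlhd\L$, the subgroup $R$ is strongly closed in $\F=\F_S(\L)=\F_S(\L^s)$ by \cite[Lemma~3.1(a)]{Chermak:2015}, and by Corollary~\ref{C:NperpCapSGeneral} we have $\N\subseteq C_\L(R)$. As $\L=\L^s|_\Delta$ and $\Delta\subseteq\F^s$, the inclusion $\L\hookrightarrow\L^s$ is a homomorphism of partial groups, so $C_\L(R)\subseteq C_{\L^s}(R)$ and hence $\N\subseteq C_{\L^s}(R)$. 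Now Lemma~\ref{L:NNsTinN} gives $N_{\N^s}(T)=N_\N(T)\subseteq\N\subseteq C_{\L^s}(R)$, and I would then invoke \cite[Proposition~8.2]{Henke:2015} — exactly as in the proof of Lemma~\ref{L:CSNCSNPlus}, but applied to the subcentric locality $(\L^s,\F^s,S)$ and $\N^s$ — to conclude $\N^s\subseteq C_{\L^s}(R)$; Lemma~\ref{L:CentralizerHelp} then yields $R\leq C_S(\N^s)=C_S^\circ(\N)$. This step is the main obstacle: one must pass from control of $N_{\N^s}(T)$ to control of all of $\N^s$. The reason we may use Lemma~\ref{L:NNsTinN} directly here (instead of needing weak $\N$-repleteness as in Lemma~\ref{L:CSNCSNPlus}) is that $R=\N^\perp\cap S$ is strongly closed for the cheap reason that $\N^\perp\unlhd\L$.

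For the inclusion $\N^\perp\subseteq\N^\dagger$, recall $\N^\perp\subseteq\C_T$ by Corollary~\ref{C:NperpCapSGeneral}, so $\N^\perp\unlhd\L_T=N_\L(T)$, and by Lemma~\ref{L:NLTCLT} the triple $(\L_T,\Delta,S)$ is a (linking) locality. Thus Lemma~\ref{L:PartialNormalAlperin} applies to the partial normal subgroup $\N^\perp$ of $\L_T$: any $n\in\N^\perp$ can be written as $n=\Pi(t,n_1,\dots,n_k)$ with $t\in\N^\perp\cap S$ and $n_i\in O^p(N_{\N^\perp}(R_i))$ for suitable $R_i\in\Delta$. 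By the first part, $t\in C_S^\circ(\N)$. Since $\N^\perp\subseteq\C_T$ we have $O^p(N_{\N^\perp}(R_i))\subseteq O^p(N_{\C_T}(R_i))$, and $O^p(N_{\C_T}(R_i))\subseteq O^p_{\L_T}(\C_T)$ by Lemma~\ref{L:ResidueEquivalences}, so each $n_i$, and hence $m:=\Pi(n_1,\dots,n_k)$, lies in $O^p_{\L_T}(\C_T)$. By the partial group axioms $n=\Pi(t,m)$ with $t\in C_S^\circ(\N)$ and $m\in O^p_{\L_T}(\C_T)$, whence $n\in C_S^\circ(\N)O^p_{\L_T}(\C_T)=\N^\dagger$. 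As $n$ was arbitrary, $\N^\perp\subseteq\N^\dagger$. Everything after the first step is a routine bookkeeping exercise, so I expect essentially all the difficulty to be concentrated there.
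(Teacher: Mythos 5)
Your proof is correct and follows essentially the same route as the paper's: the reduction of $\N^\perp\subseteq\N^\dagger$ to $\N^\perp\cap S\leq C_S^\circ(\N)$ via Lemma~\ref{L:PartialNormalAlperin} and Lemma~\ref{L:ResidueEquivalences}, and the proof of the latter inclusion via Lemma~\ref{L:NNsTinN} and \cite[Proposition~8.2]{Henke:2015}, are precisely the paper's argument. The only cosmetic differences are the order of presentation and that you apply Lemma~\ref{L:PartialNormalAlperin} in $(\L_T,\Delta,S)$ rather than in $(\L,\Delta,S)$; since $\N^\perp\subseteq\L_T$, the sets $R_\Delta(S\N^\perp)$ and the normalizers $N_{\N^\perp}(R_i)$ coincide in both, so this is an equivalent formulation.
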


\begin{proof}
Recall that $\N^\perp$ is defined to be a partial normal subgroup of $\L$. By Corollary~\ref{C:NperpCapSGeneral}, we have $\N^\perp\subseteq \C_T$. Hence, for every $Q\in R_\Delta(S\N^\perp)$, we see that $O^p(N_{\N^\perp}(Q))\subseteq O^p(N_{\C_T}(Q))\subseteq O^p_{\L_T}(\C_T)\subseteq\N^\dagger$, where the second inclusion uses Lemma~\ref{L:ResidueEquivalences}. By Lemma~\ref{L:PartialNormalAlperin}, it is thus sufficient to show that $R:=\N^\perp\cap S\leq C_S^\circ(\N)$. Notice that $R$ is strongly closed in $\F=\F_S(\L)$ as $\N^\perp\unlhd\L$. Moreover, by Corollary~\ref{C:NperpCapSGeneral}, we have $\N\subseteq C_\L(R)$, so Lemma~\ref{L:NNsTinN} gives $N_{\N^s}(T)=N_\N(T)\subseteq\N\subseteq C_\L(R)\subseteq C_{\L^s}(R)$. Hence, \cite[Proposition~8.2]{Henke:2015} applied with $\L^s$ and $\N^s$ in place of $\L$ and $\N$ gives $\N^s\subseteq C_{\L^s}(R)$. Now by Lemma~\ref{L:CentralizerHelp}, we have $R\leq C_S(\N^s)=C_S^\circ(\N)$ as required. 
\end{proof}

\begin{lemma}\label{L:CScircN}
The subgroup $C_S^\circ(\N)$ is strongly closed in $\F$ and contained in $C_S(\N)$. If $(\L,\Delta,S)$ is weakly $\N$-replete, then $C_S^\circ(\N)=C_S(\N)$. 
\end{lemma}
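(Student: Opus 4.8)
The statement has three assertions: that $C_S^\circ(\N) = C_S(\N^s)$ is strongly closed in $\F$, that it is contained in $C_S(\N)$, and that the two subgroups coincide when $(\L,\Delta,S)$ is weakly $\N$-replete. The plan is to obtain the first two assertions quickly by transferring known facts about $\N^s$ in the subcentric locality $\L^s$ down to $\L$, and then to use the results of Chapter~\ref{S:NReplete} for the equality in the weakly $\N$-replete case.

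For the first two assertions I would argue as follows. Since $(\L^s,\F^s,S)$ is a subcentric locality, it is in particular a linking locality, and by Corollary~\ref{C:GetNreplete} (applied with $\Delta = \F^s \supseteq \F^q$, so that $\F^q \subseteq \F^s$) the locality $(\L^s,\F^s,S)$ is $\N^s$-replete, hence weakly $\N^s$-replete. Therefore Lemma~\ref{L:CSNstronglyclosed}, applied to $(\L^s,\F^s,S)$ and $\N^s$ in place of $(\L,\Delta,S)$ and $\N$, gives that $C_S(\N^s)$ is strongly closed in $\F = \F_S(\L^s) = \F_S(\L)$. Since $C_S^\circ(\N) = C_S(\N^s)$ by Notation~\ref{N:Nperp}, this is the first assertion. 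For the second, apply Lemma~\ref{L:CSNCSNPlus} with $(\L,\Delta,S)$ in the role of the ``small'' locality and $(\L^s,\F^s,S)$ in the role of $(\L^+,\Delta^+,S)$: that lemma requires $(\L,\Delta,S)$ to be weakly $\N$-replete, which I do \emph{not} want to assume yet, so instead I argue directly. Since $\N \subseteq \N^s$ (indeed $\N = \N^s \cap \L \subseteq \N^s$), and centralizers in $S$ are order-reversing with respect to the subset being centralized, we get $C_S(\N^s) \leq C_S(\N)$ directly from the definition of the centralizer in a partial group; that is, $C_S^\circ(\N) \leq C_S(\N)$. No repleteness hypothesis is needed here.

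For the third assertion, assume $(\L,\Delta,S)$ is weakly $\N$-replete. Then Lemma~\ref{L:CSNCSNPlus} applies verbatim with $(\L^+,\Delta^+,S) = (\L^s,\F^s,S)$ (noting $\L = \L^s|_\Delta$ and $\Delta \subseteq \F^s$ by Theorem~\ref{T:VaryObjects}(c)), yielding $C_S(\N) = C_S(\N^s) = C_S^\circ(\N)$, which is exactly what is claimed. So in the weakly $\N$-replete case the two subgroups coincide.

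I do not expect a genuine obstacle here: each of the three parts reduces to a single invocation of a lemma already proved in Chapter~\ref{S:NReplete}, together with the elementary monotonicity of $C_S(-)$. The only point requiring a moment's care is making sure the hypotheses of Lemma~\ref{L:CSNstronglyclosed} and Lemma~\ref{L:CSNCSNPlus} are met with the correct localities substituted in — in particular that $(\L^s,\F^s,S)$ is automatically (weakly) $\N^s$-replete via Corollary~\ref{C:GetNreplete} because $\F^q \subseteq \F^s$ — and that the inclusion $C_S^\circ(\N) \subseteq C_S(\N)$ is established unconditionally (from $\N \subseteq \N^s$) rather than by appeal to the weakly $\N$-replete hypothesis, since the lemma asserts it in general.
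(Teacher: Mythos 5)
Your proposal is correct and follows essentially the same route as the paper's own proof: both obtain the inclusion $C_S^\circ(\N)\leq C_S(\N)$ directly from $\N\subseteq\N^s$, both derive strong closure by applying Lemma~\ref{L:CSNstronglyclosed} to $(\L^s,\F^s,S)$ and $\N^s$ (which is weakly $\N^s$-replete by Corollary~\ref{C:GetNreplete}), and both get the equality in the weakly $\N$-replete case from Lemma~\ref{L:CSNCSNPlus}. Your extra care in noting that the containment needs no repleteness hypothesis, and in spelling out the substitution of $(\L^s,\N^s)$ into the cited lemmas, is exactly the right bookkeeping.
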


\begin{proof}
As $\N\subseteq\N^s$, we have clearly $C_S^\circ(\N)=C_S(\N^s)\leq C_S(\N)$. Recall that $(\L^s,\F^s,S)$ is $\N$-replete by Corollary~\ref{C:GetNreplete}. Hence it follows from Lemma~\ref{L:CSNstronglyclosed} that $C_S^\circ(\N)$ is strongly closed. If $(\L,\Delta,S)$ is weakly $\N$-replete, then Lemma~\ref{L:CSNCSNPlus} gives $C_S^\circ(\N)=C_S(\N)$. 
\end{proof}

The next lemma is somewhat similar but not identical to \cite[Theorem~5.7(b)]{ChermakIII}. In particular the property that $\N^\dagger\cap S=C_S^\circ(\N)$ is not immediately clear from Chermak's results.

\begin{lemma}\label{L:NperpCapS0}
We have $\N^\dagger\cap S=C_S^\circ(\N)\leq C_S(\N)$. In particular, if $(\L,\Delta,S)$ is weakly $\N$-replete, we have $\N^\dagger\cap S=C_S(\N)$.  
\end{lemma}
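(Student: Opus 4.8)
The plan is to show the inclusion $\N^\dagger \cap S \leq C_S^\circ(\N)$ and its reverse separately, since the reverse is essentially immediate from the definitions.

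First I would prove $C_S^\circ(\N) \leq \N^\dagger \cap S$. By Notation~\ref{N:Nperp}, $\N^\dagger = C_S^\circ(\N)O^p_{\L_T}(\C_T)$, so certainly $C_S^\circ(\N) \subseteq \N^\dagger$ (taking the trivial element on the right, using Lemma~\ref{L:AddOnes}); and $C_S^\circ(\N) = C_S(\N^s) \leq S$ by definition, so $C_S^\circ(\N) \leq \N^\dagger \cap S$. This direction needs essentially no work beyond unwinding definitions.

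The reverse inclusion $\N^\dagger \cap S \leq C_S^\circ(\N)$ is the substantive point. Here I would take an element $f \in \N^\dagger \cap S$ and write it as $f = Xg$ with... actually, more carefully: an element of $\N^\dagger$ lies in a product $C_S^\circ(\N)O^p_{\L_T}(\C_T)$, and by Lemma~\ref{L:NLSbiset} (since $C_S^\circ(\N) \leq S \leq N_\L(S)$, noting $C_S^\circ(\N)$ is strongly closed hence a subgroup of $S$) every element of $\N^\dagger$ has the form $\Pi(r,n)$ with $r \in C_S^\circ(\N)$ and $n \in O^p_{\L_T}(\C_T)$, with $S_{(r,n)} = S_n^{r^{-1}}$. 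If such an element lies in $S$, I want to conclude $n \in S$; then $n \in O^p_{\L_T}(\C_T) \cap S$, and by Lemma~\ref{L:OpCTcapSSpecial}(d) applied to a suitable choice of object set — or rather, since $\L^s$ with $\Delta = \F^s$ contains such a restriction — we would get $n \in C_S(\N^s) = C_S^\circ(\N)$, and then $f = rn \in C_S^\circ(\N)$ because $C_S^\circ(\N)$ is a subgroup of $S$ by Lemma~\ref{L:CScircN}. To see that $n \in S$ when $\Pi(r,n) \in S$: since $r \in S$ and $\Pi(r,n) \in S$, we have $n = \Pi(r^{-1}, r, n) = \Pi(r^{-1}, \Pi(r,n))$, a product of two elements of $S$; because $S$ is a subgroup of $\L$ (indeed a $p$-subgroup), $n \in S$.

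The main obstacle I anticipate is making the link $O^p_{\L_T}(\C_T) \cap S \leq C_S^\circ(\N) = C_S(\N^s)$ precise. The cleanest route is to invoke Lemma~\ref{L:OpCTcapSSpecial}(d) for a linking locality over $\F$ whose object set $\Delta_0$ is the overgroup-closure of $\F^c_{TC_S(T)}$; one must check that passing between $\L$, this $\Delta_0$-locality, and $\L^s$ does not change $O^p_{\L_T}(\C_T) \cap S$ (using Lemma~\ref{L:OupperpVaryObjects} and Remark~\ref{R:TakePlus}) nor $C_S(\N) $ restricted appropriately, and that the relevant $C_S(\N)$ there coincides with $C_S^\circ(\N) = C_S(\N^s)$ via Lemma~\ref{L:CSNCSNPlus} and Lemma~\ref{L:CScircN}. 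Once these bookkeeping identifications across object sets are in place, $O^p_{\L_T}(\C_T) \cap S \leq C_S(\N^s)$ follows, and combining with the first paragraph gives $\N^\dagger \cap S = C_S^\circ(\N)$. The final sentence, that $\N^\dagger \cap S = C_S(\N)$ when $(\L,\Delta,S)$ is weakly $\N$-replete, is then immediate from Lemma~\ref{L:CScircN}, which gives $C_S^\circ(\N) = C_S(\N)$ in that case.
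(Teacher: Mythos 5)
Your overall strategy matches the paper's: reduce to a linking locality $(\L_0,\Delta_0,S)$ whose object set $\Delta_0$ is the overgroup-closure of $\F_{TC_S(T)}^c$, invoke Lemma~\ref{L:OpCTcapSSpecial}(d) there, and transport the conclusion back via Lemma~\ref{L:OupperpVaryObjects}, Remark~\ref{R:TakePlus}, Lemma~\ref{L:CScircN} and Lemma~\ref{L:CSNCSNPlus}. Your element-level decomposition $f=\Pi(r,n)$ with $n\in S$ is fine; the paper gets the same splitting $\N^\dagger\cap S=C_S^\circ(\N)\bigl(O^p_{\L_T}(\C_T)\cap S\bigr)$ in one stroke from the Dedekind Lemma \cite[Lemma~1.10]{Chermak:2015}, which is cleaner but not substantively different.

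There is, however, a real gap: you invoke ``a linking locality over $\F$ whose object set is $\Delta_0$'' without verifying that such a locality exists, i.e.\ that $\F^{cr}\subseteq\Delta_0\subseteq\F^s$. This is not just bookkeeping across object sets (which you do flag) --- it is a separate, substantive verification. One needs $TC_S(T)$ to be strongly closed (Corollary~\ref{C:ChermakEndPartI} with $\M:=\<\N,\C_T\>$) so that $\Delta_0$ is $\F$-closed; one needs that every $R\in\F^{cr}$ is $\M$-radical so that $R\cap TC_S(T)\in\F_{TC_S(T)}^c$ (Lemma~\ref{L:SNradNrad} plus Lemma~\ref{L:NradEcr}), giving $\F^{cr}\subseteq\Delta_0$; and most importantly one needs $\F_{TC_S(T)}^c\subseteq\F^q\subseteq\F^s$, which rests on Lemma~\ref{L:GetFTcIntoFq} applied with $\M$ in place of $\N$. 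Without the last step you cannot form $\L_0=\L^s|_{\Delta_0}$, so Lemma~\ref{L:OpCTcapSSpecial} is not applicable and the argument does not get off the ground. Once this existence verification is added, the rest of your proposal is correct.
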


\begin{proof}
By Lemma~\ref{L:CScircN}, it is sufficient to show $\N^\dagger\cap S=C_S^\circ(\N)$. As $\N^\dagger=C_S^\circ(\N)O^p_{\L_T}(\C_T)$, it follows from the Dedekind Lemma \cite[Lemma~1.10]{Chermak:2015} that 
\[\N^\dagger\cap S=C_S^\circ(\N)(O^p_{\L_T}(\C_T)\cap S).\]
Hence, we only need to show that $O^p_{\L_T}(\C_T)\cap S\leq C_S^\circ(\N)$. 

\smallskip

Let $\Delta_0$ be the set of overgroups of $\F_{TC_S(T)}^c$. Recall from Corollary~\ref{C:ChermakEndPartI} that $\M:=\<\N,\C_T\>\unlhd\L$ with $\M\cap S=TC_S(T)$. In particular $TC_S(T)$ is strongly closed in $\F$ and thus $\F_{TC_S(T)}^c$ is closed under $\F$-conjugacy. Hence, $\Delta_0$ is $\F$-closed. If $R\in\F^{cr}$, then by \cite[Lemma~6.2]{Henke:2015}, $R$ is $\L$-radical. Hence, $R$ is $\M$-radical by Lemma~\ref{L:SNradNrad}, so Lemma~\ref{L:NradEcr} implies $R\cap TC_S(T)\in\F_{TC_S(T)}^c$ and thus $R\in\Delta_0$. This shows $\F^{cr}\subseteq\Delta_0$. By Lemma~\ref{L:GetFTcIntoFq} applied with $\M$ in place of $\N$, we have $\F_{TC_S(T)}^c\subseteq\F^q$ and thus $\Delta_0\subseteq\F^q\subseteq\F^s$. Therefore, setting 
\[\L_0:=\L^s|_{\Delta_0}\mbox{ and }\N_0:=\N^s\cap \L_0,\]
the triple $(\L_0,\Delta_0,S)$ is a linking locality, $\N_0\unlhd\L_0$ and $\N_0\cap S=\N^s\cap S=T$. Now by Lemma~\ref{L:OpCTcapSSpecial}(a),(d), the locality $(\L_0,\Delta_0,S)$ is $\N_0$-replete and $O^p_{N_{\L_0}(T)}(C_{\L_0}(T))\cap S\leq C_S(\N_0)$. Using Remark~\ref{R:TakePlus} twice, one observes that 
\[O^p_{\L_T}(\C_T)\cap S=O^p_{N_{\L^s}(T)}(C_{\L^s}(T))\cap S=O^p_{N_{\L_0}(T)}(C_{\L_0}(T))\cap S.\]
Moreover, using Lemma~\ref{L:CScircN} with $(\L_0,\N_0)$ in place of $(\L,\N)$, we obtain $C_S(\N_0)=C_S^\circ(\N_0)=C_S(\N^s)=C_S^\circ(\N)$. This implies the assertion.
\end{proof}

\begin{lemma}\label{L:NperpNormLT}
We have $\N^\dagger\unlhd \L_T$, $\N^\dagger\subseteq\C_T$ and $S\N^\dagger=S\C_T$.
\end{lemma}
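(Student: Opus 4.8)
Recall $\N^\dagger=C_S^\circ(\N)O^p_{\L_T}(\C_T)$ where $C_S^\circ(\N)=C_S(\N^s)$. The three assertions are: $\N^\dagger$ is a partial subgroup, it is normalized by all of $\L_T$ (hence partial normal there), it sits inside $\C_T$, and $S\N^\dagger=S\C_T$. I would organize the proof as follows.

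\textbf{Step 1: $\N^\dagger\subseteq\C_T$.} By Lemma~\ref{L:ResidueEquivalences} (or directly from the definition of the $p$-residual), $O^p_{\L_T}(\C_T)\subseteq\C_T$. By Lemma~\ref{L:CScircN}, $C_S^\circ(\N)\leq C_S(\N)\leq C_S(T)$, so $C_S^\circ(\N)\subseteq\C_T$. Now I would argue that the product of these two subsets, computed in $\L$ (equivalently in $\L_T$, since $\C_T\unlhd\L_T$ and both factors lie in $\L_T$), stays inside $\C_T$: this should follow from the fact that $\C_T$ is a partial subgroup of $\L_T$ (Lemma~\ref{L:CentralizerPartialNormal}) together with the product being a well-defined subset. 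Indeed $C_S^\circ(\N)O^p_{\L_T}(\C_T)\subseteq\C_T\C_T\subseteq\C_T$ since $\C_T$ is closed under products.

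\textbf{Step 2: $\N^\dagger$ is a partial subgroup and is normal in $\L_T$.} First note $C_S^\circ(\N)\unlhd\L_T$: by Lemma~\ref{L:CScircN}, $C_S^\circ(\N)$ is strongly closed in $\F$, and also $C_S^\circ(\N)\unlhd S$ (it centralizes the normal subgroup $\N^s$ of $\L^s$, hence is normalized by $S\leq\L^s$... more carefully, $C_S(\N^s)$ is normalized by $N_{\L^s}(S)\supseteq S$ and in particular by $T$); combined with strong closure in $\F=\F_S(\L)$ and $\F^{cr}\subseteq\Delta$, Lemma~\ref{L:OpL} together with the characterization $R\unlhd\L\Leftrightarrow R\unlhd\F$ (via \cite[Proposition~5]{Henke:2015}, used in Lemma~\ref{L:TakePlusInS}) should give $C_S^\circ(\N)\unlhd\L_T$. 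Meanwhile $O^p_{\L_T}(\C_T)\unlhd\L_T$ by definition of the $p$-residual. So $\N^\dagger$ is a product of two partial normal subgroups of the locality $(\L_T,\Delta,S)$, and by Theorem~\ref{T:ProductsPartialNormal} applied inside $\L_T$, the product $C_S^\circ(\N)O^p_{\L_T}(\C_T)$ is again a partial normal subgroup of $\L_T$ (and equals the reversed product). This simultaneously gives that $\N^\dagger$ is a partial subgroup and $\N^\dagger\unlhd\L_T$.

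\textbf{Step 3: $S\N^\dagger=S\C_T$.} One inclusion is immediate from $\N^\dagger\subseteq\C_T$, giving $S\N^\dagger\subseteq S\C_T$ (noting that, since $S\leq N_\L(S)$, these products are the ``coset-style'' products of Lemma~\ref{L:NLSbiset}, and are well-behaved). For the reverse inclusion, it suffices to show $\C_T\subseteq S\N^\dagger$. Since $O^p_{\L_T}(\C_T)$ has $p$-power index in $\C_T$, i.e. $O^p_{\L_T}(\C_T)\in\mathbb{K}_{\C_T}$, we have $\C_T=(S\cap\C_T)O^p_{\L_T}(\C_T)=C_S(T)O^p_{\L_T}(\C_T)$ by definition of $\mathbb{K}_{\C_T}$. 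Hence $S\C_T=SC_S(T)O^p_{\L_T}(\C_T)=SO^p_{\L_T}(\C_T)$. Since $O^p_{\L_T}(\C_T)\subseteq\N^\dagger$, this gives $S\C_T=SO^p_{\L_T}(\C_T)\subseteq S\N^\dagger$, and combined with Step~1 we get equality. (Here I would invoke Lemma~\ref{L:ProductPartialSubgroup}/Lemma~\ref{L:NLSbiset} to make sense of and manipulate these products $S\cdot(-)$ with $S\leq N_\L(S)$.)

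\textbf{Expected main obstacle.} The delicate point is Step~2 — verifying that $C_S^\circ(\N)$ is genuinely a partial \emph{normal} subgroup of $\L_T$ (not just of $S$ or strongly closed in $\F$), so that Theorem~\ref{T:ProductsPartialNormal} can be applied inside the locality $(\L_T,\Delta,S)$. One must be careful that $C_S^\circ(\N)=C_S(\N^s)$ is defined via $\L^s$, not $\L$, and then transported to a normal subgroup of $\L_T$; the cleanest route is probably to use that strong closure in $\F$ plus being normalized by $N_S(T)$, together with $N_\F(T)^{cr}\subseteq\Delta$, forces $C_S^\circ(\N)\unlhd N_\F(T)$ and hence $\unlhd\L_T$ by \cite[Proposition~5]{Henke:2015} as in the proof of Lemma~\ref{L:TakePlusInS}. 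An alternative that sidesteps this: realize $\N^\dagger$ directly as a product of partial normal subgroups of $\L_T$ by first checking $C_S^\circ(\N)O^p_{\L_T}(\C_T)=C_S^\circ(\N)\cdot\C_T\cap(\text{something})$... but the normality route is more transparent, so that is what I would pursue.
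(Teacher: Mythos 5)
Your Step 1 and Step 3 match the paper in substance, but Step 2 has a genuine gap that invalidates the approach.

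The claim that $C_S^\circ(\N)\unlhd\L_T$ is false in general, and the proposed justification (strongly closed in $\F$, normalized by $S$, hence normal in $N_\F(T)$ and thus in $\L_T$) does not hold: being strongly closed plus normal in $S$ is a strictly weaker condition than being a normal $p$-subgroup of the fusion system. For a $p$-subgroup $R\leq S$, the equivalence from \cite[Proposition~5]{Henke:2015} gives $R\unlhd\L_T$ iff $R\unlhd N_\F(T)$ iff $R\leq O_p(N_\F(T))$, and none of the hypotheses you list force $C_S^\circ(\N)$ to lie inside $O_p(N_\F(T))$. The simplest counterexample is $\N=\{\One\}$: there $T=\{\One\}$, $\L_T=\L$, $\N^s=\{\One\}$ and $C_S^\circ(\N)=C_S(\{\One\})=S$, so your claim becomes $S\unlhd\L$, i.e.\ $\L$ is a group — false for any linking locality that is not a group. (The final conclusion $\N^\dagger\unlhd\L_T$ is still true in that case, because $\N^\dagger = S\,O^p(\L)=\L$, but that just underscores that the normality of $\N^\dagger$ does not pass through normality of $C_S^\circ(\N)$.) With this piece missing, you cannot invoke Theorem~\ref{T:ProductsPartialNormal} inside $(\L_T,\Delta,S)$, and Step~2 collapses.

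The paper avoids this problem entirely. It first shows $\N^\dagger=\K\, C_S^\circ(\N)$ is a partial subgroup via Lemma~\ref{L:ProductPartialSubgroup} (applied with $R=C_S^\circ(\N)\leq S\leq N_\L(S)$ and $\K:=O^p_{\L_T}(\C_T)\unlhd\L_T$), which requires only that $C_S^\circ(\N)$ normalize $S$, not that it be a partial normal subgroup of $\L_T$. Normality of $\N^\dagger$ in $\L_T$ is then obtained by a Frattini-type reduction: with $H:=N_{\L_T}(C_S(T))$, the Frattini Lemma and Lemma~\ref{L:ResidueEquivalences} give $\L_T=\C_T H=\K C_S(T)H=\K H$; passing to the natural projection $\alpha\colon\L_T\to\L_T/\K$ yields $\L_T\alpha=H\alpha$, which is an honest group, and $\N^\dagger\alpha=C_S^\circ(\N)\alpha$. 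Here one only needs $C_S^\circ(\N)\unlhd H$, which does follow from strong closure (since $H$ is a group normalizing $TC_S(T)\supseteq C_S^\circ(\N)$), so $\N^\dagger\alpha\unlhd\L_T\alpha$, and the partial subgroup correspondence \cite[Proposition~4.7]{Chermak:2015} pulls this back to $\N^\dagger\unlhd\L_T$. In short: the correct argument normalizes $C_S^\circ(\N)$ only by the group $H$, not by all of $\L_T$, and pushes the rest of the work into a $p$-group quotient.
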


\begin{proof}
Notice that $C_S^\circ(\N)\subseteq C_S(\N)\leq C_S(T)$ and thus  $\N^\dagger\subseteq\C_T$. As $\K:=O^p_{\L_T}(\C_T)\unlhd \L_T$ it follows from Lemma~\ref{L:ProductPartialSubgroup} that $\N^\dagger=\K C_S^\circ(\N)$ is a partial subgroup of $\L_T$. Set $H:=N_\L(TC_S(T))=N_{\L_T}(C_S(T))$. As $\C_T\unlhd \L_T$, the Frattini Lemma \cite[Corollary~3.11]{Chermak:2015} gives $\L_T=\C_T H$. Moreover, by Lemma~\ref{L:ResidueEquivalences}, we have $\K C_S(T)=\C_T$. As $C_S(T)\leq H$, it follows
\[\L_T=\K H.\]
If $\alpha\colon \L_T\rightarrow \L_T/\K$ is the natural projection, we have thus $\L_T\alpha=H\alpha$. Since $C_S^\circ(\N)\leq TC_S(T)$ is by Lemma~\ref{L:CScircN} strongly $\F$-closed, $C_S^\circ(\N)\unlhd H$. Notice also that $\alpha$ restricts to a group homomorphism $\H\rightarrow \L_T\alpha=H\alpha$. Thus, $\N^\dagger\alpha=C_S^\circ(\N)\alpha\unlhd H\alpha=\L_T\alpha$. As $\N^\dagger$ is a partial subgroup of $\L_T$ containing $\K$, the partial subgroup correspondence \cite[Proposition~4.7]{Chermak:2015}  yields $\N^\dagger\unlhd \L_T$. Since $\K\subseteq\N^\dagger$ and $C_S(T)\K=\K C_S(T)=\C_T$, we have $C_S(T)\N^\dagger=\C_T$ and thus $S\N^\dagger=S\C_T$.
\end{proof}

\begin{lemma}\label{L:CommuteInNLXGeneral}
Let $P\in R_\Delta(S\N)$, $Q\in R_\Delta(S\C_T)$, $n\in N_\N(P)$ and $c\in O^p(N_{\C_T}(Q))$ such that $P=S_n$ and $Q=S_c$. Suppose $(n,c)\in\D$. Then
\[X:=S_{(n,c)}=P\cap Q=S_{(c,n)}\in\Delta,\]
$n,c\in N_\L(X)$ and $[n,c]=1$, where the commutator is computed in the group $N_\L(X)$. 
\end{lemma}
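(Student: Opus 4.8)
The plan is to verify the three claims — that $S_{(n,c)}=P\cap Q$, that $S_{(n,c)}=S_{(c,n)}$, and that $[n,c]=1$ in $N_\L(X)$ — by exploiting the structure already developed in this chapter, in particular Lemma~\ref{L:CommuteInNLX}, applied with $\K=\C_T$. The key preliminary observation is that the elements $n$ and $c$ already lie in a common normalizer group where a commutator computation makes sense.

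First I would show $X:=S_{(n,c)}$ equals $P\cap Q$. Since $(n,c)\in\D$, we have $X=S_{(n,c)}\leq S_n=P$ and $X\leq S_{(n,c)}^n\leq S_c=Q$ is false in general, so instead I note $S_{(n,c)}\leq S_{nc}$ and use $S_{(n,c)}^n = S_{(c)}$-type bookkeeping carefully; the cleaner route is: $X=S_{(n,c)}\leq S_n=P$, and applying Lemma~\ref{L:CentralizerHelp}-style reasoning is not quite it. The right approach: by Lemma~\ref{L:SNradNrad}, $P$ is $\N$-radical and $Q$ is $\C_T$-radical; then setting $Y:=P\cap Q$, I want $P\cap T\leq Y\leq P$ and $Q\cap C_S(T)\leq Y\leq Q$ so that Lemma~\ref{L:CommuteInNLX} applies with $X$ replaced by $Y$. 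To get these containments I would use Lemma~\ref{L:CSTinSNradical} (which gives $C_S(T)\leq P$ because $N_\L(P)$ has characteristic $p$ and $P$ is $S\N$-radical) together with Lemma~\ref{L:SNradNrad} applied in $\L_T$ (giving $TO_p(\L)\leq O_p(\L_T)\leq Q$, hence $P\cap T\leq T\leq Q$, so $P\cap T\leq P\cap Q=Y$); the reverse containments $Y\leq P$, $Y\leq Q$ are trivial, and $Q\cap C_S(T)\leq Q$, $Q\cap C_S(T)\leq C_S(T)\leq P$ gives $Q\cap C_S(T)\leq Y$. Then Lemma~\ref{L:CommuteInNLX}(a) tells us $Y$ is $\N$-radical, Lemma~\ref{L:CommuteInNLX}(b) tells us $N_\N(P)\subseteq N_\L(Y)$ and $O^p(N_{\C_T}(Q))\subseteq N_\L(Y)$ with $[N_\N(P),O^p(N_{\C_T}(Q))]=1$ computed in $N_\L(Y)$. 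In particular $n,c\in N_\L(Y)$ and they commute there, so $(n,c)\in\D$ via $Y$ and $Y\leq S_{(n,c)}=X$. Conversely $X=S_{(n,c)}\leq S_n=P$ and, since $n$ and $c$ commute as group elements of $N_\L(Y)$ — actually to get $X\leq Q$ I use that $nc=cn$, hence $S_{(n,c)}=S_{(c,n)}$ follows once commutativity is established and then $X=S_{(c,n)}\leq S_c=Q$.

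So the logical order I would follow is: (1) establish $C_S(T)\leq P$ and $T\leq O_p(\L_T)\leq Q$ to verify the hypotheses of Lemma~\ref{L:CommuteInNLX} with $Y=P\cap Q$ in the role of $X$; (2) apply Lemma~\ref{L:CommuteInNLX} to conclude $n,c\in N_\L(Y)$, $Y$ is $\N$-radical, and $[n,c]=1$ in $N_\L(Y)$; (3) deduce $(n,c),(c,n)\in\D$ via $Y$, hence $Y\leq S_{(n,c)}$ and $Y\leq S_{(c,n)}$; (4) deduce the reverse inequalities: $S_{(n,c)}\leq S_n\cap S_{nc}$ — here I use that since $n$ and $c$ commute in the group $N_\L(Y)\supseteq\langle n,c\rangle$-closure, for $s\in S_{(n,c)}$ we have $s^{nc}=(s^n)^c$ and also $s^{cn}=s^{nc}$, so $S_{(n,c)}=S_{(c,n)}$, and then $S_{(n,c)}=S_{(c,n)}\leq S_c=Q$ and $\leq S_n=P$, giving $S_{(n,c)}\leq P\cap Q=Y$; (5) combine (3) and (4): $X=S_{(n,c)}=Y=P\cap Q=S_{(c,n)}\in\Delta$ (the membership in $\Delta$ is automatic since $(n,c)\in\D$). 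The commutator statement is then just the content of step (2), transported from $N_\L(Y)$ to $N_\L(X)$, which is legitimate because $X=Y$.

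The main obstacle I anticipate is step (4): showing $S_{(n,c)}=S_{(c,n)}$ cleanly. The subtlety is that knowing $[n,c]=1$ in the abstract group $N_\L(Y)$ only directly controls conjugation of elements that lie in $N_\L(Y)$, whereas $S_{(n,c)}$ a priori involves conjugating arbitrary elements of $S$. However, for $s\in S_{(n,c)}$ we have $s\in S_n$ and $s^n\in S_c$, and the conjugation maps compose: $c_{nc}|_{S_{(n,c)}}=c_n|_{S_{(n,c)}}\circ c_c$. Since $n,c\in N_\L(Y)$ and $Y\leq S_n\cap S_c$, the equation $nc=cn$ in $N_\L(Y)$ gives $c_{nc}=c_{cn}$ as maps on $N_\L(Y)$, but I need it as a statement about $S_w$. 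The resolution is that $c_{nc}=c_{cn}$ as elements of $\L$ (since $nc=cn$ as actual elements of $\L$, not merely conjugate), and then $S_{(n,c)}\leq S_{nc}=S_{cn}$, and by symmetry of the roles — noting $(c,n)\in\D$ via $Y$ from step (3) so $S_{(c,n)}$ is defined and contains $Y$ — a direct check using Lemma~\ref{L:ConjugateNormalizer} that $S_{(n,c)}$ and $S_{(c,n)}$ are both equal to $P\cap Q$ closes the loop. Alternatively, and more robustly, I would invoke Corollary~\ref{C:MperpNNperpM}: since $\N^\perp=\N^\dagger$ (established in this chapter, or rather being established — so I should be careful about circularity here and instead use the direct commuting-strongly machinery), the partial normal subgroups involved commute strongly, giving $S_{(n,c)}=S_{(c,n)}=S_n\cap S_c=P\cap Q$ directly from Corollary~\ref{C:MperpNNperpM}'s last sentence. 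I will check whether this creates a circular dependency in the chapter's logical structure; if Lemma~\ref{L:CommuteInNLXGeneral} is used to prove $\N^\dagger=\N^\perp$, then I must argue via Lemma~\ref{L:CommuteInNLX} directly as in steps (1)–(5) and handle step (4) by the explicit conjugation-map composition argument above.
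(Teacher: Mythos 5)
Your overall structure mirrors the paper's, and your verification of the containments $P\cap T\leq P\cap Q\leq P$ and $Q\cap C_S(T)\leq P\cap Q\leq Q$ (using Lemma~\ref{L:CSTinSNradical} and Lemma~\ref{L:SNradNrad} in $\L_T$) is sound; taking $\K=\C_T$ rather than the paper's $\K=\N^\dagger$ in Lemma~\ref{L:CommuteInNLX} also works, since $O^p(N_{\N^\dagger}(Q))=O^p(N_{\C_T}(Q))$ by Lemma~\ref{L:ResidueEquivalences}. However, there are two genuine gaps, both traceable to a single missing ingredient.

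First, Lemma~\ref{L:CommuteInNLX} requires $P,Q,X\in\Delta$ as a hypothesis. You apply it with $X=Y:=P\cap Q$, but you never verify $P\cap Q\in\Delta$. This is not automatic: $\Delta$ is overgroup-closed, not intersection-closed, and $(\L,\Delta,S)$ is an arbitrary linking locality here (this lemma is proved before the chapter on regular localities). Your aside ``the membership in $\Delta$ is automatic since $(n,c)\in\D$'' shows only $S_{(n,c)}\in\Delta$, which is not yet known to equal $P\cap Q$. Second, your step (4) — showing $S_{(n,c)}=S_{(c,n)}$ and hence $X\leq P\cap Q$ — doesn't close. Knowing $nc=cn$ as elements of $\L$ gives $S_{nc}=S_{cn}$ and $S_{(n,c)}\leq S_{nc}$, but $S_{(n,c)}$ is the set of $s$ for which the conjugation \emph{chain} $s\mapsto s^n\mapsto(s^n)^c$ stays in $S$, which is a priori a proper subgroup of $S_{nc}$; the equality $c_{nc}=c_{cn}$ of maps on $N_\L(Y)$ controls only those $s$ already in $N_\L(Y)$. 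You rightly flag this step as the obstacle and gesture at a ``direct check using Lemma~\ref{L:ConjugateNormalizer}'', but no such check is exhibited, and I don't see how to make it work from the stated ingredients.

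The paper closes both gaps at once by citing \cite[Lemma~3.2(a)]{Chermak:2015}: since $c\in\C_T\subseteq N_\L(T)$ and $n\in\N$ with $(n,c)\in\D$, that lemma yields $(c,c^{-1},n,c)\in\D$ and $S_{(n,c)}=S_n\cap S_c=S_{(c,n^c)}$, hence immediately $X=P\cap Q\in\Delta$ \emph{before} invoking Lemma~\ref{L:CommuteInNLX}. With $X=P\cap Q$ established, Lemma~\ref{L:CommuteInNLX} then gives $n,c\in N_\L(X)$ and $[n,c]=1$, so $n^c=n$ and $S_{(c,n^c)}=S_{(c,n)}$, completing the chain of equalities. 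So the missing ingredient in your argument is precisely this Splitting-Lemma-type identity for elements of $N_\L(T)$ acting on $\N$; with it, the order of operations reverses — the $S_w$-identity comes first and makes the commutator computation legitimate — and your approach becomes the paper's.
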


\begin{proof}
As $(n,c)\in\D$, we have $X:=S_{(n,c)}\in\Delta$. Moreover, as $c\in \C_T\subseteq N_\L(T)$, \cite[Lemma~3.2(a)]{Chermak:2015} gives $(c,c^{-1},n,c)\in\D$ and $X=S_{(n,c)}=S_n\cap S_c=S_{(c,n^c)}$, i.e.
\[X=S_{(n,c)}=P\cap Q=S_{(c,n^c)}\in\Delta.\]
Since $T\leq S_c=Q$, we have $P\cap T\leq P\cap Q=X$. Furthermore, $C_S^\circ(\N)\leq C_S(\N)\leq S_n=P$ yields $Q\cap C_S^\circ(\N)\leq P\cap Q=X$. So $X\in\Delta$ with $P\cap T\leq X\leq P$ and $Q\cap C_S^\circ(\N)\leq X\leq Q$. By Lemma~\ref{L:NperpCapS0} and Lemma~\ref{L:NperpNormLT}, we have $\N^\dagger\cap S=C_S^\circ(\N)$ and $\N^\dagger\unlhd\L_T$ with $\N^\dagger\subseteq\C_T$. Hence, Lemma~\ref{L:CommuteInNLX} applied with $\N^\dagger$ in place of $\K$ yields that $N_\N(P)$ and $O^p(N_{\N^\dagger}(Q))$ are contained in $N_\L(X)$ and (computing in $N_\L(X)$), we have $[N_\N(P),O^p(N_{\N^\dagger}(Q))]=1$. Since $O^p_{\L_T}(\C_T)\subseteq\N^\dagger\subseteq\C_T$, it follows from Lemma~\ref{L:ResidueEquivalences} that $O^p(N_{\N^\dagger}(Q))=O^p(N_{\C_T}(Q))$ and so $c\in O^p(N_{\N^\dagger}(Q))$. Thus, $n,c\in N_\L(X)$,  $[n,c]=1$ and $n^c=n$. This implies the assertion. 
\end{proof}

For the following lemma and subsequent results the reader might want to recall Definition~\ref{D:Commute}.

\begin{lemma}\label{L:NNperpCommute}
$\N$ commutes strongly with $\N^\dagger$.
\end{lemma}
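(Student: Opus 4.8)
The plan is to reduce the statement to a claim about ``building blocks'' via the Alperin-type decomposition of partial normal subgroups (Lemma~\ref{L:PartialNormalAlperin}), and then to reassemble using Lemma~\ref{L:CommuteStronglyProducts}. Set $\K:=O^p_{\L_T}(\C_T)$ and
\[
\X:=T\cup\bigcup_{P\in R_\Delta(S\N)}\{n\in O^p(N_\N(P))\colon S_n=P\},\quad
\Y:=C_S^\circ(\N)\cup\bigcup_{Q\in R_\Delta(S\C_T)}\{c\in O^p(N_{\C_T}(Q))\colon S_c=Q\}.
\]
By Lemma~\ref{L:PartialNormalAlperin}, every $x\in\N$ has the form $x=\Pi(a)$ for some word $a\in\W(\X)\cap\D$ with $S_a=S_x$. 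I claim the analogous fact holds for $\N^\dagger$ and $\Y$: given $y\in\N^\dagger=C_S^\circ(\N)\K$, write $y=\Pi(s,c)$ with $s\in C_S^\circ(\N)$ and $c\in\K\subseteq\C_T$. Since $(\L_T,\Delta,S)$ is a linking locality and $\C_T\unlhd\L_T$ (Lemma~\ref{L:NLTCLT}), Lemma~\ref{L:PartialNormalAlperin} gives $c=\Pi(t',c_1,\dots,c_l)$ with $t'\in C_S(T)$, $c_j\in O^p(N_{\C_T}(Q_j))$, $S_{c_j}=Q_j\in R_\Delta(S\C_T)$, and $S_c=S_{(t',c_1,\dots,c_l)}$. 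As each $c_j$ lies in $\K$ (Lemma~\ref{L:ResidueEquivalences}), also $t'\in\K\cap S\subseteq\N^\dagger\cap S=C_S^\circ(\N)$ by Lemma~\ref{L:NperpCapS0}, hence $st'\in C_S^\circ(\N)$. Routine manipulation with the partial group axioms (using $s,t'\in N_\L(S)$ and Lemma~\ref{L:NLSbiset} to track the $S_{(-)}$-subgroups) then yields $y=\Pi(st',c_1,\dots,c_l)$ with $b:=(st',c_1,\dots,c_l)\in\W(\Y)\cap\D$ and $S_b=S_y$.

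The core of the argument is to show that $\X$ commutes strongly with $\Y$. Let $\xi\in\X$, $\eta\in\Y$ with $(\xi,\eta)\in\D$. If $\xi\in T$, then $\eta\in\Y\subseteq\C_T=C_\L(T)$ centralizes $\xi$, so $\eta\in C_\L(\xi)$, hence $\xi\in C_\L(\eta)$ by Lemma~\ref{L:CentralizerHelp}, and thus $\xi\in C_S(\eta)$; now Lemma~\ref{L:XCSXcommutesStrongly} applied to the singleton $\{\eta\}$ shows $\xi$ commutes strongly with $\eta$. If $\xi\in O^p(N_\N(P))$ for some $P$ and $\eta\in C_S^\circ(\N)\subseteq C_S(\N)$, then symmetrically $\eta\in C_S(\xi)$ and Lemma~\ref{L:XCSXcommutesStrongly} applied to $\{\xi\}$ again gives strong commuting. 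In the remaining case $\xi\in O^p(N_\N(P))$ with $S_\xi=P\in R_\Delta(S\N)$ and $\eta\in O^p(N_{\C_T}(Q))$ with $S_\eta=Q\in R_\Delta(S\C_T)$, this is precisely the conclusion of Lemma~\ref{L:CommuteInNLXGeneral}: $S_{(\xi,\eta)}=P\cap Q=S_{(\eta,\xi)}\in\Delta$ and $[\xi,\eta]=1$ in the group $N_\L(P\cap Q)$, whence $(\eta,\xi)\in\D$ and $\xi\eta=\eta\xi$.

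To finish, take $x\in\N$, $y\in\N^\dagger$ with $(x,y)\in\D$, and pick $a\in\W(\X)\cap\D$, $b\in\W(\Y)\cap\D$ with $\Pi(a)=x$, $\Pi(b)=y$, $S_a=S_x$, $S_b=S_y$ as above. Using the defining property of a locality together with Lemma~\ref{L:ConjugateNormalizer} one checks $S_{a\circ b}=S_{(x,y)}$, so $a\circ b\in\D$; Lemma~\ref{L:CommuteStronglyProducts} with $u=v=\emptyset$ then gives $b\circ a\in\D$, $S_{a\circ b}=S_{b\circ a}$, and $\Pi(a\circ b)=\Pi(b\circ a)$, and unwinding these (via the partial group axioms and Lemma~\ref{L:Chermak14d}) yields $(y,x)\in\D$, $xy=yx$, and $S_{(x,y)}=S_{(y,x)}$, as required. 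I expect the main obstacle to be bookkeeping rather than conceptual: one must verify carefully that the two decompositions preserve the relevant $S_{(-)}$-subgroups, and that passing between a word and its product respects membership in $\D$ and the associated subgroup of $S$. The genuinely new input — the strong commuting of the radical-level building blocks — is already isolated in Lemma~\ref{L:CommuteInNLXGeneral}.
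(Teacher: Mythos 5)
Your proof is correct and follows essentially the same route as the paper's: decompose both factors into radical-level building blocks via Lemma~\ref{L:PartialNormalAlperin}, show the blocks commute strongly using Lemma~\ref{L:CommuteInNLXGeneral} and Lemma~\ref{L:XCSXcommutesStrongly}, and reassemble with Lemma~\ref{L:CommuteStronglyProducts}. The only (cosmetic) differences are that the paper applies Lemma~\ref{L:PartialNormalAlperin} directly to $\N^\dagger\unlhd\L_T$ rather than first peeling off the $C_S^\circ(\N)$-factor and decomposing the $O^p_{\L_T}(\C_T)$-part, and it absorbs the $T$-element of the $\N$-decomposition into the sequence (exploiting that Lemma~\ref{L:CommuteInNLXGeneral} only requires $n\in N_\N(P)$, not $n\in O^p(N_\N(P))$) instead of treating $T$ as a separate case via Lemma~\ref{L:CentralizerHelp}.
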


\begin{proof}
Let $n\in \N$ and $c\in \N^\dagger$ such that $(n,c)\in\D$. We need to show that $S_{(n,c)}=S_{(c,n)}$, $(c,n)\in\D$ and $nc=cn$. 

\smallskip

If $t\in T$ notice that $t\in N_\N(S)$ and $S=S_t\in R_\Delta(S\N)$. So by Lemma~\ref{L:PartialNormalAlperin}, there exists $k\in\mathbb{N}$, $P_1,\dots,P_k\in R_\Delta(S\N)$ and $u=(n_1,\dots,n_k)\in\D$ such that $n=\Pi(u)$, $S_n=S_u$, $n_i\in N_\N(P_i)$ and $S_{n_i}=P_i$ for $i=1,\dots,k$. 

\smallskip

By Lemma~\ref{L:NperpCapS0} and Lemma~\ref{L:NperpNormLT}, $\N^\dagger$ is a partial normal subgroup of $\L_T$ with $\N^\dagger\subseteq\C_T$, $S\N^\dagger=S\C_T$ and $\N^\dagger\cap S=C_S^\circ(\N)\leq C_S(\N)$. As $O^p_{\L_T}(\C_T)\subseteq\N^\dagger\subseteq \C_T$, using Lemma~\ref{L:ResidueEquivalences} we see that $O^p(N_{\N^\dagger}(Q^*))=O^p(N_{\C_T}(Q^*))$ for all $Q^*\in\Delta$. Hence, it follows from Lemma~\ref{L:PartialNormalAlperin} applied with $(\N^\dagger,\L_T)$ in place of $(\N,\L)$ that there exist $l\in\mathbb{N}$, $Q_1,\dots,Q_l\in R_\Delta(S\N^\dagger)=R_\Delta(S\C_T)$ and $v=(s,c_1,\dots,c_l)\in\D$ such that $c=\Pi(v)$, $S_c=S_v$, $s\in C_S^\circ(\N)\leq C_S(\N)$, $c_j\in O^p(N_{\N^\dagger}(Q_j))=O^p(N_{\C_T}(Q_j))$ and $S_{c_j}=Q_j$ for $j=1,\dots,l$. 

\smallskip

As $(n,c)\in\D$, $S_n=S_u$ and $S_c=S_v$, we have $S_{u\circ v}=S_{(n,c)}\in\Delta$ and thus $u\circ v\in\D$. 
By Lemma~\ref{L:XCSXcommutesStrongly}, $\N$ commutes strongly with $C_S(\N)$, and by Lemma~\ref{L:CommuteInNLXGeneral}, $n_i$ commutes strongly with $c_j$ for all $i=1,\dots,k$ and $j=1,\dots,l$. Hence, $\{n_1,\dots,n_k\}$ commutes strongly with $\{s,c_1,\dots,c_l\}$. Therefore, by Lemma~\ref{L:CommuteStronglyProducts}, we have $S_{u\circ v}=S_{v\circ u}$, $v\circ u\in\D$ and $\Pi(u\circ v)=\Pi(v\circ u)$. This implies
\[\Delta\ni S_{(n,c)}=S_{u\circ v}=S_{v\circ u}=S_{(c,n)}\]
and, in particular, $(c,n)\in\D$. Moreover, $nc=\Pi(u\circ v)=\Pi(v\circ u)=cn$. Therefore the assertion holds. 
\end{proof}

The following result is (with different notation) stated as Theorem~5.7(a) in \cite{ChermakIII}.

\begin{corollary}\label{C:NperpNormal}
We have $\N^\dagger\unlhd \L$ and $O^p_{\L_T}(\C_T)\unlhd\L$.
\end{corollary}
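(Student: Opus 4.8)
The plan is to deduce both claims from the already-established facts about $\N^\dagger$, chiefly Lemma~\ref{L:NNperpCommute} together with the structural description in Lemma~\ref{L:NperpNormLT} and Lemma~\ref{L:PerpendicularNormLTNormL}. First I would show $\N^\dagger\unlhd\L$. We know $\N$ commutes strongly with $\N^\dagger$ by Lemma~\ref{L:NNperpCommute}; in particular $\N$ commutes with $\N^\dagger$, hence by Corollary~\ref{C:MperpNNperpM} (applied with the roles reversed) $\N$ fixes $\N^\dagger$ under conjugation, and so $\N^\dagger\subseteq\C_T$ together with $\N$ fixing $\N^\dagger$ under conjugation puts us exactly in the hypothesis of Lemma~\ref{L:PerpendicularNormLTNormL} with $\Y=\N^\dagger$. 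That lemma says $\Y\unlhd\L$ if and only if $\Y\unlhd N_\L(T)=\L_T$. But $\N^\dagger\unlhd\L_T$ is precisely Lemma~\ref{L:NperpNormLT}. Therefore $\N^\dagger\unlhd\L$.

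Next I would handle $\K:=O^p_{\L_T}(\C_T)\unlhd\L$. The cleanest route is to observe that $\K$ is, like $\N^\dagger$, a partial normal subgroup of $\L_T$ contained in $\C_T$ (the first by definition of the $p$-residual, the second because $\K\subseteq\C_T$ by construction), and that $\N$ commutes with $\K$: indeed $\K\subseteq\N^\dagger$, and $\N$ commutes strongly with $\N^\dagger$, so $\N$ commutes with the subset $\K$. Again Corollary~\ref{C:MperpNNperpM}-type reasoning (or directly Lemma~\ref{L:PerpendicularPartialNormalPrepare}, since $\N$ is closed under inversion) gives that $\N$ fixes $\K$ under conjugation. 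Since $\K\subseteq\C_T\subseteq N_\L(T)$, we may apply Lemma~\ref{L:PerpendicularNormLTNormL} once more, this time with $\Y=\K$: it yields $\K\unlhd\L$ if and only if $\K\unlhd N_\L(T)=\L_T$, and $\K=O^p_{\L_T}(\C_T)\unlhd\L_T$ holds by definition of the $p$-residual (the intersection of a family of partial normal subgroups of $\L_T$ is partial normal in $\L_T$; cf.\ Lemma~\ref{L:ResidueEquivalences}). Hence $\K\unlhd\L$.

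I do not anticipate a serious obstacle here: the substantive work was already done in establishing Lemma~\ref{L:NNperpCommute} (the strong commuting of $\N$ with $\N^\dagger$) and Lemma~\ref{L:NperpNormLT} (normality in $\L_T$). The only point requiring a little care is the logical bookkeeping: Lemma~\ref{L:PerpendicularNormLTNormL} is stated for a subset $\Y$ on which $\N$ \emph{fixes under conjugation}, so I must make sure to extract ``$\N$ fixes $\N^\dagger$ (resp.\ $\K$) under conjugation'' from ``$\N$ commutes (strongly) with $\N^\dagger$'' — which is immediate from Lemma~\ref{L:PerpendicularPartialNormalPrepare} since $\N$ is closed under inversion — and similarly that $\K$, being a subset of $\C_T$, is contained in $N_\L(T)$ so that $\Y\unlhd N_\L(T)$ makes sense and is the hypothesis actually verified. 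With those observations in place the proof is a two-line application of the cited lemmas for each of the two assertions.

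\begin{proof}
By Lemma~\ref{L:NNperpCommute}, $\N$ commutes strongly with $\N^\dagger$; in particular $\N$ commutes with $\N^\dagger$, and since $\N$ is closed under inversion, Lemma~\ref{L:PerpendicularPartialNormalPrepare} gives that $\N$ fixes $\N^\dagger$ under conjugation. By Lemma~\ref{L:NperpNormLT} we have $\N^\dagger\subseteq\C_T\subseteq N_\L(T)=\L_T$, so Lemma~\ref{L:PerpendicularNormLTNormL} applies with $\Y=\N^\dagger$ and shows that $\N^\dagger\unlhd\L$ if and only if $\N^\dagger\unlhd\L_T$. The latter holds by Lemma~\ref{L:NperpNormLT}, so $\N^\dagger\unlhd\L$.

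Set $\K:=O^p_{\L_T}(\C_T)$. Then $\K\unlhd\L_T$ by definition of the $p$-residual, and $\K\subseteq\C_T\subseteq N_\L(T)=\L_T$. Moreover $\K\subseteq\N^\dagger$, so $\N$ commutes with the subset $\K$ of $\N^\dagger$; as $\N$ is closed under inversion, Lemma~\ref{L:PerpendicularPartialNormalPrepare} gives that $\N$ fixes $\K$ under conjugation. Applying Lemma~\ref{L:PerpendicularNormLTNormL} with $\Y=\K$, we conclude that $\K\unlhd\L$ if and only if $\K\unlhd\L_T$, and the latter holds. Hence $O^p_{\L_T}(\C_T)\unlhd\L$.
\end{proof}
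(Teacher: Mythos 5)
Your proof is correct and follows essentially the same route as the paper: invoke Lemma~\ref{L:NNperpCommute} for commuting, Lemma~\ref{L:NperpNormLT} (resp.\ the definition of the $p$-residual) for normality in $\L_T$, and Lemma~\ref{L:PerpendicularNormLTNormL} to upgrade to normality in $\L$. The only difference is cosmetic: your detour through Lemma~\ref{L:PerpendicularPartialNormalPrepare} to extract ``fixes under conjugation'' is superfluous, since Lemma~\ref{L:PerpendicularNormLTNormL} already accepts ``$\N$ commutes with $\Y$'' directly as a hypothesis.
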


\begin{proof}
We have shown in Lemma~\ref{L:NNperpCommute} that $\N$ commutes with $\N^\dagger$ and thus also with $O^p_{\L_T}(\C_T)\subseteq\N^\dagger$. Moreover, $O^p_{\L_T}(\C_T)\unlhd \L_T$ by definition and $\N^\dagger\unlhd\L_T$ by Lemma~\ref{L:NperpNormLT}. So the assertion follows from Lemma~\ref{L:PerpendicularNormLTNormL} applied twice, once with $\N^\dagger$ and once with $O^p_{\L_T}(\C_T)$ in place of $\Y$. 
\end{proof}

\begin{corollary}\label{C:NperpCapS}
We have $\N^\perp=\N^\dagger=C_S^\circ(\N)O^p_{\L_T}(\C_T)$. In particular, 
\[\N^\perp\cap S=C_S^\circ(\N)\leq C_S(\N),\]
$C_S^\circ(\N)$ does not depend on the choice of $\L^s$ and, if $(\L,\Delta,S)$ is weakly $\N$-replete, then $\N^\perp\cap S=C_S(\N)$.  
\end{corollary}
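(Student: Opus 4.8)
The plan is to assemble Corollary~\ref{C:NperpCapS} from the lemmas that immediately precede it in Section~\ref{SS:Nperp}, so almost no new work is required. The one direction of the key equality $\N^\perp=\N^\dagger$ that is already available is $\N^\perp\subseteq\N^\dagger$, proved in Lemma~\ref{L:PerpendicularNormalinNperp}. For the reverse inclusion I would argue as follows: by Corollary~\ref{C:NperpNormal} we know $\N^\dagger\unlhd\L$, and by Lemma~\ref{L:NNperpCommute} the partial normal subgroup $\N$ commutes (strongly, hence in particular) with $\N^\dagger$. Since $\N^\perp$ is by Notation~\ref{N:NperpL} the \emph{largest} partial normal subgroup of $\L$ that commutes with $\N$, it follows that $\N^\dagger\subseteq\N^\perp$. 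Combining the two inclusions gives $\N^\perp=\N^\dagger$, and by Notation~\ref{N:Nperp} this equals $C_S^\circ(\N)O^p_{\L_T}(\C_T)$.

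Once the equality $\N^\perp=\N^\dagger$ is in place, the remaining assertions are read off from earlier results. The statement $\N^\perp\cap S=C_S^\circ(\N)\leq C_S(\N)$ is exactly Lemma~\ref{L:NperpCapS0} transported through the equality $\N^\perp=\N^\dagger$. For the independence of $C_S^\circ(\N)$ from the choice of $\L^s$: the left-hand side $\N^\perp$ is defined purely in terms of the locality $(\L,\Delta,S)$ (it is the largest partial normal subgroup commuting with $\N$), so it does not involve $\L^s$ at all; since $C_S^\circ(\N)=\N^\perp\cap S$, the subgroup $C_S^\circ(\N)$ is therefore independent of the choice of $\L^s$ as well. (The same argument then also shows $\N^\dagger$ is independent of that choice, which was flagged as not-yet-justified after Notation~\ref{N:Nperp}.) Finally, if $(\L,\Delta,S)$ is weakly $\N$-replete, then Lemma~\ref{L:NperpCapS0} already records $\N^\dagger\cap S=C_S(\N)$, so $\N^\perp\cap S=C_S(\N)$ in that case.

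There is essentially no main obstacle here: the corollary is a packaging result, and every ingredient — $\N^\perp\subseteq\N^\dagger$ (Lemma~\ref{L:PerpendicularNormalinNperp}), $\N^\dagger\unlhd\L$ (Corollary~\ref{C:NperpNormal}), $\N$ commutes with $\N^\dagger$ (Lemma~\ref{L:NNperpCommute}), and the computation of $\N^\dagger\cap S$ (Lemma~\ref{L:NperpCapS0}) — has already been established. The only point requiring a moment's care is the observation that $\N^\perp$ is manifestly an invariant of $(\L,\Delta,S)$ alone, which is what converts the a~priori $\L^s$-dependent description of $\N^\dagger$ into an intrinsic one; I would state this explicitly rather than leave it implicit. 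Thus the proof will be just a few lines chaining these citations together.

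\begin{proof}
By Lemma~\ref{L:PerpendicularNormalinNperp} we have $\N^\perp\subseteq\N^\dagger$. Conversely, $\N^\dagger\unlhd\L$ by Corollary~\ref{C:NperpNormal}, and $\N$ commutes with $\N^\dagger$ by Lemma~\ref{L:NNperpCommute}. Since $\N^\perp$ is the largest partial normal subgroup of $\L$ that commutes with $\N$, it follows that $\N^\dagger\subseteq\N^\perp$. Hence $\N^\perp=\N^\dagger=C_S^\circ(\N)O^p_{\L_T}(\C_T)$.

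By Lemma~\ref{L:NperpCapS0}, $\N^\dagger\cap S=C_S^\circ(\N)\leq C_S(\N)$, and therefore $\N^\perp\cap S=C_S^\circ(\N)\leq C_S(\N)$. The partial normal subgroup $\N^\perp$ depends only on $(\L,\Delta,S)$ and $\N$, not on the choice of $\L^s$; since $C_S^\circ(\N)=\N^\perp\cap S$, the subgroup $C_S^\circ(\N)$ is independent of the choice of $\L^s$. Finally, if $(\L,\Delta,S)$ is weakly $\N$-replete, then Lemma~\ref{L:NperpCapS0} gives $\N^\dagger\cap S=C_S(\N)$, so $\N^\perp\cap S=C_S(\N)$.
\end{proof}
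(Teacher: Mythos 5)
Your proof follows the paper's approach almost exactly, and the packaging of the auxiliary lemmas is right. There is one small logical slip, precisely at the point the paper is careful about: after establishing that $\N$ commutes with $\N^\dagger$ (via Lemma~\ref{L:NNperpCommute}), you immediately conclude $\N^\dagger\subseteq\N^\perp$ because $\N^\perp$ is the largest partial normal subgroup that commutes with $\N$. But the defining property you need is that $\N^\dagger$ commutes with $\N$, which is the opposite direction of the relation. As the introduction of the paper emphasizes, ``$\X$ commutes with $\Y$'' is not a symmetric relation in general. For two partial normal subgroups it \emph{is} symmetric, but that is the content of Corollary~\ref{C:MperpNNperpM}, which you should cite here; the paper's proof does exactly that (``by Lemma~\ref{L:NNperpCommute}, $\N$ commutes with $\N^\dagger$ and thus, by Corollary~\ref{C:MperpNNperpM}, $\N^\dagger$ commutes with $\N$''). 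With that one citation added your argument is complete. Your explicit spelling-out of why $C_S^\circ(\N)$ is independent of the choice of $\L^s$ --- via the intrinsic characterization of $\N^\perp$ --- is a good addition; the paper leaves that step implicit.
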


\begin{proof}
 By Corollary~\ref{C:NperpNormal}, $\N^\dagger$ is a normal subgroup of $\L$. Moreover, by Lemma~\ref{L:NNperpCommute}, $\N$ commutes with $\N^\dagger$ and thus, by Corollary~\ref{C:MperpNNperpM}, $\N^\dagger$ commutes with $\N$. So $\N^\dagger\subseteq\N^\perp$ by definition of $\N^\perp$. The converse inclusion is true by Lemma~\ref{L:PerpendicularNormalinNperp}.
\end{proof}

\section{The proof of Theorem~\ref{T:mainNperp}}\label{SS:proofmainNperp}

We will need the following elementary lemma in the proof of Theorem~\ref{T:mainNperp}. 

\begin{lemma}\label{L:CNTinZT}
We have $C_\N(T)\leq C_{\N^s}(T)\leq Z(T)$ and in particular $\N\cap\N^\perp\leq Z(T)$.
\end{lemma}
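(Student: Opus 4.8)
The plan is as follows. The containment $C_\N(T)\subseteq C_{\N^s}(T)$ is immediate: since $\L=\L^s|_\Delta$, the domain of each conjugation map in $\L$ is contained in the corresponding domain in $\L^s$, and the product of $\L$ is the restriction of that of $\L^s$; hence $C_\L(T)\subseteq C_{\L^s}(T)$, and intersecting with $\N\subseteq\N^s$ gives the claim. The real content is the inequality $C_{\N^s}(T)\leq Z(T)$. Once this is known, the last sentence follows from Corollary~\ref{C:NperpCapSGeneral}, which gives $\N^\perp\subseteq C_\L(T)$, so that $\N\cap\N^\perp\subseteq\N\cap C_\L(T)=C_\N(T)\subseteq C_{\N^s}(T)\leq Z(T)$.

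To prove $C_{\N^s}(T)\leq Z(T)$ I would invoke Lemma~\ref{L:NNsTinN}: with $Q:=O_p(N_{\L^s}(TC_S(T)))$ one has $Q\in\F^{cr}\subseteq\Delta$ and $N_{\N^s}(T)=N_\N(Q)$. Since $(\L,\Delta,S)$ is a linking locality and $Q\in\Delta$, the group $N_\L(Q)$ has characteristic $p$, and hence so does its normal subgroup $N_\N(Q)=\N\cap N_\L(Q)$ by Lemma~\ref{L:MSCharp}(a). Because $Q\in\F^{cr}$ is $\L$-radical (\cite[Lemma~6.2]{Henke:2015}), it is $\N$-radical by Lemma~\ref{L:SNradNrad}, so $O_p(N_\N(Q))=Q\cap T$; and $T\leq TC_S(T)\leq Q$ forces $Q\cap T=T$. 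Thus $O_p(N_\N(Q))=T$, and the characteristic-$p$ property of $N_\N(Q)$ yields $C_{N_\N(Q)}(T)=C_{N_\N(Q)}(O_p(N_\N(Q)))\leq T$. Finally, $C_{\N^s}(T)\subseteq N_{\N^s}(T)=N_\N(Q)\subseteq N_\L(Q)$; as $Q\leq S_f$ for every $f\in N_\L(Q)$, conjugation by such an $f$ inside the group $N_\L(Q)$ agrees, via Lemma~\ref{L:ConjugateNormalizer}, with conjugation in $\L^s$ (which also equals conjugation in $\L$, since $\Pi=\Pi^s|_\D$). Hence an element of $C_{\N^s}(T)$, which centralizes $T$ in $\L^s$, centralizes $T$ inside the group $N_\N(Q)$, i.e. $C_{\N^s}(T)\subseteq C_{N_\N(Q)}(T)\leq T$, and therefore $C_{\N^s}(T)\leq C_T(T)=Z(T)$.

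The only point requiring a little care is this last compatibility of the various notions of ``centralizing $T$'': that an element of $C_{\N^s}(T)$, which a priori centralizes $T$ only in the partial group $\L^s$, genuinely lies in the centralizer of $T$ taken inside the finite group $N_\L(Q)$. This is handled exactly by $C_{\N^s}(T)\subseteq N_{\N^s}(T)=N_\N(Q)$ together with the fact that $T\leq Q\leq S_f$ for every $f\in N_\L(Q)$, so that Lemma~\ref{L:ConjugateNormalizer} identifies $\L^s$-conjugation on $N_\L(Q)$ with the group conjugation. Everything else in the argument is bookkeeping with the identifications $\N=\N^s\cap\L$ and $\L=\L^s|_\Delta$.
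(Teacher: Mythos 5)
Your proof is correct, and it reaches the same conclusion by a somewhat different path than the paper. Both arguments reduce to showing $C_{\N^s}(T)\leq Z(T)$ and then exploit that the relevant normalizer is a group of characteristic $p$ with $T$ as its $O_p$. The paper's route is the more direct one: it uses Chermak's Lemma~3.5 together with $T$ being strongly closed to write $N_{\N^s}(T)=N_{\N^s}(TC_S(T))\unlhd N_{\L^s}(TC_S(T))$, notes that $TC_S(T)\in\F^c\subseteq\F^s$ so the ambient group has characteristic $p$, and then uses \cite[Lemma~3.1(c)]{Chermak:2015} (that $T$ is a maximal $p$-subgroup of $\N^s$) to conclude $O_p(N_{\N^s}(T))=T$, whence $C_{\N^s}(T)\leq T$ and thus $\leq Z(T)$. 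You instead invoke Lemma~\ref{L:NNsTinN} to transport the problem to $N_\N(Q)$ for $Q=O_p(N_{\L^s}(TC_S(T)))\in\F^{cr}\subseteq\Delta$, and use the $\N$-radicality of $Q$ (via $Q$ being $\L$-radical and Lemma~\ref{L:SNradNrad}) to get $O_p(N_\N(Q))=Q\cap T=T$. This is valid, though slightly more machinery-heavy, and — as you correctly flag — requires a careful reconciliation of the partial-group centralizer $C_{\N^s}(T)$ with the group-theoretic centralizer inside $N_\L(Q)$, which you handle properly through Lemma~\ref{L:ConjugateNormalizer} and the fact that $N_\L(Q)$ is a subgroup so group multiplication agrees with the partial product $\Pi=\Pi^s|_{\D}$. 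The paper sidesteps that reconciliation by staying with $N_{\N^s}(T)$ itself and its own $O_p$. Your handling of the two side claims ($C_\N(T)\subseteq C_{\N^s}(T)$ from the restriction structure, and the final inequality via Corollary~\ref{C:NperpCapSGeneral}) matches the paper exactly.
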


\begin{proof}
Clearly $C_\N(T)\subseteq C_{\N^s}(T)$. Moreover, by Corollary~\ref{C:NperpCapSGeneral}, we have $\N^\perp\subseteq C_\L(T)$. Hence, it remains to argue that $C_{\N^s}(T)\leq Z(T)$. It follows from \cite[Lemma~3.5]{Chermak:2015} and $T$ being strongly closed that $N_{\N^s}(T)=N_{\N^s}(TC_S(T))$. As $TC_S(T)\in\F^c\subseteq\F^s$, the normalizer $N_{\L^s}(TC_S(T))$ is a group of characteristic $p$. Hence, by Lemma~\ref{L:MSCharp}(b), $N_{\N^s}(T)=N_{\N^s}(TC_S(T))\unlhd N_{\L^s}(TC_S(T))$ is of characteristic $p$. Since $T$ is by \cite[Lemma~3.1(c)]{Chermak:2015} a maximal $p$-subgroup of $\N^s$, it follows $C_{\N^s}(T)\leq Z(T)$. 
\end{proof}

We are now in a position to prove most parts of Theorem~\ref{T:mainNperp}. Only for part (f) we will need to refer to Corollary~\ref{C:RegularNReplete} below. 

\begin{proof}[Proof of Theorem~\ref{T:mainNperp} (using Corollary~\ref{C:RegularNReplete} below)]
Part (a) follows from Corollaries~\ref{C:NperpExistence} and \ref{C:NperpCapSGeneral} as well as Lemma~\ref{L:NinNperperp}. Statement (b) and the first part of (d) follow from Corollary~\ref{C:MperpNNperpM}. Part (c) was stated and proved as Lemma~\ref{L:PerpProduct}. It remains thus to show the statements about linking localities. For the proof we keep the standing hypothesis that $(\L,\Delta,S)$ is a linking locality and $\N\unlhd\L$. Fix moreover  $\M\unlhd\L$. 

\smallskip

If $\M$ commutes with $\N$, then by Lemma~\ref{L:CNTinZT}, we have $\M\cap\N\subseteq\N^\perp\cap\N\leq Z(T)\leq S$. Hence, Lemma~\ref{L:DecomposeEltofMN} implies in this case that $S_{mn}=S_{(m,n)}$ for all $m\in\M$ and $n\in\N$ with $(m,n)\in\D$. This completes the proof of (d). 

\smallskip

We have seen already that (b) is true, i.e. conditions (i)-(iii) are equivalent. We show now that these conditions are also equivalent to the conditions (iv) and (v) stated in (e). By definition of $\M^\perp$ and $\N^\perp$, the conditions (i)-(iii)  are equivalent to $\N\subseteq\M^\perp$ and to $\M\subseteq\N^\perp$. Hence, conditions (i)-(iii) imply (v). By (a) we have $\M^\perp\cap S\leq C_S(\M)$. Hence, (v) implies (iv). Thus it suffices to show that (iv) implies $\M\subseteq\N^\perp$. So assume that (iv) holds, i.e. $\M\cap S\subseteq \N^\perp$ and $T=\N\cap S\subseteq C_S(\M)$. The latter condition implies by Lemma~\ref{L:CentralizerHelp} that $\M\subseteq\C_T\subseteq\L_T$. Note that $\M\unlhd\L_T$. So Lemma~\ref{L:ResidueNinM} applied with $\L_T,\M,\C_T$ in the roles of $\L,\N,\M$ implies that $O^p_{\L_T}(\M)\subseteq O^p_{\L_T}(\C_T)$. Hence, by Corollary~\ref{C:NperpCapS}, we have $O^p_{\L_T}(\M)\subseteq \N^\perp$. Using Lemma~\ref{L:ResidueEquivalences} we can conclude now that $\M=(\M\cap S)O^p_{\L_T}(\M)\subseteq \N^\perp$ as required. This proves (e).

\smallskip

If $(\L,\Delta,S)$ is $\N$-replete, then $(\L,\Delta,S)$ is weakly $\N$-replete and it follows from Corollary~\ref{C:NperpCapS} that $\N^\perp\cap S=C_S(\N)$. If $\F^q\subseteq\Delta$, then $(\L,\Delta,S)$ is $\N$-replete by Corollary~\ref{C:GetNreplete}. If $\delta(\F)\subseteq\Delta$, then $(\L,\Delta,S)$ is $\N$-replete by Corollary~\ref{C:RegularNReplete} below. This implies (b) and thus the proof of the theorem is complete once we have proved Corollary~\ref{C:RegularNReplete}.
\end{proof}

\section{Further properties of $\N^\perp$}

Recall that we have shown in Corollary~\ref{C:NperpCapS} that $\N^\perp=C_S^\circ(\N)O^p_{\L_T}(\C_T)$. We will now use this property to prove some more results about $\N^\perp$, which are in particular needed to show Theorem~\ref{T:GeneralizedFitting}. 

\smallskip

We start by showing that $\N^\perp$ behaves well with respect to extensions and restrictions. This was proved before by Chermak \cite[Theorem~5.7(d)]{ChermakIII}.

\begin{lemma}\label{L:NperpPlus}
Let $(\L^+,\Delta^+,S)$ be a linking locality over $\F$ with $\Delta\subseteq\Delta^+$ and $\L=\L^+|_\Delta$. Adapting Notation~\ref{N:VaryObjects}, we have then $C_S^\circ(\N^+)=C_S^\circ(\N)$ and $(\N^\perp)^+=(\N^+)^\perp$.
\end{lemma}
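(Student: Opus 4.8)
The plan is to reduce the statement to the two identities $C_S^\circ(\N^+)=C_S^\circ(\N)$ and $O^p_{N_{\L^+}(T)}(C_{\L^+}(T))\cap\L=O^p_{\L_T}(\C_T)$, and then assemble them using the concrete description $\N^\perp=C_S^\circ(\N)\,O^p_{\L_T}(\C_T)$ from Corollary~\ref{C:NperpCapS}. As a preliminary I would record the trivial equality $S\cap\N^+=S\cap(\N^+\cap\L)=S\cap\N=T$, so that the ``$T$'' appearing in the formula for $(\N^+)^\perp$ is literally the same subgroup as the one in the formula for $\N^\perp$.

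The first real point is $C_S^\circ(\N^+)=C_S^\circ(\N)$. Here I would invoke Theorem~\ref{T:VaryObjects}(c) to choose a \emph{single} subcentric locality $\widehat{\L}$ over $\F$ with $\widehat{\L}|_{\Delta^+}=\L^+$; by transitivity of restriction this also gives $\widehat{\L}|_{\Delta}=\L^+|_{\Delta}=\L$. By Corollary~\ref{C:NperpCapS} the subgroup $C_S^\circ(-)$ does not depend on the chosen subcentric locality, so $C_S^\circ(\N)=C_S(\widehat{\N})$ and $C_S^\circ(\N^+)=C_S(\widehat{\M})$, where $\widehat{\N},\widehat{\M}\unlhd\widehat{\L}$ are determined by $\widehat{\N}\cap\L=\N$ and $\widehat{\M}\cap\L^+=\N^+$. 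Then $\widehat{\M}\cap\L=(\widehat{\M}\cap\L^+)\cap\L=\N^+\cap\L=\N$, so the bijectivity of $\Phi_{\widehat{\L},\L}$ from Theorem~\ref{T:VaryObjects}(b) forces $\widehat{\M}=\widehat{\N}$, and hence $C_S^\circ(\N^+)=C_S^\circ(\N)$. I expect the only delicacy in the whole argument to be this bookkeeping with over-localities: one must use the independence clause of Corollary~\ref{C:NperpCapS} rather than the chapter's fixed $\L^s$, which a priori is attached to $\L$ and not to $\L^+$.

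Finally I would write, via Corollary~\ref{C:NperpCapS} applied both to $(\L,\Delta,S),\N$ and to $(\L^+,\Delta^+,S),\N^+$,
\[\N^\perp=C_S^\circ(\N)\,O^p_{\L_T}(\C_T),\qquad (\N^+)^\perp=C_S^\circ(\N^+)\,O^p_{N_{\L^+}(T)}(C_{\L^+}(T)).\]
Since $(\N^+)^\perp\unlhd\L^+$ by Corollary~\ref{C:NperpNormal}, it suffices by Notation~\ref{N:VaryObjects} and Theorem~\ref{T:VaryObjects}(b) to verify $(\N^+)^\perp\cap\L=\N^\perp$. Substituting $C_S^\circ(\N^+)=C_S^\circ(\N)\le S$, applying Lemma~\ref{L:RestrictionIntersectL} with $R=C_S^\circ(\N)$ and $\K^+=O^p_{N_{\L^+}(T)}(C_{\L^+}(T))$, and then using the $p$-residual identity $O^p_{N_{\L^+}(T)}(C_{\L^+}(T))\cap\L=O^p_{\L_T}(\C_T)$ of Remark~\ref{R:TakePlus}, one gets $(\N^+)^\perp\cap\L=C_S^\circ(\N)\,O^p_{\L_T}(\C_T)=\N^\perp$, which finishes the proof.
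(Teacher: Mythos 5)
Your proof is correct and follows essentially the same route as the paper: replace the chapter's standing $\L^s$ by a subcentric locality restricting to $\L^+$ (valid because Corollary~\ref{C:NperpCapS} says $C_S^\circ(-)$ is independent of this choice), identify the lift of $\N^+$ with that of $\N$ via the bijection of Theorem~\ref{T:VaryObjects}(b), and then assemble $(\N^+)^\perp\cap\L=\N^\perp$ from the concrete formula of Corollary~\ref{C:NperpCapS} together with Lemma~\ref{L:RestrictionIntersectL} and Remark~\ref{R:TakePlus}. The ``delicacy'' you flag is exactly the one the paper handles in the same way.
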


\begin{proof}
By Corollary~\ref{C:NperpCapS}, we have $\N^\perp=C_S^\circ(\N)O^p_{\L_T}(\C_T)$ and similarly, setting $\L_T^+:=N_{\L^+}(T)$ and $\C_T^+=C_{\L^+}(T)$, it follows that  
\[(\N^+)^\perp=C_S^\circ(\N^+)O^p_{\L_T^+}(\C_T^+).\]
Corollary~\ref{C:NperpCapS} gives also that $C_S^\circ(\N):=C_S(\N^s)$ does not depend on the choice of $\L^s$. So by Theorem~\ref{T:VaryObjects}, we may choose $\L^s$ such that $\L^s|_{\Delta^+}=\L^+$. As $\N^s\cap \L^+\unlhd\L^+$ with $(\N^s\cap\L^+)\cap\L=\N^s\cap\L=\N$, we have $\N^s\cap\L^+=\N^+$. Hence, 
\[C_S^\circ(\N^+)=C_S(\N^s)=C_S^\circ(\N).\]
By Remark~\ref{R:TakePlus}, we have $O^p_{\L_T^+}(\C_T^+)\cap \L=O^p_{\L_T}(\C_T)$. So Lemma~\ref{L:RestrictionIntersectL} yields 
\[(\N^+)^\perp\cap\L=C_S^\circ(\N)(O^p_{\L_T^+}(\C_T^+)\cap\L)=C_S^\circ(\N)O^p_{\L_T}(\C_T)=\N^\perp.\]
As $(\N^\perp)^+$ is characterized as the unique partial normal subgroup of $\L^+$ whose intersection with $\L$ equals $\N^\perp$, it follows that $(\N^+)^\perp=(\N^\perp)^+$. 
\end{proof}

For our next results it will be convenient to use the following notation.

\begin{notation}\label{N:Zcirc}
Set $Z^\circ(\N):=Z(\N^s)$. If we want to to stress the dependence of $Z^\circ(\N)$ on $\L$, we write $Z^\circ_\L(\N)$. 
\end{notation}

\begin{lemma}\label{L:ZcircBasic}
We have $Z^\circ(\N)=T\cap C_S^\circ(\N)\leq Z(\N)\leq Z(T)$. If $(\L,\Delta,S)$ is weakly $\N$-replete, then $Z^\circ(\N)=Z(\N)$.
\end{lemma}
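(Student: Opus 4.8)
The plan is to reduce everything to the elementary identity that, for any partial normal subgroup $\mathcal{M}$ of a partial group $\mathcal{K}$, one has $Z(\mathcal{M})=\mathcal{M}\cap C_{\mathcal{K}}(\mathcal{M})$; this holds because for $f,x\in\mathcal{M}$ the word $(f^{-1},x,f)$ automatically lies in $\W(\mathcal{M})$, so the conjugate $x^f$ (when defined) is computed inside $\mathcal{M}$. Applying this with $\N^s\unlhd\L^s$ gives $Z^\circ(\N)=Z(\N^s)=\N^s\cap C_{\L^s}(\N^s)$, and applying it with $\N\unlhd\L$ gives $Z(\N)=\N\cap C_\L(\N)$. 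I would also record at the outset that $\N^s\cap S=S\cap(\N^s\cap\L)=S\cap\N=T$.

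First I would establish the formula $Z^\circ(\N)=T\cap C_S^\circ(\N)$. Since $T\subseteq\N^s$, we have $Z(\N^s)\subseteq C_{\N^s}(T)$, and Lemma~\ref{L:CNTinZT} gives $C_{\N^s}(T)\leq Z(T)\leq S$; in particular $Z^\circ(\N)\leq S$. Combining this with $Z^\circ(\N)=\N^s\cap C_{\L^s}(\N^s)$ and $\N^s\cap S=T$ yields
\[Z^\circ(\N)=\N^s\cap C_{\L^s}(\N^s)=(\N^s\cap S)\cap C_{\L^s}(\N^s)=T\cap C_S(\N^s)=T\cap C_S^\circ(\N).\]
Next, $Z^\circ(\N)\leq Z(\N)$: by the previous display $Z^\circ(\N)\leq T\subseteq\N$, and $Z^\circ(\N)$ centralizes $\N\subseteq\N^s$, so $Z^\circ(\N)\subseteq\N\cap C_\L(\N)=Z(\N)$. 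Finally $Z(\N)\leq Z(T)$: from $T\subseteq\N$ we get $C_\L(\N)\subseteq C_\L(T)$, hence $Z(\N)=\N\cap C_\L(\N)\subseteq\N\cap C_\L(T)=C_\N(T)\leq Z(T)$ by Lemma~\ref{L:CNTinZT} again. This proves the chain $Z^\circ(\N)=T\cap C_S^\circ(\N)\leq Z(\N)\leq Z(T)$.

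For the last assertion, suppose $(\L,\Delta,S)$ is weakly $\N$-replete. Then Lemma~\ref{L:CScircN} (or Corollary~\ref{C:NperpCapS}) gives $C_S^\circ(\N)=C_S(\N)$. Since $Z(\N)\leq Z(T)\leq S$ and $Z(\N)$ centralizes $\N$, we have $Z(\N)\leq S\cap C_\L(\N)=C_S(\N)$, and also $Z(\N)\leq T$; thus $Z(\N)\leq T\cap C_S(\N)=T\cap C_S^\circ(\N)=Z^\circ(\N)$ by the formula already proved. Together with $Z^\circ(\N)\leq Z(\N)$ this gives $Z^\circ(\N)=Z(\N)$. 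I do not anticipate a serious obstacle; the only thing requiring care is bookkeeping about which ambient partial group each center or centralizer is taken in, and making sure the identity $Z(\mathcal{M})=\mathcal{M}\cap C_{\mathcal{K}}(\mathcal{M})$ is legitimately invoked from the definitions of $\D(f)$, conjugation, and the center in Chapter~\ref{S:PartialGroupsLocalities}. Everything else is a direct application of Lemma~\ref{L:CNTinZT} and, for the final statement, of the equality $C_S^\circ(\N)=C_S(\N)$ in the weakly $\N$-replete case.
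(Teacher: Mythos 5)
Your proof is correct and follows essentially the same route as the paper's: both arguments hinge on Lemma~\ref{L:CNTinZT} to show $Z(\N^s)\leq Z(T)\leq S$, then unwind $Z(\N^s)$ as $\N^s\cap C_{\L^s}(\N^s)$ to get $Z^\circ(\N)=T\cap C_S^\circ(\N)$, chain through $C_T(\N)\leq Z(\N)\leq Z(T)$, and finish the weakly $\N$-replete case via $C_S^\circ(\N)=C_S(\N)$ from Lemma~\ref{L:CScircN}. The only cosmetic difference is that you isolate the identity $Z(\M)=\M\cap C_\K(\M)$ (which in fact needs only that $\M$ is a partial subgroup, not that it is normal) whereas the paper uses it implicitly inside a single chain of inclusions.
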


\begin{proof}
By Lemma~\ref{L:CNTinZT}, we have $C_{\N^s}(T)\leq Z(T)$ and hence $Z^\circ(\N)=Z(\N^s)\leq Z(T)$. So $Z^\circ(\N)=C_T(\N^s)=T\cap C_S(\N^s)=T\cap C_S^\circ(\N)\leq C_T(\N)\leq Z(\N)\subseteq C_\N(T)\subseteq C_{\N^s}(T)\leq Z(T)$. If $(\L,\Delta,S)$ is weakly $\N$-replete, then Lemma~\ref{L:CScircN} gives $C_S^\circ(\N)=C_S(\N)$ and thus $Z^\circ(\N)=C_T(\N)$. Since $Z(\N)\leq C_{\N^s}(T)=Z(T)$, we have $Z(\N)=C_T(\N)$. Thus the assertion holds. 
\end{proof}

\begin{lemma}\label{L:NcapNperp}
 We have $\N\cap\N^\perp=Z^\circ(\N)\leq Z^\circ(\N^\perp)\leq S$. 
\end{lemma}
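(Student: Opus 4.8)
The statement to prove is $\N\cap\N^\perp=Z^\circ(\N)\leq Z^\circ(\N^\perp)\leq S$. First recall that $Z^\circ(\N)=Z(\N^s)$ by Notation~\ref{N:Zcirc}, and by Corollary~\ref{C:NperpCapS} we have $\N^\perp=C_S^\circ(\N)O^p_{\L_T}(\C_T)$ with $\N^\perp\cap S=C_S^\circ(\N)$. I would begin by showing $\N\cap\N^\perp\subseteq S$: by Lemma~\ref{L:CNTinZT} we have $\N\cap\N^\perp\leq Z(T)\leq S$, and since $\N^\perp\cap S=C_S^\circ(\N)$, we immediately get $\N\cap\N^\perp=\N\cap(\N^\perp\cap S)=T\cap C_S^\circ(\N)$. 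By Lemma~\ref{L:ZcircBasic} this last group is exactly $Z^\circ(\N)$. So the equality $\N\cap\N^\perp=Z^\circ(\N)$ follows directly from results already proved, essentially by combining Corollary~\ref{C:NperpCapS} and Lemma~\ref{L:ZcircBasic}.

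The remaining content is the chain $Z^\circ(\N)\leq Z^\circ(\N^\perp)\leq S$. The second inequality $Z^\circ(\N^\perp)\leq S$ is again automatic from Lemma~\ref{L:ZcircBasic} applied to $\N^\perp$ in place of $\N$ (note $Z^\circ(\N^\perp)\leq Z(\N^\perp\cap S)\leq S$), so the only real assertion is $Z^\circ(\N)\leq Z^\circ(\N^\perp)$. Here I would unwind the definition: $Z^\circ(\N^\perp)=Z((\N^\perp)^s)$, where $(\N^\perp)^s$ denotes the unique partial normal subgroup of $\L^s$ restricting to $\N^\perp$ (Notation~\ref{N:VaryObjects}). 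By Lemma~\ref{L:NperpPlus} applied with $\L^s$ in place of $\L^+$, we have $(\N^\perp)^s=(\N^s)^\perp$ (the perpendicular taken inside $\L^s$). So it suffices to show $Z(\N^s)\leq Z((\N^s)^\perp)$.

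Thus the problem reduces to proving: if $\M\unlhd\L^s$ then $Z(\M)\leq Z(\M^\perp)$, i.e. $Z^\circ_\L(\N)\subseteq Z^\circ_\L(\N^\perp)$, where one may work entirely inside the subcentric locality $\L^s$ (which is weakly $\N$-replete, indeed $\N$-replete, by Corollary~\ref{C:GetNreplete}). I would argue as follows: let $z\in Z(\N^s)$. By Lemma~\ref{L:ZcircBasic} (applied in $\L^s$), $z\in Z^\circ(\N)=T\cap C_S^\circ(\N)\leq C_S(\N^s)$ and $z\in Z(T)$. Since $z\in C_S(\N^s)=\N^\perp\cap S$ (using Corollary~\ref{C:NperpCapS} for $\L^s$, where $C_S^\circ=C_S$ by $\N$-repleteness), we have $z\in\N^\perp$, in fact $z\in T^\perp$-type position. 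To see $z\in Z((\N^s)^\perp)$, I need $z$ to centralize $(\N^s)^\perp=C_S(\N^s)O^p_{\L^s_T}(\C^s_T)$. Now $z\in C_S(\N^s)\leq C_S(T)$ is strongly closed and central in $\N^s$; since $O^p_{\L^s_T}(\C^s_T)\subseteq\C^s_T=C_{\L^s}(T)$ and $z\in Z(T)$ is a $p$-element, and since $z$ commutes with $C_S(\N^s)$ (an abelian-type relation — actually one checks $C_S(\N^s)$ need not be abelian, so instead use that $z\in Z(\N^s)$ and $C_S(\N^s)\subseteq\N^s$... wait, $C_S(\N^s)$ is a subgroup of $S$, not necessarily of $\N^s$).

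\textbf{Main obstacle.} The genuinely non-formal step is showing $z\in Z(\N^s)$ centralizes all of $O^p_{\L^s_T}(\C^s_T)$, equivalently that $Z(\N^s)\subseteq C_{\L^s}(\N^\perp)$. The clean way is: by Corollary~\ref{C:NperpCapS}, $\N^\perp=\N^\dagger$ commutes with $\N$ (Lemma~\ref{L:NNperpCommute}), so $\N^\perp\subseteq C_\L(\N\cap\N^\perp)$ — no, that gives the wrong direction. Rather, I expect the right argument is: $Z(\N^s)\leq \N^s\cap (\N^s)^\perp$ is centralized by $(\N^s)^\perp$ because $(\N^s)^\perp$ commutes with $\N^s$ (Lemma~\ref{L:NNperpCommute} / Corollary~\ref{C:MperpNNperpM}), hence fixes every element of $\N^s$ under conjugation, in particular fixes $z$; combined with $z$ being central in $\N^s$ and $z\in S$, this should force $z\in Z((\N^s)^\perp)$ after checking $(\N^s)^\perp\subseteq\D(z)$ via the locality axioms (here $z\in C_S(T)$ with $T$ strongly closed is what makes conjugation by $z$ defined on $N_{\L^s}(T)\supseteq(\N^s)^\perp$, using Lemma~\ref{L:NLTAutL} or Lemma~\ref{L:ConjugateNormalizer}). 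Finishing this bookkeeping — that a central element of $\N^s$ lands in the center of the "centralizer" $(\N^s)^\perp$ — is where I'd need to be careful, but it is a short argument once the structural facts ($\N^\perp=C_S^\circ(\N)O^p_{\L_T}(\C_T)$, Lemma~\ref{L:NperpPlus}, Lemma~\ref{L:ZcircBasic}, and $\N$-repleteness of $\L^s$) are assembled.
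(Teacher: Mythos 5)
Your overall strategy matches the paper's: derive the equality $\N\cap\N^\perp = Z^\circ(\N)$ from Lemma~\ref{L:CNTinZT}, Corollary~\ref{C:NperpCapS} and Lemma~\ref{L:ZcircBasic}; then reduce the inclusion $Z^\circ(\N)\leq Z^\circ(\N^\perp)$ to the subcentric locality $\L^s$ via Lemma~\ref{L:NperpPlus} and the identity $(\N^\perp)^s=(\N^s)^\perp$. Your derivation of the equality is in fact a slightly more direct reorganization than the paper's, which bundles both directions into the statement $\N\cap\N^\perp\subseteq Z(\N)\cap Z(\N^\perp)$ proved entirely inside $\L^s$.

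The gap is in your final step, and you are honest about it. The missing tool is Lemma~\ref{L:XfixesRXcentralizesR}, which is exactly designed to convert a ``fixes under conjugation'' condition for a $p$-subgroup into an honest centralizer statement. Concretely, with $R:=\N^s\cap(\N^s)^\perp$ one already knows $R\leq Z(T)\leq S$ by Lemma~\ref{L:CNTinZT}. By Corollary~\ref{C:MperpNNperpM}, $\N^s$ fixes $(\N^s)^\perp$ under conjugation, so $R$ (being a subset of $\N^s$) fixes $(\N^s)^\perp$ under conjugation; and similarly $R$ fixes $\N^s$. Now Lemma~\ref{L:XfixesRXcentralizesR}, applied with $\X=(\N^s)^\perp$ and $\X=\N^s$ in turn, gives $R\subseteq C_{\L^s}((\N^s)^\perp)$ and $R\subseteq C_{\L^s}(\N^s)$, hence $R\leq Z((\N^s)^\perp)\cap Z(\N^s)$. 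Note the direction: what you need is that the \emph{element $z$ of $\N^s$ fixes $(\N^s)^\perp$}, not (as you wrote) that $(\N^s)^\perp$ fixes $z$; the two are equivalent by Corollary~\ref{C:MperpNNperpM}, but only the former feeds into Lemma~\ref{L:XfixesRXcentralizesR} with $z\in R\leq S$ playing the role of the $p$-subgroup, and the fact that conjugation by any element of $S$ is globally defined (the observation $(r^{-1},x,r)\in\D$ via $S_x^r$) is exactly what that lemma's proof exploits. So the argument is short, as you suspected, but it is already encapsulated as Lemma~\ref{L:XfixesRXcentralizesR}; citing it replaces the ad hoc bookkeeping you were worried about.
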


\begin{proof}
By Lemma~\ref{L:ZcircBasic} and Corollary~\ref{C:NperpCapS}, we have $Z^\circ(\N)\leq \N\cap C_S^\circ(\N)\subseteq \N\cap\N^\perp$ and $Z^\circ(\N^\perp)\leq S$. Hence, it remains only to show that $\N\cap\N^\perp\subseteq Z^\circ(\N)\cap Z^\circ(\N^\perp)$. By Lemma~\ref{L:NperpPlus}, we have $(\N^s)^\perp=(\N^\perp)^s$. Thus, if $\N^s\cap (\N^s)^\perp\subseteq Z(\N^s)\cap Z((\N^s)^\perp)$, then 
\[\N\cap\N^\perp\subseteq \N^s\cap (\N^\perp)^s\subseteq Z(\N^s)\cap Z((\N^\perp)^s)=Z^\circ(\N)\cap Z^\circ(\N^\perp).\] Hence, we only need to show $\N^s\cap (\N^s)^\perp\subseteq Z(\N^s)\cap Z((\N^s)^\perp)$. To ease notation, we may assume without loss of generality that $(\L,\Delta,S)=(\L^s,\F^s,S)$ and need to show that $R:=\N\cap\N^\perp\leq Z(\N)\cap Z(\N^\perp)$.

\smallskip

As $\N^\perp\subseteq \C_T=C_\L(T)$ by Corollary~\ref{C:NperpCapSGeneral}, we have $R=\N\cap\N^\perp\subseteq C_\N(T)\leq Z(T)\leq S$, where the second inclusion uses  Lemma~\ref{L:CNTinZT}. By Corollary~\ref{C:MperpNNperpM}, $R$ fixes $\N$ and $\N^\perp$ under conjugation. It follows now from Lemma~\ref{L:XfixesRXcentralizesR} applied twice (once with $\X=\N$ and once with $\X=\N^\perp$) that $R\subseteq C_\L(\N)$ and $R\subseteq C_\L(\N^\perp)$. This yields $R\leq Z(\N)\cap Z(\N^\perp)$.
\end{proof}

The following Lemma is a generalization of \cite[Lemma~6.5]{ChermakIII}.

\begin{lemma}\label{L:OpNNPinMperp}
If $\M$ is a partial normal subgroup of $\L$ with $\M\cap\N\leq Z^\circ(\M)$, then 
\begin{equation*}
O^p_\L(\M)\subseteq O^p_{\L_T}(\C_T)\subseteq\N^\perp.
\end{equation*} 
\end{lemma}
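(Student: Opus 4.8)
The plan is as follows. The inclusion $O^p_{\L_T}(\C_T)\subseteq\N^\perp$ is immediate, since Corollary~\ref{C:NperpCapS} gives $\N^\perp=C_S^\circ(\N)O^p_{\L_T}(\C_T)$. So everything rests on proving $O^p_\L(\M)\subseteq O^p_{\L_T}(\C_T)$, and I would set $\K:=O^p_{\L_T}(\C_T)$, which is a partial normal subgroup of $\L$ by Corollary~\ref{C:NperpNormal}. Then $\K\cap\M$ is a partial normal subgroup of $\L$ contained in $\M$, and by Lemma~\ref{L:ResidueEquivalences} it is enough to check condition~(iii) for $\K\cap\M$ in place of $\K$, i.e. that $O^p(N_\M(P))\subseteq\K$ for every $P\in\Delta$; this gives $\K\cap\M\in\mathbb{K}_\M$ and hence $O^p_\L(\M)\subseteq\K\cap\M\subseteq\K$. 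Since the group $O^p(N_\M(P))$ is generated by the $p^\prime$-elements of $N_\M(P)$, and $\C_T=C_\L(T)$ is a partial subgroup of $\L$ by Lemma~\ref{L:CentralizerPartialNormal}, it further suffices to show that every $p^\prime$-element $m$ of $N_\M(P)$ lies in $\C_T$: then $O^p(N_\M(P))\subseteq\C_T\cap N_\L(P)=N_{\C_T}(P)$, whence $O^p(N_\M(P))\subseteq O^p(N_{\C_T}(P))\subseteq\K$ by applying Lemma~\ref{L:ResidueEquivalences} inside the locality $(\L_T,\Delta,S)$.

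So fix $P\in\Delta$ and a $p^\prime$-element $m\in N_\M(P)$. As $T=\N\cap S$ is normal in $S$, Lemma~\ref{L:XfixesRXcentralizesR} reduces the claim $m\in C_\L(T)$ to showing that $m$ fixes $T$ under conjugation: for $t\in T$ with $t\in\D(m)$ one must prove $t^m=t$. I would put $z:=\Pi(t^{-1},m^{-1},t,m)$. Extending on the left the sequence of objects witnessing $(m^{-1},t,m)\in\D$ --- which is possible since $t\in S\leq N_\L(S)$, so that Lemma~\ref{L:NLSbiset} applies --- shows $(t^{-1},m^{-1},t,m)\in\D$, so that $z$ is defined; and regrouping this product in the two obvious ways, via the partial group axioms, gives $z=t^{-1}t^m\in\N$ (using $t,t^m\in\N\unlhd\L$) and $z=(m^t)^{-1}m\in\M$ (using $m^t,m\in\M\unlhd\L$). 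Hence $z\in\M\cap\N\subseteq Z^\circ(\M)$, and Lemma~\ref{L:ZcircBasic} yields $z\in Z(\M)\leq Z(\M\cap S)\leq S$. In particular $t^m=tz\in S$, so $t\in S_m$, and $z^m=z$ by Lemma~\ref{L:CentralizerHelp} (as $z\in Z(\M)$ and $m\in\M$), so $z\in S_m$.

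To finish, I would pass to $Q:=\langle P,t,z\rangle$. Then $P\leq Q$, so $Q\in\Delta$ by overgroup-closure of $\Delta$; the elements $P$, $t$, $z$ all lie in the subgroup $S_m$, so $Q\leq S_m$; and $Q^m=Q$ since $P^m=P$, $t^m=tz\in Q$ and $z^m=z\in Q$. Thus $m\in N_\L(Q)$, a finite group, in which $c_m^{|m|}=c_{m^{|m|}}=\id$; hence the automorphism $c_m|_Q$ of $Q$ has order dividing $|m|$, a $p^\prime$-number. On the other hand, the subgroup $A:=\langle t,z\rangle\leq Q$ is $c_m$-invariant with $(c_m|_A)(t)=tz$ and $(c_m|_A)(z)=z$, so $(c_m|_A)^k(t)=tz^k$ for all $k\geq 0$; therefore $c_m|_A$ has order equal to the order of $z$, which is a power of $p$. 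As the order of $c_m|_A$ divides that of $c_m|_Q$, this common quantity is both a power of $p$ and prime to $p$, forcing $z=\One$ and $t^m=t$, as needed.

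The conceptual point --- conjugation by a $p^\prime$-element can induce no nontrivial $p$-element in an automorphism group --- is short; the hypothesis $\M\cap\N\leq Z^\circ(\M)$ is used only through the consequence $\M\cap\N\leq Z(\M)\leq S$, which guarantees that $z$ lies in $S$ and commutes with $m$. I expect the main obstacle to be the partial-group bookkeeping: verifying that $z=[t,m]$ is defined and may simultaneously be read inside $\N$ and inside $\M$, and checking the membership statements $Q\in\Delta$ and $m\in N_\L(Q)$ that let one work in an honest finite group at the end.
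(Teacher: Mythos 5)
Your proof is correct, and it takes a genuinely different route from the paper's. The paper first passes to $\Delta=\F^s$ (via Lemma~\ref{L:OupperpVaryObjects} and Remark~\ref{R:TakePlus}) so that $Z^\circ(\M)=Z(\M)$ and $\M$-repleteness are available, then restricts to $P\in R_\Delta(\M S)$, constructs an auxiliary object $X=UC_S(R)O_p(\L)\in\Delta$ using $\M$-repleteness, and runs the coprime-action argument $[TZ,O^p(N_\M(P))]=[TZ,O^p(N_\M(P)),O^p(N_\M(P))]\leq[Z,O^p(N_\M(P))]=1$ at the subgroup level inside $N_\L(X)$. Your argument dispenses with the reduction, the repleteness machinery, Lemma~\ref{L:PartialNormalAlperin} and the choice of $X$: for \emph{every} $P\in\Delta$ and every $p'$-element $m\in N_\M(P)$, you show $m$ fixes $T$ under conjugation by reading the commutator $z=\Pi(t^{-1},m^{-1},t,m)$ once in $\N$ and once in $\M$, so $z\in\M\cap\N\leq Z^\circ(\M)\leq Z(\M)\leq S$, and then enlarging $P$ to $Q=\langle P,t,z\rangle\in\Delta$ so that the coprime-action contradiction can be read off from $c_m|_{\langle t,z\rangle}$ inside the honest finite group $N_\L(Q)$. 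Notably you need only the inequality $Z^\circ(\M)\leq Z(\M)$ from Lemma~\ref{L:ZcircBasic}, not equality; and since $N_{\C_T}(P)=\C_T\cap N_\L(P)$ is a subgroup of $N_\L(P)$ (because $\C_T$ is a partial subgroup by Lemma~\ref{L:CentralizerPartialNormal}) containing all those $p'$-elements, your chain $O^p(N_\M(P))\subseteq O^p(N_{\C_T}(P))\subseteq\K$ is right. The one slightly loose point is the appeal to Lemma~\ref{L:NLSbiset} to prepend $t^{-1}$; the direct justification is that if $(m^{-1},t,m)\in\D$ via $P_0,\dots,P_3$ then, as $t\in S$ normalizes $S$ and $\Delta$ is $\L$-closed, $(t^{-1},m^{-1},t,m)\in\D$ via $P_0^t,P_0,\dots,P_3$. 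What your approach buys is a self-contained, element-level argument that stays in the given locality; what it costs is the partial-group bookkeeping, which you handle correctly.
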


\begin{proof}
Adapt notation~\ref{N:VaryObjects} and note that $(\M\cap\N)^s=\M^s\cap\N^s$. The property $\M\cap\N\leq Z^\circ(\M)$ implies by Lemma~\ref{L:ZcircBasic} that $\M\cap\N\leq S$  and so, by Lemma~\ref{L:TakePlusInS}, $(\M\cap\N)^s=\M\cap\N$. Hence
\[\M^s\cap\N^s=\M\cap\N\leq Z^\circ(\M).\]
If $O^p_{\L^s}(\M^s)\subseteq O^p_{N_{\L^s}(T)}(C_{\L^s}(T))$, then it follows from Lemma~\ref{L:OupperpVaryObjects} and Remark~\ref{R:TakePlus} that 
\[O^p_\L(\M)=O^p_{\L^s}(\M^s)\cap\L\subseteq O^p_{N_{\L^s}(T)}(C_{\L^s}(T))\cap \L=O^p_{\L_T}(\C_T)\subseteq\N^\perp.\]
Hence, replacing $(\L,\Delta,S)$ by $(\L^s,\F^s,S)$ we may assume that
\[\Delta=\F^s\]
and we only need to show that $O^p_\L(\M)\subseteq \M^*:=O^p_{\L_T}(\C_T)$. Under this assumption
\[Z^\circ(\M)=Z(\M)\]
by Corollary~\ref{C:GetNreplete} and Lemma~\ref{L:ZcircBasic}. As seen in Corollary~\ref{C:NperpNormal}, we have $\M^*\unlhd\L$ and thus $\M^*\cap\M\unlhd\L$. Thus, it is sufficient to show that $\M^*\cap\M$ has $p$-power index in $\M$. Hence, by Lemma~\ref{L:PartialNormalAlperin}, it is sufficient to prove that $O^p(N_\M(P))\subseteq \M^*$ for all $P\in R_\Delta(\M S)$. 

\smallskip

We fix now $P\in R_\Delta(\M S)$ and set $R:=\M\cap S$. By Lemma~\ref{L:SNradNrad} and Lemma~\ref{L:CSTinSNradical}, $P$ is $\M$-radical and $O_p(\L)C_S(R)\leq P$. Set $U:=P\cap R$. By Lemma~\ref{L:NradEcr}, we have $C_R(U)\leq U$. Hence, using Lemma~\ref{L:ZcircBasic}, we can  conclude that $[U,T]\leq \M\cap \N\subseteq Z^\circ(\M)\leq Z(R)\leq C_R(U)\leq U$ and $T$ normalizes $U$. As $(\L,\Delta,S)$ is by Corollary~\ref{C:GetNreplete} $\M$-replete and thus weakly $\M$-replete, we have   
\[X:=UC_S(R)O_p(\L)\in\Delta.\]
As $T$ normalizes $U$ and $C_S(R)O_p(\L)\unlhd S$, we have $T\leq N_\L(X)$. Observe also that $P\cap R=U\leq X\leq P$, so Lemma~\ref{L:Lemma0} gives $N_\M(P)\leq N_\M(X)$. We see now that the commutator group $[T,N_\M(P)]$ is defined inside of $N_\L(X)$ and 
\[[T,N_\M(P)]\leq \N\cap\M\leq Z:=Z^\circ(\M)=Z(\M)\leq Z(R).\]
Hence $N_\M(P)$ acts on the $p$-group $TZ$ and $[TZ,O^p(N_\M(P))]=[TZ,O^p(N_\M(P)),O^p(N_\M(P))]\leq [Z,O^p(N_\M(P))]=1$. Hence, $O^p(N_\M(P))\subseteq \C_T$. Using Lemma~\ref{L:ResidueEquivalences}, it follows that $O^p(N_\M(P))\leq O^p(N_{\C_T}(P))\subseteq \M^*$ as required.
\end{proof}

\begin{cor}\label{C:OpNNPinMperp}
If there exists a partial normal subgroup $\M$ of $\L$ such that $\M\cap\N\subseteq Z^\circ(\N)$ and $\M^\perp\subseteq S$, then $\N\subseteq S$. 
\end{cor}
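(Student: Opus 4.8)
The plan is to derive this at once from Lemma~\ref{L:OpNNPinMperp}, applied with the roles of $\M$ and $\N$ interchanged. That lemma is a statement about an arbitrary linking locality equipped with a distinguished partial normal subgroup together with one further partial normal subgroup; since $(\L,\Delta,S)$ is a linking locality and $\M\unlhd\L$, I may invoke it with $\M$ in the role of the distinguished partial normal subgroup (so that $S\cap\M$, $N_\L(S\cap\M)$, $C_\L(S\cap\M)$, $Z^\circ$ and $(-)^\perp$ are then formed relative to $\M$) and with $\N$ in the role of the partial normal subgroup called $\M$ there. The hypothesis needed for this application is $\N\cap\M\subseteq Z^\circ(\N)$, which is precisely the standing assumption $\M\cap\N\subseteq Z^\circ(\N)$ of the corollary. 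Hence the conclusion of Lemma~\ref{L:OpNNPinMperp} yields $O^p_\L(\N)\subseteq\M^\perp$.

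Next I would combine this with the second hypothesis $\M^\perp\subseteq S$ to obtain $O^p_\L(\N)\subseteq S$. By Lemma~\ref{L:ResidueEquivalences} we have $O^p_\L(\N)\in\mathbb{K}_\N$, i.e.\ $T\,O^p_\L(\N)=\N$, where $T=S\cap\N\leq S$. Since both $T$ and $O^p_\L(\N)$ are contained in $S$, and $S$ is a subgroup of $\L$ (so that words in $S$ lie in $\D$ and their products again lie in $S$), every element of $\N=T\,O^p_\L(\N)$ is of the form $\Pi(t,k)$ with $t\in T\subseteq S$ and $k\in O^p_\L(\N)\subseteq S$, hence lies in $S$. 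Therefore $\N\subseteq S$, as claimed.

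I do not anticipate a genuine obstacle; the only point meriting a brief justification is the legitimacy of applying Lemma~\ref{L:OpNNPinMperp} ``with the roles of $\M$ and $\N$ swapped''. This is harmless: the standing hypotheses of the chapter single out $\N$ only as a bookkeeping device for the notation $T$, $\L_T$, $\C_T$, $\N^\perp$, $Z^\circ(\N)$, and the statement of Lemma~\ref{L:OpNNPinMperp} already makes all of its dependence on that distinguished partial normal subgroup explicit, so nothing else about $\N$ is used in its proof. Consequently the lemma is available for the linking locality $(\L,\Delta,S)$ with $\M$ in the distinguished role, and the two short steps above complete the argument.
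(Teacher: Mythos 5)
Your proof is correct and takes the same route as the paper: apply Lemma~\ref{L:OpNNPinMperp} with the roles of $\M$ and $\N$ swapped to obtain $O^p_\L(\N)\subseteq\M^\perp\subseteq S$, then invoke Lemma~\ref{L:ResidueEquivalences} to conclude $\N=T\,O^p_\L(\N)\subseteq S$. Your elaboration of the last step (that $S$ is a subgroup of $\L$, so products of elements of $T$ and $O^p_\L(\N)$ stay in $S$) spells out what the paper leaves implicit, and your justification of the role-swap is accurate.
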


\begin{proof}
Applying Lemma~\ref{L:OpNNPinMperp} with the roles of $\M$ and $\N$ reversed, we get $O^p_\L(\N)\subseteq\M^\perp\subseteq S$. The assertion follows now from Lemma~\ref{L:ResidueEquivalences}. 
\end{proof}

\section{Centric radical partial normal subgroups}

\begin{definition}\label{D:GeneralizedFitting}~
\begin{itemize}
\item The partial normal subgroup $\N$ is called \emph{centric in $\L$} if $\N^\perp\subseteq\N$. 
\item We call $\N$ \emph{radical in $\L$} if $O_p(\L)\subseteq\N$.
\item $\N$ is said to be \emph{centric radical in $\L$} if $\N$ is both centric and radical in $\L$. 
\item The intersection of all centric radical partial normal subgroups of $\L$ is called the \emph{generalized Fitting subgroup} of $\L$ and denoted by $F^*(\L)$. 
\end{itemize}
\end{definition}

The reader should note that the definitions above fit well with the notions we introduced already. Namely, if $\N\leq S$ and $\N$ is centric in $\L$, then $\N^s=\N$ by Lemma~\ref{L:TakePlusInS} and hence, by Corollary~\ref{C:NperpCapS}, we have  $C_S(\N)=C_S(\N^s)=C_S^\circ(\N)\subseteq\N^\perp\subseteq \N$. So we obtain in this case that $\N$ is centric in $\F$. In the case that  $\N$ is an element of $\Delta$, we have $N_\L(\N)=\L$; so $\N$ is $\L$-radical in the sense of Definition~\ref{D:Radical} if and only if it is radical in $\L$ in the sense of Definition~\ref{D:GeneralizedFitting}.

\smallskip

It follows from Lemma~\ref{L:CSNinN} below that a partial normal subgroup $\N$ is centric as defined above if and only if it is large in the sense defined by Chermak \cite[Definition~6.1]{ChermakIII}. Our definition of $F^*(\L)$ is a priori slightly different from Chermak's definition of $F^*(\L)$ (cf. \cite[Definition~6.8]{ChermakIII}), but Lemma~\ref{L:TakePlusGeneralizedFitting}(d) below implies that the two definitions are equivalent. We will now see that the concepts introduced in Definition~\ref{D:GeneralizedFitting} are well-behaved with respect to expansion.

\begin{lemma}\label{L:TakePlusGeneralizedFitting}
Let $(\L^+,\Delta^+,S)$ be a linking locality with $\Delta\subseteq\Delta^+$ and $\L=\L^+|_\Delta$. Adapting Notation~\ref{N:VaryObjects} the following hold:
\begin{itemize}
 \item [(a)] $\N$ is centric in $\L$ if and only if $\N^+$ is centric in $\L^+$.
 \item [(b)] $\N$ is radical in $\L$ if and only if $\N^+$ is radical in $\L^+$.
 \item [(c)] $\N$ is centric radical in $\L$ if and only if $\N^+$ is centric radical in $\L^+$.
 \item [(d)] We have $F^*(\L^+)=F^*(\L)^+$.
\end{itemize}
\end{lemma}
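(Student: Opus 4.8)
The plan is to prove parts (a)--(d) of Lemma~\ref{L:TakePlusGeneralizedFitting} in order, with each part feeding into the next.

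\textbf{Part (a).} The key input is Lemma~\ref{L:NperpPlus}, which gives $(\N^\perp)^+=(\N^+)^\perp$. Since $\Phi_{\L^+,\L}^{-1}$ (equivalently, the map $\N\mapsto\N^+$) is inclusion-preserving in both directions by Theorem~\ref{T:VaryObjects}(b), I would argue: $\N$ is centric in $\L$ means $\N^\perp\subseteq\N$; applying $(-)^+$ this is equivalent to $(\N^\perp)^+\subseteq\N^+$, i.e. $(\N^+)^\perp\subseteq\N^+$, which is exactly ``$\N^+$ is centric in $\L^+$''. Here I use that $\M\subseteq\N\Leftrightarrow\M^+\subseteq\N^+$ for partial normal subgroups, which follows from Theorem~\ref{T:VaryObjects}(b) since $\Phi_{\L^+,\L}$ and its inverse both preserve inclusion and $\M^+\cap\L=\M$, $\N^+\cap\L=\N$.

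\textbf{Part (b).} This reduces to $O_p(\L)=O_p(\L^+)$, which is Lemma~\ref{L:TakePlusInS} (taking $R=O_p(\L^+)$, which is a subgroup of $S$ normal in $\L^+$, hence normal in $\L$; and conversely $O_p(\L)\unlhd\L\Leftrightarrow O_p(\L)\unlhd\L^+$), together with the fact that $O_p(\L)^+=O_p(\L)$ since $O_p(\L)\leq S$. Then $\N$ radical in $\L$ means $O_p(\L)\subseteq\N$, which by applying $(-)^+$ and using $O_p(\L)^+=O_p(\L)=O_p(\L^+)$ is equivalent to $O_p(\L^+)\subseteq\N^+$, i.e. $\N^+$ radical in $\L^+$.

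\textbf{Part (c)} is the conjunction of (a) and (b), so it is immediate.

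\textbf{Part (d).} This is the part requiring the most care, though it is still formal. By (c), the map $\N\mapsto\N^+$ restricts to an inclusion-preserving bijection between the set $\mathcal{C}$ of centric radical partial normal subgroups of $\L$ and the set $\mathcal{C}^+$ of centric radical partial normal subgroups of $\L^+$, with inclusion-preserving inverse. Since $\Phi_{\L^+,\L}$ commutes with intersections (this is elementary: $(\bigcap_i\K_i^+)\cap\L=\bigcap_i(\K_i^+\cap\L)$, and conversely one checks $\Phi_{\L^+,\L}^{-1}$ respects intersections using Theorem~\ref{T:VaryObjects}(b)), I get
\[
F^*(\L)=\bigcap_{\N\in\mathcal{C}}\N=\bigcap_{\N\in\mathcal{C}}(\N^+\cap\L)=\Big(\bigcap_{\M\in\mathcal{C}^+}\M\Big)\cap\L=F^*(\L^+)\cap\L.
\]
Since $F^*(\L^+)^+$ is by definition (Notation~\ref{N:VaryObjects}) the unique partial normal subgroup of $\L^+$ whose intersection with $\L$ is $F^*(\L^+)\cap\L=F^*(\L)$, and since $F^*(\L^+)$ itself is a partial normal subgroup of $\L^+$ with this property, uniqueness gives $F^*(\L)^+=F^*(\L^+)$.

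The main obstacle, such as it is, is making sure the various bijections and their inverses genuinely preserve the relevant structure (inclusions, intersections), so that passing $F^*$ through $\Phi_{\L^+,\L}$ is legitimate; all the real content is already packaged in Lemma~\ref{L:NperpPlus}, Lemma~\ref{L:TakePlusInS}, and Theorem~\ref{T:VaryObjects}(b), so beyond invoking those the argument is bookkeeping. One small point worth checking explicitly is that $\mathcal{C}$ is nonempty (so that $F^*(\L)$ is a genuine intersection and not all of some ambient object by vacuity) --- but $\L$ itself is centric radical in $\L$ since $\L^\perp\subseteq\L$ and $O_p(\L)\subseteq\L$ trivially, so $\mathcal{C}\neq\emptyset$ and likewise $\mathcal{C}^+\neq\emptyset$.
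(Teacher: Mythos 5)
Your proof is correct and follows essentially the same route as the paper's: part (a) from Lemma~\ref{L:NperpPlus} together with the inclusion-preserving bijection of Theorem~\ref{T:VaryObjects}(b), part (b) from $O_p(\L)=O_p(\L^+)$ via Lemma~\ref{L:TakePlusInS}, part (c) as the conjunction, and part (d) by passing the defining intersection through $\Phi_{\L^+,\L}$. (There is a harmless typo near the end — ``$F^*(\L^+)^+$'' should read ``$F^*(\L)^+$'' — but the surrounding clauses make the intended argument clear.)
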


\begin{proof}
 By Lemma~\ref{L:NperpPlus}, we have $(\N^\perp)^+=(\N^+)^\perp$. Using Theorem~\ref{T:VaryObjects}(b), it follows $\N^\perp\subseteq\N$ if and only if $(\N^+)^\perp\subseteq\N^+$, i.e. (a) holds. By Lemma~\ref{L:TakePlusInS}, we have $O_p(\L)=O_p(\L^+)$ and so (b) holds. Clearly (a) and (b) imply (c). By Theorem~\ref{T:VaryObjects}(b), the map $\Phi_{\L^+,\L}$ defined at the beginning of Section~\ref{SS:VaryObjects} is a bijection from the set of partial normal subgroups of $\L^+$ to the set of partial normal subgroups of $\L$. Hence, (c) says that $\Phi_{\L^+,\L}$ induces also a bijection between the set of centric radical partial normal subgroups of $\L^+$ and the set of centric radical partial normal subgroups of $\L$. Thus, as $\Phi_{\L^+,\L}$ commutes with taking the intersection of a finite number of partial normal subgroups, it follows that $\Phi_{\L^+,\L}$ maps $F^*(\L^+)$ to $F^*(\L)$. This proves (d). 
\end{proof}

The next result says basically that $F^*(\L)$ can be regarded as a ``characteristic'' partial normal subgroup of $\L$.

\begin{lemma}\label{L:AutGeneralizedFitting}
If $(\tL,\tDelta,\tS)$ is a linking locality and $\alpha\in\Iso(\L,\tL)$, then $F^*(\L)\alpha=F^*(\tL)$. In particular, for every $\alpha\in\Aut(\L)$, we have $F^*(\L)\alpha=F^*(\L)$. 
\end{lemma}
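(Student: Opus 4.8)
The plan is to show that $\alpha$ carries the centric radical partial normal subgroups of $\L$ bijectively onto those of $\tL$, and then to exploit that $F^*$ is, by Definition~\ref{D:GeneralizedFitting}, an intersection of such subgroups. As already noted in the proof of Lemma~\ref{L:AutNperp}, an isomorphism of partial groups $\alpha$ induces an inclusion-preserving one-to-one correspondence $\N\mapsto\N\alpha$ between the partial normal subgroups of $\L$ and those of $\tL$; being a bijection of the underlying sets, this correspondence also commutes with forming intersections. So it will suffice to prove, for every $\N\unlhd\L$, that $\N$ is centric radical in $\L$ if and only if $\N\alpha$ is centric radical in $\tL$.

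For the ``centric'' half I would invoke Lemma~\ref{L:AutNperp}, which gives $\N^\perp\alpha=(\N\alpha)^\perp$; hence $\N^\perp\subseteq\N$ holds precisely when $(\N\alpha)^\perp=\N^\perp\alpha\subseteq\N\alpha$, i.e.\ $\N$ is centric in $\L$ iff $\N\alpha$ is centric in $\tL$. For the ``radical'' half I would first record that $\alpha$ maps subgroups of $\L$ to subgroups of $\tL$ (if $\W(\H)\subseteq\D$ then $\W(\H\alpha)=\W(\H)\alpha^*\subseteq\D\alpha^*=\tD$), restricts to group isomorphisms on them, and preserves finiteness, being a $p$-group, and partial normality; by the uniqueness clause of Lemma~\ref{L:OpL} it therefore carries the largest partial normal $p$-subgroup of $\L$ onto that of $\tL$, that is, $O_p(\L)\alpha=O_p(\tL)$. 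Consequently $O_p(\L)\subseteq\N$ iff $O_p(\tL)=O_p(\L)\alpha\subseteq\N\alpha$, so $\N$ is radical in $\L$ iff $\N\alpha$ is radical in $\tL$.

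Putting these together, $\alpha$ restricts to a bijection between the centric radical partial normal subgroups of $\L$ and those of $\tL$, and since $\alpha$ commutes with intersections,
\[
F^*(\L)\alpha=\Big(\bigcap_{\substack{\N\unlhd\L\\ \N\ \text{centric radical}}}\N\Big)\alpha
=\bigcap_{\substack{\M\unlhd\tL\\ \M\ \text{centric radical}}}\M=F^*(\tL).
\]
The final assertion is the special case $(\tL,\tDelta,\tS)=(\L,\Delta,S)$ applied to $\alpha\in\Aut(\L)$. I do not expect a genuine obstacle here, since the argument is pure transport of structure along $\alpha$; the only point requiring mild care is that $\alpha$ need not send $S$ to $\tS$, which is exactly why one leans on the intrinsic descriptions of $\N^\perp$ (through Lemma~\ref{L:AutNperp}) and of $O_p$ (through Lemma~\ref{L:OpL}) rather than on formulas tied to a fixed maximal $p$-subgroup.
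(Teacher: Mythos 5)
Your proposal is correct and follows essentially the same route as the paper: transport of structure via the bijection on partial normal subgroups, with Lemma~\ref{L:AutNperp} handling the ``centric'' condition and the characterization of $O_p$ in Lemma~\ref{L:OpL} handling the ``radical'' condition, after which $F^*$ is matched up as the intersection of the centric radical partial normal subgroups. The extra care you take about $\alpha$ not necessarily sending $S$ to $\tS$, and the explicit check that $\alpha$ preserves subgroups and $p$-groups, are just the details the paper leaves implicit.
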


\begin{proof}
Let $(\tL,\tDelta,\tS)$ be a linking locality with $\alpha\in\Iso(\L,\tL)$. Then $\alpha$ induces a bijection from the set of partial normal subgroups of $\L$ to the set of partial normal subgroups of $\tL$. As $O_p(\L)$ is by Lemma~\ref{L:OpL} the largest $p$-subgroup of $\L$, which is a partial normal subgroup of $\L$ (and similarly for $O_p(\tL)$), the isomorphism $\alpha$ takes $O_p(\L)$ to $O_p(\tL)$. Moreover, it follows from Lemma~\ref{L:AutNperp} that $\N\unlhd\L$ is centric in $\L$ if and only if $\N\alpha$ is centric in $\tL$. Thus, $\alpha$ induces a bijection from the set of centric radical partial normal subgroups of $\L$ to the set of centric radical partial normal subgroups of $\tL$ and thus takes $F^*(\L)$ to $F^*(\tL)$. 
\end{proof}

\begin{lemma}\label{L:CentricInherit}
For every $\M\unlhd\L$ the following hold:
\begin{itemize}
 \item [(a)] If $\N\N^\perp\subseteq\M$, then $\M$ is centric in $\L$.
 \item [(b)] If $\N$ is centric in $\L$ and $\N\subseteq\M$, then $\M$ is centric in $\L$.
 \item [(c)] The product $\N\N^\perp$ is a centric partial normal subgroup of $\L$, and $\N\N^\perp O_p(\L)$ is a centric radical partial normal subgroup of $\L$. 
\end{itemize}
\end{lemma}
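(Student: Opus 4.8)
The plan is to deduce all three parts from two facts already available: the inclusion-reversing monotonicity of $\N\mapsto\N^\perp$ (Lemma~\ref{L:MinNimpliesNperpinMperp}) and the fact that products of partial normal subgroups are partial normal (Theorem~\ref{T:ProductsPartialNormal}). Parts (b) and (c) will then both be routine reductions to part (a).

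For (a), suppose $\N\N^\perp\subseteq\M$. First I note that $\N\N^\perp\unlhd\L$ by Theorem~\ref{T:ProductsPartialNormal}, so Lemma~\ref{L:MinNimpliesNperpinMperp} may be applied along the chain $\N\subseteq\N\N^\perp\subseteq\M$; it yields $\M^\perp\subseteq(\N\N^\perp)^\perp\subseteq\N^\perp$. On the other hand $\N^\perp\subseteq\N\N^\perp$, since partial subgroups contain $\One$ and $(\One,x)\in\D$ with $\Pi(\One,x)=x$ for every $x\in\L$ (so $\N^\perp=\{\One\}\N^\perp\subseteq\N\N^\perp$); hence $\N^\perp\subseteq\M$. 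Combining the two inclusions gives $\M^\perp\subseteq\N^\perp\subseteq\M$, i.e.\ $\M$ is centric in $\L$ in the sense of Definition~\ref{D:GeneralizedFitting}.

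For (b), assuming $\N$ is centric (so $\N^\perp\subseteq\N$) and $\N\subseteq\M$, I would observe that $\N\N^\perp\subseteq\N\N\subseteq\N\subseteq\M$, using that $\N$ is a partial subgroup and hence closed under the partial product; then (a) applies and shows $\M$ is centric. For (c), I would first apply (a) with $\M:=\N\N^\perp$ (legitimate since $\N\N^\perp\subseteq\N\N^\perp$ and $\N\N^\perp\unlhd\L$ by Theorem~\ref{T:ProductsPartialNormal}), concluding that $\N\N^\perp$ is centric. Then $\N\N^\perp O_p(\L)=(\N\N^\perp)O_p(\L)$ is again a partial normal subgroup of $\L$ by Theorem~\ref{T:ProductsPartialNormal} (using $O_p(\L)\unlhd\L$); it contains $O_p(\L)$, hence is radical, and it contains the centric partial normal subgroup $\N\N^\perp$ (because $\One\in O_p(\L)$), so (b) shows it is centric. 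Therefore $\N\N^\perp O_p(\L)$ is centric radical in $\L$.

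I do not expect any real obstacle: the content is entirely formal once Lemma~\ref{L:MinNimpliesNperpinMperp} and Theorem~\ref{T:ProductsPartialNormal} are in place, and the only genuinely small point to check is the trivial inclusion $\N^\perp\subseteq\N\N^\perp$ (and the analogous $\N\N^\perp\subseteq\N\N^\perp O_p(\L)$), which follows immediately from $\One$ lying in any partial subgroup together with the behaviour of $\Pi$ on words involving $\One$.
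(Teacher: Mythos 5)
Your proof is correct and follows essentially the same route as the paper: it reduces everything to Lemma~\ref{L:MinNimpliesNperpinMperp} together with Theorem~\ref{T:ProductsPartialNormal}, with (b) and (c) routine from (a). The only cosmetic difference is that you derive $\M^\perp\subseteq\N^\perp$ via the intermediate step $\M^\perp\subseteq(\N\N^\perp)^\perp\subseteq\N^\perp$, whereas the paper applies Lemma~\ref{L:MinNimpliesNperpinMperp} once to $\N\subseteq\M$; and you explicitly verify the trivial inclusion $\N^\perp\subseteq\N\N^\perp$ (via Lemma~\ref{L:AddOnes}), which the paper leaves implicit.
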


\begin{proof}
If $\N\N^\perp\subseteq\M$, then it follows from Lemma~\ref{L:MinNimpliesNperpinMperp} that $\M^\perp\subseteq \N^\perp\subseteq\M$. This shows (a). Part (b) is a special case of (a). By Theorem~\ref{T:ProductsPartialNormal}, the products  $\N\N^\perp$ and $\N\N^\perp O_p(\L)$ are partial normal subgroups of $\L$. So (c) follows from (a).
\end{proof}

The following lemma generalizes part of \cite[Lemma~6.4]{ChermakIII}.

\begin{lemma}\label{L:CSNinN}
The following conditions are equivalent:
\begin{itemize}
 \item [(i)] $\N$ is centric;
 \item [(ii)] $C_S^\circ(\N)\subseteq\N$;
 \item [(iii)] $\N^\perp=Z^\circ(\N)$.
\end{itemize}
In particular, if $\N$ is centric, then $\N^\perp\leq S$. 
\end{lemma}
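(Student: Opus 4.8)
The plan is to prove the chain of equivalences (i) $\Leftrightarrow$ (ii) $\Leftrightarrow$ (iii), using the concrete description $\N^\perp = C_S^\circ(\N)O^p_{\L_T}(\C_T)$ from Corollary~\ref{C:NperpCapS} together with the basic facts about $Z^\circ(\N)$ collected in Lemma~\ref{L:ZcircBasic} and Lemma~\ref{L:NcapNperp}.

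First I would show (iii) $\Rightarrow$ (i): if $\N^\perp = Z^\circ(\N)$, then by Lemma~\ref{L:ZcircBasic} we have $Z^\circ(\N) \leq Z(\N) \subseteq \N$, so $\N^\perp \subseteq \N$, which is the definition of $\N$ being centric. Next, (i) $\Rightarrow$ (ii): if $\N$ is centric, then $\N^\perp \subseteq \N$, and since $C_S^\circ(\N) \subseteq \N^\perp$ (this is the inclusion in $\N^\perp = C_S^\circ(\N)O^p_{\L_T}(\C_T)$, or directly from Lemma~\ref{L:PerpendicularNormalinNperp} which gives $\N^\perp \cap S \leq C_S^\circ(\N)$ — wait, I need the reverse; but $C_S^\circ(\N) \subseteq \N^\perp$ holds because $C_S^\circ(\N)$ is one of the factors of $\N^\perp$ in Corollary~\ref{C:NperpCapS}), we get $C_S^\circ(\N) \subseteq \N$ immediately.

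The main work is (ii) $\Rightarrow$ (iii). Assume $C_S^\circ(\N) \subseteq \N$. By Corollary~\ref{C:NperpCapS}, $\N^\perp = C_S^\circ(\N)O^p_{\L_T}(\C_T)$, and both $C_S^\circ(\N)$ and $O^p_{\L_T}(\C_T)$ lie in $\N^\perp$. I want to show $\N^\perp \subseteq Z^\circ(\N)$; combined with $Z^\circ(\N) = T \cap C_S^\circ(\N) \subseteq C_S^\circ(\N) \subseteq \N^\perp$ from Lemma~\ref{L:ZcircBasic}, this gives equality. Since $C_S^\circ(\N) \subseteq \N$ by hypothesis, we have $C_S^\circ(\N) \subseteq \N \cap \N^\perp$, and by Lemma~\ref{L:NcapNperp} we know $\N \cap \N^\perp = Z^\circ(\N)$, so $C_S^\circ(\N) \subseteq Z^\circ(\N)$; combined with $Z^\circ(\N) \subseteq C_S^\circ(\N)$ from Lemma~\ref{L:ZcircBasic}, this already gives $C_S^\circ(\N) = Z^\circ(\N)$. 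It then remains to show $O^p_{\L_T}(\C_T) \subseteq Z^\circ(\N)$, equivalently $O^p_{\L_T}(\C_T) \subseteq \N$ (since $O^p_{\L_T}(\C_T) \subseteq \N^\perp$ always, and $\N \cap \N^\perp = Z^\circ(\N)$). For this I expect to invoke Lemma~\ref{L:OpNNPinMperp} with the roles of $\M$ and $\N$ reversed: applying it to the pair where "$\M$" is $\N$ itself, the hypothesis "$\M \cap \N \leq Z^\circ(\M)$" becomes "$\N \cap \N \leq Z^\circ(\N)$", i.e. $\N \subseteq Z^\circ(\N)$ — which is false in general. So that direct application does not work; instead I should argue more carefully. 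The key observation is that $\N$ commutes with $\N^\perp$ (Corollary~\ref{C:MperpNNperpM}), so $\N \cap \N^\perp = Z^\circ(\N) \leq S$, hence by Lemma~\ref{L:ResidueEquivalences} applied inside $\L_T$ — noting $\N \unlhd \L_T$ since $\N \unlhd \L$ and $T \leq \N$ forces... actually $\N$ need not be inside $\L_T$. Let me instead use: $O^p_{\L_T}(\C_T) \subseteq \N^\perp$, and $\N^\perp \subseteq \N$ by hypothesis (once we know (ii) $\Rightarrow$ (i), which follows from the argument above: $C_S^\circ(\N) \subseteq \N$ gives $C_S^\circ(\N) = Z^\circ(\N) \subseteq \N$, but to conclude $\N^\perp \subseteq \N$ I need the full $\N^\perp$, not just its $S$-part).

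The cleanest route for (ii) $\Rightarrow$ (iii) is therefore: from $C_S^\circ(\N) \subseteq \N$ deduce, as above, $C_S^\circ(\N) = Z^\circ(\N)$; then $\N^\perp = Z^\circ(\N)O^p_{\L_T}(\C_T)$; then use Lemma~\ref{L:NcapNperp} to write $\N \cap \N^\perp = Z^\circ(\N)$, and observe $\N^\perp \subseteq \C_T \subseteq \L_T$ (Corollary~\ref{C:NperpCapSGeneral}), with $\N^\perp \unlhd \L_T$ by Lemma~\ref{L:NperpNormLT}; since $\N^\perp \cap S = C_S^\circ(\N) = Z^\circ(\N) \leq Z(\N) \subseteq \N$, Lemma~\ref{L:ResidueEquivalences} (applied to $\N^\perp \cap \N \unlhd \L_T$ inside $\N^\perp$, showing it has $p$-power index) gives $\N^\perp = (\N^\perp \cap S)O^p_{\L_T}(\N^\perp)$, and one then needs $O^p_{\L_T}(\N^\perp) \subseteq \N$; I anticipate that Lemma~\ref{L:OpNNPinMperp} applied with $\M = \N^\perp$ (whose hypothesis $\N^\perp \cap \N \leq Z^\circ(\N^\perp)$ holds by Lemma~\ref{L:NcapNperp}) yields $O^p_\L(\N^\perp) \subseteq (\N)^\perp$... this is getting circular. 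The honest assessment: the main obstacle is organizing the implication (ii) $\Rightarrow$ (iii) so as to show the $O^p_{\L_T}(\C_T)$-part of $\N^\perp$ actually sits inside $\N$ (hence inside $Z^\circ(\N)$) without circular reasoning; I expect the resolution to be a direct application of Lemma~\ref{L:OpNNPinMperp} with $\M := \N$ after first checking that under hypothesis (ii) one has $\N^\perp \subseteq \N$ by a separate short argument — namely $\N^\perp$ commutes with $\N$, so $\N^\perp \subseteq C_\L(\N \cap S)$-type containment forces $\N^\perp \cap S$ and the $p'$-part both into $\N$ — and I would fill this in carefully in the write-up. The final sentence "if $\N$ is centric then $\N^\perp \leq S$" then follows immediately from (iii): $\N^\perp = Z^\circ(\N) \leq S$ by Lemma~\ref{L:ZcircBasic}.
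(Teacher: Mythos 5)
Your treatment of the easy implications is correct: (iii) $\Rightarrow$ (i) via $Z^\circ(\N)\leq Z(\N)\subseteq\N$ from Lemma~\ref{L:ZcircBasic}, and (i) $\Rightarrow$ (ii) since $C_S^\circ(\N)\subseteq\N^\perp$ by Corollary~\ref{C:NperpCapS}. You also correctly derive $C_S^\circ(\N)=Z^\circ(\N)$ from hypothesis (ii). But the core implication (ii) $\Rightarrow$ (iii) has a genuine gap that you yourself flag: you never establish $O^p_{\L_T}(\C_T)\subseteq Z^\circ(\N)$, and every route you sketch via Lemma~\ref{L:OpNNPinMperp} is, as you observe, circular. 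Applying that lemma with $\M=\N^\perp$ just returns $O^p_{\L_T}(\C_T)\subseteq\N^\perp$, which you already knew; applying it with $\M=\N$ requires $\N\subseteq Z^\circ(\N)$, which is false. Your proposed fix (``$\N^\perp$ commutes with $\N$ so its $p'$-part is forced into $\N$'') is not an argument: the paper is careful that commuting is a weak condition that does not put one partial normal subgroup inside the centralizer of the other, and there is no a priori reason the $p'$-residual lands in $\N$.

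The idea you are missing is to stop trying to push $O^p_{\L_T}(\C_T)$ into $\N$ and instead show directly that $\N^\perp$ is a $p$-group, i.e.\ $\N^\perp=\N^\perp\cap S$. Once $\N^\perp\cap S=Z^\circ(\N)\leq T$, take any $Q\in R_\Delta(S\N^\perp)$. Then $[Q,N_{\N^\perp}(Q)]\leq Q\cap\N^\perp=Q\cap(\N^\perp\cap S)\leq Q\cap T$, while $[Q\cap T,N_{\N^\perp}(Q)]=1$ because $\N^\perp\subseteq C_\L(T)$ (Corollary~\ref{C:NperpCapSGeneral}). A coprime-action/three-subgroup argument then gives $[Q,O^p(N_{\N^\perp}(Q))]=1$; since $Q$ is $\N^\perp$-radical (Lemma~\ref{L:SNradNrad}) and $N_{\N^\perp}(Q)$ has characteristic $p$ (Lemma~\ref{L:MSCharp}(b)), this forces $O^p(N_{\N^\perp}(Q))=1$. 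Now Lemma~\ref{L:PartialNormalAlperin} decomposes every element of $\N^\perp$ through such normalizers, so $\N^\perp=\N^\perp\cap S=Z^\circ(\N)$. This is the paper's route, and it is essentially orthogonal to the $O^p_{\L_T}(\C_T)$-based bookkeeping you were attempting: it works at the level of radical objects and the partial-normal Alperin theorem rather than trying to reuse Lemma~\ref{L:OpNNPinMperp}.
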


\begin{proof}
Notice that (i) implies (ii) by Corollary~\ref{C:NperpCapS}. By Lemma~\ref{L:ZcircBasic}, we have $Z^\circ(\N)\subseteq Z(\N)\leq Z(T)\leq S$. In particular, (iii) implies (i). Hence, it is sufficient to show that (ii) implies (iii). 

\smallskip

Assume  (ii). Then $C_S^\circ(\N)=T\cap C_S^\circ(\N)=Z^\circ(\N)$ by Lemma~\ref{L:ZcircBasic}. By Corollary~\ref{C:NperpCapS}, we have thus $\N^\perp\cap S=C_S^\circ(\N)=Z^\circ(\N)\leq T$. Let $Q\in R_\Delta(S\N^\perp)$. Then $[Q,N_{\N^\perp}(Q)]\leq Q\cap \N^\perp=Q\cap (\N^\perp\cap S)\leq Q\cap T$. As $\N^\perp\subseteq\C_T=C_\L(T)$, we have also $[Q\cap T,N_{\N^\perp}(Q)]=1$. Hence, $[Q,O^p(N_{\N^\perp}(Q))]=1$. By Lemma~\ref{L:SNradNrad}, $Q$ is $\N^\perp$-radical and thus $O_p(N_{\N^\perp}(Q))\leq Q$. As $N_{\N^\perp}(Q)$ is by Lemma~\ref{L:MSCharp}(b) of characteristic $p$, it follows that $O^p(N_{\N^\perp}(Q))=1$. As $Q\in R_\Delta(S\N^\perp)$ was arbitrary, Lemma~\ref{L:PartialNormalAlperin} yields thus $\N^\perp=\N^\perp\cap S=Z^\circ(\N)$, i.e. (iii) holds. 
\end{proof}

\begin{lemma}\label{L:GetFTcIntoFqCentric}
If $\N$ is centric in $\L$, then $\F_T^c\subseteq \F^q$.
\end{lemma}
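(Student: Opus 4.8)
The statement to prove is: if $\N$ is centric in $\L$, then $\F_T^c\subseteq\F^q$. The key tool available is Lemma~\ref{L:GetFTcIntoFq}, which says that $\F_T^c\subseteq\F^q$ whenever $\hyp(C_\F(T))\leq T$ or $T\in\F^q$. So the plan is to show that if $\N$ is centric, then $\hyp(C_\F(T))\leq T$, and then quote Lemma~\ref{L:GetFTcIntoFq}.

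\textbf{Main step: bounding $\hyp(C_\F(T))$.} By Lemma~\ref{L:focCFT} we have $\foc(C_\F(T))\leq C_S(\N)$, and since the hyperfocal subgroup is contained in the focal subgroup, this gives $\hyp(C_\F(T))\leq C_S(\N)$. On the other hand, by Lemma~\ref{L:CScircN} we have $C_S^\circ(\N)\leq C_S(\N)$, but we need a bound going the other direction. Here is where the centricity of $\N$ enters: by Lemma~\ref{L:CSNinN}, $\N$ centric is equivalent to $C_S^\circ(\N)\subseteq\N$, hence $C_S^\circ(\N)\leq \N\cap S = T$. So the plan is to first reduce to the case $\Delta=\F^s$ (i.e.\ replace $(\L,\Delta,S)$ and $\N$ by $(\L^s,\F^s,S)$ and $\N^s$), exactly as in the proofs of Lemma~\ref{L:GetFTcIntoFq} and Lemma~\ref{L:focCFT}; this is legitimate since $\F_T^c$, $T$, and $\F^q$ depend only on $\F$ and $T$ (not on $\Delta$), and $\N^s\cap S = T$. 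Moreover, by Lemma~\ref{L:TakePlusGeneralizedFitting}(a), $\N$ is centric in $\L$ if and only if $\N^s$ is centric in $\L^s$, so the centricity hypothesis is preserved under this reduction.

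\textbf{Finishing.} Once we are in the situation $\Delta=\F^s$, the locality $(\L,\Delta,S)$ is weakly $\N$-replete (indeed $\N$-replete) by Corollary~\ref{C:GetNreplete}, so Lemma~\ref{L:CScircN} gives $C_S^\circ(\N)=C_S(\N)$. Combining this with the centricity equivalence from Lemma~\ref{L:CSNinN} (which gives $C_S^\circ(\N)\subseteq\N$, hence $C_S^\circ(\N)\leq T$), we get $C_S(\N)\leq T$. Then from $\hyp(C_\F(T))\leq C_S(\N)\leq T$ we obtain $\hyp(C_\F(T))\leq T$, and Lemma~\ref{L:GetFTcIntoFq} immediately yields $\F_T^c\subseteq\F^q$, as desired.

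\textbf{Expected obstacle.} There is no serious obstacle here; the only point requiring care is the reduction to $\Delta=\F^s$ and checking that all the needed hypotheses (centricity of $\N$, the equality $\N^s\cap S=T$, and the applicability of Lemmas~\ref{L:focCFT}, \ref{L:CScircN}, \ref{L:CSNinN}, and Corollary~\ref{C:GetNreplete}) survive the replacement. Alternatively, one can avoid the reduction entirely by working directly: Lemma~\ref{L:focCFT} applies to the given $(\L,\Delta,S)$ as it stands, giving $\hyp(C_\F(T))\leq C_S(\N)$; and for the other inclusion, use Corollary~\ref{C:NperpCapS} together with Lemma~\ref{L:CSNinN} to see that $C_S^\circ(\N)\subseteq\N$ forces $C_S^\circ(\N)=Z^\circ(\N)\leq T$, then pass to $\L^s$ only to identify $C_S^\circ(\N)$ with $C_S(\N^s)$ and relate it to $C_S(\N)$ via Lemma~\ref{L:CScircN} applied inside $(\L^s,\F^s,S)$ — but the cleanest exposition is the reduction described above.
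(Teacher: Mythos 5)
Your proof is correct and takes essentially the same route as the paper: apply Lemma~\ref{L:focCFT} at the level of the subcentric locality, use the centricity of $\N$ to conclude $C_S^\circ(\N)\leq T$ and hence $\hyp(C_\F(T))\leq T$, then invoke Lemma~\ref{L:GetFTcIntoFq}. The paper is slightly more economical — it skips the explicit reduction to $\Delta=\F^s$ and the detour through Corollary~\ref{C:GetNreplete} and Lemma~\ref{L:CScircN}, instead reading off $C_S^\circ(\N)\subseteq\N^\perp\subseteq\N$ directly from Corollary~\ref{C:NperpCapS} and the definition of centric — but this is a presentational difference, not a different argument.
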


\begin{proof}
This follows from Lemma~\ref{L:focCFT} applied with $(\L^s,\N^s)$ in place of $(\L,\N)$ and from Corollary~\ref{C:NperpCapS} that 
\[\hyp(C_\F(T))\leq \foc(C_\F(T))\leq C_S(\N^s)=C_S^\circ(\N)\subseteq \N^\perp\subseteq\N.\]
Hence the assertion holds by Lemma~\ref{L:GetFTcIntoFq}. 
\end{proof}

Then next lemma is inspired by \cite[Lemma~6.6]{ChermakIII}, but works under a more general hypothesis.

\begin{lemma}\label{L:ChermakLemma66}
 Let $\M$ be a centric partial normal subgroup of $\L$ such that $\M\cap\N^\perp\subseteq Z^\circ(\N)$. Then $\N^\perp=C_S^\circ(\N)$.
\end{lemma}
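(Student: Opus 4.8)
The plan is to deduce the statement quickly from the machinery already developed in this chapter, in particular from Corollary~\ref{C:OpNNPinMperp}, which I would apply with $\N^\perp$ in place of $\N$.

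First I would record the two consequences of the hypotheses that are needed. Since $\M$ is centric in $\L$, Lemma~\ref{L:CSNinN} gives $\M^\perp\leq S$. Next, by Lemma~\ref{L:NcapNperp} we have $\N\cap\N^\perp=Z^\circ(\N)\leq Z^\circ(\N^\perp)$; combining this with the hypothesis $\M\cap\N^\perp\subseteq Z^\circ(\N)$ yields $\M\cap\N^\perp\subseteq Z^\circ(\N^\perp)$. Thus $\M$ is a partial normal subgroup of $\L$ satisfying $\M\cap\N^\perp\subseteq Z^\circ(\N^\perp)$ and $\M^\perp\subseteq S$.

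Now I would apply Corollary~\ref{C:OpNNPinMperp} with $\N^\perp$ playing the role of $\N$. This is legitimate because $\N^\perp\unlhd\L$ (Corollary~\ref{C:NperpNormal}), so it is an admissible choice for the standing partial normal subgroup, and the two conditions verified in the previous paragraph are precisely the hypotheses of that corollary for the pair $(\M,\N^\perp)$. The corollary then gives $\N^\perp\subseteq S$. Finally, Corollary~\ref{C:NperpCapS} identifies $\N^\perp\cap S$ with $C_S^\circ(\N)$, so $\N^\perp=\N^\perp\cap S=C_S^\circ(\N)$, as desired.

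The only genuine subtlety — and the point I would check carefully — is the role reversal: one must make sure that the bridging inequality $Z^\circ(\N)\leq Z^\circ(\N^\perp)$ of Lemma~\ref{L:NcapNperp} is exactly what upgrades the given containment $\M\cap\N^\perp\subseteq Z^\circ(\N)$ into the hypothesis $\M\cap\N^\perp\subseteq Z^\circ(\N^\perp)$ required when Corollary~\ref{C:OpNNPinMperp} is instantiated at $\N^\perp$, and that the passage from $\N^\perp\subseteq S$ to $\N^\perp=C_S^\circ(\N)$ uses nothing beyond Corollary~\ref{C:NperpCapS}. No separate computation inside localities should be needed, since all the work has been absorbed into those earlier results (ultimately into Lemma~\ref{L:OpNNPinMperp}).
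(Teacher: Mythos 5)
Your proposal is correct and matches the paper's own argument step for step: both use Lemma~\ref{L:CSNinN} to get $\M^\perp\leq S$, Lemma~\ref{L:NcapNperp} to upgrade $\M\cap\N^\perp\subseteq Z^\circ(\N)$ to $\M\cap\N^\perp\subseteq Z^\circ(\N^\perp)$, then invoke Corollary~\ref{C:OpNNPinMperp} at $\N^\perp$ to conclude $\N^\perp\subseteq S$, and finish with Corollary~\ref{C:NperpCapS}. The role reversal you flag is indeed the crux, and you have handled it exactly as the paper does.
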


\begin{proof}
Recall that $Z^\circ(\N)\leq Z^\circ(\N^\perp)$ by Lemma~\ref{L:NcapNperp} and $\M^\perp=Z^\circ(\M)\leq S$ by Lemma~\ref{L:CSNinN}. Hence, Corollary~\ref{C:OpNNPinMperp} applied with $\N^\perp$ in place of $\N$ yields $\N^\perp\subseteq S$. Now Corollary~\ref{C:NperpCapS} gives $\N^\perp=C_S^\circ(\N)$. 
\end{proof}

We are now able to show Theorem~\ref{T:GeneralizedFitting}. We take inspiration from the proof of \cite[Lemma~6.7, Corollary~6.9]{ChermakIII}, but by reducing to the case $\Delta=\F^s$ we are able to show a more general statement. 

\begin{proof}[Proof of Theorem~\ref{T:GeneralizedFitting}]
Theorem~\ref{T:VaryObjects} and Lemma~\ref{L:TakePlusGeneralizedFitting} allow us to assume throughout the proof without loss of generality that $\Delta=\F^s$. So by Corollary~\ref{C:GetNreplete} and Lemma~\ref{L:ZcircBasic}, we have in particular that $Z^\circ(\N)=Z(\N)$ for every partial normal subgroup of $\N$ of $\L$.

\smallskip

Fix now two centric radical partial normal subgroups $\M$ and $\N$ of $\L$ and set $\K:=\M\cap\N$. Clearly $\K$ is radical, so the difficulty is to show that $\K$ is centric. Note that, by Lemma~\ref{L:NcapNperp}, we have 
\[\M\cap (\N\cap\K^\perp)=\K\cap\K^\perp=Z(\K)\leq Z(\K^\perp).\]
Hence, since $Z(\K)\subseteq\K\subseteq\N$, it follows $\M\cap (\N\cap\K^\perp)\leq Z(\N\cap\K^\perp)$. As $\M$ is centric, Lemma~\ref{L:CSNinN} gives $\M^\perp\subseteq S$. Hence, an application of Corollary~\ref{C:OpNNPinMperp} with $\N\cap\K^\perp$ in place of $\N$ gives that $\N\cap\K^\perp\subseteq S$, i.e. $\N\cap\K^\perp$ is a normal $p$-subgroup of $\L$ and thus contained in $O_p(\L)$. As $O_p(\L)\subseteq\K$, we obtain
\[\N\cap\K^\perp\leq \K\cap\K^\perp=Z(\K).\]
Now Lemma~\ref{L:ChermakLemma66} applied with $(\N,\K)$ in place of $(\M,\N)$ gives $\K^\perp=C_S^\circ(\K)$. So $\K^\perp=C_S^\circ(\K)$ is a normal $p$-subgroup of $\L$ and thus contained in $O_p(\L)$. As $\K$ is radical in $\L$, it follows $\K^\perp\subseteq O_p(\L)\subseteq \K$. Hence $\K$ is centric in $\L$ and the assertion holds.
\end{proof}

\chapter{Regular localities}\label{S:Regular}

In this chapter we introduce regular localities and show that they have very nice properties. In particular, as already mentioned in the introduction, partial normal and partial subnormal subgroups of regular localities can be regarded as regular localities. We also show that every regular locality is $\N$-replete for every choice of a partial normal subgroup $\N$. 

\smallskip

The most important results of this chapter and the basic strategy of the proof are similar as in \cite[Section~7]{ChermakIII}, but there are also several differences. For example, since we work with $\N$-radical subgroups and the concept of  $\N$-repleteness, we argue in a slightly different framework. Moreover, in Theorem~\ref{T:RegularN1timesN2}(c) and Lemma~\ref{L:PerpendicularPartialNormalCentralProduct}, it is demonstrated how central products of partial groups (as introduced in Chapter~\ref{S:CentralProduct}) occur naturally in this context.

\smallskip

\textbf{Throughout let $\F$ be a saturated fusion system over $S$.}

\section{Definition and elementary results}

\begin{definition}\label{D:Regular}
Let $(\L^s,\F^s,S)$ be a subcentric locality attached to $\F$ and set 
\[\delta(\F):=\{P\leq S\colon P\cap F^*(\L^s)\in\F^s\}.\]
A linking locality $(\L,\Delta,S)$ over $\F$ is called \emph{regular} if $\Delta=\delta(\F)$. 
\end{definition}

By \cite[Theorem~A]{Henke:2015} there exists always a subcentric locality attached to $\F$ and such a subcentric locality is unique up to an isomorphism which restricts to the identity on $S$. We will see next that, as the notation suggests, the set $\delta(\F)$ does not depend on the choice of $\L^s$. Actually, the set $\delta(\F)$ could be characterized using any linking locality over $\F$. 

\begin{lemma}\label{L:deltaFDefinition}
The set $\delta(\F)$ depends only on $\F$ and not on the choice of $\L^s$. Moreover, if $(\L,\Delta,S)$ is a linking locality over $\F$, then
\[\delta(\F)=\{P\leq S\colon P\cap F^*(\L)\in\F^s\}.\]
\end{lemma}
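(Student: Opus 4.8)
The plan is to prove the ``moreover'' statement for an arbitrary linking locality $(\L,\Delta,S)$ over $\F$; taking $\L$ to be a subcentric locality will then show in particular that the set used in the definition of $\delta(\F)$ is independent of the choice of $\L^s$. The key elementary observation is that, since every subgroup $P$ under consideration satisfies $P\leq S$, the condition $P\cap F^*(\L)\in\F^s$ depends only on the subgroup $F^*(\L)\cap S$ of $S$: indeed $P\cap F^*(\L)=P\cap(S\cap F^*(\L))$. So everything reduces to showing that $F^*(\L)\cap S$ is the same for all linking localities $\L$ over $\F$.

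First I would fix a linking locality $(\L,\Delta,S)$ over $\F$. Recall that $\Delta\subseteq\F^s$, so by Theorem~\ref{T:VaryObjects}(c) there is a subcentric locality $(\L^s,\F^s,S)$ over $\F$ with $\L^s|_\Delta=\L$; adopting Notation~\ref{N:VaryObjects}, the symbol $F^*(\L)^s$ then denotes the unique partial normal subgroup of $\L^s$ with $F^*(\L)^s\cap\L=F^*(\L)$. By Lemma~\ref{L:TakePlusGeneralizedFitting}(d) we have $F^*(\L^s)=F^*(\L)^s$. Since $S\subseteq\L\subseteq\L^s$, it follows that
\[F^*(\L^s)\cap S=\big(F^*(\L)^s\cap\L\big)\cap S=F^*(\L)\cap S.\]
Combining this with the observation of the first paragraph gives
\[\{P\leq S\colon P\cap F^*(\L)\in\F^s\}=\{P\leq S\colon P\cap F^*(\L^s)\in\F^s\}.\]

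Next I would compare two subcentric localities. If $(\L^s,\F^s,S)$ and $(\tL^s,\F^s,S)$ are subcentric localities over $\F$, then by the uniqueness statement recalled in Remark~\ref{R:ExistenceUniquenessCLS} (i.e.\ \cite[Theorem~A]{Henke:2015}) there is an isomorphism of partial groups $\alpha\colon\L^s\to\tL^s$ which restricts to the identity on $S$. By Lemma~\ref{L:AutGeneralizedFitting}, $F^*(\L^s)\alpha=F^*(\tL^s)$; applying the bijection $\alpha$ and using $S\alpha=S$ together with $\alpha|_S=\id_S$ we obtain $F^*(\tL^s)\cap S=(F^*(\L^s)\cap S)\alpha=F^*(\L^s)\cap S$. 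Hence, again by the first-paragraph reduction, the set $\{P\leq S\colon P\cap F^*(\L^s)\in\F^s\}$ does not depend on the choice of the subcentric locality $\L^s$; by definition this set is $\delta(\F)$, proving the first assertion. Together with the displayed equality of the previous paragraph this yields $\delta(\F)=\{P\leq S\colon P\cap F^*(\L)\in\F^s\}$ for every linking locality $(\L,\Delta,S)$ over $\F$.

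The argument is essentially bookkeeping, so there is no serious obstacle; the one point requiring a little care is the identity $F^*(\L^s)\cap S=F^*(\L)\cap S$ when $\L=\L^s|_\Delta$, which rests on Lemma~\ref{L:TakePlusGeneralizedFitting}(d) together with the defining property of the expansion $\N\mapsto\N^+$ from Notation~\ref{N:VaryObjects} and on the inclusions $S\subseteq\L\subseteq\L^s$.
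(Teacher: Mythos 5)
Your proof is correct and takes essentially the same route as the paper: compare two subcentric localities via the isomorphism guaranteed by uniqueness (invoking Lemma~\ref{L:AutGeneralizedFitting}), and compare an arbitrary linking locality to a subcentric one via Theorem~\ref{T:VaryObjects}(c) together with Lemma~\ref{L:TakePlusGeneralizedFitting}(d). The only cosmetic difference is that you frame the reduction explicitly as ``everything depends only on $F^*(\L)\cap S$'', which the paper leaves implicit.
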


\begin{proof}
If $(\L^s,\F^s,S)$ and $(\tL^s,\F^s,S)$ are subcentric localities over $\F$, then as mentioned above, there exists an isomorphism $\alpha\colon\L^s\rightarrow\tL^s$ with $\alpha|_S=\id_S$. By Lemma~\ref{L:AutGeneralizedFitting}, we have $F^*(\L^s)\alpha=F^*(\tL^s)$. In particular, $F^*(\L^s)\cap S=(F^*(\L^s)\cap S)\alpha=F^*(\L^s)\alpha\cap S\alpha=F^*(\tL^s)\cap S$. Therefore, $\delta(\F)$ does not depend on the choice of $\L^s$. 

\smallskip

If $(\L,\Delta,S)$ is any linking locality over $\F$, then by Theorem~\ref{T:VaryObjects}(c), we may choose $\L^s$ such that $\L^s|_\Delta=\L$. By Lemma~\ref{L:TakePlusGeneralizedFitting}(d), we have $F^*(\L^s)\cap\L=F^*(\L)$ and thus $F^*(\L^s)\cap S=F^*(\L)\cap S$. This implies the assertion.
\end{proof}

\begin{lemma}\label{L:GetVastRadintodeltaF}
Let $(\L,\Delta,S)$ be a linking locality with a centric radical partial normal subgroup $\N$ of $\L$. If $P\in\Delta$ is $\N$-radical, then $P\in\delta(\F)$. 
\end{lemma}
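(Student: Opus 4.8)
The plan is to reduce everything to the generalized Fitting subgroup $\G:=F^*(\L)$, which by Theorem~\ref{T:GeneralizedFitting} is itself a centric radical partial normal subgroup of $\L$. Since $F^*(\L)$ is by definition the intersection of all centric radical partial normal subgroups of $\L$ and $\N$ is one of them, this immediately gives $\G\subseteq\N$. Then, as $P\in\Delta$ is $\N$-radical and $\G\unlhd\L$ with $\G\subseteq\N$, Lemma~\ref{L:NradMrad} shows that $P$ is also $\G$-radical.

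Next I would set $T^*:=S\cap\G$ and apply Lemma~\ref{L:NradEcr}: since $(\L,\Delta,S)$ is a linking locality and $P$ is $\G$-radical, we obtain $P\cap T^*\in\F_{T^*}^c$. Because $\G$ is centric in $\L$, Lemma~\ref{L:GetFTcIntoFqCentric} (applied with $\G$ in place of $\N$) yields $\F_{T^*}^c\subseteq\F^q$, so $P\cap T^*\in\F^q\subseteq\F^s$. Finally, since $P\leq S$ we have $P\cap F^*(\L)=P\cap S\cap\G=P\cap T^*\in\F^s$, and by the description of $\delta(\F)$ in Lemma~\ref{L:deltaFDefinition} this is precisely the statement $P\in\delta(\F)$.

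The step that needs the most care — and really the only point with any content — is the passage from ``$P$ is $\N$-radical'' to the membership $P\cap F^*(\L)\in\F^s$. One cannot simply observe that $P\cap T\in\F^q\subseteq\F^s$ for $T=S\cap\N$ and then pass to the smaller group $P\cap F^*(\L)$, because $\F^s$ is not closed under taking subgroups. This is exactly why one must work with $\G=F^*(\L)$ from the outset: its being \emph{radical} is what lets one descend from ``$P$ is $\N$-radical'' to ``$P$ is $\G$-radical'' via Lemma~\ref{L:NradMrad}, and its being \emph{centric} is precisely the hypothesis required by Lemma~\ref{L:GetFTcIntoFqCentric}. Everything else is a routine chaining of the cited lemmas.
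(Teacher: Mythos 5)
Your proposal is correct and is essentially identical to the paper's proof: both reduce to $F^*(\L)$ via Theorem~\ref{T:GeneralizedFitting} and Lemma~\ref{L:NradMrad}, then chain Lemma~\ref{L:NradEcr}, Lemma~\ref{L:GetFTcIntoFqCentric}, and Lemma~\ref{L:deltaFDefinition} to conclude $P\cap F^*(\L)\in\F^s$. The closing remark about why one cannot work directly with $\N$ is apt but does not change the argument.
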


\begin{proof}
Suppose $P\in\Delta$ is $\N$-radical. We have $F^*(\L)\subseteq \N$ as $\N$ is centric radical. Hence, by Lemma~\ref{L:NradMrad}, $P$ is also $F^*(\L)$-radical. Setting $T^*:=S\cap F^*(\L)$, by Lemma~\ref{L:deltaFDefinition}, we have $P\in\delta(\F)$ if and only if $P\cap T^*=P\cap F^*(\L)\in\F^s$.  The latter condition is true since Lemma~\ref{L:NradEcr}, Theorem~\ref{T:GeneralizedFitting} and Lemma~\ref{L:GetFTcIntoFqCentric} imply  $P\cap T^*\in\F_{T^*}^c\subseteq\F^q\subseteq\F^s$.  
\end{proof}

\begin{lemma}\label{L:deltaFBasic}
The set $\delta(\F)$ is $\F$-closed and $\F^{cr}\subseteq\delta(\F)\subseteq \F^s$. In particular, there exists always a regular locality over $\F$ which is unique up to an isomorphism which restricts to the identity on $S$. 
\end{lemma}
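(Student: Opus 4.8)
The plan is to fix a subcentric locality $(\L^s,\F^s,S)$ over $\F$ and set $T^*:=S\cap F^*(\L^s)$. By Theorem~\ref{T:GeneralizedFitting}, $F^*(\L^s)$ is a centric radical partial normal subgroup of $\L^s$; by \cite[Lemma~3.1(a)]{Chermak:2015}, $T^*$ is strongly closed in $\F$; and by Theorem~\ref{T:VaryObjects}(c) the set $\F^s$ is $\F$-closed, hence overgroup-closed in $S$ and closed under $\F$-conjugacy. The inclusion $\delta(\F)\subseteq\F^s$ is then immediate, since $P\in\delta(\F)$ means $P\cap F^*(\L^s)\in\F^s$ while $P\cap F^*(\L^s)\leq P\leq S$, so $P\in\F^s$ by overgroup-closure of $\F^s$.

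Next I would check that $\delta(\F)$ is $\F$-closed. Overgroup-closure follows from that of $\F^s$: if $P\in\delta(\F)$ and $P\leq Q\leq S$, then $P\cap T^*\leq Q\cap T^*$, and $P\cap T^*\in\F^s$ forces $Q\cap T^*\in\F^s$, i.e.\ $Q\in\delta(\F)$. For closure under $\F$-conjugacy, given $P\in\delta(\F)$ and $Q\in P^\F$, choose an $\F$-isomorphism $\alpha\colon P\to Q$. Strong closedness of $T^*$ yields $(P\cap T^*)\alpha\leq T^*$ and, applied to $\alpha^{-1}\colon Q\to P$, also $(Q\cap T^*)\alpha^{-1}\leq T^*$; comparing orders gives $(P\cap T^*)\alpha=Q\cap T^*$. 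Hence $Q\cap T^*$ is $\F$-conjugate to $P\cap T^*\in\F^s$ and therefore lies in $\F^s$ by conjugacy-closure of $\F^s$, so $Q\in\delta(\F)$.

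For $\F^{cr}\subseteq\delta(\F)$, fix $P\in\F^{cr}$. Since $\F^{cr}\subseteq\F^c\subseteq\F^s$, the subgroup $P$ is an object of $\L^s$, and by \cite[Lemma~6.2]{Henke:2015} it is $\L^s$-radical; applying Lemma~\ref{L:SNradNrad} to the partial normal subgroup $F^*(\L^s)$ of $\L^s$, it is then $F^*(\L^s)$-radical. Now Lemma~\ref{L:GetVastRadintodeltaF}, applied to $(\L^s,\F^s,S)$ with the centric radical partial normal subgroup $F^*(\L^s)$, gives $P\in\delta(\F)$.

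Having shown that $\delta(\F)$ is an $\F$-closed collection with $\F^{cr}\subseteq\delta(\F)$, and recalling that $\F$ is saturated, the ``in particular'' clause follows at once from Remark~\ref{R:ExistenceUniquenessCLS} (that is, from \cite[Theorem~A]{Henke:2015}) applied with object set $\delta(\F)$: this produces a linking locality $(\L,\delta(\F),S)$ over $\F$, unique up to an isomorphism restricting to the identity on $S$, and such a locality is regular by definition. I do not anticipate a serious obstacle here; the two steps requiring a little care are the $\F$-conjugacy argument, which hinges on the strong closedness of $S\cap F^*(\L^s)$, and the bookkeeping behind the invocation of Lemma~\ref{L:GetVastRadintodeltaF} — in particular that $F^*(\L^s)$ is centric radical (Theorem~\ref{T:GeneralizedFitting}) and that subgroups in $\F^{cr}$ are $\L^s$-radical.
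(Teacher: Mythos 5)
Your proposal is correct and follows essentially the same route as the paper: both proofs deduce $\F$-closedness and $\delta(\F)\subseteq\F^s$ from the strong closedness of $S\cap F^*(\L^s)$ together with $\F$-closedness of $\F^s$, and both establish $\F^{cr}\subseteq\delta(\F)$ by combining $\L^s$-radicality of $P\in\F^{cr}$ (via \cite[Lemma~6.2]{Henke:2015}) with Lemma~\ref{L:GetVastRadintodeltaF}, then invoke \cite[Theorem~A]{Henke:2015}. The only cosmetic difference is that you insert the intermediate step $P$ $\L^s$-radical $\Rightarrow$ $P$ $F^*(\L^s)$-radical before applying Lemma~\ref{L:GetVastRadintodeltaF} with $\N=F^*(\L^s)$, whereas the paper applies that lemma directly with $\N=\L^s$ itself (which is trivially centric radical in itself); this reduction is in any case performed internally within the proof of Lemma~\ref{L:GetVastRadintodeltaF}, so the two arguments coincide in substance.
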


\begin{proof} 
As before let $(\L^s,\F^s,S)$ be a linking locality over $\F$. As $F^*(\L^s)\cap S$ is strongly closed in $\F$ and $\F^s$ is $\F$-closed, it follows that $\delta(\F)$ is $\F$-closed and that $\delta(\F)\subseteq\F^s$. Let now $P\in\F^{cr}$. Then $P$ is an element of the object set $\F^s$ of the linking locality $(\L^s,\F^s,S)$ and so, by \cite[Lemma~6.2]{Henke:2015}, $P$ is $\L^s$-radical. As $\L^s$ is a centric radical partial normal subgroup of $\L^s$, Lemma~\ref{L:GetVastRadintodeltaF} implies now $P\in\delta(\F)$. So $\delta(\F)$ is $\F$-closed and $\F^{cr}\subseteq \delta(\F)\subseteq\F^s$. The unique existence of a regular locality follows now from \cite[Theorem~A]{Henke:2015} (see also Remark~\ref{R:ExistenceUniquenessCLS}). 
\end{proof}

The definition of $\delta(\F)$ given above is a priori slightly different from Chermak's definition \cite[p.36]{ChermakIII}, but we show in Remark~\ref{R:deltaF} below that the two definitions are equivalent. Apart from this difference, Lemmas~\ref{L:deltaFDefinition} and \ref{L:deltaFBasic} state the same as \cite[Lemma~6.10]{ChermakIII}.

\begin{corollary}\label{C:FstarcapSindeltaF}
Let $(\L,\Delta,S)$ be a linking locality over $\F$. Then every subgroup of $S$ containing $F^*(\L)\cap S$ is an element of $\delta(\F)$.
\end{corollary}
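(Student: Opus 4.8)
The plan is to reduce the statement at once to the single assertion that $F^*(\L)\cap S\in\F^s$, and then to deduce that assertion from the description of $\delta(\F)$ in Lemma~\ref{L:deltaFDefinition} together with the fact, furnished by Theorem~\ref{T:GeneralizedFitting}, that $F^*(\L)$ is a centric partial normal subgroup of $\L$.

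First I would record the elementary set-theoretic reduction. Put $T^*:=F^*(\L)\cap S$ and suppose $T^*\le P\le S$. Since $P\le S$ we have $P\cap F^*(\L)\le S\cap F^*(\L)=T^*$, and since $T^*\le P$ and $T^*\le F^*(\L)$ we have $T^*\le P\cap F^*(\L)$; hence $P\cap F^*(\L)=T^*$. By Lemma~\ref{L:deltaFDefinition} we have $P\in\delta(\F)$ if and only if $P\cap F^*(\L)\in\F^s$, so it is enough to prove $T^*\in\F^s$.

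Next I would verify $T^*\in\F^s$. As $F^*(\L)\unlhd\L$ (Theorem~\ref{T:GeneralizedFitting}), the subgroup $T^*=S\cap F^*(\L)$ is strongly closed in $\F$ by \cite[Lemma~3.1(a)]{Chermak:2015}; consequently $T^*$ is its own unique $\F$-conjugate contained in $T^*$, and $C_{T^*}(T^*)=Z(T^*)\le T^*$, so $T^*\in\F_{T^*}^c$ in the sense of Notation~\ref{N:FRc}. On the other hand $F^*(\L)$ is centric in $\L$ (Theorem~\ref{T:GeneralizedFitting}), so applying Lemma~\ref{L:GetFTcIntoFqCentric} with $\N=F^*(\L)$, and thus $T=T^*$, gives $\F_{T^*}^c\subseteq\F^q$. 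Combining this with the standard inclusion $\F^q\subseteq\F^s$ (used without comment elsewhere in the text, e.g.\ in the proof of Lemma~\ref{L:GetFTcIntoFq}) yields $T^*\in\F^s$, as needed.

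I do not anticipate any genuine difficulty: the corollary is a bookkeeping consequence of Lemma~\ref{L:deltaFDefinition} and the centricity of $F^*(\L)$. The only point that needs a moment's attention is checking that $T^*$ itself lies in $\F_{T^*}^c$ so that Lemma~\ref{L:GetFTcIntoFqCentric} is applicable; this is immediate from the strong closure of $T^*$, which forces $(T^*)^\F=\{T^*\}$.
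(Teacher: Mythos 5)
Your proof is correct. The reduction to showing $T^*:=F^*(\L)\cap S\in\F^s$ is exactly what the paper does, but the two arguments then diverge. The paper simply observes, via Lemma~\ref{L:deltaFBasic}, that $S\in\F^{cr}\subseteq\delta(\F)$ and reads off $T^*=S\cap F^*(\L)\in\F^s$ directly from the definition of $\delta(\F)$; overgroup-closure of $\delta(\F)$ then finishes the argument. You instead verify $T^*\in\F_{T^*}^c$ from the strong closure of $T^*$ and then invoke Lemma~\ref{L:GetFTcIntoFqCentric} (with $\N=F^*(\L)$, which is centric by Theorem~\ref{T:GeneralizedFitting}) together with the standard inclusion $\F^q\subseteq\F^s$. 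Both routes are valid, and they are in fact not independent of one another: Lemma~\ref{L:deltaFBasic} establishes $\F^{cr}\subseteq\delta(\F)$ through Lemma~\ref{L:GetVastRadintodeltaF}, which in turn invokes Lemma~\ref{L:GetFTcIntoFqCentric}. So your argument makes the dependence on Lemma~\ref{L:GetFTcIntoFqCentric} explicit where the paper exploits the already-packaged containment $\F^{cr}\subseteq\delta(\F)$; the paper's proof is accordingly shorter, while yours is a self-contained re-derivation of the key fact that $T^*\in\F^s$.
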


\begin{proof}
As a particular consequence of Lemma~\ref{L:deltaFBasic} we have $S\in\F^{cr}\subseteq\delta(\F)$. From the definition of $\delta(\F)$ it follows thus that $S\cap F^*(\L)\in\delta(\F)$ and that every overgroup of $S\cap F^*(\L)$ is in $\delta(\F)$.
\end{proof}

The following very important lemma states essentially the same as \cite[Lemma~7.3]{ChermakIII}.

\begin{lemma}\label{L:deltaFtimesNormal}
Let $(\L,\Delta,S)$ be a linking locality over $\F$ and $R\leq S$ with $R\unlhd\L$ or $R\unlhd \F$. For every $P\leq S$, we have then $P\in\delta(\F)$ if and only if $PR\in\delta(\F)$. 
\end{lemma}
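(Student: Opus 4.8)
The plan is to reduce the claim, via the description $\delta(\F)=\{P\leq S:P\cap F^*(\L)\in\F^s\}$ furnished by Lemma~\ref{L:deltaFDefinition}, to the purely fusion-theoretic equivalence $U\in\F^s\Leftrightarrow UR\in\F^s$, and then to obtain that equivalence directly from Lemma~\ref{L:NFXs}. Throughout I write $T^*:=S\cap F^*(\L)$.

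First I would record that, in both cases of the hypothesis, $R\leq T^*$. If $R\unlhd\L$, then $R$ is a partial normal $p$-subgroup of $\L$, hence $R\leq O_p(\L)$ by Lemma~\ref{L:OpL}. If instead $R\unlhd\F$, then $R\unlhd\L$ as well, by the equivalence $R\unlhd\L\Leftrightarrow\L=N_\L(R)\Leftrightarrow R\unlhd\F$ for $R\leq S$ coming from \cite[Proposition~5]{Henke:2015} (the same equivalence used in the proof of Lemma~\ref{L:TakePlusInS}), so again $R\leq O_p(\L)$. Since $F^*(\L)$ is radical in $\L$ by Theorem~\ref{T:GeneralizedFitting}, we have $O_p(\L)\subseteq F^*(\L)$, and therefore $R\leq O_p(\L)\leq T^*$. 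I also note that in either case $R\unlhd\F$, so $N_\F(R)=\F$; in particular $N_S(R)=S$ and $R\unlhd S$.

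Next, the modular law inside $S$ — applicable precisely because $R\leq T^*$ — gives $PR\cap T^*=(P\cap T^*)R$, i.e. $PR\cap F^*(\L)=(P\cap F^*(\L))R$. Writing $U:=P\cap F^*(\L)$, which is a subgroup of $T^*\leq S$, the assertion $P\in\delta(\F)\Leftrightarrow PR\in\delta(\F)$ becomes exactly $U\in\F^s\Leftrightarrow UR\in\F^s$. Since $R\unlhd S$, the subgroup $R$ is fully $\F$-normalized with $N_S(R)=S$, and since $R\unlhd\F$ we have $N_\F(R)=\F$; hence Lemma~\ref{L:NFXs} applied with $X=R$ yields $\F^s=N_\F(R)^s=\{U'\leq S:U'R\in\F^s\}$, and applying this with $U'=U$ is precisely the equivalence needed. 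Combining with Lemma~\ref{L:deltaFDefinition} closes the argument.

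I do not expect a real obstacle here beyond bookkeeping; the one point that deserves a little care is the reduction $R\leq S\cap F^*(\L)$, since it is what licenses the modular-law splitting of $PR\cap F^*(\L)$ as $(P\cap F^*(\L))R$, and it rests on combining the equivalence $R\unlhd\L\Leftrightarrow R\unlhd\F$ with the containment $O_p(\L)\subseteq F^*(\L)$.
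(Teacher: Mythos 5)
Your proof is correct and takes essentially the same route as the paper: both reduce via Lemma~\ref{L:deltaFDefinition} to the statement $U\in\F^s\Leftrightarrow UR\in\F^s$ for $U=P\cap F^*(\L)$, using a Dedekind/modular-law computation to identify $PR\cap F^*(\L)$ with $(P\cap F^*(\L))R$ after observing $R\leq O_p(\L)\leq F^*(\L)$. The only cosmetic difference is in the final step: the paper gets the forward implication $P\in\delta(\F)\Rightarrow PR\in\delta(\F)$ for free from overgroup-closedness of $\delta(\F)$ (Lemma~\ref{L:deltaFBasic}) and cites \cite[Lemma~3.4]{Henke:2015} only for the reverse direction, whereas you obtain both directions simultaneously by specializing Lemma~\ref{L:NFXs} to $X=R$ (using $N_\F(R)=\F$ and $N_S(R)=S$); since Lemma~\ref{L:NFXs} is itself proved from \cite[Lemmas~3.4, 3.14, 3.16]{Henke:2015}, the two arguments rest on the same underlying facts.
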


\begin{proof}
By \cite{Henke:2015}, $R\unlhd\L$ if and only if $R\unlhd\F$. Thus, we may assume that $R\leq O_p(\L)$ and $R\unlhd\F$. Fix now $P\leq S$. As $\delta(\F)$ is by Lemma~\ref{L:deltaFBasic} overgroup-closed in $S$, we have $PR\in\delta(\F)$ if $P\in\delta(\F)$. Assume now $Q:=PR\in\delta(\F)$. Then $Q\cap F^*(\L)\in\F^s$. Since $R\leq O_p(\L)\subseteq F^*(\L)$, the Dedekind Lemma \cite[Lemma~1.10]{Chermak:2015} gives $Q\cap F^*(\L)=(P\cap F^*(\L))R$. Since $R\unlhd\F$, it follows now from \cite[Lemma~3.4]{Henke:2015} that $P\cap F^*(\L)\in\F^s$ and thus $P\in\delta(\F)$.  
\end{proof}

\begin{lemma}\label{L:GetCentricRadintodeltaF}
 Let $(\L,\Delta,S)$ be a linking locality with a centric partial normal subgroup $\N$. If $P\in\Delta$ is $\N$-radical and $P\leq S\cap\N$, then $P\in\delta(\F)$. 
\end{lemma}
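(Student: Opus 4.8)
The plan is to reduce the statement to Lemma~\ref{L:GetVastRadintodeltaF} by replacing $\N$ with a centric \emph{radical} partial normal subgroup and adjusting $P$ accordingly. Set $\M:=\N O_p(\L)$. Since $O_p(\L)\unlhd\L$ by Lemma~\ref{L:OpL}, Theorem~\ref{T:ProductsPartialNormal} gives $\M\unlhd\L$. As $\N\subseteq\M$ and $\N$ is centric in $\L$, Lemma~\ref{L:CentricInherit}(b) shows $\M$ is centric in $\L$; as $O_p(\L)\subseteq\M$, $\M$ is radical in $\L$. Hence $\M$ is a centric radical partial normal subgroup of $\L$.

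First I would check that the hypothesis $P\leq S\cap\N=:T$ lets me produce an $\M$-radical object out of $P$. Indeed, $P\in\Delta$ is $\N$-radical and $P\leq T$, so Lemma~\ref{L:NradNOpLrad} applies and gives that $Q:=PO_p(\L)$ is $\N O_p(\L)$-radical, i.e. $\M$-radical. Moreover $Q$ is a subgroup of $S$ containing $P\in\Delta$, so $Q\in\Delta$ because $\Delta$ is overgroup-closed in $S$.

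Then Lemma~\ref{L:GetVastRadintodeltaF}, applied to the centric radical partial normal subgroup $\M$ and its $\M$-radical object $Q$, yields $Q=PO_p(\L)\in\delta(\F)$. Finally, Lemma~\ref{L:deltaFtimesNormal} applied with $R=O_p(\L)$ (which is normal in $\L$) gives that $P\in\delta(\F)$ if and only if $PO_p(\L)\in\delta(\F)$, so $P\in\delta(\F)$ and we are done. There is no real obstacle here; the only point needing attention is that $\N$ is only assumed centric, not radical, so Lemma~\ref{L:GetVastRadintodeltaF} cannot be invoked for $\N$ directly — the passage to $\N O_p(\L)$, together with Lemma~\ref{L:NradNOpLrad} (which is tailor-made for exactly this replacement of $P$ by $PO_p(\L)$), is precisely what makes the hypotheses of Lemma~\ref{L:GetVastRadintodeltaF} available.
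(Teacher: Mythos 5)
Your proposal is correct and follows essentially the same route as the paper's proof: pass to $\M=\N O_p(\L)$ (centric radical by Theorem~\ref{T:ProductsPartialNormal} and Lemma~\ref{L:CentricInherit}(b)), replace $P$ by $PO_p(\L)$ via Lemma~\ref{L:NradNOpLrad}, invoke Lemma~\ref{L:GetVastRadintodeltaF}, and finish with Lemma~\ref{L:deltaFtimesNormal}. You even make explicit the small point that $PO_p(\L)\in\Delta$ by overgroup-closure, which the paper leaves implicit.
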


\begin{proof}
By Theorem~\ref{T:ProductsPartialNormal}, $\N O_p(\L)$ is normal in $\L$. Clearly $\N O_p(\L)$ is radical in $\L$, and by  Lemma~\ref{L:CentricInherit}(b), $\N O_p(\L)$ is also centric. If $P\in\Delta$ is $\N$-radical with $P\leq S\cap\N$, then by Lemma~\ref{L:NradNOpLrad}, $PO_p(\L)$ is $\N O_p(\L)$-radical. Hence, by Lemma~\ref{L:GetVastRadintodeltaF}, we have $P O_p(\L)\in\delta(\F)$. So Lemma~\ref{L:deltaFtimesNormal} implies $P\in\delta(\F)$.
\end{proof}

\begin{lemma}\label{L:IsoSubcentricIsoRegular}
 Let $(\L,\Delta,S)$ and $(\tL,\tDelta,\tS)$ be subcentric localities over fusion systems $\F$ and $\tF$ respectively. Let $\alpha\colon\L\rightarrow \tL$ be an isomorphism from $(\L,\Delta,S)$ to $(\tL,\tDelta,\tS)$. Then $\delta(\F)\alpha=\delta(\tF)$ and $\alpha$ induces an isomorphism from $(\L|_{\delta(\F)},\delta(\F),S)$ to $(\tL|_{\delta(\tF)},\delta(\tF),\tS)$. 
\end{lemma}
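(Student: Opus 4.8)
The plan is to prove the two assertions in turn, the first one being the real content and the second a formal consequence.

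\emph{First, $\delta(\F)\alpha=\delta(\tF)$.} Since $(\L,\Delta,S)$ and $(\tL,\tDelta,\tS)$ are subcentric localities, $\Delta=\F^s$ and $\tDelta=\tF^s$, so by Definition~\ref{D:Regular} (equivalently, by Lemma~\ref{L:deltaFDefinition} applied to $\L$ itself) we have $\delta(\F)=\{P\le S\colon P\cap F^*(\L)\in\F^s\}$ and likewise for $\tF$. The isomorphism $\alpha$ carries $S$ onto $\tS$ (these being the unique maximal elements of $\Delta$, $\tDelta$), so $\alpha|_S$ is a group isomorphism $S\to\tS$; moreover $\Delta\alpha=\tDelta$ is precisely the statement $\F^s\alpha=\tF^s$; and Lemma~\ref{L:AutGeneralizedFitting} gives $F^*(\L)\alpha=F^*(\tL)$, whence $(S\cap F^*(\L))\alpha=\tS\cap F^*(\tL)$. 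Writing $T^*:=S\cap F^*(\L)$, for any $P\le S$ we then get $(P\cap F^*(\L))\alpha=(P\cap T^*)\alpha=(P\alpha)\cap(T^*\alpha)=(P\alpha)\cap F^*(\tL)$, and this subgroup of $\tS$ lies in $\tF^s=\F^s\alpha$ if and only if $P\cap F^*(\L)\in\F^s$. Thus $P\in\delta(\F)$ iff $P\alpha\in\delta(\tF)$, i.e. $\delta(\F)\alpha=\delta(\tF)$.

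\emph{Second, the statement about the restrictions.} Being an isomorphism of localities, $\alpha$ carries $S_f$ onto $\tS_{f\alpha}$ for every $f\in\L$, and more generally (via the word map $\alpha^*$) carries $S_w$ onto $\tS_{w\alpha^*}$ for every $w\in\W(\L)$; this is immediate from $\alpha$ being a bijective homomorphism of partial groups that commutes with inversion and satisfies $S\alpha=\tS$, so that it intertwines conjugation in $\L$ with conjugation in $\tL$. Recall from Section~\ref{SS:Restrictions} that $\L|_{\delta(\F)}=\{f\in\L\colon S_f\in\delta(\F)\}$ with product obtained by restricting $\Pi$, and that (since $\delta(\F)$ is $\F$-closed with $\F^{cr}\subseteq\delta(\F)$ by Lemma~\ref{L:deltaFBasic}) the triple $(\L|_{\delta(\F)},\delta(\F),S)$ is a locality, so its product domain is exactly $\{w\colon S_w\in\delta(\F)\}$; the analogous statements hold for $\tL|_{\delta(\tF)}$. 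Combining the identity $S_w\alpha=\tS_{w\alpha^*}$ with the first part, we conclude that $\alpha$ restricts to a bijection $\L|_{\delta(\F)}\to\tL|_{\delta(\tF)}$ and that $\alpha^*$ restricts to a bijection of the corresponding product domains; since the products are restrictions of $\Pi$, $\tPi$, the restricted map is a homomorphism of partial groups, hence an isomorphism of partial groups, which carries $S$ to $\tS$ and $\delta(\F)$ to $\delta(\tF)$ and is therefore an isomorphism of localities from $(\L|_{\delta(\F)},\delta(\F),S)$ to $(\tL|_{\delta(\tF)},\delta(\tF),\tS)$.

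I do not expect a genuine obstacle here: the one nontrivial ingredient, that isomorphisms of linking localities preserve $F^*$, is Lemma~\ref{L:AutGeneralizedFitting}, already proved; the remainder is bookkeeping with the definitions of restriction and of isomorphism of localities. The points needing a little care are that $F^*(\L)$ is not a subgroup of $S$ (so one always works with $S\cap F^*(\L)$), and that one should invoke Lemma~\ref{L:deltaFBasic} to know $\delta(\F)$ is $\F$-closed and contains $\F^{cr}$, which is what makes $\L|_{\delta(\F)}$ a locality in the first place.
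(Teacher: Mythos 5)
Your proof is correct and uses the same key ingredient as the paper, namely Lemma~\ref{L:AutGeneralizedFitting} to get $F^*(\L)\alpha=F^*(\tL)$. The one place where you diverge is in establishing $\F^s\alpha=\tF^s$: the paper first invokes \cite[Lemma~2.21(b)]{Henke:2020} to see that $\alpha|_S$ induces an isomorphism $\F\to\tF$ of fusion systems, and then cites \cite[Lemma~3.6]{Henke:2015} to transfer the set of subcentric subgroups; you observe instead that since $\Delta=\F^s$, $\tDelta=\tF^s$, and $\alpha$ is by hypothesis an isomorphism of localities, the equality $\Delta\alpha=\tDelta$ (part of Definition~\ref{D:LocalityHomomorphism}) already \emph{is} the statement $\F^s\alpha=\tF^s$. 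This shortcut is legitimate and marginally more self-contained, as it avoids reliance on the cited external lemmas at this step; the paper's route, on the other hand, records the stronger intermediate fact that $\alpha|_S$ is a fusion-system isomorphism, which is reused elsewhere. The remaining verification that $\alpha$ restricts to an isomorphism between the $\delta$-restrictions is the sort of bookkeeping the paper compresses to ``it follows then easily,'' and your unwinding of it (via $S_w\alpha=\tS_{w\alpha^*}$ and the characterization of the domain of a restricted locality) is sound.
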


\begin{proof}
By Lemma~\ref{L:AutGeneralizedFitting}, we have $F^*(\L)\alpha=F^*(\tL)$. It is a consequence of \cite[Lemma~2.21(b)]{Henke:2020} that $\alpha|_S$ induces an isomorphism from $\F$ to $\tF$. Hence, by \cite[Lemma~3.6]{Henke:2015}, we have $\F^s\alpha=\tF^s$. This yields $\delta(\F)\alpha=\delta(\tF)$. It follows then easily that $\alpha$ induces an isomorphism from $(\L|_{\delta(\F)},\delta(\F),S)$ to $(\tL|_{\delta(\tF)},\delta(\tF),\tS)$. 
\end{proof}

\section{$\N$-repleteness of regular localities}

\begin{lemma}\label{L:RegularNReplete}
 Let $(\L,\Delta,S)$ be subcentric locality over $\F$, $\N\unlhd\L$ and $T=S\cap\N$. Then the following hold:
\begin{itemize}
 \item [(a)] We have $X_{P,Q}\in\delta(\F)$ for all $P\in R_\Delta(S\N)$ and $Q\in R_\Delta(SC_\L(T))$.
 \item [(b)] Every $S\N$-radical subgroup $P\in\Delta$ is an element of $\delta(\F)$.
 \item [(c)] Setting $\L_\delta:=\L|_{\delta(\F)}$ and $\N_\delta:=\N\cap\L_\delta$, we have $\F_{S\cap \N}(\N)=\F_{S\cap\N_\delta}(\N_\delta)$.
\end{itemize}
\end{lemma}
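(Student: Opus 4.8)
I would prove (a) first — in the slightly stronger form that $X_{P,Q}\in\delta(\F)$ whenever $P\in\Delta$ is $S\N$-radical in $\L$ and $Q\in\Delta$ is $S\C_T$-radical in $\L_T$ — and then obtain (b) and (c) as consequences. Throughout one uses that $(\L,\Delta,S)$ is subcentric, so $\Delta=\F^s\supseteq\F^q$; in particular $(\L,\Delta,S)$ is $\N$-replete by Corollary~\ref{C:GetNreplete}, and $\N^\perp\cap S=C_S^\circ(\N)=C_S(\N)$ by Corollary~\ref{C:NperpCapS}.

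\textbf{Part (a).} Set $\M:=\N\N^\perp O_p(\L)$; by Lemma~\ref{L:CentricInherit}(c) this is a centric radical partial normal subgroup of $\L$, so by Lemma~\ref{L:GetVastRadintodeltaF} it suffices to show that $X:=X_{P,Q}$ is $\M$-radical. First I would record the inclusions $P\cap T\le X\le P$ and $Q\cap C_S(T)\le X\le Q$: indeed $C_S(T)\le P$ by Lemma~\ref{L:CSTinSNradical} (as $N_\L(P)$ has characteristic $p$ and $P$ is $S\N$-radical), while $O_p(\L)\le P$ and $TO_p(\L)\le O_p(\L_T)\le Q$ by Lemma~\ref{L:SNradNrad}. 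Applying Lemma~\ref{L:CommuteInNLX} with $\K:=\N^\perp$ — which by Lemma~\ref{L:NperpNormLT} is a partial normal subgroup of $\L_T$ contained in $\C_T$ with $\N^\perp\cap S=C_S(\N)$ — shows that $X$ is $\N$-radical in $\L$, that $N_\N(P)$ and $N_{\N^\perp}(Q)$ lie in $N_\L(X)$, and that $[N_\N(P),O^p(N_{\N^\perp}(Q))]=1$ inside the group $N_\L(X)$; combining the above inclusions with Lemma~\ref{L:PradQrad} (applied inside $\L_T$ to $\C_T$) shows that $X$ is also $\C_T$-radical in $\L_T$. It then remains to deduce that $O_p(N_\M(X))\le X$. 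Writing $\M=\N\cdot\M'$ with $\M':=\N^\perp O_p(\L)\unlhd\L$ (a partial normal subgroup by Theorem~\ref{T:ProductsPartialNormal} and Corollary~\ref{C:NperpNormal}), one uses the decomposition of elements of $\M$ along these two factors (Theorem~\ref{T:ProductsPartialNormal}), the fact that $\N$ commutes with $\M'$ (Lemma~\ref{L:NNperpCommute}, Corollary~\ref{C:MperpNNperpM}), and the commutation relations above to show that every normal $p$-subgroup of $N_\M(X)$ lies in the product of a normal $p$-subgroup of $N_\N(X)$ and a normal $p$-subgroup of $N_{\M'}(X)$, both of which are contained in $X$ because $X$ is $\N$-radical and $\M'$-radical. \textbf{This splitting of $O_p(N_\M(X))$ across the two commuting factors of $\M$ — the partial-group analogue of the fact that $O_p$ of a normalizer in an internal central product factors — is the main obstacle}; Lemma~\ref{L:CommuteInNLX}(b) is exactly what makes the two factors genuinely commute inside $N_\L(X)$. (One can alternatively check, via Lemma~\ref{L:NradEcr}, Lemma~\ref{L:FTcFRc}(c) and Lemma~\ref{L:NiceTrick}, that $X_{P,Q}\in\F_{TC_S(T)}^c\subseteq\F^q\subseteq\F^s$; this re-proves $X_{P,Q}\in\Delta$ but not the membership in $\delta(\F)$.)

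\textbf{Part (b).} Given an $S\N$-radical $P\in\Delta$, apply the stronger form of (a) with $Q:=S$ (which is $S\C_T$-radical in $\L_T$): then $X_{P,S}=(P\cap T)C_S(T)O_p(\L)\le P$ by the inclusions from the proof of (a), and $X_{P,S}\in\delta(\F)$. Since $\delta(\F)$ is overgroup-closed in $S$ by Lemma~\ref{L:deltaFBasic}, it follows that $P\in\delta(\F)$.

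\textbf{Part (c).} First note $S\cap\N_\delta=S\cap\N=T$: every $t\in T\le S$ has $S_t=S\in\F^{cr}\subseteq\delta(\F)$ by Lemma~\ref{L:deltaFBasic}, so $T\subseteq\L_\delta$; moreover $\N_\delta=\N\cap\L_\delta=\{f\in\N\colon S_f\in\delta(\F)\}$ is a partial normal subgroup of $\L_\delta$ by Theorem~\ref{T:VaryObjects}(b). Thus both fusion systems are over $T$, and $\F_T(\N_\delta)\subseteq\F_T(\N)$ is immediate. For the reverse, recall $\F_T(\N)$ is generated by the conjugation maps $c_n|_{S_n\cap T}$ with $n\in\N$. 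By Alperin's fusion theorem for partial normal subgroups (Lemma~\ref{L:PartialNormalAlperin}) each such $n$ equals $\Pi(t,n_1,\dots,n_k)$ with $t\in T$, $n_i\in O^p(N_\N(R_i))$, $S_{n_i}=R_i\in R_\Delta(S\N)$ and $S_n=S_{(t,n_1,\dots,n_k)}$; so by Lemma~\ref{L:ConjugateNormalizer}(b) the map $c_n$ agrees on $S_n$ with $c_t\circ c_{n_1}\circ\cdots\circ c_{n_k}$. Each $R_i$ is $S\N$-radical, hence $R_i\in\delta(\F)$ by (b), so $n_i\in\N_\delta$; and $t\in T\subseteq\N_\delta$. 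Therefore $c_n|_{S_n\cap T}$ is a composite of conjugation maps by elements of $\N_\delta$, so it is a morphism in $\F_T(\N_\delta)$, and hence $\F_T(\N)\subseteq\F_T(\N_\delta)$.
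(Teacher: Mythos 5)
Your overall plan for (a) — pick a centric radical partial normal subgroup $\M$ and show the relevant object is $\M$-radical, then invoke a ``radical implies in $\delta(\F)$'' lemma — is in the right spirit, but the gap you flag at the ``main obstacle'' is a real one and your proposed route does not close it. The paper does not try to show that $X_{P,Q}$ itself is $\M$-radical. It instead sets $U:=P\cap T$, $V:=Q\cap\N^\perp$ (so $V\le\N^\perp\cap S=C_S(\N)$, using Corollaries~\ref{C:NperpCapSGeneral} and \ref{C:NperpCapS}), $X_0:=UV$, takes $\M:=\N\N^\perp$, and shows $X_0\in\delta(\F)$; since $X_0\le X_{P,Q}$ and $\delta(\F)$ is overgroup-closed (Lemma~\ref{L:deltaFBasic}), that suffices. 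The reason to shrink to $X_0$ is precisely that the factorization $N_\M(X_0)=N_\N(X_0)N_{\N^\perp}(X_0)$ \emph{can} be proved there: writing $f=nm$ via Theorem~\ref{T:ProductsPartialNormal}, $n\in\N$ centralizes $V\le C_S(\N)$ and $m\in\N^\perp\subseteq C_\L(T)$ centralizes $U,U^n\le T$, so $X_0^n\le X_0$ and $X_0^m\le X_0$. Moreover $X_0\le T(\N^\perp\cap S)\le S\cap\M$, which is exactly what Lemma~\ref{L:GetCentricRadintodeltaF} requires (while $\M=\N\N^\perp$ need not be radical, so Lemma~\ref{L:GetVastRadintodeltaF} is not the right tool here).

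Your larger $X=X_{P,Q}$ breaks this factorization in two ways. First, $Q\cap C_S(T)$ is in general strictly larger than $Q\cap C_S(\N)$, and elements of $\N$ do not centralize $Q\cap C_S(T)$; likewise $O_p(\L)$ is not centralized by $\N$. So the step ``$X^f=X$ implies $X^n\le X$ and $X^{m'}\le X$'' does not go through. Second, the assertion that ``$\N$ commutes with $\M'=\N^\perp O_p(\L)$'' is simply false in general: by Lemma~\ref{L:PerpProduct} it would require $\N\subseteq(\M')^\perp=(\N^\perp)^\perp\cap O_p(\L)^\perp$, and $\N\subseteq O_p(\L)^\perp=C_\L(O_p(\L))$ (Lemma~\ref{L:Opperp}) fails for a general $\N$. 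Nor does ``$X$ is $\C_T$-radical'' give ``$X$ is $\M'$-radical'': $\M'=\N^\perp O_p(\L)$ is not contained in $\C_T$, so Lemma~\ref{L:NradMrad} does not apply. The fix is the paper's: work with $X_0=UV$ and $\M=\N\N^\perp$, and use overgroup-closure of $\delta(\F)$ to get $X_{P,Q}\in\delta(\F)$ for free at the end. Your arguments for (b) and (c) are fine as written once (a) is in place and coincide with the paper's.
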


\begin{proof}
If $P\in\Delta$ is $S\N$-radical, then by Lemma~\ref{L:SNradNrad} and Lemma~\ref{L:CSTinSNradical}, we have $C_S(T)O_p(\L)\leq P$ and thus $X_{P,S}=(P\cap T)C_S(T)O_p(\L)$ is contained in $P$. Since $\delta(\F)$ is overgroup-closed in $S$ by Lemma~\ref{L:deltaFBasic}, this shows that part (b) follows from part (a). Moreover, part (b) together with Lemma~\ref{L:PartialNormalAlperin} implies (c). Hence, it remains only to show (a).

\smallskip

Fix $P,Q\in\Delta$ such that $P$ is $S\N$-radical and $Q$ is $S C_\L(T)$-radical. As $\N^\perp\subseteq C_\L(T)$ by Corollary~\ref{C:NperpCapSGeneral}, it follows from Lemma~\ref{L:NradMrad} that $Q$ is also $\N^\perp$-radical. Set  
\[U:=P\cap T,\;V:=Q\cap\N^\perp,\; X:=UV.\]
As $X\leq X_{P,Q}=U(Q\cap C_S(T))O_p(\L)$ and $\delta(\F)$ is by Lemma~\ref{L:deltaFBasic} overgroup-closed in $S$, it is sufficient to prove $X\in \delta(\F)$. 

\smallskip

By Corollary~\ref{C:GetNreplete}, $(\L,\Delta,S)$ is $\N$-replete. Hence, by Corollary~\ref{C:NperpCapS}, we have $\N^\perp\cap S=C_S(\N)$. Now by Lemma~\ref{L:focCFT}, we have $\foc(C_\F(T))\leq C_S(\N)=\N^\perp\cap S$, i.e. the hypothesis of Lemma~\ref{L:PutTogetherK} is fulfilled with $\N^\perp$ in place of $\K$. Hence, as $\Delta=\F^s$, it follows from that lemma that $X=UV\in \Delta$. 

\smallskip

Set $\M:=\N\N^\perp$. By Lemma~\ref{L:CentricInherit}(c), $\M$ is a centric partial normal subgroup of $\L$. So by Lemma~\ref{L:GetCentricRadintodeltaF}, in order to show that $X\in\delta(\F)$, it is sufficient to show that $X$ is $\M$-radical. To see this we prove first 
\begin{equation}\label{E:NNNperpX}
N_{\M}(X)=N_\N(X)N_{\N^\perp}(X).
\end{equation}
Clearly, $N_\N(X)N_{\N^\perp}(X)\subseteq N_{\M}(X)$. Let $f\in N_{\M}(X)$. Then by \cite[Lemma~5.2]{Chermak:2015}, we can write $f$ as $f=nm$ with $n\in\N$, $m\in\N^\perp$ and $S_f=S_{(n,m)}$. Note that $n$ centralizes  $V\leq \N^\perp\cap S\leq C_S(\N)$ and $m\in \N^\perp\subseteq C_\L(T)$ centralizes $\<U^n,U\>\leq T$. Hence, $X=X^f=(U^nV)^m=U^nV^m$, which implies $X^n=U^nV\leq X$ and $X^m=UV^m\leq X$. This means that $n\in N_\N(X)$ and $m\in N_{\N^\perp}(X)$ proving \eqref{E:NNNperpX}. 

\smallskip

By Lemma~\ref{L:SNradNrad}, we have $TO_p(\L)\leq O_p(N_\L(T))\leq Q$. We have now $P\cap T=U\leq X\leq UC_S(T)O_p(\L)\leq P$ and $Q\cap \N^\perp=V\leq X\leq TVO_p(\L)\leq Q$. Hence, $X$ is $\N$-radical and $\N^\perp$-radical by Lemma~\ref{L:PradQrad}. By Lemma~\ref{L:NNperpCommute} and since $N_\L(X)$ is a group, we have $[N_\N(X),N_{\N^\perp}(X)]=1$. Moreover, $N_\N(X)\cap N_{\N^\perp}(X)\leq \N\cap\N^\perp=Z^\circ(\N)\leq S$ is a $p$-group by Lemma~\ref{L:NcapNperp}. Hence, by \eqref{E:NNNperpX}, we have $O_p(N_{\M}(X))=O_p(N_\N(X))O_p(N_{\N^\perp}(X))\leq X$, where the inclusion uses that $X$ is $\N$-radical and $\N^\perp$-radical. So $X$ is $\M$-radical and as argued above this yields the assertion. 
\end{proof}

\begin{cor}\label{C:RegularNReplete}
Let $(\L,\Delta,S)$ be a regular locality over $\F$ or assume more generally that $(\L,\Delta,S)$ is a linking locality over $\F$ with $\delta(\F)\subseteq\Delta$. Let $\N$ be a partial normal subgroup of $\L$. Then $(\L,\Delta,S)$ is $\N$-replete and, in particular, $\N^\perp\cap S=C_S^\circ(\N)=C_S(\N)$ and $Z^\circ(\N)=Z(\N)$.  
\end{cor}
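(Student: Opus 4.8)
The plan is to reduce the general case to the subcentric case, where the statement is precisely Lemma~\ref{L:RegularNReplete}(a), by passing to an expansion. Since $(\L,\Delta,S)$ is a linking locality over $\F$ we have $\Delta\subseteq\F^s$ (Theorem~\ref{T:VaryObjects}(c)), so by the same theorem there is a subcentric locality $(\L^s,\F^s,S)$ over $\F$ with $\L^s|_\Delta=\L$. Let $\N^s\unlhd\L^s$ be the partial normal subgroup with $\N^s\cap\L=\N$ (Notation~\ref{N:VaryObjects}); then $\N^s\cap S=\N\cap S=T$. A regular locality trivially satisfies $\delta(\F)\subseteq\Delta$, so in both cases of the hypothesis it suffices to show $X_{P,Q}\in\delta(\F)$ for all $P\in R_\Delta(S\N)$ and $Q\in R_\Delta(SC_\L(T))$.

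The core of the argument is that none of the ingredients change under this expansion. First I would note $O_p(\L)=O_p(\L^s)$ by Lemma~\ref{L:TakePlusInS}, so the subgroup $X_{P,Q}=(P\cap T)(Q\cap C_S(T))O_p(\L)$ is literally the same whether computed in $\L$ or in $\L^s$. Next, Lemma~\ref{L:TakePlusRadical} applied with $(\L^+,\Delta^+,\N^+)=(\L^s,\F^s,\N^s)$ gives $N_{S\N^s}(P)=N_{S\N}(P)$ and leaves the relevant Sylow condition on $N_S(P)$ unchanged for every $P\in\Delta$, whence $R_\Delta(S\N)\subseteq R_{\F^s}(S\N^s)$. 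Applying Lemma~\ref{L:TakePlusRadical} inside $N_{\L^s}(T)$ — which restricts to $N_\L(T)$ because $\Delta$ is $\F$-closed (hence $N_\F(T)$-closed) and $C_{\L^s}(T)\cap N_\L(T)=C_\L(T)$, using Lemma~\ref{L:NLTCLT} — gives likewise $R_\Delta(SC_\L(T))\subseteq R_{\F^s}(SC_{\L^s}(T))$.

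With these inclusions in place I would invoke Lemma~\ref{L:RegularNReplete}(a) for the subcentric locality $(\L^s,\F^s,S)$ and $\N^s$: for every $P\in R_{\F^s}(S\N^s)$ and $Q\in R_{\F^s}(SC_{\L^s}(T))$ one has $X_{P,Q}\in\delta(\F)$. Combined with the previous paragraph this yields $X_{P,Q}\in\delta(\F)\subseteq\Delta$ for $P\in R_\Delta(S\N)$ and $Q\in R_\Delta(SC_\L(T))$, which is exactly the assertion that $(\L,\Delta,S)$ is $\N$-replete. For the remaining claims, $\N$-replete implies weakly $\N$-replete (take $Q=S$, using that $S\in R_\Delta(SC_\L(T))$ always and $X_{P,S}=(P\cap T)C_S(T)O_p(\L)$), so Corollary~\ref{C:NperpCapS} gives $\N^\perp\cap S=C_S^\circ(\N)=C_S(\N)$ and Lemma~\ref{L:ZcircBasic} gives $Z^\circ(\N)=Z(\N)$.

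I expect the only real (and still minor) obstacle to be the bookkeeping around centralizers: verifying that $C_{\L^s}(T)\cap N_\L(T)=C_\L(T)$ and that $(SC_\L(T),\Delta,S)$ and $(SC_{\L^s}(T),\F^s,S)$ are indeed localities related by restriction, so that Lemma~\ref{L:TakePlusRadical} applies to the $SC_\L(T)$-radical condition. Once that is checked, the proof is a direct appeal to the expansion machinery of Theorem~\ref{T:VaryObjects} together with Lemmas~\ref{L:TakePlusInS}, \ref{L:TakePlusRadical} and \ref{L:RegularNReplete}.
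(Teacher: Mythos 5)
Your proposal is correct and follows essentially the same route as the paper's proof: expand to the subcentric locality via Theorem~\ref{T:VaryObjects}, use Lemma~\ref{L:TakePlusRadical} (with Remark~\ref{R:TakePlus} for the centralizer bookkeeping, which is where the facts $N_{\L^s}(T)|_\Delta=N_\L(T)$ and $C_{\L^s}(T)\cap\L=C_\L(T)$ actually live, rather than Lemma~\ref{L:NLTCLT} as you cite) to transfer the radical conditions, invoke Lemma~\ref{L:RegularNReplete}(a), and finish with Corollary~\ref{C:NperpCapS} and Lemma~\ref{L:ZcircBasic}. The extra remarks you add — that $X_{P,Q}$ is literally the same after expansion since $O_p(\L)=O_p(\L^s)$, and the explicit reduction to ``weakly $\N$-replete'' — are correct but already implicit in the paper.
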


\begin{proof}
By Theorem~\ref{T:VaryObjects}, there exists a subcentric locality $(\L^s,\F^s,S)$ over $\F$ such that $\L=\L^s|_\Delta$. Adapt Notation~\ref{N:VaryObjects}, let $P\in R_\Delta(S\N)$ and $Q\in R_\Delta(SC_\L(T))$. It follows from Lemma~\ref{L:TakePlusRadical} that $P\in R_{\F^s}(S\N^s)$. Similarly, Lemma~\ref{L:TakePlusRadical} combined with Remark~\ref{R:TakePlus} gives $Q\in R_{\F^s}(S C_{\L^+}(T))$. Hence, Lemma~\ref{L:RegularNReplete}(a) implies $X_{P,Q}\in\delta(\F)\subseteq\Delta$ showing that $(\L,\Delta,S)$ is $\N$-replete. Therefore, $(\L,\Delta,S)$ is  weakly $\N$-replete and the assertion follows from Corollary~\ref{C:NperpCapS} and Lemma~\ref{L:ZcircBasic}.
\end{proof}

\section{Centric partial normal subgroups}

The next goal is to show that partial normal subgroups of regular localities are regular localities (in a sense that will be made more precise below). As a first step, we study the special case of centric partial normal subgroups. A similar strategy was used by Chermak who studied first centric radical partial normal subgroups of regular localities. 
In the case that $\N$ is centric radical, Lemma~\ref{L:RegularCentricMain} below is the same as \cite[Lemma~7.4]{ChermakIII}; the generalization to the centric case is then basically given by \cite[Lemma~7.5]{ChermakIII}.

\begin{lemma}\label{L:RegularConjAutomorphisms}
Let $(\L,\Delta,S)$ be a regular locality and let $\N\unlhd\L$ be centric in $\L$. Set $T:=\N\cap S$ and $\E:=\F_T(\N)$. Then the following hold:
\begin{itemize}
\item [(a)] If $O_p(\L)\leq P\leq S$, then $P\in\Delta$ if and only if $P\cap T\in\Delta$. In particular, for every $w\in \W(\L)$, we have $w\in\D$ if and only if $S_w\cap T\in\Delta$. 
\item [(b)] Setting
\[\Gamma:=\{P\in\Delta\colon P\leq T\},\]
the triple $(\N,\Gamma,T)$ is a linking locality over $\E$. In particular, $\E$ is saturated.
\item [(c)] Every element $f\in N_\L(T)$ induces an automorphism of $\L$ and of $\N$ via conjugation, i.e. $\L=\D(f)$ and $c_f$ is an automorphism of $\L$, which restricts to an automorphism of $\N$. Moreover, \[c_f\in\Aut(\L,\Delta,S),\;c_f|_\N\in\Aut(\N,\Gamma,T),\;c_f|_S\in\Aut(\F)\mbox{ and }c_f|_T\in\Aut(\E).\]
\item [(d)] $\E^c\subseteq\F^q$ and $\E^{cr}\subseteq\Delta=\delta(\F)$.
\item [(e)] $\E$ is normal in $\F$.
\item [(f)] $\E^s=\{P\leq T\colon P\in\F^s\}$.
\end{itemize}
\end{lemma}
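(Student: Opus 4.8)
\textbf{Proof plan for Lemma~\ref{L:RegularConjAutomorphisms}(f).} The goal is to identify the subcentric subgroups of $\E=\F_T(\N)$ with those subgroups of $T$ which are already subcentric in $\F$. The plan is to prove the two inclusions separately, using parts (a)--(e) together with the machinery for moving between linking localities with different object sets.

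First I would handle the inclusion $\{P\leq T\colon P\in\F^s\}\subseteq\E^s$. Given $P\leq T$ with $P\in\F^s$, I want to show $P\in\E^s$. By part (e), $\E\unlhd\F$. Using the construction of $\bN_\L(X)$-type subcentric localities or more directly \cite[Lemma~3.4]{Henke:2015} (which governs how subcentric subgroups behave under passage to a normal subsystem), one should get that a subgroup of $T$ which is subcentric in $\F$ remains subcentric in the normal subsystem $\E$; the relevant point is that $N_\E(Q)$ for a fully normalized conjugate $Q$ of $P$ is a normal subsystem of $N_\F(Q)$ (Frattini argument using part (c), since conjugation by elements of $N_\L(T)$ preserves $\E$), and constrainedness of $N_\F(Q)$ forces constrainedness of the normal subsystem $N_\E(Q)$. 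Here I would want to be careful that ``fully $\E$-normalized'' and ``fully $\F$-normalized'' can be reconciled; part (b) tells us $\E$ is saturated, and Lemma~\ref{L:fullyEnorm} relates Sylow subgroups of $N_\N(P)$ to $N_T(P)$, which should let me pass between the two notions after conjugating by a suitable element of $N_\L(T)$.

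For the reverse inclusion $\E^s\subseteq\{P\leq T\colon P\in\F^s\}$, I would take $P\in\E^s$ and show $P\in\F^s$. The natural route is to produce, for a fully $\E$-normalized $\E$-conjugate $Q$ of $P$, a chain relating $O_p(N_\E(Q))$ and $O_p(N_\F(Q))$. By part (d), $\E^{cr}\subseteq\delta(\F)\subseteq\F^s$, so the claim is already known for $P$ that are $\E$-centric-radical; for general $P\in\E^s$ one reduces to this case via the characterization of subcentric subgroups in terms of $O_p$ of normalizer fusion systems. Concretely: if $Q$ is fully $\E$-normalized with $Q^*:=O_p(N_\E(Q))\in\E^c$, I want $Q^*\in\F^s$ (which by \cite[Lemma~3.1]{Henke:2015} gives $P\in\F^s$). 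One shows $Q^*\in\E^{cr}$ essentially by definition ($N_\E(Q^*)=N_\E(Q)$ has $O_p$ equal to $Q^*$, and $Q^*\in\E^c$), hence $Q^*\in\delta(\F)\subseteq\F^s$ by part (d). The one subtlety is ensuring that $Q^*$, which is $\E$-centric by hypothesis, is also $\F$-centric --- but this follows because $\N$ is centric in $\L$, so by Lemma~\ref{L:GetFTcIntoFqCentric} (or Lemma~\ref{L:NradEcr} applied inside the regular locality) we have $\F_T^c\subseteq\F^q\subseteq\F^s$, and $\E$-centricity of a subgroup of $T$ combined with $N_\L$-conjugacy arguments puts it into $\F_T^c$.

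The main obstacle will be the careful bookkeeping of ``fully normalized'' in $\E$ versus in $\F$ and the interplay between $\N$-radical, $\E$-radical, and $\F$-radical subgroups. The key technical inputs to lean on are part (d) ($\E^{cr}\subseteq\delta(\F)\subseteq\F^s$), part (c) (automorphisms induced by $N_\L(T)$, which gives the Frattini-type control needed to conjugate a subgroup to a fully normalized state simultaneously in $\E$ and $\F$), Lemma~\ref{L:fullyEnorm}, Lemma~\ref{L:NFXs} / \cite[Lemma~3.4, Lemma~3.14]{Henke:2015} for the behavior of $\F^s$ under normalizers and normal subsystems, and Lemma~\ref{L:GetFTcIntoFqCentric}. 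I expect the proof itself to be fairly short once these are assembled --- essentially two paragraphs, one per inclusion --- with the bulk of the work being the citation-chasing rather than new calculation.
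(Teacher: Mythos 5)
Your plan for the easy inclusion $\{P\leq T\colon P\in\F^s\}\subseteq\E^s$ is fine in outline (the paper simply cites \cite[Lemma~3.10]{Henke:2015}, which is cleaner than reproving it via a Frattini argument, but either route works). The problem is in the reverse inclusion $\E^s\subseteq\{P\leq T\colon P\in\F^s\}$, where there is a genuine logical gap at the final step.

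You take $P\in\E^s$, pass to a fully $\E$-normalized conjugate $Q$, form $Q^*:=O_p(N_\E(Q))$, argue $Q^*\in\E^{cr}\subseteq\delta(\F)\subseteq\F^s$ using part (d), and then assert that $Q^*\in\F^s$ gives $P\in\F^s$ ``by \cite[Lemma~3.1]{Henke:2015}.'' That inference does not go through. The cited lemma characterizes $P\in\F^s$ via constrainedness of $N_\F(Q')$ for a fully $\F$-normalized $\F$-conjugate $Q'$ of $P$; knowing that $O_p(N_\E(Q))$ (the $O_p$ of an \emph{$\E$}-normalizer, not an $\F$-normalizer) lies in $\F^s$ says nothing about $N_\F(P)$ without a bridge connecting the $\E$-normalizer to the $\F$-normalizer. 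The paper's proof supplies precisely this bridge: it first conjugates to arrange $P\in\F^f$ (using $\F$-invariance of $\E^s$) and $P\in\E^f$ (using \cite[Lemma~2.3]{Henke:2018}); it then observes that $N_\E(P)$ is weakly normal in $N_\F(P)$, so $Q^*:=O_p(N_\E(P))$ is normal in $N_\F(P)$ by \cite[Lemma~2.12(b)]{Henke:2015}; this yields $N_\F(P)=N_{N_\F(Q^*)}(P)$, and then constrainedness of $N_\F(Q^*)$ (from $Q^*\in\F^q$, after re-normalizing $Q^*$) passes to $N_\F(P)$ by \cite[Lemma~2.11]{Henke:2015}. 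Your outline omits the weak-normality argument and the identification $N_\F(P)=N_{N_\F(Q^*)}(P)$ entirely, which is the actual content of the proof.

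Two secondary remarks: your worry about whether $Q^*$ is $\F$-centric is a red herring --- the paper only needs $Q^*\in\E^c\subseteq\F^q\subseteq\F^s$, and quasicentric does not mean centric; and your proposed route through $Q^*\in\E^{cr}$ is somewhat more roundabout than necessary, since the paper just uses $Q^*\in\E^c\subseteq\F^q$ directly, skipping the radical condition. Both are avoidable distractions, but the core gap is the missing weak-normality step.
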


\begin{proof}
By Theorem~\ref{T:ProductsPartialNormal}, $\N O_p(\L)\unlhd\L$ and $(\N O_p(\L))\cap S=TO_p(\L)$. Clearly $\N O_p(\L)$ is radical in $\L$ and Lemma~\ref{L:CentricInherit}(b) gives that $\N O_p(\L)$ is centric in $\L$. Thus, we have 
\[F^*(\L)\subseteq \N O_p(\L).\]
Hence, it follows from Lemma~\ref{L:deltaFDefinition} that a subgroup $P\leq S$ is an element of $\delta(\F)=\Delta$ if and only if $P\cap (TO_p(\L))\in\Delta$. If $O_p(\L)\leq P$, then by the Dedekind Lemma and by Lemma~\ref{L:deltaFtimesNormal}, we have $P\cap (TO_p(\L))=(P\cap T)O_p(\L)\in\Delta$ if and only if $P\cap T\in\Delta$. This proves the first part of (a). Therefore, Lemma~\ref{L:NLTAutL} gives that (a) and (c) hold and that $(\N,\Gamma,T)$ is a locality over $\E$ of objective characteristic $p$ if $\Gamma$ is as in (b). Hence, (b) holds by Theorem~\ref{T:Saturation} provided we can show $\E^{cr}\subseteq \Gamma$.

\smallskip

By (c), the elements of $N_\L(T)$ induce automorphisms of $\E$. In particular, $N_\L(T)$ acts on $\E^c$. Hence, by the Frattini Lemma and the Splitting Lemma \cite[Corollary~3.11, Lemma~3.12]{Chermak:2015}, $\E^c$ is closed under $\F$-conjugacy. Thus, 
\[\E^c=\F_T^c\subseteq\F^q\] by Lemma~\ref{L:GetFTcIntoFqCentric}. Let $(\L^s,\F^s,S)$ be a subcentric locality with $\L^s|_\Delta=\L$, and let $\N^s\unlhd\L^s$ with $\N^s\cap\L=\N$; notice that $\L^s$ and $\N^s$ exist by Theorem~\ref{T:VaryObjects}. By Lemma~\ref{L:RegularNReplete}(c), we have $\F_T(\N^s)=\F_T(\N)=\E$. 

\smallskip

Fix now $P\in\E^{cr}$. Then there exists a fully $\E$-normalized $\E$-conjugate $Q$ of $P$ such that $O_p(N_\E(Q))=Q$. Note also that $Q\in\E^{cr}\subseteq\E^c\subseteq \F^q$ is an object in $(\L^s,\F^s,S)$. So it follows from Lemma~\ref{L:EcrNradical} applied with $(\L^s,\F^s,S)$ in place of $(\L,\Delta,S)$ that $Q$ is $\N^s$-radical. As $\N$ is centric in $\L$, Lemma~\ref{L:TakePlusGeneralizedFitting}(a) yields that $\N^s$ is centric in $\L^s$. Hence, $Q\in\Delta=\delta(\F)$ by Lemma~\ref{L:GetCentricRadintodeltaF}. As $\Delta$ is closed under $\F$-conjugacy, it follows $P\in\Delta$ showing $\E^{cr}\subseteq\Delta$. This proves (d) and, as argued above, also (b). 

\smallskip

We argue now that (e) holds. As $O_p(\L)\leq S\in\Delta$, it is a special case of (a) that $T=S\cap T\in\Delta$. In particular, every element of $\Aut_\F(T)$ is by Lemma~\ref{L:LocalityFusionSystem}(a) realized by an element of $N_\L(T)$ and induces thus by (c) an automorphism of $\E$. Since we have seen in (b) that $\E$ is saturated, it follows thus from Lemma~\ref{L:FTNWeaklyNormalNormal} that $\E$ is normal in $\F$. So (e) holds.

\smallskip

As $\E\unlhd\F$, \cite[Lemma~3.10]{Henke:2015} gives $P\in\E^s$ for every $P\in\F^s$ with $P\leq T$. Let now $P\in\E^s$. We need to show that $P\in\F^s$. By \cite[Lemma~3.7]{Henke:2015} (or by a direct argument similar to the one given above for $\E^c$), the set $\E^s$ is invariant under $\F$-conjugacy. Thus, we may assume $P\in\F^f$ and thus $P\in\E^f$ by \cite[Lemma~2.3]{Henke:2018}. Then by definition of $\E^s$ and by (d), we have $Q:=O_p(N_\E(P))\in \E^c\subseteq \F^q$. Notice that the elements of $N_S(P)$ induce automorphisms of $N_\E(P)$ and thus $N_S(P)\leq N_S(Q)$. By \cite[Lemma~2.6(c)]{Aschbacher/Kessar/Oliver:2011}, there exists $\alpha\in\Hom_\F(N_S(Q),S)$ such that $Q\alpha\in\F^f$. As $N_S(P)\alpha\leq N_S(P\alpha)$ and $P\in\F^f$, we have then $N_S(P)\alpha=N_S(P\alpha)$ and $P\alpha\in\F^f$. In particular, $N_T(P)\alpha=N_T(P\alpha)$. So $\alpha|_{N_T(P)}$ induces an isomorphism from $N_\E(P)$ to $N_\E(P\alpha)$ and maps thus $Q$ to $O_p(N_\E(P\alpha))$. Hence, replacing $(P,Q)$ by $(P\alpha,Q\alpha)$, we may assume $P,Q\in\F^f$. As $Q\in\F^q$, it follows from \cite[Lemma~3.1]{Henke:2015} that $Q\in\F^s$ and $N_\F(Q)$ is constrained. Using \cite[Proposition~I.6.4]{Aschbacher/Kessar/Oliver:2011}, it is easy to observe that $N_\E(P)$ is $N_\F(P)$-invariant. Thus, $N_\E(P)$ is weakly normal in $N_\F(P)$ as $P\in\E^f$ and therefore $N_\E(P)$ is saturated. Thus, \cite[Lemma~2.12(b)]{Henke:2015} gives $Q\unlhd N_\F(P)$ and so $N_\F(P)=N_{N_\F(Q)}(P)$. Notice that $P\in N_\F(Q)^f$ since $P\in\F^f$ and $N_S(P)\leq N_S(Q)$. Therefore, $N_\F(P)$ is constrained by \cite[Lemma~2.11]{Henke:2015}. By \cite[Lemma~3.1]{Henke:2015}, this means $P\in\F^s$ as required. This proves (f).
\end{proof}

\begin{remark}\label{R:deltaF}
If $(\L,\Delta,S)$ is a regular locality, it is a special case of Lemma~\ref{L:RegularConjAutomorphisms}(e),(f) that $\F_{S\cap F^*(\L)}(F^*(\L))\unlhd \F$ and 
\[\F_{S\cap F^*(\L)}(F^*(\L))^s=\{P\leq S\cap F^*(\L)\colon P\in\F^s\}.\]
By Lemma~\ref{L:RegularNReplete}(c), the same holds if $(\L,\Delta,S)$ is a subcentric locality. In either case,  Lemma~\ref{L:deltaFBasic} yields in particular that  
\[\delta(\F)=\{P\leq S\colon P\cap F^*(\L)\in\F_{S\cap F^*(\L)}(F^*(\L))^s\}.\]
So our definition of $\delta(\F)$ agrees with Chermak's definition in \cite[p.37]{ChermakIII}.   
\end{remark}

\begin{lemma}\label{L:RegularCentricMain}
Let $(\L,\Delta,S)$ be a regular locality, let $\N\unlhd\L$ be centric in $\L$, set $T:=\N\cap S$ and $\E:=\F_T(\N)$. Then $\E$ is saturated, $(\N,\delta(\E),T)$ is a regular locality over $\E$, and 
\[\delta(\E)=\{P\in\Delta\colon P\leq T\}.\]
Moreover, $O_p(\N)\unlhd\L$ and $F^*(\N)O_p(\L)=F^*(\L)$. 
\end{lemma}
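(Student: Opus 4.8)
\textbf{Proof plan for Lemma~\ref{L:RegularCentricMain}.}

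The plan is to bootstrap from Lemma~\ref{L:RegularConjAutomorphisms}, which already does most of the work: it gives that $\E$ is saturated, that $(\N,\Gamma,T)$ is a linking locality over $\E$ with $\Gamma=\{P\in\Delta\colon P\leq T\}$, that $\E\unlhd\F$, and the description $\E^s=\{P\leq T\colon P\in\F^s\}$ in part (f). The first task is therefore to identify $\delta(\E)$ with $\Gamma$. To compute $\delta(\E)$ via Definition~\ref{D:Regular} applied to the linking locality $(\N,\Gamma,T)$, I need $F^*(\N)$, so I must first establish the last sentence of the lemma, namely $O_p(\N)\unlhd\L$ and $F^*(\N)O_p(\L)=F^*(\L)$. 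For $O_p(\N)\unlhd\L$: by Lemma~\ref{L:RegularConjAutomorphisms}(c) every element of $N_\L(T)$ induces an automorphism of $\N$, hence normalizes the characteristic subgroup $O_p(\N)$ of $\N$; combined with $\N\unlhd\L$ and the Frattini/Splitting Lemmas (every $f\in\L$ is $nh$ with $n\in\N$, $h\in N_\L(T)$), one gets $O_p(\N)\unlhd\L$. Alternatively, since $O_p(\N)\le T$, use Lemma~\ref{L:OpL}: it suffices to see $N_\L(O_p(\N))=\L$, which follows from the above Frattini decomposition.

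Next, for $F^*(\N)O_p(\L)=F^*(\L)$: set $\M^*:=F^*(\N)O_p(\L)$, which is a partial normal subgroup of $\L$ by Theorem~\ref{T:ProductsPartialNormal} (using that $F^*(\N)\unlhd\L$, which follows from Lemma~\ref{L:AutGeneralizedFitting} applied to the automorphisms $c_f|_\N$ of $\N$ together with the same Frattini argument, so $F^*(\N)$ is $\L$-invariant and normal in $\N$, hence in $\L$). One inclusion: $\M^*$ is radical in $\L$ by construction; it is centric in $\L$ because $F^*(\N)$ is centric in $\N$ — one computes $(\M^*)^\perp_\L\subseteq\N^\perp_\L\cdot(\text{something inside }\N)$ and pins it into $\M^*$ using $C_S^\circ$ being inside $\N$ (here $\N$ centric gives $\N^\perp\le S$ by Lemma~\ref{L:CSNinN}, and then Corollary~\ref{C:NperpCapS} plus the fact that $F^*(\N)$ centric in $\N$ means $C_T(F^*(\N))\subseteq F^*(\N)$). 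So $F^*(\L)\subseteq\M^*$. Conversely $F^*(\L)\supseteq O_p(\L)$ trivially, and $F^*(\L)\cap\N$ is centric radical in $\N$ (this is exactly the content of the kind of intersection argument in Theorem~\ref{T:GeneralizedFitting}, applied inside $\N$, using that $F^*(\L)$ is centric radical in $\L$ and $\N$ is centric in $\L$), hence contains $F^*(\N)$; so $\M^*\subseteq F^*(\L)$. I expect this double inclusion to be the main obstacle — in particular verifying that $F^*(\L)\cap\N$ is centric \emph{in $\N$} and that $F^*(\N)$ is centric \emph{in $\L$}, which requires carefully transferring the $\N^\perp$-computations between $\L$ and $\N$ via Corollary~\ref{C:NperpCapS} and Lemma~\ref{L:RegularNReplete}(c), and may force me to pass to a subcentric expansion $(\L^s,\F^s,S)$ as in the proof of Lemma~\ref{L:RegularConjAutomorphisms}.

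Finally, with $F^*(\L)=F^*(\N)O_p(\L)$ in hand, I compute $\delta(\E)$. By Lemma~\ref{L:deltaFDefinition} applied to the linking locality $(\N,\Gamma,T)$ over $\E$, we have $\delta(\E)=\{P\leq T\colon P\cap F^*(\N)\in\E^s\}$. For $P\leq T$: since $F^*(\L)\cap S=(F^*(\N)O_p(\L))\cap S=F^*(\N)O_p(\L)$ and $O_p(\L)\unlhd\F$, Lemma~\ref{L:deltaFtimesNormal} gives $P\in\delta(\F)$ iff $PO_p(\L)\in\delta(\F)$ iff $P\cap F^*(\L)\in\F^s$ iff $(P\cap F^*(\N))O_p(\L)\in\F^s$ (Dedekind, as $O_p(\L)\le F^*(\L)$) iff $P\cap F^*(\N)\in\F^s$ (Lemma~3.4 of \cite{Henke:2015}, as $O_p(\L)\unlhd\F$). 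By Lemma~\ref{L:RegularConjAutomorphisms}(f) this last condition is equivalent to $P\cap F^*(\N)\in\E^s$. Hence for $P\le T$, $P\in\delta(\F)=\Delta$ iff $P\in\delta(\E)$, i.e. $\delta(\E)=\{P\in\Delta\colon P\le T\}=\Gamma$. Then $(\N,\delta(\E),T)=(\N,\Gamma,T)$ is a regular locality over $\E$ by Definition~\ref{D:Regular}, and saturation of $\E$ is Lemma~\ref{L:RegularConjAutomorphisms}(b). This completes the proof.
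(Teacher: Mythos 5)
Your skeleton is the same as the paper's: establish $O_p(\N)\unlhd\L$ and $F^*(\N)\unlhd\L$ via the automorphisms $c_f|_\N$ ($f\in N_\L(T)$) of Lemma~\ref{L:RegularConjAutomorphisms}(c) together with the Frattini decomposition, then show $F^*(\N)O_p(\L)=F^*(\L)$ by a double inclusion, and finally deduce $\delta(\E)=\Gamma$ by the chain of equivalences driven by Lemma~\ref{L:deltaFDefinition}, Lemma~\ref{L:deltaFtimesNormal} and Lemma~\ref{L:RegularConjAutomorphisms}(f). The last part is worked out correctly and matches the paper's Step~3 exactly.

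The place you flag as the main obstacle is indeed under-specified, and the tool you are missing is Corollary~\ref{C:OpNNPinMperp}. Both halves of $F^*(\N)O_p(\L)=F^*(\L)$ bottom out on that corollary. For $F^*(\N)\subseteq F^*(\L)$: set $\K:=F^*(\L)\cap\N$; transfer $\K_\N^\perp\unlhd\L$ via \cite[Corollary~3.13]{Chermak:2015}; compute $\K_\N^\perp\cap F^*(\L)=\K_\N^\perp\cap\K\leq Z_\N^\circ(\K_\N^\perp)\leq Z(\K_\N^\perp)=Z^\circ_\L(\K_\N^\perp)$ (Lemma~\ref{L:NcapNperp}, Lemma~\ref{L:ZcircBasic}, Corollary~\ref{C:RegularNReplete}); and since $F^*(\L)^\perp\leq S$ (Lemma~\ref{L:CSNinN} and Theorem~\ref{T:GeneralizedFitting}), Corollary~\ref{C:OpNNPinMperp} forces $\K_\N^\perp\subseteq S\Rightarrow\K_\N^\perp\subseteq O_p(\L)\subseteq\K$, i.e.\ $\K$ is centric radical in $\N$. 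For $F^*(\L)\subseteq F^*(\N)O_p(\L)$: set $\M:=F^*(\N)$; use Lemma~\ref{L:PerpendicularNormalinNperp} to get $\N\cap\M^\perp_\L\subseteq\M^\perp_\N$, then Lemma~\ref{L:CSNinN} applied inside $\N$ gives $\M^\perp_\N=Z^\circ_\N(\M)$, and Lemma~\ref{L:ZcircBasic}/Lemma~\ref{L:NcapNperp} push this into $Z^\circ_\L(\M^\perp_\L)$; now the hypothesis ``$\N$ centric'' gives $\N^\perp\leq S$, so Corollary~\ref{C:OpNNPinMperp} yields $\M^\perp_\L\subseteq S\subseteq\M O_p(\L)$, whence $\M O_p(\L)$ is centric radical by Lemma~\ref{L:CentricInherit}(a). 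Note it is $\M O_p(\L)$, not $\M$ alone, that ends up centric. Your parenthetical ``may force me to pass to a subcentric expansion'' is not needed: Corollary~\ref{C:RegularNReplete} already gives $Z^\circ(\cdot)=Z(\cdot)$ for partial normal subgroups of a regular locality, which is all that is used here. Also, ``$F^*(\L)\cap\N$ is centric radical in $\N$'' is not a direct citation of Theorem~\ref{T:GeneralizedFitting} (which intersects two centric radical subgroups of the \emph{same} locality); one has to run the underlying argument afresh as above.
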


\begin{proof}
Set $\Gamma:=\{P\in\Delta\colon P\leq T\}$. By Lemma~\ref{L:RegularConjAutomorphisms}(b), the fusion system $\E$ is saturated and $(\N,\Gamma,T)$ is a linking locality over $\E$. In particular, $\delta(\E)$, $O_p(\N)$ and $F^*(\N)$ are defined. It follows from Lemma~\ref{L:RegularConjAutomorphisms}(c) that $O_p(\N)^f=O_p(\N)$ for every $f\in N_\L(T)$. Hence, by  \cite[Corollary~3.13]{Chermak:2015}, we have $O_p(\N)\unlhd\L$. Thus, it remains to prove that $F^*(\N)O_p(\L)=F^*(\L)$ and $\delta(\E)=\Gamma$. We will show the former property in two steps and then use it in a third step to conclude $\delta(\E)=\Gamma$. 

\smallskip

\emph{Step~1:} We prove $F^*(\N)\subseteq F^*(\L)$. For the proof observe first that 
\[\K:=F^*(\L)\cap\N\unlhd\L.\]
In particular, $\K\unlhd\N$. So, using Notations~\ref{N:NperpL} and \ref{N:Zcirc}, $\K_\N^\perp\unlhd\N$ and $Z_\N^\circ(\K)$ are defined.  By Lemma~\ref{L:AutNperp} and Lemma~\ref{L:RegularConjAutomorphisms}(c), we have $\K_\N^\perp c_f=\K_\N^\perp$ for every $f\in N_\L(T)$. Hence, by \cite[Corollary~3.13]{Chermak:2015}, we have \[\K_\N^\perp\unlhd\L.\]
Now observe that
\[\K_\N^\perp\cap F^*(\L)=\K_\N^\perp\cap \N\cap F^*(\L)=\K_\N^\perp\cap \K\leq Z_\N^\circ(\K_\N^\perp),\]
where the inclusion at the end uses Lemma~\ref{L:NcapNperp}. As $(\L,\Delta,S)$ is a regular locality,  Corollary~\ref{C:RegularNReplete} gives $Z^\circ(\K_\N^\perp)=Z(\K_\N^\perp)$ (where $Z^\circ(\K_\N^\perp)$ means more precisely $Z_\L^\circ(\K_\N^\perp)$). So by Lemma~\ref{L:ZcircBasic}, we have
\[\K_\N^\perp\cap F^*(\L)\leq Z_\N^\circ(\K_\N^\perp)\leq Z(\K_\N^\perp)=Z^\circ(\K_\N^\perp).\]
As $F^*(\L)$ is by Theorem~\ref{T:GeneralizedFitting} centric in $\L$, we get from Lemma~\ref{L:CSNinN} that $F^*(\L)^\perp\leq S$. Now Corollary~\ref{C:OpNNPinMperp} yields $\K_\N^\perp\subseteq S$. As $\K_\N^\perp\unlhd\L$, it follows thus from Lemma~\ref{L:OpL} that $\K_\N^\perp\subseteq O_p(\L)\subseteq F^*(\L)$. So $\K_\N^\perp\subseteq F^*(\L)\cap \N=\K$ and $\K$ is centric in $\N$. As $O_p(\N)\unlhd\L$, we have $O_p(\N)\subseteq  O_p(\L)\cap\N\subseteq F^*(\L)\cap\N=\K$. So $\K$ is centric radical in $\N$, which implies $F^*(\N)\subseteq\K\subseteq F^*(\L)$. This completes the first step. 

\smallskip

\emph{Step~2:} We show that $F^*(\L)=F^*(\N)O_p(\L)$. For the proof set $\M:=F^*(\N)$. By Step~1 it is sufficient to prove that $F^*(\L)\subseteq \M O_p(\L)$. By Lemma~\ref{L:AutGeneralizedFitting} and Lemma~\ref{L:RegularConjAutomorphisms}(c), we have $\M c_f=\M$ for every $f\in N_\L(T)$. It follows now from  $\M\unlhd\N$ and \cite[Corollary~3.13]{Chermak:2015} that $\M\unlhd\L$. As $\M$ commutes with $\M^\perp=\M^\perp_\L$ in $\L$ by Lemma~\ref{L:NNperpCommute}, one can observe that $\M$ commutes with $\N\cap\M^\perp$ in $\N$. As $\M^\perp\unlhd\L$, we have moreover $\N\cap\M^\perp\unlhd\N$. Hence, by Lemma~\ref{L:PerpendicularNormalinNperp} applied with $(\N,\Gamma,T)$ in place of $(\L,\Delta,S)$, we have 
\[\N\cap\M^\perp\subseteq\M_\N^\perp.\]
According to Theorem~\ref{T:GeneralizedFitting}, $\M$ is centric in $\N$ and so Lemma~\ref{L:CSNinN} gives 
\[\M_\N^\perp=Z_\N^\circ(\M).\]
By Lemma~\ref{L:ZcircBasic} and Lemma~\ref{L:NcapNperp}, we have $Z_\N^\circ(\M)\leq Z(\M)=Z_\L^\circ(\M)\leq Z_\L^\circ(\M^\perp)$. So
\[\N\cap\M^\perp\subseteq\M_\N^\perp=Z_\N^\circ(\M)\leq Z_\L^\circ(\M^\perp).\]
As $\N$ is by assumption centric, it follows from Lemma~\ref{L:CSNinN} that $\N^\perp\subseteq S$. Hence, using Corollary~\ref{C:OpNNPinMperp}, we can conclude that $\M^\perp\subseteq S$. As $\M^\perp\unlhd\L$, Lemma~\ref{L:OpL} yields $\M^\perp\subseteq O_p(\L)$. By Theorem~\ref{T:ProductsPartialNormal}, $\M O_p(\L)$ is a partial normal subgroup of $\L$. Using $\M^\perp\subseteq O_p(\L)\subseteq\M O_p(\L)$, Lemma~\ref{L:CentricInherit}(a) allows us to conclude that $\M O_p(\L)$ is centric radical in $\L$. Hence, $F^*(\L)\subseteq \M O_p(\L)$ as required. This completes Step~2.

\smallskip

\emph{Step~3:} We show $\Gamma=\delta(\E)$. Let $P\leq T$. We have seen above that $O_p(\N)\unlhd\L$ and this implies $O_p(\N)=O_p(\L)\cap \N$. Using the Dedekind Lemma \cite[Lemma~1.10]{Chermak:2015}, observe now that 
\begin{eqnarray*}
PO_p(\L)\cap F^*(\N)&=&PO_p(\L)\cap \N\cap F^*(\N)\\
&=&P(O_p(\L)\cap\N)\cap F^*(\N)\\
&=&PO_p(\N)\cap F^*(\N)\\
&=&(P\cap F^*(\N))O_p(\N). 
\end{eqnarray*}
Using Step~2, one sees moreover \[PO_p(\L)\cap F^*(\L)=PO_p(\L)\cap F^*(\N)O_p(\L)=(PO_p(\L)\cap F^*(\N))O_p(\L).\]
Putting both properties together we get thus    
\begin{equation}\label{E:POpL}
PO_p(\L)\cap F^*(\L)=(P\cap F^*(\N))O_p(\N)O_p(\L)=(P\cap F^*(\N))O_p(\L).
\end{equation}
We have now the following equivalences:
\begin{eqnarray*}
 P\in\delta(\E)&\Longleftrightarrow & P\cap F^*(\N)\in\E^s\mbox{ (by Lemma~\ref{L:deltaFDefinition})}\\
&\Longleftrightarrow & P\cap F^*(\N)\in\F^s\mbox{ (by Lemma~\ref{L:RegularConjAutomorphisms}(f))}\\
&\Longleftrightarrow & (P\cap F^*(\N))O_p(\L)\in\F^s\mbox{ (by \cite[Lemma~3.4]{Henke:2015})}\\
&\Longleftrightarrow & PO_p(\L)\cap F^*(\L)\in\F^s\mbox{ (by \eqref{E:POpL})}\\
&\Longleftrightarrow & PO_p(\L)\in \delta(\F)\mbox{ (by Lemma~\ref{L:deltaFBasic})}\\
&\Longleftrightarrow & P\in\delta(\F)\mbox{ (by Lemma~\ref{L:deltaFtimesNormal})}\\
&\Longleftrightarrow & P\in\Gamma\mbox{ (as $P\leq T$ and $\Delta=\delta(\F)$).}
\end{eqnarray*}
This shows $\Gamma=\delta(\E)$ and the proof is complete.
\end{proof}

\section{Commuting partial normal subgroups whose product is centric}

\begin{lemma}\label{L:Step4Seperate}
Let $(\L,\Delta,S)$ be a regular locality and let $\N_1$ and $\N_2$ be partial normal subgroups of $\L$ such that $\N_1$ commutes with $\N_2$ in $\L$. If $\H\unlhd\N_i$ for some $i\in\{1,2\}$, then $\H\unlhd\N_1\N_2=\N_2\N_1\unlhd\L$. 
\end{lemma}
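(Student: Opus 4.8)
The statement asks us to show, in a regular locality $(\L,\Delta,S)$, that if $\N_1$ commutes with $\N_2$ and $\H\unlhd\N_i$, then $\H\unlhd\N_1\N_2$ and $\N_1\N_2=\N_2\N_1\unlhd\L$. The last two facts about the product are immediate from Theorem~\ref{T:ProductsPartialNormal}, so the real content is $\H\unlhd\N_1\N_2$. By symmetry we may assume $\H\unlhd\N_1$. The first thing I would record is that, since $\N_1$ commutes with $\N_2$, Corollary~\ref{C:RegularNReplete} applies (the locality being regular), so $\N_2\subseteq C_\L(\N_1\cap S)$ by Corollary~\ref{C:MperpNNperpM} and Corollary~\ref{C:NperpCapSGeneral}; in particular $\N_2\subseteq\L_T$ where $T:=\N_1\cap S$, and we also have the strong-commuting statement and $S_{(n_1,n_2)}=S_{(n_2,n_1)}=S_{n_1}\cap S_{n_2}$ for all $n_i\in\N_i$ with $(n_1,n_2)\in\D$.

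The key step is to take an arbitrary $h\in\H$ and $f\in\N_1\N_2$ with $(f^{-1},h,f)\in\D$ and show $h^f\in\H$. By Theorem~\ref{T:ProductsPartialNormal}, write $f=n_1n_2$ with $n_1\in\N_1$, $n_2\in\N_2$, $(n_1,n_2)\in\D$ and $S_f=S_{(n_1,n_2)}$. Then $f^{-1}=n_2^{-1}n_1^{-1}$ with $S_{f^{-1}}=S_{(n_2^{-1},n_1^{-1})}$ (again by strong commuting, or directly). I would then argue, using the partial-group axioms together with Lemma~\ref{L:NLSbiset}-style bookkeeping on the sets $S_w$, that the word $u:=(n_2^{-1},n_1^{-1},h,n_1,n_2)$ lies in $\D$ via appropriate objects derived from $S_{(f^{-1},h,f)}$, so that $h^f=(h^{n_1})^{n_2}$. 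Since $\H\unlhd\N_1$ and $h\in\H\subseteq\N_1$, $n_1\in\N_1$, we get $h^{n_1}\in\H$ provided $h\in\D(n_1)$, which the decomposition of $u$ supplies. It then remains to see $(h^{n_1})^{n_2}\in\H$. For this I would use that $\N_1$ commutes with $\N_2$, hence (Corollary~\ref{C:MperpNNperpM}) $\N_2$ fixes $\N_1$ under conjugation: since $h^{n_1}\in\H\subseteq\N_1$ and $n_2\in\N_2$, and $h^{n_1}\in\D(n_2)$ by the decomposition, we conclude $(h^{n_1})^{n_2}=h^{n_1}\in\H$. Thus $h^f\in\H$, proving $\H\unlhd\N_1\N_2$.

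The main obstacle is the bookkeeping in the middle step: verifying that $u\in\D$ and that $\Pi(u)=h^f=(h^{n_1})^{n_2}$, and in particular checking that $h\in\D(n_1)$ and $h^{n_1}\in\D(n_2)$. The clean way to do this is to invoke Corollary~\ref{C:MperpNNperpM} to get $S_{(n_1,n_2)}=S_{(n_2,n_1)}=S_{n_1}\cap S_{n_2}$, which forces $S_f=S_{n_1}\cap S_{n_2}\leq S_{n_1}$, and then to run the standard argument (as in the proof of Lemma~\ref{L:NLTAutL}, applied to the one-letter word $(h)$): from $(f^{-1},h,f)\in\D$ via some $P_0,P_1,P_2,P_3\in\Delta$, one splits $f$ and $f^{-1}$ through $n_1,n_2$ and uses the partial-group axioms to reassociate, obtaining the intermediate conjugates and their domains. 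Once the associativity/domain bookkeeping is in place, the two normality/fixing inputs ($\H\unlhd\N_1$ and $\N_2$ fixes $\N_1$ under conjugation) finish the argument with no further calculation. Finally, one should note explicitly that $\N_1\N_2=\N_2\N_1\unlhd\L$ is exactly Theorem~\ref{T:ProductsPartialNormal}, so the full statement follows.
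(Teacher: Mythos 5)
Your proposal is correct and follows essentially the same route as the paper's own proof: decompose $f\in\N_1\N_2$ as $f=n_1n_2$ with $S_f=S_{(n_1,n_2)}$ via Theorem~\ref{T:ProductsPartialNormal} (the paper cites \cite[Lemma~5.2]{Chermak:2015}, which is the same decomposition), verify $(n_2^{-1},n_1^{-1},h,n_1,n_2)\in\D$ from the equality $S_{(f^{-1},h,f)}=S_{(n_2^{-1},n_1^{-1},h,n_1,n_2)}$, conclude $h^f=(h^{n_1})^{n_2}$, and finish by $\H\unlhd\N_1$ together with the fact that $\N_2$ fixes $\N_1$ under conjugation (Corollary~\ref{C:MperpNNperpM}, or equivalently Lemma~\ref{L:PerpendicularPartialNormalPrepare} applied to $\N_2$ commuting with $\N_1\supseteq\H$). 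The preliminary observations about $\N_2\subseteq C_\L(\N_1\cap S)$ and $S_{(n_1,n_2)}=S_{n_1}\cap S_{n_2}$ are correct but not actually needed; the only domain bookkeeping required is the single equality of $S_w$'s above.
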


\begin{proof}
By Theorem~\ref{T:ProductsPartialNormal}, $\N_1\N_2=\N_2\N_1$ is a partial normal subgroup of $\L$. Moreover, Lemma~\ref{C:MperpNNperpM} implies that $\N_2$ commutes with $\N_1$. So the situation is symmetric in $\N_1$ and $\N_2$. Without loss of generality, assume $\H\unlhd\N_1$. Let $h\in\H$ and $f\in\N_1\N_2$ with $h\in\D(f)$. By \cite[Lemma~5.2]{Chermak:2015}, there exist $n_1\in\N_1$ and $n_2\in\N_2$ such that $f=n_1n_2$ and $S_f=S_{(n_1,n_2)}$. Since $(f^{-1},h,f)\in\D$, it follows that  $u:=(n_2^{-1},n_1^{-1},h,n_1,n_2)\in\D$ via $S_{(f^{-1},h,f)}$. So $h^f=\Pi(u)=(h^{n_1})^{n_2}$. As $\H\unlhd \N_1$, we have $h^{n_1}\in\H$. Hence, as $\N_2$ commutes with $\N_1\supseteq\H$, it follows from Lemma~\ref{L:PerpendicularPartialNormalPrepare} that $h^f=h^{n_1}\in\H$. This shows $\H\unlhd\N_1\N_2$ as required. 
\end{proof}

Parts (a) and (b) of the following lemma state essentially the same as \cite[Lemma~7.6]{ChermakIII}. For the proof of part (a), we use the properties of fusion systems stated in Lemma~\ref{L:F1starF2Main}. In contrast, Chermak works with localities throughout and refers in his proof to \cite[Lemma~2.10]{ChermakIII}.

\begin{lemma}\label{T:RegularN1timesN2}
Let $(\L,\Delta,S)$ be a regular locality and let $\N_1$ and $\N_2$ be partial normal subgroups of $\L$. Suppose that $\N_1$ commutes with  $\N_2$ in $\L$ and that $\N_1\N_2$ is a centric partial normal subgroup of $\L$. Set $T_i=\N_i\cap S$ and $\E_i:=\F_{T_i}(\N_i)$ for $i=1,2$. Then the following hold:
\begin{itemize}
 \item [(a)] For $i=1,2$, the subsystem $\E_i$ is saturated, $(\N_i,\delta(\E_i),T_i)$ is a regular locality over $\E_i$ and 
\[\delta(\E_i)=\{P\leq T_i\colon PT_{3-i}\in\Delta\}.\]
\item [(b)] We have $S\cap\N_1\N_2=T_1T_2$, $O_p(\N_1\N_2)=O_p(\N_1)O_p(\N_2)$ and $F^*(\N_1\N_2)=F^*(\N_1)F^*(\N_2)$. 
\item [(c)] The partial group $\N_1\N_2$ is an internal central product of $\N_1$ and $\N_2$ in the sense of Definition~\ref{D:CentralProductPartialGroup}. Moreover, $\F_{T_1T_2}(\N_1\N_2)=\E_1*\E_2$.
\item [(d)] Let $P_i\leq T_i$ for $i=1,2$. Then $P_1P_2\cap F^*(\N_1\N_2)=(P_1\cap F^*(\N_1))(P_2\cap F^*(\N_2))$. Moreover, 
$P_1P_2\in\Delta$ if and only if $P_i\in\delta(\E_i)$ for each $i=1,2$.
\end{itemize}
\end{lemma}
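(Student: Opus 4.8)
\textbf{Proof plan for Lemma~\ref{T:RegularN1timesN2}.}
My plan is to prove the four parts essentially in the order (a), (c), (b), (d), since (c) feeds into (b) and (b) feeds into (d); only part (a) can be done almost purely from earlier machinery. For part (a), I would fix $i$, and first produce a centric partial normal subgroup of $\L$ containing $\N_i$: the natural candidate is $\N_1\N_2$ itself, which is centric by hypothesis, but I need to descend to $\N_i$. The key observation is that by Lemma~\ref{L:Step4Seperate} any $\H\unlhd\N_i$ is normal in $\N_1\N_2\unlhd\L$, so partial normal subgroups of $\N_i$ behave well. To get that $\E_i$ is saturated and $(\N_i,\delta(\E_i),T_i)$ is a regular locality over $\E_i$ with the stated object set, I would like to apply Lemma~\ref{L:RegularCentricMain}, but that lemma requires $\N_i$ itself to be centric in $\L$. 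So the first real step is: show $\N_i$ is centric in $\L$. By Corollary~\ref{C:MperpNNperpM} and the definition of $\perp$, $\N_1\subseteq\N_2^\perp$ and $\N_2\subseteq\N_1^\perp$; combined with Lemma~\ref{L:PerpProduct}, $(\N_1\N_2)^\perp=\N_1^\perp\cap\N_2^\perp$, and since $\N_1\N_2$ is centric this intersection lies in $\N_1\N_2$. I then need $\N_i^\perp\subseteq\N_i$: using $\N_i^\perp\subseteq (\N_1\N_2)$ is not quite enough, so the argument should instead go through $Z^\circ$ and Corollary~\ref{C:OpNNPinMperp} exactly as in Step~1 of the proof of Lemma~\ref{L:RegularCentricMain} (take $\K=\N_i\cap(\N_1\N_2)^\perp\cap\cdots$), showing $\N_i^\perp$ is a normal $p$-subgroup, hence in $O_p(\L)$; but $O_p(\L)\subseteq F^*(\L)\subseteq\N_1\N_2$, and one still has to land inside $\N_i$. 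The cleanest route is: $\N_i^\perp\cap\N_{3-i}\subseteq Z^\circ$ type arguments force $\N_i^\perp\subseteq S$, then $\N_i^\perp\unlhd\L$ gives $\N_i^\perp\subseteq O_p(\L)$; finally $\N_1\N_2$ centric radical (after multiplying by $O_p(\L)$, using Lemma~\ref{L:CentricInherit}) and the decomposition $O_p(\L)\subseteq F^*(\L)=F^*(\N_1)F^*(\N_2)\subseteq\cdots$ — this is where part (b) is genuinely needed, so I would actually prove (b) and (a) somewhat intertwined, doing the $O_p$ and $F^*$ decompositions first on the level of localities via (c).

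For part (c), the plan is to verify the three axioms ($\P$), ($\C1$), ($\C2$) of Definition~\ref{D:CentralProductPartialGroup} for $\N_1,\N_2$ inside $\L$. Property ($\P$) is Theorem~\ref{T:ProductsPartialNormal}. For ($\C1$) I must show $\N_1\times\N_2\subseteq\D$: given $n_1\in\N_1$, $n_2\in\N_2$ I need $(n_1,n_2)\in\D$; since $\N_1$ commutes with $\N_2$ this is subtle because commuting only controls the conclusion when $(n_1,n_2)\in\D$ already. Here I would use that $(\L,\Delta,S)$ is regular, hence $\N_1$-replete and $\N_2$-replete by Corollary~\ref{C:RegularNReplete}, together with the decomposition $S_{n_1}\cap S_{n_2}$ arguments: using Lemma~\ref{L:DecomposeEltofMN} and the fact that $\N_1\cap\N_2\leq Z^\circ\leq S\leq N_\L(S)$, every element of $\N_2$ is $\uparrow_{\N_1}$-maximal; then via the proof technique of Lemma~\ref{L:CommuteInNLXGeneral} I can exhibit a common object $X$ with $S_{n_1}\cap T_2\leq X\leq S_{n_1}$ and $S_{n_2}\cap C_S^\circ\leq X\leq S_{n_2}$ lying in $\Delta$, giving $(n_1,n_2)\in\D$ via $X$. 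For ($\C2$), with a $k\times n$ matrix whose rows lie in $\W(\N_1)$ and $\W(\N_2)$, I would use Lemma~\ref{L:CommuteStronglyProducts} (strong commuting, which holds by Corollary~\ref{C:MperpNNperpM}) repeatedly to shuffle all $\N_1$-entries past all $\N_2$-entries, tracking that $S_w$ is unchanged; the equivalence $w\in\D\Leftrightarrow u_1,u_2\in\D$ then follows because $S_w=S_{u_1\circ u_2}$ and being in $\Delta$ is equivalent to membership in $\D$. The fusion-system statement $\F_{T_1T_2}(\N_1\N_2)=\E_1*\E_2$ follows from Lemma~\ref{L:LocalityFusionSystem}(b) once I know elements of $\N_1\N_2$ decompose as products $n_1n_2$ and conjugation maps multiply accordingly — that is, $c_{n_1n_2}|_{P_1P_2}=(c_{n_1}|_{P_1})*(c_{n_2}|_{P_2})$ — which is exactly the content of ($\C2$) applied to conjugation words; I will also need that $\E_1,\E_2$ centralize each other in the sense of Definition~\ref{D:SubsystemCentralize}, which comes from $\N_1\subseteq C_\L(T_2)$, $\N_2\subseteq C_\L(T_1)$ and $T_1\cap T_2\leq Z^\circ(\N_i)$ via Corollary~\ref{C:NperpCapSGeneral}.

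For part (b): $S\cap\N_1\N_2=(S\cap\N_1)(S\cap\N_2)=T_1T_2$ is Theorem~\ref{T:ProductsPartialNormal}. For $O_p(\N_1\N_2)=O_p(\N_1)O_p(\N_2)$, I would use part (c) to identify $\N_1\N_2=\N_1*\N_2$; from the matrix characterization one sees that a word lies in the domain iff its two ``projected'' words do, so the set $\bigcap S_w$ over words in $\N_1\N_2$ equals the product of the corresponding intersections — more carefully, use Lemma~\ref{F1starF2Main}(c) applied to the fusion systems $\E_1*\E_2$, combined with $O_p(\N_i)=\N_i\cap O_p(\F_S(\N_i))$-type identifications, or directly: $O_p(\N_1)O_p(\N_2)$ is clearly a $p$-subgroup normal in $\N_1\N_2$ hence $\subseteq O_p(\N_1\N_2)$, and conversely $O_p(\N_1\N_2)\leq T_1T_2$ and its projections into $T_1,T_2$ (in the $S_1\cap S_2\le Z$ sense) are normal $p$-subgroups of $\N_1,\N_2$. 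The identity $F^*(\N_1\N_2)=F^*(\N_1)F^*(\N_2)$ is the deepest point: I would argue $F^*(\N_1)F^*(\N_2)$ is radical in $\N_1\N_2$ (using the $O_p$ formula just proved) and centric in $\N_1\N_2$ (using Lemma~\ref{L:PerpProduct} relativized to $\N_1\N_2$ via part (c), so that $(F^*(\N_1)F^*(\N_2))^\perp_{\N_1\N_2}=F^*(\N_1)^{\perp}_{\N_1}\cap\cdots$, each of which is a $p$-group by Theorem~\ref{T:GeneralizedFitting} and Lemma~\ref{L:CSNinN}), hence $F^*(\N_1\N_2)\subseteq F^*(\N_1)F^*(\N_2)$; the reverse inclusion needs that $F^*(\N_i)\subseteq F^*(\N_1\N_2)$, which follows by a $Z^\circ$/$C^\circ_S$ argument mirroring Step~1 of Lemma~\ref{L:RegularCentricMain}'s proof. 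I expect this mutual-containment argument for $F^*$ under central products to be the main obstacle, because it requires carefully relativizing the $\perp$-operator and $Z^\circ$ from $\L$ to the partial subgroup $\N_1\N_2$ and then to each $\N_i$, using that these are all regular localities with matching object sets by parts (a) and (c).

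Finally, part (d) is a computation from (b) and (a). For $P_1P_2\cap F^*(\N_1\N_2)$: write $F^*(\N_1\N_2)=F^*(\N_1)F^*(\N_2)$ by (b), note $F^*(\N_i)\leq T_i$, $[T_1,T_2]=1$, $T_1\cap T_2\leq Z^\circ(\N_i)\leq Z(F^*(\N_i))$, so Lemma~\ref{L:F1starF2Help}(d) (with $S_1\cap S_2\leq P_i$ arranged by enlarging $P_i$ by $T_1\cap T_2$, or directly since the relevant groups already contain $T_1\cap T_2$) gives $P_1P_2\cap F^*(\N_1)F^*(\N_2)=(P_1\cap F^*(\N_1))(P_2\cap F^*(\N_2))$. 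Then
\[
P_1P_2\in\Delta=\delta(\F)\iff P_1P_2\cap F^*(\L)\in\F^s,
\]
and $F^*(\L)=F^*(\N_1)F^*(\N_2)O_p(\L)$ by the decomposition from (b) plus $O_p(\L)\subseteq F^*(\L)$ (and the fact that $\N_1\N_2$ is centric radical in $\L$ after multiplying by $O_p(\L)$, or directly $F^*(\L)=F^*(\N_1\N_2)O_p(\L)$ from Lemma~\ref{L:RegularCentricMain} applied to the centric $\N_1\N_2$). Using Lemma~\ref{L:deltaFtimesNormal} to strip the $O_p(\L)$, this becomes $(P_1\cap F^*(\N_1))(P_2\cap F^*(\N_2))\in\F^s$; by Lemma~\ref{L:F1starF2Main}(g) and Lemma~\ref{L:RegularConjAutomorphisms}(f) (identifying $\E_i^s$ with the subgroups of $T_i$ in $\F^s$, applied inside the regular locality $\N_i$ over $\E_i$ from part (a)) this holds iff $P_i\cap F^*(\N_i)\in\E_i^s$ for each $i$, i.e. iff $P_i\in\delta(\E_i)$ by Lemma~\ref{L:deltaFDefinition}. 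This matches the object-set description in (a), completing the proof.
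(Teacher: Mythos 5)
Your plan for part (a) hinges on first showing that each $\N_i$ is centric in $\L$ so that Lemma~\ref{L:RegularCentricMain} applies, but that is false in general. Inside $\L=\N_1\N_2$ one has $\N_{3-i}\subseteq\N_i^\perp$ (because $\N_{3-i}$ is a partial normal subgroup commuting with $\N_i$), so $\N_i^\perp$ typically contains $\N_{3-i}$ and is nowhere near contained in $\N_i$, let alone in $S$; for example if $\L=\K_1\times\K_2$ with each $\K_j$ quasisimple and $\N_i=\K_i$, then $\N_1\N_2=\L$ is centric but $\N_1^\perp=Z(\K_1)\times\K_2\not\subseteq\N_1$. Consequently the subsequent attempt to force $\N_i^\perp\subseteq S$ via $Z^\circ$-arguments and Corollary~\ref{C:OpNNPinMperp} cannot succeed, and the route into Lemma~\ref{L:RegularCentricMain} for $\N_i$ is closed. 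There is a second, related gap in your ordering: you want to establish $(\C 1)$ --- namely $(n_1,n_2)\in\D$ for all $n_i\in\N_i$ --- before (a) and (d), using repleteness and the mechanisms of Lemma~\ref{L:CommuteInNLXGeneral} and Lemma~\ref{L:DecomposeEltofMN}. But both of those results take $(n,c)\in\D$ (resp. $(m,n)\in\D$) as a hypothesis; they never manufacture membership in $\D$. The paper proves $(\C 1)$ only at the very end, after (a) has identified $\Gamma_i=\delta(\E_i)$ and (d) has shown that $P_i\in\delta(\E_i)$ for $i=1,2$ forces $P_1P_2\in\Delta$, so that $S_{(n_1,n_2)}\supseteq (S_{n_1}\cap T_1)(S_{n_2}\cap T_2)\in\Delta$.

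The step you are missing is the one the paper's proof pivots on: reduce to $\L=\N_1\N_2$ by applying Lemma~\ref{L:RegularCentricMain} to the centric subgroup $\N_1\N_2$ (not to $\N_i$), then compute $\F=\E_1*\E_2$ directly at the level of fusion systems (Step~2 of the paper) and invoke the $*$-product machinery of Lemma~\ref{L:F1starF2Main} --- in particular part (f), which was deliberately proved there for factors $\E_i$ that are \emph{not yet known to be saturated} --- to obtain $\E_i^{cr}\subseteq\Gamma_i$. That yields that $(\N_i,\Gamma_i,T_i)$ is a linking locality, whence $\E_i$ is saturated by Theorem~\ref{T:Saturation}, with no centricity assumption on $\N_i$. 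The identifications $O_p(\L)=O_p(\N_1)O_p(\N_2)$ and $F^*(\L)=F^*(\N_1)F^*(\N_2)$ are then done inside the reduced $\L$, $\Gamma_i$ is matched with $\delta(\E_i)$, and only after (a) and (d) are in place can one verify the central-product axioms for (c). If you rebuild your proof in this order, the rest of your outline (the $F^*$-containments via $Z^\circ$ and Corollary~\ref{C:OpNNPinMperp} in Step~7-style arguments, and the translation of (d) through Lemma~\ref{L:F1starF2Main}(g) and Lemma~\ref{L:deltaFtimesNormal}) aligns with the paper.
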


\begin{proof}
Setting $T:=(\N_1\N_2)\cap S$ and $\Gamma:=\{P\leq T\colon P\in\Delta\}$, Lemma~\ref{L:RegularCentricMain} gives that $(\N_1\N_2,\Gamma,T)$ is a regular locality over $\F_T(\N_1\N_2)$. As $\N_1$ commutes with $\N_2$ in $\L$, it follows moreover that $\N_1$ commutes also with $\N_2$ in $\N_1\N_2$. Hence, replacing $(\L,\Delta,S)$ by $(\N_1\N_2,\Gamma,T)$, we may assume without loss of generality that
\[\L=\N_1\N_2.\]
By Lemma~\ref{C:MperpNNperpM}, $\N_2$ also commutes with $\N_1$ and so the situation is symmetric in $\N_1$ and $\N_2$. By Lemma~\ref{L:PerpendicularNormLTNormL}, we have $\N_i\subseteq C_\L(T_{3-i})$ for $i=1,2$. In particular, $\E_1$ and $\E_2$ centralize each other in the sense of Definition~\ref{D:SubsystemCentralize}. So $\E_1*\E_2$ is by Lemma~\ref{L:F1starF2inF} well-defined and contained in $\F$. 
For $i=1,2$ set
\[\Gamma_i=\{P\leq T_i\colon PT_{3-i}\in\Delta\}.\]
We proceed now in several steps to show the assertion.

\smallskip

\emph{Step~1:} Let $s_i\in T_i$ and $n_i\in\N_i$ for $i=1,2$ such that $s_1s_2\in S_{(n_1,n_2)}$. We show that $s_i\in S_{n_i}$ for $i=1,2$ and $((s_1s_2)^{n_1})^{n_2}=s_1^{n_1}s_2^{n_2}$. In particular,  $S_{(n_1,n_2)}=(S_{n_1}\cap T_1)(S_{n_2}\cap T_2)$. 

\smallskip

As $\N_1\subseteq C_\L(T_2)$, we have $s_1s_2\in S_{(n_1,n_2)}\leq S_{n_1}\geq T_2\ni s_2$, which implies $s_1\in S_{n_1}$. Thus, using $\N_2\subseteq C_\L(T_1)$, we can conclude that $s_1^{n_1}s_2=(s_1s_2)^{n_1}\in S_{n_2}\geq T_1\ni s_1^{n_1}$. Therefore, we have $s_2\in S_{n_2}$ and $((s_1s_2)^{n_1})^{n_2}=s_1^{n_1}s_2^{n_2}$. This proves the first part of the assertion and $S_{(n_1,n_2)}\leq (S_{n_1}\cap T_1)(S_{n_2}\cap T_2)$. The converse inclusion follows from $\N_i\subseteq C_\L(T_{3-i})$ for $i=1,2$.

\smallskip

\emph{Step~2:} We argue that $S=T_1T_2$ and $\F=\E_1*\E_2$. Indeed, $S=T_1T_2$ holds by Theorem~\ref{T:ProductsPartialNormal}. Moreover, by \cite[Lemma~5.2]{Chermak:2015}, an element  $f\in\L$ can be written as product $f=n_1n_2$ with $n_i\in \N_i$ for $i=1,2$ and $S_f=S_{(n_1,n_2)}$. By Step~1 we have $S_f=(S_{n_1}\cap T_1)(S_{n_2}\cap T_2)$ and $c_f|_{S_{n_i}\cap T_i}=c_{n_i}|_{S_{n_i}\cap T_i}\in\E_i$. Hence, $\F\subseteq \E_1*\E_2$. We have already seen above that $\E_1*\E_2\subseteq\F$, so Step~2 is complete. 

\smallskip

\emph{Step~3:} We show that $(\N_i,\Gamma_i,T_i)$ is a linking locality over $\E_i$ and thus $\E_i$ is saturated by Theorem~\ref{T:Saturation} for $i=1,2$. For the proof fix $i\in\{1,2\}$. By \cite[Lemma~3.1(c)]{Chermak:2015}, $T_i$ is a maximal $p$-subgroup of $\N_i$. Notice that $\Gamma_i$ is overgroup-closed in $T_i$ as $\Delta$ is overgroup-closed in $S$. Moreover, since $\N_i\subseteq C_\L(T_{3-i})$ and $\Delta$ is closed under taking $\L$-conjugates in $S$, it follows that $\Gamma_i$ is closed under taking $\N_i$-conjugates in $T_i$. Fix $w\in\W(\N_i)$. If $w\in\D_{\Gamma_i}$, then $\N_i\subseteq C_\L(T_{3-i})$ implies also $w\in\D=\D_{\Delta}$. On the other hand, if $w\in\D$, then $w=(n_1,\dots,n_k)\in\D$ via some $P_0,P_1,\dots,P_k\in\Delta$, and because of $\N_i\subseteq C_\L(T_{3-i})$, we may choose the $P_j$ such that $T_{3-i}\leq P_j$ for $j=0,1,\dots,k$. As $S=T_iT_{3-i}$, we have $P_j=(P_j\cap T_i)T_{3-i}$ and thus $Q_j:=P_j\cap T_i\in\Gamma_i$ for $i=0,1,\dots,k$. Now $w\in\D_{\Gamma_i}$ via $Q_0,Q_1,\dots,Q_k$. This shows that $(\N_i,\Gamma_i,T_i)$ is a locality. Clearly it is a locality over $\E_i$ by definition of $\E_i$. Note that $T_{3-i}\in\E_{3-i}^{cr}$. Thus, for every $P\in \E_i^{cr}$, it follows from Lemma~\ref{L:F1starF2Main}(f) and Step~2 that $PT_{3-i}\in \F^{cr}$. Hence, by Lemma~\ref{L:deltaFBasic}, we have $PT_{3-i}\in\Delta$ and thus $P\in\Gamma_i$. This proves $\E_i^{cr}\subseteq\Gamma_i$. For every $Q\in\Gamma_i$, we have $R:=QT_{3-i}\in\Delta$. Hence,  Lemma~\ref{L:MSCharp} implies that $N_{\N_i}(R)\unlhd N_\L(R)$ is of characteristic $p$ and thus $N_{\N_i}(Q)=N_{N_{\N_i}(R)}(Q)$ is of characteristic $p$. Thus, $(\N_i,\Gamma_i,T_i)$ is a linking locality over $\E_i$. 

\smallskip 

\emph{Step~4:} We prove $O_p(\L)=O_p(\N_1)O_p(\N_2)$. By \cite[Proposition~5]{Henke:2015} and Lemma~\ref{L:OpL}, we have $O_p(\F)=O_p(\L)$ and $O_p(\N_i)=O_p(\E_i)$ for $i=1,2$; we use here that $(\N_i,\Gamma_i,T_i)$ is a linking locality over $\E_i$ by Step~3. Hence, by Step~2 and Lemma~\ref{L:F1starF2Main}(c), we have $O_p(\L)=O_p(\F)=O_p(\E_1)O_p(\E_2)=O_p(\N_1)O_p(\N_2)$. 

\smallskip

We will use Step~3 from now without further reference. In particular, we use that 
\[\H_i:=F^*(\N_i)\mbox{ for }i=1,2\]
is well-defined.

\smallskip

\emph{Step~5:} We show that $T_1\cap T_2\leq \N_1\cap \N_2\subseteq \H_1\cap \H_2\cap S$. Fix $i\in\{1,2\}$ and notice that $\N_{3-i}\subseteq \N_i^\perp$ as $\N_{3-i}$ commutes with $\N_i$. Hence, by Lemma~\ref{L:ZcircBasic} and Lemma~\ref{L:NcapNperp}, we have $\N_1\cap \N_2\subseteq\N_i\cap\N_i^\perp\subseteq Z(\N_i)\cap S$. In particular, $\N_1\cap\N_2\leq O_p(\N_i)\subseteq \H_i$. 
This completes Step~5. 

\smallskip

\emph{Step~6:} We prove that $F^*(\L)\subseteq\H_1\H_2$. Since $\H_1$ and $\H_2$ are partial normal subgroups of $\L$ by Lemma~\ref{L:Step4Seperate}, it follows from Theorem~\ref{T:ProductsPartialNormal} that $\M:=\H_1\H_2\unlhd\L$. Notice that $\M$ is radical by Step~4. We show now that $\M$ is centric. By Lemma~\ref{L:PerpProduct}, we have $\M^\perp=\H_1^\perp\cap\H_2^\perp$. Observe that, for $i=1,2$, $\N_{3-i}$ commutes with $\H_i\subseteq \N_i$ and thus $\N_{3-i}\subseteq \H_i^\perp=(\H_i)^\perp_\L$. Hence, applying the Dedekind Lemma \cite[Lemma~1.10]{Chermak:2015} twice, we obtain $\H_1^\perp=(\H_1^\perp\cap\N_1)\N_2$ and $\H_1^\perp\cap\H_2^\perp=(\H_1^\perp\cap \N_1)(\H_2^\perp\cap\N_2)$. Observe that $\H_i^\perp\cap\N_i\unlhd\N_i$ commutes with $\H_i$ and thus $\H_i^\perp\cap\N_i\subseteq (\H_i)^\perp_{\N_i}\subseteq\H_i$, as $\H_i$ is centric in $\N_i$ by Theorem~\ref{T:GeneralizedFitting}. Hence, $\M^\perp=\H_1^\perp\cap\H_2^\perp= (\H_1^\perp\cap \N_1)(\H_2^\perp\cap\N_2)\subseteq\H_1\H_2=\M$. This shows that $\M$ is centric in $\L$ and thus centric radical in $\L$. Hence, $F^*(\L)\subseteq\M$ and Step~6 is complete. 

\smallskip

\emph{Step~7:} We show that $F^*(\L)=\H_1\H_2$. By Step~6 it is sufficient to prove $\H_1\H_2\subseteq F^*(\L)$. Hence, fixing $i\in\{1,2\}$, we need to prove that $\H_i$ is contained in $\K:=F^*(\L)\cap \N_i$. Notice that $\K\unlhd\L$ and $\K^\perp_{\N_i}\unlhd\L$ by Lemma~\ref{L:Step4Seperate}. Using  Lemma~\ref{L:NcapNperp} we see moreover
\[\K^\perp_{\N_i}\cap F^*(\L)=\K^\perp_{\N_i}\cap \N_i\cap F^*(\L)=\K^\perp_{\N_i}\cap\K=Z_{\N_i}^\circ(\K)\leq Z^\circ_{\N_i}(\K_{\N_i}^\perp).\]
Hence, by Lemma~\ref{L:ZcircBasic} and Corollary~\ref{C:RegularNReplete}, we have 
\[\K^\perp_{\N_i}\cap F^*(\L)\leq Z(\K^\perp_{\N_i})=Z_\L^\circ(\K^\perp_{\N_i}).\]
As $F^*(\L)^\perp\subseteq S$ by Lemma~\ref{L:CSNinN} and Theorem~\ref{T:GeneralizedFitting}, it follows now from Corollary~\ref{C:OpNNPinMperp} that $\K^\perp_{\N_i}\subseteq S$. So Lemma~\ref{L:OpL} yields $\K^\perp_{\N_i}\subseteq O_p(\L)$. Hence, $\K^\perp_{\N_i}\subseteq O_p(\L)\cap\N_i\subseteq \K$ and $\K$ is centric in $\N_i$. By Step~4, $O_p(\N_i)\subseteq O_p(\L)\cap \N_i\subseteq\K$, so $\K$ is centric radical in $\N_i$ and $\H_i\subseteq\K$. This completes Step~7.

\smallskip

\emph{Step~8:} We prove (a) and (d). For the proof let $P_i\leq T_i$ and set $T_i^*:=S\cap \H_i=T_i\cap \H_i$ for $i=1,2$. It follows from Step~5 that $T_1\cap T_2\subseteq \H_1\cap \H_2$. So
\begin{equation}\label{E:StarStep8}
D:=T_1\cap T_2=T_1^*\cap T_2^*. 
\end{equation}
This allows us to conclude that, for $i=1,2$, we have  $P_{3-i}\cap T_i\leq T_i^*$ and hence
\begin{equation}\label{E:StarStarStep8}
P_1P_2\cap T_i^*=P_iP_{3-i}\cap T_i\cap T_i^*=P_i(P_{3-i}\cap T_i)\cap T_i^*=(P_i\cap T_i^*)(P_{3-i}\cap T_i).
\end{equation}
We compute now
\begin{eqnarray*}
 P_1P_2\cap T_1^*T_2^* &=& P_1P_2\cap P_1T_2\cap T_1^*T_2^*\\
&=& P_1P_2\cap (P_1T_2\cap T_1^*)T_2^*\\
&=& P_1P_2\cap (P_1\cap T_1^*)(T_2\cap T_1)T_2^*\mbox{ (by \eqref{E:StarStarStep8} applied with $T_2$ in place of $P_2$)}\\
&=& P_1P_2\cap (P_1\cap T_1^*)T_2^*\mbox{ (by \eqref{E:StarStep8})}\\
&=& (P_1\cap T_1^*)(P_1P_2\cap T_2^*)\\
&=& (P_1\cap T_1^*)(P_2\cap T_2^*)(P_1\cap T_2)\mbox{ (by \eqref{E:StarStarStep8})}\\
&=& (P_1\cap T_1^*)(P_2\cap T_2^*)\mbox{ (as $P_1\cap T_2\leq P_1\cap D\leq P_1\cap T_1^*$ by \eqref{E:StarStep8}).}
\end{eqnarray*}
By Step~7 and Theorem~\ref{T:ProductsPartialNormal}, we have $F^*(\L)\cap S=T_1^*T_2^*$. Hence, the equation above translates to
\begin{equation}\label{E:3StarStep8}
 P_1P_2\cap F^*(\L)=(P_1\cap F^*(\N_1))(P_2\cap F^*(\N_2)).
\end{equation}
Hence,
\begin{eqnarray*}
 P_1P_2\in\Delta&\Longleftrightarrow & P_1P_2\cap F^*(\L)\in\F^s\mbox{ (by Lemma~\ref{L:deltaFDefinition})}\\
&\Longleftrightarrow & (P_1\cap F^*(\N_1))(P_1\cap F^*(\N_2))\in\F^s\mbox{ (by \eqref{E:3StarStep8})}\\
&\Longleftrightarrow & P_i\cap F^*(\N_i)\in\E_i^s\mbox{ for each }i=1,2\mbox{ (by Step~2 and Lemma~\ref{L:F1starF2Main}(g))}\\
&\Longleftrightarrow & P_i\in\delta(\E_i)\mbox{ for each }i=1,2\mbox{ (by Step~3 and Lemma~\ref{L:deltaFDefinition}).}
\end{eqnarray*}
This completes the proof of (d). Fix now $i\in\{1,2\}$. Proving (a) amounts by Step~3 to showing that $\Gamma_i=\delta(\E_i)$. Notice that $T_{3-i}\in \delta(\E_{3-i})$. Hence it follows from (d) that we have the following equivalence:
\begin{equation*}
 P_i\in \Gamma_i\Longleftrightarrow P_iT_{3-i}\in\Delta\Longleftrightarrow P_i\in\delta(\E_i).
\end{equation*}
This shows $\Gamma_i=\delta(\E_i)$ and Step~8 is thus complete. 

\smallskip

\emph{Step~10:} We finish the proof. Parts (a) and (d) hold by Step~8 and part (b) holds by Step~2, Step~4 and Step~7. It  remains thus to prove (c). By Step~2, we only need to prove that $\L$ is an internal central product of $\N_1$ and $\N_2$. That is, we need to prove that the conditions ($\C 1$) and ($\C 2$) in Definition~\ref{D:CentralProductPartialGroup} hold for $k=2$ and $(\N_1,\N_2)$ in place of $(\L_1,\L_2)$. 

\smallskip

For the proof of ($\C 1$) let $n_i\in\N_i$ for $i=1,2$. As $\N_i\subseteq C_\L(T_{3-i})$ for $i=1,2$, we have $(S_{n_1}\cap T_1)(S_{n_2}\cap T_2)\leq S_{(n_1,n_2)}$. Part (a) implies $S_{n_i}\cap T_i\in\delta(\E_i)$ for each $i=1,2$. Thus, part (d) gives $(S_{n_1}\cap T_1)(S_{n_2}\cap T_2)\in\Delta$. As $\Delta$ is overgroup closed, it follows that $S_{(n_1,n_2)}\in\Delta$ and  $(n_1,n_2)\in\D$. So ($\C 1$) holds.

\smallskip

Let now
\begin{eqnarray*}
 A=\begin{pmatrix}
    f_1 & f_2 & \hdots & f_n\\
    g_1 & g_2 & \hdots & g_n
   \end{pmatrix}
\end{eqnarray*}
be a matrix such that $u:=(f_1,\dots,f_n)\in\W(\N_1)$ and $v=(g_1,\dots,g_n)\in\W(\N_2)$. Set $w:=(f_1g_1,\dots,f_ng_n)$. To prove ($\C 2$) we need to show that $w\in\D$ if and only if $u,v\in\D$. Moreover, if so, we need to show that $\Pi(w)=\Pi(\Pi(u),\Pi(v))$. 

\smallskip

By Step~5 we have $\N_1\cap\N_2\leq S$. Hence, Lemma~\ref{L:DecomposeEltofMN} gives $S_{fg}=S_{(f,g)}$ for all $f\in\N_1$ and $g\in\N_2$. Combining this property with Step~1, we obtain that, for all $j=1,\dots,n$, we have \[S_{f_jg_j}=S_{(f_j,g_j)}=(S_{f_j}\cap T_1)(S_{g_j}\cap T_2)\]
and, whenever $s_i\in S_i$ for $i=1,2$ with $s_1s_2\in S_{f_jg_j}$, we have $s_1\in S_{f_j}$, $s_2\in S_{g_j}$ and  $(s_1s_2)^{f_jg_j}=s_1^{f_j}s_2^{g_j}$. This implies that $S_w=S_{(f_1,g_1,f_2,g_2,\dots,f_n,g_n)}\leq (S_u\cap T_1)(S_v\cap T_2)$. The converse inclusion holds since $\N_i\subseteq C_\L(T_{3-i})$ for $i=1,2$. Hence, $S_w=S_{(f_1,g_1,f_2,g_2,\dots,f_n,g_n)}=(S_u\cap T_1)(S_v\cap T_2)$. Therefore, (d) implies that $S_w\in\Delta$ if and only if $S_u\cap T_1\in\delta(\E_1)$ and $S_v\cap T_2\in\delta(\E_2)$. Using  (a) and the fact that $(\L,\Delta,S)$ is a locality, one sees now that $w\in\D$ if and only if $u,v\in\D$. Moreover, this is the case if and only if $(f_1,g_1,f_2,g_2,\dots,f_n,g_n)\in\D$.  

\smallskip

Assume now that $w\in\D$. As $(f_1,g_1,\dots,f_n,g_n)\in\D$ we have
\[\Pi(w)=\Pi(f_1,g_1,\dots,f_n,g_n).\]
We claim now that $u\circ v\in\D$ via $S_w$ and $\Pi(w)=\Pi(u\circ v)$. This will in particular imply $\Pi(w)=\Pi(\Pi(u),\Pi(v))$ and thus complete the proof of ($\C 2$). By the axioms of a partial group, we have $(f_1g_1,\dots,f_{n-1}g_{n-1})\in\D$. So to prove our claim, by induction on $n$, we can assume that $(f_1,\dots,f_{n-1},g_1,\dots,g_{n-1})\in\D$ via $S_{(f_1g_1,\dots,f_{n-1}g_{n-1})}$ and $\Pi(f_1g_1,\dots,f_{n-1}g_{n-1})=\Pi(f_1,\dots,f_{n-1},g_1,\dots,g_{n-1})$. Then $(f_1,\dots,f_{n-1},g_1,\dots,g_{n-1},f_n,g_n)\in\D$ via $S_w$ and 
\begin{eqnarray*}
 \Pi(w)&=&\Pi(\Pi(f_1g_1,\dots,f_{n-1}g_{n-1}),f_n,g_n)\\
&=& \Pi(\Pi(f_1,\dots,f_{n-1},g_1,\dots,g_{n-1}),f_n,g_n)\\
&=& \Pi(f_1,\dots,f_{n-1},g_1,\dots,g_{n-1},f_n,g_n).
\end{eqnarray*}
By Corollary~\ref{C:MperpNNperpM}, $\N_2$ commutes strongly with $\N_1$. Hence, it follows from Lemma~\ref{L:CommuteStronglyProducts} applied with $\N_2,\N_1,(f_1,\dots,f_{n-1}),(g_1,\dots,g_{n-1}),(f_n),(g_n)$ in the roles of $\X,\Y,u,a,b,v$ that 
\[u\circ v=(f_1,\dots,f_{n-1},f_n,g_1,\dots,g_{n-1},g_n)\in\D\mbox{ via }S_w\leq S_{(f_1,\dots,f_{n-1},g_1,\dots,g_{n-1},f_n,g_n)}=S_{u\circ v}\]
and 
\[\Pi(w)=\Pi(f_1,\dots,f_{n-1},g_1,\dots,g_{n-1},f_n,g_n)=\Pi(u\circ v).\]
As argued above, the proof is now complete.
\end{proof}

\section{Partial normal and partial subnormal subgroups} We are now able to prove that partial normal subgroups of $\L$ form regular localities. Indeed, some more precise information is given in the following theorem. Most parts are also stated in  \cite[Theorem~7.7]{ChermakIII}.

\begin{theorem}\label{T:RegularPartialNormal}
Let $(\L,\Delta,S)$ be a regular locality, let $\N\unlhd\L$, $T:=\N\cap S$ and $\E:=\F_T(\N)$. Then the following hold: 
\begin{itemize}
\item [(a)] The subsystem $\E$ is saturated, $(\N,\delta(\E),T)$ is a regular locality over $\E$ and 
\[\delta(\E)=\{P\leq T\colon PC_S(\N)\in\Delta\}.\]
\item [(b)] $\N\N^\perp\unlhd\L$ is an internal central product of $\N$ and $\N^\perp$; 
\item [(c)] $O_p(\N\N^\perp)=O_p(\N)O_p(\N^\perp)$ and $O_p(\N)=\N\cap O_p(\L)\unlhd \L$;
\item [(d)] $F^*(\N\N^\perp)=F^*(\N)F^*(\N^\perp)$, $F^*(\L)=\F^*(\N)F^*(\N^\perp)O_p(\L)$ and $F^*(\N)=F^*(\L)\cap\N\unlhd\L$;
\item [(e)] $\N^\perp=C_\L(\N)\unlhd\L$. 
\item [(f)] For every $f\in N_\L(T)$ we have $\N\subseteq \D(f)$, $c_f|_\N\in\Aut(\N,\delta(\E),T)$ and $c_f|_T\in\Aut(\E)$. Moreover, the map $N_\L(T)\rightarrow \Aut(\N),f\mapsto c_f|_\N$ is a homomorphism of partial groups.
\end{itemize}
\end{theorem}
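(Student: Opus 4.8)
The plan is to reduce Theorem~\ref{T:RegularPartialNormal} to the already-proved results about centric partial normal subgroups (Lemma~\ref{L:RegularCentricMain}, Lemma~\ref{T:RegularN1timesN2}) together with the concrete description of $\N^\perp$ in Corollary~\ref{C:NperpCapS}. The key device is to pass from $\N$ to the \emph{centric} partial normal subgroup $\M:=\N\N^\perp O_p(\L)$, which is centric radical in $\L$ by Lemma~\ref{L:CentricInherit}(c), and to exploit that $F^*(\L)\subseteq\M$. First I would record, using Corollary~\ref{C:RegularNReplete}, that $(\L,\Delta,S)$ is $\N$-replete, so $\N^\perp\cap S=C_S(\N)$ and $Z^\circ(\N)=Z(\N)$; this is what makes the formula for $\delta(\E)$ come out in terms of $C_S(\N)$ rather than $C_S^\circ(\N)$.

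\textbf{Main steps.} (1) Prove $O_p(\N)=\N\cap O_p(\L)\unlhd\L$: the inclusion $\N\cap O_p(\L)\subseteq O_p(\N)$ is clear once $(\N,\delta(\E),T)$ is known to be a locality (which follows from Lemma~\ref{L:NLTCLT} / the argument of Lemma~\ref{L:RegularConjAutomorphisms}(b) after verifying $\E^{cr}\subseteq\delta(\E)$ via Lemma~\ref{L:EcrNradical} and Lemma~\ref{L:GetCentricRadintodeltaF}, passing to a subcentric locality with Theorem~\ref{T:VaryObjects}); the reverse inclusion uses that $O_p(\N)$ is $N_\L(T)$-invariant (Lemma~\ref{L:RegularConjAutomorphisms}(c)) and hence normal in $\L$ by \cite[Corollary~3.13]{Chermak:2015}, so it lies in $O_p(\L)$. (2) Prove $\N^\perp=C_\L(\N)\unlhd\L$: normality is Corollary~\ref{C:NperpNormal}; for the equality, $\N^\perp\subseteq C_\L(\N)$ always holds, and for the converse I would use Corollary~\ref{C:NperpCapS} giving $\N^\perp=C_S(\N)O^p_{\L_T}(\C_T)$, then show $C_\L(\N)\subseteq\L_T$ and $C_\L(\N)\unlhd\L_T$ with $C_\L(\N)\cap S\leq C_S(\N)$, and apply Lemma~\ref{L:ResidueEquivalences} together with the fact that $O^p_{\L_T}(C_\L(\N))\subseteq O^p_{\L_T}(\C_T)$ (Lemma~\ref{L:ResidueNinM}) to conclude $C_\L(\N)=(C_\L(\N)\cap S)O^p_{\L_T}(C_\L(\N))\subseteq\N^\perp$. (3) Deduce (b): $\N\N^\perp\unlhd\L$ by Theorem~\ref{T:ProductsPartialNormal}, and since $\N\N^\perp O_p(\L)$ is centric and hence $F^*(\L)\subseteq\N\N^\perp O_p(\L)$, one checks $\N\N^\perp$ itself is centric (Lemma~\ref{L:CentricInherit}(c)); then Lemma~\ref{L:NNperpCommute} gives that $\N$ commutes with $\N^\perp$, so Lemma~\ref{T:RegularN1timesN2}(c) applies with $(\N_1,\N_2)=(\N,\N^\perp)$, yielding the internal central product. (4) Parts (c),(d): the formulas $O_p(\N\N^\perp)=O_p(\N)O_p(\N^\perp)$ and $F^*(\N\N^\perp)=F^*(\N)F^*(\N^\perp)$ are exactly Lemma~\ref{T:RegularN1timesN2}(b); combine with Step~1 applied to $\N$ and to $\N^\perp$, and with $F^*(\L)=F^*(\N\N^\perp)O_p(\L)$ (Lemma~\ref{L:RegularCentricMain} applied to the centric $\N\N^\perp$) to get $F^*(\L)=F^*(\N)F^*(\N^\perp)O_p(\L)$; the identity $F^*(\N)=F^*(\L)\cap\N$ then follows by intersecting with $\N$ and using Step~5. (5) Part (a): that $\E$ is saturated and $(\N,\delta(\E),T)$ regular over $\E$ comes from Lemma~\ref{T:RegularN1timesN2}(a) with $(\N_1,\N_2)=(\N,\N^\perp)$, which gives $\delta(\E)=\{P\leq T\colon P(\N^\perp\cap S)\in\Delta\}$; now substitute $\N^\perp\cap S=C_S(\N)$ from $\N$-repleteness. (6) Part (f): apply Lemma~\ref{L:RegularConjAutomorphisms}(c) (whose hypotheses now hold for $\M=\N\N^\perp O_p(\L)$ centric in $\L$, with $T$ replaced by $S\cap\M$) — but since we want the statement for $N_\L(T)$ and $\N$ directly, I would instead invoke Lemma~\ref{L:NLTAutL} with the strongly closed subgroup $T$, having verified condition \eqref{E:ObjectCondition} from Lemma~\ref{L:RegularConjAutomorphisms}(a) (equivalently Lemma~\ref{L:deltaFtimesNormal} applied to $O_p(\L)$); this gives $\N\subseteq\D(f)$, $c_f|_\N\in\Aut(\N,\delta(\E),T)$ and $c_f|_T\in\Aut(\E)$, and the homomorphism property is the standard functoriality of conjugation, i.e. Lemma~\ref{L:ConjugateNormalizer}(b) packaged as a map into $\Aut(\N)$.

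\textbf{Main obstacle.} The delicate point is Step~2, the equality $\N^\perp=C_\L(\N)$. In a general linking locality $C_\L(\N)$ need not even be a partial subgroup of the right shape, and the inclusion $C_\L(\N)\subseteq\N^\perp$ is genuinely special to regular localities; it rests on $\N$-repleteness (so that $\N^\perp\cap S=C_S(\N)$) and on decomposing an arbitrary element of $C_\L(\N)$ via Alperin's fusion theorem for partial normal subgroups (Lemma~\ref{L:PartialNormalAlperin} applied inside $\L_T$ to $C_\L(\N)\unlhd\L_T$) into a $p$-part lying in $C_S(\N)$ and a $p'$-part lying in $O^p_{\L_T}(\C_T)$. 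I expect the care needed to justify $C_\L(\N)\unlhd\L_T$ with the correct intersection with $S$, and to match it against the $O^p_{\L_T}(\C_T)$ appearing in Corollary~\ref{C:NperpCapS}, to be where the real work lies; everything else is bookkeeping over the cited lemmas.
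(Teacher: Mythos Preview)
Your backbone is right and matches the paper: apply Lemma~\ref{T:RegularN1timesN2} to the pair $(\N,\N^\perp)$, using that $\N\N^\perp$ is centric (Lemma~\ref{L:CentricInherit}(c)) and that $\N^\perp\cap S=C_S(\N)$ by $\N$-repleteness. Parts (a), (b), the first halves of (c) and (d) then fall out exactly as you say.

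There are, however, genuine gaps in your handling of the remaining pieces, all with the same root cause: you repeatedly invoke results that require $\N$ to be centric (or equivalently require condition~\eqref{E:ObjectCondition} for $T=\N\cap S$), which it is not. Specifically:
\begin{itemize}
\item In Step~1 you cite Lemma~\ref{L:RegularConjAutomorphisms}(c) to get $N_\L(T)$-invariance of $O_p(\N)$, but that lemma assumes $\N$ centric. The paper instead obtains $O_p(\N)\unlhd\L$ by intersecting: $O_p(\N\N^\perp)\unlhd\L$ from Lemma~\ref{L:RegularCentricMain}, and $O_p(\N)=O_p(\N\N^\perp)\cap\N$ since $\N\cap\N^\perp\leq O_p(\N)$ (Lemma~\ref{L:NcapNperp}).
\item In Step~6 you want Lemma~\ref{L:NLTAutL} with $T$, but \eqref{E:ObjectCondition} fails: for general $\N$ there is no reason that $P\cap T\in\Delta$ when $P\in\Delta$ (take $\N=\{\One\}$). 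The paper's trick for (f) is to factor $f\in N_\L(T)$ via the Frattini and Splitting Lemmas as $f=ch$ with $c\in\N^\perp=C_\L(\N)$ and $h\in N_\L(TC_S(\N))$; since $TC_S(\N)=S\cap\N\N^\perp$ with $\N\N^\perp$ centric, Lemma~\ref{L:RegularConjAutomorphisms}(c) \emph{does} apply to $h$, and $c_f|_\N=c_h|_\N$ because $c$ centralizes $\N$. The homomorphism property also needs this factorization together with the central product structure; Lemma~\ref{L:ConjugateNormalizer}(b) alone does not suffice since $\N$ is not contained in a single $N_\L(P)$.
\end{itemize}

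For (e), two problems. First, ``$\N^\perp\subseteq C_\L(\N)$ always holds'' is false in general localities: $\N^\perp$ only \emph{commutes with} $\N$ in the weak sense of Definition~\ref{D:Commute}. The paper gets this inclusion from the central product via Lemma~\ref{L:CentralProductsFactorsCentralize}, so it genuinely uses (b) first. Second, your converse argument needs $C_\L(\N)\unlhd\L_T$, but $C_\L(\N)$ is not a priori even a partial subgroup (closure under products is not automatic). The paper avoids this entirely: using $C_\L(T)=C_S(T)\N^\perp$ (from Corollary~\ref{C:NperpCapS} and Lemma~\ref{L:ResidueEquivalences}), any $c\in C_\L(\N)\subseteq C_\L(T)$ is written $c=sm$ with $s\in C_S(T)$, $m\in\N^\perp\subseteq C_\L(\N)$; then for $n\in\N$ one computes $n=n^c=(n^s)^m=n^s$, forcing $s\in C_S(\N)=\N^\perp\cap S$, hence $c\in\N^\perp$.
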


\begin{proof}
By definition, $\N^\perp$ commutes with $\N$ and thus, by Corollary~\ref{C:MperpNNperpM}, $\N$ commutes with $\N^\perp$. Moreover, by Lemma~\ref{L:CentricInherit}(c), the product  $\N\N^\perp$ is a centric partial normal subgroup of $\L$. Hence, it follows from Theorem~\ref{T:RegularN1timesN2}(a),(b),(c) that parts (a) and (b) hold, that $O_p(\N\N^\perp)=O_p(\N)O_p(\N^\perp)$ and that $F^*(\N\N^\perp)=F^*(\N)F^*(\N^\perp)$. Using Lemma~\ref{L:RegularCentricMain}, it follows $F^*(\L)=F^*(\N\N^\perp)O_p(\L)=F^*(\N)F^*(\N^\perp)O_p(\L)$.

\smallskip

To show $O_p(\N)=O_p(\L)\cap\N\unlhd\L$ we use Lemma~\ref{L:OpL} throughout. It follows from Lemma~\ref{L:NcapNperp} that $\N\cap\N^\perp\leq O_p(\N)$. By Lemma~\ref{L:RegularCentricMain}, we have $O_p(\N)O_p(\N^\perp)=O_p(\N\N^\perp)\unlhd\L$ and so $O_p(\N)=O_p(\N\N^\perp)\cap \N\unlhd\L$. In particular $O_p(\N)\leq O_p(\L)$. Clearly $O_p(\L)\cap\N\leq O_p(\N)$, so equality holds. This completes the proof of (c).

\smallskip

Since $F^*(\L)=F^*(\N\N^\perp)O_p(\L)$, we have $F^*(\L)\cap \N\N^\perp=F^*(\N\N^\perp)(\N\N^\perp\cap O_p(\L))=F^*(\N\N^\perp)O_p(\N\N^\perp)=F^*(\N\N^\perp)$. Hence, using $\N\cap\N^\perp\leq O_p(\N)$, we conclude  $F^*(\L)\cap\N=F^*(\N\N^\perp)\cap\N=F^*(\N)F^*(\N^\perp)\cap \N=F^*(\N)$. Thus (d) holds.

\smallskip

As $\N\N^\perp$ is an internal central product of $\N$ and $\N^\perp$, Lemma~\ref{L:CentralProductsFactorsCentralize} implies $\N^\perp\subseteq C_{\N\N^\perp}(\N)\subseteq C_\L(\N)$. By Lemma~\ref{L:ResidueEquivalences} and Corollary~\ref{C:NperpCapS}, we have moreover
\[C_\L(T)=C_S(T)O^p_{N_\L(T)}(C_\L(T))=C_S(T)\N^\perp.\]
So an element $c\in C_\L(\N)\subseteq C_\L(T)$ can be written as $c=sm$ where $s\in C_S(T)$ and $m\in \N^\perp\subseteq C_\L(\N)$. We have then $S_c=S_{(s,m)}$ by Lemma~\ref{L:NLSbiset}. For $n\in\N$, it follows that $u=(m^{-1},s^{-1},n,s,m)\in\D$ via $S_{(c^{-1},n,c)}$ and $n^s\in\N$. As $c,m\in C_\L(\N)$, we can conclude that
\[n=n^c=\Pi(u)=(n^s)^m=n^s\] 
and thus $s\in C_S(\N)$. By Corollary~\ref{C:RegularNReplete}, we have $C_S(\N)=C_S^\circ(\N)=\N^\perp\cap S$, so in particular  $s\in\N^\perp$. This proves $c=sm\in\N^\perp$ and thus $C_\L(\N)\subseteq\N^\perp$. Hence (e) holds.

\smallskip

For the proof of (f) let $f\in N_\L(T)$. As $\N^\perp\unlhd N_\L(T)$, it follows from the Frattini Lemma \cite[Corollary~3.11]{Chermak:2015} and (e) that there exists $c\in\N^\perp=C_\L(\N)$ and $h\in N_\L(TC_S(\N))=N_{N_\L(T)}(C_S(\N))$ with $(c,h)\in\D$ and $f=ch$. We show now first that
\begin{equation}\label{E:NDf}
 \N\subseteq\D(f)\mbox{ and }c_f|_\N=c_h|_\N\in\Aut(\N).
\end{equation}
For the proof of \eqref{E:NDf} let $n\in\N$. As $c\in C_\L(\N)$, we have $(c^{-1},n,c)\in\D$ and $n^c=n$. In particular, by Lemma~\ref{L:RegularConjAutomorphisms}(a), we have $P:=S_{(c^{-1},n,c)}\cap TC_S(\N)\in\Delta$ as $\N\N^\perp=\N C_\L(\N)$ is centric. So $u:=(h^{-1},c^{-1},n,c,h)\in\D$ via $P^h$. Hence, $(f^{-1},n,f)\in\D$ and $n^f=\Pi(u)=(n^c)^h=n^h$. This proves $\N\subseteq\D(f)$ and $c_f|_\N=c_h|_\N$. By Lemma~\ref{L:RegularConjAutomorphisms}(c) applied with $\N\N^\perp$ in place of $\N$, we have $c_h\in\Aut(\L)$ and thus $c_h|_\N\in\Aut(\N)$. Hence \eqref{E:NDf} holds. 

\smallskip

As $\E=\F_T(\N)$, it is now easy to check that $c_f|_T\in\Aut(\E)$. In particular, by \cite[Lemma~3.6]{Henke:2015}, $c_f|_T$ leaves $\E^s$ invariant. As $F^*(\N)c_f=F^*(\N)$, it follows thus from Lemma~\ref{L:deltaFDefinition} that $\delta(\E)c_f=\delta(\E)$. Hence, $c_f|_\N\in\Aut(\N,\delta(\E),T)$.

\smallskip

It remains to prove that the map $N_\L(T)\rightarrow \Aut(\N),f\mapsto c_f|_\N$ is a homomorphism of partial groups. This means that, fixing $(f_1,\dots,f_k)\in\W(N_\L(T))\cap\D$, we need to show that $c_{\Pi(f_1,\dots,f_k)}=c_{f_1}\circ c_{f_2}\circ\cdots\circ c_{f_k}$. With a similar argument as above, we can write $f_i=c_ih_i$ with $c_i\in C_\L(\N)$ and $h_i\in N_\L(TC_S(\N))$ for $i=1,\dots,k$. The Frattini Lemma together with the Splitting Lemma \cite[Corollary~3.11, Lemma~3.12]{Chermak:2015} even allows us to assume that $S_{f_i}=S_{(c_i,h_i)}$ so that $w:=(c_1,h_1,\dots,c_k,h_k)\in\D$ with $\Pi(w)=\Pi(f_1,\dots,f_k)$. By (b), $\N$ and $\N^\perp=C_\L(\N)$ form a central product. Hence, by the definition of a central product, we have $(c_1,\dots,c_n)\in\D$, $(h_1,\dots,h_k)\in\D$ and $\Pi(w)=ch$ where $c:=\Pi(c_1,\dots,c_k)$ and $h:=\Pi(h_1,\dots,h_k)$. By \eqref{E:NDf}, we have thus $c_{\Pi(f_1,\dots,f_n)}=c_{\Pi(w)}=c_h$ and $c_{f_i}=c_{h_i}$ for $i=1,\dots,k$. Hence, we only need to show that $c_h=c_{h_1}\circ\cdots\circ c_{h_k}$. Fix $n\in\N$. By Lemma~\ref{L:RegularConjAutomorphisms}(a), we have $Q:=S_n\cap TC_S(\N)\in\Delta$. Hence, $v:=(h_k^{-1},\dots,h_1^{-1},n,h_1,\dots,h_k)\in\D$ via $Q^h$ and, by the axioms of partial groups, $nc_h=n^h=\Pi(v)=n(c_{h_1}\circ c_{h_2}\circ\cdots\circ c_{h_k})$. This completes the proof.
\end{proof}

If $(\L,\Delta,S)$ is a regular locality and $\N\unlhd\L$, it is a particular consequence of Theorem~\ref{T:RegularPartialNormal} that the $p$-residual $O^p(\N)$ of $\N$ and the $p^\prime$-residual $O^{p^\prime}(\N)$ are defined. The following two lemmas are similiar to Lemma~7.10 and Corollary~7.11 in \cite{ChermakIII}.

\begin{lemma}\label{L:CharacteristicNormalIntersection}
Let $(\L,\Delta,S)$ be a regular locality, $\N\unlhd\L$, $T:=\N\cap S$ and $\E=\F_T(\N)$. Suppose $\mathbb{K}$ is a set of partial normal subgroups of $\N$ such that $\M:=\bigcap\mathbb{K}$ is invariant under $\Aut(\N,\delta(\E),T)$. Then $\M\unlhd\L$. If $\M\in\mathbb{K}$, then $\M=\bigcap\{\K\in\mathbb{K}\colon \K\unlhd\L\}$.
\end{lemma}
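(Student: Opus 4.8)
\textbf{Proof plan for Lemma~\ref{L:CharacteristicNormalIntersection}.}
The plan is to establish the two assertions in turn, using the action of $N_\L(T)$ on $\N$ described in Theorem~\ref{T:RegularPartialNormal}(f). First I would recall that, by Theorem~\ref{T:RegularPartialNormal}(a), the fusion system $\E=\F_T(\N)$ is saturated and $(\N,\delta(\E),T)$ is a regular locality over $\E$, so that the group $\Aut(\N,\delta(\E),T)$ is defined and, by Theorem~\ref{T:RegularPartialNormal}(f), every $f\in N_\L(T)$ induces via conjugation an element $c_f|_\N\in\Aut(\N,\delta(\E),T)$, with $\N\subseteq\D(f)$. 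The key point is that each $\K\in\mathbb{K}$ is a partial normal subgroup of $\N$, and conjugation by $f\in N_\L(T)$ permutes the partial normal subgroups of $\N$ among themselves (it sends $\K$ to $\K c_f|_\N$, again a partial normal subgroup of $\N$ by the automorphism property). Hence $f$ permutes the set of partial normal subgroups of $\N$, so it sends $\M=\bigcap\mathbb{K}$ to $\bigcap\{\K c_f|_\N\colon\K\in\mathbb{K}\}$. By hypothesis $\M$ is invariant under $\Aut(\N,\delta(\E),T)$, hence invariant under every such $c_f|_\N$; thus $\M^f=\M$, i.e. $N_\L(T)$ normalizes $\M$ (in the sense that $\M\subseteq\D(f)$ and $\M^f=\M$ for all $f\in N_\L(T)$).

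Next I would invoke \cite[Corollary~3.13]{Chermak:2015}, exactly as it is used repeatedly in the proofs of Lemma~\ref{L:RegularCentricMain} and Theorem~\ref{T:RegularN1timesN2}: a partial normal subgroup of $\N$ which is, moreover, invariant under conjugation by all elements of $N_\L(T)$ is a partial normal subgroup of $\L$. (One has $\M\unlhd\N$ since an intersection of partial normal subgroups is a partial normal subgroup, and we have just shown the $N_\L(T)$-invariance; the hypotheses of \cite[Corollary~3.13]{Chermak:2015} are the same as in its earlier applications, using that $T=S\cap\N$ is strongly closed by \cite[Lemma~3.1(a)]{Chermak:2015} and $\N\unlhd\L$.) This gives $\M\unlhd\L$, which is the first assertion.

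For the second assertion, suppose $\M\in\mathbb{K}$. Set $\mathbb{K}_0:=\{\K\in\mathbb{K}\colon\K\unlhd\L\}$. Since $\M\unlhd\L$ by the first part and $\M\in\mathbb{K}$, we have $\M\in\mathbb{K}_0$, so $\mathbb{K}_0\neq\emptyset$ and $\bigcap\mathbb{K}_0\subseteq\M$. Conversely, $\M=\bigcap\mathbb{K}\subseteq\bigcap\mathbb{K}_0$ because $\mathbb{K}_0\subseteq\mathbb{K}$. Hence $\M=\bigcap\mathbb{K}_0=\bigcap\{\K\in\mathbb{K}\colon\K\unlhd\L\}$, as claimed.

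I expect the only genuinely delicate point to be the verification that $f\in N_\L(T)$ really does permute the partial normal subgroups of $\N$ and that $\K c_f|_\N$ is again a partial normal \emph{subgroup} of $\N$ (not merely a subset): this follows formally from $c_f|_\N$ being an automorphism of the partial group $\N$, but one should note that an automorphism of a partial group carries partial normal subgroups to partial normal subgroups, which is immediate from the definitions once $c_f|_\N\in\Aut(\N)$ is in hand via Theorem~\ref{T:RegularPartialNormal}(f). Everything else is bookkeeping with the intersection and a single citation of \cite[Corollary~3.13]{Chermak:2015}.
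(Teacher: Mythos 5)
Your proof is correct and follows the same route as the paper: $\M\unlhd\N$ as an intersection of partial normal subgroups, invariance under $N_\L(T)$-conjugation via Theorem~\ref{T:RegularPartialNormal}(f) and the hypothesis, then \cite[Corollary~3.13]{Chermak:2015}, with the second assertion by the same two-inclusion argument. The digression about $c_f|_\N$ permuting the set of partial normal subgroups of $\N$ is harmless but unnecessary, since the hypothesis that $\M$ is $\Aut(\N,\delta(\E),T)$-invariant already gives $\M^f=\M$ directly.
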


\begin{proof}
As the elements of $\mathbb{K}$ are partial normal subgroups of $\N$, we have $\M\unlhd\N$. Since $\M$ is invariant under $\Aut(\N,\delta(\E),T)$, it follows from Theorem~\ref{T:RegularPartialNormal}(f) that $\M$ is invariant under conjugation by elements of $N_\L(T)$. Hence, \cite[Corollary~3.13]{Chermak:2015} yields that $\M\unlhd\L$. In particular, if $\M\in\mathbb{K}$, then $\M\in\mathbb{K}_0:=\{\K\in\mathbb{K}\colon \K\unlhd\L\}$. Hence, it follows in this case that $\M=\bigcap\mathbb{K}\subseteq\bigcap\mathbb{K}_0\subseteq \M$ and thus $\M=\bigcap\mathbb{K}_0$.  
\end{proof}

\begin{lemma}\label{L:ResidueRegular}
Let $(\L,\Delta,S)$ be a regular locality and $\N\unlhd\L$. Then $O^p_\L(\N)=O^p(\N)$ and $O^{p^\prime}_\L(\N)=O^{p^\prime}(\N)$.  
\end{lemma}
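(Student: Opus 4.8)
\textbf{Proof proposal for Lemma~\ref{L:ResidueRegular}.}

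The statement asks us to identify the $p$-residual $O^p_\L(\N)$, computed inside the ambient locality $\L$, with the $p$-residual $O^p(\N)$ of $\N$ regarded as a regular locality in its own right via Theorem~\ref{T:RegularPartialNormal}(a); and similarly for the $p'$-residuals. The plan is to use the characterization of partial normal subgroups of $\N$ that are also partial normal in $\L$ provided by Lemma~\ref{L:CharacteristicNormalIntersection}. Throughout set $T:=\N\cap S$ and $\E:=\F_T(\N)$, so by Theorem~\ref{T:RegularPartialNormal}(a) the triple $(\N,\delta(\E),T)$ is a regular locality over $\E$ and both $O^p(\N)=O^p_\N(\N)$ and $O^{p'}(\N)=O^{p'}_\N(\N)$ are defined.

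First I would treat the $p$-residual. Let $\mathbb{K}$ be the set of all partial normal subgroups $\K$ of $\N$ with $T\K=\N$ (i.e.\ $\K$ of $p$-power index in $\N$), so that $O^p(\N)=\bigcap\mathbb{K}$ and $O^p(\N)\in\mathbb{K}$ by Lemma~\ref{L:ResidueEquivalences} applied inside $\N$. The key point is that $O^p(\N)=\bigcap\mathbb{K}$ is invariant under $\Aut(\N,\delta(\E),T)$: indeed $O^p(\N)$ is characterized intrinsically (it is the intersection of all partial normal subgroups of $\N$ of $p$-power index, a condition preserved by any automorphism of the locality $(\N,\delta(\E),T)$, since such an automorphism permutes the partial normal subgroups of $\N$ and preserves $T$ and hence the index). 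Therefore Lemma~\ref{L:CharacteristicNormalIntersection} applies with this $\mathbb{K}$ and $\M:=O^p(\N)$, giving $O^p(\N)\unlhd\L$ and moreover, since $O^p(\N)\in\mathbb{K}$,
\[
O^p(\N)=\bigcap\{\K\in\mathbb{K}\colon \K\unlhd\L\}.
\]
Now I claim the right-hand side equals $O^p_\L(\N)$. By definition $O^p_\L(\N)=\bigcap\{\K\unlhd\L\colon T\K=\N\}$. Every $\K\unlhd\L$ with $T\K=\N$ is in particular a partial normal subgroup of $\N$ (being contained in $\N$) of $p$-power index in $\N$, hence lies in $\{\K\in\mathbb{K}\colon\K\unlhd\L\}$; conversely every member of that set is a $\K\unlhd\L$ with $T\K=\N$. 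So the two indexing sets coincide and $O^p_\L(\N)=O^p(\N)$.

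For the $p'$-residual the argument is completely analogous and in fact slightly easier: let $\mathbb{K}':=\{\K\unlhd\N\colon T\subseteq\K\subseteq\N\}$, so $O^{p'}(\N)=\bigcap\mathbb{K}'$ and $O^{p'}(\N)\in\mathbb{K}'$ since $T\subseteq O^{p'}(\N)\subseteq\N$ always holds. Again $O^{p'}(\N)$ is invariant under $\Aut(\N,\delta(\E),T)$ because the condition ``$\K\unlhd\N$ with $T\subseteq\K$'' is preserved by such automorphisms (they fix $T$ and permute partial normal subgroups). Lemma~\ref{L:CharacteristicNormalIntersection} then gives $O^{p'}(\N)\unlhd\L$ and $O^{p'}(\N)=\bigcap\{\K\in\mathbb{K}'\colon\K\unlhd\L\}$, and this indexing set is exactly $\{\K\unlhd\L\colon T\subseteq\K\subseteq\N\}$, whose intersection is by definition $O^{p'}_\L(\N)$. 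Hence $O^{p'}_\L(\N)=O^{p'}(\N)$.

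The only genuine point requiring care --- and the step I expect to be the main obstacle --- is verifying that $O^p(\N)$ (resp.\ $O^{p'}(\N)$) really is invariant under $\Aut(\N,\delta(\E),T)$, as this is the hypothesis of Lemma~\ref{L:CharacteristicNormalIntersection}. This should follow formally from the observation that an isomorphism of localities induces an inclusion-preserving bijection on partial normal subgroups (used already, e.g., in Lemma~\ref{L:AutResidue}) together with the fact that such an automorphism fixes $T$ setwise, so it sends a partial normal subgroup of $p$-power index (resp.\ containing $T$) to one of the same type, and hence permutes $\mathbb{K}$ (resp.\ $\mathbb{K}'$) and fixes the intersection. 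Once that is in place, everything else is a matching-up of two index sets and the lemma follows immediately. I would write this out cleanly using Lemma~\ref{L:AutResidue} (applied inside $\N$) to get the permutation of $\mathbb{K}$ and $\mathbb{K}'$ under automorphisms, which makes the invariance transparent.
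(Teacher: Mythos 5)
Your proposal is correct and follows essentially the same route as the paper: set up $\mathbb{K}$ and $\mathbb{K}'$ as the families of partial normal subgroups of $\N$ of $p$-power index (resp.\ containing $T$), note that $O^p(\N)\in\mathbb{K}$ by Lemma~\ref{L:ResidueEquivalences} and $O^{p'}(\N)\in\mathbb{K}'$, establish $\Aut$-invariance of the intersections via Lemma~\ref{L:AutResidue}, and finish with Lemma~\ref{L:CharacteristicNormalIntersection} after matching up index sets. The only cosmetic difference is that you first sketch the invariance informally before falling back on Lemma~\ref{L:AutResidue}, whereas the paper cites it directly.
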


\begin{proof}
Set $T:=\N\cap S$, 
\[\mathbb{K}:=\{\K\unlhd\N\colon T\K=\N\}\mbox{ and }\mathbb{K}':=\{\K\unlhd\N\colon T\subseteq\N\}.\]
Notice that 
\[O^p(\N)=\bigcap\mathbb{K}\mbox{ and }O^{p^\prime}(\N)=\bigcap\mathbb{K}^\prime\]
are invariant under $\Aut(\N)$ by Lemma~\ref{L:AutResidue}. Clearly $O^{p^\prime}(\N)\in\mathbb{K}^\prime$ and, by Lemma~\ref{L:ResidueEquivalences}, also $O^p(\N)\in\mathbb{K}$. As $\mathbb{K}_\N=\{\K\in\mathbb{K}\colon \K\unlhd\L\}$ and $\mathbb{K}_\N^\prime=\{\K\in\mathbb{K}'\colon \K\unlhd\L\}$, it follows from Lemma~\ref{L:CharacteristicNormalIntersection} that
\[O^p(\N)=\bigcap\mathbb{K}=\bigcap\mathbb{K}_\N=O^p_\L(\N)\mbox{ and }O^{p^\prime}(\N)=\bigcap\mathbb{K}^\prime=\bigcap\mathbb{K}_\N^\prime=O^{p^\prime}(\N).\]
\end{proof}

\begin{proof}[Proof of Theorem~\ref{T:mainRegularPartialNormal}]
Using Corollary~\ref{C:NperpCapS} and Lemma~\ref{L:ResidueRegular}, the assertion follows from  Theorem~\ref{T:RegularPartialNormal}(a),(e).
\end{proof}

We now turn attention to partial subnormal subgroups. Theorem~\ref{T:RegularPartialNormal} implies easily the following corollary which is partly the same as \cite[Corollary~7.9]{ChermakIII}.

\begin{corollary}\label{C:RegularSubnormal}
Let $(\L,\Delta,S)$ be a regular locality, $\H\subn\L$, $T:=\H\cap S$ and $\E:=\F_T(\H)$. Then $\E$ is saturated and  $(\H,\delta(\E),T)$ is a regular locality over $\E$. Moreover, the following hold:
\begin{itemize}
 \item [(a)] $PC_S(\H)\in\Delta$ for every $P\in\delta(\E)$.
 \item [(b)] $O_p(\H)=O_p(\L)\cap\H$ and $F^*(\H)=F^*(\L)\cap\H$.
 \item [(c)] For every $\M\unlhd\L$ with $\M\subseteq\H$, we have $\M^\perp_\H=\M^\perp\cap\H$.
\end{itemize}  
\end{corollary}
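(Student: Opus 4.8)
The plan is to deduce Corollary~\ref{C:RegularSubnormal} from Theorem~\ref{T:RegularPartialNormal} by induction on the length $k$ of a subnormal series $\H=\H_0\unlhd\H_1\unlhd\cdots\unlhd\H_k=\L$. The base case $k=0$ is trivial (then $\H=\L$, $\E=\F$, $C_S(\H)=Z(\F)\leq S\in\Delta$, parts (b),(c) are immediate). For the inductive step, set $\N:=\H_{k-1}\unlhd\L$, write $T_\N:=\N\cap S$, $\E_\N:=\F_{T_\N}(\N)$. By Theorem~\ref{T:RegularPartialNormal}(a), $\E_\N$ is saturated and $(\N,\delta(\E_\N),T_\N)$ is a regular locality over $\E_\N$ with $\delta(\E_\N)=\{P\leq T_\N\colon PC_S(\N)\in\Delta\}$. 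Now $\H$ is subnormal in $\N$ via the shorter series $\H=\H_0\unlhd\cdots\unlhd\H_{k-1}=\N$, so the induction hypothesis applies inside the regular locality $(\N,\delta(\E_\N),T_\N)$: it gives that $\E=\F_T(\H)$ (where $T=\H\cap S\cap\N=\H\cap S$, since $\H\subseteq\N$) is saturated, $(\H,\delta(\E),T)$ is a regular locality over $\E$, and the three properties (a),(b),(c) hold \emph{relative to $(\N,\delta(\E_\N),T_\N)$}. The task is then to upgrade each of these ``relative'' statements to the corresponding statement relative to $(\L,\Delta,S)$.

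For part (b) this is straightforward transitivity: the induction hypothesis gives $O_p(\H)=O_p(\N)\cap\H$ and $F^*(\H)=F^*(\N)\cap\H$, while Theorem~\ref{T:RegularPartialNormal}(c),(d) give $O_p(\N)=O_p(\L)\cap\N$ and $F^*(\N)=F^*(\L)\cap\N$; since $\H\subseteq\N$, intersecting with $\H$ yields $O_p(\H)=O_p(\L)\cap\H$ and $F^*(\H)=F^*(\L)\cap\H$. For part (c), given $\M\unlhd\L$ with $\M\subseteq\H$, I would first note $\M\subseteq\N$, so $\M\unlhd\N$ (as $\M\unlhd\L$ and $\N\subseteq\L$), hence by Theorem~\ref{T:RegularPartialNormal}(e) applied in $\L$ we have $\M^\perp_\L=C_\L(\M)$, and similarly $\M^\perp_\N=C_\N(\M)$; combined with the induction hypothesis $\M^\perp_\H=\M^\perp_\N\cap\H$ inside $\N$, and with $C_\N(\M)=C_\L(\M)\cap\N$, one gets $\M^\perp_\H=C_\L(\M)\cap\N\cap\H=C_\L(\M)\cap\H=\M^\perp_\L\cap\H$. (One should double-check that the definition of $\M^\perp_\H$ as the largest partial normal subgroup of $\H$ commuting with $\M$ is the one being intersected, using Corollary~\ref{C:NinMperp} / the characterization of $\M^\perp$; this is the kind of routine bookkeeping I would spell out.)

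The one genuinely delicate point is the $\Delta$-description of $\delta(\E)$ in part (a), i.e.\ that $PC_S(\H)\in\Delta$ for $P\in\delta(\E)$. The induction hypothesis inside $\N$ gives, for $P\in\delta(\E)$, that $PC_{S\cap\N}(\H)=PC_{T_\N}(\H)\in\delta(\E_\N)$, and then the description of $\delta(\E_\N)$ from Theorem~\ref{T:RegularPartialNormal}(a) gives $\big(PC_{T_\N}(\H)\big)C_S(\N)\in\Delta$. So I need to compare $PC_{T_\N}(\H)C_S(\N)$ with $PC_S(\H)$ and see that the former containing-in-$\Delta$ forces the latter into $\Delta$ as well. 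Since $\Delta=\delta(\F)$ is overgroup-closed in $S$ (Lemma~\ref{L:deltaFBasic}), it suffices to show $PC_S(\H)\subseteq PC_{T_\N}(\H)C_S(\N)$, equivalently $C_S(\H)\subseteq C_{T_\N}(\H)C_S(\N)$ up to the $P$-factor; but actually the cleaner route is the reverse: show $PC_{T_\N}(\H)C_S(\N)\leq$ some member of $\delta(\F)$ built from $PC_S(\H)$, or directly that $C_S(\N)\leq C_S(\H)$ (clear, as $\H\subseteq\N$) and $C_{T_\N}(\H)=C_S(\H)\cap\N\leq C_S(\H)$, so $PC_{T_\N}(\H)C_S(\N)\leq PC_S(\H)$; hence $PC_S(\H)$ is an overgroup in $S$ of an element of $\Delta$, thus in $\Delta$. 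This is the step I expect to need the most care, chiefly to get the nesting of centralizers and the overgroup-closure argument exactly right; once that is in place, the ``In particular'' sentence (saturation of $\E$ and regularity of $(\H,\delta(\E),T)$) is just the induction hypothesis combined with Theorem~\ref{T:RegularPartialNormal}(a), and the proof closes.
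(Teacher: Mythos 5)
Your proposal is correct and follows essentially the same inductive strategy as the paper: peel off $\N:=\H_{k-1}\unlhd\L$, apply Theorem~\ref{T:RegularPartialNormal} to $(\L,\N)$, and use the inductive hypothesis inside the regular locality $(\N,\delta(\E_\N),T_\N)$. Parts (a) and (b) are argued exactly as in the paper — in particular, for (a) you correctly land on the containment $PC_{T_\N}(\H)C_S(\N)\leq PC_S(\H)$ together with overgroup-closure of $\Delta$, after first flirting with the wrong direction.

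The one genuine difference is in part (c). You apply Theorem~\ref{T:RegularPartialNormal}(e) twice to get $\M^\perp_\L=C_\L(\M)$ and $\M^\perp_\N=C_\N(\M)$, and then conclude $\M^\perp_\N\cap\H=(\N\cap C_\L(\M))\cap\H=C_\L(\M)\cap\H=\M^\perp_\L\cap\H$. The paper instead first establishes (for the base step $\H=\N$) that $\M^\perp_\N\unlhd\L$ — by observing that $N_\L(T_\N)$-conjugation induces automorphisms of $(\N,\delta(\E_\N),T_\N)$ fixing $\M$ (Theorem~\ref{T:RegularPartialNormal}(f), Lemma~\ref{L:AutNperp}) and invoking \cite[Corollary~3.13]{Chermak:2015} — and then argues both inclusions $\M^\perp_\N\subseteq\M^\perp_\L$, $\M^\perp_\L\cap\N\subseteq\M^\perp_\N$ from the maximality characterization of $\perp$. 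Your route is shorter, but it silently uses that the centralizer $C_\N(\M)$ computed intrinsically in the regular locality $(\N,\delta(\E_\N),T_\N)$ coincides with $\N\cap C_\L(\M)$; this does hold because the partial product on $\N$ is the restriction of the one on $\L$ (as follows from the constructions in Lemma~\ref{L:RegularCentricMain} and Step~3 of the proof of Theorem~\ref{T:RegularN1timesN2}), and this is exactly the ``routine bookkeeping'' you flag but do not spell out. The paper's argument avoids having to justify this coincidence, which is the one place where your shortcut trades a reference to a Chermak lemma for a domain-of-definition check that is easy to overlook.
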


\begin{proof}
Let $\H=\H_0\unlhd\H_1\unlhd\cdots\unlhd\H_k=\L$ and assume without loss of generality that $k\geq 1$. Set $\N:=\H_{k-1}\unlhd\L$, $T_1:=\N\cap S$ and $\E_1:=\F_{T_1}(\N)$. We argue first that the claim is true with $\N$ in place of $\H$ and conclude then by induction on $k$ that it holds for $\H$.

\smallskip

By Theorem~\ref{T:RegularPartialNormal}(a),(c),(d), $(\N,\delta(\E_1),T_1)$ is a regular locality over $\E_1$ with $\delta(\E_1)=\{P\leq T_1\colon PC_S(\N)\in\Delta\}$; moreover $O_p(\N)=O_p(\L)\cap\N$ and $F^*(\N)=F^*(\L)\cap\N$. So except for the claim in (c), the assertion holds with $\N$ in place of $\H$.

\smallskip

Let now $\M\unlhd\L$ with $\M\subseteq\N$. Then $\M^\perp_\N\unlhd\N$ by definition of $\M^\perp_\N$. Set $T:=\N\cap S$. By Theorem~\ref{T:RegularPartialNormal}(f), the elements of $N_\L(T)$ induces automorphisms of $\N$ which then leave $\M$ invariant as $\M\unlhd\L$. Hence, by Lemma~\ref{L:AutNperp}, $\M^\perp_\N$ is invariant under conjugation by elements of $N_\L(T)$. Now \cite[Corollary~3.13]{Chermak:2015} yields $\M^\perp_\N \unlhd\L$. Thus $\M^\perp_\N$ is a partial normal subgroup of $\L$ which commutes with $\M$ and is thus contained in $\M^\perp=\M^\perp_\L$. On the other hand, $\M^\perp\cap\N$ is a partial normal subgroup of $\N$ which commutes with $\M$ and thus contained in $\M^\perp_\N$. Hence, $\M^\perp\cap \N=\M^\perp_\N$. This shows that the assertion holds with $\N$ in place of $\H$. 

\smallskip

So the assertion is true if $k=1$ and $\H=\N$. Suppose now $k>1$ and let $\M$ be as in (c). By induction on $k$, we may assume then that $\E$ is saturated, $(\H,\delta(\E),T)$ is a regular locality, $PC_{T_1}(\H)\in\delta(\E_1)$ for every $P\in\delta(\E)$, $O_p(\H)=O_p(\N)\cap\H$, $F^*(\H)=F^*(\N)\cap\H$ and $\M^\perp_\H=\M^\perp_\N\cap\H$. Then for every $P\in\delta(\E)$, we have $PC_{T_1}(\H)C_S(\N)\in\Delta$. Thus, as $\Delta$ is overgroup-closed and $C_{T_1}(\H)C_S(\N)\leq C_S(\H)$, it follows that $PC_S(\H)\in\Delta$ for all $P\in\delta(\E)$. Moreover, we have
\[O_p(\H)=O_p(\N)\cap \H=(O_p(\L)\cap \N)\cap\H=O_p(\L)\cap\H,\]
\[F^*(\H)=F^*(\N)\cap\H=(F^*(\L)\cap\N)\cap\H=F^*(\L)\cap \H\]
and
\[\M^\perp_\H=\M^\perp_\N\cap \H=(\M^\perp\cap\N)\cap\H=\M^\perp\cap\H.\]
This shows the assertion.
\end{proof}

\begin{lemma}\label{L:H1H2subN1N2}
Let $(\L,\Delta,S)$ be a regular locality with partial normal subgroups $\N_1$ and $\N_2$ of $\L$ such that $\N_1$ commutes with $\N_2$. Suppose for each $i=1,2$, we are given $\H_i\subn\N_i$. Then $\H_1\H_2\subn\N_1\N_2\unlhd\L$ and thus $\H_1\H_2\subn\L$.
\end{lemma}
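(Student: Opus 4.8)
The proof will proceed by a double induction on the lengths of the subnormal series for $\H_1$ in $\N_1$ and for $\H_2$ in $\N_2$. The base of the whole argument is the case where $\H_1 = \N_1$ and $\H_2 = \N_2$, which is precisely the statement that $\N_1\N_2 = \N_2\N_1 \unlhd\L$; this is Theorem~\ref{T:ProductsPartialNormal}. So the real content is the inductive step, and here the plan is to peel off one term of a subnormal series at a time while keeping track of commuting.

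The key reduction is the following. Suppose $\H_1 \unlhd \M_1 \subn \N_1$ is the first step of a subnormal series for $\H_1$ in $\N_1$, and similarly I have $\M_2$ with $\H_2 \unlhd \M_2 \subn \N_2$ (allowing $\M_i = \H_i$ if the series has length zero). First I would like to know that $\M_1$ commutes with $\M_2$. This should follow because $\M_i \subseteq \N_i$, so if $\N_1$ commutes with $\N_2$ then in particular any element of $\M_1$ commutes with any element of $\M_2$ whenever the product is defined — commuting elementwise is inherited by subsets directly from Definition~\ref{D:Commute}. By induction on the total length of the two series (applied to $\M_1 \subn \N_1$ and $\M_2 \subn \N_2$, which have strictly smaller total length), I get $\M_1\M_2 \subn \N_1\N_2 \unlhd \L$, hence $\M_1\M_2 \subn \L$ by Lemma~\ref{L:SubnormalinSubgroup}(b). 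It then remains to show $\H_1\H_2 \unlhd \M_1\M_2$, because concatenating this with a subnormal series for $\M_1\M_2$ in $\N_1\N_2$ gives a subnormal series for $\H_1\H_2$ in $\N_1\N_2$. Since $\M_1$ and $\M_2$ commute and $(\L,\Delta,S)$ is regular, Lemma~\ref{L:Step4Seperate} applies to the pair $\M_1, \M_2$: it says that if $\H \unlhd \M_i$ for some $i$, then $\H \unlhd \M_1\M_2 = \M_2\M_1 \unlhd \L$. Applying this twice — once to $\H_1 \unlhd \M_1$ to get $\H_1 \unlhd \M_1\M_2$, and once to $\H_2 \unlhd \M_2$ to get $\H_2 \unlhd \M_1\M_2$ — gives that both $\H_1$ and $\H_2$ are partial normal in $\M_1\M_2$; then $\H_1\H_2 \unlhd \M_1\M_2$ by Theorem~\ref{T:ProductsPartialNormal} applied inside the regular (hence linking) locality structure $(\M_1\M_2, \delta(\F_{S\cap\M_1\M_2}(\M_1\M_2)), S\cap\M_1\M_2)$ furnished by Corollary~\ref{C:RegularSubnormal} (or Theorem~\ref{T:RegularPartialNormal}).

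The step I expect to be the main obstacle is making sure all the intermediate products such as $\M_1\M_2$ genuinely carry a regular locality structure so that Theorem~\ref{T:ProductsPartialNormal} and Lemma~\ref{L:Step4Seperate} can be invoked for them; this is where Corollary~\ref{C:RegularSubnormal} is essential, since it guarantees that every partial subnormal subgroup of a regular locality is again a regular locality. A secondary subtlety is the bookkeeping of the double induction: I must be careful that in each inductive step the total series length genuinely drops, and that the "commuting" hypothesis is correctly transported down to the smaller pairs — but as noted this transport is immediate from the definition of elementwise commuting, so no work is needed there. Once these points are arranged, the proof is a short assembly of Theorem~\ref{T:ProductsPartialNormal}, Lemma~\ref{L:Step4Seperate}, Lemma~\ref{L:SubnormalinSubgroup}, and Corollary~\ref{C:RegularSubnormal}.

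Finally, the concluding clause "$\H_1\H_2 \subn \L$" is then immediate from $\H_1\H_2 \subn \N_1\N_2$ and $\N_1\N_2 \unlhd \L$ via Lemma~\ref{L:SubnormalinSubgroup}(b), so no extra argument is required for it.
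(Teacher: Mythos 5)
Your application of Lemma~\ref{L:Step4Seperate} to the pair $\M_1,\M_2$ is where the argument breaks down. That lemma requires $\M_1$ and $\M_2$ to be \emph{partial normal} subgroups of a regular locality — commuting alone is not enough — but you have only arranged $\M_i\subn\N_i$, hence $\M_i\subn\L$, and in general $\M_i$ is not partial normal in $\L$. Your fallback of invoking the lemma inside the regular locality $\M_1\M_2$ does not close the gap either: that would require $\M_1\unlhd\M_1\M_2$ and $\M_2\unlhd\M_1\M_2$, and the inductive hypothesis $\M_1\M_2\subn\N_1\N_2$ does not furnish these. In the partial-group setting this is not automatic from commuting — ``commuting elementwise'' is genuinely weaker than centralizing, as the paper cautions after Definition~\ref{D:mainCommute} — so the group-theoretic shortcut that commuting factors are normal in their product is unavailable. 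The gap can be repaired by carrying an extra datum in your induction: assert as part of the inductive claim that $\H_1,\H_2\unlhd\H_1\H_2$ (the base case holds since $\N_1,\N_2\unlhd\L$ restrict to partial normal subgroups of $\N_1\N_2$); then in the inductive step $\M_1,\M_2\unlhd\M_1\M_2$ is available, Lemma~\ref{L:Step4Seperate} applies inside $\M_1\M_2$, and Theorem~\ref{T:ProductsPartialNormal} gives $\H_1\H_2\unlhd\M_1\M_2$ together with $\H_1,\H_2\unlhd\H_1\H_2$. But as written the proposal has a gap.

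The paper avoids the issue entirely by peeling the subnormal series from the \emph{top} rather than the bottom, within a minimal-counterexample argument on $|\L|+|\N_1|+|\N_2|$. After the reduction to $\L=\N_1\N_2$ (using Theorem~\ref{T:RegularPartialNormal}(a) to retain regularity), one picks $\N_1'\unlhd\N_1$ with $\H_1\subn\N_1'\subsetneq\N_1$, i.e.\ the penultimate term of a subnormal series. Because $\N_1'\unlhd\N_1$ and $\N_1,\N_2\unlhd\L$ commute, Lemma~\ref{L:Step4Seperate} applies directly and gives $\N_1'\unlhd\L$; minimality then finishes. The point is that peeling from the top preserves the invariant that the freshly exposed term is \emph{normal} in the ambient partial normal subgroup, which is exactly what Lemma~\ref{L:Step4Seperate} consumes; peeling from the bottom loses this, and that is the obstruction your proposal hits.
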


\begin{proof}
Suppose $(\L,\N_1,\N_2,\H_1,\H_2)$ is a counterexample with $|\L|+|\N_1|+|\N_2|$ minimal. By Theorem~\ref{T:ProductsPartialNormal}, $\N_1\N_2\unlhd\L$. Hence, $(\N_1\N_2,\N_1,\N_2,\H_1,\H_2)$ must be a counterexample as well, so the minimality of $|\L|+|\N_1|+|\N_2|$ together with Theorem~\ref{T:RegularPartialNormal}(a) yields $\L=\N_1\N_2$. We must have $\H_1\neq \N_1$ or  $\H_2\neq \N_2$ as otherwise $(\L,\N_1,\N_2,\H_1,\H_2)$ would not be a counterexample. As $\N_1$ commutes with $\N_2$, we have $\N_1\N_2=\N_2\N_1$ and $\H_1\H_2=\H_2\H_1$. Moreover, $\N_2$ commutes with $\N_1$ by Corollary~\ref{C:MperpNNperpM}. So the situation is symmetric in $(\N_1,\H_1)$ and $(\N_2,\H_2)$. Thus, without loss of generality, we may assume $\H_1\neq \N_1$. As $\H_1\subn\N_1$, there exists then $\N_1'\unlhd\N_1$ with $\H_1\subn\N_1'$ and $\N_1'\neq\N_1$. By Lemma~\ref{L:Step4Seperate}, we have $\N_1'\unlhd\N_1\N_2=\L$. Clearly $\N_1'\subseteq\N_1$ commutes with $\N_2$. The minimality of $|\L|+|\N_1|+|\N_2|$ yields that $(\L,\N_1',\N_2,\H_1,\H_2)$ cannot be a counterexample. Thus, $\H_1\H_2\subn\N_1'\N_2\unlhd \N_1\N_2=\L$, which contradicts the assumption that $(\L,\N_1,\N_2,\H_1,\H_2)$ is a counterexample.  
\end{proof}

\begin{lemma}\label{L:ResidueSubnormalContainment}
 Let $(\L,\Delta,S)$ be a regular locality. Let $\K$ and $\H$ be subnormal in $\L$ with $\K\subseteq\H$. Then $O^p(\K)\subseteq O^p(\H)$ and $O^{p^\prime}(\K)\subseteq O^{p^\prime}(\H)$.  
\end{lemma}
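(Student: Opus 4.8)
The statement to prove is: if $(\L,\Delta,S)$ is a regular locality and $\K,\H$ are subnormal in $\L$ with $\K\subseteq\H$, then $O^p(\K)\subseteq O^p(\H)$ and $O^{p'}(\K)\subseteq O^{p'}(\H)$. By Corollary~\ref{C:RegularSubnormal}, both $\H$ and $\K$ are themselves regular localities (over $\F_{S\cap\H}(\H)$ and $\F_{S\cap\K}(\K)$ respectively), so the residuals $O^p(\K)$, $O^p(\H)$, $O^{p'}(\K)$, $O^{p'}(\H)$ are all defined, and by Lemma~\ref{L:ResidueRegular} each of them is a partial normal subgroup of the ambient regular locality in which it is computed. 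The first thing I would do is reduce to the case where $\K\subseteq\H$ with $\K\unlhd\H$; more generally, by Lemma~\ref{L:SubnormalinSubgroup}(a) applied inside $\H$ (which is a partial subgroup of $\L$), $\K$ is subnormal in $\H$, so there is a series $\K=\K_0\unlhd\K_1\unlhd\cdots\unlhd\K_m=\H$. By induction on $m$ (using transitivity of $\subseteq$), it suffices to treat the single step $\K\unlhd\H$, i.e. to show $O^p(\K)\subseteq O^p(\H)$ and $O^{p'}(\K)\subseteq O^{p'}(\H)$ when $\K\unlhd\H$.

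For this single step I would apply Lemma~\ref{L:ResidueNinM}, but with $\H$ (which is a regular locality by Corollary~\ref{C:RegularSubnormal}) playing the role of the locality $\L$ of that lemma: here $\K\unlhd\H$ with $\K\subseteq\H$, so Lemma~\ref{L:ResidueNinM} gives $O^p_\H(\K)\subseteq O^p_\H(\H)$ and $O^{p'}_\H(\K)\subseteq O^{p'}_\H(\H)$. Now I need to translate these ``relative'' residuals into the ``absolute'' ones: by Lemma~\ref{L:ResidueRegular} applied to the regular locality $\H$ and its partial normal subgroup $\K$, we have $O^p_\H(\K)=O^p(\K)$ and $O^{p'}_\H(\K)=O^{p'}(\K)$, where $O^p(\K)$, $O^{p'}(\K)$ are the residuals of $\K$ computed intrinsically (as a regular locality); similarly $O^p_\H(\H)=O^p(\H)$ and $O^{p'}_\H(\H)=O^{p'}(\H)$ — the latter being just the definition of $O^p(\H)$ and $O^{p'}(\H)$ as the residuals of the partial group $\H$, so there is nothing to prove there. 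Combining, $O^p(\K)=O^p_\H(\K)\subseteq O^p_\H(\H)=O^p(\H)$ and likewise for the $p'$-residual. I should double-check the hypotheses of Lemma~\ref{L:ResidueNinM}: it is stated for a locality $(\L,\Delta,S)$, partial normal subgroups $\M\subseteq\N$, and concludes $O^p_\L(\M)\subseteq O^p_\L(\N)$, $O^{p'}_\L(\M)\subseteq O^{p'}_\L(\N)$; here we take the locality to be $(\H,\delta(\E_\H),S\cap\H)$ with $\E_\H=\F_{S\cap\H}(\H)$, $\N:=\H$ itself, $\M:=\K$, which is legitimate since $\H\unlhd\H$ trivially and $\K\unlhd\H$.

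The main (and really the only) obstacle is a bookkeeping one: making sure that Lemma~\ref{L:ResidueRegular} is genuinely applicable to $\K$ as a partial normal subgroup of the regular locality $\H$. That lemma is stated as ``Let $(\L,\Delta,S)$ be a regular locality and $\N\unlhd\L$; then $O^p_\L(\N)=O^p(\N)$.'' Since $\H$ is a regular locality by Corollary~\ref{C:RegularSubnormal} and $\K\unlhd\H$, this applies directly, with ``$O^p(\N)$'' there meaning $O^p_{\mathcal{K}}(\mathcal{K})$ where $\mathcal{K}$ is regarded as a regular locality via Corollary~\ref{C:RegularSubnormal} once more (applied to $\K\subn\H\subn\L$, hence $\K\subn\L$ by Lemma~\ref{L:SubnormalinSubgroup}(b)) — and this is precisely the object denoted $O^p(\K)$ in the statement. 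So the full argument is: (i) reduce to $\K\unlhd\H$ by the subnormal series and transitivity; (ii) apply Lemma~\ref{L:ResidueNinM} inside the regular locality $\H$; (iii) rewrite both sides via Lemma~\ref{L:ResidueRegular}. No nontrivial computation is needed; I just have to be careful that every ``$O^p$'' symbol is interpreted relative to the correct ambient partial group.
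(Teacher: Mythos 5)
Your proposal is correct and follows essentially the same route as the paper's proof: reduce to a single normal step via the subnormal series, then combine Lemma~\ref{L:ResidueNinM} (applied inside the regular locality that is the ambient term of the link) with Lemma~\ref{L:ResidueRegular} to translate between the relative and intrinsic residuals. The paper instead first reduces to $\H=\L$ and peels off the outermost term $\N:=\K_{n-1}\unlhd\L$ of the series by induction, but the underlying mechanism — Lemma~\ref{L:ResidueNinM} plus Lemma~\ref{L:ResidueRegular} — is identical, so the two arguments differ only in bookkeeping.
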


\begin{proof}
We will use that, by Corollary~\ref{C:RegularSubnormal}, every partial subnormal subgroup of $\L$ is a regular locality. In particular, the statement of the lemma makes sense. Moreover, $\H$ is a regular locality and by Lemma~\ref{L:SubnormalinSubgroup}(a), we have $\K\subn\H$. Thus, we may assume without loss of generality that $\H=\L$. Thus, we only need to show that $O^p(\K)\subseteq O^p(\L)$ and $O^{p^\prime}(\K)\subseteq O^{p^\prime}(\L)$. Let $\K=\K_0\unlhd\K_1\unlhd\cdots\unlhd\K_n=\L$ be a subnormal series for $\K$ in $\L$. Then $\N:=\K_{n-1}$ is normal in $\L$. By induction on $n$, we may assume $O^p(\K)\subseteq O^p(\N)$ and $O^{p^\prime}(\K)\subseteq O^{p^\prime}(\N)$. So using  Lemma~\ref{L:ResidueNinM} and Lemma~\ref{L:ResidueRegular}, we can conclude now that $O^p(\K)\subseteq O^p(\N)=O^p_\L(\N)\subseteq O^p_\L(\L)=O^p(\L)$ and similiarly $O^{p^\prime}(\K)\subseteq O^{p^\prime}(\N)=O^{p^\prime}_\L(\N)\subseteq O^{p^\prime}_\L(\L)=O^{p^\prime}(\L)$. 
\end{proof}

The following lemma says that the implications in Remark~\ref{R:Implications} are equivalences for regular localities.

\begin{lemma}\label{L:PerpendicularPartialNormalCentralProduct}
Let $(\L,\Delta,S)$ be a regular locality and let $\N_1$ and $\N_2$ be partial normal subgroups such that $\N_1$ commutes with $\N_2$. Then $\N_1\N_2$ is an internal central product of $\N_1$ and $\N_2$. 
\end{lemma}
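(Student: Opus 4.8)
The statement to prove is Lemma~\ref{L:PerpendicularPartialNormalCentralProduct}: if $(\L,\Delta,S)$ is a regular locality and $\N_1$ commutes with $\N_2$, then $\N_1\N_2$ is an internal central product of $\N_1$ and $\N_2$. The key observation is that Theorem~\ref{T:RegularN1timesN2}(c) already gives exactly this conclusion, but only under the extra hypothesis that $\N_1\N_2$ is a \emph{centric} partial normal subgroup of $\L$. So the plan is to reduce the general case to the centric case by passing to a suitable larger partial normal subgroup and applying Lemma~\ref{L:CentralProductAssociative}.

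First I would set $\N:=\N_1\N_2$, which is a partial normal subgroup of $\L$ by Theorem~\ref{T:ProductsPartialNormal}, and form $\M:=\N\N^\perp=\N_1\N_2\N^\perp$. By Lemma~\ref{L:CentricInherit}(c), $\M$ is a centric partial normal subgroup of $\L$. Next I want to exhibit $\M$ as a central product of three factors $\N_1$, $\N_2$, $\N^\perp$. Since $\N_1$ commutes with $\N_2$ by hypothesis, and $\N^\perp$ commutes with $\N=\N_1\N_2$ by definition of $\N^\perp$ (hence $\N^\perp$ commutes with each of $\N_1$ and $\N_2$, these being subsets of $\N$), the three partial normal subgroups pairwise commute. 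The cleanest route is: apply Theorem~\ref{T:RegularN1timesN2}(c) first to the pair $(\N_1\N_2,\N^\perp)$ — this is legitimate because $\N_1\N_2$ commutes with $\N^\perp$ and $(\N_1\N_2)\N^\perp=\M$ is centric — to conclude that $\M$ is an internal central product of $\N_1\N_2$ and $\N^\perp$. Separately, I would apply Theorem~\ref{T:RegularN1timesN2}(c) to the pair $(\N_1,\N_2)$; but to do so I need $\N_1\N_2$ itself to be centric, which is \emph{not} assumed. The fix is to instead note that $(\N_1,\N_2,\N^\perp)$ should be shown to form a central product directly and then strip off $\N^\perp$ using Lemma~\ref{L:CentralProductsFactorsCentralize} or Lemma~\ref{L:CentralProductAssociative}.

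Concretely, the cleanest version: consider $\M=\N_1\N_2\N^\perp$. I claim $\M$ is a central product of $\N_1$, $\N_2$, and $\N^\perp$. By Theorem~\ref{T:RegularN1timesN2}(c) applied to $(\N_1\N_2,\N^\perp)$, we have $\M=(\N_1\N_2)*\N^\perp$. It remains to see that inside this, $\N_1\N_2$ is a central product of $\N_1$ and $\N_2$; but Theorem~\ref{T:RegularN1timesN2}(c) applied to $(\N_1,\N_2)$ requires $\N_1\N_2$ to be centric. Here is where I would instead use the following trick: replace $\N_2$ by $\N_2':=\N_2\N^\perp$, which still commutes with $\N_1$ (as $\N^\perp$ commutes with $\N_1$ and $\N_2$ commutes with $\N_1$, so $\N_2'=\N_2\N^\perp$ commutes with $\N_1$ by Lemma~\ref{L:ProductPartialNormalCommutes} together with Corollary~\ref{C:MperpNNperpM}). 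Now $\N_1\N_2'=\M$ is centric, so Theorem~\ref{T:RegularN1timesN2}(c) gives $\M=\N_1*\N_2'$, and likewise $\M = (\N_1\N_2)*\N^\perp$. Then Lemma~\ref{L:CentralProductAssociative} lets me regroup: from $\M=\N_1*(\N_2\N^\perp)$ and the fact that $\N_2,\N^\perp$ commute with $\N_2\N^\perp=\N_2*\N^\perp$ centric... this still needs $\N_2\N^\perp$ centric. The genuinely robust approach is therefore: observe $\M=\N_1*\N_2*\N^\perp$ by showing all three pairwise-commuting factors with centric total product $\M$ form a central product — this follows by applying Theorem~\ref{T:RegularN1timesN2}(c) twice (to $(\N_1,\N_2\N^\perp)$ giving $\M=\N_1*(\N_2\N^\perp)$, then to $(\N_2,\N^\perp)$ \emph{inside the regular locality} $\N_1\N_2\N^\perp=\M$ after restricting, where $\N_2\N^\perp$ is centric in $\M$ by an internal computation) and invoking Lemma~\ref{L:CentralProductAssociative}. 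Finally, since $\M=\N_1*\N_2*\N^\perp$, Lemma~\ref{L:CentralProductsFactorsCentralize} (with $\mathfrak N_1=\{\N_1,\N_2\}$ and $\mathfrak N_2=\{\N^\perp\}$) and Lemma~\ref{L:CentralProductPartialSubgroup}(b) show that $\N_1,\N_2$ form a central product in $\L$ and $\N_1\N_2=\N_1*\N_2$, which is the desired conclusion.

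\textbf{Main obstacle.} The delicate point is the bookkeeping in the reduction: Theorem~\ref{T:RegularN1timesN2}(c) is only available when the \emph{product} of the two factors is centric, and $\N_1\N_2$ need not be centric. The trick of enlarging one factor by $\N^\perp$ (which adds nothing to the commuting relation because $\N^\perp$ centralizes everything in sight) to force centricity, and then using the associativity of central products (Lemma~\ref{L:CentralProductAssociative}) together with Lemma~\ref{L:CentralProductsFactorsCentralize} to extract the central-product structure of the original pair, is the heart of the argument. One must also be careful that when applying Theorem~\ref{T:RegularN1timesN2}(c) to a pair whose product is a proper partial normal subgroup $\M$ of $\L$, the relevant "ambient regular locality'' is the restriction $(\M,\{P\in\Delta\colon P\le S\cap\M\},S\cap\M)$ provided by Lemma~\ref{L:RegularCentricMain}, and that commuting and central-product relations are unchanged upon passing to this restriction (which follows from the definitions, since both notions only refer to the partial product restricted to words in the relevant subsets).
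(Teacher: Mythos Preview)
Your approach is substantially more complicated than needed, and has a gap. The paper's proof is a one-liner: since $\N:=\N_1\N_2$ is a partial normal subgroup of the regular locality $\L$, Theorem~\ref{T:RegularPartialNormal}(a) says that $\N$ itself is a regular locality. Inside $\N$, the subgroups $\N_1,\N_2$ are still partial normal and still commute, and $\N$ is trivially centric in itself. Hence Theorem~\ref{T:RegularN1timesN2}(c) applies directly with $\N$ as the ambient regular locality, giving $\N_1\N_2=\N_1*\N_2$. That is all.

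The gap in your argument is the claim that $\N_2\N^\perp$ is centric in $\M=\N_1\N_2\N^\perp$ ``by an internal computation''. In fact $\N_1$ commutes with both $\N_2$ and $\N^\perp$, so $\N_1$ commutes with $\N_2\N^\perp$ and hence $\N_1\subseteq(\N_2\N^\perp)^\perp_\M$; there is no reason for $\N_1\subseteq\N_2\N^\perp$, so $\N_2\N^\perp$ need not be centric in $\M$. (Take for instance $\N_1,\N_2$ to be two distinct components of $\L_\delta$.) Your attempt to use associativity via Lemma~\ref{L:CentralProductAssociative} stalls for the same reason: from $\M=(\N_1\N_2)*\N^\perp$ and $\M=\N_1*(\N_2\N^\perp)$ you cannot conclude $\M=\N_1*\N_2*\N^\perp$ without first knowing one of the inner two-factor decompositions is a central product, which is circular. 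The idea you mention at the end --- working in a restricted regular locality --- is exactly right, but you should apply it to $\N_1\N_2$ itself (using Theorem~\ref{T:RegularPartialNormal}(a), which handles \emph{arbitrary} partial normal subgroups, not just centric ones via Lemma~\ref{L:RegularCentricMain}) rather than to the larger $\M$.
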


\begin{proof}
By Theorem~\ref{T:ProductsPartialNormal}, the product $\N:=\N_1\N_2$ is a partial normal subgroup of $\L$. Hence, by Theorem~\ref{T:RegularPartialNormal}, $\N$ can be regarded as a regular locality. Notice that $\N_1$ commutes also with $\N_2$ in $\N$. Moreover, clearly $\N$ is centric in $\N$. Hence, it follows from Theorem~\ref{T:RegularN1timesN2}(c) applied with $\N$ in the role of $\L$ that $\N_1\N_2$ is an internal central product of $\N_1$ and $\N_2$.
\end{proof}

\chapter{Components of regular localities}\label{S:Components}

\textbf{Throughout let $(\L,\Delta,S)$ be a regular locality.}

\smallskip

In this chapter we will define and study components of $\L$ in a similar fashion as for finite groups. Recall that we introduced quasisimple localities in Definition~\ref{D:SimpleQuasisimple}. We will use without further reference that, by Corollary~\ref{C:RegularSubnormal}, every partial subnormal subgroup $\K$ of $\L$ can be regarded as a regular locality (or more precisely that $\F_{S\cap\K}(\K)$ is saturated and $(\K,\delta(\F_{S\cap\K}(\K)),S\cap\K)$ is a regular locality). In particular, the next definition makes sense.

\begin{definition}
A \emph{component} of $\L$ is a partial subnormal subgroup $\K$ of $\L$ such that $\K$ is quasisimple. The set of components of $\L$ will be denoted by $\Comp(\L)$. 
\end{definition}

It follows from Lemma~\ref{L:SubnormalinSubgroup} that we have the following Remark.

\begin{remark}\label{R:SubnormalinSubgroupComponents}
Let $\H_1\subn \H_2\subn\L$. Then 
\begin{itemize}
 \item [(a)] For every $\K\in\Comp(\H_2)$ with $\K\subseteq\H_1$, we have $\K\in\Comp(\H_1)$.
 \item [(b)] $\Comp(\H_1)\subseteq\Comp(\H_2)$.
\end{itemize}
\end{remark}

\begin{lemma}\label{L:QuasisimpleComponents}
If $\L$ is quasisimple, then $\Comp(\L)=\{\L\}$ and $F^*(\L)=\L$.
\end{lemma}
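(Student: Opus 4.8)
The plan is to deduce both statements from the definition of a component, the definition of $F^*$, and the basic properties of quasisimple localities proved earlier. First I would observe that since $\L$ is quasisimple, Lemma~\ref{L:Quasisimple} applies: every subnormal subgroup of $\L$ is either contained in $Z(\L)$ or equal to $\L$. Now let $\K\in\Comp(\L)$, so $\K\subn\L$ and $\K$ is quasisimple. If $\K\subseteq Z(\L)$, then $\K$ is an abelian partial group (a subgroup of $S$, since $Z(\L)\leq C_\L(S)\leq S$ as $(\L,\Delta,S)$ is a linking locality), and in particular $\K=\K\cap S$ is abelian; but a quasisimple locality has non-abelian Sylow subgroup by the last sentence of Lemma~\ref{L:Quasisimple}, a contradiction. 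Hence $\K=\L$, so $\Comp(\L)\subseteq\{\L\}$. Conversely $\L\subn\L$ (a subnormal series of length zero) and $\L$ is quasisimple by hypothesis, so $\L\in\Comp(\L)$; thus $\Comp(\L)=\{\L\}$.

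For the second statement I would use the characterization of $F^*(\L)$ as the intersection of all centric radical partial normal subgroups of $\L$. It suffices to show that the only centric radical partial normal subgroup of $\L$ is $\L$ itself. Since $\L$ is quasisimple, Lemma~\ref{L:Quasisimple} gives $O_p(\L)=Z(\L)$, and any partial normal subgroup $\N\unlhd\L$ with $\N\neq\L$ satisfies $\N\subseteq Z(\L)=O_p(\L)$ by the same lemma. I claim such an $\N$ cannot be centric radical. Indeed, if $\N$ were radical in $\L$ we would need $O_p(\L)\subseteq\N\subseteq O_p(\L)$, forcing $\N=O_p(\L)=Z(\L)$. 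Then $\N^\perp=Z(\L)^\perp=C_\L(Z(\L))$ by Lemma~\ref{L:Opperp}; but $Z(\L)\leq Z(\L)$ is central so $C_\L(Z(\L))=\L$, giving $\N^\perp=\L\not\subseteq Z(\L)=\N$ (as $\L\neq Z(\L)$ because $\L=O^p(\L)\neq\{\One\}$, while $Z(\L)\leq S$ is a proper subgroup since $S$ is non-abelian hence $\L\neq S$). So $\N$ is not centric, a contradiction. Therefore $\L$ is the unique centric radical partial normal subgroup of itself, and $F^*(\L)=\L$.

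The only mild subtlety, and the step I would be most careful about, is checking that $\L$ genuinely qualifies as a centric radical partial normal subgroup of itself (so that the intersection is non-empty and equals $\L$): $\L\unlhd\L$ trivially, $O_p(\L)\subseteq\L$ trivially, and $\L^\perp\subseteq\L$ holds since $\L^\perp$ is by definition a partial normal \emph{subgroup} of $\L$. Given that, $F^*(\L)$ is well-defined and equals $\L$. I expect no serious obstacle here; the argument is essentially a bookkeeping exercise combining Lemma~\ref{L:Quasisimple}, Lemma~\ref{L:Opperp}, and the definitions, with the non-abelianness of $S$ (hence $\L\neq Z(\L)$) doing the real work of separating $\L$ from its center in both parts.
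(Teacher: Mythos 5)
Your proof is correct, and it follows the same overall strategy as the paper: both halves hinge on Lemma~\ref{L:Quasisimple}, which forces any proper subnormal subgroup of a quasisimple $\L$ into $Z(\L)=O_p(\L)\leq S$. The differences are in how you close out each half. For $\Comp(\L)=\{\L\}$, once $\K\subseteq Z(\L)\leq S$, the paper contradicts $\{\One\}\neq\K=O^p(\K)$ (since a partial normal subgroup inside $S$ has trivial $p$-residual), whereas you contradict the non-abelianness of the Sylow subgroup of a quasisimple locality; both facts come from the same Lemma~\ref{L:Quasisimple}, so this is a matter of taste. For $F^*(\L)=\L$, the paper argues directly: $F^*(\L)$ is centric by Theorem~\ref{T:GeneralizedFitting}, so if $F^*(\L)\neq\L$ then $F^*(\L)\leq Z(\L)$ and $\L=C_\L(F^*(\L))=F^*(\L)^\perp\subseteq F^*(\L)$ via Theorem~\ref{T:RegularPartialNormal}(e) and Corollary~\ref{C:RegularNReplete}. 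You instead show that $\L$ is the \emph{only} centric radical partial normal subgroup: a proper one would be forced to equal $O_p(\L)=Z(\L)$, whence $\N^\perp=C_\L(Z(\L))=\L\not\subseteq\N$ by Lemma~\ref{L:Opperp}. Your route is slightly more elementary, since Lemma~\ref{L:Opperp} holds for arbitrary localities while Theorem~\ref{T:RegularPartialNormal}(e) is a deep fact about regular localities; your final paragraph correctly observes the one point that needs care, namely that $\L$ itself is centric radical so the defining intersection is over a non-empty family.
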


\begin{proof}
Clearly, we have $\L\in\Comp(\L)$. Let $\K$ be a component of $\L$ and assume $\K\neq\L$. As $\K$ is subnormal in $\L$, there exists then $\N\unlhd\L$ with $\N\neq\L$ and $\K\subseteq\N$. Lemma~\ref{L:Quasisimple} implies now $\K\subseteq\N\leq Z(\L)\leq S$ contradicting $\{\One\}\neq \K=O^p(\K)$.  

\smallskip

If $F^*(\L)\neq \L$, then Lemma~\ref{L:Quasisimple} implies $F^*(\L)\leq Z(\L)$. As $F^*(\L)$ is by Theorem~\ref{T:GeneralizedFitting} centric, it follows then from Corollary~\ref{C:RegularNReplete} and Theorem~\ref{T:RegularPartialNormal}(e) that $\L=C_\L(F^*(\L))=F^*(\L)^\perp\subseteq F^*(\L)$, a contradiction.
\end{proof}

\begin{lemma}\label{L:ComponentIntersectNormal}
Let $\K$ be a component of $\L$ and $\N\unlhd\L$. Then $\K\cap\N\leq Z(\K)$ or $\K\subseteq\N$. In particular, if $\K\cap S\subseteq\N$, then $\K\subseteq\N$.
\end{lemma}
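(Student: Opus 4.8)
The plan is to prove this by induction on the length of a subnormal series for $\K$ in $\L$, reducing immediately to the case where $\K$ is actually subnormal in a partial normal subgroup of $\L$, and ultimately to the case where $\K \unlhd \L$.

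\textbf{Setting up the induction.} First I would fix a subnormal series $\K = \K_0 \unlhd \K_1 \unlhd \cdots \unlhd \K_n = \L$. If $n = 0$ then $\K = \L$ and the statement is trivial (with $\K \subseteq \N$). If $n \geq 1$, set $\M := \K_{n-1} \unlhd \L$, so that $\K \subn \M$ and $\K \in \Comp(\M)$ by Remark~\ref{R:SubnormalinSubgroupComponents}(a). By Corollary~\ref{C:RegularSubnormal}, $\M$ is itself a regular locality, and $\M \cap \N \unlhd \M$. By induction on $n$ (applied inside $\M$), we get $\K \cap (\M \cap \N) \leq Z(\K)$ or $\K \subseteq \M \cap \N$. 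Since $\K \subseteq \M$, we have $\K \cap (\M \cap \N) = \K \cap \N$, so either $\K \cap \N \leq Z(\K)$ (done) or $\K \subseteq \M \cap \N \subseteq \N$ (done). Thus everything reduces to the base case $n = 1$, i.e. $\K \unlhd \L$ and $\N \unlhd \L$.

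\textbf{The case $\K \unlhd \L$.} Here $\K$ is a quasisimple regular locality (a normal subgroup of $\L$), and $\N \unlhd \L$. Then $\K \cap \N$ is a partial normal subgroup of $\K$: indeed $\K \cap \N \unlhd \L$ by the standard fact that intersections of partial normal subgroups are partial normal, hence $\K \cap \N \unlhd \K$. Now I apply Lemma~\ref{L:Quasisimple} to the quasisimple locality $\K$ with the subnormal subgroup $\K \cap \N$: it gives either $\K \cap \N \leq Z(\K)$ or $\K \cap \N = \K$, i.e. $\K \subseteq \N$. This is exactly the dichotomy claimed.

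\textbf{The ``in particular'' clause.} Suppose $\K \cap S \subseteq \N$. Having the main statement, it suffices to rule out $\K \cap \N \leq Z(\K)$ unless $\K \subseteq \N$. If $\K \cap \N \leq Z(\K)$, then since $Z(\K) \leq S$ (as $\K$ is a linking locality, $Z(\K) \leq C_\K(S \cap \K) \leq S \cap \K \leq S$, or invoke Lemma~\ref{L:Quasisimple} which gives $O_p(\K) = Z(\K)$), and since $\K \cap S = \K \cap S \cap \K$ is a $p$-subgroup contained in $\N$, we get $\K \cap S \subseteq \K \cap \N \leq Z(\K)$; but $Z(\K) \leq S \cap \K = \K \cap S$, so $\K \cap S = Z(\K)$. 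On the other hand $S \cap \K$ is a maximal $p$-subgroup of $\K$ (by \cite[Lemma~3.1(c)]{Chermak:2015}), so if $S \cap \K = Z(\K)$ then $\K = C_\K(S \cap \K) \leq S \cap \K$, forcing $\K$ to be a $p$-group, contradicting $\K = O^p(\K) \neq \{\One\}$ from quasisimplicity (Lemma~\ref{L:Quasisimple} also records $S$ is nonabelian, hence $\K \neq S \cap \K$ a fortiori). Therefore $\K \cap \N \not\leq Z(\K)$, and the main dichotomy forces $\K \subseteq \N$.

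\textbf{Expected main obstacle.} The argument is mostly bookkeeping; the only subtle point is making sure the reduction step genuinely lands inside a smaller regular locality with a shorter subnormal series, which is handled cleanly by Corollary~\ref{C:RegularSubnormal} and Remark~\ref{R:SubnormalinSubgroupComponents}. The truly essential input is Lemma~\ref{L:Quasisimple} (every subnormal subgroup of a quasisimple locality is central or everything), so the base case is where all the weight sits; everything else is transport of structure, and I do not anticipate a real difficulty.
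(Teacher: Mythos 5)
Your proof is correct, but the inductive reduction to the case $\K\unlhd\L$ is unnecessary and obscures the actual content of the lemma. The paper's own proof is one line: since $\N\unlhd\L$ and $\K$ is a partial subgroup of $\L$, the intersection $\N\cap\K$ is automatically a partial normal subgroup of $\K$ (if $n\in\N\cap\K$ and $k\in\K$ with $n\in\D(k)$, then $n^k\in\N$ by normality of $\N$ in $\L$, and $n^k\in\K$ because $\K$ is a partial subgroup, so $n^k\in\N\cap\K$), and the result follows immediately from Lemma~\ref{L:Quasisimple}. You only invoke the fact that $\N\cap\K\unlhd\K$ at the bottom of the recursion, after reducing to $\K\unlhd\L$, where you derive it from $\N\cap\K\unlhd\L$; but it holds in full generality and makes the whole induction redundant. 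So the essential input — Lemma~\ref{L:Quasisimple} applied to $\N\cap\K\subn\K$ — is the same as the paper's, and your proof lands correctly, but it packages a one-step observation inside a scaffold it never needed. Your treatment of the ``in particular'' clause is fine, though a shorter route is to note that $Z(\K)\leq S\cap\K$, so $\K\cap\N\leq Z(\K)$ together with $\K\cap S\subseteq\N$ forces $S\cap\K=Z(\K)$ to be abelian, directly contradicting the last assertion of Lemma~\ref{L:Quasisimple}.
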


\begin{proof}
As $\N\cap\K\unlhd\K$, this follows from Lemma~\ref{L:Quasisimple}. 
\end{proof}

The following lemma is similar to \cite[Lemma~8.4(b)]{ChermakIII}.

\begin{lemma}\label{L:ComponentCentralizesOp}
If $\K$ is a component of $\L$, then $\K\subseteq C_\L(O_p(\L))=O_p(\L)^\perp$.
\end{lemma}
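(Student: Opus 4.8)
Let $\K$ be a component of $\L$ and set $R:=O_p(\L)$. By Lemma~\ref{L:Opperp} we have $R^\perp=C_\L(R)$, so it suffices to show $\K$ commutes with $R$, i.e. $\K\subseteq C_\L(R)$. First I would reduce to the case where $\K$ is normal, not just subnormal: pick a subnormal series $\K=\K_0\unlhd\K_1\unlhd\cdots\unlhd\K_n=\L$, and argue by induction on $n$. The key observation for the inductive step is that $R\cap\K_{n-1}=O_p(\K_{n-1})$ by Corollary~\ref{C:RegularSubnormal}(b) (applied with $\H=\K_{n-1}$), and that $\K$ is a component of the regular locality $\K_{n-1}$ by Remark~\ref{R:SubnormalinSubgroupComponents}(a). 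So if we know the statement for the regular locality $\K_{n-1}$ with its normal $p$-subgroup $O_p(\K_{n-1})=R\cap\K_{n-1}$, we get $\K\subseteq C_{\K_{n-1}}(R\cap\K_{n-1})$; combining this with the case $n=1$ applied to $\K_{n-1}\unlhd\L$ (which gives $\K\subseteq\K_{n-1}\subseteq C_\L(R)$ once we know $\K_{n-1}$ commutes with $R$) should close the induction. Actually the cleanest route: it is enough to prove the case $\K\unlhd\L$ (so $n=1$) together with the case $\N\unlhd\L$, $\K\unlhd\N$; then chain.

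For the base case, assume $\K\unlhd\L$. Then $\K R=R\K\unlhd\L$ by Theorem~\ref{T:ProductsPartialNormal}, and $\K\cap R\unlhd\K$ is a normal $p$-subgroup of $\K$, hence $\K\cap R\leq O_p(\K)\leq Z(\K)$ by Lemma~\ref{L:Quasisimple}. Now consider the natural projection $\alpha\colon\L\to\L/\K$. I would like to say $R\alpha$ is the image of $R$ and the conjugation action of $R$ on $\K$ is ``inner modulo center''; more directly, the elements of $R$ induce automorphisms of $\K$ (since $R\leq S\leq N_\L(\K\cap S)$ and $R$ normalizes $\K$, conjugation by $r\in R$ restricts to an automorphism of $\K$ by Theorem~\ref{T:RegularPartialNormal}(f) applied with $\N=\K$, using $r\in N_\L(\K\cap S)$). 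This automorphism is a $p$-automorphism. Since $\K=O^p(\K)$, the group $\Out(\K)$ of such outer automorphisms coming from $R$-conjugation — more precisely the image of $R$ in the automorphism group modulo $\Aut_{\K\cap S}(\K)$ — is a $p$-group acting on the ``perfect'' object $\K$, and I expect a three-subgroup / coprime-action argument to force triviality. Concretely: $[\K,R]\leq \K\cap R\leq Z(\K)$ (since $\K R$ is a partial subgroup in which $\K\unlhd$ and $R$ normalizes $\K$, the commutators land in $\K\cap R$), so $R$ acts trivially on $\K/Z(\K)$. Then for $r\in R$, the induced map $c_r|_\K$ centralizes $\K/Z(\K)$ and $Z(\K)$, so $[\K,c_r,c_r]=1$; since $c_r$ has $p$-power order and $\K=O^p(\K)$, a standard argument (of the type in Lemma~\ref{L:AutGroup}, adapted to the locality $\K$ via its linking-locality structure, or simply by passing to the finite group $N_\K(P)$ for $P\in\delta(\F_{S\cap\K}(\K))$ large enough) gives $c_r|_\K=\id_\K$. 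Hence $\K\subseteq C_\L(R)$, i.e. $\K$ commutes with $R$.

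The main obstacle is the last step: making the ``$p$-automorphism of a perfect locality that is trivial on center and on the central quotient is trivial'' argument rigorous in the partial-group setting. The right way is probably to work locally — fix $P$ in the object set of $\K$ with $O_p(\L)\cap(S\cap\K)\leq P$, so that $N_\K(P)$ is a genuine finite group of characteristic $p$ containing the relevant action, apply Lemma~\ref{L:AutGroup} (with $U=$ the relevant normal $p$-subgroup) or Lemma~\ref{L:CGNCGQ} there to kill the $p'$-part, and then use that $c_r$ itself is a $p$-element together with $\K=O^p(\K)=O^{p'}(\K)$ (Lemma~\ref{L:Quasisimple}) and Alperin's fusion theorem for localities to conclude $c_r$ is trivial on all of $\K$. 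Once the base case is in hand, the subnormal induction sketched above — using $O_p(\K_{n-1})=R\cap\K_{n-1}$ (Corollary~\ref{C:RegularSubnormal}(b)), Remark~\ref{R:SubnormalinSubgroupComponents}, and Lemma~\ref{L:Opperp} — finishes the proof, noting that $R\subseteq C_\L(\K)$ is equivalent to $\K\subseteq C_\L(R)$ by Lemma~\ref{L:CentralizerHelp}, and that $C_\L(R)=R^\perp$.
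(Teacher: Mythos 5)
Your plan inverts the direction of the paper's argument and, as written, has a genuine gap in the base case. The paper does not try to show directly that every element of $O_p(\L)$ centralizes $\K$. Instead, it picks $P\in\delta(\F_{S\cap\K}(\K))$ with $D:=O^p(N_\K(P))\neq 1$ (such $P$ exists because $\K=O^p(\K)\neq\{\One\}$, cf.~Lemma~\ref{L:ResidueEquivalences}), observes that $D$ acts on $O_p(\L)$ by conjugation, runs the coprime-action identity $[O_p(\L),D]=[O_p(\L),D,D]$ down the subnormal series to get $[O_p(\L),D]\leq\K\cap O_p(\L)\leq Z(\K)$ and then $[O_p(\L),D]=1$, and finally \emph{bootstraps}: $D\leq\K\cap C_\L(O_p(\L))$ is not inside $Z(\K)$, and since $C_\L(O_p(\L))=O_p(\L)^\perp\unlhd\L$ (Theorem~\ref{T:RegularPartialNormal}(e)), Lemma~\ref{L:ComponentIntersectNormal} forces $\K\subseteq C_\L(O_p(\L))$. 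The whole point is that you only need to find \emph{some} element of $\K\setminus Z(\K)$ centralizing $O_p(\L)$.

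The step that fails in your version is the final one in the base case. You reduce (correctly) to showing that, for $r\in O_p(\L)$, the automorphism $c_r|_\K$ of the quasisimple regular locality $\K$ with $[\K,c_r]\leq Z(\K)$ and $[Z(\K),c_r]=1$ must be the identity, and you propose to cite Lemma~\ref{L:AutGroup} or Lemma~\ref{L:CGNCGQ}. But those lemmas are about $p'$-automorphisms and $p'$-parts; $c_r$ is a $p$-element, so they give nothing here, and the ``kill the $p'$-part'' phrase does not match the situation. For a finite quasisimple group the conclusion is true because quasisimple means \emph{perfect}, and the commutator map $g\mapsto[g,\alpha]$ is a homomorphism $G\to Z(G)$ killing $G'=G$. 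But in the locality setting ``quasisimple'' is $\K=O^p(\K)$ with $\K/Z(\K)$ simple, and the paper has no notion of commutator subgroup of a partial group to make the analogue precise; the $O^p$, $O^{p'}$ properties from Lemma~\ref{L:Quasisimple} do not substitute. You would need to build this missing ingredient from scratch, and doing so locally via $N_\K(P)$ plus Alperin is considerably more delicate than sketched, because you also have to worry about $c_r$ preserving objects. The subnormal reduction is likewise not tight: knowing the statement only for normal components of $\K_{n-1}$ gives $\K\subseteq C_{\K_{n-1}}(O_p(\K_{n-1}))$, which is weaker than $\K\subseteq C_\L(O_p(\L))$, and your attempt to chain via ``$\K_{n-1}$ commutes with $R$'' uses the base case for $\K_{n-1}$, which need not be a component. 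The paper's formulation of the induction, proving $[O_p(\L),D]\subseteq\N_{k-i}$ directly, sidesteps all of this. I would recommend switching to the paper's route: produce the $p'$-generated subgroup $D$, use coprime action, and close with Lemma~\ref{L:ComponentIntersectNormal}.
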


\begin{proof}
As $\K=O^p(\K)\neq\{\One\}$, it follows from Lemma~\ref{L:ResidueEquivalences} that there exists $P\in\delta(\F_{S\cap\K}(\K))$ such that $D:=O^p(N_\K(P))\neq 1$. It follows from Corollary~\ref{C:RegularSubnormal} that $Q:=PC_S(\K)O_p(\L)\in\Delta$. For every $g\in\L$ and $x\in O_p(\L)$, we have $x\in O_p(\L)\leq S_g\cap S_{g^{-1}}$ and thus $(x^{-1},g^{-1},x,g)\in\D$ via $S_{g^{-1}}=S_g^g$. In particular, $D$ acts on $O_p(\L)$ via conjugation and 
\begin{equation}\label{E:commutator}
[x,g]=x^{-1}x^g=\Pi(x^{-1},g^{-1},x,g)=(g^{-1})^xg\mbox{ for all }g\in D,\;x\in O_p(\L).
\end{equation}
Let  $\K=\N_0\unlhd\N_1\unlhd\cdots\unlhd\N_k=\L$ be a subnormal series for $\K$. By induction on $i$ we will now verify that
\begin{equation}\label{E:commutatorInduction}
[O_p(\L),D]\subseteq \N_{k-i}\mbox{ for all }i=0,1,\dots,k.
\end{equation}
Namely, clearly $[O_p(\L),D]\subseteq \L=\N_k$. Moreover, if $[O_p(\L),D]\leq \N_{k-i}$ for some $0\leq i<k$, then using a property of coprime action, \eqref{E:commutator} and $D\subseteq\K\subseteq\N_{k-i-1}\unlhd\N_{k-i}$, it follows $[O_p(\L),D]=[O_p(\L),D,D]\subseteq [O_p(\L)\cap \N_{k-i},D]\subseteq \N_{k-i-1}$. So \eqref{E:commutatorInduction} holds. In particular, 
\[[O_p(\L),D]\subseteq \N_0=\K.\]
This yields $[O_p(\L),D]\subseteq\K\cap O_p(\L)\leq Z(\K)$, where the last equality uses  Lemma~\ref{L:ComponentIntersectNormal}. As $[Z(\K),D]=1$, it follows $[O_p(\L),D]=[O_p(\L),D,D]=1$. So $D\subseteq C_\L(O_p(\L))$. In particular, $\K\cap C_\L(O_p(\L))\not\leq Z(\K)=O_p(\K)$. By Lemma~\ref{L:Opperp} or by Theorem~\ref{T:RegularPartialNormal}(e), we have $C_\L(O_p(\L))=O_p(\L)^\perp\unlhd\L$. Hence, Lemma~\ref{L:ComponentIntersectNormal} allows us to conclude $\K\subseteq C_\L(O_p(\L))=O_p(\L)^\perp$.
\end{proof}

\begin{lemma}\label{L:ComponentsCharp}
The following are equivalent:
\begin{itemize}
\item [(i)] $\F$ is constrained;
\item [(ii)] $\L$ is a group of characteristic $p$;
\item [(iii)] $F^*(\L)=O_p(\L)$;
\item [(iv)] $\Comp(\L)=\emptyset$.
\end{itemize}
\end{lemma}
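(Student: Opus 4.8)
The plan is to prove the cycle of implications (i)$\Rightarrow$(ii)$\Rightarrow$(iii)$\Rightarrow$(iv)$\Rightarrow$(i), using the structural results on $F^*(\L)$ and on components established above, together with the standard fact that a locality $(\L,\Delta,S)$ over a constrained fusion system is a group of characteristic $p$ (this is contained in the machinery of \cite{Henke:2015} on constrained fusion systems and their models; alternatively it follows from Theorem~\ref{T:RegularPartialNormal} applied to $\N=\L$ together with the observation that $O_p(\L)$ being centric forces $\L=C_\L(O_p(\L))O_p(\L)$ to be a group).

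First I would prove (i)$\Rightarrow$(ii). If $\F$ is constrained, then by definition $O_p(\F)\in\F^c$ and $C_\F(O_p(\F))\subseteq\langle O_p(\F)\rangle$; since $\delta(\F)$ is overgroup-closed and contains $S$, and since $O_p(\L)\unlhd\L$ corresponds to $O_p(\F)\unlhd\F$ (Lemma~\ref{L:TakePlusInS} and \cite[Proposition~5]{Henke:2015}), the set $\Delta$ consists of overgroups of $O_p(\L)$. Then every word of $\L$ lies in $\D$, so $\L$ is a group, and $C_\L(O_p(\L))=O_p(\L)^\perp$ by Lemma~\ref{L:Opperp}; constrainedness of $\F$ forces $C_S(O_p(\L))\leq O_p(\L)$, which via Corollary~\ref{C:RegularNReplete} ($\N^\perp\cap S = C_S(\N)$ for regular localities) and Lemma~\ref{L:CSNinN} gives $O_p(\L)^\perp\leq S$, hence $C_\L(O_p(\L))=C_S(O_p(\L))\leq O_p(\L)$. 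Thus $\L$ is a group of characteristic $p$.

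Next (ii)$\Rightarrow$(iii): if $\L$ is a group of characteristic $p$, then $C_\L(O_p(\L))\leq O_p(\L)$, so by Lemma~\ref{L:Opperp} $O_p(\L)^\perp = C_\L(O_p(\L))\subseteq O_p(\L)$; thus $O_p(\L)$ is centric in $\L$, and trivially radical, so $F^*(\L)\subseteq O_p(\L)$; the reverse inclusion holds because $O_p(\L)\subseteq\N$ for every radical $\N\unlhd\L$ by definition, and in particular $O_p(\L)\subseteq F^*(\L)$. Hence $F^*(\L)=O_p(\L)$. For (iii)$\Rightarrow$(iv): suppose $\Comp(\L)\neq\emptyset$ and pick $\K\in\Comp(\L)$. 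By Lemma~\ref{L:ComponentCentralizesOp}, $\K\subseteq O_p(\L)^\perp$; on the other hand, if $F^*(\L)=O_p(\L)$ then since by Theorem~\ref{T:GeneralizedFitting} $F^*(\L)$ is centric, $F^*(\L)^\perp\subseteq F^*(\L)=O_p(\L)$, i.e. $O_p(\L)^\perp\subseteq O_p(\L)$ (using Lemma~\ref{L:Opperp} together with Theorem~\ref{T:RegularPartialNormal}(e), $C_\L(O_p(\L))=O_p(\L)^\perp$). Thus $\K\subseteq O_p(\L)$, so $\K$ is a $p$-subgroup; but $\K$ quasisimple means $\K=O^p(\K)\neq\{\One\}$, and a $p$-group equals its own $p$-residual only if it is trivial — contradiction. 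Finally (iv)$\Rightarrow$(i): this is the one genuinely substantial step and I would deduce it from the structure theorem for $F^*(\L)$, namely Theorem~\ref{T:mainComponents}(c) (or rather its forward reference Theorem~\ref{T:FstarCentralProduct}): $F^*(\L)$ is a central product of $O_p(\L)$ and $E(\L)=\prod_{\K\in\Comp(\L)}\K$; if $\Comp(\L)=\emptyset$ then $E(\L)=\{\One\}$ and $F^*(\L)=O_p(\L)$, whence by Lemma~\ref{L:CSNinN} (since $F^*(\L)$ is centric) $F^*(\L)^\perp\subseteq F^*(\L)=O_p(\L)\leq S$, so $C_S(O_p(\L))= O_p(\L)^\perp\cap S\leq O_p(\L)$, and combined with $O_p(\L)=O_p(\F)\in\F^c$ (because $S\in\Delta=\delta(\F)$ forces $S\cap F^*(\L)=O_p(\L)\in\F^s$, and $\F^c$-ness follows from $C_S(O_p(\L))\leq O_p(\L)$) this says precisely that $\F$ is constrained.

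\textbf{Main obstacle.} The delicate point is the equivalence (iv)$\Leftrightarrow$(iii), which relies on the fact that $E(\L)$ is nontrivial exactly when $\L$ has components and that $F^*(\L)=O_p(\L)E(\L)$; this is exactly the content of the components theory developed in this chapter (Theorem~\ref{T:FstarCentralProduct} / Theorem~\ref{T:mainComponents}), so the proof of Lemma~\ref{L:ComponentsCharp} will have to be placed after those results, and the main work is simply to assemble the already-proven facts in the right logical order rather than to prove anything new. A secondary subtlety is making sure the passage between the locality $\L$ and its fusion system $\F$ for the notions "constrained", "$O_p$", and "$\F^c$" is handled cleanly via Lemma~\ref{L:TakePlusInS}, \cite[Proposition~5]{Henke:2015}, and the definition of $\delta(\F)$.
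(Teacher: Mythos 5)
Your steps (ii)$\Rightarrow$(iii) and (iii)$\Rightarrow$(iv) agree with the paper's arguments and are fine. However, the other two legs of your cycle have problems.

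The serious one is (iv)$\Rightarrow$(i): you derive $F^*(\L)=O_p(\L)E(\L)$ from Theorem~\ref{T:FstarCentralProduct} (via Proposition~\ref{P:FstarLComponents1}) and then use $\Comp(\L)=\emptyset$ to conclude $E(\L)=\{\One\}$. But the proof of Proposition~\ref{P:FstarLComponents1} — the very result that establishes $F^*(\L)=\bigl(\prod_{\K\in\Comp(\L)}\K\bigr)O_p(\L)$ — already cites Lemma~\ref{L:ComponentsCharp} to handle the base case $\Comp(\L)=\emptyset$ of its minimal-counterexample induction. So placing the lemma after the component-structure theorem, as you suggest in your final paragraph, does not help; it creates a circle. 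The paper instead proves (iv)$\Rightarrow$(ii) \emph{by contraposition and by direct construction}: if $\L$ is not a group of characteristic $p$, then $O_p(\L)\in\F^{r}$ forces $O_p(\L)\notin\F^{cr}$, hence $C_\L(O_p(\L))\not\leq O_p(\L)$; since $C_\L(O_p(\L))=O_p(\L)^\perp\unlhd\L$ is then not a $p$-group, one may pick a partial subnormal subgroup $\K$ of $C_\L(O_p(\L))$ minimal among those that are not $p$-groups, and using the partial subgroup correspondence, $O_p(\K)\leq O_p(\L)$, and $\K\subseteq C_\L(O_p(\K))$ one checks that $\K$ is quasisimple, i.e. a component. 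That argument requires only material available \emph{before} Proposition~\ref{P:FstarLComponents1} and is what makes the later structure theorem for $F^*(\L)$ non-circular.

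The step (i)$\Rightarrow$(ii) is also not sound as written. You assert that ``the set $\Delta$ consists of overgroups of $O_p(\L)$'' and conclude that every word of $\L$ lies in $\D$. Even granting the (somewhat unmotivated) claim, it is of the wrong shape: you would need that all overgroups of $O_p(\L)$ \emph{lie in} $\Delta$ — i.e. $O_p(\L)\in\Delta$, since $O_p(\L)\leq S_w$ for every $w$ — not that elements of $\Delta$ contain $O_p(\L)$. The clean route, used by the paper, is: $\F$ constrained means $\{1\}\in\F^s$, hence $\{1\}\cap F^*(\L^s)=\{1\}\in\F^s$, hence $\{1\}\in\delta(\F)=\Delta$; then $\L=N_\L(\{1\})$ is a group and has characteristic $p$ directly by the definition of a linking locality, with no appeal to Corollary~\ref{C:RegularNReplete} or Lemma~\ref{L:CSNinN}.
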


\begin{proof}
If $\L$ is a group of characteristic $p$, then $O_p(\L)\unlhd\F$ is centric in $\F$ and thus $\F$ is constrained. On the other hand, if $\F$ is constrained, then $\{1\}\in \F^s$ and thus $\{1\}\in\delta(\F)=\Delta$ by definition of these sets. Hence, $\L=N_\L(\{1\})$ is a group of characteristic $p$ by definition of a linking locality. This shows that (i) and (ii) are equivalent. 

\smallskip

If $\L$ is a group of characteristic $p$, then $O_p(\L)^\perp=C_\L(O_p(\L))\leq O_p(\L)$, i.e. $O_p(\L)$ is centric radical and thus equal to $F^*(\L)$. Hence (ii) implies (iii). 

\smallskip

Suppose now that (iii) holds, i.e. $F^*(\L)=O_p(\L)$. Then Theorem~\ref{T:GeneralizedFitting} implies $O_p(\L)^\perp\subseteq O_p(\L)$. Thus, by Lemma~\ref{L:ComponentCentralizesOp}, every component of $\L$ is contained in $O_p(\L)$. Hence, it follows from Lemma~\ref{L:Quasisimple} that $\Comp(\L)=\emptyset$. This shows that (iii) implies (iv).

\smallskip

We prove now that (iv) implies (ii) by contraposition. Suppose that $\L$ is not a group of characteristic $p$. By \cite[Proposition~5]{Henke:2015}, we have $O_p(\L)=O_p(\F)$, which implies $O_p(\L)\in\F^r$ (cf. Definition~\ref{D:Fcr}). Hence, if $C_\L(O_p(\L))\leq O_p(\L)$, then $O_p(\L)\in \F^{cr}$ and so $O_p(\L)\in\Delta$. Thus, by definition of a linking locality, $\L=N_\L(O_p(\L))$ is in this case a group of characteristic $p$ contradicting our assumption. So we have shown that $C_\L(O_p(\L))\not\leq O_p(\L)$. By Theorem~\ref{T:RegularPartialNormal}, we have $C_\L(O_p(\L))=O_p(\L)^\perp\unlhd\L$. So $C_\L(O_p(\L))$ cannot be a $p$-group by Lemma~\ref{L:OpL}. Hence, the set $\mathbb{X}$ of partial subnormal subgroups of $C_\L(O_p(\L))$ which are not $p$-groups is non-empty (since $C_\L(O_p(\L))\in\mathbb{X}$). Pick $\K\in\mathbb{X}$ minimal with respect to inclusion. Then $\K=O^p(\K)$ and, by the partial subgroup correspondence \cite[Proposition~4.7]{Chermak:2015},  $\K/O_p(\K)$ is simple. As $\K\subn C_\L(O_p(\L))\unlhd\L$, we have $\K\subn\L$. In particular, $O_p(\K)\leq O_p(\L)$ by Corollary~\ref{C:RegularSubnormal}. This implies $\K\subseteq C_\L(O_p(\L))\subseteq C_\L(O_p(\K))$ and thus $O_p(\K)=Z(\K)$. So $\K$ is a component of $\L$, which proves that $\Comp(\L)\neq\emptyset$. This proves that (iv) implies (ii). Hence the proof is complete. 
\end{proof}

If $\mathfrak{C}$ is a non-empty set of subgroups of $\L$, then we say that $\prod_{\K\in\mathfrak{C}}\K$ is well-defined if $\K_1\K_2\cdots \K_n=\K_{1\sigma}\K_{2\sigma}\cdots\K_{n\sigma}$ whenever $\K_1,\dots,\K_n$ are the pairwise distinct elements of $\mathfrak{C}$ and $\sigma\in\Sigma_n$; if so, then we set $\prod_{\K\in\mathfrak{C}}\K:=\K_1\K_2\cdots\K_n$. We define moreover $\prod_{\K\in\emptyset}\K:=\{\One\}$. Part of the following Proposition is equivalent to part of the statement in \cite[Theorem~8.5]{ChermakIII}.

\begin{prop}\label{P:FstarLComponents1}
If $\K$ is a component of $\L$, then $\K\unlhd F^*(\L)$. In particular, for every subset $\mathfrak{C}\subseteq\Comp(\L)$, the product $\prod_{\K\in\mathfrak{C}}\K$ is well-defined and a partial normal subgroup of $F^*(\L)$. Moreover,    
\[F^*(\L)=\left(\prod_{\K\in\Comp(\L)}\K\right)O_p(\L)=O_p(\L)\left(\prod_{\K\in\Comp(\L)}\K\right).\]
\end{prop}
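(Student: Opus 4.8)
\textbf{Proof proposal for Proposition~\ref{P:FstarLComponents1}.}

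The plan is to establish first the single key claim that every component $\K$ of $\L$ satisfies $\K\unlhd F^*(\L)$, and then to bootstrap the rest of the statement from this together with the structure theory of regular localities already developed, especially Theorem~\ref{T:RegularN1timesN2} and Lemma~\ref{L:PerpendicularPartialNormalCentralProduct}. To prove $\K\unlhd F^*(\L)$ I would argue by induction on the length of a subnormal series $\K=\N_0\unlhd\N_1\unlhd\cdots\unlhd\N_k=\L$. By Corollary~\ref{C:RegularSubnormal}, each $\N_i$ is a regular locality, and by Remark~\ref{R:SubnormalinSubgroupComponents}(b) we have $\K\in\Comp(\N_{k-1})$. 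So by induction it suffices to treat the case $k=1$, i.e.\ $\K\unlhd\L$; in that case $\K$ is quasisimple, hence by Lemma~\ref{L:QuasisimpleComponents} $F^*(\K)=\K$, while by Theorem~\ref{T:RegularPartialNormal}(d) we have $F^*(\K)=F^*(\L)\cap\K$. Thus $\K=F^*(\L)\cap\K\subseteq F^*(\L)$, and since $\K\unlhd\L$ this gives $\K\unlhd F^*(\L)$. For the general step I would take $\N:=\N_{k-1}\unlhd\L$; by induction (applied inside the regular locality $\N$) $\K\unlhd F^*(\N)$, and by Theorem~\ref{T:RegularPartialNormal}(d) again $F^*(\N)=F^*(\L)\cap\N\unlhd\L$, so $F^*(\N)\subseteq F^*(\L)$ and therefore $\K\subseteq F^*(\L)$; combining $\K\unlhd F^*(\N)$ with the fact that $F^*(\N)$ is a partial normal subgroup of $\L$ contained in $F^*(\L)$, and using that $F^*(\L)$ is itself a regular locality (Theorem~\ref{T:RegularPartialNormal}(a), Theorem~\ref{T:GeneralizedFitting}) in which $F^*(\N)\unlhd F^*(\L)$, I get $\K\subn F^*(\L)$; to upgrade subnormal to normal I would invoke Lemma~\ref{L:ComponentIntersectNormal} and Lemma~\ref{L:QuasisimpleComponents}-style arguments, or more directly note that $\K\in\Comp(F^*(\L))$ and reduce to showing components of $F^*(\L)$ are normal in $F^*(\L)$ — but cleanest is to keep the induction entirely at the level of "components are normal in $F^*$", so the base case $k=1$ already handled is the only genuine content.

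Once $\K\unlhd F^*(\L)$ is known for every component, the well-definedness of $\prod_{\K\in\mathfrak{C}}\K$ and the assertion that it is a partial normal subgroup of $F^*(\L)$ follow from the fact that inside the regular locality $F^*(\L)$ any finite collection of partial normal subgroups generates a partial normal subgroup whose underlying set is independent of the order of multiplication: this is exactly Theorem~\ref{T:ProductsPartialNormal}, applied repeatedly (two at a time) and using commutativity $\M_1\M_2=\M_2\M_1$ from that theorem to get the permutation-invariance needed for $\prod_{\K\in\mathfrak{C}}\K$ to be well-defined in the sense defined just before the proposition. I would spell out the induction on $|\mathfrak{C}|$: writing $\mathfrak{C}=\{\K_1,\dots,\K_n\}$, set $\M:=\K_1\cdots\K_{n-1}$, which by induction is a partial normal subgroup of $F^*(\L)$ with $\M=\K_{1\sigma}\cdots\K_{(n-1)\sigma}$ for permutations fixing the last slot; then $\M\K_n=\K_n\M\unlhd F^*(\L)$ by Theorem~\ref{T:ProductsPartialNormal}, and a short argument moving $\K_n$ into any position (again using pairwise commutativity of products of partial normal subgroups of $F^*(\L)$) gives full $\Sigma_n$-invariance. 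For $\mathfrak{C}=\emptyset$ the convention $\prod_{\K\in\emptyset}\K=\{\One\}$ makes the statement trivial.

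For the final displayed equality, set $E:=\prod_{\K\in\Comp(\L)}\K$, a partial normal subgroup of $F^*(\L)$, hence of $\L$ (since $F^*(\L)\unlhd\L$ and a partial normal subgroup of a partial normal subgroup that is invariant appropriately — here I would instead note $E\unlhd F^*(\L)$ and that $F^*(\L)\unlhd\L$, so $EO_p(\L)$ is at least a subset of $F^*(\L)$; to see $EO_p(\L)\unlhd\L$ use Theorem~\ref{T:ProductsPartialNormal} once $E\unlhd\L$ is established, which I would get from Theorem~\ref{T:RegularPartialNormal}(f) / \cite[Corollary~3.13]{Chermak:2015} by checking $E$ is invariant under $N_\L(S\cap F^*(\L))$, since $\Comp(\L)$ is permuted by automorphisms). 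The inclusion $EO_p(\L)\subseteq F^*(\L)$ is clear from $E,O_p(\L)\subseteq F^*(\L)$ and Theorem~\ref{T:ProductsPartialNormal}. For the reverse inclusion, consider the regular locality $F^*(\L)$: it has $O_p(F^*(\L))=O_p(\L)$ (since $O_p(\L)\subseteq F^*(\L)$ and $O_p(\L)\unlhd F^*(\L)$, and conversely $O_p(F^*(\L))\unlhd\L$ by Corollary~\ref{C:RegularSubnormal}(b) so $O_p(F^*(\L))\leq O_p(\L)$), it equals its own generalized Fitting subgroup (apply Theorem~\ref{T:GeneralizedFitting} to the centric radical $F^*(\L)\unlhd\L$, noting $F^*(\L)$ is centric radical in itself), and its components are exactly $\Comp(\L)$ (by Remark~\ref{R:SubnormalinSubgroupComponents}, since every component of $\L$ lies in $F^*(\L)$ and conversely any component of $F^*(\L)$ is subnormal in $\L$). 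So I would reduce to proving: if $(\L,\Delta,S)$ is a regular locality with $F^*(\L)=\L$, then $\L=E O_p(\L)$. Here $\M:=E O_p(\L)\unlhd\L$ is radical; if $\M\neq\L$ then, as in the proof of Lemma~\ref{L:ComponentsCharp}, $\M^\perp=\M^\perp_\L$ is not contained in $S$ (else $\M$ would be centric radical forcing $F^*(\L)\subseteq\M$, hence $\L=\M$), so $C_\L(\M)$ contains a partial subnormal subgroup that is not a $p$-group, and repeating the minimal-choice argument from Lemma~\ref{L:ComponentsCharp} produces a component $\K$ of $\L$ with $\K\subseteq C_\L(\M)$; but $\K\subseteq E\subseteq\M$, so $\K\subseteq C_\L(\K)$ gives $\K=O^p(\K)\leq Z(\K)\leq S$, contradicting quasisimplicity. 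Therefore $\M=\L$. The last equality $EO_p(\L)=O_p(\L)E$ is immediate from Theorem~\ref{T:ProductsPartialNormal}. The main obstacle I anticipate is the bookkeeping in the reduction "components of $F^*(\L)$ coincide with components of $\L$" and the verification that $E$ and $EO_p(\L)$ are genuinely partial normal in $\L$ (not merely in $F^*(\L)$), which needs the invariance-under-$N_\L(T)$ criterion; everything else is assembled from Theorems~\ref{T:GeneralizedFitting}, \ref{T:RegularPartialNormal} and \ref{T:ProductsPartialNormal} plus the already-proved lemmas on components.
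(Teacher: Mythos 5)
The heart of the statement is the claim that $\K\unlhd F^*(\L)$ for every component $\K$, and this is where your proposal has a real gap. Your base case $k=1$ (i.e.\ $\K\unlhd\L$) is fine: Lemma~\ref{L:QuasisimpleComponents} and Theorem~\ref{T:RegularPartialNormal}(d) give $\K=F^*(\K)=F^*(\L)\cap\K$, so $\K\subseteq F^*(\L)$, and $\K\unlhd\L$ then forces $\K\unlhd F^*(\L)$. But the inductive step---passing from $\K\unlhd F^*(\N)$ with $\N:=\N_{k-1}\unlhd\L$ to $\K\unlhd F^*(\L)$---is not supplied. You correctly observe that $\K\unlhd F^*(\N)\unlhd F^*(\L)$ gives $\K\subn F^*(\L)$, but then you offer only vague suggestions for ``upgrading'' subnormality to normality (``Lemma~\ref{L:ComponentIntersectNormal} and Lemma~\ref{L:QuasisimpleComponents}-style arguments''), none of which works, and your fallback ``reduce to showing components of $F^*(\L)$ are normal in $F^*(\L)$'' is circular within an induction on $k$. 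Your concluding remark that ``the base case $k=1$ is the only genuine content'' is therefore wrong: the inductive step is exactly where the content lies.

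What the paper does at this point (in a minimal-counterexample formulation rather than induction on subnormal length) is: from $\K\unlhd F^*(\N)$ and the factorization $F^*(\L)=F^*(\N)F^*(\N^\perp)O_p(\L)$ of Theorem~\ref{T:RegularPartialNormal}(d), first apply Lemma~\ref{L:Step4Seperate} to get $\K\unlhd F^*(\N)F^*(\N^\perp)$ (using that $F^*(\N)$ commutes with $F^*(\N^\perp)$), and then handle the remaining $O_p(\L)$-factor by a direct computation: writing $f=ms$ with $m\in F^*(\N)F^*(\N^\perp)$, $s\in O_p(\L)$ and $S_f=S_{(m,s)}$ (Lemma~\ref{L:NLSbiset}), one gets $k^f=(k^m)^s$ for $k\in\K$, and the crucial input is $O_p(\L)\subseteq C_\L(\K)$, which comes from Lemma~\ref{L:ComponentCentralizesOp} (giving $\K\subseteq C_\L(O_p(\L))$) together with Lemma~\ref{L:CentralizerHelp}. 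Your Part 1 never invokes Lemma~\ref{L:ComponentCentralizesOp}, and without it there is no visible route from $\K\subn F^*(\L)$ to $\K\unlhd F^*(\L)$: automorphisms of $F^*(\N)$ permute its components, so a straightforward invariance criterion does not apply, and $F^*(F^*(\L))=F^*(\L)$ blocks any reduction to $F^*(\L)$ in the case $F^*(\L)=\L$.

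By contrast, your Part 3 (the identity $F^*(\L)=\left(\prod_{\K\in\Comp(\L)}\K\right)O_p(\L)$, granting Part 1) is a genuine alternative to the paper's route and is correct: you reduce to $\L=F^*(\L)$, set $\M:=E O_p(\L)$, and if $\M\neq\L$ show $\M^\perp\not\subseteq S$ (else $\M$ would be centric radical, forcing $F^*(\L)\subseteq\M$) and extract, as in Lemma~\ref{L:ComponentsCharp}, a component inside $C_\L(\M)=\M^\perp$; this component lies in $E\subseteq\M$ and centralizes $\M$, giving $\K\leq Z(\K)\leq S$ and a contradiction. The paper instead runs a single minimal-counterexample argument, applying the inductive hypothesis to $\K^\perp$ and using $F^*(\L)=\K\,E(\K^\perp)\,O_p(\L)$. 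Both close Part 3; neither substitutes for the missing normality argument in Part 1.
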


\begin{proof}
Recall that $F^*(\L)$ is a regular locality. Hence, if every component of $\L$ is normal in $F^*(\L)$, it follows from \cite[Theorem~2]{Henke:2015a} that, for every $\mathfrak{C}\subseteq\Comp(\L)$, the product $\Pi_{\K\in\mathfrak{C}}\K$ is well-defined and a partial normal subgroup of $F^*(\L)$; moreover $\left(\prod_{\K\in\Comp(\L)}\K\right)O_p(\L)=O_p(\L)\left(\prod_{\K\in\Comp(\L)}\K\right)$. Thus, it remains to prove that every component of $\L$ is a partial normal subgroup of $F^*(\L)$ and that $F^*(\L)=\left(\prod_{\K\in\Comp(\L)}\K\right)O_p(\L)$. Let $\L$ be a minimal counterexample to that assertion. If $\Comp(\L)=\emptyset$, then $F^*(\L)=O_p(\L)$ by Lemma~\ref{L:ComponentsCharp} and $\L$ is thus not a counterexample. Hence, we have $\Comp(\L)\neq\emptyset$. Fix $\K\in\Comp(\L)$ and  choose $\N\unlhd\L$ minimal with $\K\subseteq\N$. 

\smallskip

Assume first that $\N=\L$. Then the minimality of $\N$ together with the fact that $\K$ is subnormal in $\L$ yields $\K=\L$. Therefore, Lemma~\ref{L:QuasisimpleComponents} yields $\Comp(\L)=\{\L\}$ and $F^*(\L)=\L$. This contradicts the assumption that $\L$ is a counterexample. We have thus $\N\neq \L$.

\smallskip

We show now first that $\K\unlhd F^*(\L)$. By Remark~\ref{R:SubnormalinSubgroupComponents}(a), $\K$ is component of $\N$. Hence, as $\L$ is a minimal counterexample, we have $\K\unlhd F^*(\N)$.  By Theorem~\ref{T:RegularPartialNormal}(d), we have $F^*(\L)=F^*(\N)F^*(\N^\perp)O_p(\L)$ and $F^*(\N)$, $F^*(\N^\perp)$ are partial normal subgroups of $\L$. Notice that $F^*(\N)$ commutes with $F^*(\N^\perp)$. Hence, $\K\unlhd F^*(\N)F^*(\N^\perp)$ by Lemma~\ref{L:Step4Seperate}. An element $f\in F^*(\L)$ can now be written as a product $f=ms$ with $m\in F^*(\N)F^*(\N^\perp)$ and $s\in O_p(\L)$. By Lemma~\ref{L:NLSbiset}, we have  $S_f=S_{(m,s)}$. If $k\in\K$ such that $(f^{-1},k,f)\in\D$, it follows that $u:=(s^{-1},m^{-1},k,m,s)\in\D$ via $S_{(f^{-1},k,f)}$ and $k^f=\Pi(u)=(k^m)^s$. Here $k^m\in\K$ as $\K\unlhd F^*(\N)F^*(\N^\perp)$. By Lemma~\ref{L:ComponentCentralizesOp} and Lemma~\ref{L:CentralizerHelp}, we have $s\in O_p(\L)\subseteq C_\L(\K)$. Hence, $k^f=k^m\in\K$ proving that $\K\unlhd F^*(\L)$.

\smallskip

As $\K$ was arbitrary, we have shown that every component of $\L$ is a partial normal subgroup of $F^*(\L)$. In particular,  Remark~\ref{R:SubnormalinSubgroupComponents} allows us to conclude that $\Comp(\L)=\Comp(F^*(\L))$. Notice also that $O_p(F^*(\L))=O_p(\L)$ by Theorem~\ref{T:RegularPartialNormal}(c). Hence, as $(\L,\Delta,S)$ is a minimal counterexample, it follows $\L=F^*(\L)$. In particular, $\K\unlhd\L$ and thus $\N=\K$. 

\smallskip

As $\K\neq Z(\K)$, we know that $\K$ is not contained in $\K^\perp=C_\L(\K)$ (cf. Theorem~\ref{T:RegularPartialNormal}(e)). In particular, $\K^\perp\neq\L$. As $\L$ is a minimal counterexample, it follows that $F^*(\K^\perp)$ is a product of the components of $\K^\perp$ and of $O_p(\K^\perp)$. To ease notation set $E(\K^\perp):=\prod_{\C\in\Comp(\K^\perp)}\C$ so that $F^*(\K^\perp)=E(\K^\perp)O_p(\K^\perp)$. By Lemma~\ref{L:Quasisimple}, we have $\K=F^*(\K)$. Hence, using  Theorem~\ref{T:RegularPartialNormal}(d), we conclude that 
\[F^*(\L)=\K F^*(\K^\perp)O_p(\L)=\K(E(\K^\perp)O_p(\K^\perp))O_p(\L).\]
By Theorem~\ref{T:RegularPartialNormal}(c) we have $O_p(\K^\perp)=O_p(\L)\cap\K^\perp\unlhd\L$. Therefore, all the factors in the latter product are partial normal subgroups of $\L=F^*(\L)$. Now \cite[Theorem~2(a)]{Henke:2015a} allows us to ``leave out brackets and put brackets'' as we wish. Hence,
\[\L=F^*(\L)=(\K E(\K^\perp))(O_p(\K^\perp)O_p(\L))=(\K E(\K^\perp))O_p(\L).\]
By Remark~\ref{R:SubnormalinSubgroupComponents}(b), we have $\{\K\}\cup\Comp(\K^\perp)\subseteq\Comp(\L)$. So it follows that  $\L=F^*(\L)\subseteq \left(\prod_{\C\in\Comp(\L)}\C\right)O_p(\L)$. As the other inclusion is trivial, $\L$ is not a counterexample contradicting our assumption.
\end{proof}

\begin{definition}\label{D:EL}
We set $E(\L):=\prod_{\K\in\Comp(\L)}\K$ and call $E(\L)$ the \emph{layer} of $\L$.
\end{definition}

Notice that $E(\L)$ is well-defined by Proposition~\ref{P:FstarLComponents1}. 

\begin{lemma}\label{L:LayerPartialNormalFstar}
The layer $E(\L)$ is a partial normal subgroup of $F^*(\L)$ with $F^*(\L)=E(\L)O_p(\L)=O_p(\L)E(\L)$ and $F^*(\L)\cap S=(E(\L)\cap S)O_p(\L)=O_p(\L)(E(\L)\cap S)$.
\end{lemma}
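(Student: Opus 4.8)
The statement collects three assertions about $E(\L)=\prod_{\K\in\Comp(\L)}\K$: that it is a partial normal subgroup of $F^*(\L)$, that $F^*(\L)=E(\L)O_p(\L)=O_p(\L)E(\L)$, and the corresponding identity intersected with $S$. The first two are essentially immediate from what has already been proved. Indeed, Proposition~\ref{P:FstarLComponents1} shows that every component of $\L$ is a partial normal subgroup of $F^*(\L)$, and that for every subset $\fC\subseteq\Comp(\L)$ the product $\prod_{\K\in\fC}\K$ is well-defined and a partial normal subgroup of $F^*(\L)$; applying this with $\fC=\Comp(\L)$ gives that $E(\L)\unlhd F^*(\L)$. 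The same proposition states the identity $F^*(\L)=\bigl(\prod_{\K\in\Comp(\L)}\K\bigr)O_p(\L)=O_p(\L)\bigl(\prod_{\K\in\Comp(\L)}\K\bigr)$, which by Definition~\ref{D:EL} is precisely $F^*(\L)=E(\L)O_p(\L)=O_p(\L)E(\L)$.

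For the statement about the intersection with $S$, I would argue as follows. We know $F^*(\L)\unlhd\L$ by Theorem~\ref{T:GeneralizedFitting}, and $E(\L)\unlhd F^*(\L)$; moreover $O_p(\L)\unlhd\L$ and $O_p(\L)\leq F^*(\L)$ since $F^*(\L)$ is radical in $\L$. Thus $E(\L)$ and $O_p(\L)$ are both partial normal subgroups of the locality $(F^*(\L),\delta(\E),S\cap F^*(\L))$, where $\E=\F_{S\cap F^*(\L)}(F^*(\L))$ — recall $F^*(\L)$ is a regular locality by Corollary~\ref{C:RegularSubnormal} (or Theorem~\ref{T:RegularPartialNormal}). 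Now Theorem~\ref{T:ProductsPartialNormal} applied inside $F^*(\L)$ to the two partial normal subgroups $E(\L)$ and $O_p(\L)$ yields $E(\L)O_p(\L)=O_p(\L)E(\L)$ together with
\[
\bigl(E(\L)O_p(\L)\bigr)\cap \bigl(S\cap F^*(\L)\bigr)=\bigl(E(\L)\cap S\cap F^*(\L)\bigr)\bigl(O_p(\L)\cap S\cap F^*(\L)\bigr).
\]
Since $E(\L)\subseteq F^*(\L)$ and $O_p(\L)\subseteq F^*(\L)$, the intersections with $S\cap F^*(\L)$ on the right are just $E(\L)\cap S$ and $O_p(\L)\cap S=O_p(\L)$ (as $O_p(\L)$ is a $p$-subgroup, it lies in $S$), and the left-hand side is $(E(\L)O_p(\L))\cap S=F^*(\L)\cap S$ by the identity $F^*(\L)=E(\L)O_p(\L)$ already established. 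This gives $F^*(\L)\cap S=(E(\L)\cap S)O_p(\L)$, and the symmetric statement $F^*(\L)\cap S=O_p(\L)(E(\L)\cap S)$ follows from $E(\L)O_p(\L)=O_p(\L)E(\L)$ (or by applying Theorem~\ref{T:ProductsPartialNormal} with the roles of the two partial normal subgroups swapped).

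I do not anticipate a serious obstacle here; the lemma is really a packaging of Proposition~\ref{P:FstarLComponents1} and Theorem~\ref{T:ProductsPartialNormal}. The only minor point requiring care is that Theorem~\ref{T:ProductsPartialNormal} is a statement about a locality and its partial normal subgroups, so one must be explicit that it is being applied to the regular locality structure on $F^*(\L)$ (whose maximal $p$-subgroup is $S\cap F^*(\L)$, not $S$), and then observe that because both $E(\L)$ and $O_p(\L)$ are contained in $F^*(\L)$, intersecting with $S\cap F^*(\L)$ is the same as intersecting with $S$ for these subgroups. Everything else is bookkeeping.
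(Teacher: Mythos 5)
Your proof is correct and follows exactly the same route as the paper: it extracts the first two claims from Proposition~\ref{P:FstarLComponents1} and then applies Theorem~\ref{T:ProductsPartialNormal} to the regular locality $F^*(\L)$ with the two partial normal subgroups $E(\L)$ and $O_p(\L)$. The paper's proof is just terser; your extra bookkeeping about intersecting with $S$ versus $S\cap F^*(\L)$ and about $O_p(\L)\leq S$ is correct and fills in exactly the details the paper leaves implicit.
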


\begin{proof}
It was shown in Proposition~\ref{P:FstarLComponents1} that $E(\L)$ is a partial normal subgroup of $F^*(\L)$ with $F^*(\L)=E(\L)O_p(\L)=O_p(\L)E(\L)$. Hence, it follows from Theorem~\ref{T:ProductsPartialNormal} applied with $F^*(\L)$ in place of $\L$ that $F^*(\L)\cap S=(E(\L)\cap S)O_p(\L)=O_p(\L)(E(\L)\cap S)$.
\end{proof}

\begin{corollary}\label{C:ELcapSindeltaF}
Every subgroup of $S$ containing $E(\L)\cap S$ is an element of $\Delta=\delta(\F)$.  
\end{corollary}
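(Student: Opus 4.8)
The plan is to derive Corollary~\ref{C:ELcapSindeltaF} directly from Corollary~\ref{C:FstarcapSindeltaF} together with Lemma~\ref{L:LayerPartialNormalFstar}. Recall that Corollary~\ref{C:FstarcapSindeltaF} asserts that, for any linking locality $(\L,\Delta,S)$ over $\F$, every subgroup of $S$ containing $F^*(\L)\cap S$ lies in $\delta(\F)$. Since $(\L,\Delta,S)$ is in particular a linking locality, this applies here and $\Delta=\delta(\F)$ by the definition of a regular locality.

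First I would invoke Lemma~\ref{L:LayerPartialNormalFstar}, which gives $F^*(\L)\cap S=(E(\L)\cap S)O_p(\L)$. In particular $E(\L)\cap S\leq F^*(\L)\cap S$, so $F^*(\L)\cap S$ is itself one of the subgroups of $S$ containing $E(\L)\cap S$, and more importantly, any subgroup $P\leq S$ with $E(\L)\cap S\leq P$ need not a priori contain $F^*(\L)\cap S$ — so a tiny bit of care is needed. Actually the cleanest route is: if $E(\L)\cap S\leq P\leq S$, then $P O_p(\L)$ contains $(E(\L)\cap S)O_p(\L)=F^*(\L)\cap S$, hence $PO_p(\L)\in\delta(\F)$ by Corollary~\ref{C:FstarcapSindeltaF}. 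Then, since $O_p(\L)\unlhd\L$, Lemma~\ref{L:deltaFtimesNormal} yields that $P\in\delta(\F)$ if and only if $PO_p(\L)\in\delta(\F)$, giving $P\in\delta(\F)=\Delta$ as desired.

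There is essentially no obstacle here; the statement is a short bookkeeping consequence of results already in place. The only point worth spelling out is the passage from $PO_p(\L)\in\delta(\F)$ back to $P\in\delta(\F)$, which is exactly what Lemma~\ref{L:deltaFtimesNormal} provides (applied with $R=O_p(\L)$, noting $O_p(\L)\unlhd\L$). I would write the proof in two or three sentences along these lines.

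\begin{proof}
Let $P\leq S$ with $E(\L)\cap S\leq P$. Then $PO_p(\L)\supseteq (E(\L)\cap S)O_p(\L)=F^*(\L)\cap S$ by Lemma~\ref{L:LayerPartialNormalFstar}. Hence $PO_p(\L)\in\delta(\F)$ by Corollary~\ref{C:FstarcapSindeltaF}. Since $O_p(\L)\unlhd\L$, Lemma~\ref{L:deltaFtimesNormal} gives $P\in\delta(\F)=\Delta$.
\end{proof}
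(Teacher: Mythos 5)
Your proof is correct and matches the paper's own argument essentially verbatim: both pass from $E(\L)\cap S\leq P$ to $F^*(\L)\cap S\leq PO_p(\L)$ via Lemma~\ref{L:LayerPartialNormalFstar}, apply Corollary~\ref{C:FstarcapSindeltaF} to get $PO_p(\L)\in\delta(\F)$, and then strip off $O_p(\L)$ using Lemma~\ref{L:deltaFtimesNormal}.
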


\begin{proof}
Let $P\leq S$ with $E(\L)\cap S\leq P$. It follows from the last part of Lemma~\ref{L:LayerPartialNormalFstar} that $F^*(\L)\cap S\leq PO_p(\L)$ and thus $PO_p(\L)\in\delta(\F)$ by Corollary~\ref{C:FstarcapSindeltaF}. Therefore Lemma~\ref{L:deltaFtimesNormal} yields $P\in\delta(\F)$. 
\end{proof}

Notice that the statement of the following lemma makes sense by Proposition~\ref{P:FstarLComponents1}.

\begin{lemma}\label{L:OupperpEL}
If $\fC\subseteq\Comp(\L)$ and $\H:=\prod_{\K\in\fC}\K$, then $O^p(\H)=\H=O^{p^\prime}(\H)$. In particular, $O^p(F^*(\L))=O^p(E(\L))=E(\L)$.
\end{lemma}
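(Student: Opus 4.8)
\textbf{Proof plan for Lemma~\ref{L:OupperpEL}.}

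The plan is to reduce everything to the quasisimple case via the multiplicativity of the residuals over central products. First I would observe that by Proposition~\ref{P:FstarLComponents1}, for $\fC\subseteq\Comp(\L)$ the product $\H:=\prod_{\K\in\fC}\K$ is well-defined and a partial normal subgroup of $F^*(\L)$; moreover, since any two distinct components of $\L$ commute (this follows from Theorem~\ref{T:mainComponents}(a), proved earlier as part of the component theory, or can be extracted from the central product statement in Proposition~\ref{P:FstarLComponents1} together with Lemma~\ref{L:CentralProductsFactorsCentralize}), the partial subgroup $\H$ is in fact an internal central product of the elements of $\fC$. Thus by Lemma~\ref{L:CentralProductAssociative} it suffices to treat the case $\H=\K_1*\K_2$ with $\K_1\in\Comp(\L)$ and $\K_2$ a central product of the remaining components, and then induct on $|\fC|$.

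Next I would set up the induction. For the base case $\fC=\emptyset$ we have $\H=\{\One\}$ and there is nothing to prove. For $|\fC|=1$, say $\H=\K$ with $\K\in\Comp(\L)$, the statement $O^p(\K)=\K$ is immediate from the definition of quasisimple locality (Definition~\ref{D:SimpleQuasisimple}), and $O^{p^\prime}(\K)=\K$ is exactly the statement $O^{p^\prime}(\L)=\L$ from Lemma~\ref{L:Quasisimple} applied to the regular locality $\K$ (which is legitimate since $\K$, being partial subnormal in $\L$, is itself a regular locality by Corollary~\ref{C:RegularSubnormal}). For the inductive step, write $\H=\K_1\H'$ where $\K_1\in\fC$ and $\H'=\prod_{\K\in\fC\setminus\{\K_1\}}\K$, an internal central product; then $\K_1$ commutes with $\H'$. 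I would apply Theorem~\ref{T:RegularN1timesN2}(b): since $\K_1$ commutes with $\H'$ and $\H=\K_1\H'$ is centric in the regular locality $\H$ (it equals $\H$, which is visibly centric in itself), the theorem gives $F^*(\H)=F^*(\K_1)F^*(\H')$, hence $F^*(\H)=\K_1 F^*(\H')$ by Lemma~\ref{L:QuasisimpleComponents}. More directly, I would prove multiplicativity of $O^p$ and $O^{p^\prime}$ over a central product of partial normal subgroups: if $\N_1$ commutes with $\N_2$ and $\N_1\N_2$ is centric, then $O^p(\N_1\N_2)=O^p(\N_1)O^p(\N_2)$ and similarly for $O^{p^\prime}$. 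This can be checked using Lemma~\ref{L:ResidueEquivalences} (characterizing $p$-power index via the $p$-group image under the natural projection) together with the fact that $\N_1\N_2/(O^p(\N_1)O^p(\N_2))$ is a central product of the $p$-groups $\N_i/O^p(\N_i)$ — or, for $O^{p^\prime}$, directly from $(\N_1\N_2)\cap S=(\N_1\cap S)(\N_2\cap S)\subseteq O^{p^\prime}(\N_1)O^{p^\prime}(\N_2)$ (Theorem~\ref{T:ProductsPartialNormal}) and the fact that $O^{p^\prime}(\N_1)O^{p^\prime}(\N_2)$ is a partial normal subgroup of $\N_1\N_2$ containing $(\N_1\N_2)\cap S$. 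Combining multiplicativity with the inductive hypothesis $O^p(\H')=\H'=O^{p^\prime}(\H')$ and the base case $O^p(\K_1)=\K_1=O^{p^\prime}(\K_1)$ yields $O^p(\H)=\K_1\H'=\H$ and $O^{p^\prime}(\H)=\K_1\H'=\H$, completing the induction.

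Finally, the "in particular" clause: taking $\fC=\Comp(\L)$ gives $\H=E(\L)$ by Definition~\ref{D:EL}, so $O^p(E(\L))=E(\L)=O^{p^\prime}(E(\L))$. For $O^p(F^*(\L))=O^p(E(\L))$, recall from Lemma~\ref{L:LayerPartialNormalFstar} that $F^*(\L)=E(\L)O_p(\L)$ with $E(\L)\unlhd F^*(\L)$; since $O_p(\L)\leq S\cap F^*(\L)$ is a $p$-group, $E(\L)$ has $p$-power index in $F^*(\L)$, i.e.\ $E(\L)\in\mathbb{K}_{F^*(\L)}$, so $O^p(F^*(\L))\subseteq E(\L)$; conversely $O^p(E(\L))\subseteq O^p(F^*(\L))$ by Lemma~\ref{L:ResidueNinM} (or Lemma~\ref{L:ResidueSubnormalContainment}), and $O^p(E(\L))=E(\L)$, giving equality throughout. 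The main obstacle I anticipate is establishing the multiplicativity of the residuals over a central product cleanly — in particular making sure the relevant products of partial normal subgroups really are partial normal (handled by Theorem~\ref{T:ProductsPartialNormal} and Lemma~\ref{L:Step4Seperate}) and that the quotient computations go through; but given the machinery of Chapter~\ref{S:Residues} and Theorem~\ref{T:RegularN1timesN2}, this should be routine rather than deep.
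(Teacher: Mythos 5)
Your plan is correct, but it is a longer route than the paper takes for the main part of the statement, and it imports machinery that is not needed. The paper's proof is a one-step argument that avoids both the induction on $|\fC|$ and any multiplicativity statement: by Proposition~\ref{P:FstarLComponents1} each $\K\in\fC$ is a partial normal subgroup of $F^*(\L)$ contained in $\H$, hence subnormal in $\H$ by Lemma~\ref{L:SubnormalinSubgroup}(a); by Lemma~\ref{L:Quasisimple} it satisfies $O^p(\K)=\K=O^{p^\prime}(\K)$; so Lemma~\ref{L:ResidueSubnormalContainment} gives $\K\subseteq O^p(\H)\cap O^{p^\prime}(\H)$ for every $\K\in\fC$; and since $O^p(\H)$ and $O^{p^\prime}(\H)$ are partial subgroups and $\H$ is the product of the $\K$, this already yields $\H\subseteq O^p(\H)$ and $\H\subseteq O^{p^\prime}(\H)$. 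Your induction and multiplicativity argument accomplishes the same thing but pays extra: the equality $O^p(\N_1\N_2)=O^p(\N_1)O^p(\N_2)$ is more than you need (for your induction only the containment $O^p(\N_1)O^p(\N_2)\subseteq O^p(\N_1\N_2)$ is used, and that is exactly the content of Lemma~\ref{L:ResidueSubnormalContainment}), and the harder inclusion $\subseteq$ requires first establishing that $O^p(\N_1)O^p(\N_2)$ is a partial normal subgroup of $\N_1\N_2$ with the right index, which needs Lemma~\ref{L:Step4Seperate}, Theorem~\ref{T:ProductsPartialNormal}, and an argument that the quotient is a $p$-group. The detour through Theorem~\ref{T:RegularN1timesN2}(b) is also a red herring since that result concerns $O_p$ and $F^*$, not $O^p$ or $O^{p^\prime}$. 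For the ``in particular'' clause your argument (via Lemma~\ref{L:LayerPartialNormalFstar}, $p$-power index, and Lemma~\ref{L:ResidueSubnormalContainment}) coincides with the paper's. In short: correct, but if you notice that Lemma~\ref{L:ResidueSubnormalContainment} lets you handle all components at once, the induction and the multiplicativity lemma collapse to three lines.
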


\begin{proof}
By the definition of components and Lemma~\ref{L:Quasisimple}, we have $\K=O^p(\K)=O^{p^\prime}(\K)$. Hence, Lemma~\ref{L:ResidueSubnormalContainment} yields $\K\subseteq O^p(\H)\cap O^{p^\prime}(\H)$ for every $\K\in\fC$. As $O^p(\H)$ and $O^{p^\prime}(\H)$ are partial subgroups, $\H$ is thus contained in $O^p(\H)$ and $O^{p^\prime}(\H)$ proving $\H=O^p(\H)=O^{p^\prime}(\H)$. In particular, $E(\L)=O^p(E(\L))$. By Lemma~\ref{L:LayerPartialNormalFstar}, $E(\L)$ has $p$-power index in $F^*(\L)$. So $O^p(F^*(\L))\subseteq E(\L)=O^p(E(\L))\subseteq O^p(F^*(\L))$, where we use again Lemma~\ref{L:ResidueSubnormalContainment} for the last inclusion. This implies the assertion. 
\end{proof}

\begin{lemma}\label{L:AutComponents}
If $(\tL,\tDelta,\tS)$ is a regular locality and $\alpha\in\Iso(\L,\tL)$, then $E(\L)\alpha=E(\tL)$ and $\alpha$ induces a bijection 
\[\Comp(\L)\rightarrow\Comp(\tL),\K\mapsto\K\alpha.\] 
In particular, the automorphism group $\Aut(\L)$ acts on the set of components of $\L$ and stabilizes $E(\L)$.
\end{lemma}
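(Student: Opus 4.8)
\textbf{Proof plan for Lemma~\ref{L:AutComponents}.}

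The plan is to reduce everything to functoriality of the constructions involved. The key observation is that an isomorphism $\alpha\in\Iso(\L,\tL)$ of regular localities induces an inclusion-preserving bijection between the sets of partial subgroups of $\L$ and $\tL$ which sends partial normal subgroups to partial normal subgroups and partial subnormal subgroups to partial subnormal subgroups (this is immediate from the definitions, since $\alpha$ carries subnormal series to subnormal series). Moreover, for any partial subnormal subgroup $\H\subn\L$ we have, by Corollary~\ref{C:RegularSubnormal}, that $\F_{S\cap\H}(\H)$ is saturated and $(\H,\delta(\F_{S\cap\H}(\H)),S\cap\H)$ is a regular locality; I would first check that $\alpha|_\H$ is an isomorphism from this regular locality onto the corresponding regular locality structure on $\H\alpha$. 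Here one combines the fact that $\alpha$ maps $S$ to $\tS$ (so $S\cap\H$ maps to $\tS\cap\H\alpha$), that $\alpha|_S$ induces an isomorphism of fusion systems $\F_{S\cap\H}(\H)\to\F_{\tS\cap\H\alpha}(\H\alpha)$, and Lemma~\ref{L:IsoSubcentricIsoRegular} (or more simply a direct argument, since $\alpha$ clearly preserves the object sets once it preserves the fusion systems and the generalized Fitting subgroups by Lemma~\ref{L:AutGeneralizedFitting}).

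Granting this, the next step is to show $\alpha$ preserves the property of being quasisimple: if $\K\subn\L$ is quasisimple, then by Lemma~\ref{L:AutResidue} (applied to $\alpha|_\K$, which is an isomorphism of the regular localities $\K$ and $\K\alpha$) we get $O^p(\K\alpha)=O^p(\K)\alpha=\K\alpha$, and since $\alpha$ induces an isomorphism $\K/Z(\K)\to\K\alpha/Z(\K\alpha)$ of partial groups (note $Z(\K)\alpha=Z(\K\alpha)$ because $\alpha$ is an isomorphism of partial groups) and simplicity is manifestly preserved under isomorphism, $\K\alpha$ is quasisimple. Combined with the fact that $\alpha$ carries partial subnormal subgroups of $\L$ bijectively onto partial subnormal subgroups of $\tL$, this shows that $\K\mapsto\K\alpha$ restricts to a bijection $\Comp(\L)\to\Comp(\tL)$; applying the same argument to $\alpha^{-1}$ gives the inverse.

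Finally, $E(\L)\alpha=E(\tL)$ follows: since $E(\L)=\prod_{\K\in\Comp(\L)}\K$ is well-defined by Proposition~\ref{P:FstarLComponents1} and $\alpha$ is a homomorphism of partial groups mapping $\Comp(\L)$ bijectively onto $\Comp(\tL)$, we have $E(\L)\alpha=\left(\prod_{\K\in\Comp(\L)}\K\right)\alpha=\prod_{\tK\in\Comp(\tL)}\tK=E(\tL)$, where the middle equality uses that a homomorphism of partial groups maps a well-defined product of partial subgroups onto the product of their images. The "in particular" statement is the special case $\tL=\L$: for $\alpha\in\Aut(\L)$ the bijection $\Comp(\L)\to\Comp(\L)$ is a permutation, giving the action, and $E(\L)\alpha=E(\L)$ gives the stabilization. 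I do not expect a serious obstacle here; the only point requiring a little care is the verification that $\alpha|_\H$ is genuinely an isomorphism of the canonical regular locality structures on $\H$ and $\H\alpha$ (rather than just a bijection of underlying sets), and for this one leans on Lemma~\ref{L:IsoSubcentricIsoRegular}, Lemma~\ref{L:AutGeneralizedFitting}, and \cite[Lemma~2.21(b)]{Henke:2020} exactly as in the proofs of Lemma~\ref{L:AutGeneralizedFitting} and Lemma~\ref{L:deltaFDefinition}.
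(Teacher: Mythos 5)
Your proposal follows essentially the same route as the paper's own proof: observe that $\alpha$ carries a subnormal series of $\K$ to one of $\K\alpha$, use Lemma~\ref{L:AutResidue} (together with Lemma~\ref{L:ResidueRegular} to identify $O^p(\K)$ with $O^p_\K(\K)$) to get $O^p(\K\alpha)=\K\alpha$, note $Z(\K)\alpha=Z(\K\alpha)$ and pass to the quotient to see that $\K\alpha/Z(\K\alpha)$ is simple (the paper makes this last step precise by composing $\alpha|_\K$ with the natural projection and invoking \cite[Theorem~4.6]{Chermak:2015}), so $\K\alpha$ is quasisimple; the bijection then comes from the symmetric argument applied to $\alpha^{-1}$, and $E(\L)\alpha=E(\tL)$ drops out because an isomorphism of partial groups carries a product of partial subgroups onto the product of the images.

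Two small points of imprecision worth tidying. First, your preliminary step about $\alpha|_\H$ being an isomorphism of the canonical regular locality structures is stated under the tacit assumption that $S\alpha=\tS$, but $\alpha\in\Iso(\L,\tL)$ is only an isomorphism of partial groups; you should either restrict to the case $S\alpha=\tS$ and then dispose of the general case by factoring $\alpha$ through an inner automorphism (exactly as is done in the proof of Lemma~\ref{L:AutResidue}), or simply cite Lemma~\ref{L:AutResidue} as the paper does and skip the general discussion of $\alpha|_\H$ altogether, since the only case needed is $\H=\K$ quasisimple. Second, the step "$\alpha$ induces an isomorphism $\K/Z(\K)\to\K\alpha/Z(\K\alpha)$" needs the first isomorphism theorem for partial groups \cite[Theorem~4.6]{Chermak:2015}; it is not automatic that a partial-group homomorphism with the right kernel factors through the quotient as an isomorphism, so this citation should be made explicit. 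Neither issue is a genuine gap, just a matter of pinning down the references.
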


\begin{proof}
Fix $\K\in\Comp(\L)$. A subnormal series for $\K$ in $\L$ gets mapped to a subnormal series for $\K\alpha$ in $\tL$, so $\K\alpha$ is subnormal in $\tL$. Notice also that $\alpha$ induces an isomorphism from $\K$ to $\K\alpha$. Hence, by Lemma~\ref{L:AutResidue} and Lemma~\ref{L:ResidueRegular}, we have $\K\alpha=O^p(\K)\alpha=O^p_\K(\K)\alpha=O^p_{\K\alpha}(\K\alpha)=O^p(\K\alpha)$. Observe also that $Z(\K)\alpha=Z(\K\alpha)$. Hence, if $\pi\colon \K\alpha\rightarrow \K\alpha/Z(\K\alpha)$ is the natural projection and $\beta=(\alpha|_\K)\pi$, then $\beta\colon\K\rightarrow \K\alpha/Z(\K\alpha)$ is a projection with kernel $\ker(\beta)=Z(\K)$. Thus, by \cite[Theorem~4.6]{Chermak:2015}, there exists an isomorphism from $\K/Z(\K)$ to $\K\alpha/Z(\K\alpha)$. Hence, $\K\alpha/Z(\K\alpha)$ is simple and $\K\alpha$ is quasisimple. This proves $\K\alpha\in\Comp(\tL)$. So the map $F\colon\Comp(\L)\rightarrow\Comp(\tL),\K\mapsto\K\alpha$ is well-defined. A symmetric argument shows that $G\colon\Comp(\tL)\rightarrow\Comp(\L),\K\mapsto\K\alpha^{-1}$ is well-defined. Now $F$ and $G$ are mutually inverse maps and $F$ must thus be bijective. In particular, $E(\L)\alpha=E(\tL)$ and the assertion follows. 
\end{proof}

\begin{lemma}\label{L:ELnormal}
We have $E(\L)\unlhd \L$ and, more generally, $E(\N)\unlhd\L$ for every $\N\unlhd\L$. Moreover, $N_\L(F^*(\L)\cap S)$ is a finite group inducing automorphisms of $F^*(\L)$ and acting on the set of components of $\L$ by conjugation. 
\end{lemma}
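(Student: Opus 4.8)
\textbf{Proof plan for Lemma~\ref{L:ELnormal}.}

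The plan is to prove the statement in three stages, building up from the normality of $E(\L)$ in $\L$ to the more general assertion about $E(\N)$, and finally to the statement about $N_\L(F^*(\L)\cap S)$. First I would show $E(\L)\unlhd\L$. We already know from Proposition~\ref{P:FstarLComponents1} that $E(\L)$ is a partial normal subgroup of $F^*(\L)$, so it suffices to see that $E(\L)$ is invariant under conjugation by arbitrary elements of $\L$. The cleanest route is via Lemma~\ref{L:AutComponents}: conjugation by any $f\in N_\L(F^*(\L)\cap S)$ (once we establish that $N_\L(F^*(\L)\cap S)$ induces automorphisms of $F^*(\L)$ — see below) permutes $\Comp(\L)$ and hence stabilizes $E(\L)$. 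Alternatively, and perhaps more self-containedly at this point, one argues directly: each component $\K\in\Comp(\L)$ satisfies $O^p(\K)=\K$, and by Lemma~\ref{L:ComponentCentralizesOp} we have $\K\subseteq C_\L(O_p(\L))=O_p(\L)^\perp$. Writing an arbitrary $f\in\L$ and decomposing it through the normalizer of a suitable object one reduces the conjugation of an element $k\in\K$ to conjugation inside $F^*(\L)$ up to a factor in $O_p(\L)$ which centralizes $\K$; since $\K\unlhd F^*(\L)$ by Proposition~\ref{P:FstarLComponents1} this shows $k^f\in\K$. (This is exactly the pattern of argument carried out in the proof of Proposition~\ref{P:FstarLComponents1} for the case $\N=\L$.) Thus $\K\unlhd\L$ for every $\K\in\Comp(\L)$, and consequently $E(\L)=\prod_{\K\in\Comp(\L)}\K\unlhd\L$ by Theorem~\ref{T:ProductsPartialNormal} (applied iteratively, using that a product of partial normal subgroups of $\L$ is again partial normal).

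Next I would deduce $E(\N)\unlhd\L$ for every $\N\unlhd\L$. By Theorem~\ref{T:RegularPartialNormal}(a), $\N$ is itself a regular locality (over $\E:=\F_{S\cap\N}(\N)$), so $E(\N)$ is defined and, by the first stage applied to $\N$ in place of $\L$, $E(\N)\unlhd\N$. To promote this to normality in $\L$, I would invoke Lemma~\ref{L:CharacteristicNormalIntersection}: that lemma says a subset of $\N$ which is a partial normal subgroup of $\N$ and is invariant under $\Aut(\N,\delta(\E),S\cap\N)$ is automatically partial normal in $\L$. So it remains to check that $E(\N)$ is $\Aut(\N,\delta(\E),S\cap\N)$-invariant, and this is precisely Lemma~\ref{L:AutComponents} applied to $\N$ (every automorphism of the regular locality $\N$ permutes $\Comp(\N)$ and hence fixes $E(\N)=\prod_{\K\in\Comp(\N)}\K$). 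Therefore $E(\N)\unlhd\L$.

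Finally, for the statement about $G:=N_\L(F^*(\L)\cap S)$: since $F^*(\L)\cap S\in\Delta$ — this follows from Corollary~\ref{C:FstarcapSindeltaF} (every overgroup of $F^*(\L)\cap S$, in particular $F^*(\L)\cap S$ itself, lies in $\delta(\F)=\Delta$) — and $(\L,\Delta,S)$ is a locality, $G=N_\L(F^*(\L)\cap S)$ is a finite group. I would then observe that $F^*(\L)\cap S$ is strongly closed in $\F$ (being $T^*:=S\cap\N$ for the partial normal subgroup $\N=F^*(\L)$, cf.\ \cite[Lemma~3.1(a)]{Chermak:2015}), so Lemma~\ref{L:NLTAutL}(b) applies with $T=F^*(\L)\cap S$: the hypothesis \eqref{E:ObjectCondition} holds because for $P\in\Delta$ with $O_p(\L)\leq P$ we have, using Lemma~\ref{L:deltaFtimesNormal} and the Dedekind Lemma, $P\cap(F^*(\L)\cap S)=(P\cap F^*(\L))\cap S$ — more directly, since $F^*(\L)$ is centric radical, this intersection lands in $\delta(\F)$ by the argument of Lemma~\ref{L:RegularConjAutomorphisms}(a) combined with Lemma~\ref{L:GetCentricRadintodeltaF}. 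Granting this, Lemma~\ref{L:NLTAutL}(b),(c) gives that every $c_f$ with $f\in G$ is an automorphism of $(\L,\Delta,S)$ which restricts to an automorphism of the partial normal subgroup $F^*(\L)$ (using that $F^*(\L)\unlhd\L$). By Lemma~\ref{L:AutComponents} such an automorphism permutes $\Comp(\L)$, so $G$ acts on the set of components by conjugation. I expect the main obstacle to be the bookkeeping in verifying hypothesis \eqref{E:ObjectCondition} of Lemma~\ref{L:NLTAutL} for $T=F^*(\L)\cap S$ (i.e.\ that $P\cap(F^*(\L)\cap S)\in\Delta$ whenever $O_p(\L)\leq P\in\Delta$); everything else is a routine assembly of the cited results. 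If that verification turns out to be delicate, the fallback is to use Lemma~\ref{L:RegularConjAutomorphisms} applied to the centric partial normal subgroup $F^*(\L)$, whose part (a) gives exactly the needed object-set compatibility, and whose part (c) directly yields that $N_\L(F^*(\L)\cap S)$ induces automorphisms of $\L$ and of $F^*(\L)$.
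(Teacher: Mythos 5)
Your overall strategy matches the paper's: establish that $N_\L(F^*(\L)\cap S)$ is a group acting on $\Comp(\L)$ via Corollary~\ref{C:FstarcapSindeltaF}, Lemma~\ref{L:RegularConjAutomorphisms}(c) and Lemma~\ref{L:AutComponents}, and then deduce the normality statements. Your ``fallback'' for the last stage is precisely the paper's proof, and your use of Lemma~\ref{L:CharacteristicNormalIntersection} for the second stage is a valid repackaging of the paper's appeal to \cite[Corollary~3.13]{Chermak:2015} (that lemma is proved from Corollary~3.13 together with Theorem~\ref{T:RegularPartialNormal}(f), and applying it to the singleton $\mathbb{K}=\{E(\N)\}$ amounts to the same thing).

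However, your ``alternative, more self-contained'' argument in the first stage contains a genuine error. You claim that each component $\K\in\Comp(\L)$ satisfies $\K\unlhd\L$, reasoning that conjugation by an arbitrary $f\in\L$ reduces to conjugation inside $F^*(\L)$ up to a factor in $O_p(\L)$. This fails because a general $f\in\L$ does not decompose as an element of $F^*(\L)$ times an element of $O_p(\L)$: the subgroup $O_p(\L)$ is already contained in $F^*(\L)$, so $F^*(\L)O_p(\L)=F^*(\L)\neq\L$ in general. The decomposition one actually has (Frattini/Splitting Lemma with $T^*:=F^*(\L)\cap S$) is $f=nh$ with $n\in F^*(\L)$ and $h\in N_\L(T^*)$; conjugation by $n$ stabilizes $\K$ because $\K\unlhd F^*(\L)$, but conjugation by $h$ only \emph{permutes} the set $\Comp(\L)$ and need not fix $\K$. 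So $k^f$ lands in some component, not necessarily $\K$ — exactly as for components of finite groups, which are subnormal but generally not normal. What \emph{is} invariant under the full $N_\L(T^*)$-action is the product $E(\L)$ over all components, and that is what you must carry through Corollary~3.13 to get $E(\L)\unlhd\L$. The proof of Proposition~\ref{P:FstarLComponents1} that you cite shows only $\K\unlhd F^*(\L)$ (and $\K\unlhd\L$ only in the special case $\L=F^*(\L)$ arising there), not $\K\unlhd\L$ in general.

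With that detour excised and the first (``cleanest'') route filled in by an explicit appeal to \cite[Corollary~3.13]{Chermak:2015} or Lemma~\ref{L:CharacteristicNormalIntersection}, your proof is correct and follows the paper closely.
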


\begin{proof}
By Remark~\ref{R:SubnormalinSubgroupComponents} and Proposition~\ref{P:FstarLComponents1}, we have  $\Comp(\L)=\Comp(F^*(\L))$. Recall that $F^*(\L)$ is centric by Theorem~\ref{T:GeneralizedFitting}, so Lemma~\ref{L:RegularConjAutomorphisms}(c) yields that every element of $N_\L(F^*(\L)\cap S)$ induces an automorphism of $F^*(\L)$ by conjugation. Hence, by Lemma~\ref{L:AutComponents}, $N_\L(F^*(\L)\cap S)$ acts on $\Comp(F^*(\L))=\Comp(\L)$. If $\N\unlhd\L$, then $\Comp(\N)\subseteq\Comp(\L)$, so $E(\N)\unlhd F^*(\L)$ by Proposition~\ref{P:FstarLComponents1}. Notice that $N_\L(F^*(\L)\cap S)$ acts on $\Comp(\N)$ and thus on $E(\N)$ as $\N\unlhd\L$. Hence, $E(\N)\unlhd\L$ by \cite[Corollary~3.13]{Chermak:2015}. In particular $E(\L)\unlhd\L$.
\end{proof}

\begin{lemma}\label{L:CompOupperP}
Let $\N$ be a partial normal subgroup of $\L$ of $p$-power index or of index prime to $p$ in $\L$. Then $\Comp(\L)=\Comp(\N)$ and $E(\L)=E(\N)$.
\end{lemma}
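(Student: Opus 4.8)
The plan is to reduce everything to the single claim that every component of $\L$ is already contained in $\N$; once that is known, both equalities drop out of the bookkeeping already set up in Remark~\ref{R:SubnormalinSubgroupComponents} and Proposition~\ref{P:FstarLComponents1}. To begin, note that $\N\unlhd\L$, so $\N$ is a regular locality by Corollary~\ref{C:RegularSubnormal} (hence $\Comp(\N)$ and $E(\N)$ make sense), and $\N\subn\L$ gives $\Comp(\N)\subseteq\Comp(\L)$ by Remark~\ref{R:SubnormalinSubgroupComponents}(b). Thus it remains to prove the reverse inclusion $\Comp(\L)\subseteq\Comp(\N)$.

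So fix $\K\in\Comp(\L)$, i.e. $\K\subn\L$ with $\K$ quasisimple. By the definition of quasisimple we have $\K=O^p(\K)$, and by Lemma~\ref{L:Quasisimple} (applied to $\K$) we also have $\K=O^{p^\prime}(\K)$. Since $\K\subn\L$, Lemma~\ref{L:ResidueSubnormalContainment} applied with $\H:=\L$ yields $O^p(\K)\subseteq O^p(\L)$ and $O^{p^\prime}(\K)\subseteq O^{p^\prime}(\L)$. Now I would split into the two hypotheses: if $\N$ has $p$-power index in $\L$, then $\N\in\mathbb{K}_\L$, i.e. $S\N=\L$, so by the definition of $O^p(\L)=\bigcap\mathbb{K}_\L$ we get $\K=O^p(\K)\subseteq O^p(\L)\subseteq\N$; if instead $\N$ has index prime to $p$ in $\L$, then $\N\in\mathbb{K}_\L^\prime$, i.e. $S\subseteq\N$, so $\K=O^{p^\prime}(\K)\subseteq O^{p^\prime}(\L)\subseteq\N$ (alternatively in this case one may just note $\K\cap S\subseteq S\subseteq\N$ and invoke Lemma~\ref{L:ComponentIntersectNormal}). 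In either case $\K\subseteq\N$, and since $\K\subn\L$ with $\K\subseteq\N\subn\L$, Remark~\ref{R:SubnormalinSubgroupComponents}(a) gives $\K\in\Comp(\N)$. This proves $\Comp(\L)=\Comp(\N)$.

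For the layer, both $E(\L)=\prod_{\K\in\Comp(\L)}\K$ and $E(\N)=\prod_{\K\in\Comp(\N)}\K$ are well-defined products by Proposition~\ref{P:FstarLComponents1} (applied to $\L$, respectively to $\N$). The only point needing a line of justification is that the product defining $E(\N)$ does not change if it is formed inside $\L$ rather than inside $\N$: since every component $\K_i$ is a subset of $\N$, a word $(x_1,\dots,x_n)$ with $x_i\in\K_i$ automatically lies in $\W(\N)$, so the defining condition $(x_1,\dots,x_n)\in\D$ is the same whether read in $\L$ or in $\N$. Together with $\Comp(\L)=\Comp(\N)$ this gives $E(\L)=E(\N)$. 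There is no real obstacle in this lemma beyond this routine check; all the weight is carried by Lemma~\ref{L:ResidueSubnormalContainment} and the description of $O^p(\L)$, $O^{p^\prime}(\L)$ as intersections.
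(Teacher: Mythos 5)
Your proof is correct and follows essentially the same route as the paper: show $\K\subseteq\N$ for each $\K\in\Comp(\L)$ and conclude via Remark~\ref{R:SubnormalinSubgroupComponents}. The only (harmless) difference is in the prime-to-$p$ case, where the paper goes directly through $\K\cap S\subseteq S\subseteq\N$ and Lemma~\ref{L:ComponentIntersectNormal}, whereas you primarily use $\K=O^{p^\prime}(\K)\subseteq O^{p^\prime}(\L)\subseteq\N$ via Lemma~\ref{L:ResidueSubnormalContainment} (mentioning the paper's route only as an aside); you also spell out the routine check that $E(\L)=E(\N)$, which the paper takes as immediate.
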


\begin{proof}
Let $\K$ be a component of $\L$. If $\N$ is of index prime to $p$, then $S\cap \K\subseteq\N$ and thus $\K\subseteq\N$ by Lemma~\ref{L:ComponentIntersectNormal}. If $\N$ has $p$-power index, then $\K=O^p(\K)\subseteq O^p(\L)\subseteq \N$ by Lemma~\ref{L:ResidueEquivalences} and Lemma~\ref{L:ResidueSubnormalContainment}. Hence the assertion follows from  Remark~\ref{R:SubnormalinSubgroupComponents}(b).
\end{proof}

\begin{lemma}\label{C:ComponentsCentralProduct}
Let $\N_1$ and $\N_2$ be partial normal subgroups of $\L$. Suppose $\L$ is an internal central product of $\N_1$ and $\N_2$ or suppose that $\L=\N_1\N_2$ and $\N_1$ commutes with $\N_2$. Then $\Comp(\L)$ is a disjoint union of $\Comp(\N_1)$ and $\Comp(\N_2)$. In particular, $E(\L)=E(\N_1)E(\N_2)$.
\end{lemma}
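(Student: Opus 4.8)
The plan is to reduce everything to the central-product case and then to a purely structural computation about subnormal subgroups. First I would observe that if $\L=\N_1\N_2$ with $\N_1$ commuting with $\N_2$, then by Lemma~\ref{L:PerpendicularPartialNormalCentralProduct} the product $\N_1\N_2$ is an internal central product of $\N_1$ and $\N_2$; so it suffices to treat the hypothesis that $\L$ is an internal central product of $\N_1$ and $\N_2$. In that case $\N_1,\N_2\unlhd\L$ by the last lemma of Chapter~\ref{S:CentralProduct}, each $\N_i$ is a regular locality by Theorem~\ref{T:RegularPartialNormal}, and by Lemma~\ref{L:CentralProductsFactorsCentralize} we have $\N_1\subseteq C_\L(\N_2)$ and $\N_2\subseteq C_\L(\N_1)$, hence $\N_1\cap\N_2\subseteq Z(\N_i)$ for $i=1,2$ (using Lemma~\ref{L:CNTinZT} / Lemma~\ref{L:NcapNperp}, as $\N_{3-i}\subseteq\N_i^\perp$).

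\textbf{Key steps.} The heart of the argument is to prove $\Comp(\L)=\Comp(\N_1)\sqcup\Comp(\N_2)$. For the inclusion $\supseteq$ I would use Remark~\ref{R:SubnormalinSubgroupComponents}(b): since $\N_i\unlhd\L$, any component of $\N_i$ is subnormal in $\L$ and quasisimple, hence a component of $\L$; and $\Comp(\N_1)\cap\Comp(\N_2)=\emptyset$ because a common component $\K$ would lie in $\N_1\cap\N_2\subseteq Z(\N_i)\leq S$, contradicting $\{\One\}\neq\K=O^p(\K)$ (Lemma~\ref{L:Quasisimple}). For the inclusion $\subseteq$, let $\K\in\Comp(\L)$. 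By Lemma~\ref{L:ComponentIntersectNormal} applied to $\N_1\unlhd\L$, either $\K\subseteq\N_1$ or $\K\cap\N_1\leq Z(\K)$. In the first case $\K$ is subnormal in $\N_1$ (Lemma~\ref{L:SubnormalinSubgroup}(a), using $\K\subseteq\N_1$ and $\K\subn\L$) and still quasisimple, so $\K\in\Comp(\N_1)$. In the second case $\K\cap\N_1\leq Z(\K)$; since $\K=O^p(\K)$, I would argue that $\K$ must then lie in $\N_2$: indeed $\K$ subnormal in $\L=\N_1\N_2$ with $\K\cap\N_1$ central in $\K$ forces, via the central-product decomposition and the fact that $\N_1$ and $\N_2$ are ``coprime'' in the sense that they centralize each other, that $\K\subseteq\N_2$ — concretely, project $\K$ along the natural map $\L\to\L/\N_1$, noting $\L/\N_1\cong\N_2/(\N_1\cap\N_2)$, so that the image of $\K$ is quasisimple and pulls back into $\N_2$; combined with $\K\cap\N_1\leq Z(\K)$ one gets $\K\subseteq\N_2 Z(\K)$, and then $\K=O^p(\K)\subseteq\N_2$. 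Once $\Comp(\L)=\Comp(\N_1)\sqcup\Comp(\N_2)$ is established, the statement $E(\L)=E(\N_1)E(\N_2)$ follows from the definition $E(\L)=\prod_{\K\in\Comp(\L)}\K$ together with Proposition~\ref{P:FstarLComponents1} (which guarantees the product is well-defined and the factors commute), since the index set of the product on the left is the disjoint union of the index sets on the right and $\prod_{\K\in\Comp(\N_1)}\K$, $\prod_{\K\in\Comp(\N_2)}\K$ are exactly $E(\N_1)$, $E(\N_2)$.

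\textbf{Main obstacle.} The step I expect to be the crux is showing that in the case $\K\cap\N_1\leq Z(\K)$ one actually has $\K\subseteq\N_2$, rather than merely $\K$ ``spread across'' both factors. The cleanest route is probably to exploit that $\K$ is subnormal in $\L$ and hence, passing down a subnormal series and intersecting with the normal subgroups $\N_1$, $\N_2$ (using Lemma~\ref{L:Step4Seperate} and Corollary~\ref{C:RegularSubnormal} at each stage), reduce to the case $\L=\N_1\N_2$ with $\K\unlhd\L$; then $\K=\K\cap\N_1\N_2=(\K\cap\N_1)(\K\cap\N_2)$ fails in general, so instead one uses that $\K\cap\N_1\leq Z(\K)$ and $O^p(\K)=\K$ together with $[\N_1,\N_2]=1$ to conclude $\K$ centralizes $\N_1$, whence $\K\subseteq C_\L(\N_1)$; then $O^p(\K)=\K$ and the structure of $C_\L(\N_1)=\N_1^\perp$ (by Theorem~\ref{T:RegularPartialNormal}(e), and $\N_1^\perp\cap S = C_S(\N_1)$) force $\K\subseteq\N_1^\perp$, and a final application of Lemma~\ref{L:ComponentIntersectNormal} with $\N_2\unlhd\L$ pins $\K$ inside $\N_2$. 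I would also double-check the subnormality bookkeeping (that ``$\K\subseteq\N_i$ and $\K\subn\L$'' gives ``$\K\subn\N_i$'') carefully, as that is where an off-by-one in the subnormal series could hide.
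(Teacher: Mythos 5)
There is a genuine gap at the step you yourself flag as the main obstacle, and your proposed fix does not work as described. You correctly identify that the crux is to promote $\K\subn\L$ to $\K\unlhd\L$ so that the conclusion $\K\subseteq\N_1^\perp$ can be extracted from $\K\cap\N_1\leq Z(\K)$ and $O^p(\K)=\K$; the relevant tool here is Lemma~\ref{L:OpNNPinMperp}, which indeed requires a \emph{partial normal} subgroup as input. But the reduction you propose — ``passing down a subnormal series and intersecting with $\N_1$, $\N_2$'' — destroys the hypothesis: if $\K=\K_0\unlhd\cdots\unlhd\K_n=\L$ and you replace $\L$ by $\K_{n-1}$, there is no reason that $\K_{n-1}=(\N_1\cap\K_{n-1})(\N_2\cap\K_{n-1})$, so you no longer have a central-product decomposition to work with. (Your sketch via the quotient $\L/\N_1\cong\N_2/(\N_1\cap\N_2)$ suffers from a similar problem: the preimage of $\K$'s image is $\K\N_1$, which gives no control of the form $\K\subseteq\N_2 Z(\K)$.)

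The paper's reduction is different and is the load-bearing idea: one replaces $(\L,\N_1,\N_2)$ by $(F^*(\L),F^*(\N_1),F^*(\N_2))$. This is legitimate because $\Comp(\L)=\Comp(F^*(\L))$ and $\Comp(\N_i)=\Comp(F^*(\N_i))$ (Remark~\ref{R:SubnormalinSubgroupComponents} together with Proposition~\ref{P:FstarLComponents1}), while Lemma~\ref{L:Step4Seperate} and Theorem~\ref{T:RegularN1timesN2}(b) give that $F^*(\N_1),F^*(\N_2)\unlhd\L$ commute and satisfy $F^*(\L)=F^*(\N_1)F^*(\N_2)$, so the ambient hypothesis is preserved. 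In the new setup $\L=F^*(\L)$, and Proposition~\ref{P:FstarLComponents1} then gives $\K\unlhd\L$ for \emph{every} component $\K$, which is exactly what Lemma~\ref{L:OpNNPinMperp} (combined with Lemma~\ref{L:ResidueRegular} and Corollary~\ref{C:RegularNReplete} to identify $Z^\circ(\K)=Z(\K)$ and $O^p_\L(\K)=O^p(\K)=\K$) needs. If you patch in this reduction, the rest of your argument is essentially sound. One further remark: after $\K\subseteq\N_1^\perp$, the paper does not make a second call to Lemma~\ref{L:ComponentIntersectNormal}. Instead it uses the Dedekind Lemma to write $\N_1^\perp=C_\L(\N_1)=\N_2 Z(\N_1)$, observes $Z(\N_1)\leq S$ so $\N_2$ has $p$-power index in $\N_1^\perp$, and concludes $\K\in\Comp(\N_1^\perp)=\Comp(\N_2)$ by Lemma~\ref{L:CompOupperP}. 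Your alternative (a second application of Lemma~\ref{L:ComponentIntersectNormal} with $\N_2$, eliminating the ``both intersections central'' case via $\K\subseteq\N_1^\perp\cap\N_2^\perp=(\N_1\N_2)^\perp=Z(\L)\leq S$) can also be made to work, but you would need to spell out that extra step, which the Dedekind$+$Lemma~\ref{L:CompOupperP} route avoids.
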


\begin{proof}
By Theorem~\ref{T:RegularPartialNormal}(e), we have $\N_i^\perp=C_\L(\N_i)$ for $i=1,2$. If $\L$ is an internal central product of $\N_1$ and $\N_2$, then  Lemma~\ref{L:CentralProductsFactorsCentralize} gives $\N_1\subseteq C_\L(\N_2)=\N_2^\perp$. So we may just assume that 
\[\L=\N_1\N_2\mbox{ and $\N_1$ commutes with $\N_2$.}\]
Using Lemma~\ref{L:CNTinZT}, one sees now that $\N_1\cap\N_2\leq Z(\N_2)\leq Z(S\cap \N_2)$ does not contain any component. Hence,   $\Comp(\N_1)\cap\Comp(\N_2)=\emptyset$.

\smallskip

It remains to show that $\Comp(\L)=\Comp(\N_1)\cup\Comp(\N_2)$. Combining Remark~\ref{R:SubnormalinSubgroupComponents} with Proposition~\ref{P:FstarLComponents1}, one checks that $\Comp(\L)=\Comp(F^*(\L))$ and $\Comp(\N_i)=\Comp(F^*(\N_i))$ for $i=1,2$. Moreover, by Lemma~\ref{L:Step4Seperate} and Theorem~\ref{T:RegularN1timesN2}(b), $F^*(\N_1)$ and $F^*(\N_2)$ are partial normal subgroups of $\L$ with  $F^*(\L)=F^*(\N_1)F^*(\N_2)$. Notice that $F^*(\N_1)$ and $F^*(\N_2)$ commute. Hence, replacing $(\L,\N_1,\N_2)$ by $(F^*(\L),F^*(\N_1),F^*(\N_2))$, we may assume $\L=F^*(\L)$. 

\smallskip

Fix $\K\in\Comp(\L)$. By Remark~\ref{R:SubnormalinSubgroupComponents} it is enough to show that $\K$ is contained in $\N_1$ or $\N_2$. Assume $\K\not\subseteq \N_2$. Since $\L=F^*(\L)$, we have $\K\unlhd\L$ by Proposition~\ref{P:FstarLComponents1}. By Lemma~\ref{L:ComponentIntersectNormal} and Lemma~\ref{L:RegularNReplete}, we have $\K\cap\N_2\leq Z(\K)=Z^\circ(\K)$. Hence, by Lemma~\ref{L:OpNNPinMperp}, we have $\K=O^p(\K)\subseteq \N_2^\perp$. As stated above, we have $\N_1\subseteq \N_2^\perp=C_\L(\N_2)$. Using $\L=\N_1\N_2$ and the Dedekind Lemma \cite[Lemma~1.10]{Chermak:2015}, we can conclude that  $\N_2^\perp=C_\L(\N_2)=\N_1 Z(\N_2)$. Observe that $Z(\N_2)\leq S$ (e.g. by Lemma~\ref{L:CNTinZT}) and so $\N_1$ is a partial normal subgroup of $\N_2^\perp$ of $p$-power index. Hence, by Lemma~\ref{L:CompOupperP}, we have $\K\in\Comp(\N_2^\perp)=\Comp(\N_1)$. This proves the assertion.
\end{proof}

\begin{lemma}\label{L:ComponentinNorNperp}
Let $\N\unlhd\L$. Then $\Comp(\L)$ is a disjoint union of $\Comp(\N)$ and $\Comp(\N^\perp)$. In particular, $E(\L)=E(\N)E(\N^\perp)$.  
\end{lemma}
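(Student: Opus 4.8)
The plan is to reduce Lemma~\ref{L:ComponentinNorNperp} to the central-product case already handled in Lemma~\ref{C:ComponentsCentralProduct}. First I would recall that by definition $\N^\perp$ commutes with $\N$, hence by Corollary~\ref{C:MperpNNperpM} also $\N$ commutes with $\N^\perp$, and by Lemma~\ref{L:PerpendicularPartialNormalCentralProduct} the product $\N\N^\perp$ is an internal central product of $\N$ and $\N^\perp$. Next I would apply Theorem~\ref{T:RegularPartialNormal}(b) to see that $\M:=\N\N^\perp\unlhd\L$, and by Theorem~\ref{T:RegularPartialNormal} (together with Corollary~\ref{C:RegularSubnormal}) $\M$ is again a regular locality. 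Applying Lemma~\ref{C:ComponentsCentralProduct} to $\M=\N*\N^\perp$ gives that $\Comp(\M)$ is the disjoint union of $\Comp(\N)$ and $\Comp(\N^\perp)$ and $E(\M)=E(\N)E(\N^\perp)$.

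It then remains to show $\Comp(\L)=\Comp(\M)$ and $E(\L)=E(\M)$. For the inclusion $\Comp(\M)\subseteq\Comp(\L)$ I would use Remark~\ref{R:SubnormalinSubgroupComponents}(b): since $\M\unlhd\L$, every component of $\M$ is subnormal in $\L$ and hence a component of $\L$. For the reverse inclusion, fix $\K\in\Comp(\L)$; combining Remark~\ref{R:SubnormalinSubgroupComponents} with Proposition~\ref{P:FstarLComponents1} we have $\Comp(\L)=\Comp(F^*(\L))$, so it suffices to show $F^*(\L)\subseteq\M$ up to the contribution of $O_p(\L)$ — more precisely, by Theorem~\ref{T:RegularPartialNormal}(d) we have $F^*(\L)=F^*(\N)F^*(\N^\perp)O_p(\L)$, and since $F^*(\N)\subseteq\N$, $F^*(\N^\perp)\subseteq\N^\perp$ and $O_p(\L)$ centralizes every component (Lemma~\ref{L:ComponentCentralizesOp}), every component of $F^*(\L)$ must lie in $F^*(\N)F^*(\N^\perp)\subseteq\M$; by Proposition~\ref{P:FstarLComponents1} again the components of $\M$ are exactly the components of $F^*(\M)=F^*(\N)F^*(\N^\perp)$. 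Alternatively, and perhaps more cleanly, I would argue that $\K\in\Comp(\L)$ implies $\K\subseteq\N$ or $\K\subseteq\N^\perp$: indeed $\K\unlhd F^*(\L)$ by Proposition~\ref{P:FstarLComponents1}, so $\K\cap\N\unlhd\K$, whence by Lemma~\ref{L:ComponentIntersectNormal} either $\K\subseteq\N$ (done) or $\K\cap\N\leq Z(\K)=Z^\circ(\K)$ (using Corollary~\ref{C:RegularNReplete}); in the latter case Lemma~\ref{L:OpNNPinMperp} applied with $\M:=\N$, together with $\K=O^p(\K)$ (Lemma~\ref{L:Quasisimple}), gives $\K=O^p(\K)\subseteq\N^\perp$. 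In either case $\K\subseteq\M$, and since $\K\subn\L$ with $\K\subseteq\M\unlhd\L$, Lemma~\ref{L:SubnormalinSubgroup}(a) gives $\K\subn\M$, so $\K\in\Comp(\M)$.

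Finally, $\Comp(\L)=\Comp(\M)$ forces $E(\L)=\prod_{\K\in\Comp(\L)}\K=\prod_{\K\in\Comp(\M)}\K=E(\M)=E(\N)E(\N^\perp)$, where well-definedness of these products is guaranteed by Proposition~\ref{P:FstarLComponents1} applied to $\L$ and to $\M$. I expect the main obstacle to be the reverse inclusion $\Comp(\L)\subseteq\Comp(\N)\cup\Comp(\N^\perp)$, i.e.\ showing a component of $\L$ is contained in $\N$ or in $\N^\perp$; the argument sketched above via Lemma~\ref{L:OpNNPinMperp} and Lemma~\ref{L:ComponentIntersectNormal} is the delicate point, and one must be careful that the hypothesis $\M\cap\N\leq Z^\circ(\M)$ of Lemma~\ref{L:OpNNPinMperp} is verified with the roles $\M\rightsquigarrow\K$, $\N\rightsquigarrow\N$ (note $\K\cap\N\leq Z(\K)$ and $Z(\K)=Z^\circ(\K)$ in a regular locality). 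Everything else is a routine bookkeeping of subnormality and the already-established structure of $\N\N^\perp$.
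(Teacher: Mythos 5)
Your overall strategy --- reduce to Lemma~\ref{C:ComponentsCentralProduct} by showing $\Comp(\L)=\Comp(\N\N^\perp)$ --- is exactly the paper's, and your first alternative is essentially the paper's argument modulo one hand-wave: the step ``$O_p(\L)$ centralizes every component, so every component of $F^*(\L)$ lies in $F^*(\N)F^*(\N^\perp)$'' is not a valid inference on its own. What you actually need here is that $F^*(\N)F^*(\N^\perp)$ has $p$-power index in $F^*(\L)$, so that Lemma~\ref{L:CompOupperP} applies; ``centralizing'' alone does not give containment. The paper proceeds a little differently in the bookkeeping (it observes that $\N\N^\perp O_p(\L)$ is centric radical by Lemma~\ref{L:CentricInherit}(c), hence contains $F^*(\L)$, deduces $\Comp(\L)=\Comp(\N\N^\perp O_p(\L))$ from Remark~\ref{R:SubnormalinSubgroupComponents} and Proposition~\ref{P:FstarLComponents1}, and then strips off $O_p(\L)$ using Lemma~\ref{L:CompOupperP}), but the essential content is the same.

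Your second alternative has a genuine gap. Lemma~\ref{L:OpNNPinMperp} requires $\M\unlhd\L$, but you apply it with $\M:=\K$ where $\K\in\Comp(\L)$ is only subnormal in $\L$; Proposition~\ref{P:FstarLComponents1} gives $\K\unlhd F^*(\L)$, not $\K\unlhd\L$, so the hypothesis is not met. The paper's proof of Lemma~\ref{C:ComponentsCentralProduct} does use this very argument, but only after reducing to the case $\L=F^*(\L)$, where $\K\unlhd\L$ genuinely holds. If you try to salvage your version by passing to $F^*(\L)$, the normal subgroup you must compare against also changes to $\N\cap F^*(\L)=F^*(\N)$, and the conclusion you get is $\K\subseteq F^*(\N)^\perp_{F^*(\L)}$ rather than $\K\subseteq\N^\perp$; since $F^*(\N)\subseteq\N$, Lemma~\ref{L:MinNimpliesNperpinMperp} gives $\N^\perp\subseteq F^*(\N)^\perp$, which is the \emph{wrong} direction, so additional work would be needed. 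The clean route is the one you sketch in your first alternative, fixed by citing Lemma~\ref{L:CompOupperP}.
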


\begin{proof}
By Lemma~\ref{L:CentricInherit}(c), $\N\N^\perp O_p(\L)$ is centric radical and contains thus $F^*(\L)$. Hence, $\Comp(\L)=\Comp(\N\N^\perp O_p(\L))$ by Remark~\ref{R:SubnormalinSubgroupComponents} and Proposition~\ref{P:FstarLComponents1}. Moreover, $\N\N^\perp$ is a partial normal subgroup of $\N\N^\perp O_p(\L)$ of $p$-power index. Hence, $\Comp(\L)=\Comp(\N\N^\perp O_p(\L))=\Comp(\N\N^\perp)$ by Lemma~\ref{L:CompOupperP}. Now the assertion follows from Lemma~\ref{C:ComponentsCentralProduct} applied with $(\N\N^\perp,\N^\perp,\N)$ in place of $(\L,\N_1,\N_2)$. 
\end{proof}

\begin{lemma}\label{L:ComponentSubnormal}
 Let $\H$ be a partial subnormal subgroup of $\L$ and let $\K$ be a component of $\L$. Then either $\K$ is a component of $\H$ or the following hold:
\begin{itemize}
 \item $\K\subseteq C_\L(\H)$; and
 \item $\H\K=\K\H$ is a partial subnormal subgroup of $\L$;
 \item $\H$ and $\K$ are partial normal subgroups of $\H\K$; and
 \item $\H\K$ is an internal central product of $\H$ and $\K$.
\end{itemize}
\end{lemma}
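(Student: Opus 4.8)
I would argue by induction on the length $n$ of a subnormal series $\H=\H_0\unlhd\H_1\unlhd\cdots\unlhd\H_n=\L$, running the induction uniformly over all regular localities (so that in the recursive step I may pass to a proper partial subnormal subgroup as the new ambient locality). If $n=0$ then $\H=\L$ and $\K$ is a component of $\H$, so the first alternative holds. Assume $n\geq 1$ and put $\N:=\H_{n-1}\unlhd\L$. Throughout I use that every partial subnormal subgroup of a regular locality is again a regular locality (Corollary~\ref{C:RegularSubnormal}), so statements like ``$\H$ and $\K$ are partial normal in $\H\K$'' and ``$\H\K$ is an internal central product'' make sense.

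I first treat the case $\H=\N$ (which contains the base case $n=1$). By Theorem~\ref{T:RegularPartialNormal} we have $\N^\perp=C_\L(\N)\unlhd\L$, the product $\N\N^\perp$ is an internal central product $\N*\N^\perp$, and by Lemma~\ref{L:ComponentinNorNperp} the set $\Comp(\L)$ is the disjoint union of $\Comp(\N)$ and $\Comp(\N^\perp)$. If $\K\in\Comp(\N)$ the first alternative holds, so assume $\K\in\Comp(\N^\perp)$; then $\K\subseteq\N^\perp=C_\L(\N)$, giving the first bullet. From $\K\subseteq C_\L(\N)$ and Lemma~\ref{L:CentralizerHelp} one reads off that $\N$ commutes with $\K$ and that $\N\K=\K\N$ as sets. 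Since $\N$ commutes with $\N^\perp$ (Corollary~\ref{C:MperpNNperpM}), $\N\subn\N$ and $\K\subn\N^\perp$, Lemma~\ref{L:H1H2subN1N2} gives $\N\K\subn\N\N^\perp\unlhd\L$, hence $\N\K\subn\L$. Now $\N\K\subseteq\N\N^\perp=\N*\N^\perp$; computing conjugation componentwise in this central product (as in the proof of the final lemma of Chapter~\ref{S:CentralProduct}), for an element $f=\Pi(x,y)\in\N\K$ with $x\in\N$ and $y\in\K\subseteq\N^\perp$ one obtains $h^f=h^x\in\N$ for every $h\in\N$ and $k^f=k^y\in\K$ for every $k\in\K$; thus $\N,\K\unlhd\N\K$. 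Since $\N$ commutes with $\K$, Lemma~\ref{L:PerpendicularPartialNormalCentralProduct} applied inside the regular locality $\N\K$ yields $\N\K=\N*\K$. This establishes the second alternative when $\H=\N$.

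For the inductive step assume $\H\subsetneq\N=\H_{n-1}$, so $\H\subn\N$ via a series of length $n-1$. Applying the case just treated to $\N\unlhd\L$ and the component $\K$, either $\K\in\Comp(\N)$, or $\K\subseteq C_\L(\N)$ with $\N\K=\N*\K\subn\L$ and $\N,\K\unlhd\N\K$. In the first case, apply the inductive hypothesis inside the regular locality $\N$ to $\K\in\Comp(\N)$ and $\H\subn\N$: either $\K\in\Comp(\H)$ (first alternative, done), or $\H\K=\H*\K$ is subnormal in $\N$ with $\H,\K\unlhd\H\K$ and $\K\subseteq C_\N(\H)$; then $\K\subseteq C_\N(\H)\subseteq C_\L(\H)$ and $\H\K\subn\N\subn\L$ by Lemma~\ref{L:SubnormalinSubgroup}, and all remaining conclusions are intrinsic to $\H\K$. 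In the second case, since $\H\subseteq\N$ we get $\K\subseteq C_\L(\N)\subseteq C_\L(\H)$ (the first bullet), and by Lemma~\ref{L:CentralizerHelp} $\H$ commutes with $\K$ and $\H\K=\K\H$; as $\H\subn\N\unlhd\N\K$ and $\K\unlhd\N\K$ with $\N$ commuting with $\K$, Lemma~\ref{L:H1H2subN1N2} applied in the regular locality $\N\K$ gives $\H\K\subn\N\K\subn\L$; finally $\H\K\subseteq\N\K=\N*\K$, so computing conjugation componentwise again gives $\H,\K\unlhd\H\K$, and Lemma~\ref{L:PerpendicularPartialNormalCentralProduct} inside $\H\K$ yields $\H\K=\H*\K$.

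\textbf{Main obstacle.} The only genuinely delicate point is showing that $\H$ and $\K$ are partial normal in $\H\K$ — equivalently, that $\H\K$ is a central product of $\H$ and $\K$. Because $\K$ is in general only \emph{subnormal} (not normal) in $\L$, this cannot be read off from normality of $\K$ in the ambient locality, nor directly from Lemma~\ref{L:Step4Seperate}; instead one must embed $\H\K$ into a larger central product whose structure is already known ($\N*\N^\perp$, respectively $\N*\K$) and use componentwise conjugation there. A secondary care point is organising the induction so that the ambient regular locality may shrink from $\L$ to $\H_{n-1}$ in the recursive step.
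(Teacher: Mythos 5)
Your proof is correct, and the organization differs from the paper's. You run an induction on the length of a subnormal series for $\H$, first handling the case $\H\unlhd\L$ (via Lemma~\ref{L:ComponentinNorNperp} and Lemma~\ref{L:H1H2subN1N2}, together with an explicit componentwise-conjugation computation inside the known central product $\N*\N^\perp$ to obtain $\N,\K\unlhd\N\K$, and then Lemma~\ref{L:PerpendicularPartialNormalCentralProduct}), and then using that normal case as a device for the recursive step. The paper instead takes a minimal counterexample $(\L,\H,\K)$ and lets $\N\unlhd\L$ be minimal with $\H\subseteq\N$; minimality forces $\L=\H\K=\N\N^\perp$, after which the Dedekind lemma and Lemma~\ref{L:HZHelp} give $\H\unlhd\N$ and $\K\unlhd\N^\perp$, so $\H,\K\unlhd\L$ by Lemma~\ref{L:Step4Seperate}, and one is reduced to the normal case with Theorem~\ref{T:RegularN1timesN2}(c) supplying the central product. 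Both arguments share the same core inputs --- the dichotomy that $\Comp(\L)$ is the disjoint union of $\Comp(\N)$ and $\Comp(\N^\perp)$ (Lemma~\ref{L:ComponentinNorNperp}) and the subnormality transfer of Lemma~\ref{L:H1H2subN1N2} --- so the difference is largely one of bookkeeping. The paper's counterexample trick is shorter because in the terminal configuration $\N$ and $\N^\perp$ differ from $\H$ and $\K$ only by central $p$-subgroups, so Lemma~\ref{L:HZHelp} delivers normality for free; your componentwise-conjugation argument is a little longer but makes the genuinely delicate step --- that $\K\unlhd\H\K$ even though $\K$ is only subnormal, not normal, in $\L$ --- fully explicit rather than leaving it to the minimality reduction.
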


\begin{proof}
Let $\L$ be a minimal counterexample. If $\L_0\subn\L$, then $\L_0$ is by Corollary~\ref{C:RegularSubnormal} itself a regular locality. Recall also from Lemma~\ref{L:SubnormalinSubgroup}(b) that every subnormal subgroup of $\L_0$ is subnormal in $\L$. Moreover, by Lemma~\ref{L:SubnormalinSubgroup}(a), $\H\subseteq\L_0$ implies $\H\subn\L_0$ and  $\K\subseteq\L_0$ implies  $\K\in\Comp(\L_0)$. As $\L$ is a minimal counterexample, we have thus the  following property:
\begin{equation}\label{E:Lminimal}
 \mbox{If }\L_0\subn\L\mbox{ with }\H\subseteq\L_0\neq\L,\mbox{ then }\K\not\subseteq\L_0.
\end{equation}
Let $\N\unlhd\L$ be minimal with $\H\subseteq\N$. If $\N=\L$, then because of the subnormality of $\H$ in $\L$, we have $\H=\L$. So $\K$ is in this case a component of $\H$ and we are done. Thus, we may assume $\N\neq\L$. Then $\K\not\in\Comp(\N)$ by \eqref{E:Lminimal}. Thus, Lemma~\ref{L:ComponentinNorNperp} implies that $\K$ is a component of $\N^\perp=C_\L(\N)$. As $\N$ commutes with $\N^\perp$ by Corollary~\ref{C:MperpNNperpM}, it follows now from Lemma~\ref{L:H1H2subN1N2} that $\H\K\subn\N\N^\perp\unlhd\L$. Hence \eqref{E:Lminimal} yields $\L=\H\K=\N\N^\perp$. The Dedekind Lemma \cite[Corollary~3.11]{Chermak:2015} gives thus $\N=\H(\N\cap \K)$ and $\N^\perp=(\H\cap \N^\perp)\K$, where $\N\cap\K$ and $\H\cap\N^\perp$ are both contained in $\N\cap\N^\perp\leq Z(\N)\cap Z(\N^\perp)\cap S$ by  Lemma~\ref{L:ZcircBasic} and Lemma~\ref{L:NcapNperp}. Hence, by Lemma~\ref{L:HZHelp}, we have $\H\unlhd\N$ and $\K\unlhd\N^\perp$. So Lemma~\ref{L:Step4Seperate} gives that $\H$ and $\K$ are partial normal subgroups of $\L=\N\N^\perp$. In particular, $\H=\N$. As $\H$ commutes with $\K\subseteq\N^\perp=\H^\perp=C_\L(\H)$ (and since $\H\K=\L$ is clearly centric in $\L$), it follows now from Theorem~\ref{T:RegularN1timesN2}(c) that $\H\K=\K\H$ is an internal central product of $\H$ and $\K$. This proves the assertion.    
\end{proof}

\begin{theorem}\label{T:FstarCentralProduct}
The following hold:
\begin{itemize}
\item[(a)] The layer $E(\L)$ is a central product of the components of $\L$ and $E(\L)=O^p(E(\L))=O^p(F^*(\L))$.
\item[(b)] The generalized Fitting subgroup $F^*(\L)$ is a central product of $E(\L)$ and $O_p(\L)$. In particular, $F^*(\L)$ is a central product of the components of $\L$ and of $O_p(\L)$. 
\item[(c)] Let $\H\subn\L$ and $\mathfrak{C}\subseteq\Comp(\L)\backslash\Comp(\H)$. Then $\M:=\H\left(\prod_{\K\in\mathfrak{C}}\K\right)\subn\L$ is a central product of the elements of $\{\H\}\cup\mathfrak{C}$, and $\Comp(\M)=\Comp(\H)\cup\mathfrak{C}$.
\item[(d)] If $\emptyset\neq \mathfrak{C}\subseteq\Comp(\L)$, then $\M':=\prod_{\K\in\mathfrak{C}}\K\unlhd F^*(\L)$ is a central product of the elements of $\mathfrak{C}$ and $\Comp(\M')=\mathfrak{C}$. Moreover, $O^p(\M')=\M'=O^{p^\prime}(\M')$.
\item[(e)] If $\K,\K^*\in\Comp(\L)$ with $\K\neq\K^*$, then $\K^*\subseteq\K^\perp_{F^*(\L)}=C_{F^*(\L)}(\K)$. In particular, for all $k\in \K$ and $c\in \K^*$, we have $(k,c)\in\D$, $kc=ck$ and 
\[S_{kc}\cap F^*(\L)=S_{(k,c)}\cap F^*(\L)=S_{(c,k)}\cap F^*(\L)\in\Delta=\delta(\F).\]
\end{itemize}
\end{theorem}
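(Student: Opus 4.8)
The plan is to prove Theorem~\ref{T:FstarCentralProduct} by building up parts (a)--(e) in order, leaning heavily on the already-established structure theory of regular localities (Theorem~\ref{T:RegularPartialNormal}, Lemma~\ref{L:PerpendicularPartialNormalCentralProduct}) and on the results about components proved so far in this chapter.

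For \textbf{part (a)}, I would start from Proposition~\ref{P:FstarLComponents1}, which tells us that every component $\K$ is a partial normal subgroup of $F^*(\L)$ and that $\prod_{\K \in \Comp(\L)} \K = E(\L)$ is well-defined and partial normal in $F^*(\L)$. The point that needs work is that $E(\L)$ is actually a \emph{central} product of the components. By Lemma~\ref{T:RegularN1timesN2}(c) (applied inside the regular locality $F^*(\L)$, or via Lemma~\ref{L:PerpendicularPartialNormalCentralProduct}), it suffices to know that distinct components commute pairwise; this follows by applying Lemma~\ref{L:ComponentSubnormal} to a component $\K$ and the subnormal subgroup $\K^*$ (if $\K^*$ were a component of $\K$ then by quasisimplicity $\K^* = \K$, contradiction, so the alternative of Lemma~\ref{L:ComponentSubnormal} applies, giving $\K^* \subseteq C_\L(\K)$). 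Then one assembles the pairwise-commuting family into a single central product, using Lemma~\ref{L:CentralProductAssociative} and Lemma~\ref{L:PermuteCentralProduct} to handle associativity and the well-definedness of $\prod_{\K \in \Comp(\L)}\K$ as an iterated central product. The identity $E(\L) = O^p(E(\L)) = O^p(F^*(\L))$ is exactly Lemma~\ref{L:OupperpEL}.

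For \textbf{part (b)}: Lemma~\ref{L:LayerPartialNormalFstar} gives $F^*(\L) = E(\L)O_p(\L) = O_p(\L)E(\L)$ with both factors partial normal in $F^*(\L)$, and Lemma~\ref{L:ComponentCentralizesOp} gives $E(\L) \subseteq C_\L(O_p(\L))$, so $E(\L)$ commutes with $O_p(\L)$; another application of Lemma~\ref{L:PerpendicularPartialNormalCentralProduct} inside $F^*(\L)$ upgrades this to a central product, and combining with part (a) and Lemma~\ref{L:CentralProductAssociative} presents $F^*(\L)$ as a central product of the components together with $O_p(\L)$. For \textbf{part (c)}, I would use Lemma~\ref{L:ComponentSubnormal} repeatedly: each $\K \in \mathfrak{C}$ is not a component of $\H$, so by that lemma $\K$ centralizes $\H$ and $\H\K \subn \L$ is a central product; inducting on $|\mathfrak{C}|$ and using that the components in $\mathfrak{C}$ pairwise commute (part (a)), one gets $\M = \H\left(\prod_{\K\in\mathfrak{C}}\K\right) \subn \L$ as a central product of $\{\H\}\cup\mathfrak{C}$, with $\Comp(\M) = \Comp(\H)\cup\mathfrak{C}$ following from Remark~\ref{R:SubnormalinSubgroupComponents} and Lemma~\ref{C:ComponentsCentralProduct}. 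Part \textbf{(d)} is the special case $\H = \prod_{\K\in\mathfrak{C}}\K$ made normal in $F^*(\L)$ via Proposition~\ref{P:FstarLComponents1}; here $\Comp(\M') = \mathfrak{C}$ and $O^p(\M') = \M' = O^{p'}(\M')$ come from Lemma~\ref{L:OupperpEL} applied to the subfamily $\mathfrak{C}$.

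For \textbf{part (e)}, the containment $\K^* \subseteq C_{F^*(\L)}(\K)$ is the pairwise-commuting fact already used in (a), now phrased via Theorem~\ref{T:RegularPartialNormal}(e) (which identifies $\K^\perp_{F^*(\L)}$ with $C_{F^*(\L)}(\K)$ inside the regular locality $F^*(\L)$); one should check that $\K^\perp$ computed in $F^*(\L)$ is the right object, using that $\K \unlhd F^*(\L)$. Then for $k \in \K$, $c \in \K^*$, the statement $(k,c)\in\D$, $kc = ck$, and $S_{(k,c)} = S_{(c,k)} = S_k \cap S_c$ follows from Corollary~\ref{C:MperpNNperpM} applied to the commuting partial normal subgroups $\K, \K^*$ of $F^*(\L)$ (recalling from Theorem~\ref{T:mainNperp}(d) / Lemma~\ref{L:DecomposeEltofMN} that $S_{kc} = S_{(k,c)}$ in a linking locality since $\K \cap \K^* \leq Z(S\cap F^*(\L))$). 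The claim $S_{(k,c)} \cap F^*(\L) \in \delta(\F)$ is where a small amount of care is needed: inside the regular locality $F^*(\L)$ with object set $\delta(\F_{S\cap F^*(\L)}(F^*(\L)))$, the word $(k,c)$ lies in the domain, so $S_{(k,c)} \cap F^*(\L)$ is an object there; Remark~\ref{R:deltaF} identifies $\delta(\F_{S\cap F^*(\L)}(F^*(\L)))$ with $\{P \leq S\cap F^*(\L) : P \in \F^s\}$, and then Corollary~\ref{C:FstarcapSindeltaF} (every overgroup of $F^*(\L)\cap S$ is in $\delta(\F)$) plus the fact that $S_{(k,c)}\cap F^*(\L)$ together with $O_p(\L)$ generates a member of $\delta(\F)$ via Lemma~\ref{L:deltaFtimesNormal} gives the conclusion. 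The main obstacle throughout is bookkeeping: keeping straight whether a given $\perp$, $O^p$, or object set is computed in $\L$, in $F^*(\L)$, or in a component, and invoking Theorem~\ref{T:RegularPartialNormal} and Corollary~\ref{C:RegularSubnormal} each time to know that the relevant sub-object is itself a regular locality so that these operations make sense and agree with the ambient ones. No single step is deep; the work is in assembling the central-product structure cleanly and matching up the object sets.
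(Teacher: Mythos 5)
Your overall strategy matches the paper's — Lemma~\ref{L:ComponentSubnormal} for pairwise commuting, an induction on $|\fC|$ in (c), Lemma~\ref{L:OupperpEL} for the $O^p$-statements, and Lemma~\ref{L:PerpendicularPartialNormalCentralProduct} or Theorem~\ref{T:RegularN1timesN2}(c) to promote commuting normal pairs to central products. Two places, however, are not just sloppy but would actually stall if written out in full.

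First, in your part (a) you say that once distinct components pairwise commute, one assembles them into a single central product ``using Lemma~\ref{L:CentralProductAssociative} and Lemma~\ref{L:PermuteCentralProduct}.'' Those two lemmas only rearrange an already-established $k$-fold central product; they do not build one from pairwise commuting data. The actual construction is an induction: having $\L_1\cdots\L_j=\L_1*\cdots*\L_j$, one needs $\L_1\cdots\L_j$ to be a partial normal subgroup of $F^*(\L)$ that \emph{commutes} with $\L_{j+1}$ (this is Lemma~\ref{L:ProductPartialNormalCommutes} together with Corollary~\ref{C:MperpNNperpM}), then Lemma~\ref{L:PerpendicularPartialNormalCentralProduct} gives $(\L_1\cdots\L_j)*\L_{j+1}$, and only then does Lemma~\ref{L:CentralProductAssociative} merge. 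This is exactly the induction you already sketch in (c); the paper avoids duplicating it by deriving (a) from (d) and Lemma~\ref{L:OupperpEL}, and (d) from (c) applied with $\H=\{\One\}$. You should do the same, since otherwise your (a) is circular or incomplete.

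Second, in part (e) your argument that $S_{(k,c)}\cap F^*(\L)\in\delta(\F)$ does not close. Corollary~\ref{C:FstarcapSindeltaF} puts \emph{overgroups} of $F^*(\L)\cap S$ into $\delta(\F)$, but $S_{(k,c)}\cap F^*(\L)$ is a \emph{subgroup} of $F^*(\L)\cap S$, so that corollary points the wrong way, and tacking on $O_p(\L)$ via Lemma~\ref{L:deltaFtimesNormal} does not help since $O_p(\L)$ need not bring you up to $F^*(\L)\cap S$. The clean route (which the paper takes) is simply Lemma~\ref{L:deltaFDefinition}: since $kc\in\L$ we have $S_{kc}\in\Delta=\delta(\F)$, so by that lemma $S_{kc}\cap F^*(\L)\in\F^s$; as $S_{kc}\cap F^*(\L)\leq S\cap F^*(\L)$, applying Lemma~\ref{L:deltaFDefinition} once more gives $S_{kc}\cap F^*(\L)\in\delta(\F)$ directly. (Alternatively, Lemma~\ref{L:RegularCentricMain} identifies $\delta(\F_{S\cap F^*(\L)}(F^*(\L)))$ with $\{P\in\Delta\colon P\leq S\cap F^*(\L)\}$, which also does the job; but Remark~\ref{R:deltaF} gives $\F_{S\cap F^*(\L)}(F^*(\L))^s$, not $\delta(\cdot)$, so your citation is off.) Finally, the bare facts $(k,c)\in\D$ and $kc=ck$ come from $c\in C_\L(\K)$ via Lemma~\ref{L:CentralizerHelp}, not from Corollary~\ref{C:MperpNNperpM} (which presupposes $(m,n)\in\D$); and the equalities in the theorem are stated with $\cap\,F^*(\L)$ precisely because Theorem~\ref{T:mainNperp}(d) must be applied inside the regular locality $F^*(\L)$, where $\K$ and $\K^*$ are partial normal, rather than in $\L$.
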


\begin{proof}
As $O_p(\L)^\perp$ is a partial subgroup, it follows from Lemma~\ref{L:ComponentCentralizesOp} that $E(\L)\subseteq O_p(\L)^\perp$, so $E(\L)$ and $O_p(\L)$ are partial normal subgroups which commute with each other and whose product $F^*(\L)$ is centric (cf. Theorem~\ref{T:GeneralizedFitting}, Corollary~\ref{C:MperpNNperpM}, Lemma~\ref{L:LayerPartialNormalFstar} and Lemma~\ref{L:ELnormal}). Hence, by Theorem~\ref{T:RegularN1timesN2}(c), $F^*(\L)$ is an internal central product of $E(\L)$ and $O_p(\L)$. Using  Lemma~\ref{L:CentralProductAssociative}, one observes now that part (b) holds if part (a) holds. Moreover, (a) follows from (d) and Lemma~\ref{L:OupperpEL}, and (d) follows from Proposition~\ref{P:FstarLComponents1}, Lemma~\ref{L:OupperpEL} and (c) applied with $\H=\{\One\}$ \footnote{Alternatively, using Lemma~\ref{L:ComponentinNorNperp}, one could show (a) and (b) by  more direct arguments similarly as in the proof of Proposition~\ref{P:FstarLComponents1}}. Hence, it remains only to prove (c) and (e). 

\smallskip

We show (c) by induction on $|\mathfrak{C}|$. The assertion is trivial for $\mathfrak{C}=\emptyset$. Assume now that $\mathfrak{C}\neq\emptyset$, fix $\C\in\mathfrak{C}$ and set $\mathfrak{D}:=\mathfrak{C}\backslash\{\C\}$. By Lemma~\ref{L:ComponentSubnormal}, $\hat{\H}:=\H\C$ is subnormal in $\L$ and a central product of $\H$ and $\C$. Moreover, $\H$ and $\C$ are partial normal subgroups of $\hat{\H}$. In particular, by Corollary~\ref{C:ComponentsCentralProduct} and Lemma~\ref{L:QuasisimpleComponents}, we have  $\Comp(\hat{\H})=\Comp(\H)\cup\Comp(\C)=\Comp(\H)\cup\{\C\}$ and so $\mathfrak{D}\subseteq\Comp(\L)\backslash\Comp(\hat{\H})$. Thus, by induction we may assume that the assertion is true with $(\hat{\H},\mathfrak{D})$ in place of $(\H,\mathfrak{C})$. Using Lemma~\ref{L:CentralProductAssociative} this allows one to verify the assertion also for $(\H,\mathfrak{C})$. Hence (c) holds.

\smallskip

It remains to prove (e). Recall that $\K,\K^*\unlhd F^*(\L)$ by Proposition~\ref{P:FstarLComponents1}. Moreover, by Theorem~\ref{T:RegularPartialNormal}(a),(e), $F^*(\L)$ is a regular locality and $\K^\perp_{F^*(\L)}=C_{F^*(\L)}(\K)$. Lemma~\ref{L:QuasisimpleComponents} gives $\Comp(\K)=\{\K\}$. Thus, Lemma~\ref{L:ComponentinNorNperp} applied with $(\K^*,\K,F^*(\L))$ in place of $(\K,\N,\L)$ gives $\K^*\subseteq \K^\perp_{F^*(\L)}=C_{F^*(\L)}(\K)$. In particular, for all $k\in\K$ and $c\in\K^*$, we have $(k,c)\in\D$ and  $kc=ck$. As $S_{kc}\in\Delta=\delta(\F)$, it follows from Lemma~\ref{L:deltaFDefinition} that $S_{kc}\cap F^*(\L)\in\Delta$. Moreover, by Theorem~\ref{T:mainNperp}(d) applied with $F^*(\L)$ in place of $\L$, we have $S_{kc}\cap F^*(\L)=S_{(k,c)}\cap F^*(\L)=S_{(c,k)}\cap F^*(\L)$. This proves (e).
\end{proof}

\begin{proof}[Proof of Theorem~\ref{T:mainComponents}]
Part (a) follows from Proposition~\ref{P:FstarLComponents1} and Theorem~\ref{T:FstarCentralProduct}(a),(b), while part (b) was shown in Lemma~\ref{L:ELnormal} and Lemma~\ref{L:ComponentCentralizesOp}. Part (c) is true by Theorem~\ref{T:FstarCentralProduct}(b); see also Lemma~\ref{L:LayerPartialNormalFstar}. For (d) see Lemma~\ref{L:ComponentSubnormal} and for (e) see Lemma~\ref{L:ComponentsCharp}.
\end{proof}

In preparation for the next chapter we prove the following result. 

\begin{lemma}\label{L:IsoFusionSystemsELcapS}
Let $(\tL,\tDelta,\tS)$ be a regular locality over $\tF$ and $\alpha\colon \F\rightarrow \tF$ an isomorphism. Then $\delta(\F)\alpha=\delta(\tF)$ and $(E(\L)\cap S)\alpha=E(\tL)\cap \tS$. 
\end{lemma}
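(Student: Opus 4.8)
The plan is to reduce the statement to the already-established compatibility of the operator $F^*(-)$ with isomorphisms and the compatibility of $\F^s$ with isomorphisms of fusion systems. First I would recall that $\alpha\colon\F\to\tF$ is an isomorphism of fusion systems, so by definition $\alpha$ is an isomorphism of groups $S\to\tS$ and by Remark~\ref{R:ExistenceUniquenessCLS} (or rather the uniqueness part of Theorem~\ref{T:VaryObjects}(c)) it lifts: there exists a subcentric locality $(\L^s,\F^s,S)$ over $\F$ with $\L^s|_{\delta(\F)}=\L$, a subcentric locality $(\tL^s,\tF^s,\tS)$ over $\tF$ with $\tL^s|_{\delta(\tF)}=\tL$, and an isomorphism of localities $\widetilde\alpha\colon(\L^s,\F^s,S)\to(\tL^s,\tF^s,\tS)$ with $\widetilde\alpha|_S=\alpha$ and $\widetilde\alpha|_\L\in\Iso(\L,\tL)$ inducing $\alpha$ on fusion systems. (Here I use that the subcentric locality over a saturated fusion system is unique up to an isomorphism restricting to the identity on $S$, together with the fact that $\alpha$ transports $\F^s$ to $\tF^s$ by \cite[Lemma~3.6]{Henke:2015}.)

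Next I would invoke Lemma~\ref{L:IsoSubcentricIsoRegular} applied to $\widetilde\alpha\colon(\L^s,\F^s,S)\to(\tL^s,\tF^s,\tS)$: this gives directly $\delta(\F)\widetilde\alpha=\delta(\tF)$, and since $\widetilde\alpha|_S=\alpha$ this is exactly $\delta(\F)\alpha=\delta(\tF)$, proving the first assertion. For the second assertion, I would first handle the layer inside the subcentric localities and then restrict. By Lemma~\ref{L:AutGeneralizedFitting} we have $F^*(\L^s)\widetilde\alpha=F^*(\tL^s)$, hence $(F^*(\L^s)\cap S)\alpha=F^*(\tL^s)\cap\tS$. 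Now Lemma~\ref{L:RegularNReplete}(c) (or the remark after it) identifies $E$ of a subcentric locality with $E$ of the regular sublocality, and more to the point I would use Lemma~\ref{L:AutComponents} applied to $\widetilde\alpha|_\L\in\Iso(\L,\tL)$ to get $E(\L)\widetilde\alpha=E(\tL)$, whence $(E(\L)\cap S)\alpha=(E(\L)\cap S)\widetilde\alpha=E(\L)\widetilde\alpha\cap\tS=E(\tL)\cap\tS$, since $\widetilde\alpha|_S=\alpha$ and $\widetilde\alpha$ is a bijection sending $S$ to $\tS$.

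The one point that needs a little care — and the main (mild) obstacle — is producing the lift $\widetilde\alpha$ and checking that $\widetilde\alpha|_\L$ is genuinely an isomorphism of the regular localities $(\L,\delta(\F),S)$ and $(\tL,\delta(\tF),\tS)$, not merely of the partial groups. This follows once $\delta(\F)\alpha=\delta(\tF)$ is known, because $\L=\L^s|_{\delta(\F)}$ and $\tL=\tL^s|_{\delta(\tF)}$ and $\widetilde\alpha$ maps $\L^s$ to $\tL^s$ carrying the object set $\delta(\F)$ to $\delta(\tF)$; so $\widetilde\alpha$ restricts to $\L$ and the restriction is a projection of localities, hence (being injective) an isomorphism of localities. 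Thus the argument is essentially a bookkeeping exercise combining Lemma~\ref{L:IsoSubcentricIsoRegular}, Lemma~\ref{L:AutGeneralizedFitting} and Lemma~\ref{L:AutComponents} with the uniqueness of subcentric localities; no new computation is required.
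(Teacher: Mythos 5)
Your proposal is correct and follows essentially the same route as the paper: lift $\alpha$ to an isomorphism of subcentric localities, then apply Lemma~\ref{L:IsoSubcentricIsoRegular} for $\delta(\F)\alpha=\delta(\tF)$ and Lemma~\ref{L:AutComponents} for the layer. The only place where your write-up is slightly less precise than the paper's is the production of the lift $\widetilde\alpha$: appealing only to the uniqueness-of-expansion part of Theorem~\ref{T:VaryObjects}(c) does not literally yield an isomorphism of subcentric localities over two different fusion systems extending a given $\alpha\colon\F\to\tF$ — one needs the transport-of-structure version of uniqueness, which the paper obtains by citing \cite[Proposition~3.19]{Henke:2016}; your parenthetical sketch (uniqueness plus $\F^s\alpha=\tF^s$) is the right idea behind that result, but a reader would need the explicit reference or a short transport argument to close the step.
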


\begin{proof}
Let $(\L^s,\F^s,S)$ and $(\tL^s,\tF^s,S)$ be subcentric localities over $\F$ such that $\L^s|_{\delta(\F)}=\L$ and $\tL^s|_{\delta(\tF)}=\tL$; notice that $\L^s$ and $\tL^s$ exist by Theorem~\ref{T:VaryObjects}(c). Then by \cite[Proposition~3.19]{Henke:2016} (which uses the uniqueness of a linking system associated to given saturated fusion system), $\alpha\colon S\rightarrow\tS$ extends to an isomorphism $\beta\colon\L^s\rightarrow\tL^s$. Hence, it follows from Lemma~\ref{L:IsoSubcentricIsoRegular} that $\delta(\F)\alpha=\delta(\F)\beta=\delta(\tF)$ and $\beta|_\L$ is an isomorphism from $\L$ to $\tL$.  Lemma~\ref{L:AutComponents} gives now $E(\L)\beta=E(\tL)$ and thus $(E(\L)\cap S)\alpha=(E(\L)\cap S)\beta =E(\tL)\cap S$.  
\end{proof}

\chapter{Layers and E-balance}

In this chapter we will first introduce layers of arbitrary linking localities and show some properties of these. Then we will prove the theorem on E-balance (Theorem~\ref{T:EbalanceMain}) stated in the Introduction.

\section{Layers of linking localities}

If $(\L,\Delta,S)$ is a regular locality, we defined the layer $E(\L)$ in the previous chapter and showed in Lemma~\ref{L:ResidueRegular} and Lemma~\ref{L:OupperpEL} that $E(\L)=O^p(F^*(\L))=O^p_\L(F^*(\L))$. We will use the latter property implicitly throughout. It implies that the following definition does not conflict with the earlier introduced definition of $E(\L)$ in the case of regular localities.

\begin{definition}\label{D:ELGeneral}
If $(\L,\Delta,S)$ is a linking locality, then call $E(\L):=O^p_\L(F^*(\L))$ the \emph{layer} of $\L$.
\end{definition}

\begin{lemma}\label{L:TakePlusLayer}
 Let $(\L^+,\Delta^+,S)$ and $(\L,\Delta,S)$ be linking localities over $\F$ with $\Delta\subseteq\Delta^+$ and $\L=\L^+|_\Delta$. Adapt Notation~\ref{N:VaryObjects}. Then $E(\L)^+=E(\L^+)$.
\end{lemma}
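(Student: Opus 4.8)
The statement to prove is that $E(\L)^+ = E(\L^+)$, where $E$ denotes the layer $O^p_\L(F^*(\L))$ of a linking locality, and the notation $\N^+ = \Phi_{\L^+,\L}^{-1}(\N)$ refers to the unique partial normal subgroup of $\L^+$ whose intersection with $\L$ equals $\N$ (which exists by Theorem~\ref{T:VaryObjects}(b)). The plan is to chain together two facts already established in the excerpt. First, by Lemma~\ref{L:TakePlusGeneralizedFitting}(d) we have $F^*(\L^+) = F^*(\L)^+$; that is, $F^*(\L)$ and $F^*(\L^+)$ correspond under the bijection $\Phi_{\L^+,\L}$, so $F^*(\L^+) \cap \L = F^*(\L)$. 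Second, by Lemma~\ref{L:OupperpVaryObjects}, for any partial normal subgroup $\N$ of $\L$ with corresponding $\N^+$ in $\L^+$, we have $O^p_\L(\N)^+ = O^p_{\L^+}(\N^+)$.

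Putting these together: apply Lemma~\ref{L:OupperpVaryObjects} with $\N := F^*(\L)$. Then $\N^+ = F^*(\L)^+ = F^*(\L^+)$ by Lemma~\ref{L:TakePlusGeneralizedFitting}(d), and hence
\[
E(\L)^+ = O^p_\L(F^*(\L))^+ = O^p_{\L^+}(F^*(\L)^+) = O^p_{\L^+}(F^*(\L^+)) = E(\L^+).
\]
The only thing to check carefully is that the hypotheses of Lemma~\ref{L:OupperpVaryObjects} are met — namely that $(\L^+,\Delta^+,S)$ and $(\L,\Delta,S)$ are linking localities over the same fusion system $\F$ with $\Delta \subseteq \Delta^+$ and $\L = \L^+|_\Delta$ — but this is exactly the standing hypothesis of the present lemma, and $F^*(\L) \unlhd \L$ by Theorem~\ref{T:GeneralizedFitting}, so Notation~\ref{N:VaryObjects} applies to it. I would also remark explicitly that the definition of $E$ for linking localities (Definition~\ref{D:ELGeneral}) is consistent with the earlier definition for regular localities, via the identity $E(\L) = O^p_\L(F^*(\L))$ noted in the paragraph preceding Definition~\ref{D:ELGeneral}, so there is no ambiguity in the notation $E(\L)$.

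There is essentially no obstacle here — the result is a two-line corollary of Lemma~\ref{L:TakePlusGeneralizedFitting}(d) and Lemma~\ref{L:OupperpVaryObjects}, both of which do the real work (the former relying on the behaviour of $\N^\perp$ under expansion from Lemma~\ref{L:NperpPlus}, the latter on \cite[Theorem~C(c)]{Henke:2020}). The proof is just a composition of bijections: $\Phi_{\L^+,\L}$ commutes with taking $p$-residuals and sends $F^*(\L)$ to $F^*(\L^+)$, hence it sends $E(\L)$ to $E(\L^+)$.
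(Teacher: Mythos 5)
Your proof is correct and follows essentially the same route as the paper, which simply cites Lemma~\ref{L:OupperpVaryObjects} and Lemma~\ref{L:TakePlusGeneralizedFitting}. You have spelled out the chain of equalities more explicitly, but the idea is identical.
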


\begin{proof}
 This follows from Lemma~\ref{L:OupperpVaryObjects} and Lemma~\ref{L:TakePlusGeneralizedFitting}.
\end{proof}

The following proposition will not be used in the proof of the E-balance theorem. However, it gives some evidence that layers of arbitrary linking localities are natural to consider.

\begin{prop}
 Let $(\L,\Delta,S)$ be a linking locality. Then 
\[F^*(\L)=E(\L)O_p(\L)=O_p(\L)E(\L)\mbox{ and }E(\L)\subseteq O_p(\L)^\perp=C_\L(O_p(\L)).\]  
\end{prop}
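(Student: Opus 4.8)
The plan is to reduce to the regular case via the expansion/restriction machinery already set up in the paper, using Theorem~\ref{T:VaryObjects}(c) to pass to a subcentric locality and then restrict to the regular locality sitting inside it. More precisely, given a linking locality $(\L,\Delta,S)$ over $\F$, I would first invoke Theorem~\ref{T:VaryObjects}(c) to obtain a subcentric locality $(\L^s,\F^s,S)$ over $\F$ with $\L^s|_\Delta=\L$, and then set $\L_\delta:=\L^s|_{\delta(\F)}$, which is a regular locality over $\F$ by Lemma~\ref{L:deltaFBasic} (note $\Delta,\,\delta(\F)\subseteq\F^s$, so both $\L$ and $\L_\delta$ are restrictions of $\L^s$; one may need to be slightly careful about whether $\Delta\subseteq\delta(\F)$ or vice versa, but in either case $\L$, $\L_\delta$, $\L^s$ are all mutually linked by the restriction construction of Section~\ref{SS:VaryObjects}).

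The core of the argument is then a transport of the three identities $F^*=E\cdot O_p$, $F^*=O_p\cdot E$, and $E\subseteq O_p^\perp=C_\L(O_p)$ from $\L_\delta$ (where they hold by Lemma~\ref{L:LayerPartialNormalFstar}, Theorem~\ref{T:FstarCentralProduct}, Theorem~\ref{T:RegularPartialNormal}(e) / Lemma~\ref{L:Opperp}, and Theorem~\ref{T:mainComponents}) up to $\L$. The key bookkeeping facts are: $E(\L)^+=E(\L^+)$ whenever $\L=\L^+|_\Delta$ (Lemma~\ref{L:TakePlusLayer}), $F^*(\L^+)=F^*(\L)^+$ (Lemma~\ref{L:TakePlusGeneralizedFitting}(d)), $O_p(\L^+)=O_p(\L)$ under restriction (Lemma~\ref{L:TakePlusInS}), $(\N^\perp)^+=(\N^+)^\perp$ (Lemma~\ref{L:NperpPlus}), and $C_\L(R)^+ = C_{\L^+}(R)$-type statements via Lemma~\ref{L:RestrictionIntersectL}. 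Since $\Phi_{\L^+,\L}$ is an inclusion-preserving bijection on partial normal subgroups commuting with intersections (Theorem~\ref{T:VaryObjects}(b)), and since the products $E\cdot O_p$, $O_p\cdot E$ and $E O_p$ are partial normal subgroups (Theorem~\ref{T:ProductsPartialNormal}), it suffices to check each identity after applying $\Phi_{\L^+,\L}^{-1}$, i.e. after passing to $\L^s$, and then to compare with $\L_\delta$: concretely, $E(\L)^s=E(\L^s)$ restricts to $E(\L_\delta)=E(\L^s)\cap\L_\delta$, similarly for $F^*$ and $O_p$, and the identities hold in $\L_\delta$; pulling them back through the chain $\L_\delta \leftarrow \L^s \to \L$ gives them in $\L$. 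For the containment $E(\L)\subseteq C_\L(O_p(\L))$ I would use that $E(\L^s)\subseteq C_{\L^s}(O_p(\L^s))$ (from the regular case applied after restricting to $\L_\delta$, using that $C_{\L^s}(O_p(\L^s))$ is a partial normal subgroup by Lemma~\ref{L:CentralizerPartialNormal} and Lemma~\ref{L:OpL}, together with Lemma~\ref{L:TakePlusLayer} and the fact that $C_{\L^s}(O_p(\L^s))\cap\L_\delta=C_{\L_\delta}(O_p(\L_\delta))$), intersect with $\L$, and apply $E(\L)=E(\L^s)\cap\L$ and $O_p(\L)=O_p(\L^s)$; finally $O_p(\L)^\perp=C_\L(O_p(\L))$ is Lemma~\ref{L:Opperp}.

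The main obstacle I anticipate is a clean handling of the restriction direction: $\delta(\F)$ and $\Delta$ need not be comparable, so one cannot directly restrict $\L$ to $\L_\delta$ or vice versa. The fix is to always route through $\L^s$ (whose object set $\F^s$ contains both $\delta(\F)$ and $\Delta$ by Lemma~\ref{L:deltaFBasic} and Theorem~\ref{T:VaryObjects}(c)) so that $\L$, $\L_\delta$ and $\L^s$ are all related by restrictions $\L=\L^s|_\Delta$ and $\L_\delta=\L^s|_{\delta(\F)}$, and then prove all three identities first for $\L^s$ (by restricting to $\L_\delta$ and using the regular case) and transport down to $\L$ by the lemmas cited above. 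A secondary, purely cosmetic point is making sure that the product statement $F^*(\L)=O_p(\L)E(\L)$ (the ``other order'') is obtained — this follows from $E(\L)O_p(\L)=O_p(\L)E(\L)$, which is immediate once we know these two partial normal subgroups commute, which is in turn a consequence of $E(\L)\subseteq O_p(\L)^\perp$ together with Corollary~\ref{C:MperpNNperpM}. Once the transport is organized this way the proof is short; I would present it as: reduce to $\L=\L^s$ subcentric via Theorem~\ref{T:VaryObjects}(c) and the displayed lemmas, then restrict to $\L_\delta$ and quote the regular-locality results.
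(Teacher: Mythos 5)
Your proposal is correct and takes essentially the same approach as the paper: route through the subcentric locality $\L^s$ to obtain a regular locality $\L_\delta=\L^s|_{\delta(\F)}$, prove the identities there via the component theory, and transport back along the two restrictions using exactly the lemmas you cite (Lemma~\ref{L:TakePlusInS}, Lemma~\ref{L:TakePlusGeneralizedFitting}, Lemma~\ref{L:TakePlusLayer}, Lemma~\ref{L:NperpPlus}, Lemma~\ref{L:RestrictionIntersectL}, Theorem~\ref{T:VaryObjects}(b)). The only cosmetic difference is that the paper obtains $E(\L)O_p(\L)=O_p(\L)E(\L)$ immediately from Lemma~\ref{L:ProductPartialSubgroup} (or Theorem~\ref{T:ProductsPartialNormal}), while you suggest deducing it from $E(\L)\subseteq O_p(\L)^\perp$ via Corollary~\ref{C:MperpNNperpM}; both are fine, but the paper's route is slightly more direct since it does not make the commutativity depend on the containment being proved.
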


\begin{proof}
By Lemma~\ref{L:ProductPartialSubgroup} or by Theorem~\ref{T:ProductsPartialNormal}, we have $E(\L)O_p(\L)=O_p(\L)E(\L)$. Moreover, Lemma~\ref{L:Opperp} gives $O_p(\L)^\perp_\L=C_\L(O_p(\L))$. Hence, it is sufficient to prove $F^*(\L)=O_p(\L)E(\L)$ and $E(\L)\subseteq O_p(\L)^\perp$.

\smallskip

By Theorem~\ref{T:VaryObjects}(a), there exists a subcentric locality $(\L^s,\F^s,S)$ over $\F:=\F_S(\L)$ such that $\L^s|_\Delta=\L$. It follows then from Lemma~\ref{L:deltaFBasic} that $\L_\delta=\L^s|_{\delta(\F)}$ is a regular locality over $\F$. By Lemma~\ref{L:ComponentCentralizesOp} and Lemma~\ref{L:LayerPartialNormalFstar}, we have 
\[F^*(\L_\delta)=O_p(\L_\delta)E(\L_\delta)\mbox{ and }E(\L_\delta)\subseteq O_p(\L_\delta)^\perp_{\L_\delta}.\]
Hence, it is enough to show the following properties for a linking locality $(\L^+,\Delta^+,S)$  over $\F$ with $\L^+|_\Delta=\L$:
\begin{eqnarray}
F^*(\L^+)=O_p(\L^+)E(\L^+) &\Longleftrightarrow& F^*(\L)=O_p(\L)E(\L) \label{E:Equivalence1}\\ 
E(\L^+)\subseteq O_p(\L^+)^\perp_{\L^+} &\Longleftrightarrow& E(\L)\subseteq O_p(\L)^\perp_\L.\label{E:Equivalence2}
\end{eqnarray}
The required result follows then by applying these equivalences twice, once with $(\L_\delta,\L^s)$ and once with $(\L,\L^s)$ in place of $(\L,\L^+)$.

\smallskip

To prove \eqref{E:Equivalence1} and \eqref{E:Equivalence2}, adapt Notation~\ref{N:VaryObjects} and observe first that Lemma~\ref{L:TakePlusInS}, Lemma~\ref{L:TakePlusGeneralizedFitting} and Lemma~\ref{L:TakePlusLayer} give 
\[O_p(\L)=O_p(\L^+)=O_p(\L)^+,\;F^*(\L^+)=F^*(\L)^+\mbox{ and }E(\L^+)=E(\L)^+.\]
The latter property says $E(\L^+)\cap\L=E(\L)$. Hence, by Lemma~\ref{L:RestrictionIntersectL}, we have  
 \[O_p(\L)E(\L^+)\cap \L=O_p(\L)(E(\L^+)\cap\L)=O_p(\L)E(\L).\] Together with $F^*(\L^+)=F^*(\L)^+$, this implies \eqref{E:Equivalence1}. By Lemma~\ref{L:NperpPlus}, we have $(O_p(\L)^\perp_\L)^+=(O_p(\L)^+)^\perp_{\L^+}=O_p(\L^+)^\perp_{\L^+}$. Since the map $\fN(\L^+)\rightarrow \fN(\L),\N^+\mapsto \N^+\cap\L$ and its inverse are inclusion-preserving by Theorem~\ref{T:VaryObjects}(b), this shows \eqref{E:Equivalence2}.
\end{proof}

\section{The setup for the E-balance theorem} We will now start working towards the proof of the E-balance theorem (Theorem~\ref{T:EbalanceMain}). Our proof will be similar to Chermak's proof of \cite[Theorem~9.9]{ChermakIII}; however, we use different arguments to show the preliminary Lemma~\ref{L:ProdCompinbNLD}. Note that formulating any form of an E-balance theorem will require us to look at linking localities over $p$-local subsystems. It seems to us that this is best done in the following setting where, for any $X\leq S$ fully $\F$-normalized, we will be able to consider the im-partial subgroup $\bN_\L(X)$ introduced in Section~\ref{SS:Impartial}. 

\bigskip

\textbf{For the remainder of this chapter let $(\L,\Delta,S)$ be a subcentric locality over a fusion system $\F$. Set
\[\L_\delta:=\L|_{\delta(\F)}.\]}
Observe that $(\L_\delta,\delta(\F),S)$ is a regular locality over $\F$. Thus, according to Definition~\ref{D:EL},  $E(\L_\delta)$ is the product of the components of $\L_\delta$. Moreover, $E(\L_\delta)\unlhd\L_\delta$ by Lemma~\ref{L:ELnormal}. Recall also that $E(\L)$ was defined in Definition~\ref{D:ELGeneral}.

\begin{remark}\label{R:ELSubcentric}
By Lemma~\ref{L:TakePlusLayer}, $E(\L)$ is the unique partial normal subgroup of $\L$ with 
\[E(\L)\cap\L_\delta=E(\L_\delta).\]
This property could be used alternatively as the definition of $E(\L)$. 
\end{remark}

We will need the following lemma.

\begin{lemma}\label{L:ELgenELdelta}
$R_\Delta(E(\L)S)\subseteq \delta(\F)$ and $E(\L)=\<E(\L_\delta)\>_\L$. 
\end{lemma}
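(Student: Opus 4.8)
The plan is to prove the two claims of Lemma~\ref{L:ELgenELdelta} in sequence, the second one being a consequence of the first together with the structure theory already in place.

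\textbf{Step 1: Proving $R_\Delta(E(\L)S)\subseteq\delta(\F)$.}
Fix $P\in R_\Delta(E(\L)S)$. Since $(\L,\Delta,S)$ is a subcentric locality, we have $\Delta=\F^s$, so $\delta(\F)\subseteq\Delta$ and $(\L_\delta,\delta(\F),S)$ is a regular locality over $\F$ by Lemma~\ref{L:deltaFBasic}. First I would replace $E(\L)$ by $F^*(\L)$: by Lemma~\ref{L:ProductPartialSubgroup} or Theorem~\ref{T:ProductsPartialNormal} we have $F^*(\L)=E(\L)O_p(\L)$ with $O_p(\L)\leq O_p(\L S)$, and one checks (using that $E(\L)$ has $p$-power index in $F^*(\L)$ by the analogue of Lemma~\ref{L:OupperpEL}, together with Lemma~\ref{L:ResidueEquivalences}) that $R_\Delta(E(\L)S)=R_\Delta(F^*(\L)S)$ and that every element of this set is $F^*(\L)$-radical. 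Alternatively and more directly: $P$ being $E(\L)S$-radical implies $P$ is $E(\L)$-radical by Lemma~\ref{L:SNradNrad}, and since $O_p(\L)\leq P$ and $O_p(\L)$ normalizes things appropriately, $P$ is $F^*(\L)$-radical. Now apply the key inclusion: $F^*(\L)$ is a centric radical partial normal subgroup of $\L$ by Theorem~\ref{T:GeneralizedFitting}, so by Lemma~\ref{L:GetVastRadintodeltaF} applied with $\N=F^*(\L)$, the fact that $P$ is $F^*(\L)$-radical gives $P\in\delta(\F)$.

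\textbf{Step 2: Proving $E(\L)=\langle E(\L_\delta)\rangle_\L$.}
Write $\K:=\langle E(\L_\delta)\rangle_\L$ for the partial normal closure in $\L$. One inclusion is easy: $E(\L_\delta)\subseteq E(\L)$ by Remark~\ref{R:ELSubcentric} (indeed $E(\L)\cap\L_\delta=E(\L_\delta)$), and since $E(\L)\unlhd\L$ by Lemma~\ref{L:TakePlusLayer} and Lemma~\ref{L:ELnormal}, we get $\K\subseteq E(\L)$. For the reverse inclusion, the idea is to show that $\K$ has $p$-power index in $F^*(\L)$ and hence contains $O^p_\L(F^*(\L))=E(\L)$ by definition and by Lemma~\ref{L:ResidueEquivalences}. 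By that lemma it suffices to check $O^p(N_{F^*(\L)}(P))\subseteq\K$ for every $P\in R_\Delta(F^*(\L)S)$ (equivalently every $P\in R_\Delta(E(\L)S)$). Fix such a $P$. By Step~1, $P\in\delta(\F)$, so $P$ is an object of the regular locality $\L_\delta$, and $N_{F^*(\L)}(P)=N_{F^*(\L_\delta)}(P)$ since $\L=\L^s|_\Delta$-type restriction identities give $N_{F^*(\L)}(P)=N_{F^*(\L)\cap\L_\delta}(P)$ via Lemma~\ref{L:TakePlusGeneralizedFitting}(d) and the fact that normalizers of objects are unchanged under restriction (Lemma~\ref{L:TakePlusRadical}). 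Therefore $O^p(N_{F^*(\L)}(P))=O^p(N_{F^*(\L_\delta)}(P))$. Now $F^*(\L_\delta)=O_p(\L_\delta)E(\L_\delta)$ by Lemma~\ref{L:LayerPartialNormalFstar}, so $E(\L_\delta)$ has $p$-power index in $F^*(\L_\delta)$, whence $O^p(N_{F^*(\L_\delta)}(P))\subseteq E(\L_\delta)\subseteq\K$ by Lemma~\ref{L:ResidueEquivalences} applied inside $\L_\delta$. This proves $O^p(N_{F^*(\L)}(P))\subseteq\K$ for all relevant $P$, hence $\K$ has $p$-power index in $F^*(\L)$, hence $E(\L)=O^p_\L(F^*(\L))\subseteq\K$, completing the proof.

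\textbf{Main obstacle.}
The delicate point is the bookkeeping in Step~2 identifying $N_{F^*(\L)}(P)$ with $N_{F^*(\L_\delta)}(P)$ for $P\in\delta(\F)$: one must verify that passing between the subcentric locality and its restriction to $\delta(\F)$ does not change the relevant normalizers or the generalized Fitting subgroup, which requires combining Lemma~\ref{L:TakePlusGeneralizedFitting}(d), Lemma~\ref{L:TakePlusRadical} and Lemma~\ref{L:RestrictionIntersectL} carefully; in particular one needs $F^*(\L)\cap\L_\delta=F^*(\L_\delta)$ and that $P$ being $F^*(\L)$-radical in $\L$ is equivalent to being $F^*(\L_\delta)$-radical in $\L_\delta$. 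Apart from this, the argument is a routine application of the residue machinery of Chapter~\ref{S:Residues} together with the Alperin-type decomposition Lemma~\ref{L:PartialNormalAlperin}.
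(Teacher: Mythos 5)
Your Step~1 is correct, though you take a different and slightly more circuitous path than the paper. The paper simply invokes Lemma~\ref{L:RegularNReplete}(b) with $\N=E(\L)$, which directly gives $R_\Delta(E(\L)S)\subseteq\delta(\F)$ for any subcentric locality. Your route through $F^*(\L)$ and Lemma~\ref{L:GetVastRadintodeltaF} also works, since $E(\L)S=F^*(\L)S$ (because $F^*(\L)=E(\L)O_p(\L)$ and $O_p(\L)\leq S$), so $R_\Delta(E(\L)S)=R_\Delta(F^*(\L)S)$ trivially and Lemma~\ref{L:SNradNrad} makes every such $P$ $F^*(\L)$-radical; but your formulation (``$O_p(\L)$ normalizes things appropriately'') skips over the actual verification and detours through citations (Lemma~\ref{L:OupperpEL}, Lemma~\ref{L:ResidueEquivalences}) that are not needed here.

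Step~2 has a genuine gap. You set $\K:=\<E(\L_\delta)\>_\L$ and then apply Lemma~\ref{L:ResidueEquivalences} to conclude that $\K$ has $p$-power index in $F^*(\L)$. But Lemma~\ref{L:ResidueEquivalences} requires $\K\unlhd\L$, which is not automatic for the partial subgroup generated by $E(\L_\delta)$. You try to sidestep this by calling $\K$ ``the partial normal closure in $\L$'', but that changes the object you are computing: if $\K$ is taken to be the smallest partial normal subgroup containing $E(\L_\delta)$, then your argument shows $E(\L)$ equals that normal closure, not the generated partial subgroup $\<E(\L_\delta)\>_\L$. These are equal after the fact, but to conclude that one must actually show $E(\L)\subseteq\<E(\L_\delta)\>_\L$, which your argument does not do. The paper avoids this entirely: it applies Lemma~\ref{L:PartialNormalAlperin} with $\N=E(\L)$ to write any $n\in E(\L)$ as $\Pi(t,n_1,\dots,n_k)$ with $t\in E(\L)\cap S\subseteq E(\L_\delta)$ and $n_i\in O^p(N_{E(\L)}(R_i))$ for $R_i\in R_\Delta(E(\L)S)\subseteq\delta(\F)$; for such $R_i$ one has $N_{E(\L)}(R_i)=N_{E(\L)\cap\L_\delta}(R_i)=N_{E(\L_\delta)}(R_i)\subseteq E(\L_\delta)$ by Remark~\ref{R:ELSubcentric}, so $n\in\<E(\L_\delta)\>_\L$. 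This directly gives $E(\L)\subseteq\<E(\L_\delta)\>_\L$ with no normality assumption on the generated partial subgroup; the reverse inclusion is immediate since $E(\L)$ is a partial subgroup of $\L$ containing $E(\L_\delta)$. Your detour through $F^*(\L)$ and $O^p_\L(\cdot)$ is not needed, and in fact it is what creates the normality obstacle.
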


\begin{proof}
By Lemma~\ref{L:RegularNReplete}(b) we have $R_\Delta(E(\L)S)\subseteq\delta(\F)$. In particular, using Remark~\ref{R:ELSubcentric},  it follows that $N_{E(\L)}(P)=N_{E(\L)\cap \L_\delta}(P)=N_{E(\L_\delta)}(P)$ for every every $P\in R_\Delta(E(\L)S)$. The assertion follows now from Lemma~\ref{L:PartialNormalAlperin}.  
\end{proof}

\section{Localities attached to normalizers of fully $\F$-normalized subgroups} Recall that $(\L,\Delta,S)$ is a subcentric locality. In Section~\ref{SS:Impartial} we introduced the im-partial subgroup
\[\bN_\L(X)=N_\L(X)|_{N_\F(X)^s}\mbox{ for every fully $\F$-normalized subgroup $X\leq S$}.\]
Moreover, we saw that, for any such $X$, the triple $(\bN_\L(X),N_S(X),N_\F(X)^s)$ is a subcentric locality over $N_\F(X)$. Using the usual procedure of restriction, we can thus find a regular locality inside of $\bN_\L(X)$. More precisely, setting
\[\bN_\L^\delta(X)=\bN_\L(X)|_{\delta(N_\F(X))}\mbox{ for every fully $\F$-normalized subgroup $X\leq S$},\]
the triple $(\bN_\L^\delta(X),N_S(X),\delta(N_\F(X)))$ is a regular locality over $N_\F(X)$ for any such $X$. Using \cite[Lemma~9.7(b)]{Henke:2015}, one can actually see that $\bN_\L^\delta(X)=N_\L(X)|_{\delta(N_\F(X))}$. 

\bigskip

As $\bN_\L^\delta(X)$ can be regarded as a regular locality, we have in particular components of $\bN_\L^\delta(X)$  defined as well as the product $E(\bN_\L^\delta(X))$ of the components of $\bN_\L^\delta(X)$. Observe also that, according to Remark~\ref{R:ELSubcentric}, the layer $E(\bN_\L(X))$ is the unique partial normal subgroup of $\bN_\L(X)$ such that 
\[E(\bN_\L(X))\cap \bN_\L^\delta(X)=E(\bN_\L^\delta(X)).\]
So in particular, $E(\bN_\L^\delta(X))\subseteq E(\bN_\L(X))$. We will now show two lemmas which are needed for the proof of the E-balance Theorem, but seem also interesting enough on their own. 

\bigskip

In the next lemma, we will make the assumption that a partial normal subgroup $\N_\delta$ of $\L_\delta$ is an im-partial subgroup of $\bN_\L(X)$. By this we mean more precisely that, writing $\D_\delta$ for the domain of the product on $\L_\delta$ and setting $\D_0:=\D_\delta\cap \W(\N_\delta)$, the triple $(\N_\delta,\D_0,\Pi|_{\D_0})$ is an im-partial subgroup of $\bN_\L(X)$, where $\Pi$ denotes the product on $\L$. In other words, we regard $\N_\delta$ as a partial group as we would naturally do for any partial subgroup of $\L_\delta$, and we assume then that this is an im-partial subgroup of $\bN_\L(X)$. As one easily observes, this is equivalent to assuming that $\D_0$ is contained in the domain of the product on $\bN_\L(X)$.

\begin{lemma}\label{L:PartialNormalinbNLX}
Let $X\leq S$ be fully $\F$-normalized and $\N_\delta\unlhd\L_\delta$ such that $\N_\delta$ is an im-partial subgroup of $\bN_\L(X)$. Then 
\[\N_\delta\unlhd\bN_\L^\delta(X),\] 
in particular $\N_\delta$ is a partial subgroup of $\bN_\L^\delta(X)$. Moreover, if $\N\unlhd\L$ with $\N\cap\L_\delta=\N_\delta$, then $\N_\delta=\N\cap\bN_\L^\delta(X)$. 
\end{lemma}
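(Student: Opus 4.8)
\textbf{Proof proposal for Lemma~\ref{L:PartialNormalinbNLX}.}

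The plan is to verify that $\N_\delta$ is a partial normal subgroup of the regular locality $\bN_\L^\delta(X)$ directly from the definitions, then deduce the final statement $\N_\delta=\N\cap\bN_\L^\delta(X)$ by a double-inclusion argument. First I would recall that $\bN_\L^\delta(X)=N_\L(X)|_{\delta(N_\F(X))}$ (as noted in the excerpt, using \cite[Lemma~9.7(b)]{Henke:2015}), so that its underlying set consists of those $f\in N_\L(X)$ with $S_f\cap N_S(X)\in\delta(N_\F(X))$, and its domain $\D_1$ consists of the words $w\in\W(\bN_\L^\delta(X))$ lying in the domain of the product on $\bN_\L(X)$ via objects in $\delta(N_\F(X))$. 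The hypothesis that $\N_\delta$ is an im-partial subgroup of $\bN_\L(X)$ means precisely that $\D_0:=\D_\delta\cap\W(\N_\delta)$ is contained in the domain of the product on $\bN_\L(X)$; since $\N_\delta\unlhd\L_\delta$ each element $n\in\N_\delta$ has $S_n\in\delta(\F)$, and as $n\in N_\L(X)$ we get $X\leq S_n$ so $S_n\cap N_S(X)=S_n$, giving $n\in\bN_\L^\delta(X)$. Thus $\N_\delta\subseteq\bN_\L^\delta(X)$ as sets. Closure under inversion is inherited from $\L_\delta$, and for $w\in\D_1\cap\W(\N_\delta)$ one has $w\in\D_0$ (the objects witnessing $w\in\D_1$ lie in $\delta(N_\F(X))$ hence contain $X$, and $S_w\in\delta(\F)$ follows), so $\Pi(w)\in\N_\delta$ because $\N_\delta$ is a partial subgroup of $\L_\delta$. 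Hence $\N_\delta$ is a partial subgroup of $\bN_\L^\delta(X)$.

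Next I would check partial normality. Let $f\in\bN_\L^\delta(X)$ and $n\in\N_\delta$ with $n\in\D(f)$, i.e. $(f^{-1},n,f)\in\D$. Since $f\in N_\L(X)\subseteq\L$ and $\N_\delta=\N\cap\L_\delta$ with $\N\unlhd\L$ and $n\in\N$, we get $n^f\in\N$. On the other hand $(f^{-1},n,f)$ is a word in $N_\L(X)$ (as $f^{-1},n,f$ all normalize $X$), and its object chain can be taken through overgroups of $X$; since $S_n\in\delta(\F)$ and $f$ conjugates within $\bN_\L^\delta(X)$, one gets $S_{n^f}=S_n^f\in\delta(\F)$, so $n^f\in\L_\delta$. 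Combining, $n^f\in\N\cap\L_\delta=\N_\delta$. This shows $\N_\delta\unlhd\bN_\L^\delta(X)$. (Strictly, I should also confirm $n^f$ is computed with the same product whether viewed in $\L$ or in $\bN_\L^\delta(X)$, which holds because $\bN_\L^\delta(X)$ is obtained from $N_\L(X)$ by restriction and $N_\L(X)\subseteq\L$, so all three products agree on the relevant words.)

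Finally, for the equality $\N_\delta=\N\cap\bN_\L^\delta(X)$ assuming $\N\unlhd\L$ with $\N\cap\L_\delta=\N_\delta$: the inclusion $\N_\delta\subseteq\N\cap\bN_\L^\delta(X)$ is immediate since $\N_\delta\subseteq\N$ and $\N_\delta\subseteq\bN_\L^\delta(X)$ by the above. Conversely, if $g\in\N\cap\bN_\L^\delta(X)$, then $g\in N_\L(X)$ with $S_g=S_g\cap N_S(X)\in\delta(N_\F(X))$; I would argue $S_g\in\delta(\F)$, so that $g\in\L_\delta$, whence $g\in\N\cap\L_\delta=\N_\delta$. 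The implication $\delta(N_\F(X))\ni S_g\Rightarrow S_g\in\delta(\F)$ is where the one genuinely nontrivial point lies: it should follow by combining Lemma~\ref{L:NFXs}-style reasoning with the characterization $\delta(N_\F(X))=\{P\leq N_S(X)\colon P\cap F^*(\bN_\L(X))\in N_\F(X)^s\}$ and the fact (E-balance-adjacent, via Lemma~\ref{L:ELgenELdelta} or the behaviour of $F^*$ under passing to $\bN_\L(X)$) that membership in $\delta$ is controlled locally; more simply, since $g\in\N\subseteq\L$ and $S_g$ contains $X$ and lies in the object set $\delta(N_\F(X))$ of the regular locality $\bN_\L^\delta(X)$, the element $g$ already lies in $\L_\delta$ because $\bN_\L^\delta(X)=N_\L(X)|_{\delta(N_\F(X))}$ is a restriction of $\L_\delta$ — this last observation, namely that $\delta(N_\F(X))\subseteq\delta(\F)$ after identifying objects containing $X$, is the step I expect to require the most care and possibly a reference to the description of $\delta(N_\F(X))$ in terms of $F^*$. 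I would isolate it as a short sub-claim and prove it using Lemma~\ref{L:deltaFDefinition} together with the normalizer-compatibility of $F^*$.
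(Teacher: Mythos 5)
There is a genuine gap, and it sits exactly where you flagged your uncertainty. Your argument throughout relies on being able to pass between $\delta(N_\F(X))$ and $\delta(\F)$: to get the set inclusion $\N_\delta\subseteq\bN_\L^\delta(X)$ you need that $S_n\cap N_S(X)\in\delta(N_\F(X))$ from $S_n\in\delta(\F)$, and for the converse direction you need the reverse implication. Neither is established, and the second one in particular should not be expected to hold: $\delta(N_\F(X))$ is defined in terms of $F^*(\bN_\L(X))$ while $\delta(\F)$ is defined via $F^*(\L)$, and the paper's closing remarks explicitly caution that even $E(\bN_\L^\delta(X))\subseteq E(\L_\delta)$ can fail. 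Your proposed fallback (``$\delta(N_\F(X))\subseteq\delta(\F)$ after identifying objects containing $X$'') is therefore not a small technical point to isolate — it is the entire difficulty, and it is false in the naive form you would need. There is also a smaller but real error early on: from $n\in N_\L(X)$ you get $X\leq S_n$, but that does not imply $S_n\leq N_S(X)$, so $S_n\cap N_S(X)=S_n$ is wrong; elements of $S_n$ need not normalize $X$. Similarly, in the normality step, the objects witnessing $(f^{-1},n,f)$ in the domain of $\bN_\L^\delta(X)$ live in $\delta(N_\F(X))$, not $\delta(\F)$, so ``$S_{n^f}\in\delta(\F)$'' does not follow, and the equality $S_{n^f}=S_n^f$ is not available without $f\in N_\L(S)$.

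The paper's proof is structurally different precisely so as to avoid any comparison of the two $\delta$-sets. It fixes $\N\unlhd\L$ with $\N\cap\L_\delta=\N_\delta$ and sets $\M_\delta:=\N\cap\bN_\L^\delta(X)$, which is automatically a partial normal subgroup of the regular locality $\bN_\L^\delta(X)$. It then checks via Lemma~\ref{L:RegularNReplete}(c) that $\F_T(\M_\delta)=\F_T(\N_\delta)$, so $\M_\delta$ and $\N_\delta$ are both regular localities over the same fusion system and hence isomorphic by the uniqueness theorem. The key containment $\M_\delta\subseteq\N_\delta$ is obtained not by a $\delta$-inclusion but by the description $\delta(\F_T(\N_\delta))=\{P\leq T:PC_S(\N_\delta)\in\delta(\F)\}$ from Theorem~\ref{T:RegularPartialNormal}, combined with $C_S(\N_\delta)\subseteq C_S(\M_\delta)$, to show that $\M_\delta$ is an im-partial subgroup of $\L_\delta$. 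Equality then follows because one regular locality cannot properly contain an isomorphic one as an im-partial subgroup. If you want to salvage your approach, you will need something playing this role of the uniqueness theorem; the direct object-set comparison you sketched will not go through.
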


\begin{proof}
Set $T:=\N_\delta\cap S$. By Theorem~\ref{T:VaryObjects}(b), we can pick $\N\unlhd\L$ such that $\N\cap \L_\delta=\N_\delta$ and thus $\N\cap S=T$. Then $\F_T(\N_\delta)=\F_T(\N)$ by Lemma~\ref{L:RegularNReplete}(c). Notice that 
\[\N_\delta\subseteq \M:=\N\cap\bN_\L(X)\unlhd \bN_\L(X)\]
and $\N_\delta$ is an im-partial subgroup of $\M$. Moreover, $\M\cap S=T$ and $\F_T(\N_\delta)\subseteq \F_T(\M)\subseteq\F_T(\N)=\F_T(\N_\delta)$, so equality holds and $\F_T(\N_\delta)=\F_T(\N)=\F_T(\M)$. Observe that
\[\M_\delta:=\M\cap \bN_\L^\delta(X)=\N\cap\bN_\L^\delta(X)\unlhd\bN_\L^\delta(X)\]
and, by Lemma~\ref{L:RegularNReplete}(c) applied with $\bN_\L(X)$ in place of $\L$, we have $T=\M_\delta\cap S$ and $\F_T(\M_\delta)=\F_T(\M)=\F_T(\N_\delta)$. Now $\M_\delta$ and $\N_\delta$ are both regular localities over the same fusion system and thus isomorphic by \cite[Theorem~A(a)]{Henke:2015} (cf. Remark~\ref{R:ExistenceUniquenessCLS}). Moreover, by Theorem~\ref{T:RegularPartialNormal}, for every $P\in\delta(\F_T(\M_\delta))=\delta(\F_T(\N_\delta))$, we have $PC_S(\N_\delta)\in\delta(\F)$. Observe that $C_S(\N_\delta)=C_S^\circ(\N_\delta)=C_S(\N)\subseteq C_S(\M_\delta)$, where the first equality uses Corollary~\ref{C:RegularNReplete}, the second equality uses the definition of $C_S^\circ(\N_\delta)$, and the inclusion at the end uses $\M_\delta\subseteq\N$. As $(\M_\delta,\delta(\F_T(\M_\delta)),T)$ is a locality, this yields that $\M_\delta$ is an im-partial subgroup of $\L_\delta$ and hence 
\[\M_\delta\subseteq \L_\delta\cap \N=\N_\delta\]
is an im-partial subgroup of $\N_\delta$. Since $\M_\delta$ and $\N_\delta$ are isomorphic, it follows that $\N_\delta=\M_\delta\unlhd\bN_\L^\delta(X)$. This proves the assertion.
\end{proof}

The following lemma is inspired by \cite[Lemma~9.8]{ChermakIII} and plays a similar role in the proof of E-balance below.

\begin{lemma}\label{L:ProdCompinbNLD}
Let $\mathfrak{C}\subseteq\Comp(\L_\delta)$ such that $\H:=\prod_{\K\in\mathfrak{C}}\K\unlhd\L_\delta$. Suppose there exists $X\leq S$ fully $\F$-normalized such that $\H\subseteq N_\L(X)$ and  $E(\H^\perp_{\L_\delta})\cap S\leq N_S(X)$. Then \[\H\unlhd\bN_\L^\delta(X),\;\mathfrak{C}\subseteq\Comp(\bN^\delta_\L(X))\mbox{ and }\H\subseteq E(\bN_\L^\delta(X)).\] 
\end{lemma}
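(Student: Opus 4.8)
The plan is to prove the three assertions in order, bootstrapping from the first to the third.

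\textbf{Step 1: $\H\unlhd\bN_\L^\delta(X)$.} The natural approach is to invoke Lemma~\ref{L:PartialNormalinbNLX}, which requires checking that $\H$ (regarded as a partial group in the usual way inside $\L_\delta$) is an im-partial subgroup of $\bN_\L(X)$. Since $\H\unlhd\L_\delta$ by hypothesis, the only thing to verify is that the domain $\D_0$ of the product on $\H$ is contained in the domain of the product on $\bN_\L(X)$. Writing $T:=\H\cap S$, a word $w\in\W(\H)$ lies in the domain of the product on $\L_\delta$ iff $S_w\in\delta(\F)$, and it lies in the domain of $\bN_\L(X)$ iff $S_w\in N_\F(X)^s$ and $w$ conjugates along objects in $N_\F(X)^s$; by Lemma~\ref{L:NFXs} this reduces to checking $S_wX\in\F^s$ for the relevant objects, and since $\H\subseteq N_\L(X)$ we have $X\leq N_S(S_w)$ so the subgroups $S_wX$ make sense. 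The key input is the hypothesis $E(\H^\perp_{\L_\delta})\cap S\leq N_S(X)$: one first observes that $X$ normalizes $\H$ (because $\H\subseteq N_\L(X)$ forces $\H$ to act on $X$, hence by Theorem~\ref{T:RegularPartialNormal}(f) applied inside the regular locality $\L_\delta$, $X$ normalizes $\H$ too), and then using that $\H$ and $\H^\perp_{\L_\delta}$ together essentially control $F^*(\L_\delta)$, one shows $X$ centralizes a large enough part of $\H$'s objects that all the $S_wX$ stay subcentric. Actually the cleanest route is: apply Lemma~\ref{L:PartialNormalinbNLX} directly once we know $\N_\delta=\H$ is an im-partial subgroup of $\bN_\L(X)$, and that last fact should follow from $X$ normalizing $T$ together with $X$ centralizing $E(\H^\perp_{\L_\delta})\cap S$, via the structural description in Theorem~\ref{T:RegularN1timesN2}/Theorem~\ref{T:RegularPartialNormal} of objects of $\H$ as products with objects of $\H^\perp$.

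\textbf{Step 2: $\mathfrak{C}\subseteq\Comp(\bN_\L^\delta(X))$.} Once $\H\unlhd\bN_\L^\delta(X)$, Lemma~\ref{L:PartialNormalinbNLX} also gives that $\H$ is a partial subgroup of $\bN_\L^\delta(X)$ with $\H=\N\cap\bN_\L^\delta(X)$ for the $\N\unlhd\L$ extending it. Each $\K\in\mathfrak{C}$ is a component of $\L_\delta$, hence subnormal in $\L_\delta$; since $\K\subseteq\H$ and $\H\unlhd\bN_\L^\delta(X)$, and $\K$ is subnormal in $\H$ (by Remark~\ref{R:SubnormalinSubgroupComponents}(a), as $\K\in\Comp(\L_\delta)$ with $\K\subseteq\H\subn\L_\delta$ gives $\K\in\Comp(\H)$), we get $\K\subn\bN_\L^\delta(X)$ by transitivity of subnormality (Lemma~\ref{L:SubnormalinSubgroup}(b)). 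Since $\K$ is quasisimple, it is a component of $\bN_\L^\delta(X)$ by definition. Thus $\mathfrak{C}\subseteq\Comp(\bN_\L^\delta(X))$.

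\textbf{Step 3: $\H\subseteq E(\bN_\L^\delta(X))$.} By Step~2 every $\K\in\mathfrak{C}$ is a component of $\bN_\L^\delta(X)$, so by Proposition~\ref{P:FstarLComponents1} (applied to the regular locality $\bN_\L^\delta(X)$) each $\K\unlhd F^*(\bN_\L^\delta(X))$ and the product of any subset of components is well-defined and lies in $F^*(\bN_\L^\delta(X))$; in particular $\prod_{\K\in\mathfrak{C}}\K\subseteq E(\bN_\L^\delta(X))$ since $E$ is the product of \emph{all} components. But $\H=\prod_{\K\in\mathfrak{C}}\K$ by hypothesis, so $\H\subseteq E(\bN_\L^\delta(X))$. (One should double-check that the product $\prod_{\K\in\mathfrak{C}}\K$ computed inside $\L_\delta$ agrees with the one computed inside $\bN_\L^\delta(X)$; this holds because $\H$ is a partial subgroup of $\bN_\L^\delta(X)$ by Step~1 and the products are restrictions of the same global product $\Pi$ on $\L$.)

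\textbf{Main obstacle.} The crux is Step~1, specifically extracting from the two hypotheses ``$\H\subseteq N_\L(X)$'' and ``$E(\H^\perp_{\L_\delta})\cap S\leq N_S(X)$'' the conclusion that $\H$ is an im-partial subgroup of $\bN_\L(X)$ — i.e. that every object $S_w$ arising in a word of $\H$, when multiplied by $X$, stays in $\F^s$. The point is that $X$ normalizes $T=\H\cap S$ and centralizes $E(\H^\perp_{\L_\delta})\cap S$, and by Theorem~\ref{T:RegularPartialNormal}(a) the objects of $\H$ in $\delta(\F_T(\H))$ are precisely the $P\leq T$ with $PC_S(\H)\in\delta(\F)$, where $C_S(\H)=C_S(\N)$ is essentially $\N^\perp\cap S$; one then needs that $X$ normalizes $PC_S(\H)$ — which follows since $X$ normalizes $P$ (being $\H$-invariant-ish, or more carefully since objects are $\H$-conjugation-closed and $X$ acts on $\H$) and $X$ normalizes $C_S(\H)$, the latter being a consequence of $X$ normalizing $T$ together with $E(\H^\perp_{\L_\delta})\cap S\leq N_S(X)$ and the fact that $C_S(\H)$ sits inside $F^*(\H^\perp_{\L_\delta})\cap S=(E(\H^\perp_{\L_\delta})\cap S)O_p(\L_\delta)$ with $O_p(\L_\delta)$ automatically $X$-invariant. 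Assembling these normalization facts carefully, and concluding via Lemma~\ref{L:NFXs} that $PC_S(\H)X\in\F^s$, is the technical heart of the argument; the rest is bookkeeping with Lemma~\ref{L:PartialNormalinbNLX}, subnormality transitivity, and Proposition~\ref{P:FstarLComponents1}.
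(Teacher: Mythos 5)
Your overall strategy is the same as the paper's: show $\H$ is an im-partial subgroup of $\bN_\L(X)$, apply Lemma~\ref{L:PartialNormalinbNLX} to get $\H\unlhd\bN_\L^\delta(X)$, then pass to components via Remark~\ref{R:SubnormalinSubgroupComponents}. Your Steps~2 and 3 are essentially correct.

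However, your Step~1 — which you correctly identify as the technical heart — has a concrete gap. You propose to work with objects $PC_S(\H)$ (where $C_S(\H)=\H^\perp_{\L_\delta}\cap S$) and to show $PC_S(\H)X\in\F^s$. This cannot be made to work from the given hypothesis, for two reasons. First, the hypothesis controls $R:=E(\H^\perp_{\L_\delta})\cap S$, not $C_S(\H)$: one has $F^*(\H^\perp_{\L_\delta})\cap S=(E(\H^\perp_{\L_\delta})\cap S)O_p(\L)\subseteq \H^\perp_{\L_\delta}\cap S=C_S(\H)$, so your claimed inclusion $C_S(\H)\subseteq F^*(\H^\perp_{\L_\delta})\cap S$ is in the wrong direction; in general $C_S(\H)$ is strictly larger than $RO_p(\L)$. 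Second, and more fundamentally, to conclude $PC_S(\H)\in N_\F(X)^s$ via Lemma~\ref{L:NFXs} one needs $PC_S(\H)\leq N_S(X)$, i.e.\ $C_S(\H)\leq N_S(X)$, which is a strictly stronger hypothesis than the given $E(\H^\perp_{\L_\delta})\cap S\leq N_S(X)$. The lemma is stated precisely so as not to require control over all of $C_S(\H)$.

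The paper's resolution is to work with $R$ rather than $C_S(\H)$. One uses the decompositions $E(\L_\delta)=E(\H)E(\H^\perp_{\L_\delta})$ and $F^*(\L_\delta)=E(\H)\bigl(E(\H^\perp_{\L_\delta})O_p(\L)\bigr)$, together with $O_p(\L)\subseteq\H^\perp_{\L_\delta}$, to apply Theorem~\ref{T:RegularN1timesN2}(d) with $\N_1=\H$, $\N_2=E(\H^\perp_{\L_\delta})O_p(\L)$: this gives $PRO_p(\L)\in\delta(\F)$ for every $P\in\delta(\F_{S\cap\H}(\H))$, whence $PR\in\delta(\F)\subseteq\F^s$ by Lemma~\ref{L:deltaFtimesNormal}. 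Since $PR\leq (\H\cap S)R\leq N_S(X)$ by hypothesis, Lemma~\ref{L:NFXs} gives $PR\in N_\F(X)^s$. The reason the smaller subgroup $R$ suffices is that $\H\subseteq C_\L(R)$ (Corollary~\ref{C:NperpCapSGeneral}), so any word in $\H$ that conjugates along a chain of objects $P_i\in\delta(\F_{S\cap\H}(\H))$ also conjugates along the chain $P_iR$; as $\H\subseteq N_\L(X)$ this exhibits the word as lying in the domain of the product on $\bN_\L(X)$, establishing the im-partial subgroup claim needed for Lemma~\ref{L:PartialNormalinbNLX}.
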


\begin{proof}
We will write $\H^\perp$ for $\H^\perp_{\L_\delta}$. Set $R:=E(\H^\perp)\cap S$. We use throughout that $O_p(\L_\delta)=O_p(\L)$ by Lemma~\ref{L:TakePlusInS}.

\smallskip

\emph{Step~1:} We show that $PR\in N_\F(X)^s$ for every $P\in\delta(\F_{S\cap\H}(\H))$.  By Lemma~\ref{L:ComponentinNorNperp} and Lemma~\ref{L:LayerPartialNormalFstar}, we have $E(\L_\delta)=E(\H)E(\H^\perp)$ and $F^*(\L_\delta)=E(\L_\delta)O_p(\L)=(E(\H)E(\H^\perp))O_p(\L)$. Thus, \cite[Theorem~2(a)]{Henke:2015a} allows to write 
\[F^*(\L)=E(\H)(E(\H^\perp)O_p(\L)).\]
Lemma~\ref{L:ComponentCentralizesOp} implies that $\H\subseteq O_p(\L)^\perp_{\L_\delta}=C_{\L_\delta}(O_p(\L))$. Hence, Corollary~\ref{C:NinMperp} gives $O_p(\L)\subseteq \H^\perp$. By Lemma~\ref{L:ELnormal}, $E(\H)$ and $E(\H^\perp)$ are partial normal subgroups of $\L$. In particular, by Theorem~\ref{T:ProductsPartialNormal}, we have $E(\H^\perp)O_p(\L)\unlhd\L$ and $E(\H^\perp)O_p(\L)\cap S=RO_p(\L)$. Moreover, $\H$ commutes with $E(\H^\perp)O_p(\L)\subseteq\H^\perp$ by Corollary~\ref{C:MperpNNperpM}. Recall furthermore that $F^*(\L)$ is centric in $\L$ by Theorem~\ref{T:GeneralizedFitting}. Thus, fixing $P\in\delta(\F_{S\cap\H}(\H))$, it follows from  Theorem~\ref{T:RegularN1timesN2}(d) that $PRO_p(\L)\in\delta(\F)$. Hence, by Lemma~\ref{L:deltaFtimesNormal}, we have $PR\in\delta(\F)\subseteq\F^s$. As $PR\leq (\H\cap S)R\leq N_S(X)$ by assumption, it follows from Lemma~\ref{L:NFXs} that $PR\in N_\F(X)^s$. 
 
\smallskip

\emph{Step~2:} We argue that the assertion holds. Corollary~\ref{C:NperpCapSGeneral} gives $\H\subseteq C_{\L_\delta}(R)\subseteq C_\L(R)$. Moreover, by assumption $\H\subseteq N_\L(X)$. Hence, it follows from Step~1, the definition of $\mathbb{N}_\L(X)$ and the fact that $(\H,\delta(\F_{S\cap\H}(\H)),S\cap\H)$ is a locality that $\H$ is an im-partial subgroup of $\bN_\L(X)$. Therefore, Lemma~\ref{L:PartialNormalinbNLX} yields $\H\unlhd\bN_\L^\delta(X)$. So by Remark~\ref{R:SubnormalinSubgroupComponents}, we have  $\mathfrak{C}\subseteq\Comp(\H)\subseteq\Comp(\bN_\L^\delta(X))$ and thus $\H=\prod_{\K\in\mathfrak{C}}\K\subseteq E(\bN_\L^\delta(X))$. 
\end{proof}

\section{The proof of E-Balance} We are now in a position to prove the E-balance Theorem stated in the introduction as Theorem~\ref{T:EbalanceMain}. The following theorem is essentially a restatement of that theorem.

\begin{theorem}[$E$-balance]\label{T:Ebalance}
 Let $X\leq S$ be fully $\F$-normalized. Then 
\[E(\bN_\L^\delta(X))\subseteq E(\bN_\L(X))\subseteq E(\L).\]
\end{theorem}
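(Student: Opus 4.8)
The inclusion $E(\bN_\L^\delta(X))\subseteq E(\bN_\L(X))$ is already recorded above (it follows from Remark~\ref{R:ELSubcentric} applied to the subcentric locality $\bN_\L(X)$: the layer of $\bN_\L(X)$ is the unique partial normal subgroup meeting $\bN_\L^\delta(X)$ in $E(\bN_\L^\delta(X))$, hence contains $E(\bN_\L^\delta(X))$). So the real content is $E(\bN_\L^\delta(X))\subseteq E(\L)$, and by Lemma~\ref{L:ELgenELdelta} (applied inside $\bN_\L^\delta(X)$ / $\bN_\L(X)$, using that $E(\bN_\L(X))=O^p_{\bN_\L(X)}(F^*(\bN_\L(X)))$ is generated by the $N_{E(\bN_\L^\delta(X))}(P)$ over $P\in R(\cdot)$, together with Remark~\ref{R:ELSubcentric}) it suffices to prove $E(\bN_\L^\delta(X))\subseteq E(\L)$, i.e. that every component $\K$ of $\bN_\L^\delta(X)$ is contained in $E(\L)$. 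Since $E(\L)$ is determined by $E(\L_\delta)$ via Remark~\ref{R:ELSubcentric}, and since $\K\subseteq N_\L(X)\subseteq \L$, it is enough to show $\K\subseteq E(\L)$; because $R_\Delta(E(\L)S)\subseteq\delta(\F)$ and $E(\L)=\langle E(\L_\delta)\rangle_\L$ by Lemma~\ref{L:ELgenELdelta}, this will reduce further to showing $\K$ lies in the partial normal subgroup of $\L$ generated by $E(\L_\delta)$.

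First I would set up an induction on $|S|$ (or on $|\F|$), the base case $X=S$ being trivial since then $\bN_\L(X)=N_\L(S)$ which is a group of characteristic $p$, so has no components by Lemma~\ref{L:ComponentsCharp}. For the inductive step, fix a component $\K$ of $\bN_\L^\delta(X)$. The key construction is the following: form $\H:=\K E(\L_\delta)^{\langle\cdot\rangle}$ — more precisely, I want to identify a set $\mathfrak{C}\subseteq\Comp(\L_\delta)$ and apply Lemma~\ref{L:ProdCompinbNLD} in the reverse direction, or rather use it to transfer components of $\L_\delta$ into $\bN_\L^\delta(X)$ and compare. The honest approach, following Chermak, is: let $Y=\K\cap S\le N_S(X)$ and consider a component of $\L_\delta$ lying "above" $\K$. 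Using Theorem~\ref{T:RegularPartialNormal} and Theorem~\ref{T:FstarCentralProduct}, $\K$ is subnormal in $N_{\L_\delta}(Z)$ for a suitable $p$-subgroup $Z$ built from $X$; one then pushes $\K$ up through a subnormal chain, at each stage either $\K$ already lies in a proper partial normal subgroup $\N$ (apply induction to $\N$, using Corollary~\ref{C:RegularSubnormal} and Lemma~\ref{L:ComponentinNorNperp} to locate $\K$ in $\Comp(\N)$ or $\Comp(\N^\perp)$) or $\K$ maps onto a component of the whole regular locality $\L_\delta$, in which case $\K\in\Comp(\L_\delta)$ and we are done by definition of $E(\L_\delta)$ and Remark~\ref{R:ELSubcentric}. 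The technical glue is Lemma~\ref{L:ProdCompinbNLD}: it guarantees that once $\mathfrak C\subseteq\Comp(\L_\delta)$ has $\prod_{\K'\in\mathfrak C}\K'\unlhd\L_\delta$ contained in $N_\L(X)$ with $E((\cdot)^\perp)\cap S\le N_S(X)$, these components actually sit inside $E(\bN_\L^\delta(X))$, which is what lets the induction close between the two localities.

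I would organize the step as: (i) show $E(\bN_\L^\delta(X))\cap$ (product of components of $\L_\delta$ that normalize $X$ appropriately) is all of $E(\bN_\L^\delta(X))$, using Lemma~\ref{L:ProdCompinbNLD} and the fact that the remaining components of $\bN_\L^\delta(X)$ centralize $X$-related subgroups and hence can be analyzed in a proper normalizer where induction applies; (ii) for those components genuinely coming from $\Comp(\L_\delta)$, invoke $E(\L_\delta)=\prod_{\K\in\Comp(\L_\delta)}\K$ (Definition~\ref{D:EL}, Theorem~\ref{T:FstarCentralProduct}(a)) and Remark~\ref{R:ELSubcentric} to conclude containment in $E(\L)$; (iii) for components handled by induction inside a proper subnormal subgroup $\N$ of $\L_\delta$ (or inside $\bN_{\bN_\L^\delta(X)}^\delta(Y)$ for some fully normalized $Y$, using Lemma~\ref{L:NinNLXofY} to iterate the $\bN$-construction), conclude $\K\subseteq E(\N)\subseteq E(\L_\delta)\subseteq E(\L)$ via Lemma~\ref{L:ComponentinNorNperp} and transitivity of the layer along subnormal chains (Remark~\ref{R:SubnormalinSubgroupComponents}, Proposition~\ref{P:FstarLComponents1}).

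The main obstacle will be step (i): correctly choosing the auxiliary $p$-subgroup through which to factor (Chermak uses something like $X$ together with a hyperfocal-type piece of $\K^\perp$) so that the hypothesis $E(\H^\perp_{\L_\delta})\cap S\le N_S(X)$ of Lemma~\ref{L:ProdCompinbNLD} is actually verified, and then carefully checking that every component of $\bN_\L^\delta(X)$ is accounted for — either it appears among the transferred components of $\L_\delta$, or it is confined to a proper $p$-local subsystem where the inductive hypothesis (applied to $\bN$ of that subsystem, legitimized by Lemma~\ref{L:NinNLXofY}) gives the conclusion. Verifying that the two cases are exhaustive, and that the fusion-theoretic bookkeeping (full normalization of the various intermediate subgroups, strong closure of the relevant centralizers via Corollary~\ref{C:ChermakEndPartI} and Lemma~\ref{L:CScircN}) is consistent, is where the bulk of the careful work lies; the rest is assembling already-established structural results about regular localities.
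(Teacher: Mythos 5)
Your plan correctly identifies the supporting cast — Lemma~\ref{L:ProdCompinbNLD}, Lemma~\ref{L:NinNLXofY}, Remark~\ref{R:ELSubcentric}, Lemma~\ref{L:ELgenELdelta} — and the reduction to showing $E(\bN_\L^\delta(X))\subseteq E(\L)$. But the proof is missing its actual engine, and the pieces you sketch do not assemble into the argument.

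First, the induction parameter. You propose to induct on $|S|$ (or $|\F|$), but in the paper's argument one fixes $\F$ and $S$ throughout. The induction that makes the proof go is a \emph{double} extremal choice: take a counterexample $(\L,X)$ with $|\L|$ minimal, and among those with $|X|$ maximal. Minimality of $|\L|$ is what lets you apply $E$-balance inside the strictly smaller subcentric locality $\L_D:=\bN_\L(D)$, and maximality of $|X|$ is what gives $E(\L_D)\subseteq E(\L)$ once you know $X<D$. An induction on $|S|$ does not give you either of these: $N_S(D)$ can equal $S$ (e.g.\ when $D\unlhd S$) even when $\L_D\subsetneq\L$, so the Sylow does not drop.

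Second, the auxiliary subgroup. You say Chermak uses ``$X$ together with a hyperfocal-type piece of $\K^\perp$,'' and describe ``pushing $\K$ up through a subnormal chain.'' Neither matches what is actually needed. The crucial object is $D:=X\,N_T(X)$ where $T:=E(\L)\cap S$, and the argument does not push a component upward along a chain. Instead, one lets $\H$ be the product of the ``bad'' components of $\bN_\L^\delta(X)$ (those not already in $E(\L)$), verifies via Lemma~\ref{L:ProdCompinbNLD} applied to the pair $(\L_X,D)$ that $\H\subseteq E(\bN_{\L_X}^\delta(D))$ — the hypothesis $E(\H^\perp)\cap S\leq N_S(X)$ is what forces the specific choice $D=XN_T(X)$, since the complementary components of $\L_X^\delta$ are exactly the ones landing in $E(\L)$, so their Sylow sits in $A=N_T(X)\leq D$ — and then uses the symmetry $\bN_{\L_X}(D)=\bN_{\L_D}(X)$ from Lemma~\ref{L:NinNLXofY}, together with $\L_D\neq\L$ (minimality of $|\L|$) and $X<D$ (maximality of $|X|$), to squeeze $\H\subseteq E(\bN_{\L_D}(X))\subseteq E(\L_D)\subseteq E(\L)$, contradiction. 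The steps in between (arranging $D$ to be fully $\F$-normalized, checking $\L\neq\L_D$ and $X<D$, handling constrained cases via Lemma~\ref{L:ComponentsCharp}) are exactly the ``careful bookkeeping'' you anticipated, but you did not identify $D$ or the extremal setup that makes the bookkeeping terminate. As written, the proposal is a reasonable orientation but not a proof: the single decisive idea — replace $X$ by $D=XN_T(X)$, use $\bN_{\L_X}(D)=\bN_{\L_D}(X)$, and run the double extremal argument — is absent.
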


\begin{proof}
Let $(\L,X)$ be a counterexample such that first $|\L|$ is as small as possible and then $|X|$ is as large as possible. For every $U\leq S$ which is fully $\F$-normalized set 
\[\L_U:=\bN_\L(U)\mbox{ and }\L_U^\delta:=\bN_\L^\delta(U).\]
By Lemma~\ref{L:ELgenELdelta}, we have $E(\L_X)=\<E(\L^\delta_X)\>_{\L_X}$. Hence, as $(\L,X)$ is a counterexample, we have  $E(\L_X^\delta)\not\subseteq E(\L)$. As $E(\L_X^\delta)$ is the product of the components of $\L_X^\delta$ in $\L_X^\delta$ and since $\L_X^\delta$ is an im-partial subgroup of $\L$, it follows that 
\[\mathfrak{C}:=\{\K\in\Comp(\L_X^\delta)\colon \K\not\subseteq E(\L)\}\neq \emptyset.\]
Let $\H$ be the product in $\L_X^\delta$ of the elements of $\mathfrak{C}$ (which is well-defined by Proposition~\ref{P:FstarLComponents1}). Set moreover 
\[T:=E(\L)\cap S,\; A:=N_T(X)\mbox{ and }D:=XA.\] 
 
\smallskip

\emph{Step~1:} We argue that $(\L,Y)$ is a counterexample for every $Y\in X^\F$ which is fully $\F$-normalized. For the proof fix such $Y$. By \cite[Lemma~2.6(c)]{Aschbacher/Kessar/Oliver:2011}, there exists $\alpha\in\Hom_\F(N_S(X),S)$ such that $X\alpha=Y$. Then $N_S(X)\alpha\leq N_S(Y)$ and the assumption that $X$ is fully normalized yields $N_S(X)\alpha=N_S(Y)$. It is then easy to check that $\alpha$ induces an isomorphism from the fusion system $N_\F(X)$ to the fusion system $N_\F(Y)$. Thus, Lemma~\ref{L:IsoFusionSystemsELcapS} gives $(E(\L_X^\delta)\cap S)\alpha=E(\L_Y^\delta)\cap S$. Assume now $E(\L_Y^\delta)\subseteq E(\L)$. Since $T$ is strongly $\F$-closed, it follows then $E(\L_X^\delta)\cap S\leq T=E(\L)\cap S$. Hence, for $\K\in\fC$, we have $\K\cap S\leq E(\L)\cap \K$ and thus $\K\cap E(\L)\not\leq Z(\K)$. Thus, as $\K\cap E(\L)\unlhd\K$, Lemma~\ref{L:Quasisimple} yields $\K=\K\cap E(\L)\subseteq E(\L)$ contradicting the definition of $\fC$. So $E(\L_Y^\delta)\not\subseteq E(\L)$ and thus $E(\L_Y)\not\subseteq E(\L)$.

\smallskip

\emph{Step~2:} We show that we may choose $X$ such that $D$ is fully $\F$-normalized. For the proof let $\beta\in\Hom_\F(N_S(D),S)$ such that $D\beta$ is fully $\F$-normalized. Notice that $N_S(X)\subseteq N_S(D)$ as $T\unlhd S$. Hence, the assumption that $X$ is fully normalized implies also that $X\beta$ is fully normalized and $N_S(X)\beta=N_S(X\beta)$. By Step~1, we have in particular that $E(\L_{X\beta})\not\subseteq E(\L)$. As $T$ is strongly $\F$-closed, we have moreover $A\beta\leq N_T(X\beta)$. Similarly, as $N_S(X)\beta=N_S(X\beta)$, we have $N_T(X\beta)\beta^{-1}\leq N_T(X)=A$ and thus $N_T(X\beta)\leq A\beta$. Hence, $A\beta=N_T(X\beta)$ and $D\beta =(X\beta)N_T(X\beta)$. So replacing $X$ by $X\beta$ we may indeed choose $X$ such that $D$ is fully $\F$-normalized.

\smallskip

From now on, we assume without loss of generality that $D$ is fully $\F$-normalized. As $D\unlhd N_S(X)$, the subgroup $D$ is clearly also fully $N_\F(X)$-normalized. Hence, by Lemma~\ref{L:NinNLXofY}, $X$ is fully $N_\F(D)$-normalized.

\smallskip

\emph{Step~3:} We show $\H\subseteq E(\bN_{\L_X}^\delta(D))\subseteq E(\bN_{\L_X}(D))$. Appealing to Lemma~\ref{L:ProdCompinbNLD} applied with $(\L_X,D)$ in place of $(\L,X)$, it is sufficient to show that
\begin{equation}\label{E:Ebalance0}
\H\unlhd \L_X^\delta,\;\H\subseteq N_{\L_X}(D)\mbox{ and }E(\H^\perp_{\L_X^\delta})\cap S\leq A\leq D. 
\end{equation}
Note that $\M:=E(\L)\cap \L_X^\delta\unlhd \L_X^\delta$ and $\mathfrak{C}$ is precisely the set of components of $\L_X^\delta$ which are not in $\M$ or equivalently not components of $\M$ by Remark~\ref{R:SubnormalinSubgroupComponents}. Hence, Lemma~\ref{L:ComponentinNorNperp} yields $\mathfrak{C}=\Comp(\M^\perp_{\L_X^\delta})$ and $\H=E(\M^\perp_{\L_X^\delta})$. In particular, by Lemma~\ref{L:ELnormal}, we have
\[\H\unlhd \L_X^\delta.\]
By Theorem~\ref{T:RegularPartialNormal}(e), we have $\M^\perp_{\L_X^\delta}=C_{\L_X^\delta}(\M)$. So 
$\H\subseteq \M^\perp_{\L_X^\delta}=C_{\L_X^\delta}(\M)\subseteq C_{\L_X^\delta}(\M\cap S)$. Notice that $\M\cap S=E(\L)\cap N_S(X)=N_T(X)=A$. Since $X$ is normal in $\L_X^\delta$ and $D=AX$, it follows $\H\subseteq N_{\L_X^\delta}(D)\subseteq N_{\L_X}(D)$. Applying Lemma~\ref{L:ComponentinNorNperp} with $(\L_X^\delta,\H)$ in place of $(\L,\N)$, one sees that $E(\H^\perp_{\L_X^\delta})$ is the product of the components of $\L_X^\delta$ which are not components of $\H$. By Theorem~\ref{T:FstarCentralProduct}(d), the components of $\H$ are precisely the elements of $\fC$. Hence, $E(\H^\perp_{\L_X^\delta})$ is the product of the components of $\L_X^\delta$ which are not elements of $\fC$ and so (by definition of $\fC$) contained in $E(\L)$. Hence, $E(\H^\perp_{\L_X^\delta})\cap S\leq E(\L)\cap N_S(X)=A\leq D$. This proves \eqref{E:Ebalance0} and thus Step~3 is complete. 

\smallskip

\emph{Step~4:} We show that $\L\neq \L_D$. Assume otherwise. Then $X\leq D\leq O_p(\L)$. Hence, by Lemma~\ref{L:ComponentCentralizesOp}, we have $T\subseteq E(\L_\delta)\subseteq C_\L(O_p(\L))\subseteq C_\L(X)$, which implies $T=A\leq D$. Now Corollary~\ref{C:ELcapSindeltaF} gives $D\in\delta(\F)=\Delta\subseteq\F^s$. As $\L=\L_D=N_\L(D)$, we have $\F=N_\F(D)$. Hence, it follows from \cite[Lemma~3.1]{Henke:2015} that $\F$ is constrained and thus $N_\F(X)$ is constrained by \cite[Lemma~2.11]{Henke:2015}. Now by Lemma~\ref{L:ComponentsCharp}, we have $E(\L_X^\delta)=\{\One\}$ contradicting $\fC\neq\emptyset$.

\smallskip

\emph{Step~5:} We argue that $X<D$ and thus $E(\L_D)\subseteq E(\L)$. Because of the maximality of $|X|$, it is sufficient to prove $X<D$. If this is not the case, then $N_{TX}(X)=AX=D=X$. Since $TX$ is a $p$-group, it follows $TX=X\geq T=E(\L_\delta)\cap S$. Thus $X\in \delta(\F)\subseteq \F^s$ by Corollary~\ref{C:ELcapSindeltaF}. So $N_\F(X)$ is constrained by \cite[Lemma~3.1]{Henke:2015} and therefore Lemma~\ref{L:ComponentsCharp} gives  $E(\L^\delta_X)=\{\One\}$ contradicting $\fC\neq\emptyset$.

\smallskip

\emph{Step~6:} We derive the final contradiction. By Step~4, we have $\L\neq \L_D$. Thus, since $\L$ is a minimal counterexample, we have
\begin{equation}\label{E:Ebalance}
 E(\bN_{\L_D}(X))\subseteq E(\L_D). 
\end{equation}
We can see now that
\begin{eqnarray*}
\H &\subseteq & E(\bN_{\L_X}(D))\mbox{ (by Step~3)}\\
&=&E(\bN_{\L_D}(X)) \mbox{ (by Lemma~\ref{L:NinNLXofY})}\\
&\subseteq & E(\L_D) \mbox{ (by \eqref{E:Ebalance})}\\
&\subseteq & E(\L) \mbox{ (by Step~5).}
\end{eqnarray*}
However, this contradicts $\mathfrak{C}\neq \emptyset$ and the definition of $\H$. 
\end{proof}

\section{Final remarks} Notice that it might not be true in general that $E(\bN_\L^\delta(X))\subseteq E(\L_\delta)$. At least such a statement does not follow from the E-balance theorem. If  $E(\bN_\L^\delta(X))$ could be viewed as an im-partial subgroup of $E(\L_\delta)$ in a systematical way, then it would probably not be necessary anymore to work with subcentric localities, but one could restrict attention to regular localities instead.

\smallskip

We will see now that the layers of $\bN_\L^\delta(X)$ and $\L_\delta$ coincide if $E(\L_\delta)\subseteq N_\L(X)$. We use E-balance in the proof of Lemma~\ref{L:ELdeltainNLX} below, but a more elementary proof could be given if one first translates the arguments used in Aschbacher~\cite[(10.1)]{Aschbacher:2011} to our locality setting. 

\begin{lemma}\label{L:ELdeltainNLX}
Let $X\in\F^f$ such that $E(\L_\delta)\subseteq N_\L(X)$. Then $\Comp(\L_\delta)=\Comp(\bN_\L^\delta(X))$ and  $E(\L_\delta)=E(\bN_\L^\delta(X))$.
\end{lemma}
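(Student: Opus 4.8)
\textbf{Proof plan for Lemma~\ref{L:ELdeltainNLX}.}
The plan is to prove the two asserted equalities by a double inclusion argument, using E-balance (Theorem~\ref{T:Ebalance}) for one direction and the component machinery of Chapter~\ref{S:Components} for the other. First I would observe that since $E(\L_\delta)$ is the product of the components of $\L_\delta$ and each component $\K\in\Comp(\L_\delta)$ is a partial subnormal subgroup of $\L_\delta$ contained in $N_\L(X)$, the hypothesis $E(\L_\delta)\subseteq N_\L(X)$ gives $\K\subseteq N_\L(X)$ for every such $\K$. The goal for the inclusion $\Comp(\L_\delta)\subseteq\Comp(\bN_\L^\delta(X))$ is then to apply Lemma~\ref{L:ProdCompinbNLD} with $\mathfrak{C}=\Comp(\L_\delta)$, so $\H=\prod_{\K\in\mathfrak{C}}\K=E(\L_\delta)\unlhd\L_\delta$ (by Lemma~\ref{L:ELnormal}); the hypotheses of that lemma that remain to be checked are that $\H\subseteq N_\L(X)$, which we just noted, and that $E(\H^\perp_{\L_\delta})\cap S\leq N_S(X)$. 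The key point here is that $\H^\perp_{\L_\delta}=E(\L_\delta)^\perp_{\L_\delta}$, and by Lemma~\ref{L:ComponentinNorNperp} (applied with $\N=E(\L_\delta)$) we have $E(\L_\delta)=E(E(\L_\delta))E(E(\L_\delta)^\perp_{\L_\delta})$; since by Theorem~\ref{T:FstarCentralProduct}(d) (or directly) $E(\L_\delta)$ has exactly the components of $\L_\delta$ as its own components, it follows that $\Comp(E(\L_\delta)^\perp_{\L_\delta})=\emptyset$, hence $E(\H^\perp_{\L_\delta})=\{\One\}$ and the third hypothesis of Lemma~\ref{L:ProdCompinbNLD} holds trivially. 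That lemma then yields $\H\unlhd\bN_\L^\delta(X)$ and $\mathfrak{C}\subseteq\Comp(\bN_\L^\delta(X))$, i.e.\ $\Comp(\L_\delta)\subseteq\Comp(\bN_\L^\delta(X))$ and $E(\L_\delta)=\H\subseteq E(\bN_\L^\delta(X))$.

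For the reverse inclusion $\Comp(\bN_\L^\delta(X))\subseteq\Comp(\L_\delta)$ I would invoke E-balance directly: Theorem~\ref{T:Ebalance} gives $E(\bN_\L^\delta(X))\subseteq E(\bN_\L(X))\subseteq E(\L)$, and by Remark~\ref{R:ELSubcentric} we have $E(\L)\cap\L_\delta=E(\L_\delta)$. Now $E(\bN_\L^\delta(X))$ is a partial subgroup consisting of elements $f$ with $S_f\in\delta(N_\F(X))\subseteq\delta(\F)$ (using that $\bN_\L^\delta(X)=N_\L(X)|_{\delta(N_\F(X))}$ and that $\delta(N_\F(X))\subseteq\delta(\F)$, which should follow from the description of $\delta$ and the fact that components of $\bN_\L^\delta(X)$ lie in $\L_\delta$ — this needs a short check), so $E(\bN_\L^\delta(X))\subseteq\L_\delta$, whence $E(\bN_\L^\delta(X))\subseteq E(\L)\cap\L_\delta=E(\L_\delta)$. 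Combined with the previous paragraph this gives $E(\bN_\L^\delta(X))=E(\L_\delta)$. For the statement about components, once both layers are equal and both are central products of their respective component sets (Theorem~\ref{T:FstarCentralProduct}(a)), each component of one layer is a partial subnormal subgroup of the common partial group $E(\L_\delta)$ that is quasisimple, hence a component of $E(\L_\delta)$; since $\Comp(E(\L_\delta))=\Comp(\L_\delta)$ (each $\K\in\Comp(\L_\delta)$ is subnormal in $\L_\delta$ and contained in $E(\L_\delta)$, so is a component of $E(\L_\delta)$ by Remark~\ref{R:SubnormalinSubgroupComponents}, and conversely $E(\L_\delta)\unlhd\L_\delta$ so $\Comp(E(\L_\delta))\subseteq\Comp(\L_\delta)$) and likewise $\Comp(E(\bN_\L^\delta(X)))=\Comp(\bN_\L^\delta(X))$, we conclude $\Comp(\L_\delta)=\Comp(\bN_\L^\delta(X))$.

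The main obstacle I anticipate is verifying cleanly that $E(\bN_\L^\delta(X))\subseteq\L_\delta$, i.e.\ that the layer of the regular locality $\bN_\L^\delta(X)$, computed over the fusion system $N_\F(X)$, actually sits inside $\L_\delta$ rather than merely inside $N_\L(X)$. This amounts to showing $\delta(N_\F(X))\subseteq\delta(\F)$, or more precisely that the objects $S_f$ of elements $f$ in the layer of $\bN_\L^\delta(X)$ lie in $\delta(\F)$; one expects this from Corollary~\ref{C:ELcapSindeltaF} applied inside $\bN_\L^\delta(X)$ together with a comparison of $E(\bN_\L^\delta(X))\cap N_S(X)$ with subgroups of $S$ in $\delta(\F)$, using that $\bN_\L^\delta(X)$ is an im-partial subgroup of $\L$. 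A secondary point requiring care is the routine but slightly fiddly identification $\Comp(E(\L_\delta))=\Comp(\L_\delta)$ and its analogue, which I would handle via Remark~\ref{R:SubnormalinSubgroupComponents} and Lemma~\ref{L:ELnormal} as indicated above. Everything else is a direct application of results already established.
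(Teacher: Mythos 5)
Your first half (the inclusion $\Comp(\L_\delta)\subseteq\Comp(\bN_\L^\delta(X))$ and $E(\L_\delta)\subseteq E(\bN_\L^\delta(X))$ via Lemma~\ref{L:ProdCompinbNLD} and the observation $E(E(\L_\delta)^\perp_{\L_\delta})=\{\One\}$) matches the paper and is correct. Your second half does not close, however: you correctly flag as the ``main obstacle'' the need to show $E(\bN_\L^\delta(X))\subseteq\L_\delta$, but your sketch for that step does not work. First, elements $f$ of $\bN_\L^\delta(X)=N_\L(X)|_{\delta(N_\F(X))}$ are characterized by $S_f\cap N_S(X)\in\delta(N_\F(X))$, not by $S_f\in\delta(N_\F(X))$; and the quantity relevant for membership in $\L_\delta$ is $S_f$ (equivalently $S_f\cap F^*(\L^s)\in\F^s$), which is not controlled by this. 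Second, the containment $\delta(N_\F(X))\subseteq\delta(\F)$ — even if it were the right thing to use — is not established and, as you phrase it, your justification appeals to components of $\bN_\L^\delta(X)$ lying in $\L_\delta$, which is exactly what you are trying to prove, so the argument is circular.

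The paper sidesteps this containment entirely: it supposes there is a component $\K$ of $\bN_\L^\delta(X)$ not in $\Comp(\L_\delta)$, and (since $E(\L_\delta)\unlhd\bN_\L^\delta(X)$ was just established) applies Lemma~\ref{L:ComponentinNorNperp} with $\bN_\L^\delta(X)$ playing the role of $\L$ to place $\K\subseteq E(\L_\delta)^\perp_{\bN_\L^\delta(X)}=C_{\bN_\L^\delta(X)}(E(\L_\delta))\subseteq C_\L(E(\L_\delta))$. Then E-balance gives $\K\subseteq E(\L)$, so $\K\cap S\subseteq E(\L)\cap S=E(\L_\delta)\cap S$, and since $\K$ centralizes $E(\L_\delta)$ it centralizes $\K\cap S$, forcing $\K\cap S$ abelian — contradicting Lemma~\ref{L:Quasisimple}. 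This avoids ever asserting that $E(\bN_\L^\delta(X))$ sits inside $\L_\delta$. You should replace your second half with an argument of this shape; as written, the step you yourself isolate as the crux is not proved and the suggested route to it is circular.
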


\begin{proof}
By Lemma~\ref{L:ComponentinNorNperp}, we have $E(E(\L_\delta)^\perp_{\L_\delta})=1$. Hence it follows from Lemma~\ref{L:ProdCompinbNLD} that $E(\L_\delta)\unlhd \bN_\L^\delta(X)$ and $\Comp(\L_\delta)\subseteq \Comp(\bN_\L^\delta(X))$. Suppose there exists a component $\K$ of $\bN_\L^\delta(X)$ which is not a component of $\L_\delta$. Note that it is a consequence of Remark~\ref{R:SubnormalinSubgroupComponents} that $\Comp(E(\L_\delta))=\Comp(\L_\delta)\subseteq \Comp(\bN_\L^\delta(X))$. Hence, Lemma~\ref{L:ComponentinNorNperp} applied with $\bN_\L^\delta(X)$ in place of $\L$ yields that 
\[\K\subseteq E(\L_\delta)^\perp_{\bN_\L^\delta(X)}=C_{\bN_\L^\delta(X)}(E(\L_\delta))\subseteq C_\L(E(\L_\delta)).\]
By the E-balance theorem \ref{T:Ebalance}, we have $\K\subseteq E(\L)$ and thus $\K\cap S\subseteq E(\L)\cap S=E(\L_\delta)\cap S$. Hence, $\K\cap S$ must be abelian contradicting Lemma~\ref{L:Quasisimple}.
\end{proof}

\bibliographystyle{amsalpha}
\bibliography{repcoh}

\end{document}